\documentclass[a4paper,leqno,11pt]{amsart}
\usepackage{etex}
\usepackage[top=1.5in,bottom=1.3in,left=1.3in,right=1.3in,marginparwidth=1.5cm]{geometry}

\usepackage{times}
\usepackage{tensor}

\usepackage{amssymb}
\usepackage{amsmath}
\usepackage{amsthm}
\usepackage{amsxtra}
\usepackage{latexsym}
\usepackage{tikz}
\usepackage{mathdots}
\usepackage{mathtools}
\usepackage[new]{old-arrows} 
\def\checkmark{\tikz\fill[scale=0.4](0,.35) -- (.25,0) -- (1,.7) -- (.25,.15) -- cycle;}

\usepackage{pict2e}
\usepackage{caption}

\usepackage[mathscr]{eucal}
\usepackage{mathrsfs}
\usepackage{upgreek}
\usepackage{wasysym}
\usepackage[all,cmtip,dvipdfmx]{xy}
\usepackage{xcolor}
\definecolor{rouge}{rgb}{0.85,0.1,.4}
\definecolor{bleu}{rgb}{0.1,0.2,0.9}
\definecolor{violet}{rgb}{0.7,0,0.8}

\usepackage{multirow}
\usepackage{diagbox}
\usepackage{arydshln}
\usepackage{bbold}
\usepackage{fix-cm} 
\usepackage{calrsfs}
\usepackage{calligra}

\usepackage[english]{babel}

\usepackage{enumerate}
\sloppy

\usepackage[driverfallback=dvipdfmx,
colorlinks=true,linkcolor=bleu,urlcolor=violet,citecolor=rouge]{hyperref}

\newcommand{\ov}{\overline}
\newcommand{\asym}{an asymptotic datum} % The singular ``datum'' was already correct!
\newcommand{\W}{\mathscr{W}}

\newcommand{\bra}{{\langle}}
\newcommand{\ket}{{\rangle}}
\newcommand{\ch}{\on{ch}}

\newcommand{\Lam}{\Lambda}

\newcommand{\cprime}{$'$}
\newcommand{\on}{\operatorname}
\newcommand{\+}{\mathop{\oplus}}
\renewcommand{\*}{\otimes}
\newcommand{\mc}{\mathcal}
\newcommand{\mf}{\mathfrak}

\newcommand{\fing}{\mf{g}}

\newcommand{\affg}{\widehat{\mf{g}}}

\newcommand{\affh}{\widehat{\mf{h}}}

\newcommand{\Z}{\mathbb{Z}}
\newcommand{\C}{\mathbb{C}}

\newcommand{\ra}{\rightarrow}
\newcommand{\lam}{\lambda}

\newcommand{\vac}{{|0\rangle}}

\newcommand{\imag}{\mathrm{i}} % imaginary unit

\newcommand{\KL}{\mathbf{KL}}
\newcommand{\ooh}{\circ}

\newcommand{\bs}{\boldsymbol}

\def\A{\mathbf{A}}
\def\Weight{\mathbf{w}}
\def\G{\mathbf{g}}
\def\g{\mathfrak{g}}
\def\l{\mathfrak{l}}

\def\h{\mathfrak{h}}

\def\O{\mathbb{O}}

\def\Slo{\mathcal{S}}

\def\P{\mathscr{P}}
\def\sl{\mathfrak{sl}}
\def\so{\mathfrak{so}}
\def\sp{\mathfrak{sp}}
\def\eps{\varepsilon}

\def\bs{\boldsymbol}
\def\P{\mathscr{P}}

\def\le{\leqslant}
\def\leq{\leqslant}
\def\ge{\geqslant}
\def\geq{\geqslant}

\DeclareMathOperator{\ad}{ad}
\DeclareMathOperator{\Hom}{Hom}

\theoremstyle{plain}
\newtheorem{Th}{Theorem}[section]

\newtheorem{Pro}[Th]{Proposition}

\newtheorem{Lem}[Th]{Lemma}

\newtheorem{Co}[Th]{Corollary}

\newtheorem{Conj}[Th]{Conjecture}

\theoremstyle{remark}
\newtheorem{Def}[Th]{Definition}

\newtheorem{Rem}[Th]{Remark}

\setcounter{secnumdepth}{3}
\setcounter{tocdepth}{1}

\setcounter{part}{1}

\newlength{\larg}
\setlength{\larg}{10.5cm}

\title[Collapsing levels of $W$-algebras]{Singularities of
nilpotent Slodowy slices and
collapsing levels of $W$-algebras}

%\date{\today}
\subjclass[2010]{}
\keywords{}
\author[Tomoyuki Arakawa]{Tomoyuki Arakawa} 
\address{\textsuperscript{1}Research Institute for Mathematical Sciences
\\ Kyoto University\\ 
 Kyoto 606-8502 JAPAN}
\email{arakawa@kurims.kyoto-u.ac.jp}

\author[Jethro van Ekeren]{Jethro van Ekeren} 
\address{\textsuperscript{2}
Instituto de Matem\'{a}tica Pura e Aplicada\\
Jardim Bot\^{a}nico, RJ 22.460-320 BRAZIL}
\address{\textsuperscript{3}
Departamento de Matem\'{a}tica Aplicada\\
Universidade Federal Fluminense \\
Niter\'{o}i, RJ 24.210-201 BRAZIL}
\email{jethro@impa.br}

\author[Anne Moreau]{Anne Moreau} 
\address{\textsuperscript{4}Universit\'{e} Paris-Saclay, CNRS\\ 
Laboratoire de Math\'{e}matiques d'Orsay\\
Rue Michel Magat, B\^{a}t. 307\\
91405 Orsay, FRANCE}
\email{anne.moreau@universite-paris-saclay.fr}

\begin{document}

\begin{abstract}
We develop techniques to construct isomorphisms between simple 
affine $W$-algebras and affine vertex algebras at admissible levels. 
We then apply these techniques to obtain many new, and conjecturally all, admissible collapsing levels for affine 
$W$-algebras. 
In short, if a simple affine $W$-algebra at a given level is equal to its affine vertex subalgebra generated by the centraliser of an $\sl_2$-triple associated with the underlying nilpotent orbit, then that level is said to be {\em collapsing}. 
Collapsing levels are important both in representation theory and in theoretical physics. 
Our approach relies on two fundamental invariants of vertex algebras. 
The first one is the {\em associated variety} which, in the context of admissible level simple affine 
$W$-algebras, leads to the Poisson varieties known as nilpotent Slodowy slices. 
We exploit the singularities of these varieties to detect possible collapsing levels. 
The second invariant is the {\em asymptotic datum}. We prove a general result asserting that, under appropriate hypotheses, equality of asymptotic data implies isomorphism at the level of vertex algebras. 
Then we use this 
to give a sufficient criterion, of combinatorial nature, for an admissible level to be collapsing. 
Our methods also allow us to study isomorphisms between quotients  
of $W$-algebras and extensions of simple affine vertex algebras at admissible levels. Based on such examples, we are led to formulate a general conjecture: for any finite extension of vertex algebras, the induced morphism between associated Poisson
varieties is dominant. 
\end{abstract}

\maketitle

\section{Introduction}

Vertex algebras have emerged as fundamental objects mediating interactions between representation theory and two-dimensional conformal field theory, three-dimensional topology and, more recently, four-dimensional physics in the guise of the 4D/2D duality. 
This article concerns two important classes of vertex algebras: the affine Kac-Moody vertex algebras and the affine $W$-algebras. 
Examples of $W$-algebras were first uncovered as vertex algebra extensions of the Virasoro algebra \cite{Fat.Zam.1987}, and these algebras were later perceived to be natural quantisations of Poisson structures of certain completely integrable models (the Gelfand-Dickii brackets in the KP hierarchy for instance, see the review \cite{Bou.Sch.review} and references therein), as well as natural chiralisations of quantised transverse slices. By $W$-algebra in general we shall mean the quantised Hamiltonian (Drinfeld-Sokolov) reduction of an affine Kac-Moody vertex algebra \cite{FeiFre90,KacRoaWak03}, or a quotient thereof. 

\subsection{Affine vertex algebras, $W$-algebras and collapsing levels}
Let $G$ be a complex connected, simple algebraic group of adjoint type with Lie algebra $\g$, and let $k \in \C$ be a complex number, referred to as the level. 
The universal affine Kac-Moody vertex algebra $V^k(\g)$ is constructed from these data, and its simple quotient is denoted $L_k(\g)$; see Section~\ref{sec:admissible}. 
The Hamiltonian reduction $H^0_{DS, f}(-)$ takes an additional choice of nilpotent element $f \in \g$ as input, the vertex algebra $\W^k(\g, f) = H^0_{DS, f}(V^k(\g))$ depending on $f$ only through its adjoint orbit $\O = G \cdot f$. We denote by $\W_k(\g, f)$ the simple quotient of the universal $W$-algebra $\W^k(\g, f)$. 
See Section \ref{sec:Asymptotics_of_W_algebras} for more details about $W$-algebras.

The theory of vertex algebras is connected to Poisson geometry through the associated variety construction 
\cite{Arakawa15a,AraMor16b,AraMor17}. In particular, the associated varieties of the vertex algebras $L_k(\g)$ and $\W_k(\g, f)$ include extensively studied families of Poisson varieties such as nilpotent orbit closures and nilpotent Slodowy slices. 
The theory of vertex algebras, and in particular the structure of the vertex algebras $L_k(\g)$ and $\W_k(\g, f)$ at special levels $k$ called {\em admissible levels} (see Definition \ref{Def:admissible}), is connected as well 
to the theory of modular functions \cite{KacWak88,Zhu96,AEkeren19admissible,AEkeren19}. 
Both of these themes will be put to use in our investigation of the structure of $W$-algebras in this work.

The universal $W$-algebras are easy to describe as graded vector spaces. Via spectral sequence arguments the algebra  $\W^k(\g, f)$ is seen to have a PBW basis corresponding to a set of strong generators indexed by a basis of the centraliser $\g^f$ \cite{dBT1993,FBZ.book,KacWak03}. The relations between these generators are, however, extremely complicated and in general are not known explicitly. This hampers direct analysis of the simple quotient $\W_k(\g, f)$ via generators and relations. Though, importantly, it is known that $\W^k(\g, f)$ contains an embedded copy of the affine vertex algebra $V^{k^\natural}(\g^\natural)$, where $\g^\natural \subset \g$ denotes the centraliser subalgebra of an $\mathfrak{sl}_2$-triple $(e, h, f)$ containing $f$, and $k^\natural$ is some level determined by the other data. It follows that the simple quotient $\W_k(\g, f)$ contains an embedded homomorphic image of the affine vertex algebra, and the level $k$ is then said to be a \emph{collapsing level} (see Definition \ref{Def:collapsing}) 
if the embedded subalgebra coincides with $\W_k(\g, f)$ itself, that is to say if we have an isomorphism 
of vertex algebras: 
\begin{align}\label{eq:coll.intro.isom} 
\W_k(\g, f) \cong L_{k^\natural}(\g^\natural).  
\end{align}

In \cite{Adamovic-et-al_collapsing}
it was shown 
that 
collapsing levels have remarkable applications
to the representation theory of affine vertex algebras. They are also useful in elucidating the structure of modular tensor categories of representations of simple $W$-algebras at admissible, not necessarily collapsing, levels \cite{AEkeren19}. 
Furthermore it has recently been observed \cite{XieYan2} that many collapsing levels for quasi-lisse $W$-algebras
should come from non-trivial isomorphisms of 4D $N=2$ SUSY field theories, via the 4D/2D duality \cite{BeeLemLie15}.

There is a full classification of collapsing levels for the case that $f$ is a minimal nilpotent element $f_{\text{min}}$, including the case in which $\g$ is a simple Lie superalgebra (\cite{AdaKacMos17,AdaKacMos18}). For more general nilpotent elements $f$, since the commutation relations in $\W^k(\g,f)$ are unknown, almost nothing is known 
about collapsing levels, and to discover them we must appeal to more indirect methods. In this work we exploit two important invariants of vertex algebras: associated varieties and asymptotic data. 
{The general strategy we use to detect (admissible) collapsing levels is given in Section \ref{sec:strategy}.}

\subsection{Associated variety, asymptotic data and main results}
To a vertex algebra $V$ one attaches, in a functorial manner, a certain affine Poisson variety $X_V$ referred to as the {\em associated variety} \cite{Ara12}. The associated variety of $V^k(\g)$ is $\g^*$ and the associated varieties of simple affine vertex algebras at admissible levels are nilpotent orbit closures with the induced Kirillov-Kostant-Souriau Poisson structures \cite{Arakawa15a}. At the level of associated varieties, the Drinfeld-Sokolov reduction $H^0_{DS, f}(-)$ corresponds to intersection with the Slodowy slice $\Slo_f\cong f+\g^{e}$ \cite{De-Kac06} \cite{Arakawa15a}. In general the intersection 
\begin{align}\label{eq:nilp.Slo.intro}
\Slo_{\O,f} = \overline{\O}\cap \Slo_f
\end{align}
of a nilpotent orbit closure $\overline{\O}$ with a Slodowy slice $\Slo_f$, is referred to as a \emph{nilpotent Slodowy slice}. Normalisations of nilpotent Slodowy slices are symplectic singularities in the sense of 
Beauville \cite{Bea00} and, like nilpotent orbit closures, these varieties are studied for their role in representation theory and in the theory of symplectic singularities.

The nilpotent Slodowy slices are best understood in the case of {\em minimal degeneration} in which $G \cdot f$ is an open subvariety of the boundary\footnote{The boundary of $\O$ in $\overline{\O}$  is precisely the  
singular locus of $\overline{\O}$ 
as was shown by Namikawa \cite{Namikawa04} using results 
of Kaledin and Panyushev \cite{Kaledin06,Panyushev}; 
this can also be deduced from \cite{KraftProcesi81,KraftProcesi82,FuJutLev17}.} of $\O$ in $\overline{\O}$. 
In the context of this class of examples, one has the celebrated result of Brieskorn and Slodowy (\cite{Brieskorn,Slo80}) 
 confirming a conjecture of Grothendieck, that the nilpotent Slodowy slice associated with the principal nilpotent orbit $\O = \O_{\text{prin}}$ and a subregular nilpotent element $f_{\text{subreg}}$ has a simple singularity of the same type as $G$, for $G$ of type $A, D, E$.

The second invariant of vertex algebras which we make use of, the {\em asymptotic datum} 
(see Definition \ref{Def:asymp.datum}), 
originates in the phenomenon of modular invariance of characters. Consider the character, i.e., normalised graded dimension
\[
\chi(\tau) = q^{-c/24} \sum \dim(V_n) q^{n},
\]
of an affine vertex algebra or $W$-algebra $V$ of central charge $c$. Under favourable circumstances the character converges to a function of $\tau$ a variable in the complex upper half plane (having set $q = e^{2\pi i \tau}$). Explicit formulas for these characters, ultimately coming from the Weyl-Kac character formula, yield modular transformation rules, 
\cite{Kac90,KacWak88}, and asymptotic behaviour of $\chi(\tau)$ of the form
$$\chi_V(\tau) \sim \A_V e^{\frac{\pi {\imag} }{12 \tau}\G_V}\quad
\text{as }\tau\downarrow 0.$$
Here $\tau\downarrow 0$ means $\tau$ tends to $0$ along the positive imaginary axis. The invariants $\A_V$ and $\G_V$ are called the {\em asymptotic dimension} and {\em asymptotic growth} of $V$. Explicit formulas have been given for principal admissible level $k$ by Kac and Wakimoto \cite{KacWak08}, and for coprincipal admissible levels in the present work. See Sections \ref{sec:admissible} and \ref{sec:Asymptotics_of_W_algebras} and in particular Propositions \ref{prop:general.affine.asymp.coprincipal} and \ref{Pro:asymptotic_data_H_DS}.

Plainly the isomorphism \eqref{eq:coll.intro.isom} entailed by a collapsing level induces an equality of asymptotic data. We now come to our first main theorem, which is a sort of converse 
(see also Theorem~\ref{Th:main} and Proposition \ref{Pro:asymptotics-and-collapsing}).
\begin{Th}
\label{Th:main-intro}
Assume that
$k$ and $k^\natural$ are admissible levels for $\g$ and $\g^\natural$, 
respectively, 
that $f \in X_{L_k(\g)}$ and that  
$\chi_{H_{DS,f}^0(L_k(\g))}(\tau) \sim \chi_{L_{k^\natural}(\g^\natural)}(\tau),$ 
as $\tau\downarrow 0$. 
Then 
$k$ is a collapsing level, that is, $\W_k(\g,f) \cong L_{k^\natural}(\g^\natural)$. 
\end{Th}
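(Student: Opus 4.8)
The plan is to trap $L_{k^\natural}(\g^\natural)$ — which will turn out to be the affine subalgebra of $\W_k(\g,f)$ — together with $H:=H_{DS,f}^0(L_k(\g))$ between coefficientwise inequalities of normalised characters which the asymptotic hypothesis forces to be equalities, and then to eliminate the remaining slack using the representation theory of affine vertex algebras at admissible level.

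\emph{Set-up.} Since $k$ is admissible and $f\in X_{L_k(\g)}$, one has $X_H=\Slo_f\cap X_{L_k(\g)}$ by \cite{Arakawa15a}; in particular $H\neq 0$ (which is in any case forced by the hypothesis, as $\chi_{L_{k^\natural}(\g^\natural)}\not\equiv 0$) and $H$, hence its simple quotient $\W_k(\g,f)$, is quasi-lisse, so the normalised characters in the statement and their $\tau\downarrow 0$ asymptotics are well defined. Functoriality of $H_{DS,f}^0$ applied to $V^k(\g)\twoheadrightarrow L_k(\g)$ produces a vertex algebra homomorphism $\W^k(\g,f)\to H$ sending vacuum to vacuum, hence nonzero; its image is a quotient of $\W^k(\g,f)$, which has one-dimensional lowest conformal weight space and thus a unique maximal graded ideal, so this image surjects onto $\W_k(\g,f)$. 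Therefore $\chi_{\W_k(\g,f)}(\tau)\leq\chi_H(\tau)$ coefficientwise in $q$ (the differing $q^{c/24}$ normalisations tend to $1$ as $\tau\downarrow 0$ and are immaterial asymptotically). On the other hand, by the PBW theorem for $W$-algebras $\W^k(\g,f)$ contains $V^{k^\natural}(\g^\natural)$, so $\W_k(\g,f)$ contains the vertex subalgebra $V_\natural$ generated by its weight-one currents $\g^\natural$, a nonzero quotient of $V^{k^\natural}(\g^\natural)$; hence a further surjection $V_\natural\twoheadrightarrow L_{k^\natural}(\g^\natural)$. As the conformal vector of $\W^k(\g,f)$ restricts on the affine part to the Sugawara vector of $\widehat{\g}^\natural$, the grading of $\W_k(\g,f)$ induces the intrinsic conformal grading of $V_\natural$, so $\chi_{L_{k^\natural}(\g^\natural)}(\tau)\leq\chi_{V_\natural}(\tau)\leq\chi_{\W_k(\g,f)}(\tau)$. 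Combining these with the hypothesis, all of $\chi_{L_{k^\natural}(\g^\natural)}$, $\chi_{V_\natural}$, $\chi_{\W_k(\g,f)}$ and $\chi_H$ are asymptotically equivalent as $\tau\downarrow 0$.

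\emph{Collapsing the slack.} First, $K:=\ker\big(V_\natural\twoheadrightarrow L_{k^\natural}(\g^\natural)\big)$ is an $\widehat{\g}^\natural$-submodule of $V_\natural$, hence a subquotient of the vacuum module $V^{k^\natural}(\g^\natural)$, with graded dimension $\chi_{V_\natural}-\chi_{L_{k^\natural}(\g^\natural)}=o(\chi_{L_{k^\natural}(\g^\natural)})$. Here I use the asymptotic input: at admissible level $k^\natural$, every simple module that can occur as a composition factor in play (of $V^{k^\natural}(\g^\natural)$, and later of $\W_k(\g,f)$ over its affine subalgebra) has asymptotic growth and power-of-$\tau$ prefactor at least matching those of $L_{k^\natural}(\g^\natural)$ — this follows from the Kac--Wakimoto modular transformation together with Arakawa's description of modules at admissible level; moreover a chiral character is a power series with non-negative coefficients evaluated at $q=e^{-2\pi\beta}\in(0,1)$ along $\tau=i\beta$, so its leading asymptotic coefficient is strictly positive and there is no cancellation among composition factors. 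Thus if $K\neq 0$, pick a composition factor $M$ of $K$; then $\chi_K\geq\chi_M$ coefficientwise and $\chi_M\neq o(\chi_{L_{k^\natural}(\g^\natural)})$, so $\chi_K\neq o(\chi_{L_{k^\natural}(\g^\natural)})$ — a contradiction. Hence $V_\natural\cong L_{k^\natural}(\g^\natural)$. Consequently $Q:=\W_k(\g,f)/V_\natural$ is a positive-energy $L_{k^\natural}(\g^\natural)$-module with finite-dimensional conformal weight spaces, so its composition factors are simple ordinary $L_{k^\natural}(\g^\natural)$-modules (admissible ones, by Arakawa's classification); its graded dimension is $\chi_{\W_k(\g,f)}-\chi_{V_\natural}=o(\chi_{L_{k^\natural}(\g^\natural)})$, and the same argument forces $Q=0$. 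Therefore $\W_k(\g,f)=V_\natural\cong L_{k^\natural}(\g^\natural)$, i.e. $k$ is collapsing.

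\emph{Main obstacle.} The crux is the asymptotic input just invoked: one must control the asymptotics of \emph{all} the simple modules that can appear — both the composition factors of the vacuum module $V^{k^\natural}(\g^\natural)$ and those of $\W_k(\g,f)$ viewed over its affine subalgebra — and show that none of them is asymptotically negligible compared with $L_{k^\natural}(\g^\natural)$, so that ``$o(\chi_{L_{k^\natural}(\g^\natural)})$'' genuinely forces vanishing. This is exactly where admissibility of $k^\natural$ (and of $k$) enters in a serious way, and where the asymptotic-data analysis of the paper, extending Kac--Wakimoto and Arakawa, does the real work; in particular the case of composition factors of $V^{k^\natural}(\g^\natural)$ that need not themselves be admissible weights requires care. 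By comparison the other ingredients — the nonvanishing and quasi-lisseness of $H$ (the only point where $f\in X_{L_k(\g)}$ is used, through $X_H=\Slo_f\cap X_{L_k(\g)}$), the functoriality producing $\W^k(\g,f)\to H$, and the compatibility of conformal gradings on the affine subalgebra — are routine.
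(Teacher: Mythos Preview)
Your chain of character inequalities is correctly set up, but the argument breaks exactly at the step you flag as the ``main obstacle'', and your diagnosis of how the paper resolves it is wrong. To force $K=\ker(V_\natural\twoheadrightarrow L_{k^\natural}(\g^\natural))=0$ from $\chi_K=o(\chi_{L_{k^\natural}(\g^\natural)})$ you need every nonzero subquotient of the maximal submodule of $V^{k^\natural}(\g^\natural)$ to have asymptotic growth at least $\G_{L_{k^\natural}(\g^\natural)}$. Those subquotients have composition factors $L(\mu)$ for \emph{non-admissible} weights $\mu$ in the dot-orbit of $k^\natural\Lambda_0$, and the paper's asymptotic formulas (Propositions~\ref{Proposition:Kac-Wakimoto} and~\ref{prop:general.affine.asymp.coprincipal}, Corollary~\ref{Co:asymptotic_data_L}) apply only to admissible weights. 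No growth bound for non-admissible $L(\mu)$ is proved anywhere in the paper, so the claim that ``the asymptotic-data analysis of the paper\ldots does the real work'' here is simply false. A similar issue affects your $Q$ step: you only checked that $L_0=L_0^S$ on $V_\natural$, not on all of $\W_k(\g,f)$, so you have not placed $\W_k(\g,f)$ in category $\mc{O}_{k^\natural}$ and cannot yet invoke Arakawa's rationality to say the composition factors of $Q$ are admissible.

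The paper's route is different and never touches non-admissible composition factors. First, using only the equality of asymptotic growths, a Virasoro-coset argument (Proposition~\ref{Pro:conformal}) shows that $\omega_{\W_k(\g,f)}$ equals the Sugawara vector, i.e.\ the map $V^{k^\natural}(\g^\natural)\to\W_k(\g,f)$ is conformal. Then, crucially using the \emph{self-duality} of $\W_k(\g,f)$ (a nontrivial input proved in Section~\ref{sec:Asymptotics_of_W_algebras} for the Dynkin grading, and entirely absent from your proposal), Theorem~\ref{Th:finite_extensions} runs a projective-cover/Casimir argument to show directly that the singular vector of $V^{k^\natural}(\g^\natural)$ dies in $\W_k(\g,f)$; this yields $L_{k^\natural}(\g^\natural)\hookrightarrow\W_k(\g,f)$ with no asymptotic input beyond conformality. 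Only then, with $\W_k(\g,f)$ genuinely in $\mc{O}_{k^\natural}$ and decomposed into admissibles, does the easy asymptotic step --- positivity of $\A_{L(\lambda)}$ for admissible $\lambda$ together with the equality of $\A$'s --- finish the proof. The ``real work'' is thus self-duality plus a structural Casimir argument, not asymptotics of composition factors.
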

Using this theorem we establish many new infinite families of collapsing levels for $\g$ of classical type, and approximately one hundred collapsing levels for $\g$ of exceptional type. 
To give a flavour of our results, we quote 
the following which is part of Theorem \ref{Th:main_sl_n-2}, and 
is representative of the results in classical types.  
See also Theorems \ref{Th:main_sl_n-1}, \ref{Th:main_sp_n-1}, \ref{Th:main_sp_n-2}, \ref{Th:main_so_n-1} 
and~\ref{Th:main_so_n-2}.
\begin{Th}
\label{Th:main_sl_n-2-intro-version} 
Let $\g=\sl_n$ with $n \geq 2$, 
and let $k = -n+p/q$ where $q \geq 1$ and $p \geq n$ is coprime to $q$ 
(that is, $n$ is admissible for $\g=\sl_n$). 
Pick a nilpotent element 
$f \in \overline{\O}_k$ corresponding to the partition $(q^m,1^s)$ of $n$, 
with $m\ge 0$ and $s >0$. 
Then $k$ is collapsing if and only if $p=n=h^{\vee}_{\sl_n}$. 
Moreover, 
$$\W_{-n + n/q}(\mf{sl}_n,f)  \cong L_{-s+s/q}(\mf{sl}_s).$$
\end{Th}
Similarly, we exhibit here, as an illustration, 
one of the isomorphisms which we uncover in exceptional types with the help of Theorem \ref{Th:main-intro}: 
\[
\W_{-12+13/3}(E_6,2A_2)\cong L_{-4+7/3}(G_2). 
\]
The remaining cases in the exceptional types are covered in Theorems \ref{Th:main_E6}, \ref{Th:main_E7}, \ref{Th:main_E8}, \ref{Th:main_F4}, and \ref{Th:main_G2}.
We conjecture (see Conjectures~\ref{Conj:exhaustive} and \ref{Conj:exhaustive_exceptional}) 
that our list of admissible collapsing levels is exhaustive. 
We also obtain a number of cases where the simple $W$-algebra $\W_k(\g,f)$ 
is merely a finite extension of its simple affine vertex algebra 
(see all the above cited theorems). 

\subsection{Nilpotent Slodowy slices}
The nature of the formulas for asymptotic data are such that it is not feasible to find collapsing levels by a naive search for coincidences between respective asymptotic data. For this reason we turn to the more refined invariant given by the associated variety. Although the problem of determining the associated variety of $L_k(\g)$ is wide open in general, many of the Poisson varieties arising as associated varieties of simple $W$-algebras are nilpotent Slodowy slices. More precisely, whenever $k$ is an admissible level for $\g$, the associated variety $X_{H^0_{DS, f}(L_k(\g))}$ is the nilpotent Slodowy slice \eqref{eq:nilp.Slo.intro} above, with $\O = \O_k \subset \g$ a certain nilpotent orbit determined by $k$ \cite{Arakawa15a} \cite{AEM}. It is conjectured in general (and confirmed in many cases) that $H^0_{DS, f}(L_k(\g))$ is simple, so that $\W_k(\g, f) = H^0_{DS, f}(L_k(\g))$ in fact (see Conjecture~\ref{Conj:isom}). 
A collapsing level thus induces  \cite{Arakawa15a}, in these cases, an isomorphism
\begin{align}\label{eq:nilp.Slo.iso.orbit}
\overline{\O}_k \cap \Slo_f \cong \overline{\O}_{k^\natural}.
\end{align}
In particular, the singularity of the nilpotent Slodowy slice in $\g$ on the left-hand-side should be of the same type as that
of the nilpotent orbit closure in $\g^\natural$ on the right-hand-side. We may therefore apply known results on the geometry of nilpotent Slodowy slices to find candidates for collapsing levels.

Kraft and Procesi studied nilpotent Slodowy slices for minimal degenerations in the classical types \cite{KraftProcesi81,KraftProcesi82}, motivated by the normality problem for nilpotent orbit closures. They introduced the row/column removal process, reviewed in Section \ref{sec:strategy} below, and which we now briefly describe. Nilpotent orbits in simple Lie algebras of classical type are parametrised by certain classes of integer partitions and, roughly speaking, the row/column removal rule is a set of combinatorial operations on a pair of partitions $(\bs{\lambda}, \bs{\mu})$ under which the nilpotent Slodowy slice $\Slo_{\O_{\bs{\lambda}}, f_{\bs{\mu}}}$ remains unchanged up to isomorphism. Using the row/column removal rule, it is possible to identify classes of nilpotent orbits for which an isomorphism of the type \eqref{eq:nilp.Slo.iso.orbit} holds. Recently, Fu, Juteau, Levy and Sommers \cite{FuJutLev17} have complemented the work of Kraft and Procesi by determining the generic singularities of nilpotent orbit closures $\overline{\O}$ in exceptional types, which they did through a study of the nilpotent Slodowy slices $\Slo_{\O,f}$ at minimal degenerations~$G \cdot f$. We make use of these results also in the sequel.

Upgrading the isomorphisms of nilpotent Slodowy slices to collapsing levels  
is then a matter of computing the level $k^\natural$ in terms of $k$, and comparing asymptotic data. 
This is straightforward in principle, but in practice 
it is extremely complicated, and exact (as opposed to numerical) computations require extensive use of classical cyclotomic product identities such as those presented in Section \ref{sec:identities}.

We observe that many of our examples of 
collapsing levels 
are 
of the form
$$-h_\g^\vee+\frac{h_\g^\vee}{q},$$ 
with $(h_\g^\vee,q)=(r^{\vee},q)=1$,
or of the form 
$$-h_\g^\vee+\frac{h_\g +1}{q},$$ with $(h_\g+1 ,q)=1$, $(r^{\vee},q)=r^\vee$, 
where  $h_\g$ is the Coxeter number,
$h_\g^\vee$ is the dual Coxeter number,
$r^{\vee}$ is the lacing number of $\g$,
respectively.
A level of the first form
is called a {\em boundary principal admissible level} \cite{KacRoaWak03} (see also \cite{KacWak17}). 
The vertex algebras 
$L_{k}(\g)$ and $\W_{k}(\g,f)$ 
at boundary principal admissible level $k$
appear as vertex algebras associated with Argyres-Douglas theories 
via the 4D/2D duality 
(\cite{SonXieYan17,Dan,WanXie}). 
Collapsing levels which are boundary principal admissible 
have been studied by Xie and Yan \cite{XieYan2} in this connection, 
and our results confirm their conjectures \cite[Section 3.4]{XieYan2}. 

\subsection{Related problems}
Aside from determination of collapsing levels, the methods we develop in this article can be used to prove other results of a similar flavour, some of which are remarked upon in the body of the text. 
For instance (see Remark \ref{rem:min.mod.cases.sp} for details) if $\g = \mf{sp}_n$ and $k = -h_\g^\vee + p/q$ where $q$ is twice an odd integer and $p=h+1$, then for $f \in \O_k = \O_{\bs\lam}$ where $\bs\lam =(\frac{q}{2}+1, (\frac{q}{2})^{m}, 2)$,
\begin{align*}
\W_k(\g,f)\cong H_{DS, f}^0(L_{k}(\g)) \cong \on{Vir}_{2, q/2}.
\end{align*}
In addition to such results, we pose a number of conjectures, mostly related to presentation of $W$-algebras as finite extensions of simple affine vertex algebras. By {\em finite extension} we mean here, for definiteness, a vertex algebra $W$ which decomposes as a finite direct sum of irreducible modules over its conformal vertex subalgebra $V$. In particular we make the following:
\begin{Conj}\label{Conj:birational}
If $W$ is a finite extension of the vertex algebra $V$ then the corresponding morphism of Poisson algebraic varieties $\pi\colon X_W \to X_V$, is a dominant morphism.
\end{Conj}
The validity of Conjecture \ref{Conj:birational} would imply the widely-believed fact that a finite extension of a lisse vertex algebra is also lisse. We plan to return to these matters in future work.

\subsection*{Plan of the article} 
The rest of the article is organised as follows. 
In Section \ref{Section:Asymptotic data} we collect results on asymptotic data for vertex algebras and 
their modules. 
We exhibit a large class of vertex algebras admitting an asymptotic datum 
(see Proposition \ref{pro:quasi_liss_asymptotic}). 
Section \ref {sec:admissible} is about affine vertex algebras. 
The main result of the section describes the asymptotic data of simple affine vertex algebras 
at admissible levels, and of their simple ordinary representations  
(see Corollary~\ref{Co:asymptotic_data_L}). 
Section \ref{sec:Asymptotics_of_W_algebras} gathers together 
several properties of $W$-algebras. 
One of the main results of this section is 
Proposition~\ref{Pro:asymptotic_data_H_DS} 
which gives the asymptotic datum 
of the Drinfeld-Sokolov reduction 
$H_{DS,f}^0(L(\lam))$ 
for all simple ordinary $L_k(\g)$-module $L(\lam)$ 
for admissible~$k$. 
The notion of collapsing level is introduced in Section~\ref{sec:collapsing}. 
In Section \ref{sec:strategy} we then explain in detail our strategy to find collapsing admissible levels. 
The identities obtained in Section \ref{sec:identities} are useful to compute the asymptotic 
data in the sections which follow. 
Our results for the classical types are presented in Sections~\ref{sec:sl_n} and~\ref{sec:classical}. 
See Theorems \ref{Th:main_sl_n-1} and \ref{Th:main_sl_n-2} 
for $\g=\sl_n$, Theorems~\ref{Th:main_sp_n-1} and~\ref{Th:main_sp_n-2} for 
$\g=\sp_n$, Theorems~\ref{Th:main_so_n-1} and~\ref{Th:main_so_n-2} for $\g=\so_n$.
Our results for the exceptional types are presented in Section~\ref{sec:exceptional}. 
See Theorem~\ref{Th:main_E6} for $\g=E_6$, 
Theorem~\ref{Th:main_E7} for $\g=E_7$, 
Theorem~\ref{Th:main_E8} for $\g=E_8$, Theorem~\ref{Th:main_F4}
 for $\g=F_4$, and Theorem~\ref{Th:main_G2}  for $\g=G_2$. 
Finally, useful data related to nilpotent orbits in the exceptional types are collected 
in Appendix \ref{App:Centralisers_exceptional}.

\subsection*{Acknowledgements} 
The authors wish to express their gratitude to Daniel Juteau for 
the numerous fruitful discussions on nilpotent Slodowy slices. 
They are also grateful to Wenbin Yan for several helpful comments concerning collapsing levels in 4D superconformal theory, 
and they warmly thank 
Thomas Creutzig and Kazuya Kawasetsu 
whose question about the isomorphism $\W_{-14/3}(\sl_7) 
\cong L_{-8/3}(\sl_4)$ and corresponding associated varieties 
was the starting point of this work. 
TA and JvE wish to thank the University of Lille 
and the Laboratoire Painlev\'{e} for the organisation and the financial support of  
the conference {\em Geometric and automorphic aspects of W-algebras} 
in 2019 where the collaboration between the three authors started. 
AM gratefully acknowledges the RIMS of Kyoto, 
where part of this paper was
written, for financial support and its hospitality 
during her stay 
in autumn 2018. 

TA is partially supported by JSPS KAKENHI Grant Number J21H04993.
JvE is supported by the Serrapilheira Institute
(grant number Serra -- 1912-31433), by CNPq grants 303806/2017-6 and 402449/2021-5, and by FAPERJ grants E-26/010.002607/2019 and E-26/201.445/2021. 
AM is partially supported by ANR Project GeoLie Grant number ANR-15-CE40-0012.

\tableofcontents 

\section{Asymptotic data of vertex algebras}
\label{Section:Asymptotic data}
A {\em vertex algebra} is a complex vector space $V$ equipped with a distinguished 
vector $\vac \in V$, an endomorphism $T \in {\rm End}\,V$ and a linear map $$V \to ({\rm End}\,V)[[z,z^{-1}]], \quad a \mapsto a(z)=\sum_{n\in \Z} a_{(n)} z^{-n-1}$$ 
satisfying the following axioms: 
\begin{itemize}
	\item $a(z)b \in V((z))$ for all $a,b \in V$, 
	\item (vacuum axiom) $\vac (z) = {\rm Id}_V$ and $a(z) \vac \in a+ zV[[z]]$ for all $a \in V$, 
	\item (translation invariance axiom) $[T, a(z)] = \tfrac{\partial}{\partial z} a(z)$, 
	\item (locality axiom) $(z-w)^{N_{a,b}} [a(z),b(w)]=0$ for a sufficiently large integer $N_{a,b}$, for all $a,b \in V$. 
\end{itemize}

A vertex algebra $V$ is called {\em conformal} if 
there exists a vector $\omega$ called the
{\em conformal vector} such that
 $L(z) = \omega(z) =\sum_{n\in \Z}L_nz^{-n-2}$ satisfies
 \begin{itemize}
\item[(a)] $[L_m,L_n]=(m-n)L_{m+n}+\dfrac{m^3-m}{12}\delta_{m+n,0}c_V$,
 where $c_V$ is a constant called the {\em central charge} of $V$,
\item[(b)] $L_0$ acts semisimply on $V$, 
\item[(c)] $L_{-1}=T$ on $V$.
\end{itemize}
  For a conformal vertex algebra $V$
 and a $V$-module $M$, we set 
 $M_d=\{m\in M\colon L_0 m=dm\}$. The $L_0$-eigenvalue of a nonzero $L_0$-eigenvector $m \in M$ is called its
 {\em conformal weight}. For an element $a \in V$ of conformal weight $\Delta$ we write $a(z) = \sum_{n \in \Z} a_n z^{-n-\Delta}$. Note that in general $a_0 : M_d \rightarrow M_d$.

 A finitely generated $V$-module $M$ is called {\em ordinary} if 
 $L_0$ acts semisimply,
 $\dim M_d<\infty$ for all $d$,
 and the conformal weights of $M$ are bounded from below.
  The minimum conformal weight of
 a simple ordinary $V$-module $M$ is called the {\em conformal dimension} of $M$. More generally, a $V$-module $M$ will be said to be of {\em positive energy} (also called admissible) if it possesses a $\Z_+$-grading $M = \bigoplus_{k \in \Z_+} M_k$ such that $a_n M_k \subset M_{k-n}$ for all $a \in V$. The {\em normalised character} of 
 an ordinary representation $M$ 
 is defined by
\begin{align*}
\chi_M(\tau)={\rm tr}_M q^{L_0-c_V/24}=q^{-c_V/24}\sum_{d\in \C}(\dim M_d)q^d,\quad q=e^{2\pi 
{\imag} \tau} {\; \text{ with }\; \tau \in \C}.
\end{align*}
A conformal vertex  algebra is called {\em conical} if 
$V=\bigoplus_{\Delta\in \frac{1}{r}\Z_{\ge 0}}V_{\Delta}$
for some $r\in \Z_{\ge 0}$ and $V_0=\C$.
A $\Z$-graded conical  vertex  algebra is said to be of  {\em CFT-type}. 
Let $V$ be a vertex algebra of CFT-type. 
Then $V$ is called {\em self-dual} if
$V \cong V'$ as $V$-modules, where $M'$ denotes the contragredient 
dual \cite{Frenkel:1993aa} of the $V$-module $M$. 
Equivalently $V$ 
is self-dual if and only if it admits a non-degenerate symmetric invariant bilinear form.

The following  definition goes back to \cite[Conjecture 1]{KacWak88}\footnote{In \cite{KacWak88} 
the triple $(\A_V,\Weight_V,\G_V)$ was called the asymptotic dimension.}.
\begin{Def}\label{Def:asymp.datum}
A conformal vertex algebra $V$ is said to admit 
{\em \asym}\ if there exist 
{$\A_V\in \mathbb{R}$,
$\Weight_V \in \mathbb{R}$, $\G_V\in \mathbb{R}$} 
such that
$$\chi_V(\tau) \sim \A_V (-{\imag}  \tau)^{{\frac{\Weight_V}{2}}}e^{\frac{\pi {\imag} }{12 \tau}\G_V}\quad
\text{as }\tau\downarrow 0.$$
The numbers
$\A_V$, $\Weight_V$ and $\G_V$ are called the {\em asymptotic dimension} of $V$,
the {\em asymptotic weight},
and the {\em asymptotic growth}, respectively.
Similarly, an ordinary $V$-module $M$ is
said to admit 
\asym\  
if there exist 
$\A_M\in \C$,
$\Weight_M,\G_M\in \mathbb{R}$
such that
$$\chi_M(\tau) \sim \A_M (-{\imag}  \tau)^{\frac{\Weight_M}{2}}e^{\frac{\pi {\imag} }{12 \tau}\G_M}\quad
\text{as }\tau\downarrow 0.$$
\end{Def}

For a conformal vertex algebra $V$ and an ordinary $V$-module $M$,
\begin{align*}
\on{qdim}_V M:=\lim_{\tau\downarrow 0}\frac{\chi_M( \tau)}{\chi_V( \tau)}
\end{align*}
is called the {\em quantum dimension} of $M$ if it exists (\cite{DonJiaXu13}).
If 
both $V$ and $M$ admit asymptotic data,
$\G_V=\G_M$ and $\Weight_V=\Weight_M$,
then the quantum dimension of $M$ exists and is equal to the ratio of the asymptotic dimensions:
\begin{align}
\on{qdim}_V M=\frac{\A_M}{\A_V}.
\end{align}

\smallskip
Given a vertex algebra $V$ one naturally defines a Poisson algebra $R_V$, 
called the {\em Zhu $C_2$-algebra}, as follows (\cite{Zhu96}). 
Let $C_2(V)$ be the subspace of $V$ spanned by the 
elements $a_{(-2)}b$, where $a,b \in V$, and set $R_V =V/C_2(V)$. 
Then $R_V$ acquires a Poisson algebra structure via
$$1= \overline{\vac}, \qquad \bar{a} \cdot \bar{b} = \overline{a_{(-1)}b} \quad 
\text{ and }\quad \{\bar{a},\bar{b}\} =\overline{a_{(0)}b},$$ 
where $\bar{a}$ denotes the image of $a\in V$ in the quotient $R_V$.

The {\em associated variety} \cite{Ara12} $X_V$ of a vertex algebra $V$ is the affine Poisson 
variety defined by 
$$X_V = {\rm Specm}\, R_V.$$ 
A vertex algebra $V$ called 
{\em lisse} if  $\dim X_V=0$.
It is called 
{\em quasi-lisse} \cite{AraKaw18} if $X_V$ has finitely many symplectic leaves.

A vertex algebra $V$ is called {\em rational}
if any finitely generated positively graded $V$-module is
completely reducible. 
For a lisse conformal vertex algebra $V$,
any finitely generated $V$-module is ordinary,
and there exist finitely many simple $V$-modules (\cite{AbeBuhDon04}).

The following fact is well-known.
\begin{Pro}
\label{Pro:asym-datum-rational}
Let $V$ be a  finitely strongly generated, rational, lisse
self-dual
simple
vertex operator algebra of CFT-type.
Then any simple $V$-module $M$
 admits \asym\  
with $\Weight_M=0$.

\end{Pro}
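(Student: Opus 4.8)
The plan is to invoke the modular invariance theorem of Zhu together with standard facts about the asymptotic behaviour of the partition-type functions that appear, and then to extract the leading term as $\tau \downarrow 0$ directly. First I would recall that under the stated hypotheses (finitely strongly generated, rational, lisse, of CFT-type, self-dual), Zhu's theorem applies: the (normalised) characters $\chi_{M}(\tau)$ of the finitely many simple $V$-modules $M_{0}=V, M_{1},\dots,M_{n}$ span a finite-dimensional representation of $SL_{2}(\Z)$, and in particular there is a matrix $S=(S_{ij})$ with
\begin{align*}
\chi_{M_{i}}(-1/\tau) = \sum_{j} S_{ij}\,\chi_{M_{j}}(\tau).
\end{align*}
Applying this with $i$ corresponding to $V$ itself gives $\chi_{V}(-1/\tau)=\sum_{j}S_{0j}\chi_{M_{j}}(\tau)$.

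Next I would analyse the behaviour of the right-hand side as $\tau \downarrow 0$, equivalently of $\chi_{V}(-1/\tau)$ as $-1/\tau \to {\bf i}\infty$. Since $V$ is of CFT-type with $V_{0}=\C$ and all other graded pieces in conformal weights lying in $\tfrac1r\Z_{\ge 0}$ (conical), the $q$-expansion $\chi_{V}(\sigma)=q_{\sigma}^{-c_{V}/24}(1+O(q_{\sigma}^{1/r}))$ with $q_{\sigma}=e^{2\pi{\bf i}\sigma}$ shows that as $\sigma \to {\bf i}\infty$ one has $\chi_{V}(\sigma)\sim e^{-\pi{\bf i}\sigma c_{V}/12}$; and similarly each simple module character satisfies $\chi_{M_{j}}(\sigma)\sim (\dim (M_{j})_{h_{j}})\,e^{2\pi{\bf i}\sigma(h_{j}-c_{V}/24)}$ where $h_{j}$ is the conformal dimension of $M_{j}$. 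Plugging $\sigma=-1/\tau$ and substituting into the $S$-transform, the dominant term of $\sum_{j}S_{0j}\chi_{M_{j}}(-1/\tau)$ as $\tau \downarrow 0$ comes from the module(s) $M_{j}$ minimising $h_{j}$; by CFT-type $V$ itself achieves the minimum $h_{0}=0$, so (assuming, as is standard in this rational self-dual setting, that the vacuum is the unique simple module of minimal conformal dimension, or at worst that the contributing terms do not cancel — here self-duality and positivity of $S_{00}$ for a unitary-type modular datum ensure $S_{00}>0$) we get
\begin{align*}
\chi_{V}(\tau) = \chi_{V}\bigl(-1/(-1/\tau)\bigr) \sim S_{00}\,e^{-\pi{\bf i}(-1/\tau)c_{V}/12} = S_{00}\,e^{\frac{\pi{\bf i}}{12\tau}c_{V}}
\end{align*}
as $\tau \downarrow 0$. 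Comparing with Definition \ref{Def:asymp.datum}, this exhibits an asymptotic datum for $V$ with $\A_{V}=S_{00}$, $\G_{V}=c_{V}$, and crucially $\Weight_{V}=0$, since no power of $(-{\bf i}\tau)$ appears: the $\tau$-dependence of $\chi_{V}(-1/\tau)$ is entirely exponential, the modular weight of these characters being $0$. The same computation applied to an arbitrary simple module $M=M_{i}$ gives $\chi_{M}(\tau)\sim S_{i0}\,e^{\frac{\pi{\bf i}}{12\tau}c_{V}}$, so $M$ admits an asymptotic datum with $\A_{M}=S_{i0}$, $\G_{M}=c_{V}$, and $\Weight_{M}=0$, which is the assertion.

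The main obstacle is the justification that one may pass from the exact $S$-transformation to the \emph{leading-order asymptotics}, i.e. that the $q$-expansion of $\chi_{V}(-1/\tau)$ may be differentiated/estimated term by term and that the subleading terms are genuinely of lower order and do not conspire to cancel the leading one — in other words, that $S_{00}\ne 0$ and that $V$ (together with any other minimal-dimension modules contributing with the same sign) really dominates. In the rational lisse self-dual CFT-type setting this is well known: $h_{j}\ge 0$ with equality iff $M_{j}\cong V$ by CFT-type and simplicity of $V$ (a module of conformal dimension $0$ would force a nonzero degree-$0$ vector, hence by self-duality and the invariant form a nonzero map to $V$), and $S_{00}>0$ follows from the standard positivity properties of the modular $S$-matrix of such a vertex operator algebra (or from the fact that $\A_{V}$ computed this way must be a positive real number, being a limit of ratios of positive character values rescaled appropriately). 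I would cite \cite{Zhu96} for modular invariance and \cite{DonJiaXu13} for the quantum-dimension formalism that packages exactly this leading-term extraction, and remark that the vanishing $\Weight_{M}=0$ is simply the statement that the relevant modular forms have weight $0$.
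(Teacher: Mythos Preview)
Your overall strategy---invoke Zhu's modular invariance, write $\chi_{M_i}(\tau)=\sum_j S_{ij}\chi_{M_j}(-1/\tau)$, and read off the leading term as $\tau\downarrow 0$ from the $q'$-expansions with $q'=e^{-2\pi{\bf i}/\tau}$---is exactly the paper's approach, and it does yield $\Weight_M=0$ because no power of $(-{\bf i}\tau)$ ever appears.

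However, your execution contains a genuine error. You assert that ``$h_j\ge 0$ with equality iff $M_j\cong V$'' in the rational lisse self-dual CFT-type setting, and you use this to conclude $\G_M=c_V$ and $\A_M=S_{i0}$. This is false in general: non-unitary rational VOAs routinely have simple modules of \emph{negative} conformal dimension. For instance the Virasoro minimal model $\on{Vir}_{2,5}$ has a simple module of conformal dimension $-1/5$, and more generally $\on{Vir}_{p,q}$ with $|p-q|>1$ has $h_{\min}<0$. Your argument that a module of conformal dimension $0$ must be $V$ is fine, but it says nothing about modules of conformal dimension $<0$. The CFT-type hypothesis constrains the grading of $V$, not of its modules.

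The paper is aware of this: it does \emph{not} identify $\G_M$ or $\A_M$ in the proof of this proposition, and immediately afterwards formulates as Conjecture~\ref{Conj:uniqueness-of-minimal-conformal-dim} the existence of a unique simple module $L_{i_o}$ of minimal conformal dimension $h_{\min}$ with $S_{i,i_o}\ne 0$ for all $i$. Under that conjecture one gets $\G_{L_i}=c_V-24h_{\min}$ (the \emph{effective} central charge) and $\A_{L_i}=\dim(L_{i_o})_{h_{\min}}\,S_{i,i_o}$, as in Proposition~\ref{Pro:effective.c.c=growth}. For the bare statement $\Weight_M=0$, one simply observes that the combined $q'$-expansion $\sum_j S_{ij}\chi_{M_j}(-1/\tau)$ is a (nonzero) series in fractional powers of $q'$, so its leading term as $q'\to 0$ is a pure exponential in $1/\tau$---no identification of \emph{which} $j$ contributes is needed, and no positivity of $S_{00}$ is required. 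Your ``main obstacle'' paragraph is thus fighting a battle that does not need to be fought for this proposition, and is lost in any case.
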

\begin{proof}
We include a proof for completeness. 
By \cite{AbeBuhDon04},
any simple $V$-module is ordinary,
and there exist finitely many simple $V$-modules,
say, $\{L_i\colon i=0,\ldots,r\}$.
Let $h_i$ be 
the conformal dimension of $L_i$.
Then
$$\chi_{L_i}(\tau)=q^{h_i-c/24}\sum_{d\ge 0}(\dim (L_i)_{h_i+d})q^d.$$
By Zhu's theorem \cite{Zhu96},
the vector space spanned by $\chi_{L_i}(\tau)$,
$i=0,\dots, r$,
is invariant under the natural action of the modular group $\on{SL}_2(\Z)$.
Hence, 
\begin{align*}
\chi_{L_i}(\tau)=\sum_{j=1}^r S_{i,j}\chi_{L_j}(-1/\tau)
\end{align*}
for some $S_{i,j}\in \C$, $j=0,\dots, r$.
The assertion follows.
\end{proof}

The following assertion is widely believed.
\begin{Conj}\label{Conj:uniqueness-of-minimal-conformal-dim}
Let $V$ be
 a rational, lisse, simple, self-dual conformal vertex algebra $V$,
 $\{L_0,\dots, L_r\}$ the complete set of simple $V$-modules.
There exists a unique  simple module $L_{i_{\ooh}}$  with conformal dimension $h_{\text{min}}:=h_{i_{\ooh}}$
such that $h_i> h_{\text{min}}$ for all   $i\ne i_{\ooh}$,
and that 
$S_{i i_{\ooh}}\ne 0$ for all $i$.
\end{Conj}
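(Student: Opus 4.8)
The plan is to combine Huang's modular tensor category theorem with a Perron--Frobenius argument on the fusion ring. The hypotheses on $V$ (rational, lisse, simple, self-dual, of CFT type) are exactly those under which Huang's theorem applies, so the category of $V$-modules is a modular tensor category: the matrix $S=(S_{ij})_{0\le i,j\le r}$ is symmetric and invertible, the normalised characters transform by $\chi_{L_i}(-1/\tau)=\sum_j S_{ij}\chi_{L_j}(\tau)$, one has $S_{0j}\ne 0$ for all $j$, the Verlinde formula holds, the fusion matrices $N_i=(N_{ij}^{\,k})_{j,k}$ commute, and each column $(S_{\cdot l})_{l}$ is a common eigenvector with $N_i(S_{\cdot l})=\tfrac{S_{il}}{S_{0l}}(S_{\cdot l})$. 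Since $L_0=V$ is the tensor unit, $N_{i0}^{\,k}=\delta_{ik}$ and $N_{ik}^{\,0}=\delta_{i,k^{*}}$, so the total fusion matrix $M:=\sum_{i=0}^{r} N_i$ has its $0$-th row and column strictly positive; hence $M^{2}>0$ entrywise, $M$ is primitive, and by Perron--Frobenius it has a simple largest eigenvalue with a strictly positive eigenvector. As the columns of the invertible matrix $S$ are pairwise non-proportional, exactly one of them, say $S_{\cdot i_0}$, lies on this Perron ray. Being a strictly positive eigenvector of every $N_j$, it forces $S_{j i_0}/S_{0 i_0}=\on{FPdim}(L_j)>0$; after normalising the Perron ray to be strictly positive this gives $S_{i i_0}>0$ for all $i$, which is the non-vanishing clause of the statement, and singles out $i_0$ as the Frobenius--Perron label.

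It remains to show that $L_{i_0}$ realises the minimal conformal dimension, uniquely. First, for every $j$ one has $\A_{L_j}>0$: by Proposition \ref{Pro:asym-datum-rational} the simple module $L_j$ admits asymptotic data with $\Weight_{L_j}=0$, so $\chi_{L_j}({\bf i}t)\sim\A_{L_j}\,e^{\frac{\pi}{12t}\G_{L_j}}$ as $t\downarrow 0$; the left-hand side is a sum of nonnegative terms with positive leading term, the exponential is real and positive, and the limit is nonzero by definition, whence $\A_{L_j}>0$. Now apply the $S$-transformation to $\chi_{L_{i_0}}$ and let $\tau={\bf i}N\to{\bf i}\infty$: the left-hand side behaves like $\dim(L_{i_0})_{h_{i_0}}\,e^{2\pi N(c/24-h_{i_0})}$, while the right-hand side equals $\sum_j S_{i_0 j}\chi_{L_j}(-1/\tau)\sim\sum_j S_{i_0 j}\A_{L_j}\,e^{\frac{\pi N}{12}\G_{L_j}}$. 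Because $S_{i_0 j}>0$ and $\A_{L_j}>0$ for every $j$, no cancellation occurs, so the genuine growth rate of the right-hand side is $\tfrac{\pi}{12}\max_j\G_{L_j}$. Expressing any $\chi_{L_j}({\bf i}t)$ through the $S$-transformation shows $\G_{L_j}\le c-24h_{\text{min}}$ in general, with equality at $j=i_0$ (there the leading coefficient $\sum_{l:\,h_l=h_{\text{min}}}S_{i_0 l}\dim(L_l)_{h_l}$ is again a sum of positive numbers); hence $\max_j\G_{L_j}=c-24h_{\text{min}}$, and comparing exponents yields $c/24-h_{i_0}=c/24-h_{\text{min}}$, i.e.\ $h_{i_0}=h_{\text{min}}$.

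The one step I expect to be genuinely hard is ruling out a second simple module of conformal dimension $h_{\text{min}}$. Re-running the previous paragraph for a hypothetical $L_{i'}$ with $h_{i'}=h_{\text{min}}$, $i'\ne i_0$, produces no contradiction: the row $(S_{i'j})_j$ need not be sign-definite, so the positivity that pinned down $i_0$ is simply not available there. To close the gap I would try to feed in the congruence-subgroup property of the modular representation together with the resulting Galois symmetry of $S$ and of the (rational) conformal dimensions, which constrains the Galois orbit of the Frobenius--Perron label and might force it to exhaust the $h_{\text{min}}$-space; alternatively, a comparison of $\on{FPdim}$ with the $T$-matrix data rigid enough to characterise $h_{\text{min}}$. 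Absent such an input the assertion remains, as stated, a conjecture; it is, however, immediate in the pseudo-unitary (in particular, unitary) case, where $h_{\text{min}}=h_0=0$ is attained only by the vacuum $L_0$, the Perron column is the vector of quantum dimensions, and the non-vanishing clause collapses to the classical fact that $S_{0j}\ne 0$.
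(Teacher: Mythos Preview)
This statement is presented in the paper as a \emph{conjecture}; the authors do not prove it in general, but only verify the uniqueness clause for the specific exceptional $W$-algebras treated in Theorem~\ref{Th:min-conf-dim}. So there is no proof in the paper to compare against, and you are right to treat the general assertion as open.

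Your partial argument is correct and is essentially the standard one. Huang's modularity theorem applies under the stated hypotheses; the primitivity of $M=\sum_iN_i$ (via $M_{0k}=M_{k0}=1$, hence $(M^2)_{jk}\ge M_{j0}M_{0k}>0$) singles out a unique Perron column $S_{\cdot i_0}$ of constant sign, which gives $S_{ii_0}\ne0$ for all $i$. Your two-sided asymptotic comparison then correctly forces $h_{i_0}=h_{\text{min}}$: positivity of each $\chi_{L_j}(it)$ gives $\A_{L_j}>0$, so there is no cancellation in $\sum_jS_{i_0j}\chi_{L_j}(i/N)$, and matching growth rates yields $c-24h_{i_0}=\max_j\G_{L_j}=c-24h_{\text{min}}$. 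What remains, as you say, is ruling out a second label $i'$ with $h_{i'}=h_{\text{min}}$; your diagnosis that positivity gives no grip on the row $S_{i'\,\cdot}$ is exactly the obstruction. The directions you suggest (Galois symmetry, congruence subgroup property, comparison with the $T$-matrix) are the natural ones, but to my knowledge no general argument along these lines settles the uniqueness, which is precisely why the paper leaves it as a conjecture. Your remark on the unitary case, where $h_{\text{min}}=0$ is attained only by the vacuum and the non-vanishing clause reduces to $S_{0j}\ne0$, is also correct.
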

In Theorem \ref{Th:min-conf-dim},
we confirm the uniqueness
of the simple module with minimal conformal dimension
for exceptional $W$-algebras (\cite{KacWak03,AraFutRam17}). See also Conjecture \ref{Conj:rationality} and the subsequent remarks.

\begin{Pro}\label{Pro:effective.c.c=growth}
Let $V$ be as in Conjecture \ref{Conj:uniqueness-of-minimal-conformal-dim}, with simple modules $\{L_0,\dots, L_r\}$ ordered so that $L_0 = V$, and assume  
that there exists a unique  simple module $L_{i_{\ooh}}$  
satisfying the assertion of 
  Conjecture \ref{Conj:uniqueness-of-minimal-conformal-dim}.
  Then
\begin{align*}
\G_{L_i}=c_V-24 h_{\text{min}},\quad \A_{L_i}=\dim(L_{i_\ooh})_{h_{\text{min}}} S_{i,i_{\ooh}}
\end{align*}
for all $i$.
Moreover, 
\begin{align*}
\on{qdim}_V(L_i)=\frac{S_{i, i_{\ooh}}}{S_{0, i_{\ooh}}},
\end{align*}
and
\begin{align}
\on{qdim}_V(L_i\boxtimes L_j)=\on{qdim}_V(L_i)\on{qdim}_V(L_j),
\label{eq:tensor-qdim}
\end{align}
where $\boxtimes$ is the fusion product
(\cite{HuaLep92,HuaLep94,HuaLep95,HuaLep95III}).
In particular,
 the quantum dimension is well-defined for all simple $V$-modules.
\end{Pro}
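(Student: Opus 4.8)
The plan is to leverage the asymptotic behaviour established in Proposition~\ref{Pro:asym-datum-rational} together with the modular $S$-matrix interpretation of characters. First I would recall that, under the running hypotheses, every simple $V$-module $L_i$ is ordinary and admits an asymptotic datum with $\Weight_{L_i}=0$, and that the span of the $\chi_{L_i}(\tau)$ is $\on{SL}_2(\Z)$-invariant, so that $\chi_{L_i}(\tau)=\sum_j S_{i,j}\chi_{L_j}(-1/\tau)$. The key point is then to compute the $\tau\downarrow 0$ asymptotics of the right-hand side: as $\tau\downarrow 0$, $-1/\tau$ has large imaginary part, so each $\chi_{L_j}(-1/\tau)\sim q'^{h_j-c_V/24}\dim(L_j)_{h_j}$ with $q'=e^{-2\pi{\bf i}/\tau}$, i.e.\ $\chi_{L_j}(-1/\tau)\sim \dim(L_j)_{h_j}\,e^{\frac{\pi{\bf i}}{12\tau}(c_V-24h_j)}$. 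The exponential growth rate is governed by the smallest conformal dimension $h_j$, and by the assumed uniqueness this minimum is attained only at $j=i_o$; hence the dominant term of $\chi_{L_i}(\tau)$ is $S_{i,i_o}\dim(L_{i_o})_{h_{\text{min}}}\,e^{\frac{\pi{\bf i}}{12\tau}(c_V-24h_{\text{min}})}$. Comparing with Definition~\ref{Def:asymp.datum} and using $S_{i,i_o}\ne 0$ yields $\G_{L_i}=c_V-24h_{\text{min}}$ and $\A_{L_i}=\dim(L_{i_o})_{h_{\text{min}}}S_{i,i_o}$ for all $i$.

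With these two formulas in hand, the quantum dimension computation is immediate: since $\G_{L_i}=\G_{L_0}=\G_V$ and $\Weight_{L_i}=\Weight_V=0$ for all $i$, the remark following Definition~\ref{Def:asymp.datum} gives $\on{qdim}_V(L_i)=\A_{L_i}/\A_{L_0}=S_{i,i_o}/S_{0,i_o}$, where I identify $V$ with $L_0$. In particular the limit defining $\on{qdim}_V(L_i)$ exists for every $i$, establishing the "well-defined" clause; note $S_{0,i_o}\ne 0$ by hypothesis, so the ratio makes sense.

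For the multiplicativity \eqref{eq:tensor-qdim}, the plan is to decompose the fusion product into simples, $L_i\boxtimes L_j\cong\bigoplus_k N_{ij}^k L_k$ with fusion coefficients $N_{ij}^k\in\Z_{\ge 0}$, so that $\on{qdim}_V(L_i\boxtimes L_j)=\sum_k N_{ij}^k\on{qdim}_V(L_k)=\sum_k N_{ij}^k S_{k,i_o}/S_{0,i_o}$. One then invokes the Verlinde formula (valid here since $V$ is rational, lisse, self-dual of CFT-type, so its module category is modular), which expresses $N_{ij}^k=\sum_m S_{im}S_{jm}\overline{S_{km}}/S_{0m}$; substituting and using unitarity of $S$ collapses the sum over $k$ to pick out the column $m=i_o$, giving $\sum_k N_{ij}^k S_{k,i_o}=S_{i,i_o}S_{j,i_o}/S_{0,i_o}$. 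Dividing by $S_{0,i_o}$ produces $\on{qdim}_V(L_i)\on{qdim}_V(L_j)$, as desired. Equivalently, and more conceptually, one may observe that $L_i\mapsto S_{i,i_o}/S_{0,i_o}$ is precisely the one-dimensional character of the fusion ring obtained by evaluating the normalised $S$-matrix at the column $i_o$, hence automatically a ring homomorphism.

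The main obstacle is really a matter of hypotheses rather than technique: the argument rests on the existence of a well-behaved modular tensor (Verlinde) structure and on the uniqueness of the minimal-conformal-dimension module, which is exactly the content assumed from Conjecture~\ref{Conj:uniqueness-of-minimal-conformal-dim}. Granting that, the only genuinely delicate step is justifying that the term-by-term $\tau\downarrow 0$ asymptotic estimate of $\sum_j S_{i,j}\chi_{L_j}(-1/\tau)$ is dominated by the unique leading exponential — i.e.\ that subleading modules and the positive-energy tails $\sum_{d>0}\dim(L_j)_{h_j+d}q'^d$ do not interfere — which follows from the strict gap $h_j>h_{\text{min}}$ for $j\ne i_o$ and the convergence of each character series for $\Im\tau$ large; this is the place to be careful, but it is a routine estimate given finitely many modules.
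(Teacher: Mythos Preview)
Your proposal is correct and follows essentially the same approach as the paper: extract the asymptotic datum by applying the modular $S$-transformation and isolating the dominant exponential contributed by the unique module $L_{i_o}$ of minimal conformal dimension, then deduce the quantum dimension formula as a ratio of $S$-matrix entries and derive multiplicativity from the Verlinde formula. The paper is simply much terser, pointing back to the proof of Proposition~\ref{Pro:asym-datum-rational} for the first assertions and citing \cite[Remark~4.10]{DonJiaXu13} for~\eqref{eq:tensor-qdim}; your write-up spells out the details that the paper leaves implicit.
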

\begin{proof}
The  assertions except for the last follows from the proof of Proposition \ref{Pro:asym-datum-rational}.
For the  assertion \eqref{eq:tensor-qdim}, see \cite[Remark 4.10]{DonJiaXu13}.
\end{proof}
The number $c_V-24 h_{\text{min}}$
 is called the {\em effective central charge} of $V$  in the literature (\cite{DonMas04}).

\begin{Rem}
Let $V$ be as in Proposition \ref{Pro:effective.c.c=growth}.
By a result of Huang \cite{Hua08rigidity},
the category $V\on{-mod}$ of finitely generated $V$-modules forms a modular tensor category.
In this context,  the quantum dimension of a simple $V$-module is the same as the 
 {\em Frobenius-Perron dimension} (\cite{EtiGelNik15}) of $V$ in $V\on{-mod}$.
\end{Rem}

\begin{Pro}
\label{pro:quasi_liss_asymptotic}
Let $V$ be a  finitely strongly generated, quasi-lisse
vertex operator algebra of CFT-type.
Then any simple ordinary $V$-module $L$ admits \asym.
\end{Pro}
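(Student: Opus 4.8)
The plan is to reduce the statement to the theory of modular linear differential equations (MLDEs) for quasi-lisse vertex algebras of Arakawa--Kawasetsu \cite{AraKaw18}, combined with a classical Frobenius analysis at the cusp. First I would invoke \cite{AraKaw18} (whose argument extends Zhu's modular-invariance machinery \cite{Zhu96} from the $C_2$-cofinite to the quasi-lisse setting): since $V$ is finitely strongly generated, of CFT-type and quasi-lisse, for every ordinary $V$-module $L$ the series $\chi_L(\tau)=q^{-c_V/24}\sum_d(\dim L_d)q^d$ converges locally uniformly on the upper half-plane $\mathbb{H}$ to a holomorphic function, and there is a single monic MLDE of some order $n$ --- independent of $L$ --- whose coefficients are holomorphic modular forms on $\on{SL}_2(\Z)$ and which annihilates every $\chi_L$. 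This is the only point where quasi-lisseness enters, being exactly what makes the relevant recursion, hence the differential equation, of finite order. Write $\mc{S}$ for the $n$-dimensional solution space.

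Next I would transport the $\tau\downarrow 0$ asymptotics to the cusp $\tau\to\imag\infty$ by the modular symmetry. As the coefficients of the MLDE are modular forms, $\mc{S}$ is stable under $\on{SL}_2(\Z)$ up to a fixed automorphy factor, so $\chi_L(-1/\tau)=\tau^{\kappa}\psi_L(\tau)$ for a fixed $\kappa$ and some $\psi_L\in\mc{S}$. Viewed on the modular curve, the equation has a regular singular point at the cusp, so by the Frobenius method $\psi_L$ has a convergent expansion near $q=0$ of the shape $\sum_j\sum_{0\le m\le m_j}q^{\lambda_j}(\log q)^m\varphi_{j,m}(q)$ with $\varphi_{j,m}$ holomorphic at $0$ and the $\lambda_j$ the finitely many indicial roots. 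Letting $\lambda$ be the indicial root of least real part that occurs with nonzero coefficient and $m^{*}$ the largest power of $\log q$ accompanying it, one gets $\psi_L(\tau)\sim a\,(2\pi\imag\tau)^{m^{*}}e^{2\pi\imag\lambda\tau}$ as $\tau\to\imag\infty$ with $a\ne 0$. Substituting $\tau\mapsto-1/\tau$, writing $e^{-2\pi\imag\lambda/\tau}=e^{\frac{\pi\imag}{12\tau}(-24\lambda)}$ and absorbing the powers of $\tau$ into factors of $-\imag\tau$, one obtains
$$\chi_L(\tau)\sim\A_L\,(-\imag\tau)^{\Weight_L/2}\,e^{\frac{\pi\imag}{12\tau}\G_L}\qquad(\tau\downarrow 0),$$
with $\G_L=-24\lambda$, with $\Weight_L$ an explicit integer determined by $n$, $m^{*}$ and $\kappa$, and with $\A_L$ a nonzero multiple of $a$ --- precisely an asymptotic datum for $L$ (Definition~\ref{Def:asymp.datum}).

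The hard part will be to be sure the leading term genuinely has this rigid shape: one needs the dominant indicial root $\lambda$ to be real, so that $\G_L\in\mathbb{R}$, and one needs no term of the same order to oscillate, so that $\chi_L$ is asymptotically equivalent to a single power-times-exponential rather than to an unbounded oscillation. A priori the minimal-real-part indicial roots could form a complex-conjugate pair, and then $\chi_L$ would admit no asymptotic datum at all. To exclude this I would use the positivity intrinsic to a vertex-algebra character: for real $t>0$, $\chi_L(\imag t)=\sum_d(\dim L_d)\,e^{-2\pi t(d-c_V/24)}$ is, by the convergence statement above, a convergent sum of non-negative terms, hence a positive real-valued function, and a Tauberian argument --- available precisely because the $\dim L_d$ are non-negative --- identifies its $t\downarrow 0$ behaviour with the one dictated by the smallest real indicial root. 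One may also normalise the MLDE so that its coefficients are modular forms with rational Fourier coefficients (they are pinned down by linear algebra from the rational $q$-expansions of characters), whence the indicial polynomial has real coefficients and its roots are real or come in conjugate pairs; the positivity then selects the least real one. Everything else --- the Frobenius expansion, the modular transformation, and the bookkeeping of $\A_L$, $\Weight_L$, $\G_L$ --- is classical ODE theory together with routine computation. Applied to $L=V$ the same argument yields in addition $\A_V\in\mathbb{R}$, since $\chi_V(\imag t)>0$.
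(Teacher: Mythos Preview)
Your approach coincides with the paper's: invoke the MLDE of \cite{AraKaw18}, use $\on{SL}_2(\Z)$-invariance of its solution space to pass from $\tau\downarrow 0$ to the cusp $q\to 0$, and extract the leading term of the Frobenius expansion (now with possible logarithms). The paper's proof is a two-sentence sketch along exactly these lines.

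Where you go further is in worrying about the constraint $\G_L\in\mathbb{R}$ from Definition~\ref{Def:asymp.datum}, which the paper does not address. Your positivity observation correctly rules out the case where the minimal-real-part indicial exponents form a single non-real conjugate pair: then $\chi_L(\imag t)$ would change sign along a sequence $t\to 0^+$. But it does not dispose of the mixed case in which a real exponent $a$ and a conjugate pair $a\pm\imag b$ share that minimal real part: one can then have
\[
\chi_L(\imag t)=e^{\pi a/(12t)}\bigl(c_0+2\operatorname{Re}(c_1e^{\pi\imag b/(12t)})\bigr)(1+o(1))
\]
with $c_0>2|c_1|>0$, which remains positive yet has no asymptotic datum, since the ratio to any candidate $\A\,t^{w/2}e^{\pi G/(12t)}$ oscillates rather than converging. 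The Tauberian theorems you allude to run from an already-established generating-function asymptotic to coefficient asymptotics, not the reverse, so they do not settle this point. Your argument is therefore at least as complete as the paper's, and more candid about where the difficulty lies, but positivity by itself does not secure $\G_L\in\mathbb{R}$; closing that gap would require additional structural input on the indicial roots of the MLDE coming from a quasi-lisse vertex algebra.
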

\begin{proof}
By \cite{AraKaw18},
the set $\{L_i\}$ of simple ordinary $V$-modules is finite, and the characters
$\chi_{L_i}(\tau)$ are solutions of a modular linear differential equation.
Since the space spanned by the solutions of a modular linear differential equation
is invariant under the natural action of $\on{SL}_2(\Z)$,
the assertion follows in a similar manner as Proposition \ref{Pro:asym-datum-rational},
except that a solution of a modular linear differential equation
may have  logarithmic terms, 
that is, it has the form
\begin{align*}
q^{\beta_i}\sum_{i=1}^e f_i(q)(\log q)^{e-i},\quad f_i(q)\in \C[[q]].
\end{align*}
\end{proof}

Let $\on{Vir}^c$ be the 
universal Virasoro vertex algebra at central charge $c\in \C$,
$\on{Vir}_c$  the unique simple quotient of $\on{Vir}^c$.
\begin{Lem}[{\cite{FeuFuc84, KacWak88}}]\label{Lem:assymptotics-of-Viraosro}
A quotient of a universal Virasoro vertex algebra
admits  an asymptotic datum.
\end{Lem}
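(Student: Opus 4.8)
The plan is to write the normalised character of an arbitrary quotient of $\on{Vir}^c$ as an $\eta$-quotient of a finite or theta-type series, and then extract the asymptotic datum from the modular behaviour of $\eta$ (and of theta functions); this is in essence the argument of \cite{KacWak88} built on the structure theory of \cite{FeuFuc84}. Any vertex-algebra ideal of $\on{Vir}^c$ is the same thing as a Virasoro submodule, so a nonzero quotient has the form $V=\on{Vir}^c/N$ for a submodule $N$. Write $M(c,h)$ for the Virasoro Verma module of highest weight $h$; then $\on{Vir}^c=M(c,0)/M(c,1)$ (the submodule of $M(c,0)$ generated by $L_{-1}\vac$ being a copy of $M(c,1)$), and $\chi_{M(c,h)}(\tau)=q^{h-c/24}\prod_{n\ge1}(1-q^n)^{-1}=q^{h+(1-c)/24}/\eta(\tau)$. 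By the Feigin--Fuchs classification of the submodule structure of Virasoro Verma modules, $\on{Vir}^c$ and each of its submodules admit resolutions by direct sums of Verma modules; taking the alternating sum of characters gives
\begin{align*}
\chi_V(\tau)=\frac{q^{(1-c)/24}}{\eta(\tau)}\,\Theta_V(q),\qquad \Theta_V(q)=\sum_j\varepsilon_j\,q^{h_j}\quad(\varepsilon_j\in\Z,\ h_j\in\Z_{\ge0}),
\end{align*}
where $\Theta_V\ne0$ and is either a polynomial in $q$ (for $V=\on{Vir}^c$ one has $\Theta_V=1-q$) or, when $c$ is one of the degenerate values $1-6(p-q)^2/(pq)$, a finite $\Z$-linear combination of monomials and of theta-type series $\sum_{k\in\Z}\pm q^{a_k}$ with $a_k$ a quadratic polynomial in $k$ of positive leading coefficient (the BGG-type character formulas à la Rocha--Caridi and their truncations).

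From $\eta(-1/\tau)=\sqrt{-\mathbf{i}\tau}\,\eta(\tau)$ we get $1/\eta(\tau)\sim(-\mathbf{i}\tau)^{1/2}e^{\pi\mathbf{i}/(12\tau)}$ as $\tau\downarrow0$; in particular $\chi_{M(c,h)}(\tau)\sim(-\mathbf{i}\tau)^{1/2}e^{\pi\mathbf{i}/(12\tau)}$ for every $h$, independently of $h$. Expanding the finitely many monomials $q^{h_j+(1-c)/24}$ in powers of $\tau$, and applying Poisson summation (the Jacobi theta transformation) to each theta constituent of $\Theta_V$, one sees that $\chi_V(\tau)$ is a superposition of modes $(-\mathbf{i}\tau)^{w/2}e^{\frac{\pi\mathbf{i}}{12\tau}g}$ with $w\in\Z$, $g\in\mathbb{R}$, and pairwise distinct growth rates as $\tau\downarrow0$; the leading mode with nonzero coefficient governs the asymptotics, so $\chi_V(\tau)\sim\mathbf{A}_V(-\mathbf{i}\tau)^{\mathbf{w}_V/2}e^{\frac{\pi\mathbf{i}}{12\tau}\mathbf{g}_V}$ for some $\mathbf{w}_V\in\Z$, $\mathbf{g}_V\in\mathbb{R}$, $\mathbf{A}_V\in\C\setminus\{0\}$. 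Finally, for $\tau=\mathbf{i}t$ with $t>0$ we have $\chi_V(\mathbf{i}t)=e^{\pi c t/12}\sum_d(\dim V_d)e^{-2\pi dt}>0$, which forces $\mathbf{A}_V$ to be a positive real number (and is consistent only with $\mathbf{w}_V,\mathbf{g}_V$ real). Hence $V$ admits an asymptotic datum. When $c=1-6(p-q)^2/(pq)$ with $p,q\ge2$, so that $V=\on{Vir}_c$ is a rational, lisse, self-dual minimal model, the conclusion for $V=\on{Vir}_c$ is also immediate from Proposition \ref{Pro:asym-datum-rational}; the argument above is uniform and handles all quotients and all $c$.

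The substantive point — and the expected main obstacle — is the reduction in the first paragraph: one must invoke Feigin--Fuchs structure theory to know that the character of \emph{every} quotient of $\on{Vir}^c$, not merely of $\on{Vir}^c$ or of its simple quotient, and for every value of $c$ (including $c=1$ and the exceptional values $c\ge25$, and including the quotients strictly between $\on{Vir}^c$ and $\on{Vir}_c$ at degenerate $c$), is an $\eta$-quotient of a series of the two types described. Granting this normal form, the remaining steps are the standard modular asymptotics of $\eta$ and of Jacobi theta functions, together with the elementary positivity observation $\chi_V(\mathbf{i}t)>0$.
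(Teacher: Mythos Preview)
Your argument is correct but works harder than the paper's. The key structural fact you seem to have overlooked is that $\on{Vir}^c$ has length at most two: for generic $c$ it is already simple, and for the degenerate values $c=1-6(p-q)^2/pq$ with $p,q\ge2$, $(p,q)=1$, the maximal proper ideal is generated by a single singular vector and is itself irreducible. Hence the only nonzero quotients of $\on{Vir}^c$ are $\on{Vir}^c$ and $\on{Vir}_c$; the ``quotients strictly between'' that you flag as the main obstacle simply do not exist. The paper therefore dispatches the lemma in two lines: for $V=\on{Vir}^c$ the character is $(1-q)q^{(1-c)/24}/\eta(\tau)$ and a single l'H\^opital step yields $(\A_V,\Weight_V,\G_V)=(2\pi,3,1)$; for $V=\on{Vir}_c$ at degenerate $c$ one records the classical minimal-model values.

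Your route via Feigin--Fuchs resolutions, the $\eta$-transformation, and Poisson summation on the theta constituents is sound and would indeed treat arbitrary quotients uniformly; the positivity observation $\chi_V(\mathbf{i}t)>0$ is a clean way to guarantee that the leading asymptotic coefficient does not vanish. What this buys is a framework that would generalise to other vacuum-like modules whose submodule lattice is more complicated. What the paper's approach buys is brevity and explicit values of the asymptotic data, which are used immediately afterwards (e.g.\ the inequality $\G_{\on{Vir}_c}<1=\G_{\on{Vir}^c}$ in the next lemma and in Proposition~\ref{Pro:conformal}).
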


\begin{proof}
It is well-known that $\on{Vir}^c$ has length two if $c=1-6(p-q)^2/pq$ for some $p,q\in \Z_{\geq 2}$, $(p,q)=1$, and otherwise $\on{Vir}^c=\on{Vir}_c$. Hence,
a quotient $V$ of $\on{Vir}^c$ is either  $\on{Vir}^c$ or $\on{Vir}_c$.
If $V=\on{Vir}^c$,
then 
\begin{align*}
\chi_V(\tau)=\frac{(1-q)q^{(1-c_V)/24}}{\eta(q)},
\end{align*}
where $\eta(q)=q^{1/24}\prod_{j\geq 1}(1-q^j)$.
Hence (indicating by $+ \cdots$ terms of lower growth)
\begin{align*}
\chi_V(e^{2\pi \imag \tau})
&= (1 - e^{2\pi \imag \tau}) e^{2\pi \imag \tau (1-c_V)/24} (-\imag\tau)^{\frac{1}{2}} \left( e^{2\pi \imag \left(-\frac{1}{\tau}\right)\left(-\frac{1}{24}\right)} + \cdots \right), \\
&\sim (-2\pi \imag \tau) (-\imag\tau)^{\frac{1}{2}} e^{\frac{\pi \imag}{12\tau}},
\end{align*}
where we have used l'Hopital's rule. So $V$ admits an asymptotic datum with $\A_V=2\pi$, $\Weight_V=3$, $\G_V=1$.

If $V=\on{Vir}_c$ with  $c=1-6(p-q)^2/pq$, $p,q\in \Z_{\geq 2}$, $(p,q)=1$,
then as it is well-known \cite{FeuFuc84, KacWak88},  
$V$ admits \asym\ with $\Weight_V=0$,
\begin{align}
\A_V= \left( \frac{8}{pq} \right)^{1/2} \sin\left(\frac{\pi a(p-q)}{q}\right) \sin\left(\frac{\pi b (p-q)}{p} \right), 
\label{eq:minmod.as.dim}
\end{align}
where $(a, b)$ is the unique solution of $pa-qb=1$ in integers $1 \leq a \leq q$ and $1 \leq b \leq p$, 
and
\begin{align}
\G_V=1-\frac{6}{pq}.
\label{eq:growth-of-Virasoro-minimal}
\end{align}
\end{proof}

The simple Virasoro vertex algebra 
$\on{Vir}_c$ with  $c=1-6(p-q)^2/pq$, $p,q\in \Z_{\geq 2}$, $(p,q)=1$,
is known to be rational and lisse (\cite{Wan93}).
The simple $\on{Vir}_c$-modules are
the $(p,q)$-minimal series representations of the Virasoro algebra,
and for each simple $\on{Vir}_c$-module $L$ we have
$\Weight_L=0$,
$\G_L=\G_{\on{Vir}_c}$ and $\A_L>0$ (\cite{FeuFuc84, KacWak88}).

\begin{Lem}
Let $V$ be a conformal vertex algebra
with central charge $c=1-6(p-q)^2/pq$, $p,q\in \Z_{\geq 2}$, $(p,q)=1$,
and suppose that $V$ admits an asymptotic datum with $\G_V<1$.
Then $V$ is a direct sum of simple $(p,q)$-minimal series representations of the Virasoro algebra.
If further $\A_V=\A_{\on{Vir}_c}$, 
then
$V\cong \on{Vir}_c$.
\end{Lem}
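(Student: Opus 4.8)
The plan is as follows. Since $\omega$ is the conformal vector of $V$, the vertex subalgebra $\langle\omega\rangle\subseteq V$ it generates is a conformal sub-vertex algebra with the same central charge $c$, hence a quotient of the universal Virasoro vertex algebra $\on{Vir}^c$. Because $c=1-6(p-q)^2/pq$, the vertex algebra $\on{Vir}^c$ has length two (recalled in the proof of Lemma \ref{Lem:assymptotics-of-Viraosro}), so $\langle\omega\rangle$ is isomorphic to either $\on{Vir}^c$ or $\on{Vir}_c$. I would first rule out the former. If $\langle\omega\rangle\cong\on{Vir}^c$, then the embedding $\on{Vir}^c\hookrightarrow V$ preserves the conformal grading, so $\dim V_d\ge\dim(\on{Vir}^c)_d$ for every $d$, and therefore $\chi_V(e^{-2\pi t})\ge\chi_{\on{Vir}^c}(e^{-2\pi t})>0$ for all $t>0$. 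Feeding in the asymptotic data along $\tau=\imag t$ with $t\downarrow0^+$, taking logarithms and multiplying through by $t$ kills the power-of-$t$ and $\log t$ contributions and leaves $\G_V\ge\G_{\on{Vir}^c}$; since $\G_{\on{Vir}^c}=1$ (also computed in the proof of Lemma \ref{Lem:assymptotics-of-Viraosro}), this contradicts the hypothesis $\G_V<1$. Hence $\langle\omega\rangle\cong\on{Vir}_c$, which means precisely that $V$ is a module over the simple Virasoro vertex algebra $\on{Vir}_c$, with the grading induced from $V$.

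Next I would invoke that $\on{Vir}_c$ with $c=1-6(p-q)^2/pq$ is rational and lisse \cite{Wan93}, so that $V$ --- which is an ordinary $\on{Vir}_c$-module, its conformal weights being bounded below with finite-dimensional graded pieces, as they must be for $\chi_V$ to be defined --- is completely reducible, i.e.\ a direct sum of simple $\on{Vir}_c$-modules, that is, of $(p,q)$-minimal series representations of the Virasoro algebra. Moreover, since $\on{Vir}_c\hookrightarrow V$ is a conformal embedding, each simple summand isomorphic to the module of conformal dimension $h_{r,s}$ contributes a lowest-weight vector lying in $V_{h_{r,s}}$; finite-dimensionality of $V_{h_{r,s}}$ bounds the corresponding multiplicity $m_{r,s}$, and as there are only finitely many minimal series modules this gives $V\cong\bigoplus_{r,s}L(c,h_{r,s})^{\oplus m_{r,s}}$ with all $m_{r,s}$ finite. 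This establishes the first assertion.

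For the second, I would read off $\A_V$ from this finite decomposition. Each simple $(p,q)$-minimal series module $L$ satisfies $\Weight_L=0$, $\G_L=\G_{\on{Vir}_c}=1-6/pq$ and $\A_L>0$ (recorded just after Lemma \ref{Lem:assymptotics-of-Viraosro}, following \cite{FeuFuc84,KacWak88}); since all summands share the same asymptotic weight and growth, their asymptotics add, giving $\A_V=\sum_{r,s}m_{r,s}\A_{L(c,h_{r,s})}$. The vacuum vector $\vac$ generates a copy of the vacuum module $L(c,0)=\on{Vir}_c$, so its multiplicity is at least $1$ and hence $\A_V\ge\A_{\on{Vir}_c}$; by positivity of all the $\A_{L(c,h_{r,s})}$, equality $\A_V=\A_{\on{Vir}_c}$ forces the multiplicity of $L(c,0)$ to equal $1$ and all other multiplicities to vanish, i.e.\ $V\cong\on{Vir}_c$.

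I expect the dichotomy step to be the crux: ruling out $\langle\omega\rangle\cong\on{Vir}^c$. The coefficientwise character comparison together with the known growth $\G_{\on{Vir}^c}=1$ handles it cleanly, and everything afterwards is a formal consequence of rationality and lisseness of $\on{Vir}_c$ together with positivity of the asymptotic dimensions of its simple modules.
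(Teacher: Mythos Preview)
Your proof is correct and follows essentially the same approach as the paper: rule out $\langle\omega\rangle\cong\on{Vir}^c$ by comparing asymptotic growths (the paper states this in one line as ``otherwise $\G_V\ge\G_{\on{Vir}^c}=1$''), then invoke rationality of $\on{Vir}_c$ for complete reducibility, and finally use positivity of the $\A_L$ to conclude. You supply more detail than the paper on the coefficientwise character inequality and on finiteness of the multiplicities (the paper simply allows $m_i\in\Z_{\ge 0}\cup\{\infty\}$ and lets the finiteness of $\A_V$ do the work), but the argument is the same.
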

\begin{proof}
The vertex algebra homomorphism
$\on{Vir}^c\ra V$, $\omega_{\on{Vir}^c}\ra \omega_V$,
factors through the embedding $\on{Vir}_c\hookrightarrow V$
because otherwise 
$\G_V\geq \G_{\on{Vir}^c}=1$.
Thus the rationality of $\on{Vir}_c$ proves the  first statement,
and so we have $V=\bigoplus_{i}L_i^{\oplus_{m_i}}$,
where $\{L_i\}$ is the set of  simple $(p,q)$-minimal series representations of the Virasoro algebra
and $m_i\in \Z_{\geq 0}\cup \{\infty\}$.
It follows that $\A_V=\sum_i m_i \A_{L_i}$,
and we get the second assertion.
\end{proof}
Recall that a homomorphism
 $f\colon V\ra W$ 
of conformal  vertex algebras is called {\em conformal}
if $\omega_W=f(\omega_V)$.
\begin{Pro}\label{Pro:conformal}
Let $f:V\ra W$ be a homomorphism of 
 conformal vertex algebras.
Suppose that 
\begin{itemize}
\item $f(\omega_V)\in W_2$ and $(\omega_W)_{(2)}f(\omega_V)=0$,
\item the simple quotient $L$ of $V$ admits an asymptotic datum,
\item $W$ is a quotient of a conformal vertex algebra $\tilde{W}$ that admits an asymptotic datum.
\end{itemize}
 If $\G_L=\G_{\tilde{W}}$, then $f$  is conformal.
\end{Pro}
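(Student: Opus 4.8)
The plan is to show that the image $f(\omega_V)$, which a priori is just a Virasoro-type element of conformal weight $2$ generating \emph{some} quotient of the universal Virasoro vertex algebra inside $W$, must in fact coincide with the conformal vector $\omega_W$. The first two hypotheses are exactly the conditions that $f(\omega_V)$ generates a conformal vertex subalgebra $U$ of $W$ (the condition $(\omega_W)_{(2)}f(\omega_V)=0$ together with $f(\omega_V)\in W_2$ forces $f(\omega_V)$ to be a conformal vector for $U$ with $[L_0^U, L_0^W]=0$, and the two Virasoro actions are compatible so that $U$ is a graded vertex subalgebra whose character is bounded below), so that $U$ is a quotient of $\mathrm{Vir}^{c_V}$. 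Since $L$ is the simple quotient of $V$ and $f$ factors a homomorphism onto a subalgebra containing $U$, comparing growths will be the tool: by Lemma \ref{Lem:assymptotics-of-Viraosro}, $U$ is either $\mathrm{Vir}^{c_V}$, with $\G_U=1$, or a simple minimal-model $\mathrm{Vir}_{c_V}$ with $\G_U=1-6/pq<1$.

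First I would pass to characters. Because $W$ is a quotient of $\tilde W$, which admits an asymptotic datum, and $f(\omega_V)\in W_2$ with $L_0^{\tilde W}$ acting compatibly, the subalgebra $U\subseteq W$ has a character dominated (coefficientwise, after the appropriate grading shift) by $\chi_{\tilde W}$; hence $\G_U\le \G_{\tilde W}$. Symmetrically, $L$ is a quotient of $V$, and $f$ induces a surjection from $V$ — hence from $L$ after passing to simple quotients — onto the subalgebra generated by $f(\omega_V)$; this gives $\G_U\le \G_L$ as well, but more importantly the hypothesis $\G_L=\G_{\tilde W}$ pins both sides to a common value $\gamma$. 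The key step is then: if $f$ were \emph{not} conformal, i.e. $f(\omega_V)\ne \omega_W$, I claim one is forced into $U=\mathrm{Vir}^{c_V}$ and hence $\gamma=\G_U=1$, contradicting $\gamma=\G_{\tilde W}$ being the growth of a vertex algebra whose simple quotient $L$ of $V$ shares it — and in the situations of interest (and as the preceding Lemma's proof shows) a proper Virasoro quotient forces $\G_L\ge 1$ only when $U$ is universal, whereas if $U$ is a minimal model then $\G_U<1$ and one argues $f$ lands conformally.

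More precisely, the mechanism I would use mirrors the proof of the Lemma just above the Proposition: the composite $\mathrm{Vir}^{c_V}\to V\to W$ sending $\omega_{\mathrm{Vir}^{c_V}}\mapsto f(\omega_V)$ either factors through $\mathrm{Vir}_{c_V}\hookrightarrow W$ or does not; if it does not, then $U\cong\mathrm{Vir}^{c_V}$ and $\G_U=1$, so the dominance $\G_U\le\G_{\tilde W}=\G_L$ would give $\G_L\ge 1$, while conversely the surjection from $L$ onto a subalgebra containing $U$ forces $\G_L\le \G_U=1$ — so $\G_L=1=\G_{\tilde W}$; but then equality of growths between $L$ and the sub-Virasoro means the entire ``extra'' growth of $W$ beyond $U$ is trivial, i.e. $f(\omega_V)$ already accounts for the full leading asymptotics, and a refinement of the character estimate (l'Hôpital as in the Lemma, tracking the weight $\Weight$) shows $f(\omega_V)$ must be $\omega_W$ up to the usual rigidity; in the minimal-model case $\G_U<1$ cannot equal $\G_{\tilde W}$ unless again $U$ exhausts the asymptotics, forcing conformality. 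Either way $f(\omega_V)=\omega_W$, i.e. $f$ is conformal.

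The main obstacle I anticipate is making the ``dominance of characters'' step rigorous: one must check that $f(\omega_V)$ generates an honest graded vertex subalgebra $U$ of $W$ with conformal grading compatible with that of $\tilde W$ (so that $\chi_U$ really is asymptotically bounded by $\chi_{\tilde W}$ and the growths compare in the stated direction), and that the growth $\G$ is monotone under both quotients and inclusions of conformal vertex subalgebras in the relevant sense. Once that monotonicity is in place, the argument is essentially the Virasoro trichotomy of Lemma \ref{Lem:assymptotics-of-Viraosro} applied twice, squeezed by the hypothesis $\G_L=\G_{\tilde W}$.
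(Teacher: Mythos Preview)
Your argument has a genuine gap. You analyse the Virasoro subalgebra $U$ generated by $f(\omega_V)$ inside $W$, and try to deduce from the dichotomy ``$U$ universal vs.\ $U$ minimal model'' that $f(\omega_V)=\omega_W$. But the key implication you need, namely ``$f(\omega_V)\ne\omega_W \Rightarrow U\cong\on{Vir}^{c_V}$'', is neither stated in the paper nor true in general: nothing prevents $f(\omega_V)$ from generating a simple Virasoro while still differing from $\omega_W$. Likewise, the line ``the surjection from $L$ onto a subalgebra containing $U$ forces $\G_L\le\G_U$'' is in the wrong direction (surjections give $\ge$, not $\le$) and in any case $f$ need not factor through $L$. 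The vague appeal to ``the entire extra growth of $W$ beyond $U$ is trivial'' is exactly where a real mechanism is missing.

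The paper's proof supplies that mechanism by looking at the \emph{difference} $\omega:=\omega_W-f(\omega_V)$ rather than at $f(\omega_V)$. The hypotheses $f(\omega_V)\in W_2$ and $(\omega_W)_{(2)}f(\omega_V)=0$ are used, via \cite[Theorem 3.11.12]{LepLi04}, to show that if $\omega\ne 0$ then $\omega$ generates a Virasoro vertex algebra of some central charge $c$ which \emph{commutes} with the image of $V$, yielding a conformal homomorphism $\tilde f\colon \on{Vir}^c\otimes V\to W$. This tensor product structure is what gives the additive bound $\G_{\tilde W}\ge \G_L+\G_{\on{Vir}_c}$, and combined with $\G_L=\G_{\tilde W}$ forces $\G_{\on{Vir}_c}=0$, hence $c=0$ and $\on{Vir}_c=\C$. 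A further argument (passing to $U_1=U/\tilde f(\on{Vir}^0\otimes N)$ and using that the maximal ideal of $\on{Vir}^0$ is simple) then rules out $\on{Vir}^0$ surviving nontrivially, since that would contribute $\G_{\on{Vir}^0}>0$ and again violate $\G_{\tilde W}=\G_L$. Your outline never isolates the commuting ``excess'' Virasoro, so it cannot access this additive growth comparison, which is the heart of the proof.
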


\begin{proof}
 Our aim is to show that $\omega=\omega_W-f(\omega_V)$ vanishes. Now, if $\omega\ne 0$ then \cite[Theorem 3.11.12]{LepLi04} asserts that the commutant subalgebra
\[
\on{Com}(W, f(V)) = \{w \in W \colon \text{$f(v)_{(j)}w$ for all $v \in V$ and $j \in \Z_+$} \}
\]
is conformal with Virasoro vector $\omega$ of some central charge $c$. We note that $\omega \notin f(V)$, for otherwise we would have $\omega_{(0)}\omega = 0$, which contradicts that $\omega$ be a conformal vector. We thus proceed to show that $\omega \in f(V)$ and deduce that $\omega = 0$ by contradiction.

Consider the homomorphism $\tilde{f} :\on{Vir}^c \otimes V \ra W$ of conformal vertex algebras 
that sends the conformal vector $\omega^{(0)}$ of $\on{Vir}^c $ to $\omega$, and such that $\tilde{f}(\vac \otimes v) = f(v)$ for all $v \in V$. We now have the vertex subalgebra $U=\tilde{f}(\on{Vir}^c  \otimes  V)\subset W$ whose simple quotient is isomorphic to
$\on{Vir}_c\* L$. We thus have
$\G_{\tilde{W}}\ge \G_L+\G_{\on{Vir}_c}$ and hence $\G_{\on{Vir}_c}=0$ by our hypotheses. From Lemma \ref{Lem:assymptotics-of-Viraosro} it follows that $c=0$.

We recall that $\on{Vir}_0 = \C$, that the maximal proper ideal of $\on{Vir}^{0}$ is generated by $\omega^{(0)}$ and that this ideal is simple as a $\on{Vir}^0$-module.

Let $N$ be the maximal ideal of $V$, so that $L = V / N$. We now consider the surjection
\[
\tilde{f}_1 : \on{Vir}^0 \otimes (V / N) \rightarrow U / \tilde{f}(\on{Vir}^c \otimes N)
\]
and we define
\[
K=\{v\in L \colon \tilde{f}_1( \omega^{(0)} \* v) \in \tilde{f}(\on{Vir}^c \otimes N)\}.
\]
Since $K \subset L$ is a $V$-submodule, we have either $K=L$ or $K=0$.

If $K = 0$ then $\tilde{f}_1$ is an isomorphism, because of simplicity of the maximal proper ideal in $\on{Vir}^0$. But then we would have $\G_{\tilde{W}} \ge \G_L+\G_{\on{Vir}^0} > \G_L$ since $\G_{\on{Vir}^0}>0$, and this contradicts our hypotheses. 

Therefore $K = L$. Since $(\on{Vir}^0 \otimes V)_2 = \C \omega^{(0)} \otimes \vac \oplus \vac \otimes V_2$, it follows that
\[
\tilde{f}(\omega^{(0)} \otimes \vac) \subset \tilde{f}(\vac \otimes N) = f(N) \subset f(V)
\]
and in particular that $\omega \in f(V)$. Thus $\omega = 0$ and we are done.
\end{proof}

\section{Admissible affine vertex algebras} 
\label{sec:admissible}
Let $\g$ be a complex simple Lie algebra of rank $\ell$.
Let $\g=\mf{n}_- \+ \h\+ \mf{n}_+$ be a triangular decomposition with 
 a Cartan subalgebra $\mf{h}$, 
$\Delta$ the root system of $(\g,\mf{h})$ 
and $\Delta_{+}$ a set of positive roots for $\Delta$,
$\Pi=\{\alpha_1,\dots,\alpha_{\ell}\}$ the set 
of simple roots.
Let $\theta$ be the highest root,
 $\theta_s$ the highest short root.
We also have $\Delta^\vee$ the set of coroots. Let $P$ be the weight lattice, $Q$ the root lattice 
and $Q^\vee$ the coroot lattice. 
The lattice $P$ is dual to $Q^\vee$ and we write $P^\vee$ for the dual of $Q$. 
Recall that the Coxeter number and the dual Coxeter number of $\g$ are 
denoted by $h_\g$ and $h_\g^\vee$, respectively.  
Identifying $\h$ with $\h^*$ using the inner product  
$$(~|~)_\g=\displaystyle{\frac{1}{2 h_\g^\vee}}\times\text{Killing form of }\g,$$ 
we view $Q^\vee$ as a sub-lattice 
of both $P$ and $Q$. 
We denote by $\rho$ the Weyl vector, i.e., the half-sum of positive roots.

For $\lam\in \h^{*}$,
let $L_{\g}(\lam)$
be the irreducible highest weight representation
of $\g$ with highest weight $\lam$,
and
let 
\begin{align}
J_{\lam}=\on{Ann}_{U(\g)}L_{\g}(\lam).
\end{align}

Let $\widetilde{\g} = \g[t,t^{-1}] \oplus \C K \+ \C D$  be the affine Kac-Moody algebra, 
with the commutation relations:  
$$[x t^m,y t^n] = [x,y] t^{m+n} + m \delta_{m+n,0} (x|y)_\g K, \quad [D,xt^n]=nxt^n,\quad
[K,\affg]=0,$$
for all $x,y \in\g$ and all $m,n\in\Z$
Let $\widetilde{\g}=\widehat{\mf{n}}_-\+ \widetilde{\h}\+\widehat{\mf{n}}_+$
be the 
standard triangular decomposition,
that is, 
$\widetilde{\h} = \h \oplus \C K\+ \C D $ is  the  Cartan subalgebra of $\widetilde{\g}$,
$\widehat{\mf{n}}_+=\mf{n}_+ +t\g[t]$,
$\widehat{\mf{n}}_-=\mf{n}_-+t^{-1}\g[t^{-1}]$.

Let $\affg =[\widetilde{\g},\widetilde{\g}]= \g[t,t^{-1}] \oplus \C K$,
and let $\affh=\h\+ \C K\subset \affg$,
so that $\affg=\widehat{\mf{n}}_-\+ \affh\+ \widehat{\mf{n}}_+$. 
The Cartan subalgebra
 $\widetilde{\h}$ is equipped  
with a bilinear form extending that on $\h$ by
\begin{align*}
(K | D) = 1, \quad (\h | \C K \oplus \C D) = (K | K) = (D | D) = 0.
\end{align*}
We write $\delta$ and $\Lambda_0$ for the elements of $\widetilde{\h}^*$ orthogonal to $\h^*$ and dual to $K$ and $D$, respectively. We have the (real) root system
\begin{align*}
 \widehat{\Delta}^{\text{re}} &= \{\alpha+n\delta  \colon n \in \Z, \alpha \in \Delta\}= \widehat{\Delta}^{\text{re}}_+\sqcup  (-\widehat{\Delta}^{\text{re}}_+),\\
 &\widehat{\Delta}^{\text{re}}_+=\{\alpha+n\delta\colon \alpha\in \Delta_+, \ n\ge 0\}\sqcup \{\alpha+n\delta\colon \alpha\in \Delta, \ n> 0\},
\end{align*}
and the affine Weyl group $\widehat{W}$, generated by reflections
$r_\alpha$ for $\alpha\in  \widehat{\Delta}^{\text{re}}$. 
For $\alpha \in \h^*$ the translation $t_\alpha : \widetilde{\h}^* \rightarrow \widetilde{\h}^*$ is defined by
\begin{align*}
t_\alpha(\lam) = \lam + \lam(K)\alpha - \left[(\alpha | \lam) + \frac{|\alpha|^2}{2} \lam(K) \right] \delta.
\end{align*}
For $\alpha \in Q^\vee$ we have $t_\alpha \in \widehat{W}$ and  in fact 
$\widehat{W} \cong W \ltimes t_{Q^\vee}$. 
The extended affine Weyl group, which is the group of isometries of $\widehat{\Delta}$, is $\widetilde{W} = W \ltimes t_{P}$. 
Here, for $R$ a subset of $P$, $t_R$ stands 
for the set $\{t_\alpha \colon \alpha \in R\}$. 

Let $\tilde{\mc{O}}_k$ be the category $\mc{O}$ of $\widetilde{\g}$ at level $k$ (\cite{Kac90}),
and let $\tilde{\KL}_k$ be the full subcategory of $\tilde{\mc{O}}_k$ consisting of objects on which the action of $\g$ is integrable.
The simple objects of $\tilde{\mc{O}}_k$ are the irreducible highest weight representations $L(\lam)$
with $\lam\in \widetilde{\h}^*$ such that $\lam(K)=k$,
while the  simple objects of $\tilde{\on{KL}}_k$ are those $L(\lam)$ with $\lam\in P_+ + k\Lam_0+\C\delta$,
where $P_+$ is the set of dominant integral weights of $\g$.

For a weight $\lam \in \widehat\h^*$ the corresponding {\em integral root system} is
\begin{align*}
\widehat{\Delta}(\lam) = \{\alpha\in \widehat{\Delta}^{\text{re}} \colon \left<\lam, \alpha^\vee\right> \in \Z\},
\end{align*}
where $\alpha^{\vee}=2\alpha/(\alpha|\alpha)$ as usual,
and the subgroup
$\widehat{W}(\lam)$ of $\widehat{W}$ generated by $r_\alpha$ with $\alpha\in \widehat{\Delta}(\lam)$
is called the
{\em integral Weyl group} of $\lam$.

\begin{Def}
\label{Def:admissible}
 A weight $\lam \in \widetilde\h^*$ 
is said to be \emph{admissible} if 
\begin{enumerate}
\item $\lam$ is regular dominant, that is,
$\bra \lam+\hat{\rho},\alpha^{\vee}\ket >0$ for all $\alpha\in \widehat{\Delta}_+(\lam):=\widehat{\Delta}(\lam)\cap 
\widehat{\Delta}^{\text{re}}_+$,
\item $\mathbb{Q}\widehat{\Delta}^{\text{re}} =\mathbb{Q}\widehat{\Delta}(\lam)$.
\end{enumerate}
Here $\hat{\rho}=\rho+h_\g^{\vee}\Lam_0$ is the affine Weyl vector.
An {\em admissible} $\widetilde{\g}$-module is one of the form $L(\lam)$ for $\lam$ 
admissible. 
\end{Def}

Given any $k\in\C$, 
let
\begin{align*} 
V^k(\fing) =
 U(\affg)\otimes_{U(\mf{g}[t]\oplus \C K)}\C_k, 
\end{align*}
where $\C_k$ is  the one-dimensional representation 
 of 
$\g[t] \oplus \C K$ on which $\g[t]$ 
acts by 0 and $K$ acts as a multiplication by the scalar $k$.
There is a unique vertex algebra structure 
on $V^k(\g)$ such that $\vac$ is the image of $1\otimes 1$ 
in $V^k(\g)$ and 
$$x(z) :=(x_{(-1)}\vac)(z)= \sum_{n\in\Z} (xt^n) z^{-n-1}$$
for all $x\in \g$, where we regard $\g$ as a subspace of $V$ 
through the embedding $x \in \g \hookrightarrow x_{(-1)}\vac \in V^k(\g)$. 
The vertex algebra $V^k(\g)$ is called the {\em universal affine vertex algebra} associated with $\g$ at level $k$.

The vertex algebra $V^k(\g)$ has a conformal structure given by the Sugawara 
construction provided that $k$ is non-critical,
that is, $k\ne -h^{\vee}_\g$,
with central charge
$$c_{V^{k}(\g)}=  \dfrac{k  \dim \g }{k + h_\g^\vee}.$$  

A $V^k(\g)$-module is the same as a smooth $\affg$-module of level $k$. Recall that a $\affg$-module $M$ is \emph{smooth} if, for all $a \in \g$ and $m \in M$, one has $at^n m = 0$ for all sufficiently large positive $n \in \Z$. The highest weight $\widetilde{\g}$-module $L_k(\lam+k\Lam_0)$ (regarded as a $\affg$-module and thus $V^k(\g)$-module) acquires conformal dimension
\[
h_{L(\lam)} = \frac{(\lam|\lam+2\rho)}{2(k+h_\g^{\vee})}.
\]
For a non-critical level $k$, we consider a $V^k(\g)$-module $M$ as a $\widetilde{\g}$-module by letting $D$ act as the semisimplification of $-L_0$. 
In particular, we identify the categories $\mc{O}_k$ and $\KL_k$ of $\affg$-modules of level $k$  with the respective full subcategories of $\tilde{\mc{O}}_k$ and $\tilde\KL_k$ consisting of modules on which the universal Casimir element $C$ \cite[Section 2.5]{Kac90}
 of $\widetilde{\g}$ acts  nilpotently. 
Accordingly, the affine space $\affh^*_k:=\h^* + k \Lam_0$ is identified with an affine subvariety of $\widetilde{\h}^*$ by the correspondence $$\lam+k\Lam_0\mapsto \lam+k\Lam_0- h_{L(\lam)} \delta$$
preserving the corresponding linkage relations in $\mc{O}_k$.

 Let $L_k(\g)$ be the unique simple graded quotient of $V^k(\g)$.
 For any graded quotient $V$ of 
$V^k(\g)$, we have $R_V=V/t^{-2}\g[t^{-1}] V$. 
In particular, $R_{V^k(\g)} \cong \C[\g^*]$ and, hence, $X_{V^k(\g)}=\g^*$. 
Furthermore, 
$X_{L_k(\g)}$ is a subvariety of $\g^* \cong \g$, which is $G$-invariant and 
conic. 

More generally, let $\mathfrak{a}$ be 
a Lie algebra endowed with a symmetric invariant bilinear form $\kappa$, 
and 
$$\widehat{\mathfrak{a}}_\kappa= \mathfrak{a}[t,t^{-1}]\oplus \C \mathbf{1}$$ 
be the {\em Kac-Moody affinisation} of $\mathfrak{a}$. 
It is a Lie algebra with commutation relations
$$[x t^m, y t^n] = [x,y] t^{m+n} + m \delta_{m+n,0} \kappa(x,y) \mathbf{1}, 
\quad [\mathbf{1},\widehat{\mathfrak{a}}_\kappa]=0,$$ 
for all $x,y \in \mathfrak{a}$ and all $m,n \in \Z$. 
Then the $\widehat{\mathfrak{a}}_\kappa$-module 
\begin{align*} 
V^\kappa(\mathfrak{a}) =
 U(\mathfrak{a})\otimes_{U(\mathfrak{a}[t]\oplus \C {\mathbf 1})}\C, 
\end{align*}
where $\C$ is  the one-dimensional representation 
 of 
$\mathfrak{a}[t] \oplus \C {\mathbf 1}$ on which $\mathfrak{a}[t]$ 
acts by 0 and ${\mathbf 1}$ acts as the identity, 
has a unique vertex algebra structure 
such that $\vac$ is the image of $1\otimes 1$ 
in $V^\kappa(\mathfrak{a})$ and 
$$x(z) :=(x_{(-1)}\vac)(z)= \sum_{n\in\Z} (xt^n) z^{-n-1}$$
for all $x\in \mathfrak{a}$.  
We have $X_{V^\kappa(\mathfrak{a})} \cong \mathfrak{a}^*$ 
and, letting $L_\kappa(\mathfrak{a})$ be the 
unique simple graded quotient of $V^\kappa(\mathfrak{a})$, 
$X_{L_\kappa(\mathfrak{a})}$ is a subvariety of $\mathfrak{a}^*$, 
which is Poisson and conic.

For $M\in \mc{O}_k$ on which $L_0$ acts semisimply,
we consider the multivariable character $\chi_M$ of $M$, defined by
\begin{align*}
\chi_{M}(\tau, z, t) = e^{2\pi \imag k t} {\rm tr}_{M} (e^{2\pi \imag z} e^{2\pi \imag \tau (L_0 - c/24)}), \qquad (\tau, z, t) \in \mathbb{H} \times \h \times \C.
\end{align*}
{We also write, in particular, $\chi_{L(\lam)}(\tau) = \chi_{L(\lam)}(\tau, 0, 0)$}.

For admissible weight $\lam$ a closed form for $\chi_{L(\lam)}(\tau, z, t)$ was given in \cite{KacWak89}. It is convenient to write $v = 2 \pi \imag (-\tau D+z+t K) \in \widetilde{\h}^*$ as in \cite{Kac90}, then the character formula is
\[
\chi_{L(\lam)}(v) = \frac{A_{\lam+\widehat\rho}(v)}{A_{\widehat\rho}(v)},
\]
where
\[
A_{\lam}(v) = e^{-\frac{|\lam|^2}{2\lam(K)} (\delta, v)} \sum_{w \in \widehat W(\lam)} \varepsilon(w) e^{\left< w(\lam), v \right>}.
\]

The complex number $k$ is said to be {\em admissible}
for $\affg$ if 
$k \Lambda_0$ is  admissible.
If this is the case, $L_k(\g)$ is called a simple {\em admissible affine vertex algebra}.
By \cite[Proposition 1.2]{KacWak08},
$k$ is admissible if and only if 
 \begin{align}
k +h_\g^\vee = \dfrac{p}{q} \text{ with } p,q \in \Z_{\ge 1}, \; 
(p,q)=1, \; p \ge \begin{cases} 
h_\g^\vee & \text{if } (r^\vee,q)=1 \\
h_\g & \text{if } (r^\vee,q)=r^\vee,
\end{cases}
\label{eq:admissible-n}
\end{align}
where $r^\vee$ is the lacing 
number\footnote{i.e., $r^\vee=1$ for the types $A,D,E$, $r^\vee=2$ for the types $B,C,F_4$, 
and $r^\vee=3$ for the type $G_2$.} of $\g$.

If $k$ is admissible with $(r^\vee,q)=1$, we say that $k$ is {\em principal}. 
If $k$ is admissible with $(r^\vee,q)=r^\vee$, we say that $k$ is {\em coprincipal}.

\begin{Th}[{\cite{Arakawa15a}}] 
\label{Th:admissible-orbits}
Assume that the level $k= -h_\g^\vee +p/q$ is admissible. 
Then 
$X_{L_k(\fing)} =\overline {\O}_k$, 
where $\O_k$ is a certain nilpotent orbit of $\g$ 
which only depends on the denominator $q$. 
\end{Th}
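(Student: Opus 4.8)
The plan is to compute $X_{L_k(\g)}$ directly from the maximal submodule of $V^k(\g)$. Write $N_k\subset V^k(\g)$ for the maximal graded submodule, so that $L_k(\g)=V^k(\g)/N_k$. Since $R_V=V/t^{-2}\g[t^{-1}]V$ for every graded quotient $V$ of $V^k(\g)$ and $R_{V^k(\g)}\cong\C[\g^*]$, one has $R_{L_k(\g)}\cong\C[\g^*]/\ov{N_k}$, where $\ov{N_k}\subset\C[\g^*]$ is the image of $N_k$; hence $X_{L_k(\g)}=\mathbb V(\ov{N_k})$. As $N_k$ is an $\affg$-submodule, $\ov{N_k}$ is stable under the adjoint $\g$-action, and it is homogeneous, so $X_{L_k(\g)}$ is automatically a $G$-invariant conic closed subvariety of $\g^*\cong\g$; the whole task is to identify it, or at least its radical.

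The first step is to show $X_{L_k(\g)}\subseteq\mc N_\g$. For $k$ admissible the principal Drinfeld--Sokolov reduction $H_{DS,f_{\text{prin}}}^{0}(L_k(\g))$ is nonzero, and its simple quotient $\W_k(\g,f_{\text{prin}})$ is lisse \cite{Arakawa2016}; on the other hand $X_{H_{DS,f_{\text{prin}}}^{0}(L_k(\g))}\cong\Slo_{f_{\text{prin}}}\cap X_{L_k(\g)}$ \cite{Arakawa15a}, and by Kostant's theorem $\Slo_{f_{\text{prin}}}$ is transverse to every $G$-orbit and maps isomorphically onto $\g/\!/G$. Were $X_{L_k(\g)}$ to meet the non-nilpotent locus, then, being a $G$-stable cone, it would contain a positive-dimensional family of semisimple conjugacy classes, which would force $\Slo_{f_{\text{prin}}}\cap X_{L_k(\g)}$ to be positive-dimensional and contradict lisseness. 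Thus $X_{L_k(\g)}$ is a finite union of nilpotent orbit closures.

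The second step is to pin down which orbit. For $\lam=k\Lambda_0$ admissible, $N_k$ is generated by its singular vectors --- in fact by a single one, $v_q$ --- whose weights (in the linkage class of $k\Lambda_0$) depend only on $q$, and hence so does the $G$-orbit in $\C[\g^*]$ of the symbol of $v_q$; the numerator $p$ enters $v_q$ only through an overall multiplicity, which does not affect the zero locus. The plan is then: (i) compute the symbol of $v_q$ and show the ideal it generates cuts out exactly $\ov{\O_k}$, for $\O_k$ the nilpotent orbit attached to $q$. In type $A$ this symbol is (essentially) the tuple of entries of the matrix $x^{q}$, whose zero locus is the irreducible variety $\{x\in\sl_n:x^{q}=0\}=\ov{\O_{(q^{n_0},r)}}$ with $n=qn_0+r$, $0\le r<q$; in the other classical and the exceptional types one proceeds case by case, matching $v_q$'s symbol with Kraft--Procesi--type defining equations for $\ov{\O_k}$. (ii) Cross-check the reverse inclusion $\ov{\O_k}\subseteq X_{L_k(\g)}$: this follows from the existence of an admissible $L_k(\g)$-module $L(\lam)$ with $\mathbb V(\gr\on{Ann}_{U(\g)}L_{\g}(\ov\lam))=\ov{\O_k}$ (from the Kac--Wakimoto classification of admissible weights \cite{KacWak89}), together with a dimension count read off from the Kac--Wakimoto character formula.

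I expect step (i) --- identifying $\sqrt{\ov{N_k}}$ with the ideal of $\ov{\O_k}$, equivalently proving that the zero scheme of the singular-vector symbol is irreducible and not merely of the right dimension --- to be the main obstacle. It cannot be settled by dimension counts alone: it requires the explicit form of the Malikov--Feigin--Fuks singular vectors, and it is genuinely type-dependent, amounting in the exceptional types to an orbit-by-orbit analysis in the spirit of Fu--Juteau--Levy--Sommers.
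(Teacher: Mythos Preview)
This theorem is not proved in the present paper: it is quoted from \cite{Arakawa15a} (see the attribution in the theorem header), and the paper uses it as a black box. So there is no ``paper's own proof'' here to compare against; any assessment must be of your argument on its own terms and against what is actually done in \cite{Arakawa15a}.

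Your step~1 is circular as written. You invoke the lisseness of $\W_k(\g,f_{\text{prin}})$ at admissible $k$ to force $X_{L_k(\g)}\subset\mc N_\g$, but that lisseness is established in \cite{Arakawa15a} precisely \emph{as a corollary} of the identification $X_{L_k(\g)}=\overline{\O}_k$ (cf.\ Theorem~\ref{Th:Associated_variety-DS}(3)(c) in this paper, also attributed to \cite{Arakawa15a}). The reference you give, \cite{Arakawa2016}, concerns rationality of $L_k(\g)$ in category $\mc O$, not lisseness of the principal $W$-algebra; and the rationality paper \cite{A2012Dec} for principal $W$-algebras relies on the $C_2$-cofiniteness input from \cite{Arakawa15a}. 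So you cannot use either as an independent source for lisseness. The actual route to $X_{L_k(\g)}\subset\mc N_\g$ in \cite{Arakawa15a} passes instead through the Zhu algebra: one shows that $\on{Zhu}(L_k(\g))\cong U(\g)/I_k$ has $\on{Var}(I_k)\subset\mc N_\g$ using Duflo's theorem and Joseph's results on associated varieties of primitive ideals, combined with the Kac--Wakimoto classification of admissible highest weights, and then lifts this to the associated variety of $L_k(\g)$ itself.

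Your step~2 is closer in spirit to what happens in \cite{Arakawa15a}, but it is a plan rather than a proof, and you correctly flag the hard part: controlling the radical of $\overline{N_k}$, not just its zero locus up to dimension. The paper \cite{Arakawa15a} does not proceed by writing down Malikov--Feigin--Fuchs singular vectors and computing their symbols directly; rather, it combines the Zhu-algebra/primitive-ideal analysis above with a case-by-case identification of $\O_k$ (the tables you see cited as \cite[Tables 2--10]{Arakawa15a}), using the structure of $\widehat\Delta(k\Lambda_0)$ for admissible $k$. Your type-$A$ heuristic ($x^q=0$ cutting out $\overline{\O_{(q^{n_0},r)}}$) is a correct after-the-fact description of $\O_k$, but turning it into a proof along your lines would require an independent identification of the symbol of the singular vector with the entries of $x^q$, which is itself nontrivial and not how the argument in \cite{Arakawa15a} is organised.
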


In particular,
the associated variety of an admissible affine vertex algebra
is contained in the nilpotent cone $\mc{N}_\g$ of $\g$.
We note that the converse is not true and
 there are
affine vertex algebras at  non-admissible levels  
whose associated variety is contained in $\mc{N}_\g$ \cite{AM15}.

\begin{Th}[\cite{Arakawa2016}]
\label{Th:rationality-in-category-O}
Let $k$ be admissible,
$\lam\in \affh^*_k$.
Then
$L(\lam)$ is an $L_k(\g)$-module 
if and only if 
$\lam$ is  an admissible weight
such that $\widehat{\Delta}(\lam)=y( \widehat{\Delta}(k\Lam_0))$
for some $y\in \widetilde{W}$.
Moreover, any 
 $L_k(\g)$-module 
that lies in $\mc{O}_k$ 
is a direct sum of admissible representations $L(\lam)$
of $\affg$ of level $k$
for $\lam$ satisfying 
$\widehat{\Delta}(\lam)=y(\widehat{\Delta}(k\Lam_0))$
for some $y\in \widetilde{W}$.
\end{Th}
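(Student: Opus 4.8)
The plan is to analyse membership in the category $\mc{O}_k^{L_k(\g)}$ of $L_k(\g)$-modules lying in $\mc{O}_k$ through the maximal submodule, and then to upgrade the resulting classification to complete reducibility using linkage. Write $N_k$ for the maximal proper $\affg$-submodule of $V^k(\g)$, so that $L_k(\g)=V^k(\g)/N_k$. Since $N_k$ acts by zero on a module $M\in\mc{O}_k$ exactly when $M$ is an $L_k(\g)$-module, and since submodules, quotients and hence composition factors of $L_k(\g)$-modules are again $L_k(\g)$-modules, the first task is to identify $N_k$. At admissible level the Kac--Kazhdan determinant together with the Malikov--Feigin--Fuks construction produces a singular vector $v_{\mathrm{sing}}$ generating $N_k$, of affine weight $\sigma\circ k\Lam_0$ for a distinguished element $\sigma\in\widehat{W}(k\Lam_0)$ attached to a real root $\beta\in\widehat{\Delta}(k\Lam_0)$ realising the first reducibility of $V^k(\g)$. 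Membership of a simple $L(\lam)$ in $\mc{O}_k^{L_k(\g)}$ is thereby reduced to the single condition that $v_{\mathrm{sing}}$ act by zero on $L(\lam)$, equivalently that the field $Y(v_{\mathrm{sing}},z)$ vanish on $L(\lam)$.

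For the ``if'' direction, suppose $\lam$ is admissible with $\widehat{\Delta}(\lam)=y(\widehat{\Delta}(k\Lam_0))$ for some $y\in\widetilde{W}$, so that $\widehat{W}(\lam)=y\,\widehat{W}(k\Lam_0)\,y\inv$. The Kac--Wakimoto character formula $\chi_{L(\lam)}(v)=A_{\lam+\widehat\rho}(v)/A_{\widehat\rho}(v)$ then exhibits $L(\lam)$ with precisely the combinatorial shape of the vacuum module, governed by the conjugated reflection group. I would build the Kac--Wakimoto resolution of $L(\lam)$ by generalised Verma modules indexed by $\widehat{W}(\lam)$, mirroring term by term, under conjugation by $y$, the resolution of $L_k(\g)=L(k\Lam_0)$. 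The positivity $\bra\lam+\widehat\rho,(y\beta)^\vee\ket\in\Z_{>0}$ holds by admissibility, and analysing the action of $v_{\mathrm{sing}}$ through the rank-one affine subalgebra attached to $y\beta$ shows that it annihilates $L(\lam)$; hence $L(\lam)\in\mc{O}_k^{L_k(\g)}$.

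For the ``only if'' direction, let $L(\mu)\in\mc{O}_k$ be an $L_k(\g)$-module. Here Theorem~\ref{Th:admissible-orbits} enters: the associated variety $X_{L(\mu)}$ is contained in $X_{L_k(\g)}=\overline{\O}_k\subset\mc{N}_\g$, which forces the annihilator variety of the top $\g$-type of $L(\mu)$ into $\overline{\O}_k$ and in particular pins the central character of $\mu$ to a linkage class meeting an $L_k(\g)$-weight. The singular-vector criterion of the first paragraph then refines this: vanishing of $v_{\mathrm{sing}}$ on $L(\mu)$, read through the attached rank-one subalgebra, forces the integrality $\bra\mu+\widehat\rho,\alpha^\vee\ket\in\Z_{>0}$ for the roots $\alpha$ cutting out a copy of $\widehat{\Delta}(k\Lam_0)$, whence $\mu$ is admissible with $\widehat{\Delta}(\mu)=y(\widehat{\Delta}(k\Lam_0))$. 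This settles the ``if and only if.''

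Finally, for the direct-sum statement I would argue block by block. Any $L_k(\g)$-module $M\in\mc{O}_k$ decomposes along the central-character/linkage blocks of $\mc{O}_k$; since every composition factor of $M$ is again an $L_k(\g)$-module, the ``only if'' direction forces each factor to be an admissible $L(\lam)$ with $\widehat{\Delta}(\lam)=y(\widehat{\Delta}(k\Lam_0))$. Within a single block such a weight, being regular dominant, is the unique dominant element of its linkage class, so a block supports at most one simple $L_k(\g)$-module. Complete reducibility then reduces to $\on{Ext}^1_{\mc{O}_k}(L(\lam),L(\lam))=0$, which holds in category $\mc{O}$ by the standard highest-weight-vector splitting argument. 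I expect the genuine obstacle to lie not here but in the ``if and only if'' above, namely in controlling $v_{\mathrm{sing}}$ at a general admissible, non-integral level and proving its precise annihilation criterion: the Malikov--Feigin--Fuks singular vectors involve formally non-integral powers, and transporting the vacuum-module resolution along $y\in\widetilde{W}$ while keeping track of exactly which weights survive in the simple quotient is the delicate technical heart of the argument.
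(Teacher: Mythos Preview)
The paper does not contain a proof of this theorem: it is stated with a citation to \cite{Arakawa2016} and no argument is given. There is therefore nothing in the present paper to compare your proposal against.

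That said, a few remarks on your sketch relative to the actual proof in \cite{Arakawa2016}. Your overall architecture --- identify the singular vector generating the maximal ideal $N_k\subset V^k(\g)$, translate the $L_k(\g)$-module condition into an annihilation condition, and then argue block by block for semisimplicity --- is the right shape. However, two points deserve caution. First, invoking Theorem~\ref{Th:admissible-orbits} (the associated variety $X_{L_k(\g)}=\overline{\O}_k$) in the ``only if'' direction is not how the argument in \cite{Arakawa2016} proceeds, and risks circularity: that associated-variety computation is logically downstream of, or at least independent from, the classification you are trying to prove. The genuine ``only if'' argument works directly with Zhu's algebra and the explicit form of the image of the singular vector there, together with the combinatorics of admissible weights.

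Second, your semisimplicity step is too quick. The assertion that ``a block supports at most one simple $L_k(\g)$-module'' is false as stated: several admissible weights with the same integral root system can lie in a single linkage class (they differ by elements of $\widehat W(\lam)$). What makes the argument work in \cite{Arakawa2016} is rather that an admissible weight $\lam$, being \emph{regular dominant} for $\widehat\Delta(\lam)$, has $L(\lam)$ projective in its block of $\mc{O}_k$ (restricted to the relevant subcategory), so that $\on{Ext}^1$ between any two admissible simples vanishes. Your final self-$\on{Ext}$ computation is correct but does not by itself suffice; the cross-$\on{Ext}$ vanishing between distinct admissible simples in the same block is where the regular-dominance hypothesis is actually used.
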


For a principal admissible number,
let
$\on{Pr}^k$ be the set of admissible weights $\lam\in \affh^*_k$ 
such that 
$\widehat{\Delta}(\lam)=y( \widehat{\Delta}(k\Lam_0))$
for some $y\in \widetilde{W}$.
Similarly,
for a coprincipal admissible number,
let
$\on{CoPr}^k$ be the set of admissible weights $\lam\in \affh^*_k$ such that 
such that 
$\widehat{\Delta}(\lam)=y( \widehat{\Delta}(k\Lam_0))$
for some $y\in \widetilde{W}$.
An element
of $\on{Pr}^k$ (resp.\ $\on{CoPr}^k$) is called a {\em principal admissible weight}
(resp.\ {\em coprincipal admissible weight}) of level $k$. Occasionally we shall use $\on{Adm}^k$ to refer to the set $\on{Pr}^k$ or $\on{CoPr}^k$, according as $k$ is a principal or coprincipal admissible number.

For $\lam \in \widehat{\h}^*$ let us denote by $\bar \lam \in \h^{*}$ the restriction of $\lam$ to $\h$. For $\lam\in \on{Pr}^{k}$ (resp.\ $\lam\in \on{CoPr}^{k}$),
the primitive ideal $J_{\bar \lam}$ is an maximal ideal of $U(\g)$,
and $J_{\bar \lam}=J_{\bar \mu}$ if and only if $\bar\mu\in W\circ \bar\lam$
for $\lam,\mu\in \on{Pr}^{k}$
(resp.\ for $\lam,\mu\in \on{CoPr}^{k}$) (\cite[Proposition 2.4]{AraFutRam17}). Here and throughout $\circ$ denotes the `dot' action $w \circ \lam = w (\lam+\rho)-\rho$. Set
\begin{align}
[\on{Pr}^{k}]=\on{Pr}^{k}/\sim,\quad
[\on{CoPr}^{k}]=\on{CoPr}^{k}/\sim,
\end{align}
where $\lam\sim \mu \iff \bar\mu\in W\circ \bar\lam$.

If $V$ is a $\frac{1}{2}\Z$-graded vertex algebra, we denote by $\on{Zhu}(V)$ the \emph{Ramond twisted} Zhu algebra of $V$, briefly recalling its construction from (\cite{Zhu96,De-Kac06}). Bilinear products $*_n : V \otimes V \rightarrow V$ are defined by
\[
a *_n b = \sum_{j \in \Z_{\ge 0}} \binom{\Delta(a)}{j} a_{(n+j)}b,
\]
for $a$ of conformal weight $\Delta(a)$, $b \in V$ arbitrary. We write in particular $a * b = a *_{-1} b$ and $a \circ b = a *_{-2} b$. Then the Zhu algebra is a quotient of $V$ by the subspace $V \circ V$ spanned by all elements of the form $a \circ b$ for $a, b \in V$. The operation $a \otimes b \mapsto a * b$ is well-defined in the quotient and turns it into an associative unital algebra with unit $[\vac]$.

The fundamental property satisfied by $\on{Zhu}(V)$ is the existence of a bijection between the set of irreducible $\on{Zhu}(V)$-modules and the set of irreducible positive energy Ramond twisted $V$-modules, sending a $V$-module $M$ to its lowest graded piece $M_{\text{low}}$ equipped with the $\on{Zhu}(V)$-action 
$[a] \cdot m = a_0 m$ for $a \in V$ and $m \in M_{\text{low}}$.

For the simple affine vertex algebra $L_k(\g)$
we have 
\begin{align}
\on{Zhu}(L_{k}(\g))\cong U(\g)/I_k
\label{eq:Zhu-of-affine}
\end{align}
for some two-sided ideal $I_k$ of $U(\g)$.

The statement of Theorem \ref{Th:rationality-in-category-O}
is strengthened by the following assertion.

\begin{Th}[\cite{AEkeren19}]\label{The:Zhu-of-admissible}
Let $k$ be admissible.
We have
\begin{align*}
\on{Zhu}(L_{k}(\g))\cong \prod U(\g)/J_{\bar \lam},
\end{align*}
where the product is taken over 
$[\lam]\in [\on{Adm}^k]$.
\end{Th}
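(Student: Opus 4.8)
The plan is to identify the two-sided ideal $I_k$ determined by \eqref{eq:Zhu-of-affine} with $\bigcap_{[\lambda]\in[\on{Adm}^k]}J_{\bar\lambda}$. Since there are finitely many admissible weights of a given level, $[\on{Adm}^k]$ is finite, and by the result of \cite{AraFutRam17} quoted above the ideals $J_{\bar\lambda}$, $[\lambda]\in[\on{Adm}^k]$, are pairwise distinct maximal two-sided ideals of $U(\g)$, hence pairwise comaximal. The Chinese Remainder Theorem then yields $U(\g)/\bigcap_{[\lambda]}J_{\bar\lambda}\cong\prod_{[\lambda]}U(\g)/J_{\bar\lambda}$, so it suffices to show $I_k=\bigcap_{[\lambda]}J_{\bar\lambda}$.

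One inclusion is immediate. By Theorem \ref{Th:rationality-in-category-O}, for each $[\lambda]\in[\on{Adm}^k]$ the simple highest weight module $L(\lambda)$ is an $L_k(\g)$-module; it is a positive-energy module whose lowest graded piece is $L_\g(\bar\lambda)$, on which $\on{Zhu}(L_k(\g))=U(\g)/I_k$ acts through the $\on{Zhu}(V^k(\g))=U(\g)$-action, i.e.\ through the $\g$-module structure. Hence $I_k$ annihilates $L_\g(\bar\lambda)$, so $I_k\subseteq\on{Ann}_{U(\g)}L_\g(\bar\lambda)=J_{\bar\lambda}$, and therefore $I_k\subseteq\bigcap_{[\lambda]}J_{\bar\lambda}$.

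The reverse inclusion is the crux, and amounts to showing the resulting surjection $U(\g)/I_k\twoheadrightarrow\prod_{[\lambda]}U(\g)/J_{\bar\lambda}$ is an isomorphism. I would argue through the filtration: on $\on{Zhu}(L_k(\g))=U(\g)/I_k$ the associated graded $\gr\on{Zhu}(L_k(\g))$ is a Poisson quotient of the Zhu $C_2$-algebra $R_{L_k(\g)}$, whose spectrum is $X_{L_k(\g)}=\overline{\mathbb{O}_k}$ by Theorem \ref{Th:admissible-orbits}; in particular $U(\g)/I_k$ has Gelfand--Kirillov dimension $\dim\overline{\mathbb{O}_k}$, and each $U(\g)/J_{\bar\lambda}$ has associated variety contained in $\overline{\mathbb{O}_k}$. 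To upgrade this to an isomorphism one must control multiplicities, and here I see two routes. First, one can try to prove that $\on{Zhu}(L_k(\g))$ is semiprimitive --- equivalently that $I_k$ is semiprime --- and that the primitive ideals of $U(\g)$ containing $I_k$ are exactly the $J_{\bar\lambda}$, $[\lambda]\in[\on{Adm}^k]$; since $U(\g)$ is a Jacobson ring this forces $I_k=\bigcap\{P:P\text{ primitive},\ P\supseteq I_k\}=\bigcap_{[\lambda]}J_{\bar\lambda}$. The input needed is that $I_k$ is the two-sided ideal of $\on{Zhu}(V^k(\g))=U(\g)$ generated by the images of the maximal submodule $N\subset V^k(\g)$, so that it can be analysed through the Malikov--Feigin--Fuchs singular vectors generating $N$, whose structure for admissible $k$ is available from \cite{Arakawa2016}. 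Alternatively one matches Hilbert series directly: compute the graded dimension of $\on{Zhu}(L_k(\g))$ from $R_{L_k(\g)}$ (or from the BGG-type resolution of $L_k(\g)$ in $\mathcal{O}_k$ of \cite{Arakawa2016}), compute that of $\prod_{[\lambda]}U(\g)/J_{\bar\lambda}$ from the known graded characters and Goldie ranks of the primitive quotients, and observe they agree; a graded surjection between graded vector spaces of equal finite-type Hilbert series is an isomorphism.

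The hard part is precisely this last step: passing from the set-theoretic coincidence of associated varieties to an equality of ideals, i.e.\ ruling out that $\bigl(\bigcap_{[\lambda]}J_{\bar\lambda}\bigr)/I_k$ is a nonzero $U(\g)$-bimodule supported on a proper subvariety of $\overline{\mathbb{O}_k}$. This is where the arithmetic of admissibility --- the denominator $q$, the identity of the orbit $\mathbb{O}_k$, and the combinatorics of $[\on{Adm}^k]$ --- has to enter, either through a sufficiently explicit description of the Zhu images of the singular vectors generating $I_k$, or through a Goldie-rank/Hilbert-series bookkeeping showing that the primitive quotients $U(\g)/J_{\bar\lambda}$ exactly exhaust $\on{Zhu}(L_k(\g))$. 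Everything else --- the Chinese Remainder reduction, the easy inclusion, and the Jacobson-ring formalism --- is routine once that input is secured.
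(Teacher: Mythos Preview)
The paper does not prove this theorem; it is quoted from \cite{AEkeren19} without argument. So there is no in-paper proof to compare against, and your proposal must be judged on its own.

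Your reduction is correct and standard: the $J_{\bar\lambda}$ for $[\lambda]\in[\on{Adm}^k]$ are finitely many pairwise distinct maximal two-sided ideals, so CRT reduces the claim to $I_k=\bigcap_{[\lambda]}J_{\bar\lambda}$, and the inclusion $I_k\subseteq\bigcap_{[\lambda]}J_{\bar\lambda}$ follows exactly as you say from Theorem~\ref{Th:rationality-in-category-O} and the Zhu correspondence. That part is fine.

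What you have written for the reverse inclusion, however, is not a proof but a discussion of possible strategies, and you say as much yourself. Neither route is executed: you do not show $I_k$ is semiprime, you do not classify the primitives over $I_k$, and you do not carry out any Hilbert-series comparison. The observation that $\on{Var}(I_k)=\overline{\mathbb{O}}_k$ (which is \cite[Theorem 9.5]{A2012Dec}, as the paper notes) constrains GK-dimension but, as you correctly point out, does not by itself exclude a nilpotent or lower-dimensional obstruction in $\bigl(\bigcap_{[\lambda]}J_{\bar\lambda}\bigr)/I_k$. So the proposal has a genuine gap at exactly the point where the content of the theorem lies.

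The argument in \cite{AEkeren19} does not go through either of your suggested routes in the form you describe. It proceeds by analysing the image in $U(\g)$ of the singular vector generating the maximal submodule of $V^k(\g)$ and showing directly, using Joseph's theory of primitive ideals and the explicit combinatorics of admissible weights, that the two-sided ideal it generates coincides with the intersection of the relevant maximal ideals. Your first route is in the right spirit, but the input you would need --- an explicit enough description of the Zhu image of the singular vector to pin down the primitive spectrum over $I_k$ --- is precisely the substance of the cited paper, not something that can be gestured at.
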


\begin{Th}
\label{Th:finite_extensions}
Let $k$ be admissible,
$V$ a conical self-dual conformal vertex algebra,
and
$\varphi:V^k(\g)\ra V$ a  conformal vertex algebra homomorphism.
Then $\varphi$ factors through an embedding
$L_k(\g)\hookrightarrow V$.
In particular,
$V$ is a direct sum of admissible 
$\affg$-modules.
\end{Th}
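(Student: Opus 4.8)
The plan is to reduce the statement to the structural results on admissible affine vertex algebras recorded above, namely Theorem~\ref{Th:rationality-in-category-O}, Theorem~\ref{The:Zhu-of-admissible}, and the asymptotic/conformality machinery of Section~\ref{Section:Asymptotic data}. First I would check that $\varphi$ does not kill the maximal submodule of $V^k(\g)$ merely up to a shift of the conformal vector: since $V$ is conformal and $\varphi$ is a conformal homomorphism, $\varphi(\omega_{V^k(\g)})=\omega_V$, so the Sugawara conformal vector of $\affg$ at level $k$ is carried to the conformal vector of $V$, and in particular the image $\varphi(V^k(\g))\subset V$ is a conformal vertex subalgebra with the same central charge $c_{V^k(\g)}$. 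The point is then to show that the kernel of $\varphi$ is exactly the maximal proper graded ideal $N_k=\ker(V^k(\g)\twoheadrightarrow L_k(\g))$, which gives the desired factorisation $L_k(\g)\hookrightarrow V$.

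The key step is the following: because $V$ is conical, $V=\bigoplus_{\Delta\ge 0}V_\Delta$ with $V_0=\C\vac$, so $V$ is a (possibly infinite) direct sum of positive energy $\affg$-modules of level $k$ lying in $\mathcal{O}_k$ — indeed in $\KL_k$, since $\g\subset\varphi(V^k(\g))\subset V$ acts by finite-dimensional (integrable) pieces on each conformal weight space of the conical grading, as each $V_\Delta$ is finite-dimensional... more carefully, one argues that the $\g$-action on $V$ is locally finite because $V$ is generated over $\affg$ by $\vac$ together with the images of the other strong generators of $V^k(\g)$, all of which have bounded $\g$-weight. Once $V\in\KL_k$ as an $\affg$-module, the submodule $W\subset V$ generated by $\vac$ is a quotient of $V^k(\g)$ in $\KL_k$, hence a graded quotient, hence either $V^k(\g)$ itself or $L_k(\g)$ (there are no intermediate quotients since $N_k$ is the unique maximal proper submodule). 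To rule out $W\cong V^k(\g)$, I would invoke the self-duality hypothesis on $V$: a self-dual conical vertex algebra admits a non-degenerate invariant bilinear form, and the restriction of this form to the subalgebra $W$ must have radical equal to the maximal proper submodule of $W$; since $V^k(\g)$ is not self-dual at admissible level (its invariant form is degenerate, with radical $N_k$, as $k$ is admissible hence not generic), $W\cong L_k(\g)$. Therefore $\varphi$ factors through $L_k(\g)\hookrightarrow V$.

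For the final sentence, once $L_k(\g)\hookrightarrow V$ is established, $V$ is a module over $L_k(\g)$, and it lies in $\mathcal{O}_k$ by the local finiteness argument above; then Theorem~\ref{Th:rationality-in-category-O} applies directly to conclude that $V$ is a direct sum of admissible representations $L(\lam)$ of $\affg$ of level $k$ with $\widehat\Delta(\lam)=y(\widehat\Delta(k\Lam_0))$ for some $y\in\widetilde W$.

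I expect the main obstacle to be the verification that $V$, a priori only a vertex algebra with a conformal homomorphism from $V^k(\g)$, genuinely lies in category $\mathcal{O}_k$ (equivalently $\KL_k$) as an $\affg$-module: one must control the $\g$-weight spaces and the action of the Casimir, using the conical grading plus finite generation, and it is here that the hypotheses ``conical'' and ``self-dual'' are doing real work rather than bookkeeping. A secondary subtlety is ensuring the bilinear-form/self-duality argument correctly distinguishes $V^k(\g)$ from $L_k(\g)$ at admissible $k$; alternatively one could bypass this by an asymptotic-growth comparison in the spirit of Proposition~\ref{Pro:conformal}, noting that the growth of $\chi_{V^k(\g)}$ strictly exceeds that of $\chi_{L_k(\g)}$ so the image of $\varphi$, being a quotient of the conical $V$, cannot be all of $V^k(\g)$.
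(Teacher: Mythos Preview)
Your proposal has two genuine gaps that the paper's argument is designed to avoid.

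First, the dichotomy ``$W$ is either $V^k(\g)$ itself or $L_k(\g)$'' is false. That $N_k$ is the unique maximal proper submodule means $L_k(\g)$ is the unique \emph{simple} quotient, not the unique proper quotient: $N_k$ itself (generated by a singular vector $v_\lambda$) is far from simple, so $V^k(\g)/M$ for any submodule $M\subsetneq N_k$ gives an intermediate quotient. What you actually need is precisely $\varphi(v_\lambda)=0$, and this is the whole content of the theorem.

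Second, the self-duality argument via restriction of the invariant form does not close. The restriction of a non-degenerate form on $V$ to a subalgebra $W$ can be degenerate without contradiction; knowing its radical is $N_k$ tells you nothing new. Your alternative asymptotic suggestion also misfires: the image of $\varphi$ is a \emph{sub}object of $V$, not a quotient, so there is no a priori bound on its growth from that of $V$.

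The paper uses self-duality differently and more sharply. Assuming $\varphi(v_\lambda)\neq 0$, one dualises the non-split sequence $0\to N\to V\to V/N\to 0$ (using that $V\cong V^*$ as a $V^k(\g)$-module because $\varphi$ is conformal) to obtain $0\to (V/N)^*\to V\to N^*\to 0$. A preimage $w_\lambda$ of the highest weight vector of $N^*$ then generates a submodule containing $\vac$, and both $\varphi(v_\lambda)$ and $w_\lambda$ are primitive of weight $\lambda$, forcing $[U(\affg)w_\lambda:L(\lambda)]\ge 2$. But since $\varphi$ is conformal, the Casimir $C$ acts by zero on $V$, so the map from the projective cover $P(\lambda)\twoheadrightarrow U(\affg)w_\lambda$ factors through $P(\lambda)/CP(\lambda)$, and a lemma from \cite{AraFie08} gives $[P(\lambda)/CP(\lambda):L(\lambda)]=1$, a contradiction. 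The crucial idea you are missing is this multiplicity count via the projective cover modulo the Casimir; the conical and self-dual hypotheses feed into producing the second primitive vector $w_\lambda$, not into a restricted-form argument.
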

\begin{proof}  
It is known \cite{KacWak88} that
the maximal proper submodule $\widetilde{N}$ of $V^k(\g)$ is generated by a singular vector $v_{\lam}$ of weight, say, $\lam$. Therefore $N = \varphi(\widetilde{N})$ is generated as a $\widehat{\g}$-module by $\varphi(v_{\lam})$, and we shall show that $\varphi(v_{\lam})=0$.

Let $M$ be an ordinary $V$-module. Since $\varphi$ is conformal, the  restricted dual  $M^*=\bigoplus_{d\in \C}\Hom_{\C}(M_d,\C)$ 
(\cite{Frenkel:1993aa}) as a $V$-module is the same as the restricted dual of $M$ as a $V^k(\g)$-module. In particular,
$V$ is self-dual as a $V^k(\g)$-module.

 Assume that $\varphi(v_{\lam})\ne 0$.
 Then we have a non-splitting exact sequence
 $0\ra N\ra V\ra V/N\ra 0$ of $\affg$-modules.
 By the self-duality of $V$,
 this gives a non-splitting exact sequence
 \begin{align}
0\ra (V/N)^*\ra V\ra N^*\ra 0,
\label{eq:non-spliting}
\end{align}
of 
 $V^k(\g)$-modules.

Let $w_{\lam}$ be a weight vector of $V$ of weight $\lam$ that is mapped to the highest weight vector of $N^*$. Since $V_0=\C|0\ket$ and \eqref{eq:non-spliting} is non split,
we have $|0\ket \in U(\affg)w_{\lam}$. It follows that $w_{\lam}$ is not a singular vector and also that $\varphi(V^k(\g))\subset U(\affg)w_{\lam}$. Thus $\varphi(v_{\lam})$ and $w_{\lam}$ are linearly independent vectors
of $U(\affg)w_{\lam}$, which are both primitive in the sense of \cite[2.6]{MooPia95}.
This implies that 
 $[U(\affg)w_{\lam}:L(\lam)]\ge 2$.

Let $P(\lam)$ be the projective cover of $L(\lam)$ in $\mc{O}_k$,
and consider the 
 homomorphism
$g:P(\lam)\twoheadrightarrow U(\affg)w_{\lam}\hookrightarrow V$ that sends the generator of $P(\lam)$ of weight $\lam$ to $w_{\lam}$.
 Since $\varphi$ is conformal,
the universal Casimir element $C$ of $\widetilde{\g}$ acts as  zero on $V$.
It follows that 
$g$ factors through a homomorphism
$P(\lam)/CP(\lam)\ra V$.
But then Lemma \ref{Lem:Arakaw-Fiebig} below says that the multiplicity 
of $L(\lam)$ in $g(P(\lam))\cong U(\affg)w_{\lam}$ is at most one.
Since this is a contradiction, we obtain $\varphi(v_{\lam})=0$ as desired.
The assertion on complete reducibility now follows from Theorem \ref{Th:rationality-in-category-O}.
\end{proof}

The following assertion is a direct consequence of \cite[Lemma 6.9]{AraFie08}.
\begin{Lem}\label{Lem:Arakaw-Fiebig}
Let $\mu\in \widehat{\h}^*_k$ be dominant,
suppose $\lam< \mu$ and there is no $\nu$ such that $\lam<\nu<\mu$.
Then $[P(\lam)/CP(\lam):L(\lam)]=1$.
\end{Lem}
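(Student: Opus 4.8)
The plan is to deduce Lemma \ref{Lem:Arakaw-Fiebig} from the structure theory of blocks of category $\mc{O}_k$ for $\affg$ at non-critical level, as developed by Arakawa--Fiebig. The key point is that the projective cover $P(\lam)$ in $\mc{O}_k$ has a standard (Verma) flag, and the quotient $P(\lam)/CP(\lam)$ by the action of the Casimir $C$ can be understood via the deformed/graded picture: intuitively, killing $C$ collapses $P(\lam)$ onto the ``most singular'' part of its linkage class, and the combinatorics are governed by the order structure on the linked weights.

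First I would recall the precise form of \cite[Lemma 6.9]{AraFie08}: for a dominant weight $\mu$ and a weight $\lam$ such that $\lam < \mu$ with no weight strictly between them, the projective object $P(\lam)$ admits a Verma flag, and after quotienting by $CP(\lam)$ one obtains an object whose Verma flag multiplicities $[P(\lam)/CP(\lam) : M(\nu)]$ are concentrated on $\nu \in \{\lam, \mu\}$ (or more precisely on the covering interval), with $[P(\lam)/CP(\lam):M(\lam)] = [P(\lam)/CP(\lam):M(\mu)] = 1$ in this minimal situation. Then I would use BGG reciprocity together with the fact that $M(\mu)$ is projective (since $\mu$ is dominant, $M(\mu)$ is both a Verma module and the projective cover of $L(\mu)$ in the truncated category, so $[M(\mu):L(\lam)]$ contributes exactly once to the composition multiplicity coming from the $M(\mu)$-subquotient), while $[M(\lam):L(\lam)] = 1$ trivially. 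Summing these two contributions gives $[P(\lam)/CP(\lam):L(\lam)] \leq 1$; combined with the obvious lower bound $\geq 1$ coming from the head of $P(\lam)$ surviving the quotient (the generator has weight $\lam$ and $C$ acts nilpotently, hence nowhere on the top layer $L(\lam)$), we conclude equality.

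The main obstacle I expect is verifying that \cite[Lemma 6.9]{AraFie08} applies in exactly the generality needed here: the Arakawa--Fiebig results are phrased for a deformed category $\mc{O}$ over a localization of a polynomial ring, and one needs that the deformation parameter attached to $C$ specializes correctly, and that the ``covering'' hypothesis $\lam < \mu$ with nothing in between translates to the combinatorial condition on the integral Weyl group orbit that makes the Verma flag of $P(\lam)/CP(\lam)$ as short as claimed. One also needs to know that no weight $\nu$ with $\lam < \nu$ other than $\mu$ is linked to $\lam$ in a way that contributes to the relevant layer — this is where the hypothesis ``there is no $\nu$ such that $\lam < \nu < \mu$'' together with dominance of $\mu$ is essential, and I would want to check it is used to pin down that the only Verma subquotients of $P(\lam)/CP(\lam)$ are $M(\lam)$ and $M(\mu)$.

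Since the statement is explicitly asserted to be ``a direct consequence of \cite[Lemma 6.9]{AraFie08},'' the proof is short: it amounts to matching notation, checking the covering hypothesis feeds into the cited lemma, and assembling the multiplicity count via BGG reciprocity as above. I would write it in two or three lines, citing the lemma for the Verma-flag description of $P(\lam)/CP(\lam)$ and then performing the elementary composition-multiplicity bookkeeping.
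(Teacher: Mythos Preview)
Your proposal contains a genuine error. You claim that $P(\lam)/CP(\lam)$ inherits a Verma filtration with factors $M(\lam)$ and $M(\mu)$, each of multiplicity one, and then that ``summing these two contributions gives $[P(\lam)/CP(\lam):L(\lam)] \leq 1$.'' But this sum is $[M(\lam):L(\lam)] + [M(\mu):L(\lam)] = 1 + 1 = 2$, not $\leq 1$: by Kac--Kazhdan one has $[M(\mu):L(\lam)] = 1$ under the covering hypothesis. So the arithmetic collapses, and in fact the short exact sequence $0 \to M(\mu) \to P(\lam) \to M(\lam) \to 0$ describes $P(\lam)$ itself, not the quotient $P(\lam)/CP(\lam)$. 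The quotient by $CP(\lam)$ is genuinely smaller and does \emph{not} retain both Verma factors intact; this is exactly what must be shown.

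The paper's argument runs in the opposite direction. From the Verma flag of $P(\lam)$ (not of the quotient) and $[M(\mu):L(\lam)]=1$ one gets $[P(\lam):L(\lam)]=2$. The content of \cite[Lemma 6.9]{AraFie08} is then that $CP(\lam)\neq 0$; you have misread what that lemma provides. Since $C$ is central, $C:P(\lam)\to P(\lam)$ is a module map, so $CP(\lam)$ is a nonzero quotient of $P(\lam)$ and therefore has head $L(\lam)$, giving $[CP(\lam):L(\lam)]\geq 1$. Subtracting from $[P(\lam):L(\lam)]=2$ yields $[P(\lam)/CP(\lam):L(\lam)]\leq 1$, and your lower bound argument (the head survives) finishes. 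The missing idea in your proposal is precisely this: use $CP(\lam)$ as a \emph{quotient} of $P(\lam)$ via the Casimir map to force $L(\lam)$ into its composition series, rather than trying to track a Verma filtration through the quotient.
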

\begin{proof}
We have \cite{KacKaz79}  $[M(\mu):L(\lam)]=1$ and
an exact sequence
 $0\ra M(\mu)\ra P(\lam)\ra M(\lam)\ra 0$,
 see the discussion just before \cite[Lemma 6.9]{AraFie08} for details.
Hence $[P(\lam):L(\lam)]=2$.
Let $\tilde{w}_{\lam}$ be a generator of $P(\lam)$ of weight $\lam$.
By \cite[Lemma 6.9]{AraFie08},
$CP(\lam)\ne 0$,
and hence $C\tilde{w}_{\lam}\ne 0$
because $\tilde{w}_{\lam}$ is a generator.
It follows that 
$[CP(\lam):L(\lam)]= 1$,
and hence,
$[P(\lam)/CP(\lam):L(\lam)]=1$.
\end{proof}

If 
$L(\lam)$ is an ordinary 
module  over the simple affine vertex algebra $L_k(\g)$ of principal (resp. coprincipal) admissible level $k$
then  $\lam\in \on{Pr}^k_{\Z}:=\{\lam\in  \on{Pr}^k  \colon 
\bra \lam,\alpha^{\vee}\ket \in \Z \text{ for all } \alpha \in \Delta\}$  (resp.\ $\lam\in \on{CoPr}^k_{\Z}
:=\{\lam\in  \on{CoPr}^k \colon
\bra \lam,\alpha^{\vee}\ket \in \Z \text{ for all } \alpha \in \Delta\}$). 
We have
$\widehat{\Delta}(\lam)=\{\alpha+nq\delta \colon 
\alpha\in \Delta,\ n\in \Z\}$ 
for $\lam\in \on{Pr}^k_{\Z}$,
while 
$\widehat{\Delta}(\lam)=\{\alpha+nq\delta \colon 
\alpha\in \Delta^{\rm long},\ n\in \Z\}
\sqcup \{\alpha+nq\delta / r^\vee\colon \alpha\in \Delta^{\rm short},\ n\in \Z\}$
for $\lam\in \on{CoPr}^k_{\Z}$,
where $\Delta^{\rm long}$ is the set of long roots of $\Delta$
and $\Delta^{\rm short}$ is the set of short roots of $\Delta$.
The set of simple roots
of $\widehat{\Delta}(\lam)$, both for $\lam\in \on{Pr}^k_{\Z}$ and for $\lam\in \on{CoPr}^k_{\Z}$, is given by
$S_{(q)} = \{\gamma_0, \gamma_1, \ldots, \gamma\} $,
where 
 $\gamma_i = \alpha_i$ for $i=1,\ldots,\ell$  and
\begin{align*}
\gamma_0 = \begin{dcases}
- \theta  +q\delta& \quad \text{if $(r^\vee, q) = 1$}, \\
- \theta_{s}+q\delta /r^{\vee}& \quad \text{if $(r^\vee, q) = r^\vee$},
\end{dcases}
\end{align*}
where $\theta_s$ is the highest short root of $\Delta$.

Let
$\phi : \widetilde\h^* \rightarrow \widetilde\h^*$ be the isometry defined to act as the identity on the finite part $\h^*$ and to act by
\begin{align*}
\phi(\Lambda_0) = (1/q)\Lambda_0, \qquad \phi(\delta) = q \delta.
\end{align*}
The adjoint $\phi^* : \widetilde\h \rightarrow \widetilde\h$ acts by
\begin{align*}
\phi^*(K) = (1/q) K, \qquad \phi(D) = q D.
\end{align*}
We have \cite{KacWak89}
\begin{align*}
&\on{Pr}^k=\bigcup_{y\in  \widetilde{W}\atop 
y(S_{(q)} )\subset \widehat{\Delta}^{\text{re}}_+}\on{Pr}^k_y,
\quad \on{Pr}^k_y= y\phi(\widehat{P}^{p-h^{\vee}_\g}_{{+}}+\widehat\rho)-\widehat\rho,\\
&\on{CoPr}^k=\bigcup_{y\in  \widetilde{W}\atop 
y(S_{(q)} )\subset \widehat{\Delta}^{\text{re}}_+}\on{Pr}^k_y,
\quad \on{CoPr}^k_y= y\phi({}^\circ\widehat{P}^{p-h_\g}_{{+}}+\widehat\rho)-\widehat\rho.
\end{align*}
We have denoted by $\widehat{P}^{k}_{{+}}$ the set of dominant integral weights of $\affg$ of level $k$, that is, the set of weights $\lam = k\Lambda_0 + \bar\lam$ such that $\bra \bar\lam,\alpha_i^{\vee}\ket \in \Z_{\ge 0}$ for $i=1,\dots, \ell$, and $\bra \bar\lam,-\theta^{\vee}\ket \le k$. Similarly $\widehat{P}^{k}_{{++}}$ will denote the set of \emph{regular} dominant integral weights, given by $\bra \bar\lam,\alpha_i^{\vee}\ket \in \Z_{> 0}$ for $i=1,\dots, \ell$, and $\bra \bar\lam,-\theta^{\vee}\ket < k$. Furthermore ${}^\circ \widehat{P}^{k}_{{+}}$ is defined similarly to $\widehat{P}^{k}_{{+}}$, the condition $\bra \bar\lam,-\theta^{\vee}\ket \le k$ being replaced by $\bra \lam,-\theta_s^{\vee}+K\ket  \in \Z_{\ge 0}$. In particular, we have
\begin{align*}
\on{Pr}^k_{\Z}=\on{Pr}^k_{1}=\phi(\widehat{P}^{p-h^{\vee}_\g}_{{+}}+\widehat{\rho})-\widehat{\rho},\quad 
\on{CoPr}^k_{\Z}=\on{CoPr}^k_{1}=\phi({}^\circ \widehat{P}^{p-h_\g}_{{+}}+\widehat{\rho})-\widehat{\rho}.
\end{align*}

\begin{Pro}[\cite{KacWak89}]
\label{Proposition:Kac-Wakimoto}
Let $k$ be  principal admissible  of the form \eqref{eq:admissible-n}
and
let $$\lam=y \phi(\nu) - \widehat\rho\in \on{Pr}^k_y$$
with 
$y = t_\beta \ov y$, $\beta \in P^\vee$, $\ov y \in W$,
and $\nu \in {}^\circ\widehat{P}_+^{p-h^\vee}+\widehat\rho$.
Let $T \in \mathbb{R}_{>0}$ and $z \in \h$ such that $\alpha(z) \notin \Z$ for all $\alpha \in \Delta$. 
Then as $T \rightarrow 0^+$ one has 
\begin{align*}
\chi_{L(\lambda)}({\imag}T, -{\imag}T z, 0) \sim b(\lam, z) e^{\pi \G / 12T},
\end{align*}
where
\begin{align*}
\G &= \dim(\g) - \frac{12 |\rho^\vee|^2}{pq} = \left( 1 - \frac{h^\vee_\g}{pq} \right) \dim(\g), \\
\text{and} \quad b(\lam, z) &= \varepsilon(\ov y) |P/ pq Q|^{-\frac{1}{2}} \prod_{\alpha \in \Delta_+} 2 \sin\frac{\pi (\alpha | \nu)}{p} \cdot \frac{ \sin \frac{\pi (\alpha | z-\beta)}{q}}{\sin \pi(\alpha | z)}.
\end{align*}
\end{Pro}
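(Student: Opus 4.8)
The plan is to start from the closed character formula $\chi_{L(\lam)}(v) = A_{\lam+\widehat\rho}(v)/A_{\widehat\rho}(v)$ of Kac--Wakimoto and extract the leading asymptotics by applying the modular (i.e. $\tau \mapsto -1/\tau$) transformation. Since $\lam = y\phi(\nu) - \widehat\rho$, the integral Weyl group $\widehat W(\lam)$ equals $y\phi(\widehat W)\phi^{-1}y^{-1}$ (the conjugate under the affine isometry $y\phi$ of the untwisted affine Weyl group $\widehat W = W \ltimes t_{Q^\vee}$), so one can rewrite both numerator and denominator as theta-like sums over $y\phi(\widehat W)$ and clear the $y$-dependence using $\varepsilon(y)$ and invariance of the pairing. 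First I would substitute $v = 2\pi\imag(-\tau D + z' + tK)$ with $\tau = iT$ and $z' = -iTz$, $t=0$, and write $A_{\mu}(v)$ as $e^{-\frac{|\mu|^2}{2\mu(K)}(\delta,v)}$ times a sum that, after the $\phi$-rescaling of $\Lambda_0$ and $\delta$, becomes a sum over $W \ltimes t_{Q^\vee}$ of the form $\sum_{\ov w, \gamma} \varepsilon(\ov w) e^{\langle \ov w t_\gamma(\phi^{-1}(\mu+\widehat\rho)), v\rangle}$, i.e. essentially a classical Weyl--Kac numerator at a rescaled argument.

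Next I would isolate the $T \to 0^+$ behaviour. The exponential prefactor $e^{-\frac{|\mu|^2}{2\mu(K)}(\delta,v)}$ contributes the $e^{\pi\G/12T}$ factor once one computes $|\lam+\widehat\rho|^2$ and $|\widehat\rho|^2$: here $\lam + \widehat\rho = y\phi(\nu)$ has the same norm as $\phi(\nu)$, and $|\phi(\nu)|^2$ versus $|\widehat\rho|^2$ produces the discrepancy $\dim\g \cdot(1 - h^\vee_\g/pq)$ after using the strange formula $|\rho|^2 = |\rho^\vee|^2 = \frac{1}{12}h^\vee_\g \dim\g$ (note $|\rho^\vee|^2 = |\rho|^2$ under our normalisation of the form). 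The remaining ratio of theta-sums is then handled by a Poisson summation / modular inversion on the lattice $t_{Q^\vee}$: each sum over $\gamma \in Q^\vee$ of $e^{2\pi i(\text{quadratic in }\gamma)/T}$ transforms, as $T \to 0$, into a sum over the dual lattice, and only the leading ($T$-independent after transformation) term survives, giving a finite trigonometric expression. Carrying the $\varepsilon(\ov w)$ signs through the inversion, together with the splitting $y = t_\beta \ov y$ (so that the $z$-dependence enters as $z - \beta$), yields precisely the product $\prod_{\alpha\in\Delta_+} 2\sin\frac{\pi(\alpha|\nu)}{p}\cdot \frac{\sin\frac{\pi(\alpha|z-\beta)}{q}}{\sin\pi(\alpha|z)}$ and the normalisation constant $|P/pqQ|^{-1/2}$ from the index of the relevant lattice, together with the overall sign $\varepsilon(\ov y)$.

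The main obstacle I expect is the bookkeeping in the modular inversion: one must carefully split the affine Weyl group sum into its finite Weyl part $W$ (which stays a finite alternating sum and produces the $\sin$-products and the Weyl denominator $\prod \sin\pi(\alpha|z)$ in the denominator $A_{\widehat\rho}$) and its translation part $t_{Q^\vee}$ (which is the part genuinely transformed by Poisson summation), and then track how the two scalings by $q$ — one on the numerator lattice coming from $\phi$, one built into $S_{(q)}$ — interact to give denominators $p$ and $q$ in the two $\sin$ factors. A secondary subtlety is justifying that subleading terms in the $T \to 0$ expansion of the transformed theta-sums are genuinely exponentially suppressed relative to the stated leading term, which requires checking that the minimal value of the relevant quadratic form on the dual lattice is uniquely attained (a standard but necessary positivity/nondegeneracy argument, using that $T \in \mathbb{R}_{>0}$ keeps everything under control and that $\alpha(z) \notin \Z$ prevents the denominator from vanishing). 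Once these points are settled the identification of $\A = b(\lam,z)$ and $\G$ is a direct comparison of the two explicit formulas.
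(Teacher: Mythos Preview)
Your overall route---express $A_{\lam+\widehat\rho}$ as an alternating sum of theta functions via $\widehat W(\lam) = y\,\phi\,\widehat W\,\phi^{-1}y^{-1}$, apply modular inversion, and pick out the dominant term on the dual lattice---is exactly what the paper (following \cite{KacWak89}) does; the paper does not write out the principal case but gives the coprincipal analogue in full in Proposition~\ref{prop:general.affine.asymp.coprincipal}, and the mechanics are identical.

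There is, however, a concrete error in your second paragraph. The prefactor $e^{-\frac{|\mu|^2}{2\mu(K)}(\delta,v)}$ does \emph{not} produce the growth $e^{\pi\G/12T}$: at $\tau = iT$ one has $(\delta,v) = -2\pi i\tau = 2\pi T \to 0$, so this factor tends to $1$. The growth comes entirely from the modular inversion step you describe afterwards. After Poisson summation the numerator becomes $T^{-\ell/2}$ times a sum over the dual lattice of terms $e^{-\pi(pq)|\gamma|^2/T}$; the singular $\gamma$ cancel under the $W$-alternation and the dominant surviving contribution is the $W$-orbit of $\rho$ (the shortest \emph{regular} element of $P$), yielding the factor $e^{-\pi|\rho|^2/(pqT)}$. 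The denominator $A_{\widehat\rho}$ likewise yields $T^{-\ell/2}e^{-\pi\dim(\g)/12T}$ (Kac, Prop.~13.13). Their ratio, combined with the strange formula $|\rho|^2 = \tfrac{1}{12}h^\vee_\g\dim\g$, is what gives $e^{\pi\G/12T}$. In other words, the growth lives precisely in the ``minimal value of the quadratic form on the dual lattice'' that you correctly flag later as the delicate point---not in the prefactor you separated off beforehand. (A side remark: $|\rho|^2 = |\rho^\vee|^2$ fails outside the simply-laced types; in the principal case the minimising regular dual-lattice vector is $\rho \in P$, not $\rho^\vee$.)
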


The proof of the proposition below closely follows the arguments of \cite{KacWak89}, with adaptations to the coprincipal case.
\begin{Pro}\label{prop:general.affine.asymp.coprincipal}
Let $k$ be coprincipal admissible of the form \eqref{eq:admissible-n}
and let 
\begin{align*}
\lambda = y \phi(\nu) - \widehat\rho\in \on{CoPr}^k_y,
\end{align*}
with  $y = t_\beta \ov y$, $\beta \in P^\vee$,
$\ov y \in W$, and $\nu \in {}^\circ\widehat{P}_+^{p-h^\vee}+\widehat{\rho}$.  
Let $T \in \mathbb{R}_{>0}$ and $z \in \h$ such that $\alpha(z) \notin \Z$ for all $\alpha \in \Delta$. Then as $T \rightarrow 0^+$ we have
\begin{align*}
\chi_{L(\lambda)}({\imag}T, -{\imag}T z, 0) \sim b(\lam, z) e^{\pi \G / 12T},
\end{align*}
where
\begin{align*}
\G &= \dim(\g) - \frac{12 |\rho^\vee|^2}{pq} = \left( 1 - \frac{h^\vee_{{}^L\g} r^\vee}{pq} \right) \dim(\g), \\
\text{and} \quad b(\lam, z) &= \varepsilon(\ov y) |P^\vee/ pq Q|^{-\frac{1}{2}} \prod_{\alpha \in \Delta_+} 2 \sin\frac{\pi (\alpha^\vee | \nu)}{p} \cdot \frac{ \sin \frac{\pi (\alpha^\vee | z-\beta)}{q}}{\sin \pi(\alpha | z)}.
\end{align*}
\end{Pro}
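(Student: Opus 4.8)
The plan is to adapt, essentially \emph{verbatim}, the asymptotic analysis of \cite{KacWak89} that underlies Proposition \ref{Proposition:Kac-Wakimoto}, making the changes forced by the coprincipal integral root system. The starting point is the Kac--Wakimoto character formula $\chi_{L(\lambda)}(v) = A_{\lambda+\widehat\rho}(v)/A_{\widehat\rho}(v)$, together with the explicit description $\lambda = y\phi(\nu) - \widehat\rho$ and the factorisation $y = t_\beta \ov y$. First I would substitute $v = 2\pi\imag(-iT D - iTz)$ and use the translation and Weyl-group equivariance of the theta-like sums $A_\mu$ to rewrite $\chi_{L(\lambda)}(iT,-iTz,0)$ as a ratio of sums over the finite Weyl group $W$ of exponentials whose arguments are linear in $1/T$ and in $z$; the dependence on $\beta$ enters through the shift $z \mapsto z - \beta$ in the numerator and the sign $\varepsilon(\ov y)$ appears when one moves $\ov y$ past the summation. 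The key structural input is that for a coprincipal admissible weight the integral root system is $\widehat{\Delta}(\lambda) = \{\alpha + nq\delta : \alpha \in \Delta^{\mathrm{long}}\} \sqcup \{\alpha + nq\delta/r^\vee : \alpha \in \Delta^{\mathrm{short}}\}$, so that the affinisation ``sees'' the Langlands dual $^L\g$: this is what replaces $|P/pqQ|$ by $|P^\vee/pqQ|$, produces the coroots $\alpha^\vee$ inside the sine products, and changes $h_\g^\vee$ to $h^\vee_{{}^L\g} r^\vee = h_\g$ in the exponent (consistently with the coprincipal constraint $p \ge h_\g$).

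Next I would carry out the $T \downarrow 0$ asymptotics. As in \cite{KacWak89}, one applies a modular ($S$-)transformation to each $A_\mu$ so as to convert the regime $T \to 0^+$ into a regime where a single term dominates; the leading exponential is governed by the minimal value of a quadratic form, which after the dust settles is $|\rho^\vee|^2/(pq)$, giving the growth $\G = \dim\g - 12|\rho^\vee|^2/(pq)$. Extracting the subleading constant $b(\lambda,z)$ is then a matter of collecting the Gaussian-integral normalisation $|P^\vee/pqQ|^{-1/2}$, the finite product $\prod_{\alpha\in\Delta_+} 2\sin(\pi(\alpha^\vee|\nu)/p)$ coming from the numerator theta-sum evaluated at $\nu$, the quotient $\sin(\pi(\alpha^\vee|z-\beta)/q)/\sin(\pi(\alpha|z))$ coming from the ratio of numerator and denominator in the surviving directions, and the sign $\varepsilon(\ov y)$. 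Here one must be careful that in the mixed long/short situation the normalisation lattice and the rescaling by $r^\vee$ in the short directions are bookkept correctly; the hypothesis $\alpha(z)\notin\Z$ for all $\alpha\in\Delta$ guarantees no denominator vanishes and that the ratio genuinely tends to a finite nonzero limit times the exponential.

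The main obstacle I expect is precisely this lattice/duality bookkeeping in the coprincipal case: in \cite{KacWak89} the principal case has a uniform affinisation by $q\delta$, whereas here the short roots are affinised by $q\delta/r^\vee$, so the sublattice of $\widehat\h^*$ over which the relevant theta function is summed, and its dual (which controls the Gaussian normalisation after the $S$-transform), are no longer the naive ones. One has to identify this lattice as (a rescaling of) $Q$ paired against $P^\vee$ rather than $Q^\vee$ against $P$, and track how the factor $r^\vee$ distributes between the exponent, the index $|P^\vee/pqQ|$, and the coroot normalisations $|\alpha^\vee|^2$ appearing in the sine arguments. Once the correct dual pair is pinned down, the rest of the argument is a routine transcription of the principal-case computation, and I would present it by indicating the three or four places where ``$Q^\vee \leftrightarrow P$'', ``$\alpha \leftrightarrow \alpha^\vee$'', and ``$h_\g^\vee \leftrightarrow h_\g$'' substitutions occur, referring to \cite{KacWak89} for the steps that are unchanged.
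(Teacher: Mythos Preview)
Your proposal is correct and follows essentially the same route as the paper: write $\chi_{L(\lambda)}=A_{\lambda+\widehat\rho}/A_{\widehat\rho}$, expand the numerator as a $W$-alternating sum of theta functions over the lattice $Q$, apply the modular transformation, and pick off the dominant contribution from the shortest regular elements of the dual lattice. The paper makes explicit two points you leave implicit: first, the reason the theta functions $\Theta_{q w(\widehat\nu),Q}$ are well-defined is that coprincipality gives $r^\vee\mid q$, hence $q\nu\in qP\subset P^\vee$ and $\sqrt{pq}\,Q$ is integral; second, the dominant terms after the $S$-transform are indexed by the $W$-orbit of $\rho^\vee$ (rather than $\rho$), and the resulting double sum over $W$ is collapsed using the Weyl denominator formula for ${}^L\g$, which is exactly what produces the $\alpha^\vee$'s in the sine product and the $|P^\vee/pqQ|^{-1/2}$ normalisation you anticipated.
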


\begin{proof}
The denominator of
\begin{align}\label{eq:aff.char.frac}
\chi_{L(\lam)}(\tau, z, t) = \frac{A_{\lam+\widehat\rho}(\tau, z, t)}{A_{\widehat\rho}(\tau, z, t)} 
\end{align}
is the standard Weyl denominator. Its asymptotic behaviour is well known \cite[Proposition 13.13]{Kac90} to be
\begin{align}\label{eq:denom.asymptotic-aff}
A_{\widehat{\rho}}({\imag}T, -{\imag}T z, 0) \sim b(\rho, z) T^{-\ell/2} e^{-\pi\dim(\g)/12T}, 
\end{align}
where
\begin{align*}
b(\rho, z) = \prod_{\alpha \in \Delta_+} 2 \sin{\pi (\alpha | z)}.
\end{align*}

We analyse the numerator by writing it in terms of theta functions. Let $m$ be a positive integer for which the lattice $\sqrt{m}Q$ is integral, and let $\widehat\mu = m\Lambda_0 + \mu \in \widehat\h^*$ where $\mu \in P^\vee$. Then we define
\begin{align*}
\Theta_{\widehat\mu, Q}(v) &= e^{-\frac{|\mu|^2}{2\mu(K)} \left<\delta, v\right>} \sum_{\alpha \in Q} e^{\left< t_\alpha \mu, v \right>}.
\end{align*}
The modular transformation of these theta functions is given by 
\begin{align*}
\Theta_{\widehat\mu, Q}({\imag}T, -{\imag}Tz, 0) &= T^{-\ell/2} |P^\vee/ m Q|^{-1/2} \\
& \phantom{............} \times \sum_{\mu' \in P^\vee / m Q} e^{-2\pi \imag (\mu' | \mu)/m} \Theta_{\widehat\mu', Q}\left(-\frac{1}{{\imag}T}, z, -\frac{{\imag}T(z | z)}{2} \right),
\end{align*}
where in the sum $\widehat\mu' = m\Lambda_0 + \mu'$.

In the sum defining $A_{\lam+\widehat\rho}(v)$ the action of the Weyl group $\widehat{W}(\lam)$ is intertwined with the action of the group $\widehat W(S_{(q)}) = W \ltimes t_{q Q}$ via the automorphism $y$. The numerator of \eqref{eq:aff.char.frac} is thus expressed in terms of theta functions as
\begin{align*}
A_{\lam+\widehat\rho}(v)
&= e^{2\pi \imag \tau \frac{|\lam+\widehat\rho|^2}{2(k+h^\vee)}} \sum_{u \in \widehat W(S_{(q)})} \varepsilon(u) e^{\left< y u \phi(\nu), v \right>} \\
&= \varepsilon(\ov y) \sum_{w \in W} \varepsilon(w) \Theta_{q w(\widehat\nu), Q}((1/q)\phi^{*}t_{\beta}^*(v)).
\end{align*}

Now we compute the asymptotic behaviour of
\begin{align*}
\varepsilon(\ov y) \sum_{w \in  W} \varepsilon(w) \Theta_{q w(\widehat\nu), Q}({\imag}T, -{\imag}Tz, t).
\end{align*}
Since $\lam$ is coprincipal the lattice $pq Q$ is integral and we may use the modular transformation formula to write
\begin{align*}
& \sum_{w \in W} \varepsilon(w) \Theta_{q w(\widehat{\nu}), Q}({\imag}T, -{\imag}Tz, 0) \\
&= T^{-\ell/2} |P^\vee/ pq Q|^{-1/2} \\
& \phantom{............} \times \sum_{w \in W} \sum_{\widehat\mu \in P^\vee / (pq) Q} \varepsilon(w) e^{-2\pi \imag (\mu' | q w(\nu))/(pq)} \Theta_{\widehat\mu, Q}\left( -\frac{1}{iT}, z, -\frac{iT(z | z)}{2} \right) \\
&= T^{-\ell/2} |P^\vee/ pq Q|^{-1/2} e^{\pi (pq) T (z | z)}  \\
& \phantom{............} \times \sum_{w \in W} \sum_{\stackrel{\widehat\mu \in P^\vee / (pq) Q}{\text{regular}}} \sum_{\gamma \in \mu/(pq) + Q} \varepsilon(w) e^{-\frac{2\pi \imag}{p} (w(\nu) | \mu)} e^{- \frac{\pi}{T} (pq) (\gamma | \gamma) + 2\pi \imag (pq) (\gamma | z)}. 
\end{align*}
(The summands corresponding to non regular $\widehat{\mu}$ cancel out in the sum over $W$.) In the limit $T \rightarrow 0^+$ the dominant terms in the sum above come from the shortest regular $\mu \in P^\vee$. Such $\mu$ consist precisely of the $W$-orbit of $\rho^\vee$. We therefore obtain the asymptotic
\begin{align*}
\sum_{w \in W} \varepsilon(w) \Theta_{q w(\widehat{\nu}), Q}(iT, -iTz, 0)
\sim {} & T^{-\ell/2} |P^\vee/(pq)Q|^{-1/2} e^{-\frac{\pi}{T} \cdot \frac{|\rho^\vee|^2}{pq}} \\
&\times \sum_{\sigma \in W} \left[ \sum_{w \in W} \varepsilon(w) e^{-\frac{2\pi \imag}{p}(\sigma w(\nu) | \rho^\vee)}\right] e^{2\pi \imag (\rho^\vee | \sigma(z))}.
\end{align*}
The Weyl denominator formula asserts that
\begin{align}\label{eq:Weyl.denom.fla.check}
\sum_{w \in W} \varepsilon(w) e^{(w(\rho^\vee) | \lam)} = \prod_{\alpha \in \Delta_+} \left( e^{(\alpha | \lam)/2} - e^{-(\alpha | \lam)/2} \right).
\end{align}
(There is a more standard version of the formula, from which \eqref{eq:Weyl.denom.fla.check} is obtained by applying to the Langlands dual Lie algebra and noting that the duality exchanges $\rho$ with $\rho^\vee$ up to a factor of $\sqrt{r^\vee}$.) Using \eqref{eq:Weyl.denom.fla.check} we reduce the double sum over $W$ to
\begin{align*}
\prod_{\alpha \in \Delta_+} 4 \sin\frac{\pi (\alpha^\vee | \nu)}{p} \sin \pi (\alpha^\vee | z).
\end{align*}

To deduce the asymptotic behaviour of $A_{\lam + \widehat\rho}$ we note that
\begin{align*}
(1/q) \phi^*t_{\beta}^*(2\pi \imag (-\tau D + z + tK)) = 2\pi \imag \left( -\tau D + (z + \tau \nu^{-1}(\beta))/q + (t + \tau \tfrac{|\beta|^2}{2}) K/q^2 \right).
\end{align*}
It then follows that
\begin{align}\label{eq:numerator.asymptotic}
\begin{split}
A_{\lam + \widehat\rho}({\imag}T, -{\imag}Tz, 0)
\sim {} & \varepsilon(\ov y) T^{-\ell/2} |P^\vee/(pq)Q|^{-1/2} e^{-\frac{\pi}{T} \cdot \frac{|\rho^\vee|^2}{pq}} \\
&\times \prod_{\alpha \in \Delta_+} 4 \sin\frac{\pi (\alpha^\vee | \nu)}{p} \sin \frac{\pi (\alpha^\vee | z-\beta)}{q},
\end{split}
\end{align}
and, combining with \eqref{eq:denom.asymptotic-aff}, finally
\begin{align*}
\chi_{L(\lam)}({\imag}T, -{\imag}Tz, 0)
\sim {} & \varepsilon(\ov y) |P^\vee/(pq)Q|^{-1/2} e^{\frac{\pi}{12 T} \cdot \left(\dim(\g) - \frac{12|\rho^\vee|^2}{pq}\right)} \\
&\times \prod_{\alpha \in \Delta_+} 2 \sin\frac{\pi (\alpha^\vee | \nu)}{p} \cdot \frac{ \sin \frac{\pi (\alpha^\vee | z-\beta)}{q}}{\sin \pi(\alpha | z)}.
\end{align*}
The second form for $\G$ given in the theorem statement follows from the Freudenthal-de Vries strange formula and the fact that Langlands duality exchanges $\rho$ and $\rho^\vee$.
\end{proof}

\begin{Co} 
\label{Co:asymptotic_data_L}
Let $k$ be an admissible number of the form \eqref{eq:admissible-n}.
The admissible affine vertex algebra $L_k(\g)$ admits \asym,
and so does any simple ordinary representation $L(\lam)$ of $L_k(\g)$.
\begin{enumerate}
\item 
For $\lam\in \on{Pr}^k_\Z$, 
\begin{align*}
&\G_{L(\lam)} =  \left(1- \dfrac{h_\g^\vee}{pq} \right)\dim \g,\quad \Weight_{L(\lam)}=0,
\\
&\A_{L(\lam)} =\dfrac{1}{q^{|\Delta_{+}|} 
{|P/(p q)Q^\vee|}^{\frac{1}{2}} } \prod\limits_{\alpha \in \Delta_{+}} 
2 \sin \dfrac{\pi (\lam+\rho | \alpha) }{p},\\
&\on{qdim}_{L(k\Lam_0)}L(\lam)=\prod\limits_{\alpha \in \Delta_{+}} \frac{(\lam+\rho|\alpha)_t}{(\rho|\alpha)_t},
\end{align*}
where $n_t=(t^n-t^{-n})/(t-t^{-1})$,
$t=e^{\pi \imag /p}$.
\item For $\lam\in \on{CoPr}^k_\Z$,
\begin{align*}
&\G_{L(\lam)}= \left(1-\dfrac{r^\vee  h^\vee_{^L\!\g} }{p q}\right) \dim \g ,\quad \Weight_{L(\lam)}=0,
\\
&\A_{L(\lam)} 
= \dfrac{(r^\vee_\g)^{|\Delta_+^{\textup{short}}|}}{ 
q^{|\Delta_+|}{|P^\vee/(pq) Q|}^{\frac{1}{2}}}
\prod_{\alpha \in \Delta_+} 2 \sin \dfrac{\pi (\lam+\rho | \alpha^\vee)}{p},\\
&\on{qdim}_{L(k\Lam_0)}L(\lam)=\prod\limits_{\alpha \in \Delta_{+}} \frac{(\lam+\rho|\alpha^{\vee})_t}{(\rho|\alpha^{\vee})_t}.
\end{align*}
\end{enumerate}
\end{Co}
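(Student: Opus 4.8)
The plan is to deduce Corollary~\ref{Co:asymptotic_data_L} directly from Propositions~\ref{Proposition:Kac-Wakimoto} and~\ref{prop:general.affine.asymp.coprincipal} by specialising the auxiliary variable $z$ and reorganising the resulting formulas. First I would recall that for $\lam\in\on{Pr}^k_\Z=\on{Pr}^k_1$ (resp.\ $\lam\in\on{CoPr}^k_\Z=\on{CoPr}^k_1$) one may take $y=1$, hence $\ov y=e$ and $\beta=0$, in the notation of those propositions, and that $\nu=\phi^{-1}(\lam+\widehat\rho)$ lies in ${}^\circ\widehat P^{p-h^\vee}_+ +\widehat\rho$; in particular $(\alpha\mid\nu)=(\alpha\mid\lam+\rho)$ for $\alpha\in\Delta_+$ after identifying the finite parts. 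The propositions give the asymptotics of $\chi_{L(\lam)}(iT,-iTz,0)$ with the factor $b(\lam,z)$; the point is that the limit $\chi_{L(\lam)}(iT,0,0)=\lim_{z\to 0}\chi_{L(\lam)}(iT,-iTz,0)$ is obtained by letting $z\to 0$ in $b(\lam,z)$. Here the ratios $\sin\frac{\pi(\alpha\mid z-\beta)}{q}/\sin\pi(\alpha\mid z)$ (resp.\ with $\alpha^\vee$) have the finite limit $1/q$ as $z\to 0$ (since $\beta=0$), so that $\prod_{\alpha\in\Delta_+}$ of these contributes $q^{-|\Delta_+|}$. Matching the surviving expression against the form $\A_V(-i\tau)^{\Weight_V/2}e^{\pi i\G_V/12\tau}$ with $\tau=iT$ then reads off $\Weight_{L(\lam)}=0$, the stated $\G_{L(\lam)}$, and the stated $\A_{L(\lam)}$.

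Second I would handle the quantum dimension. Since $L_k(\g)$ itself corresponds to $\lam=k\Lambda_0$, i.e.\ $\nu=\widehat\rho$ (so $(\alpha\mid\nu)=(\alpha\mid\rho)$), and since $\G_{L(\lam)}=\G_{L(k\Lambda_0)}$ and $\Weight_{L(\lam)}=\Weight_{L(k\Lambda_0)}=0$ by the formulas just obtained, the general remark after Definition~\ref{Def:asymp.datum} gives $\on{qdim}_{L(k\Lambda_0)}L(\lam)=\A_{L(\lam)}/\A_{L(k\Lambda_0)}$. In this ratio the prefactors $q^{-|\Delta_+|}|P/(pq)Q^\vee|^{-1/2}$ (resp.\ the coprincipal analogue with the $(r^\vee_\g)^{|\Delta_+^{\mathrm{short}}|}$ factor) cancel, leaving $\prod_{\alpha\in\Delta_+}\sin\frac{\pi(\lam+\rho\mid\alpha)}{p}/\sin\frac{\pi(\rho\mid\alpha)}{p}$ (resp.\ with $\alpha^\vee$). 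Writing $t=e^{\pi i/p}$ and $n_t=(t^n-t^{-n})/(t-t^{-1})$, the identity $2\sin\frac{\pi n}{p}=(t-t^{-1})\,n_t\cdot(\text{sign})$ shows this product equals $\prod_{\alpha\in\Delta_+}(\lam+\rho\mid\alpha)_t/(\rho\mid\alpha)_t$ (resp.\ with $\alpha^\vee$); one should note that $(\lam+\rho\mid\alpha)$ is a positive integer-or-rational pairing and hence the signs and the factors $(t-t^{-1})$ cancel in the ratio, which makes the formula exact as stated.

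Finally I would note that the very first sentence of the corollary — that $L_k(\g)$ and each simple ordinary $L(\lam)$ admit an asymptotic datum — is an immediate consequence of the two propositions together with the classification of ordinary modules: by the discussion preceding the corollary, any simple ordinary $L_k(\g)$-module is $L(\lam)$ with $\lam\in\on{Pr}^k_\Z$ or $\on{CoPr}^k_\Z$ according as $k$ is principal or coprincipal, so the explicit asymptotics in parts (1) and (2) cover all cases, and $L_k(\g)=L(k\Lambda_0)$ is the special case $\lam=k\Lambda_0$. I expect the only genuinely delicate point to be the bookkeeping in passing from the $z$-dependent asymptotic $b(\lam,z)e^{\pi\G/12T}$ of Propositions~\ref{Proposition:Kac-Wakimoto} and~\ref{prop:general.affine.asymp.coprincipal} to the $z=0$ value: one must be sure that taking $z\to 0$ commutes with the $T\to 0^+$ asymptotic expansion (legitimate here because for $\lam\in\on{Adm}^k_\Z$ there is no pole, the ratio $\sin\frac{\pi(\alpha\mid z)}{q}/\sin\pi(\alpha\mid z)\to 1/q$ being finite), and to keep track of the lattice-index normalisations $|P/(pq)Q^\vee|$ versus $|P^\vee/(pq)Q|$ and the short-root factor in the coprincipal case, for which the relation $|\rho^\vee|^2$ via the Freudenthal--de~Vries strange formula and Langlands duality (already used in the proof of Proposition~\ref{prop:general.affine.asymp.coprincipal}) supplies the second expression for $\G$.
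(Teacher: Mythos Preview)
Your approach is essentially the paper's: apply Propositions~\ref{Proposition:Kac-Wakimoto} and~\ref{prop:general.affine.asymp.coprincipal} with $\ov y=1$, $\beta=0$, $\nu=\lam+\widehat\rho$, and take the limit $z\to 0$ via l'H\^opital to read off $\Weight$, $\G$, $\A$, then form the ratio $\A_{L(\lam)}/\A_{L(k\Lambda_0)}$ for the quantum dimension. One small correction to your first paragraph: in the coprincipal case the relevant ratio is $\sin\frac{\pi(\alpha^\vee\mid z)}{q}\big/\sin\pi(\alpha\mid z)$ with $\alpha^\vee$ only in the numerator, so its limit is $\frac{2}{q(\alpha\mid\alpha)}$, equal to $1/q$ for long roots but $r^\vee/q$ for short ones---this, not a uniform $1/q$, is what produces the factor $(r^\vee)^{|\Delta_+^{\mathrm{short}}|}$ that you correctly record in the final formula.
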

\begin{proof}
The assertion follows from 
Propositions \ref{Proposition:Kac-Wakimoto} and 
\ref{prop:general.affine.asymp.coprincipal}
 by taking the limit $z \rightarrow 0$ with $\beta=0$  and $\bar y=1$.
 Let us explain the details for the coprincipal case.
 The normalised character $\chi_{L(\lam)}(\tau)$ is just the specialization $\chi_{L(\lam)}(\tau, 0, 0)$. To prove the first assertion we therefore apply Proposition \ref{prop:general.affine.asymp.coprincipal}
with $\beta = 0$, $\ov y = 1$, $\nu = \lam+\rho$, and we let $z$ tend to $0$ along a ray disjoint from the hyperplanes $(\alpha | z) = 0$. Comparing the definition of the asymptotic dimension $\A$ 
(Definition \ref{Def:asymp.datum} above) with that of $b(\lam, z)$ yields 
\begin{align*}
\A_{L(\lam)}
&= \lim_{z \rightarrow 0} b(\lam, z) \\
&= |P^\vee/ pq Q|^{-1/2} \prod_{\alpha \in \Delta_+} 2 \sin\frac{\pi (\alpha^\vee | \lam+\rho)}{p} \cdot \frac{|\alpha^\vee|/q}{|\alpha|} \\
&= |P^\vee/ pq Q|^{-1/2} \frac{(r^\vee)^{|\Delta_+^{\text{short}}|}}{q^{|\Delta_+|}} \prod_{\alpha \in \Delta_+} 2 \sin\frac{\pi (\alpha^\vee | \rho)}{p}.
\end{align*}
Here we used l'Hopital's rule. 
\end{proof}

In the above we have $\A_{L(\lam)}>0$ since 
$0<(\lam+\rho,\alpha)=
\frac{(\alpha|\alpha)}{2}( \lam+\rho|\alpha^{\vee}) <p$
for $\lam\in \on{Pr}^k$, $\alpha\in \Delta_+$,
and
$0<(\lam+\rho,\alpha^\vee)
 <p$
for $\lam\in \on{CoPr}^k$, $\alpha\in \Delta_+$.

Note that $|P /(pq) Q^\vee | = (pq)^\ell |P /Q^\vee|$ 
and 
$|P^\vee/ pq Q^\vee| = \left( \frac{pq}{r^\vee}\right)^\ell  |P^\vee/ r^\vee Q|$ 
if $r^\vee | q$. 
The values of $|P /Q^\vee|$ and $|P^\vee/ r^\vee Q|$, 
as well as other useful data, 
are collected in Table \ref{Tab:data_simple} for each simple Lie algebra. 

{\tiny 
\begin{table}[h]
\begin{tabular}{|c|ccccccccc|}
\hline 
&&&&&&&&&  \\
$\g$ & $A_\ell$, $\ell \ge 1$ & $B_\ell$, $\ell \ge 2$  & $C_\ell$, $\ell \ge 3$ & $D_\ell$, $\ell \ge 4$ 
& $E_6$ & $E_7$ & $E_8$ & $F_4$ & $G_2$ \\
&&&&&&&&&  \\
\hline 
&&&&&&&&&  \\[0.1em]
$\dim \g$ & $\ell (\ell+2)$ & $\ell(2\ell+1)$ & $\ell(2\ell+1)$ & $\ell(2\ell-1)$ & 
$78$ & $133$ & $248$ & $52$ & $14$ \\[0.5em]
$h_\g$ & $\ell+1$ & $2\ell$ & $2\ell$ &  $2\ell-2$ & $12$ & $18$ & $30$ & $12$ & $6$ \\[0.5em]
$h_\g^\vee$ & $\ell+1$ & $2\ell-1$ & $\ell+1$ & $2\ell-2$ & $12$ & $18$ & $30$ & $9$ & $4$ \\[0.5em]
$|P^\vee/Q^\vee|$ & $\ell+1$ & $2$ & $2$ & $4$ & $3$ & $2$ & $1$ & $1$ & $1$ \\[0.5em]
$|P /Q^\vee|$ & $\ell+1$ & $4$ & $2^{\ell}$ & $4$ & $3$ & $2$ & $1$ & $4$ & $3$ \\[0.5em]
$|P^\vee/ r^\vee Q|$ & $\ell+1$ & $2^{\ell}$ & $4$ & $4$ & $3$ & $2$ & $1$ & $4$ & $3$ \\[0.5em]
$|\Delta_+|$ & $\frac{\ell(\ell+1)}{2}$ 
& $\ell^2$ & $\ell^2$ & $\ell(\ell-1)$ & $36$ & $63$ & $120$ & $24$ & $6$ \\[0.5em]
$|\Delta_+^{\rm short}|$ &  
& $\ell$ & $\ell(\ell-1)$ &  &  &  &  & $12$ & $3$ \\[0.5em]
\hline
\end{tabular} \\[1em] 
\caption{\footnotesize Some data for simple Lie algebras} 
\label{Tab:data_simple} 
\end{table}
}
\begin{Th} 
\label{Th:main}
Let $k$ be admissible,
and let $V$ be a conical self-dual conformal vertex algebra
equipped with a vertex algebra homomorphism
$\varphi:V^k(\g)\ra V$
such that $\varphi(\omega_{V^k(\g)})\in V_2$
and $(\omega_V)_{(2)}\varphi(\omega_{V^k(\g)})=0$.
Assume that 
$V$ is a quotient of a vertex algebra 
$\tilde{V}$ that
admits \asym\ 
such that 
\begin{align*}
\Weight_{\tilde{V}}=0 
\quad \text{ and }\quad 
\G_{\tilde{V}} = \G_{L_{k}(\g)}.
\end{align*}
Then $\varphi$ factors through a finite extension 
$L_k(\g)\hookrightarrow V$. 
If moreover 
\begin{align*}
\chi_{\tilde{V}}(\tau) \sim \chi_{L_{k}(\g)}(\tau), 
\quad \text{as }\tau\downarrow 0, 
\end{align*}
that is, 
\begin{align*}
\A_{\tilde{V}} = \A_{L_{k}(\g)}, \quad
\Weight_{\tilde{V}}=0
\quad \text{ and }\quad 
\G_{\tilde{V}} = \G_{L_{k}(\g)}.
\end{align*}
Then $\varphi$ factors through an isomorphism
$L_k(\g)\cong V$. 
\end{Th}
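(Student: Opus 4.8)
The plan is to upgrade $\varphi$ to a conformal homomorphism, apply the finite-extension theorem to obtain a conformal embedding $L_k(\g)\hookrightarrow V$ together with a decomposition of $V$ into admissible modules, and then use the asymptotic data of Corollary \ref{Co:asymptotic_data_L} to force this embedding to be onto. For the first step I would apply Proposition \ref{Pro:conformal} with $(V,W,\tilde W,L)=\bigl(V^k(\g),V,\tilde V,L_k(\g)\bigr)$: the simple quotient of $V^k(\g)$ is $L_k(\g)$, which admits an asymptotic datum by Corollary \ref{Co:asymptotic_data_L}; $V$ is a quotient of $\tilde V$, which admits an asymptotic datum by hypothesis; the conditions $\varphi(\omega_{V^k(\g)})\in V_2$ and $(\omega_V)_{(2)}\varphi(\omega_{V^k(\g)})=0$ are assumed; and $\G_{L_k(\g)}=\G_{\tilde V}$ holds by hypothesis. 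Hence $\varphi$ is conformal.

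Since $V$ is conical and self-dual and $\varphi$ is conformal, Theorem \ref{Th:finite_extensions} shows that $\varphi$ factors through an embedding $\iota\colon L_k(\g)\hookrightarrow V$ and that $V\cong\bigoplus_i L(\lam_i)^{\oplus m_i}$ is a direct sum of admissible $L_k(\g)$-modules; as $\varphi$ is conformal this is a decomposition of $V$ as a conformal $L_k(\g)$-module, so that $\chi_V=\sum_i m_i\chi_{L(\lam_i)}$. Since $\tilde V$ admits an asymptotic datum its graded pieces are finite-dimensional, hence so are those of the quotient $V$ and of each $L(\lam_i)$; in particular each $L(\lam_i)$ is ordinary, i.e.\ $\lam_i\in\on{Pr}^k_\Z$ (resp.\ $\on{CoPr}^k_\Z$) when $k$ is principal (resp.\ coprincipal). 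Since $V$ is conical we have $V_0=\C$ and all conformal weights are $\ge 0$; because $L(k\Lam_0)=L_k(\g)$ is the unique ordinary admissible module of conformal weight $0$, it occurs among the $L(\lam_i)$ with multiplicity exactly $1$.

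Next I would compare characters. From $\iota$ and from the (conformal) quotient map $\tilde V\twoheadrightarrow V$ one gets $0<\chi_{L_k(\g)}(\tau)\le\chi_V(\tau)\le\chi_{\tilde V}(\tau)$ for $\tau\in i\mathbb{R}_{>0}$. As $\tau\downarrow 0$ both outer terms are asymptotic to $\A_{L_k(\g)}\,e^{\frac{\pi{\bf i}}{12\tau}\G_{L_k(\g)}}$ --- here we use $\Weight_{L_k(\g)}=\Weight_{\tilde V}=0$, $\A_{\tilde V}=\A_{L_k(\g)}$ and $\G_{\tilde V}=\G_{L_k(\g)}$ --- so by squeezing $\chi_V$ has the same asymptotic datum; in particular $\chi_V(\tau)-\chi_{L_k(\g)}(\tau)=o\!\bigl(e^{\frac{\pi{\bf i}}{12\tau}\G_{L_k(\g)}}\bigr)$ as $\tau\downarrow 0$. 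Suppose $\iota$ were not onto. Then the decomposition of $V$ contains a constituent $L(\lam_j)$ with $\lam_j\ne k\Lam_0$ and $m_j\ge 1$, and hence (using that the vacuum module occurs with multiplicity $1$) $\chi_V(\tau)-\chi_{L_k(\g)}(\tau)\ge\chi_{L(\lam_j)}(\tau)>0$ on $i\mathbb{R}_{>0}$. But by Corollary \ref{Co:asymptotic_data_L} and the positivity remark following it, $\chi_{L(\lam_j)}(\tau)\sim\A_{L(\lam_j)}\,e^{\frac{\pi{\bf i}}{12\tau}\G_{L(\lam_j)}}$ with $\A_{L(\lam_j)}>0$ and $\G_{L(\lam_j)}=\G_{L_k(\g)}$, the asymptotic growth being independent of the weight; this contradicts the previous line. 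Therefore $\iota$ is onto, and $\varphi$ factors through an isomorphism $L_k(\g)\cong V$.

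The step I expect to be the main obstacle is the control of the (a priori infinite) direct-sum decomposition of $V$: one needs to know that every constituent $L(\lam_i)$ is ordinary, so that it has a character with controlled behaviour as $\tau\downarrow 0$, and --- decisively --- that all the growths $\G_{L(\lam_i)}$ coincide, so that an additional constituent cannot hide inside the $o(\cdot)$ error term. Both facts are supplied by Corollary \ref{Co:asymptotic_data_L}, where $\G_{L(\lam)}$ equals $\bigl(1-h_\g^\vee/pq\bigr)\dim\g$ (resp.\ $\bigl(1-r^\vee h^\vee_{{}^L\!\g}/pq\bigr)\dim\g$), manifestly independent of $\lam$, and $\A_{L(\lam)}>0$ throughout; the conical hypothesis enters only to identify the vacuum module as the unique weight-zero constituent.
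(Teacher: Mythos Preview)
Your proof is correct and follows essentially the same route as the paper: first invoke Proposition~\ref{Pro:conformal} to make $\varphi$ conformal, then Theorem~\ref{Th:finite_extensions} to decompose $V$ as a direct sum of admissible modules $L(\lam)$ with $\lam\in\on{Pr}^k_\Z$ (resp.\ $\on{CoPr}^k_\Z$), and finally compare asymptotic data using Corollary~\ref{Co:asymptotic_data_L}. The only cosmetic difference is in the last step: the paper writes $\A_V=\sum_\lam m_\lam\A_{L(\lam)}\le\A_{\tilde V}=\A_{L_k(\g)}$ directly, whereas you phrase the same inequality as a squeezing argument on characters followed by a contradiction; since all $\A_{L(\lam)}>0$ and $m_{k\Lam_0}=1$, both formulations immediately force $m_\lam=0$ for $\lam\ne k\Lam_0$. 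Your added remarks (that finiteness of graded pieces forces the constituents to be ordinary, and that conicality pins down $m_{k\Lam_0}=1$) make explicit points the paper leaves implicit. Note also that the index set $\on{Pr}^k_\Z$ (resp.\ $\on{CoPr}^k_\Z$) is finite, so no convergence issue arises when passing from the term-by-term asymptotics to the asymptotics of the sum.
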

\begin{proof}
By Proposition \ref{Pro:conformal}, $\varphi$ is conformal.
Hence, by Theorem \ref{Th:finite_extensions},
$V$ is a direct sum of admissible representations
$L(\lam)$ with 
$\lam\in \on{Pr}^k_{\Z}$  if $k$ is principal
(resp.\ $\lam\in \on{CoPr}^k_{\Z}$  if $k$ is coprincipal). 
So we can write
 $$V=\bigoplus_{\lam\in \on{Pr}^k_{\Z}}L(\lam)^{\+ m_{\lam}},\quad
 (\text{resp.\ }  V=\bigoplus_{\lam\in \on{CoPr}^k_{\Z}}L(\lam)^{\+ m_{\lam}})$$
 with
$m_{k\Lam_0}=1$. 
In particular,
$V$ admits an asymptotic datum
with $\G_V=\G_{L_k(\g)}=\G_{\tilde{V}}$, $\Weight_V=0$,
$\A_V=\sum_{\lam \in \on{Pr}^k_{\Z}}m_{\lam}\A_{L(\lam)}$
 (resp.\  $\A_V=\sum_{\lam \in \on{CoPr}^k_{\Z}}m_{\lam}\A_{L(\lam)}$).
Since $\A_V \le \A_{\tilde{V}}$ 
and
 $\A_{L(\lam)}>0$  for all $\lam\in \on{Pr}^k_{\Z}$  (resp. $\lam\in \on{CoPr}^k_{\Z}$),
  we get the first assertion. 
If in addition $\A_{\tilde{V}}=\A_{L_k(\g)}$, then 
  $\A_V \le \A_{L_k(\g)}$ which forces $m_{k\Lam_0}=1$ and all 
  others $m_{\lam}$ are zero, whence the statement. 
\end{proof}

We now briefly discuss some relationships between primitive ideals, associated varieties, and conformal dimensions. For a two-sided ideal $I$ of
$U(\g)$, we denote $\on{Var}(I)$ be the zero locus
of $\on{gr}I\subset \on{gr}U(\g)=\C[\g^*]$  in~$\g^{*}$. Now let $k$ be an admissible number  with denominator $q$. We distinguish the following weight
\begin{align}
\lam_{\ooh}=\begin{cases}
\rho/q-\rho &\text{if }(q,r^{\vee})=1,\\
\rho^{\vee}/q-\rho &\text{if }(q,r^{\vee})\ne 1.
\end{cases}
\label{eq:def-lam0}
\end{align}
By \cite[Proposition 2.4]{AEM}
\begin{align}
\lam_{\ooh,k}:= \lam_o+k\Lam_0\in \begin{cases}
\on{Pr}^k &\text{if }(q,r^{\vee})=1,\\
\on{CoPr}^k &\text{if }(q,r^{\vee})\ne 1.
\end{cases}
\end{align}
and
\begin{Th}[\cite{AEM}]
\label{Th:short-paper}
For an admissible number $k$, we have
$\on{Var}(J_{\lam_{\ooh}})=\overline{\mathbb{O}}_k$.
\end{Th}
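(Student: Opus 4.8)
The plan is to establish $\on{Var}(J_{\lam_o})=\overline{\mathbb{O}}_k$ by proving the two inclusions separately. The inclusion $\on{Var}(J_{\lam_o})\subseteq\overline{\mathbb{O}}_k$ is soft and follows from the structure of the Zhu algebra of $L_k(\g)$; the reverse inclusion is the substance of the statement and amounts to an explicit computation of a nilpotent orbit.

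\emph{The inclusion $\on{Var}(J_{\lam_o})\subseteq\overline{\mathbb{O}}_k$.} By \cite[Proposition 2.4]{AEM} recalled just above, $\lam_{o,k}=\lam_o+k\Lam_0$ lies in $\on{Adm}^k$ and $\overline{\lam_{o,k}}=\lam_o$, so Theorem \ref{The:Zhu-of-admissible} exhibits $U(\g)/J_{\lam_o}$ as a direct factor of $\on{Zhu}(L_k(\g))=U(\g)/I_k$; in particular $I_k\subseteq J_{\lam_o}$, and passing to associated graded with respect to the PBW filtration gives $\gr I_k\subseteq \gr J_{\lam_o}$ in $\C[\g^*]=\gr U(\g)$, hence $\on{Var}(J_{\lam_o})\subseteq\on{Var}(I_k)$. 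On the other hand, Zhu's comparison \cite{Zhu96} of the $C_2$-filtration with the filtration on $\on{Zhu}(L_k(\g))$ furnishes a surjection of Poisson algebras $R_{L_k(\g)}\twoheadrightarrow\gr\on{Zhu}(L_k(\g))=\C[\g^*]/\gr I_k$, so that $\on{Var}(I_k)\subseteq X_{L_k(\g)}$, and $X_{L_k(\g)}=\overline{\mathbb{O}}_k$ by Theorem \ref{Th:admissible-orbits}. Combining these gives the claimed inclusion.

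\emph{The inclusion $\overline{\mathbb{O}}_k\subseteq\on{Var}(J_{\lam_o})$.} Since $J_{\lam_o}$ is a primitive ideal of $U(\g)$, a theorem of Joseph guarantees that $\on{Var}(J_{\lam_o})$ is the closure of a single nilpotent orbit; by the inclusion just proved this orbit lies in $\overline{\mathbb{O}}_k$, so it suffices to prove $\on{GKdim}(U(\g)/J_{\lam_o})=\dim\mathbb{O}_k$, after which equality is forced. To compute $\on{GKdim}(U(\g)/J_{\lam_o})$ I would use the shape of the block of category $\mathcal O$ containing $L_\g(\lam_o)$. Because $\lam_o+\rho=\tfrac{1}{q}\rho$ in the principal case (resp.\ $\tfrac{1}{q}\rho^\vee$ in the coprincipal case), the integral root system of $\lam_o$ is $\{\alpha\in\Delta:\ q\mid\langle\rho,\alpha^\vee\rangle\}$ (resp.\ its $\rho^\vee$-analogue), $\lam_o+\rho$ is dominant regular for this subsystem, and under the standard equivalence of this block with the principal block of the reductive subalgebra $\mathfrak l$ cut out by the subsystem, $L_\g(\lam_o)$ corresponds to a \emph{finite-dimensional} $\mathfrak l$-module, whose annihilator therefore has associated variety $\{0\}$. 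The Lusztig--Spaltenstein and Barbasch--Vogan description of associated varieties of primitive ideals then identifies $\on{Var}(J_{\lam_o})$ with the closure of the nilpotent orbit induced from the zero orbit of $\mathfrak l$: the Richardson orbit of a parabolic with Levi $\mathfrak l$ when $\mathfrak l$ is a Levi subalgebra, and the corresponding Barbasch--Vogan dual orbit when $\mathfrak l$ is only a pseudo-Levi (such as $D_2\times D_2\subset D_4$, which already occurs at $q=2$). It then remains to check that this orbit is $\mathbb{O}_k$, using the explicit description of $\mathbb{O}_k$ in terms of the denominator $q$ coming from \cite{Arakawa15a} (Theorem \ref{Th:admissible-orbits}): in type $A$ both sides are given by the partition of $\ell+1$ produced by Euclidean division by $q$, and in the remaining classical and exceptional types one matches the two combinatorial recipes (and tables) directly.

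The final matching of the two parametrisations of nilpotent orbits by $q$ is where the real work lies, and I expect the pseudo-Levi cases — where naive Lusztig--Spaltenstein induction must be replaced by its Barbasch--Vogan dual — to be the main obstacle, even though it is ultimately a finite, case-by-case verification. As an alternative to the geometric induction step one could compute $\on{GKdim}(U(\g)/J_{\lam_o})$ from the Goldie rank polynomial of $J_{\lam_o}$, or from the category $\mathcal O$ character of $L_\g(\lam_o)$, which trades the orbit combinatorics for Kazhdan--Lusztig combinatorics in the block of $\lam_o$.
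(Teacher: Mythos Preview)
The paper does not give a proof of this theorem: it is quoted verbatim from the companion paper \cite{AEM}, and the only argument appearing here is the Remark immediately following the statement, which establishes the easy inclusion $\on{Var}(J_{\lam})\subset\overline{\mathbb{O}}_k$ for every $\lam$ with $\lam+k\Lam_0\in\on{Adm}^k$. Your argument for that inclusion is the same as the one in the Remark (indeed slightly more detailed, since you spell out the passage through $\gr I_k$ and $R_{L_k(\g)}$), so on that half there is nothing to compare.

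For the reverse inclusion your plan is reasonable, and in type $A$ it goes through cleanly: the integral root system of $\lam_o$ is conjugate to a standard Levi subsystem, $L_\g(\lam_o)$ is a simple parabolic Verma module, and its annihilator has associated variety the Richardson orbit closure, which one checks is $\overline{\mathbb{O}}_k=\overline{\mathbb{O}}_{(q^{\widetilde m},\widetilde s)}$ by the transpose rule. Outside type $A$, however, your invocation of ``the Barbasch--Vogan dual orbit when $\mathfrak l$ is only a pseudo-Levi'' is not quite the right tool: Barbasch--Vogan duality relates orbits in $\g$ and in ${}^L\g$, not orbits in a pseudo-Levi and in $\g$. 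What one actually needs for a non-Levi integral subsystem is the description of $\on{Var}(J_\lambda)$ via the two-sided cell of the identity in $W(\lambda)$ and Lusztig's $j$-induction of the sign representation of $W(\lambda)$ up to $W$ (followed by Springer), or an equivalent device. You are right that this step is the substantive one and that it ultimately reduces to a type-by-type verification; since the proof lives in \cite{AEM} and not in the present paper, I cannot compare your outline against it in detail, but the overall shape of your program is the standard route to such statements.
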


\begin{Rem}
Let $k$ be 
an admissible number and  let $\lam\in \h^*$ such that $\lam+k\Lam_0 \in \on{Adm}^k$. Since 
 $\on{Var}(I_k)=\overline{\mathbb{O}}_k$ by \cite[Theorem 9.5]{A2012Dec}
(recall  \eqref{eq:Zhu-of-affine}) and
$L_\g(\lam)$ is a $\on{Zhu}(L_k(\g))$-module (Theorem \ref{Th:rationality-in-category-O}),
we have
$$\on{Var}(J_\lam)\subset \overline{\mathbb{O}}_k.$$
\end{Rem}
The connection with conformal dimensions is given by:
\begin{Pro}\label{Pro:minimal-conformal-dim}
For an admissible number $k$, the module $L(\lam_{\ooh,k})$ has the minimal 
conformal dimension
among the simple  $L_k(\g)$-modules that belong to $\mc{O}_k$.
\end{Pro}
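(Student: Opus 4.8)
The plan is to compute the conformal dimension of $L(\lambda)$ for $\lambda \in \on{Adm}^k$ explicitly, and then minimize over the relevant set. Recall that for $\lambda \in \affh^*_k$ the conformal dimension of the module $L(\lambda)$ is
\[
h_\lambda = \frac{(\bar\lambda \mid \bar\lambda + 2\rho)}{2(k + h_\g^\vee)},
\]
coming from the Sugawara construction. Here $k + h_\g^\vee = p/q$ with $p, q$ as in \eqref{eq:admissible-n}. So minimizing $h_\lambda$ over $\lambda$ with $\lambda + k\Lam_0 \in \on{Adm}^k$ amounts to minimizing the quadratic form $(\bar\lambda \mid \bar\lambda + 2\rho) = |\bar\lambda + \rho|^2 - |\rho|^2$ over the set of finite parts $\bar\lambda$ that arise, i.e.\ over the orbit under the relevant symmetry described by Theorem \ref{Th:rationality-in-category-O} and the parametrization $\on{Pr}^k_y = y\phi(\widehat P^{p-h^\vee_\g}_+ + \widehat\rho) - \widehat\rho$ (resp.\ the coprincipal version). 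Since $h_\lambda$ depends only on $|\bar\lambda + \rho|^2$, and by \cite[Proposition 2.4]{AraFutRam17} the quantity $J_{\bar\lambda}$, hence the $W$-orbit of $\bar\lambda + \rho$, is the relevant invariant, it suffices to find the shortest vector $\bar\lambda + \rho$ among all admissible weights of level $k$.

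The key computation is then: for $\lambda + k\Lam_0 \in \on{Adm}^k$ one has $\bar\lambda + \rho = y\phi(\nu)|_{\h^*}$ for $\nu \in {}^\circ\widehat P^{p - h^\vee}_+ + \widehat\rho$ (up to the $\widetilde W$-action), and unwinding the definition of $\phi$ (which scales $\Lam_0$ by $1/q$ and fixes $\h^*$) together with $y = t_\beta \bar y$ shows that $\bar\lambda + \rho$ ranges over vectors of the form $\bar y(\bar\nu) - q\beta'$ for suitable $\beta'$, scaled appropriately. The point is that the \emph{minimal} norm is achieved when $\beta$ (the translation part) is zero, $\bar y = 1$, and $\bar\nu$ is as small as possible, namely $\bar\nu = \rho$ (resp.\ $\rho^\vee$ in the coprincipal case) — this is exactly the choice defining $\lambda_o$ in \eqref{eq:def-lam0}, since then $\bar\lambda_o + \rho = \rho/q$ (resp.\ $\rho^\vee/q$). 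Concretely: among $\lambda$ with $\bar\lambda + k\Lam_0 \in \on{Adm}^k$, writing $\bar\lambda + \rho = w \circ$ (something) $+$ (a scaled integral part), the contribution of any nonzero lattice translation strictly increases $|\bar\lambda + \rho|$ because the vectors involved lie in distinct cosets whose representatives near the origin are uniquely the ``$\rho$-type'' ones. I would make this precise by invoking Theorem \ref{Th:short-paper} / the description of the integral root system $\widehat\Delta(\lambda) = y(\widehat\Delta(k\Lam_0))$: the condition forces $\bar\lambda + \rho$ to lie on a specific scaled alcove, and the unique point of minimal distance to the origin on that alcove (intersected with the regular locus) corresponds to $\lambda_{o,k}$.

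The main obstacle, I expect, is the bookkeeping in the coprincipal case, where the integral root system involves both $q\delta$ on long roots and $q\delta/r^\vee$ on short roots, so the relevant lattice is not simply $qQ$ and one must be careful about whether $\rho$ or $\rho^\vee$ gives the shorter vector — this is precisely why \eqref{eq:def-lam0} distinguishes the two cases. I would handle this by treating principal and coprincipal separately, in each case reducing to: (i) conformal dimension is an increasing function of $|\bar\lambda + \rho|^2$ since $k + h^\vee > 0$; (ii) the set $\{\bar\lambda + \rho : \lambda + k\Lam_0 \in \on{Adm}^k\}$ is $\widetilde W$-stable and its intersection with a fundamental domain is described via $\phi$; (iii) the shortest element of that set is $\rho/q$ (resp.\ $\rho^\vee/q$), achieved uniquely up to $W$ at $\lambda_{o,k}$. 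Step (iii) is where one genuinely uses that $p \geq h^\vee_\g$ (resp.\ $p \geq h_\g$), ensuring $\rho$ (resp.\ $\rho^\vee$) actually corresponds to an allowed weight $\nu$ in ${}^\circ\widehat P^{p-h^\vee}_+ + \widehat\rho$, i.e.\ that $\lambda_o$ is admissible — which is guaranteed by \cite[Proposition 2.4]{AEM} as already recorded above. Finally, since $h_{\lambda_{o,k}} = (|\rho/q|^2 - |\rho|^2)/(2p/q) = \frac{(1/q^2 - 1)|\rho|^2}{2p/q}$ (resp.\ with $\rho^\vee$), which is manifestly the minimum, the proposition follows.
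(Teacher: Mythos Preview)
Your overall strategy matches the paper's: write $h_\lambda = \frac{|\bar\lambda+\rho|^2 - |\rho|^2}{2(k+h^\vee_\g)}$ and minimize $|\bar\lambda+\rho|^2$ over $\on{Adm}^k$ via the parametrization $\lambda = y\phi(\widehat\nu) - \widehat\rho$. The paper makes this concrete: with $y = \bar y\, t_{-\eta}$, $\eta \in P^{\vee,q}_+$, and $\widehat\nu = \nu + p\Lambda_0$ with $\nu \in P_{++}^p$, one gets
\[
|\bar\lambda+\rho|^2 \;=\; \bigl|\nu - \tfrac{p}{q}\eta\bigr|^2 \;=\; \tfrac{1}{q^2}\,|q\nu - p\eta|^2,
\]
so the problem is to find the shortest vector of the form $q\nu - p\eta$.

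Here is the step you are missing. Your argument ``set $\beta=0$, $\bar y=1$, take $\bar\nu$ minimal, and check $\lambda_o$ is admissible'' only establishes that $\rho/q$ is \emph{attained}; it does not rule out cancellation between the $\nu$- and $\eta$-contributions producing a shorter vector. The paper's key observation is that $q\nu - p\eta$ is always a \emph{regular} element of $P$: if $\langle q\nu - p\eta,\alpha_i^\vee\rangle = 0$ then $p \mid q\langle\nu,\alpha_i^\vee\rangle$, and since $(p,q)=1$ this forces $p \mid \langle\nu,\alpha_i^\vee\rangle$, contradicting $0 < \langle\nu,\alpha_i^\vee\rangle < p$. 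Thus $q\nu - p\eta$ ranges inside the regular locus of $P$, whose shortest elements are exactly $W\cdot\rho$. In the coprincipal case $(q,r^\vee)=r^\vee$ gives $qP \subset P^\vee$, hence $q\nu - p\eta \in P^\vee$, and the shortest regular elements of $P^\vee$ are $W\cdot\rho^\vee$. So it is the coprimality of $p$ and $q$ that drives the argument, not the inequality $p \ge h^\vee_\g$ (resp.\ $p\ge h_\g$) you single out; the latter only ensures the choice $\nu=\rho$ is available, which is the easy direction.
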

\begin{proof}
By \cite{Arakawa2016} a simple $L_k(\g)$-module which belongs to $\mc{O}_k$ is an irreducible highest weight module $L(\widehat{\mu})$ of highest weight $\widehat{\mu} = \mu + k\Lambda_0 \in \on{Adm}^k$.

First we consider the principal admissible case $(q,r^{\vee})=1$. We have, cf. Proposition \ref{Proposition:Kac-Wakimoto} above and preceding remarks, $\widehat{\mu} = y \phi(\widehat{\nu}) - \widehat{\rho}$ where $\widehat{\nu} = \nu + p \Lambda_0$ and $\nu \in P_{++}^{p}$, where $y = \ov y t_{-\eta}$ with $\eta \in P_+^{\vee, q}$.

The conformal weight of $L(\widehat\mu)$ is
\[
h_{L(\mu)} = \frac{(\mu | \mu + 2\rho)}{2(k+h^\vee_\g)} = \frac{|\mu + \rho|^2 - |\rho|^2}{2(k+h^\vee_\g)}.
\]
Now we consider
\begin{align*}
|\mu + \rho|^2 = |\overline{y \phi(\nu)}|^2 = |\nu - \frac{p}{q} \eta|^2 = \frac{1}{q^2} |q\nu - p\eta|^2.
\end{align*}
We show that $q\nu - p\eta \in P$ is a regular weight. Indeed suppose $\left<q\nu - p\eta, \alpha_i^\vee\right> = 0$ for some $i = 1, \ldots, \ell$ and put $m = \left<\nu, \alpha_i^\vee\right>$ and $n = \left<\eta, \alpha_i^\vee\right>$. Then $qm - pn = 0$ and so $p$ divides $m$, but this is a contradiction since $0 < m < p$ because $\nu$ is regular.

The regular elements of $P$ of minimal norm are $\rho$ and its images under the finite Weyl group, so the claim is proved in the principal admissible case. The coprincipal admissible case is similar. Since $(q, r^\vee) = r^\vee$ we have $q\nu \in qP \subset P^\vee$ and hence $q\nu - p\eta \in P^\vee$. The regular elements of $P^\vee$ of minimal norm are $\rho^\vee$ and its images under the finite Weyl group, thus proving the claim.
\end{proof}

\begin{Rem}{\ }
\label{Rem:remar-on-min-con-wt}
\begin{enumerate}
\item 
The weight $
\lam_{\ooh,k}$ 
is not the unique element of $\on{Adm}^k$ that gives the minimal conformal dimension unless $k\in \Z_{\geq 0}$.
Indeed, for $\hat\mu=\mu+k\Lam_0\in \on{Adm}^k$, 
$L(\hat{\mu})$ has the minimal conformal dimension
if and only if 
$\mu+\rho=w(\lam_{\ooh}+\rho)$ with $w\in W$ such that 
$w(\alpha)\in \Delta_+$
for $\alpha\in \Delta_+$ with  $(\rho,\alpha^{\vee}) \in q \Z$
(resp.\ $\lam+\rho=w(\rho^\vee)/q$ with $w\in W$ such that $w(\alpha)\in \Delta_+$
for $\alpha\in \Delta_+$ with $(\rho^{\vee},\alpha^{\vee}) \in q \Z$).

\item 
The strange formula implies that 
the asymptotic growth coincides with the effective central charge, that is,
\begin{align*}
\G_{L(\lam)}=c_{L_k(\g)}-24 h_{\text{min}}
\end{align*}
for all $\lam\in \on{Adm}^k$.
\end{enumerate}

\end{Rem}

\section{Asymptotic data of $W$-algebras}
\label{sec:Asymptotics_of_W_algebras} 
Let $f$ be a nilpotent element of $\g$. 
Recall \cite{KacRoaWak03} that a $\frac{1}{2}\Z$-grading 
\begin{align}
\g
= \bigoplus_{j \in \frac{1}{2}\Z} 
\g_\Gamma^j
\label{eq:good-grading}
\end{align}
 is called {\em good} for $f$  if $f \in \g_\Gamma^{-1}$, $\ad f \colon \g_\Gamma^{j} 
\to \g_\Gamma^{j-1}$ is injective for $j \ge \frac{1}{2}$ and 
surjective for $j \le \frac{1}{2}$. The grading is called even if $\g_\Gamma^j = 0$ for
 $j\not\in \Z$. By the Jacobson-Morosov Theorem, the nilpotent element $f$ embeds into an 
 $\sl_2$-triple $(e,h,f)$, and $\g$ thereby inherits a $\frac{1}{2}\Z$-grading induced by the  
eigenvalues of $\ad(h/2)$. 
Such a grading is called a {\em Dynkin grading}. 
All Dynkin gradings are good, but not all good gradings are Dynkin.

Let $x^0_\Gamma$ be the semisimple element 
of $\g$ defining the grading \eqref{eq:good-grading}, that is, 
$$\g_\Gamma^{i} = \{y \in \g \colon [x^0_\Gamma,y] =  i y\}.$$
We can assume that $x^0_\Gamma$ 
is contained in the Cartan subalgebra 
$\h$ and that $\alpha(x^0_\Gamma) \in \frac{1}{2}\Z_{\ge 0}$ 
for all $\alpha \in \Delta_+$. 
For $j \in \frac{1}{2}\Z$, we set 
$$
\Delta_\Gamma^{j} := \{\alpha \in \Delta \colon (\alpha | x^0_\Gamma ) =  j \} =\{\alpha \in \Delta \colon \g^\alpha \subset   \g_\Gamma^j \},
$$
where $\g^\alpha$ is the $\alpha$-root space.
  
Since the bilinear form $(x,y) \mapsto (f | [x,y])$ is non-degenerate 
on $\g_\Gamma^{1/2} \times \g_\Gamma^{1/2}$, the set $\Delta_\Gamma^{1/2}$ 
has even cardinality. 
We set $\Delta^0_{\Gamma,+}= \Delta_\Gamma^0 \cap \Delta_{+}$. 
It is a set of positive 
roots for $\Delta_\Gamma^0$.

\begin{Rem}
\label{Rem:asymptotic dimension}
Unless otherwise specified, $\Gamma$ will always be the Dynkin grading 
associated with $(e,h,f)$. In this case, $x^0_\Gamma$ is $h/2$, 
and we will briefly write $x^0$, $\g^j$, $\Delta^j$, $\Delta_+^0$ for 
$x_\Gamma^0$, $\g_\Gamma^j$, $\Delta_\Gamma^j$, $\Delta_{\Gamma,+}^0$, 
respectively.  
However, in type $A$, even good gradings always exist \cite{ElashviliKac}, 
and it will be convenient to opt for an even good grading that is not necessarily 
the Dynkin grading. 
\end{Rem}

We denote by $\W^k(\g,f)$ the universal affine 
$W$-algebra associated with $\g,f$ at level $k$ 
and a good grading $\g_\Gamma= \bigoplus_{j \in \frac{1}{2}\Z} 
\g_\Gamma^j$: 
$$\W^k(\g,f) = H_{DS,f}^{0}(V^k(\g)),$$
where $H_{DS,f}^\bullet(?)$ is the BRST cohomology functor of 
the quantized Drinfeld-Sokolov
reduction associated with $\g,f$ (\cite{FeiFre90,KacRoaWak03}). The BRST complex of a $\widehat{\g}$-module $M$ takes the form 
\begin{align*}
C^\bullet(M) = M \otimes F^{\text{ch}} \otimes F^{\text{ne}},
\end{align*}
where $F^{\text{ch}}$ denotes the Clifford vertex algebra associated with $\g_\Gamma^{>0} \oplus (\g_\Gamma^{>0})^*$ and its canonical symmetric bilinear form, or in the terminology of \cite{KacRoaWak03} the charged free fermions vertex algebra associated with $\g_\Gamma^{>0} \oplus (\g_\Gamma^{>0})^*$ made into a purely odd vector superspace and canonical skew-supersymmetric form, and $F^{\text{ne}}$ denotes the neutral free fermion vertex algebra associated with $\g_\Gamma^{1/2}$ and bilinear form $(x,y) \mapsto (f | [x,y])$. We omit the detailed definition, referring the reader to (\cite{KacRoaWak03}).

The $W$-algebra $\W^k(\g,f)$ is conformal provided that $k\ne -h_\g^{\vee}$
and its central charge $c_{\W^k(\g,f)}$ is given by
\begin{align}
c_{\W^k(\g,f)}&=c_{V^k(\g)}-\on{dim}G.f-\frac{3}{2}\dim \g_{1/2}+24(\rho|x^0_{\Gamma})-12(k+h^{\vee}_\g)|x^0_{\Gamma}|^2
\label{eq:W.alg.c.formula2}
\\&
=\dim \g_\Gamma^0 -\dfrac{1}{2} \dim \g_\Gamma^{1/2} 
-\dfrac{12}{k+h_\g^\vee} | \rho - (k+h_\g^\vee) x^0_\Gamma |^2
\label{eq:W.alg.c.formula}
\end{align}
(\cite[Theorem 2.2]{KacRoaWak03}).
Although the vertex algebra structure of 
$\W^k(\g,f)$ does not depend on the choice of the good grading of $\g$ 
(\cite[\S3.2.5]{ArakawaKuwabaraMalikov}),
its conformal structure does. In general we denote by $\W_k(\g,f)$ the unique simple graded quotient of $\W^k(\g,f)$.
\begin{Pro}\label{Pro:Dynkin-self-dual}
The vertex algebra  $\W_k(\g,f)$ is self-dual if the grading \eqref{eq:good-grading} is Dynkin.
\end{Pro}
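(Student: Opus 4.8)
The plan is to establish self-duality of $\W_k(\g,f)$ by exhibiting a non-degenerate symmetric invariant bilinear form, which by the characterisation recalled in Section~\ref{Section:Asymptotic data} is equivalent to $\W_k(\g,f)$ being self-dual as a vertex algebra of CFT-type. Since $\W_k(\g,f)$ is the unique simple graded quotient of $\W^k(\g,f)$ and is conical (for the Dynkin grading the conformal weights are non-negative and $(\W_k(\g,f))_0 = \C$), it suffices to produce a non-zero invariant bilinear form on $\W^k(\g,f)$, or equivalently a non-zero vertex algebra homomorphism $\W^k(\g,f) \to \W^k(\g,f)'$ into the contragredient dual; passing to the simple quotient then automatically yields a \emph{non-degenerate} form on $\W_k(\g,f)$.

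First I would recall that $V^k(\g)$ is self-dual: it carries the standard non-degenerate invariant bilinear form (the affine Shapovalov form restricted to the vacuum module), which exists because the conformal vector is $L_{-1}$-closed and because the Cartan-involution-type antiautomorphism of $\widehat{\g}$ fixing $\h$ and exchanging $e_i t^n \leftrightarrow f_i t^{-n}$ induces the needed pairing. The key point is that the Drinfeld--Sokolov functor $H^0_{DS,f}(-)$ intertwines contragredient duality when the grading is Dynkin. Concretely, the BRST complex $C^\bullet(M) = M \otimes F^{\mathrm{ch}} \otimes F^{\mathrm{ne}}$ is built from $M$ together with the charged fermions on $\g_\Gamma^{>0}\oplus(\g_\Gamma^{>0})^*$ and the neutral fermions on $\g_\Gamma^{1/2}$ with the form $(x,y)\mapsto (f\mid[x,y])$; both fermionic factors are manifestly self-dual vertex superalgebras, and for the Dynkin grading the element $x^0 = h/2$ lies in $\h$ so the duality-inducing antiautomorphism of $\widehat\g$ can be chosen to preserve the grading and to be compatible with the BRST differential $d = d_{\mathrm{st}} + d_\chi$ (the character $\chi$ associated with $f\in\g^{-1}_\Gamma$ is preserved because the involution sends $f$ to $e$ up to the grading symmetry $j\mapsto -j$, and one composes with this symmetry). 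Therefore the involution descends to $H^0_{DS,f}(V^k(\g)) = \W^k(\g,f)$ and pairs it non-degenerately with $\W^k(\g,f)'$, which, after restricting to the simple quotient, gives the desired form on $\W_k(\g,f)$.

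The main obstacle, and the reason the Dynkin hypothesis is essential, is the compatibility of the antiautomorphism with the conformal structure and with the choice of $x^0_\Gamma$: for a non-Dynkin good grading $x^0_\Gamma$ need not be $W$-conjugate to $-x^0_\Gamma$ inside $\h$, so there is no involution of $\widehat\g$ simultaneously reversing the grading, fixing a Cartan, and carrying $f$ to a grading-$+1$ element; the conformal vector of $\W^k(\g,f)$ given by \eqref{eq:W.alg.c.formula2} involves $x^0_\Gamma$ and would not be fixed. Thus the step to watch carefully is verifying that, for the Dynkin grading, one genuinely has an anti-involution $\sigma$ of $\widehat\g$ with $\sigma(x^0) = x^0$ (so $\sigma$ preserves each $\g_\Gamma^j$), $\sigma(f) \in \g^{1}_\Gamma$ nondegenerate against $f$, and $\sigma(\omega_{V^k(\g)}) = \omega_{V^k(\g)}$; granting this, the induced map on BRST cohomology is an isomorphism $\W^k(\g,f)\isomap \W^k(\g,f)'$ compatible with conformal gradings, and the proposition follows by taking simple quotients.
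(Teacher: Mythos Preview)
Your approach has a genuine gap, and in fact the description of the anti-involution is internally inconsistent. You ask for an anti-involution $\sigma$ of $\widehat\g$ with $\sigma(x^0)=x^0$, so that $\sigma$ preserves each graded piece $\g_\Gamma^j$, and simultaneously with $\sigma(f)\in\g_\Gamma^{1}$; but if $\sigma$ preserves the grading and $f\in\g_\Gamma^{-1}$ then $\sigma(f)\in\g_\Gamma^{-1}$, not $\g_\Gamma^{1}$. The standard Chevalley-type anti-involution sends $h\mapsto -h$, hence $x^0\mapsto -x^0$, reversing the grading rather than preserving it; composing with the ``grading symmetry $j\mapsto -j$'' is then just composing with another grading-reversing map, and you have not specified any Lie-algebra automorphism realising it beyond the Chevalley involution itself. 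More seriously, even granting some such $\sigma$, you have not shown that it intertwines the BRST differential $d=d_{\mathrm{st}}+d_\chi$: the character piece $d_\chi$ is built from $\chi(x)=(f\mid x)$ on $\g_{\geq 1}$, and a grading-reversing involution does not preserve $\g_{>0}$, so the charged-fermion factor $F^{\mathrm{ch}}$ and the differential are not obviously carried to themselves. This compatibility is exactly the delicate point, and it is not addressed.

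The paper takes a completely different and much shorter route. It invokes the criterion of \cite[Proposition~6.1 and Remark~6.2]{AEkeren19} (ultimately Li's theorem that a simple CFT-type vertex algebra is self-dual iff $L_1$ annihilates the weight-one space), which reduces the question to checking the scalar identity
\[
(k+h_\g^\vee)(h\mid v)-\operatorname{tr}_{\g_{>0}}(\ad v)=0\qquad\text{for all }v\in\g^f_0.
\]
For the Dynkin grading one has $\g^f_0=\g^\natural$, the centraliser of the $\sl_2$-triple, and then $(h\mid v)=([e,f]\mid v)=(e\mid[f,v])=0$ since $v\in\g^f$. It remains to show $\operatorname{tr}_{\g_{>0}}(\ad v)=0$: this is immediate on $[\g^\natural,\g^\natural]$ (trace of a semisimple representation), and on the centre $\mf z(\g^\natural)$ it follows from the structural Lemma~\ref{lem:centraliser_is_reductive_subalgebra}, which shows $\mf z(\g^\natural)$ consists of semisimple elements whose weights on $\g_{>0}$ come in $\pm\alpha$ pairs. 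Thus the Dynkin hypothesis enters not through any symmetry of the grading element, but through the identification $\g^f_0=\g^\natural$ which forces $(h\mid v)=0$; for a non-Dynkin good grading $\g^f_\Gamma{}^{,0}$ can be strictly larger than $\g^\natural$ and the criterion can fail.
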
 
\begin{proof}
By \cite[Proposition 6.1 and Remark 6.2]{AEkeren19},
it suffices to show that
$(k+h^{\vee})(h|v)-{\rm tr}_{\g_{>0}}(\ad v)=0$ for all $v\in \g^f_0$.
Since $(h|v)={([e,f]|v)}=0$ for $v\in\g^f_0$,
in fact it is enough to show that ${\rm tr}_{\g_{>0}}(\ad v)=0$.
Note that $\g^f_0$ is the centraliser  $\g^\natural$ of the $\mf{sl}_2$-triple $\{e,h,f\}$ in $\g$,
which is a reductive Lie subalgebra of $\g$.
We clearly have ${\rm tr}_{\g_{>0}}(\ad v)=0$ 
 for $v\in [\g^\natural,\g^\natural]$
since $\g_{>0}$ is a finite-dimensional 
representation of 
the semisimple Lie algebra $ [\g^\natural,\g^\natural]$.
On the other hand,
Lemma \ref{lem:centraliser_is_reductive_subalgebra}  below shows that 
 ${\rm tr}_{\g_{>0}}(\ad v)=0$
for an element $v$ in the centre $\mf{z}(\g^\natural)$ of  $\g^\natural$ as well.
\end{proof}

We now discuss the associated varieties of affine $W$-algebras. The associated variety $X_{\W^k(\g,f) }$ of the universal $W$-algebra is isomorphic 
to the Slodowy slice $$\Slo_f=f+\g^{e}, $$
where $\g^{e}$ is the centraliser of $e$ in $\g$ (\cite{DeSole-Kac}).
There is a natural $\C^*$-action on $\Slo_f$ that contracts to $f$ (\cite{Slo80}),
and the associated variety $X_{\W_k(\g,f)}$ of the simple quotient is a $\C^*$-invariant subvariety of $\Slo_f$.

\begin{Th}[{\cite{Arakawa15a}}] 
\label{Th:Associated_variety-DS} 
\ 
\begin{enumerate}
\item For any $M\in \on{KL}_k$ we have $H_{DS,f}^i(M)=0$ for all $i\ne 0$.
In particular, 
the functor $\on{KL}_k\ra \W^k(\g,f)\on{-Mod}$,
$M\mapsto H_{DS,f}^0(M)$, is exact.
\item The  $\W^k(\g,f)$-module $H_{DS,f}^0(M)$ is  ordinary for any finitely generated object
$M\in \on{KL}_k$.
\item
For any quotient $V$ of $V^k(\g)$, the vertex algebra
$H_{DS,f}^{0}(V)$ is a quotient of $\W^k(\g,f)$ provided 
that it is nonzero, and we have 
$$X_{H_{DS,f}^{0}(V)}= X_V \cap \Slo_f,$$
which is a $\C^*$-invariant subvariety of $\Slo_f$ (\cite{Arakawa15a}). 
In particular,
\begin{enumerate}
\item $ H_{DS,f}^0(L_k(\g)) \not=0$ 
if and only if $f \in X_{L_k(\g)}$;
\item 
If $G.f\subset X_{L_k(\g)}\subset \mc{N}$,
then $H_{DS,f}^{0}(V)$ is quasi-lisse 
and so is $\W_k(\g,f)$;
\item 
If $ X_{L_k(\g)}=\overline{G.f}$,
then $H_{DS,f}^{0}(V)$ is lisse 
and so is $\W_k(\g,f)$.
\end{enumerate}

\end{enumerate}
\end{Th}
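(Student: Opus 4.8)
\emph{Overview and Part (1).} I would prove the three parts in order, with (1) and (3) carrying the real content. For (1), work directly with the BRST complex $C^\bullet(M)=M\otimes F^{\text{ch}}\otimes F^{\text{ne}}$ and use the standard decomposition $d=d_{(0)}+d_{(1)}$ of the differential along the $\Z$-grading of the ghost system by $\g_\Gamma^{>0}$-charge, in which $d_{(1)}$ is the Koszul-type piece carrying the character $\chi=(f\,|\,{-})$ and $d_{(0)}$ is essentially the Chevalley--Eilenberg differential of the loop algebra of $\g_\Gamma^{>0}$. Running the associated spectral sequence, the first reduction uses only that $\Gamma$ is good ($\ad f$ surjective on $\g_\Gamma^{\le 1/2}$) together with the fact that $F^{\text{ne}}$ is built on $\g_\Gamma^{1/2}$ with the non-degenerate pairing $(f\,|\,[{-},{-}])$: the $\g_\Gamma^{\ge 1}$- and $\g_\Gamma^{1/2}$-ghosts are cancelled and one is left with a much smaller complex built from $M$ and the loop algebra of $\g_\Gamma^{>0}$, computing a relative Lie algebra homology with coefficients in $M$. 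The essential point — and the reason $M\in\on{KL}_k$ cannot be relaxed to $M$ in category $\mathcal{O}$ — is that $\g$-integrability forces this homology into degree $0$; concretely, one reduces by dévissage to the generalised Verma modules $\mathbb{V}^k_\lambda:=U(\affg)\otimes_{U(\g[t]\oplus\C K)}V_\lambda$ ($V_\lambda$ finite-dimensional), which generate $\on{KL}_k$, and verifies the concentration there from a PBW/free-field description of $\mathbb{V}^k_\lambda$. Exactness of $M\mapsto H_{DS,f}^0(M)$ is then formal from the long exact sequence.

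\emph{Part (2) and the main identity of Part (3).} For finitely generated $M\in\on{KL}_k$, $M$ is $\affh$-diagonalisable with finite-dimensional weight spaces, weights in finitely many cosets and $D$-spectrum bounded above; since $F^{\text{ch}},F^{\text{ne}}$ have characters that are honest shifted power series, every generalised $L_0$-eigenspace of $C^\bullet(M)$ is finite-dimensional and the conformal grading is bounded below, so the subquotient $H_{DS,f}^0(M)$ is ordinary, with $L_0$ semisimple because $D$ is on all factors and semisimplicity passes to cohomology. For the formula $X_{H_{DS,f}^0(V)}=X_V\cap\Slo_f$ I would degenerate the quantum reduction to its classical counterpart via the Li filtration: the canonical decreasing filtration on $V$ and on the ghost vertex algebras is compatible with $d$, and $\gr C^\bullet(V)$ is the classical BRST (Koszul $\otimes$ Chevalley--Eilenberg) complex computing the Hamiltonian reduction of $\gr V$ along the moment map to $(\g_\Gamma^{>0})^*$. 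The classical analogue of Part (1) — proved by the same mechanism, Koszul acyclicity from $\ad f$-surjectivity plus acyclicity of the nilpotent-loop-algebra complex — shows $\gr C^\bullet(V)$ has cohomology only in degree $0$, so the Li spectral sequence degenerates and $\gr H_{DS,f}^0(V)$ is the classical reduction of $\gr V$. Passing to the Zhu $C_2$-quotient identifies $R_{H_{DS,f}^0(V)}$ with the coordinate ring of the scheme-theoretic intersection of $X_V$ with $\Slo_f$, whose reduced spectrum is $X_V\cap\Slo_f$, and the residual $\C^*$-action is the Kazhdan grading, which contracts $\Slo_f$ to $f$. Finally, applying Part (1) to $V^k(\g)\twoheadrightarrow V$ gives right-exactness, hence $\W^k(\g,f)=H_{DS,f}^0(V^k(\g))\twoheadrightarrow H_{DS,f}^0(V)$ whenever the latter is nonzero.

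\emph{Consequences (a)--(c).} A nonzero quotient of the conical vertex algebra $\W^k(\g,f)$ is conical, so $H_{DS,f}^0(L_k(\g))=0$ iff $R_{H_{DS,f}^0(L_k(\g))}=0$ iff $X_{L_k(\g)}\cap\Slo_f=\emptyset$. Since $X_{L_k(\g)}$ is closed, $G$-invariant and conic, and the $\C^*$-action on $\Slo_f$ (being a scaling composed with a $G$-action) preserves $X_{L_k(\g)}\cap\Slo_f$ and contracts to $f$, this intersection is empty exactly when $f\notin X_{L_k(\g)}$ — giving (a). If moreover $G.f\subset X_{L_k(\g)}\subset\mc{N}$, then $X_{L_k(\g)}$ is a finite union of nilpotent orbit closures $\overline{\mathbb{O}_j}$, so $X_{H_{DS,f}^0(L_k(\g))}=\bigcup_j(\overline{\mathbb{O}_j}\cap\Slo_f)$ is a finite union of nilpotent Slodowy slices, each a union of finitely many symplectic leaves $\mathbb{O}'\cap\Slo_f$; it is therefore quasi-lisse, and so is its quotient $\W_k(\g,f)$ — this is (b). If $X_{L_k(\g)}=\overline{G.f}$, the transversality of $\Slo_f$ to every adjoint orbit (Slodowy) makes $\overline{G.f}\cap\Slo_f=G.f\cap\Slo_f$ a transverse, $\C^*$-stable, $0$-dimensional subscheme contracting to $f$, hence the reduced point $\{f\}$; thus $\dim X_{H_{DS,f}^0(L_k(\g))}=0$ and $\W_k(\g,f)$ is lisse, which is (c).

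\emph{Main obstacle.} The crux is the ``no jump'' statement underlying Parts (1) and (3): that both the quantum BRST cohomology of $M\in\on{KL}_k$ and the classical complex $\gr C^\bullet(V)$ are concentrated in cohomological degree $0$. This is precisely where the structure of $\on{KL}_k$ — the good grading, $\g$-integrability, and the relevant Koszul/free acyclicity — is indispensable; once it is in hand, the associated-variety formula and all of (a)--(c) are essentially formal or elementary nilpotent-orbit geometry.
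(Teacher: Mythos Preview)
The paper does not prove this theorem: it is stated with attribution to \cite{Arakawa15a} and used as a black box throughout, so there is no in-paper proof to compare against. Your outline is essentially the strategy of the cited reference: the BRST-differential decomposition $d=d_{(0)}+d_{(1)}$ and the spectral sequence for Part~(1), the Li filtration degeneration for the associated-variety formula in Part~(3), and the standard Slodowy-slice geometry for (a)--(c).

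A few points where your sketch is thinner than the actual argument would need to be. In Part~(1), the reduction to generalised Verma modules is right in spirit, but the vanishing for arbitrary $f$ (not just principal) genuinely requires the two-step filtration and the careful identification of the $E_1$-page with a semi-infinite cohomology relative to $\g_\Gamma^0$; the ``free-field description'' you invoke has to be made precise for non-regular $f$. In Part~(3), the step ``the Li spectral sequence degenerates'' needs convergence, which in turn needs the Li filtration on $C^\bullet(V)$ to be separated and exhaustive; this is routine here but should be checked. Also, your identification $R_{H_{DS,f}^0(V)}\cong \mathcal{O}(X_V\cap\Slo_f)$ is really the statement that $R_{H_{DS,f}^0(V)}$ is the \emph{finite} (classical) Hamiltonian reduction of $R_V$ along $\chi$, whose spectrum is the scheme-theoretic intersection; this is correct but is itself a nontrivial compatibility (Zhu $C_2$-quotient commutes with BRST), established in the cited paper. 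For~(c), your transversality argument is fine once one adds the equidimensionality result $\dim(\overline{G.f}\cap\Slo_f)=0$ (e.g.\ from \cite{Gin08}) together with the $\C^*$-contraction to conclude the intersection is exactly $\{f\}$; as written, transversality alone does not immediately exclude other components.
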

Regarding the relationship between the simple $W$-algebra $\W_k(\g,f)$ and the reduction $H_{DS,f}^0(L_k(\g))$ of which it is a quotient, there is the following natural conjecture.
\begin{Conj}[{\cite{KacRoaWak03,KacWak08}}]
\label{Conj:isom}
$H_{DS,f}^0(L_k(\g))$ is either zero
or isomorphic to $\W_k(\g,f)$.
\end{Conj}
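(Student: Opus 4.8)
The plan is to reduce the conjecture to a simplicity statement and then to attack that statement via resolutions and the Zhu functor. By Theorem~\ref{Th:Associated_variety-DS}(1) one has $H^i_{DS,f}(L_k(\g)) = 0$ for all $i \ne 0$ (as $L_k(\g)\in\on{KL}_k$), and by part~(3) of the same theorem $H^0_{DS,f}(L_k(\g))$, when nonzero, is a graded quotient of $\W^k(\g,f)$; since $\W^k(\g,f)$ is conical with unique simple graded quotient $\W_k(\g,f)$, it follows that $H^0_{DS,f}(L_k(\g))$ in turn surjects onto $\W_k(\g,f)$. Thus Conjecture~\ref{Conj:isom} is equivalent to the statement that $H^0_{DS,f}(L_k(\g))$ has no proper nonzero graded ideal, i.e.\ that it is simple as a vertex algebra --- equivalently, that the maximal proper submodule of $V^k(\g)$ already generates the entire kernel of $\W^k(\g,f) \twoheadrightarrow H^0_{DS,f}(L_k(\g))$.

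For the simplicity statement the first step is to compute $\ch H^0_{DS,f}(L_k(\g))$. Since $H^0_{DS,f}$ is exact on $\on{KL}_k$, any resolution of $L_k(\g)$ in $\on{KL}_k$ by modules induced from finite-dimensional $\g$-modules --- a BGG-type or Kazhdan--Lusztig-type resolution, available at least when $k$ is admissible --- is transported to a resolution of $H^0_{DS,f}(L_k(\g))$ by the Drinfeld--Sokolov reductions of those induced modules, whose characters are explicit, and this determines $\ch H^0_{DS,f}(L_k(\g))$. One then wants to match it with the character of $\W_k(\g,f)$, which requires independent control of the maximal graded ideal of $\W^k(\g,f)$. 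A more structural route, best adapted to the admissible case, uses the compatibility of the Ramond twisted Zhu functor with Drinfeld--Sokolov reduction: $\on{Zhu}\big(H^0_{DS,f}(L_k(\g))\big)$ should be the finite $W$-algebra reduction of $\on{Zhu}(L_k(\g)) = U(\g)/I_k$, which Theorem~\ref{The:Zhu-of-admissible} makes explicit as $\prod U(\g)/J_{\bar\lambda}$; comparing this with the Zhu algebra of $\W_k(\g,f)$ then yields the desired simplicity.

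The main obstacle is precisely this last comparison. Drinfeld--Sokolov reduction does not preserve simplicity, and the cohomological vanishing of Theorem~\ref{Th:Associated_variety-DS} gives no control over ``new'' singular vectors which could a priori occur in $H^0_{DS,f}(L_k(\g))$ without being forced by the defining relations of $L_k(\g)$; excluding them must be done essentially by hand. This is why the conjecture is known at present only for $f$ principal (via a two-sided BGG resolution together with free-field and screening-operator realisations), for $f$ minimal, and in type~$A$ and the subregular $ADE$ cases, by arguments tailored to each orbit. Accordingly, in this paper we invoke the conjecture only where it is established; in the remaining situations we bypass it, for when $k$ and $k^\natural$ are both admissible and $\chi_{H^0_{DS,f}(L_k(\g))}(\tau) \sim \chi_{L_{k^\natural}(\g^\natural)}(\tau)$ as $\tau \downarrow 0$, one applies Theorem~\ref{Th:main} directly to the conical self-dual vertex algebra $V := H^0_{DS,f}(L_k(\g))$ --- with $\tilde V = V$ and with $(\g^\natural, k^\natural)$ in place of $(\g, k)$, the hypotheses on $\varphi$ holding because the affine subalgebra $V^{k^\natural}(\g^\natural) \subset \W^k(\g,f)$ has its currents in conformal weight $1$ --- to conclude $H^0_{DS,f}(L_k(\g)) \cong L_{k^\natural}(\g^\natural)$; this vertex algebra is simple, so it equals $\W_k(\g,f)$, which establishes Conjecture~\ref{Conj:isom} and at the same time shows $k$ is a collapsing level, as in Theorem~\ref{Th:main-intro}.
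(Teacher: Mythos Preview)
The statement is a \emph{conjecture}; the paper does not prove it and neither do you. Your first two paragraphs are an accurate summary of why it is open and of the partial results cited immediately after it (Theorem~\ref{Th:simplicity}). So far so good.

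The gap is in your final paragraph, where you claim to establish the conjecture in the cases relevant to the paper by applying Theorem~\ref{Th:main} with $V := H^0_{DS,f}(L_k(\g))$. Theorem~\ref{Th:main} requires $V$ to be \emph{self-dual}, and this is not known for $H^0_{DS,f}(L_k(\g))$; the self-duality proved in the paper (Proposition preceding Theorem~\ref{Th:Associated_variety-DS}) is for the simple quotient $\W_k(\g,f)$, not for the reduction of $L_k(\g)$. The paper's actual argument (Proposition~\ref{Pro:asymptotics-and-collapsing}) takes $V = \W_k(\g,f)$ and $\tilde V = H^0_{DS,f}(L_k(\g))$: then $V$ is self-dual, $\tilde V$ has the computable asymptotic datum, and $V$ is a quotient of $\tilde V$ by the surjection you noted. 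This yields $\W_k(\g,f) \cong L_{k^\natural}(\g^\natural)$, i.e.\ that $k$ is collapsing, but it does \emph{not} prove Conjecture~\ref{Conj:isom}. The conjecture is recovered only in the lisse case $f \in \mathbb{O}_k$ (Remark~\ref{Rem:lisse_conjecture_true}), by a separate argument: lisseness forces $L_{k^\natural}(\g^\natural)$ to be integrable, hence the embedding $L_{k^\natural}(\g^\natural) \hookrightarrow H^0_{DS,f}(L_k(\g))$ exhibits the latter as a direct sum of integrable modules, and then the asymptotic-dimension count goes through with $H^0_{DS,f}(L_k(\g))$ itself. Outside the lisse case the paper relies on Theorem~\ref{Th:simplicity} (even good gradings) rather than on the bypass you describe.
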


This has been proved in many cases. Indeed we have:
\begin{Th}[\cite{AEkeren19}]
\label{Th:simplicity}
Conjecture \ref{Conj:isom} holds
if $k$ is an admissible level, $f\in X_{L_k(\g)}=\overline{\O}_k$,
and
 $f$ admits an even good grading. 
 \end{Th}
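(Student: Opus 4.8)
The plan is to show that $H_{DS,f}^0(L_k(\g))$ is a \emph{simple} vertex algebra; since it is a nonzero graded quotient of $\W^k(\g,f)$, it must then coincide with the unique simple graded quotient $\W_k(\g,f)$, which is exactly Conjecture \ref{Conj:isom}. The route to simplicity is to produce a nondegenerate invariant bilinear form by proving self-duality, and the role of the hypotheses is: $f\in X_{L_k(\g)}$ guarantees nonvanishing, admissibility places $L_k(\g)$ in $\KL_k$ (so higher cohomology vanishes), and the even good grading makes the relevant duality clean.

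First I would record the structural facts. Since $f\in X_{L_k(\g)}$, Theorem \ref{Th:Associated_variety-DS}(3)(a) gives $H_{DS,f}^0(L_k(\g))\ne 0$, and part (3) shows it is a quotient vertex algebra of $\W^k(\g,f)$. As $L_k(\g)\in \KL_k$, part (1) gives the crucial vanishing $H_{DS,f}^i(L_k(\g))=0$ for $i\ne 0$. Fixing the conformal structure coming from the even good grading, one has $\g_\Gamma^{1/2}=0$, so the neutral-fermion factor $F^{\mathrm{ne}}$ is trivial and $\W^k(\g,f)$ is $\Z_{\ge 0}$-graded, conical, and of CFT-type with weight-zero space $\C$; the same then holds for its nonzero quotient $V:=H_{DS,f}^0(L_k(\g))$.

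The heart of the argument is self-duality of $V$. Because the grading is even, the BRST complex is simply $C^\bullet(M)=M\otimes F^{\mathrm{ch}}$, where the charged-fermion vertex algebra $F^{\mathrm{ch}}$ is self-dual with a canonical pairing identifying charge $i$ with charge $-i$. Combining this pairing with the self-duality of $L_k(\g)$ (a simple vacuum vertex algebra of CFT-type, hence self-contragredient) should identify the contragredient \cite{Frenkel:1993aa} of the complex $C^\bullet(L_k(\g))$ with $C^{-\bullet}(L_k(\g))$; passing to cohomology yields $H_{DS,f}^i(L_k(\g))'\cong H_{DS,f}^{-i}(L_k(\g))$. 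The degree-zero concentration noted above collapses this to $V'\cong V$. Finally, a conical CFT-type vertex algebra with weight-zero space $\C$ carries an invariant bilinear form, unique up to scalar, whose radical is its maximal proper ideal; self-duality forces this form to be nondegenerate, so the radical vanishes and $V$ is simple, completing the proof.

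The main obstacle is making the self-duality step precise. One must check that the pairing on $F^{\mathrm{ch}}$ intertwines the BRST differential with its adjoint, so as to give a genuine isomorphism of complexes $C^\bullet(L_k(\g))'\cong C^{-\bullet}(L_k(\g))$ that is moreover compatible with the conformal $L_0$-grading of the chosen even grading; only then is the resulting identification $V'\cong V$ one of conformal vertex algebras rather than merely of graded vector spaces, as is needed to invoke the self-dual-versus-nondegenerate-form criterion. It is precisely the evenness of the grading that removes the neutral-fermion contribution and keeps the weight grading integral, making this compatibility tractable, while the higher-cohomology vanishing of Theorem \ref{Th:Associated_variety-DS}(1) is what turns the a priori degree-reversing duality into an honest self-duality concentrated in degree zero.
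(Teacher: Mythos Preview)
The paper does not give its own proof of this theorem; it is quoted from \cite{AEkeren19} and used as a black box. So there is no ``paper's proof'' to compare against here. That said, your outline is essentially the argument of \cite{AEkeren19}: establish that $H_{DS,f}^0(L_k(\g))$ is self-dual, and conclude simplicity from the fact that a conical CFT-type vertex algebra with one-dimensional degree-zero piece and a nondegenerate invariant form has trivial maximal ideal. Your use of Theorem~\ref{Th:Associated_variety-DS} for nonvanishing and for cohomological concentration in degree zero, and your observation that the even grading kills $F^{\mathrm{ne}}$ and makes $V$ genuinely $\Z_{\ge 0}$-graded with $V_0=\C$, are all correct and are exactly the structural inputs used in \cite{AEkeren19}.

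The one place where your sketch is softer than the actual argument is the self-duality of the complex. You are right that this is the crux, but the difficulty is sharper than ``checking compatibility with $L_0$'': the BRST differential is $d=d_{\mathrm{st}}+\chi$ with the character piece $\chi=\sum_{\alpha}(f\,|\,e_\alpha)\psi_\alpha^*$, and under the natural pairing on $F^{\mathrm{ch}}$ the operators $\psi_\alpha$ and $\psi_\alpha^*$ get exchanged. So one must verify that the transported differential on $C^{-\bullet}$ really is $d$ again (up to sign), not a differential for a \emph{different} reduction. In \cite{AEkeren19} this is handled by an explicit computation exploiting that for an even grading the character lives purely in grade~$1$ and the Chevalley-type involution can be arranged to fix the relevant data; without that verification the step ``$C^\bullet(L_k(\g))'\cong C^{-\bullet}(L_k(\g))$ as complexes'' is an assertion rather than a proof. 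Once that is in hand, your deduction $V'\cong V\Rightarrow V$ simple is correct, and the higher-cohomology vanishing is indeed what collapses the a priori degree-reversing duality to degree zero.
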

Let $k$ be an admissible level. By Theorems \ref{Th:admissible-orbits} and \ref{Th:Associated_variety-DS},  $\W_k(\g,f)$ is lisse if $f\in {\O}_k$, and is quasi-lisse if $f\in \overline{\O}_k$. Moreover, 
 under the hypotheses of Theorem \ref{Th:simplicity},
 the associated variety 
of $ \W_k(\g,f)$ is equal to the nilpotent 
Slodowy slice
\begin{align}
 \Slo_{\O_k,f}:=\Slo_f\cap \overline{\O}_k.
\end{align}

\begin{Conj}[\cite{FKW92,KacWak08,Arakawa15a}]
\label{Conj:rationality}
$\W_k(\g,f)$ is rational if $k$ is admissible and $f\in {\O}_k$. 
\end{Conj}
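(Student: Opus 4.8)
The plan has three parts: record that $\W_k(\g,f)$ is lisse, transport the ``rationality in category $\mc{O}$'' of $L_k(\g)$ through Drinfeld--Sokolov reduction, and bridge the representation-theoretic gap between the two. The first is already observed in the text above: $f\in\O_k$ and Theorem~\ref{Th:admissible-orbits} give $X_{L_k(\g)}=\overline{\O}_k=\overline{G.f}$, so Theorem~\ref{Th:Associated_variety-DS}(3)(c) yields $X_{\W_k(\g,f)}=\Slo_f\cap\overline{G.f}=\{f\}$; hence $\W_k(\g,f)$ is lisse and, by \cite{AbeBuhDon04}, has finitely many simple modules, all ordinary. Being of CFT-type and (for the Dynkin grading) self-dual, it then remains only to prove complete reducibility of the category of $\W_k(\g,f)$-modules.

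For the second part I would use the exact functor $H^0_{DS,f}$ and Theorem~\ref{Th:rationality-in-category-O} (\cite{Arakawa2016}), by which every $L_k(\g)$-module in $\mc{O}_k$ is a direct sum of admissible $L(\lam)$, $\lam\in\on{Adm}^k$; the goal is to make $H^0_{DS,f}$ an exact, essentially surjective functor from that semisimple category onto the category of ordinary $\W_k(\g,f)$-modules. Three ingredients are needed: (i) $H^i_{DS,f}(L(\lam))=0$ for $i\ne0$ and all $\lam\in\on{Adm}^k$, extending the $\KL_k$-statement of Theorem~\ref{Th:Associated_variety-DS}(1) to all of $\mc{O}_k$; (ii) $H^0_{DS,f}(L(\lam))$ is simple or zero, its nonvanishing being governed by whether $f$ lies in the associated variety of the module $L(\lam)$ --- a closed $G$-stable subvariety of $X_{L_k(\g)}=\overline{\O}_k$ (cf.\ the Remark following Theorem~\ref{Th:short-paper}), which is then forced to equal $\overline{\O}_k$ --- and distinct classes of $\bar\lam$ modulo $W\circ(-)$, equivalently distinct primitive ideals $J_{\bar\lam}$, producing non-isomorphic nonzero images; and (iii) essential surjectivity: every simple $\W_k(\g,f)$-module arises as some $H^0_{DS,f}(L(\lam))$.

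Points (ii) and (iii) I would attack through the Ramond-twisted Zhu algebra. Combining $\on{Zhu}(\W^k(\g,f))\cong U(\g,f)$, the finite $W$-algebra (De Sole--Kac), with $\on{Zhu}(L_k(\g))\cong\prod_{[\lam]}U(\g)/J_{\bar\lam}$ (Theorem~\ref{The:Zhu-of-admissible}), and using Skryabin's equivalence together with Losev's description of finite-dimensional $W$-algebra modules, one expects to identify $\on{Zhu}(\W_k(\g,f))$ with the finite Drinfeld--Sokolov reduction $\prod_{[\lam]}U(\g,f)/J^{f}_{\bar\lam}$ of $\on{Zhu}(L_k(\g))$. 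Since $\on{Var}(J_{\bar\lam})=\overline{\O}_k=\overline{G.f}$, this reduction is finite-dimensional, and by the structure theory of finite $W$-algebras it should be semisimple; this identifies the simple modules (yielding (iii)) and shows $\on{Zhu}(\W_k(\g,f))$ is semisimple. The remaining part is then largely formal: lisseness, together with the identification of the simple modules as images of a semisimple category under an exact, fully faithful functor, forces $\on{Ext}^1$ between simple $\W_k(\g,f)$-modules to vanish, whence complete reducibility and rationality. Should the bare ``semisimple Zhu algebra'' conclusion be deemed insufficient on its own, the same reduction applies to the higher Zhu algebras $A_n(\W_k(\g,f))$ and closes the argument by the standard criterion.

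The main obstacle is the interlocking pair (i)+(iii). For $f$ principal both are settled in \cite{Arakawa2016}, where Verma/BGG resolutions make the cohomology and the Zhu-algebra computation explicit, and for $f$ of type $A$ or subregular in type $ADE$ in \cite{AEkeren19}; but in general one must control the inverse Drinfeld--Sokolov reduction sharply enough to exclude ``exotic'' simple $\W_k(\g,f)$-modules not coming from $\mc{O}_k$, and must prove the higher-cohomology vanishing over the entire linkage block of $L_k(\g)$. There is moreover a prerequisite hidden in the setup: to even speak of the $\W_k(\g,f)$-module category in these terms one wants Conjecture~\ref{Conj:isom}, $H^0_{DS,f}(L_k(\g))\cong\W_k(\g,f)$, which beyond the even-good-grading case established in Theorem~\ref{Th:simplicity} is itself open. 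With these in hand, Conjecture~\ref{Conj:rationality} follows.
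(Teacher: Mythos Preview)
The statement is labeled and treated in the paper as a \emph{conjecture}; the paper gives no proof. It merely records the cases in which the conjecture is known (principal $f$, type $A$, subregular in $ADE$, $B_2$, $B_n$, minimal in $C_n$) and, via its collapsing-level isomorphisms, confirms it in a few further special cases by identifying $\W_k(\g,f)$ with a vertex algebra already known to be rational (e.g.\ $\C$, $L_1(\mf{so}_m)$, or a Virasoro minimal model). There is thus no ``paper's own proof'' to compare against.

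Your proposal is a fair summary of the expected program, and you are honest about its gaps, but it is not a proof. The ingredients you list as needed are genuinely open: the isomorphism $H^0_{DS,f}(L_k(\g))\cong\W_k(\g,f)$ is Conjecture~\ref{Conj:isom}, known only when $f$ admits an even good grading (Theorem~\ref{Th:simplicity}); the vanishing $H^i_{DS,f}=0$ for $i\neq0$ is established in Theorem~\ref{Th:Associated_variety-DS} only on $\KL_k$, not on all of $\mc{O}_k$; and essential surjectivity of $H^0_{DS,f}$ onto simple $\W_k(\g,f)$-modules --- ruling out ``exotic'' simples --- is precisely the hard part and is not known in general. Even the Zhu-algebra step is conditional: Corollary~\ref{Co:Zhu-of-reduction} computes $\on{Zhu}(H^0_{DS,f}(L_k(\g)))$, and the simplicity of each factor is supplied by \cite{AEkeren19} under the even-good-grading hypothesis; this identifies the Zhu algebra of the \emph{reduction}, not of $\W_k(\g,f)$ unless Conjecture~\ref{Conj:isom} holds. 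So your sketch reproduces the heuristic behind the conjecture but does not close it beyond the already-known cases; a proof in general would require resolving at least Conjecture~\ref{Conj:isom} and the cohomology-vanishing/surjectivity problems you flag.
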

Conjecture  \ref{Conj:rationality}
has been settled positively by different methods in a number of cases, namely 
for $f$ principal \cite{A2012Dec},
for $\g$ of type $A$ \cite{AEkeren19},
for $\g$ of type $ADE$ and $f$ subregular \cite{AEkeren19},
for $\g$ of type $B_2$ and $f$ subregular \cite{JustineFasquel},
for $\g$ of type $B_n$ and $f$ subregular \cite{CreutzigLinshaw},
for $\g$ of type $C_n$ and $f$ minimal \cite{CreutzigLinshaw}. More recently, a general proof has been announced in \cite{McRae}.
    
By \cite{De-Kac06}, the associative algebra $\on{Zhu}(\W^k(\g,f))$ is naturally isomorphic to the finite $W$-algebra \cite{Pre02} $U(\g,f)$ associated with $(\g,f)$. More generally, we have the following assertion. 
\begin{Th}[\cite{A2012Dec}]\label{Th:Zhu-of-W}
There exists an isomorphism
$$\on{Zhu}(H_{DS,f}^{0}(L_k(\g))\cong H_{DS,f}^{0}(\on{Zhu}(L_{k}(\g)),$$
where on the right-hand-side $H_{DS,f}^{0}(?)$ is the finite-dimensional
analogue of the Drinfeld-Sokolov reduction functor that is denoted by 
$M\mapsto M_{\dagger}$ in \cite{Los07}.
\end{Th}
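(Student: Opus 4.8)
This is due to Arakawa \cite{A2012Dec}; I sketch the strategy. Both Drinfeld--Sokolov reductions in the statement are degree-zero cohomologies of BRST-type complexes. On the vertex-algebra side one has $C^\bullet(M)=M\otimes F^{\text{ch}}\otimes F^{\text{ne}}$ with differential $d=Q_{(0)}$, the zero mode of the BRST current; on the finite side, $M\mapsto M_\dagger$ is the degree-zero cohomology of the associated Chevalley--Eilenberg-type complex $M\otimes\Lambda^\bullet(\m^*)$ with differential twisted by the character $\chi=(f\,|\,{\cdot})$, and $U(\g)_\dagger=U(\g,f)$ is the finite $W$-algebra. The first step is to apply the (Ramond-twisted) Zhu functor to $C^\bullet(M)$ termwise: $\on{Zhu}(F^{\text{ch}})$ and $\on{Zhu}(F^{\text{ne}})$ are the finite Clifford algebras that enter the Chevalley--Eilenberg complex, $\on{Zhu}$ of a tensor product of vertex (super)algebras is the tensor product of the Zhu algebras, and the zero mode $Q_{(0)}$ induces the finite twisted differential. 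This is exactly the computation of De Sole--Kac \cite{De-Kac06} that yields $\on{Zhu}(\W^k(\g,f))\cong U(\g,f)$, and it applies verbatim with $V^k(\g)$ replaced by any graded quotient $M$: it produces a canonical algebra map relating $\on{Zhu}(H^0_{DS,f}(M))$ and $(\on{Zhu}(M))_\dagger=H^0_{DS,f}(\on{Zhu}(M))$.

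The second and essential step is to show this map is an isomorphism, i.e. to commute the Zhu functor past the cohomology. Since $\on{Zhu}$ is only right exact this is not formal, and one argues via filtrations. Equip $C^\bullet(M)$ with the Kazhdan filtration (by conformal weight together with ghost charge); it is compatible with $d$, and the induced filtration on $\on{Zhu}(H^0_{DS,f}(M))$ has associated graded a quotient of $R_{H^0_{DS,f}(M)}$, which by \cite{Arakawa15a} (Theorem~\ref{Th:Associated_variety-DS}) is the algebra of functions on $\Slo_f\cap X_M$. On the other side, the Kazhdan associated graded of $(\on{Zhu}(M))_\dagger$ is the classical $\Slo_f$-Hamiltonian reduction of $\on{gr}\on{Zhu}(M)$, a quotient of $R_M$, and this is again functions on $\Slo_f\cap X_M$ by the Kazhdan-filtration description of $\on{gr}U(\g,f)$ \cite{Pre02} together with the same geometric input. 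Since the relevant Drinfeld--Sokolov cohomology lives in degree zero (Theorem~\ref{Th:Associated_variety-DS}(1) and its finite counterpart), the two associated graded algebras are identified through the canonical map, and as the filtrations are exhaustive the map itself is an isomorphism of filtered algebras. Thus $\on{Zhu}(H^0_{DS,f}(M))\cong H^0_{DS,f}(\on{Zhu}(M))$ naturally for every graded quotient $M$ of $V^k(\g)$; taking $M=L_k(\g)$ and recalling $\on{Zhu}(L_k(\g))\cong U(\g)/I_k$ (cf.\ \eqref{eq:Zhu-of-affine}) is the theorem, while $M=V^k(\g)$ recovers $\on{Zhu}(\W^k(\g,f))\cong U(\g,f)$.

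The main obstacle is precisely this non-exactness of the Zhu functor --- equivalently, making the Kazhdan-filtered comparison degenerate --- and the input that makes it work is the geometric statement $X_{H^0_{DS,f}(V)}=\Slo_f\cap X_V$ controlling both associated graded sides. Everything else (identifying $\on{Zhu}(F^{\text{ch}})$, $\on{Zhu}(F^{\text{ne}})$ with the Clifford algebras, checking that $Q_{(0)}$ induces the finite differential, and matching the classical reductions on associated graded) is essentially bookkeeping. One should use the Ramond-twisted Zhu algebra throughout, since the conformal weights of $\W^k(\g,f)$ are in general half-integral, but this is only a notational matter.
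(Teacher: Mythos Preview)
The paper does not give its own proof of this statement: it is stated as a theorem with attribution \cite{A2012Dec} and is used as a black box (the next sentence already invokes \eqref{eq:non-vanishing} and leads to Corollary~\ref{Co:Zhu-of-reduction}). So there is nothing to compare your argument against in this paper; what you have written is a reconstruction of the argument in the cited reference.

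As such a reconstruction, your outline is in the right spirit --- pass to Zhu at the level of the BRST complex using the De~Sole--Kac identification of $\on{Zhu}(F^{\text{ch}})$, $\on{Zhu}(F^{\text{ne}})$ and of the zero mode of $Q$, and then argue via a filtration that the Zhu functor commutes with taking cohomology. Two remarks, however. First, your appeal to Theorem~\ref{Th:Associated_variety-DS} and the identity $X_{H^0_{DS,f}(V)}=\Slo_f\cap X_V$ to control the associated graded is a somewhat different packaging than in \cite{A2012Dec}: there the argument proceeds more directly through the Li/Kazhdan filtrations and cohomology-vanishing for the complex, rather than by quoting the associated-variety statement (which is in the companion paper \cite{Arakawa15a}); in particular one should check that one is not importing something logically downstream. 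Second, the step ``associated graded of both sides equals functions on $\Slo_f\cap X_M$, hence the map is an isomorphism'' hides the real work: one needs that $\on{gr}\on{Zhu}(M)$ surjects onto $R_M$ (or an appropriate quotient) and that the classical Hamiltonian reduction has no higher cohomology on the relevant object, i.e.\ a freeness/acyclicity statement. Your sketch asserts this but does not isolate it; in \cite{A2012Dec} this is exactly where the vanishing theorem enters. So the skeleton is right, but the load-bearing step --- degeneration of the spectral sequence, not just equality of endpoints --- should be made explicit rather than absorbed into ``bookkeeping.''
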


By \cite{Gin08,Los11}, see also \cite[Section 2]{A2012Dec}, we have the following equivalence
\begin{align}\label{eq:non-vanishing}
H_{DS,f}^{0}(U(\g)/I)\ne 0\iff  G.f\subset \on{Var}(I).
\end{align}
From this, together with Theorems~\ref{The:Zhu-of-admissible} and \ref{Th:Zhu-of-W}, we obtain:
\begin{Co}\label{Co:Zhu-of-reduction}
Let $k$ be  an admissible number.
We have
$$\on{Zhu}(H_{DS,f}^{0}(L_k(\g))\cong
 \prod H_{DS,f}^{0}(U(\g)/J_{\bar \lam}),$$
where the product is taken over 
$[\lam]\in [\on{Adm}^k]$ such that $G.f\subset \on{Var}(J_{\bar \lam})$.
\end{Co}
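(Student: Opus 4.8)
The plan is to combine Theorem \ref{Th:Zhu-of-W} with Theorem \ref{The:Zhu-of-admissible} and the non-vanishing criterion \eqref{eq:non-vanishing}, using exactness of the finite Drinfeld-Sokolov functor. First I would apply Theorem \ref{Th:Zhu-of-W} to rewrite the left-hand side as
\begin{align*}
\on{Zhu}(H_{DS,f}^{0}(L_k(\g)))\cong H_{DS,f}^{0}(\on{Zhu}(L_k(\g))),
\end{align*}
where on the right $H_{DS,f}^0(?)$ denotes the finite-dimensional reduction functor $M\mapsto M_\dagger$ of \cite{Los07}. Next, by \eqref{eq:Zhu-of-affine} and Theorem \ref{The:Zhu-of-admissible}, since $k$ is admissible we have $\on{Zhu}(L_k(\g))\cong\prod_{[\lam]\in[\on{Adm}^k]}U(\g)/J_{\bar\lam}$, a finite product of finite-dimensional... more precisely a finite product of algebras $U(\g)/J_{\bar\lam}$. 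So it remains to commute the functor $H_{DS,f}^0(?)$ past the finite direct product and then discard the vanishing factors.

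The key step is that $H_{DS,f}^0(?)$ (the finite $W$-algebra reduction) takes finite direct sums (equivalently, finite products of the corresponding quotient algebras) to finite direct sums, which follows because the functor is exact and additive on the relevant category of Harish-Chandra bimodules, or more elementarily because $M\mapsto M_\dagger$ is built from a Whittaker-type coinvariants/cohomology construction that commutes with finite direct sums. Applying this,
\begin{align*}
H_{DS,f}^{0}\Bigl(\prod_{[\lam]\in[\on{Adm}^k]}U(\g)/J_{\bar\lam}\Bigr)\cong\prod_{[\lam]\in[\on{Adm}^k]}H_{DS,f}^{0}(U(\g)/J_{\bar\lam}).
\end{align*}
Finally, by the non-vanishing criterion \eqref{eq:non-vanishing} from \cite{Gin08,Los11}, the factor $H_{DS,f}^{0}(U(\g)/J_{\bar\lam})$ is nonzero if and only if $G.f\subset\on{Var}(J_{\bar\lam})$; dropping the zero factors yields exactly the stated product over those $[\lam]\in[\on{Adm}^k]$ with $G.f\subset\on{Var}(J_{\bar\lam})$.

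The main obstacle I anticipate is justifying the commutation of $H_{DS,f}^0(?)$ with the finite product rigorously: one must check that the isomorphism of Theorem \ref{The:Zhu-of-admissible} is compatible with the reduction functor (i.e. that $\on{Zhu}(L_k(\g))$, as a $U(\g)$-bimodule or as an algebra equipped with the structure the functor sees, genuinely decomposes as an internal direct product on which $M\mapsto M_\dagger$ acts componentwise) and that no cross terms survive. This is essentially bookkeeping once one knows the reduction functor is exact (Theorem \ref{Th:Associated_variety-DS}(1) gives the vertex-algebra analogue, and \cite{Los07} the finite analogue), so the corollary should follow formally; the substantive inputs are all the cited theorems rather than any new computation.
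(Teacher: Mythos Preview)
Your proposal is correct and follows essentially the same route as the paper: the paper simply remarks that the corollary follows from Theorems~\ref{The:Zhu-of-admissible} and~\ref{Th:Zhu-of-W} together with the non-vanishing criterion~\eqref{eq:non-vanishing}, which is exactly the chain of reductions you outline. Your additional care about commuting $H_{DS,f}^{0}(?)$ with the finite product is a point the paper leaves implicit, but it is indeed routine for the reasons you give.
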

The image $[\omega]$
of the conformal
vector of $\W^k(\g,f)$ in $\on{Zhu}(H_{DS,f}^{0}(L_k(\g))$
acts on $H_{DS,f}^{0}(U(\g)/J_{\bar \lam})$ as the constant multiplication
by 
\begin{align}
h_{\lam}=\frac{|\lam+\rho|^2-|\rho|^2}{2(k+h^{\vee}_\g)}-\frac{k+h^{\vee}_\g}{2}|x_{\Gamma}^0|^2+(x_\Gamma^0,\rho).
\label{eq:conformal-dim-of-W-module}
\end{align}
In particular,  a simple module of  $H_{DS,f}^{0}(L_k(\g))$ corresponding to a simple module of $H_{DS,f}^{0}(U(\g)/J_{\bar \lam})$
via Corollary \ref{Co:Zhu-of-reduction}
has the conformal dimension $h_{\lam}$.

For a $V^k(\g)$-module $M$, we consider the character of its reduction, that is we set
\begin{align*}
\chi_{H^\bullet_{DS, f} (M)}(\tau,z)=\sum_{i\in \Z}(-1)^i{\rm tr}_{H^i_{DS, f}( M)} (e^{2\pi \imag z}q^{L_0 - c/24}),
\quad (\tau,z)\in
\mathbb{H}\times \h^f,
\end{align*}
when it is well-defined. Here
\[
L_0 = L_0^{\text{Sug}} - (x_\Gamma^0)_0 + L_0^{\text{ch}} + L_0^{\text{ne}}.
\]
Note that $\chi_{H^\bullet_{DS, f} (L(\lam))}(\tau,z)$ is well-defined for finitely generated objects $M$ of $\on{KL}_k$, by Theorem \ref{Th:Associated_variety-DS}, 
and we have
\[
\chi_{H^\bullet_{DS, f} (M)}(\tau,z) = {\rm tr}_{H^0_{DS, f}( M)} (e^{2\pi \imag z}q^{L_0 - c/24}).
\]

\begin{Pro}\label{Pro:reduction-of-assymptotics}
Let $M\in \mc{O}_k$ be a $V^k(\g)$-module whose character has asymptotic behaviour 
$$\chi_M(\tau,-\tau z,0)\sim 
\A_M(z) (-{\imag}  \tau)^{\Weight_M}e^{\frac{\pi {\imag} }{12 \tau}\G_M},$$
and suppose that $\chi_{H^\bullet_{DS, f} (M)}(\tau,z)$ is well-defined.
Then 
\begin{align*}
\chi_{H^\bullet_{DS, f} (M)}(\tau,-\tau z)\sim 
\A_{H^\bullet_{DS, f} (M)}(z)(-{\imag}  \tau)^{\Weight_M}e^{\frac{\pi {\imag} }{12 \tau}(\G_M-\dim \mathbb{O}_f)},
\end{align*}
where 
\begin{align*}
\A_{H^\bullet_{DS, f} (M)}(z):=\frac{
\prod\limits_{\alpha \in \Delta_+} 2 \sin{\pi (\alpha | z+x^0_\Gamma)}}{
\prod\limits_{\alpha \in \Delta_{\Gamma, +}^{0} \cup \Delta^{1/2}_{\Gamma, +}} 2 \sin \pi(\alpha | z+x^0_\Gamma)
}\A_M (z+x_{\Gamma}^0).
\end{align*}

\end{Pro}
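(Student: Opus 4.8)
The plan is to compute the asymptotic behaviour of the Euler-Poincaré character $\chi_{H^\bullet_{DS,f}(M)}(\tau,-\tau z)$ directly from the BRST complex $C^\bullet(M) = M \otimes F^{\text{ch}} \otimes F^{\text{ne}}$, using the fact (Theorem \ref{Th:Associated_variety-DS}(1)) that the cohomology is concentrated in degree $0$ for $M \in \mc{O}_k$, so that the Euler-Poincaré character equals the honest trace on $H^0_{DS,f}(M)$ while at the same time being computable as an alternating trace on the full complex. First I would write down the graded trace over $C^\bullet(M)$ as a product of three factors: the character $\chi_M$ of $M$ (with the shift $L_0 = L_0^{\text{Sug}} - (x^0_\Gamma)_0 + \cdots$ accounting for the twist by $x^0_\Gamma$), the character of the charged fermion vertex algebra $F^{\text{ch}}$ attached to $\g^{>0}_\Gamma \oplus (\g^{>0}_\Gamma)^*$, and the character of the neutral fermion vertex algebra $F^{\text{ne}}$ attached to $\g^{1/2}_\Gamma$. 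The key point is that each of the two fermionic factors is (up to an overall $q$-power controlled by the central charge correction) an infinite product of the shape $\prod_{n}(1 \pm q^{n+\cdots})$, i.e. essentially a ratio of eta-like and theta-like functions whose $\tau \downarrow 0$ asymptotics are classical.

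\textbf{Key steps.} Concretely I would proceed as follows.
\begin{enumerate}
\item Substitute $z \mapsto -\tau z$ and track how the variable specialisation interacts with the grading element: the net effect is that the $W$-algebra character sees the $\g$-character $\chi_M$ evaluated at $z + x^0_\Gamma$ rather than $z$, which explains the shift $\A_M(z + x^0_\Gamma)$ appearing in the claimed formula.
\item Compute the asymptotics of the charged fermion factor. Its character is a product over roots $\alpha \in \Delta^{>0}_\Gamma$ (equivalently over pairs in $\g^{>0}_\Gamma \oplus (\g^{>0}_\Gamma)^*$) of factors that, after the $S$-transform $\tau \mapsto -1/\tau$, contribute a term of the form $\prod_{\alpha \in \Delta^{>0}_\Gamma} 2\sin\pi(\alpha \mid z + x^0_\Gamma)$ in the numerator, together with an exponential contribution to the growth.
\item Compute the asymptotics of the neutral fermion factor attached to $\g^{1/2}_\Gamma$; this produces the factor $\prod_{\alpha \in \Delta^{1/2}_{\Gamma,+}} 2\sin\pi(\alpha \mid z + x^0_\Gamma)$ in the denominator, again with a contribution to the exponential growth.
\item Collect the exponential terms. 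The central charge formula \eqref{eq:W.alg.c.formula2} is designed precisely so that the various $|x^0_\Gamma|^2$ and $(\rho \mid x^0_\Gamma)$ contributions from the fermion asymptotics combine with $\G_M$ to yield exactly $\G_M - \dim \mathbb{O}_f$, where $\dim \mathbb{O}_f = \dim G.f = \dim \g - \dim \g^f$. I would verify this bookkeeping using $\dim \g^{>0}_\Gamma + \tfrac12 \dim \g^{1/2}_\Gamma = \tfrac12(\dim \g - \dim \g^0_\Gamma) + \cdots$ together with the good-grading identities.
\item Observe that the power of $(-\mathbf{i}\tau)$ is unchanged: the fermionic factors each contribute $\tau^{\pm \ell_j/2}$ powers from their $S$-transforms, but these cancel against the corresponding shift in $\chi_M$ exactly as the root-count identity $|\Delta_+| = |\Delta^0_{\Gamma,+} \cup \Delta^{1/2}_{\Gamma,+}| + |\Delta^{>0}_\Gamma \setminus \Delta^{1/2}_\Gamma| + \cdots$ requires, leaving $\Weight_M$ intact. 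The surviving prefactor assembles into
\[
\A_{H^\bullet_{DS,f}(M)}(z) = \frac{\prod_{\alpha \in \Delta_+} 2\sin\pi(\alpha \mid z + x^0_\Gamma)}{\prod_{\alpha \in \Delta^0_{\Gamma,+} \cup \Delta^{1/2}_{\Gamma,+}} 2\sin\pi(\alpha \mid z + x^0_\Gamma)} \, \A_M(z + x^0_\Gamma),
\]
since the full positive-root product in the numerator is the Weyl-denominator contribution already folded into $\A_M$ together with the positive-grading fermion contribution.
\end{enumerate}

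\textbf{Main obstacle.} The genuinely delicate part is the exponential-growth bookkeeping in step 4: one must show that the corrections $24(\rho \mid x^0_\Gamma) - 12(k+h^\vee_\g)|x^0_\Gamma|^2$ coming from the twisted $L_0$ and the fermion vacuum energies, when passed through the modular $S$-transform, precisely cancel the $k$-dependence and leave the clean topological quantity $\dim \mathbb{O}_f$. This requires carefully matching the $\tau \downarrow 0$ expansions of the ratio-of-theta-functions arising from $F^{\text{ch}}$ and $F^{\text{ne}}$ — in particular keeping track of which $\sin$ factors are "half-integrally shifted" because of $\g^{1/2}_\Gamma$ — against the formula \eqref{eq:W.alg.c.formula2} for $c_{\W^k(\g,f)}$. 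I expect this to follow the same computation as in \cite{KacRoaWak03} for the vacuum character, generalised to an arbitrary module $M$ with known asymptotics; the module-dependence enters only through $\A_M$ and $\G_M$, so once the vacuum case is matched the general case is a routine multiplicative modification. A secondary point to handle carefully is the interchange of the (infinite) alternating sum over cohomological degree with the $\tau \downarrow 0$ limit, which is justified by the degreewise vanishing of $H^i_{DS,f}(M)$ for $i \neq 0$ from Theorem \ref{Th:Associated_variety-DS}, so the alternating sum is in fact a single honest character and no analytic subtlety arises.
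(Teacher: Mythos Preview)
Your approach is correct and shares its core with the paper's: both compute the Euler--Poincar\'e character on the BRST complex $M \otimes F^{\text{ch}} \otimes F^{\text{ne}}$ and extract the $\tau\downarrow 0$ asymptotics of the resulting product. The paper, however, organises the product differently in a way that sidesteps what you flag as the ``main obstacle''. Rather than treating the three tensor factors separately, the paper writes
\[
\chi_{H^\bullet_{DS,f}(M)}(\tau,z)=\lim_{\varepsilon\to 0}\frac{(A_{\widehat\rho}\,\chi_M)(\tau,\,z-\tau x^0_\Gamma+\varepsilon x^0_\Gamma,\,0)}{\psi(\tau,\,z-\tau x^0_\Gamma+\varepsilon x^0_\Gamma,\,0)},
\]
where $\psi$ is what remains of the affine Weyl denominator $A_{\widehat\rho}$ after the charged fermions cancel the theta-factors indexed by $\Delta_\Gamma^{>0}$ and the neutral fermions reinstate those indexed by $\Delta^{1/2}_{\Gamma,+}$. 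Both pieces then have immediately computable asymptotics: the numerator's follows from the hypothesis on $\chi_M$ combined with the standard Weyl-denominator asymptotic, giving growth $\G_M-\dim\g$; and $\psi$ is a product of $\eta$- and theta-type factors over $\Delta^0_{\Gamma,+}\cup\Delta^{1/2}_{\Gamma,+}$ with growth $-\dim\g^f$. The difference $(\G_M-\dim\g)-(-\dim\g^f)=\G_M-\dim\mathbb{O}_f$ drops out at once, with no $k$-dependent terms ever appearing and no appeal to \eqref{eq:W.alg.c.formula2} needed. Your step~4 bookkeeping is thus bypassed entirely; indeed, in your direct approach the factor $q^{(c_{V^k(\g)}-c_W)/24}$ carrying the $(k+h^\vee_\g)|x^0_\Gamma|^2$ and $(\rho\,|\,x^0_\Gamma)$ terms tends to $1$ as $\tau\downarrow 0$ and contributes nothing to the growth, so the only genuine content of that step is the $k$-independent fermion-growth count, which the $\psi$-asymptotic packages for free.
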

\begin{proof}
Let us write
\begin{align*}
\chi_M(\tau,z,t)=\frac{(A_{\widehat{\rho}}\chi_M)(\tau,z,t)}{A_{\widehat{\rho}}(\tau,z,t)}.
\end{align*}
The asymptotic behaviour
of $A_{\widehat{\rho}}(\tau,z,t)$  is well known \cite[Proposition 13.13]{Kac90} to be
\begin{align}\label{eq:denom.asymptotic-W}
A_{\widehat{\rho}}(\tau, -\tau z, 0) \sim b(\rho, z)(-{\imag} \tau)^{-\ell/2} e^{-\frac{\pi{\imag} }{12\tau} \dim(\g)}
\end{align}
where
\begin{align*}
b(\rho, z) = \prod_{\alpha \in \Delta_+} 2 \sin{\pi (\alpha | z)}.
\end{align*}
Hence
\begin{align}
&\chi_M(\tau,-\tau z,0)\sim 
\A_M(z) (-{\imag}  \tau)^{\Weight_M}e^{\frac{\pi {\imag} }{12 \tau}\G_M}
\label{eq:aym-numerator}
\\
&\iff (A_{\widehat{\rho}}\chi_M)(\tau,-\tau z,0)\sim 
b(\rho,z)A(\rho,z) (-{\imag}  \tau)^{\Weight_M-\frac{\ell}{2}}
e^{\frac{\pi {\imag} }{12 \tau}(\G_M-\dim \g)}.
\nonumber
\end{align}
Now 
by Theorem \ref{Th:Associated_variety-DS}
and the Euler-Poincar\'{e} principle (see the discussion in \cite[Section 2]{KacWak08})
\begin{align*}
\ch_{H^0_{DS, f}(M)}(\tau, z) &= \lim_{\varepsilon \rightarrow 0} \text{(I)} \cdot \text{(II)}, \\
\text{where} \quad \text{(I)} &= {\rm tr}_M q^{L_0^{\text{Sug}} - (x_\Gamma^0)_0} e^{2\pi \imag  (z + \varepsilon x_\Gamma^0)_0} \\
\text{and} \quad \text{(II)} & = \on{str}_{F^{\text{ch}} \otimes F^{\text{ne}}} q^{L_0^{\text{ch}} + L_0^{\text{ne}}} e^{2\pi \imag ((z + \varepsilon x_\Gamma^0)_0^{\text{ch}} + (z + \varepsilon x_\Gamma^0)_0^{\text{ne}})}.
\end{align*}
Here $L_0^{\text{Sug}}$ denotes the zero mode of the Sugawara conformal vector in $V^k(\g)$ while $L_0^{\text{ch}}$ and $L_0^{\text{ne}}$ denote the conformal vectors in $F^{\text{ch}}$ and $F^{\text{ne}}$ given in {\cite[Section 2.2]{KacRoaWak03}}, or in {\cite[Section 2.1]{KacWak08}}. 
We thus have
\begin{align}\label{eq:char.H0. limit}
\chi_{H^0_{DS, f}(M)}(\tau, z) &= \lim_{\epsilon \rightarrow 0} \frac{(A_{\widehat{\rho}}\ch M)(\tau, z - \tau x_\Gamma^0 + \varepsilon x_\Gamma^0, 0)}{\psi(\tau, z - \tau x_\Gamma^0 + \varepsilon x_\Gamma^0, 0)},
\end{align}
in which
\begin{align*}
\psi(\tau, z, t) = {} & e^{2\pi \imag h^\vee t} \eta(\tau)^\ell \prod_{\alpha \in \Delta_{\Gamma, +}^{0} \cup \Delta^{1/2}_{\Gamma, +}} f(\tau, (\alpha | z)), \\
\text{where} \quad f(\tau, s) &= e^{\pi \imag \tau/6} e^{\pi \imag s} \prod_{n=1}^\infty \left( 1 - e^{2\pi \imag ((n-1)\tau-s)} \right) \left( 1 - e^{2\pi \imag (n\tau+s)} \right).
\end{align*}
We now describe the proof of \eqref{eq:char.H0. limit} briefly, referring the reader to \cite[Section 2.2]{KacWak08} for details. The numerator of \eqref{eq:char.H0. limit} comes directly from the numerator of the expression \eqref{eq:aff.char.frac} for $\chi_{L(\lam)}$. The modification $z \mapsto z - \tau x_\Gamma^0$ 
corresponds to the shift of Virasoro operator $L_0^{\text{Sug}} \mapsto L_0^{\text{Sug}} - (x_\Gamma^0)_0$. 
The Weyl denominator is $\eta(\tau)^\ell$ times a product of theta functions indexed by $\alpha \in \Delta_+$. At the level of characters the effect of the tensor product with $F^{\text{ch}}$ is to cancel those factors associated with $\alpha \in \Delta_{\Gamma}^j$ for $j > 0$. The tensor product with $F^{\text{ne}}$ reintroduces those factors associated with $\alpha \in \Delta_{\Gamma, +}^{1/2}$ (whose cardinality is half that of $\Delta_{\Gamma}^{1/2}$). Ultimately this yields the denominator $\psi(\tau, z - \tau x_\Gamma^0, t)$ of  \eqref{eq:char.H0. limit}.

The function $\psi$ satisfies the asymptotic
\begin{align}\label{eq:psi.asymptotic}
\psi(\tau, -\tau z, 0) \sim \prod_{\alpha \in \Delta_{\Gamma, +}^{0} \cup \Delta^{1/2}_{\Gamma, +}} 2 \sin \pi(\alpha | z)
(-{\imag} \tau)^{-\ell/2} e^{\pi {\imag} \dim(\g^f) / 12\tau} \end{align}
as $\tau \downarrow 0$. The asymptotic behaviour of the numerator $A_{\nu+\widehat{\rho}}(\tau, z - \tau x_\Gamma^0, t)$ was established in the proof of Proposition \ref{prop:general.affine.asymp.coprincipal}. 
The required asymptotic follows from
\eqref{eq:aym-numerator},
\eqref{eq:char.H0. limit}  and \eqref{eq:psi.asymptotic},
together with the fact that
\begin{align*}
\dim \overline{\mathbb{O}}_f=\dim \g-\dim \g^f.
\end{align*}
\end{proof}

\begin{Pro}
\label{Pro:asymptotic_data_H_DS}
Let $k =-h_\g^\vee +\dfrac{p}{q}$ be an admissible level
and let $f \in X_{L_k(\g)}$.
Then $H_{DS,f}^0(L(\lam))$
admits \asym\ 
for all  simple ordinary $L_k(\g)$-modules $L(\lam)$
with $\Weight_{H_{DS,f}^0(L(\lam))}=0$. 
Moreover:  
\begin{enumerate}
\item (\cite{KacWak08})
For  $\lam\in \on{Pr}^k_{\Z}$,
\begin{align*}
&\G_{H_{DS,f}^0(L(\lam))} =\G_{L_k(\g)}-\dim\overline{G.f} 
= \dim \g^{f} -  \dfrac{h_\g^\vee \dim \g}{pq},& \\
&\A_{H_{DS,f}^0(L(\lam))}
= {} 
\dfrac{1}{2^{\frac{|\Delta_\Gamma^{1/2}|}{2}}q^{|\Delta^0_{\Gamma,+}|}
{|P /(pq)Q^\vee|}^{\frac{1}{2}} } & \\
& \hspace{2.25cm} \times \prod\limits_{\alpha \in \Delta_{+}} 
2 \sin \dfrac{\pi (\lam+\rho | \alpha) }{p} 
\prod\limits_{\alpha \in \Delta_{+} \setminus \Delta^0_{\Gamma,+} } 
2 \sin \dfrac{\pi (x_\Gamma^{0} | \alpha) }{q}.& 
\end{align*}

\item For  $\lam\in \on{CoPr}^k_{\Z}$,
\begin{align*}
& \G_{H_{DS,f}^0(L(\lam))} =\G_{L_k(\g)}-\dim\overline{G.f} 
= \dim \g^{f} - \dfrac{r^\vee   h^\vee_{{}^L\!\g} \dim \g}{pq},  & \\
& \A_{H_{DS,f}^0(L(\lam))}
=  \dfrac{(r^\vee_\g)^{|\Delta_+^{\textup{short}} \cap \Delta_\Gamma^0|}}
{2^{\frac{|\Delta_\Gamma^{1/2}|}{2}} q^{|\Delta^0_{\Gamma,+}|}
{|P^\vee/(pq) Q |}^{\frac{1}{2}}} \\
& \hspace{2.25cm} \times \prod\limits_{\alpha \in \Delta_{+}} 
2 \sin \dfrac{\pi (\lam+\rho | \alpha^\vee) }{p}  
\prod\limits_{\alpha \in \Delta_{+} \setminus \Delta^0_{\Gamma,+} } 
2 \sin \dfrac{\pi (x_\Gamma^0 | \alpha^\vee) }{q} . & 
\end{align*}
\end{enumerate}
In particular,
$$\on{qdim}_{H_{DS,f}^0(L_k(\g))}H_{DS,f}^0(L(\lam))=\on{qdim}_{L_k(\g)}L(\lam)
$$
for any simple ordinary $L_k(\g)$-module $L(\lam)$.

\end{Pro}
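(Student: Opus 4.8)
The principal case (1) of this proposition is due to \cite{KacWak08}; the coprincipal case (2) follows the same lines, now using Proposition~\ref{prop:general.affine.asymp.coprincipal} in place of Proposition~\ref{Proposition:Kac-Wakimoto}. The plan is to extract the asymptotics of $\chi_{H_{DS,f}^0(L(\lam))}$ by feeding the multivariable asymptotics of admissible affine characters into the Drinfeld-Sokolov reduction formula of Proposition~\ref{Pro:reduction-of-assymptotics}, and then letting the elliptic variable $z$ tend to $0$. First, for a simple ordinary $L_k(\g)$-module $L(\lam)$ one has $\lam\in\on{Pr}^k_\Z$ (resp.\ $\lam\in\on{CoPr}^k_\Z$) according as $k$ is principal or coprincipal; then $\ov\lam$ is dominant integral, so $L(\lam)$ is a finitely generated object of $\KL_k$ and, by Theorem~\ref{Th:Associated_variety-DS}(1)--(2), $H^\bullet_{DS,f}(L(\lam))$ is concentrated in degree $0$ with $H_{DS,f}^0(L(\lam))$ ordinary; in particular $\chi_{H^\bullet_{DS,f}(L(\lam))}(\tau,z)=\chi_{H_{DS,f}^0(L(\lam))}(\tau,z)$ is well-defined, and it is nonzero thanks to the hypothesis $f\in X_{L_k(\g)}$ (this will also be visible from the positivity of the leading coefficient obtained below). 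Now apply Proposition~\ref{Pro:reduction-of-assymptotics} with $M=L(\lam)$: by Proposition~\ref{Proposition:Kac-Wakimoto} (resp.\ Proposition~\ref{prop:general.affine.asymp.coprincipal}), specialised to $\ov y=1$, $\beta=0$, $\nu=\lam+\rho$, we have $\chi_{L(\lam)}(iT,-iTz,0)\sim b(\lam,z)\,e^{\pi\G_{L_k(\g)}/12T}$, i.e.\ $\A_{L(\lam)}(z)=b(\lam,z)$, $\Weight_{L(\lam)}=0$ and $\G_{L(\lam)}=\G_{L_k(\g)}$. Proposition~\ref{Pro:reduction-of-assymptotics} then gives at once $\Weight_{H_{DS,f}^0(L(\lam))}=0$ and $\G_{H_{DS,f}^0(L(\lam))}=\G_{L_k(\g)}-\dim\O_f=\G_{L_k(\g)}-(\dim\g-\dim\g^f)$, which upon inserting $\G_{L_k(\g)}=(1-h_\g^\vee/pq)\dim\g$ (resp.\ $(1-r^\vee h^\vee_{{}^L\!\g}/pq)\dim\g$) from Corollary~\ref{Co:asymptotic_data_L} yields the asserted value of $\G$.

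It remains to evaluate $\A_{H_{DS,f}^0(L(\lam))}$ as the limit, as $z\to 0$, of the explicit ratio $\A_{H^\bullet_{DS,f}(L(\lam))}(z)$ furnished by Proposition~\ref{Pro:reduction-of-assymptotics}; this is where the real work lies. Substituting the explicit formula for $b(\lam,z+x^0_\Gamma)$, the factor $\prod_{\alpha\in\Delta_+}2\sin\pi(\alpha|z+x^0_\Gamma)$ in the numerator of the reduction formula cancels identically against the Weyl-denominator factor $\prod_{\alpha\in\Delta_+}\sin\pi(\alpha|z+x^0_\Gamma)$ hidden inside $b(\lam,z+x^0_\Gamma)$, leaving $2^{|\Delta_+|}$ and a quotient whose only factors degenerating as $z\to 0$ are those indexed by $\Delta^0_{\Gamma,+}$ (for which $(\alpha|x^0_\Gamma)=0$), matched between numerator and denominator. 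Applying l'Hopital's rule to each such factor — the ratio $\sin\tfrac{\pi(\alpha|z)}{q}\big/2\sin\pi(\alpha|z)$ tends to $\tfrac{1}{2q}$ in the principal case and to $\tfrac{1}{(\alpha|\alpha)q}$ in the coprincipal case, the latter producing the correction factor $(r^\vee_\g)^{|\Delta_+^{\textup{short}}\cap\Delta^0_\Gamma|}$ — while the remaining factors indexed by $\Delta^{1/2}_{\Gamma,+}$ stay finite and nonzero, and collecting the powers of $2$ and $q$, one is left precisely with the claimed formula for $\A_{H_{DS,f}^0(L(\lam))}$. Here the hypothesis $f\in X_{L_k(\g)}$ is used once more to ensure that the residual product $\prod_{\alpha\in\Delta_+\setminus\Delta^0_{\Gamma,+}}2\sin\tfrac{\pi(x^0_\Gamma|\alpha)}{q}$ (resp.\ its $\alpha^\vee$-version) does not vanish, so that $\A>0$ and $H_{DS,f}^0(L(\lam))$ genuinely admits an asymptotic datum in the sense of Definition~\ref{Def:asymp.datum}; one also has to justify interchanging the $z\to 0$ limit with the $\tau\downarrow 0$ asymptotic, which is done as in \cite{KacWak08}. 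I expect this limit computation to be the main obstacle: deciding which $\sin$-factors degenerate, pairing the vanishing factors of numerator and denominator, and keeping track of the root-length factors in the non-simply-laced (coprincipal) case.

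Finally, for the quantum dimension assertion: since $\Weight$ and $\G$ take the same value for all $\lam\in\on{Adm}^k_\Z$, in particular for $\lam=k\Lam_0$ (that is, for $L_k(\g)$ itself, which lies in $\on{Pr}^k_\Z$ resp.\ $\on{CoPr}^k_\Z$), the quantum dimension $\on{qdim}_{H_{DS,f}^0(L_k(\g))}H_{DS,f}^0(L(\lam))$ exists and equals $\A_{H_{DS,f}^0(L(\lam))}/\A_{H_{DS,f}^0(L_k(\g))}$. In this ratio every factor not depending on $\lam$ — the powers of $2$ and $q$, the factor $|P/(pq)Q^\vee|^{1/2}$ (resp.\ $|P^\vee/(pq)Q|^{1/2}$), the product over $\Delta_+\setminus\Delta^0_{\Gamma,+}$, and the $r^\vee_\g$ correction in the coprincipal case — cancels, leaving $\prod_{\alpha\in\Delta_+}\bigl(2\sin\tfrac{\pi(\lam+\rho|\alpha)}{p}\bigr)\big/\bigl(2\sin\tfrac{\pi(\rho|\alpha)}{p}\bigr)=\prod_{\alpha\in\Delta_+}\tfrac{(\lam+\rho|\alpha)_t}{(\rho|\alpha)_t}$ in the principal case (and the $\alpha^\vee$-analogue in the coprincipal case), which by Corollary~\ref{Co:asymptotic_data_L} is exactly $\on{qdim}_{L_k(\g)}L(\lam)$, as required.
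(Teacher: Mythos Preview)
Your proposal is correct and follows essentially the same route as the paper: feed the multivariable asymptotics of Propositions~\ref{Proposition:Kac-Wakimoto} and~\ref{prop:general.affine.asymp.coprincipal} (with $\ov y=1$, $\beta=0$, $\nu=\lam+\rho$) into Proposition~\ref{Pro:reduction-of-assymptotics}, then let $z\to 0$ using l'Hopital on the $\Delta^0_{\Gamma,+}$ factors to produce the $q^{-|\Delta^0_{\Gamma,+}|}$ and $(r^\vee)^{|\Delta_+^{\text{short}}\cap\Delta^0_\Gamma|}$ contributions. The paper's own proof (written only for the coprincipal case) organises the computation by splitting the single quotient into four sub-products indexed by $\Delta_+$, $\Delta^{1/2}_{\Gamma,+}$, $\Delta^0_{\Gamma,+}$, and $\Delta^{\ge 1/2}_{\Gamma,+}$ rather than first cancelling the full Weyl denominator as you do, but this is purely a bookkeeping difference and the substance is identical.
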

\begin{proof}
Let $k$ be coprincipal
and let $\lam\in \on{CoPr}^k$.
By Proposition \ref{prop:general.affine.asymp.coprincipal}
and Proposition~\ref{Pro:reduction-of-assymptotics}, we have
\begin{align}
\chi_{H^0_{DS, f}(L(\lam))}({\imag}T, -{\imag}Tz, 0) \sim \frac{\varepsilon(\ov y)e^{-\pi \G_{L_k(\g)} / 12T}}{ |P^\vee/(pq)Q|^{1/2} } \cdot \frac{ \prod_{\alpha \in \Delta_+} 4 \sin\frac{\pi (\alpha^\vee | \nu)}{p} \sin \frac{\pi (\alpha^\vee | z+x_\Gamma^0-\beta)}{q} }{ \prod_{\alpha \in \Delta_{\Gamma, +}^{0} \cup \Delta^{1/2}_{\Gamma, +}} 2 \sin \pi(\alpha | z+x_\Gamma^0),}
\end{align}
where $\nu$, $\bar y$ are  as in Proposition \ref{prop:general.affine.asymp.coprincipal}.

We are interested in the case $\lam\in\on{CoPr}^k_{\Z}$ which corresponds to $\ov y = 1$, $\beta = 0$ and $\nu=\lam+\widehat{\rho}$. We now simplify the products appearing in the asymptotic above to:
\begin{align*}
& \prod_{\alpha \in \Delta_+} 2 \sin\frac{\pi (\alpha^\vee | \lam+\rho)}{p}
\prod_{\alpha \in \Delta^{1/2}_{\Gamma, +}} \frac{1}{2 \sin \pi(\alpha | z+x_\Gamma^0)} \\
&\phantom{........} \prod_{\alpha \in \Delta_{\Gamma, +}^{0}} \frac{2 \sin \frac{\pi (\alpha^\vee | z+x_\Gamma^0)}{q}}{2 \sin \pi(\alpha | z+x_\Gamma^0)}
\prod_{\alpha \in \Delta_{\Gamma, +}^{\ge 1/2}} 2 \sin \frac{\pi (\alpha^\vee | z+x_\Gamma^0)}{q}.
\end{align*}
The second product here becomes simply $2^{-|\Delta_{\Gamma}^{1/2}|/2}$. In the third product the terms $x_\Gamma^0$ are irrelevant since $\alpha \in \Delta_{\Gamma, +}^0$. In the limit $z \rightarrow 0$ this product reduces, by l'Hopital's rule, to
\begin{align*}
\prod_{\alpha \in \Delta_{\Gamma, +}^{0}} \frac{|\alpha^\vee|/q}{|\alpha|}
= \frac{(r^\vee)^{|\Delta_{\Gamma, +}^0 \cap \Delta^{\text{short}}|}}{q^{|\Delta_{\Gamma, +}^0|}}.
\end{align*}
In the fourth product the limit obtains by simply putting $z=0$. Thus we have obtained the stated formula.
\end{proof}

\begin{Th}\label{Th:min-conf-dim}
Let $k$ be admissible,
and let $f\in \g$  be a nilpotent element that admits an even good grading.
\begin{enumerate}
\item Let $f\in \overline{\mathbb{O}}_k$,
so that $H_{DS,f}^0(L_k(\g))\cong \W_k(\g,f)$ (see Theorem \ref{Th:simplicity}).
Then
$h_{\lam_{\ooh}}$ is the minimal conformal dimension among simple positive energy representations
of $\W_k(\g,f)$ (see \eqref{eq:def-lam0} and \eqref{eq:conformal-dim-of-W-module}),
and we have
\begin{align*}
\G_{\W_k(\g,f)}=c_{\W_k(\g,f)}-24 h_{\lam_{\ooh}}.
\end{align*}
 \item Suppose further that  $f\in \mathbb{O}_k$, so that $\W_k(\g,f)$ is lisse.
Then there exists a unique 
simple $\W_k(\g,f)$-module 
that has 
the  minimal conformal dimension $h_{\lam_{\ooh}}$.
\end{enumerate}
\end{Th}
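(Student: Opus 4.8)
The plan is to pass everything to the Ramond twisted Zhu algebra and then to finite $W$-algebras. For part~(1): by Theorem~\ref{Th:simplicity} (this is where the even good grading and the hypothesis $f\in\overline{\mathbb{O}}_k$ are used) we have $\W_k(\g,f)\cong H_{DS,f}^0(L_k(\g))$, so Corollary~\ref{Co:Zhu-of-reduction} gives $\on{Zhu}(\W_k(\g,f))\cong\prod H_{DS,f}^0(U(\g)/J_{\bar\lam})$, the product being over those $[\lam]\in[\on{Adm}^k]$ with $G.f\subset\on{Var}(J_{\bar\lam})$. Each simple positive energy $\W_k(\g,f)$-module is a simple module over exactly one of these factors, on which $[\omega]$ acts by the scalar $h_\lam$ of~\eqref{eq:conformal-dim-of-W-module}; hence the minimal conformal dimension equals $\min\{h_\lam\}$ over this index set. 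As $h_\lam-\dfrac{|\lam+\rho|^2-|\rho|^2}{2(k+h_\g^\vee)}$ is independent of $\lam$, and as (by the proof of Proposition~\ref{Pro:minimal-conformal-dim}) the quantity $|\lam+\rho|^2$ is minimized over $[\on{Adm}^k]$ precisely on the class $[\lam_o]$, it remains only to observe that $[\lam_o]$ belongs to the index set: indeed $\on{Var}(J_{\lam_o})=\overline{\mathbb{O}}_k\supseteq G.f$ by Theorem~\ref{Th:short-paper} together with $f\in\overline{\mathbb{O}}_k$, and the corresponding factor is nonzero by~\eqref{eq:non-vanishing}.

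For the equality $\G_{\W_k(\g,f)}=c_{\W_k(\g,f)}-24h_{\lam_o}$ I would just compute both sides. Proposition~\ref{Pro:asymptotic_data_H_DS} applied to the vacuum module ($\lam=k\Lam_0$) gives $\G_{\W_k(\g,f)}=\dim\g^f-\dfrac{h_\g^\vee\dim\g}{pq}$ in the principal case, and $\dim\g^f-\dfrac{r^\vee h^\vee_{{}^L\g}\dim\g}{pq}$ in the coprincipal case. On the other side I insert the value of $h_{\lam_o}$ from~\eqref{eq:conformal-dim-of-W-module} (with $\lam_o+\rho$ equal to $\rho/q$, respectively $\rho^\vee/q$), the central charge formula~\eqref{eq:W.alg.c.formula}, the strange formula $|\rho|^2=\tfrac1{12}h_\g^\vee\dim\g$ (respectively $|\rho^\vee|^2=\tfrac1{12}r^\vee h^\vee_{{}^L\g}\dim\g$ for the form $(\,\cdot\,|\,\cdot\,)_\g$), and the identity $\dim\g^f=\dim\g_\Gamma^0$, valid because an even good grading has vanishing half-integer parts. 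The contributions of $x_\Gamma^0$ cancel and both sides collapse to $\dim\g_\Gamma^0-\dfrac{h_\g^\vee\dim\g}{pq}$ (respectively $\dim\g_\Gamma^0-\dfrac{r^\vee h^\vee_{{}^L\g}\dim\g}{pq}$); this is a routine manipulation.

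For part~(2): since $f\in\mathbb{O}_k$, $\W_k(\g,f)$ is lisse by Theorem~\ref{Th:Associated_variety-DS}, so all its modules are ordinary and positive energy. By the equality analysis in Proposition~\ref{Pro:minimal-conformal-dim} the class $[\lam_o]$ is the \emph{unique} minimizer of $h_\lam$, so by part~(1) the simple modules of minimal conformal dimension are precisely the simple modules of the single finite-dimensional algebra $A:=H_{DS,f}^0(U(\g)/J_{\lam_o})$; hence it suffices to show that $A$ has a unique simple module. Here $\on{Var}(J_{\lam_o})=\overline{\mathbb{O}}_k=\overline{G.f}$, so $A$ is the finite $W$-algebra reduction of a primitive quotient of $U(\g)$ whose associated variety is the closure of $G.f$; by the structure theory of finite $W$-algebras (Losev, Premet) the simple modules of $A$ form a single orbit under a quotient of the component group of $f$, and \emph{the main obstacle} is to show that this quotient is trivial for the distinguished primitive ideal $J_{\lam_o}$. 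I expect this to follow from the special shape $\lam_o+\rho=\rho/q$ (respectively $\rho^\vee/q$): this weight is regular and dominant for the integral Weyl group $W_{[\lam_o]}$ (generated by the reflections $s_\alpha$ with $q$ dividing $(\rho|\alpha^\vee)$, respectively $(\rho^\vee|\alpha^\vee)$), which should identify $J_{\lam_o}$ with the annihilator of a module induced from the corresponding Levi subalgebra, so that $\mathbb{O}_k$ is the associated induced nilpotent orbit and $A$ is computed, via parabolic induction for finite $W$-algebras, to be a matrix algebra over $\C$. Making these steps precise---the identification of $W_{[\lam_o]}$ with a standard Levi up to $W$-conjugacy, the simplicity of the induced module, and the parabolic-induction computation---is the technical heart; in the absence of a uniform argument one can fall back on the classification of pairs $(\g,f)$ with $f$ admitting an even good grading and $f\in\mathbb{O}_k$. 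The coprincipal case is entirely parallel, with $\rho$ replaced throughout by $\rho^\vee$.
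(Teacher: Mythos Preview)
Your treatment of part~(1) is correct and follows the same line as the paper: pass to the Zhu algebra via Corollary~\ref{Co:Zhu-of-reduction}, use Theorem~\ref{Th:short-paper} and \eqref{eq:non-vanishing} to see that $[\lam_o]$ contributes a nonzero factor, and invoke Proposition~\ref{Pro:minimal-conformal-dim} to identify $h_{\lam_o}$ as the minimum. Your derivation of $\G_{\W_k(\g,f)}=c_{\W_k(\g,f)}-24h_{\lam_o}$ by direct computation (Proposition~\ref{Pro:asymptotic_data_H_DS}, the strange formula, $\dim\g^f=\dim\g_\Gamma^0$ for an even grading) is equivalent to the paper's, which instead rewrites $\G_{\W_k(\g,f)}=\G_{L_k(\g)}-\dim G.f$ via Remark~\ref{Rem:remar-on-min-con-wt} and then plugs into the central-charge formula~\eqref{eq:W.alg.c.formula2}.

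Part~(2), however, has a genuine gap. You correctly reduce to showing that $A=H_{DS,f}^0(U(\g)/J_{\lam_o})$ has a unique simple module, and you correctly identify that (via Remark~\ref{Rem:remar-on-min-con-wt}) $[\lam_o]$ is the unique class in $[\on{Adm}^k]$ minimizing $h_\lam$. But your proposed route to the uniqueness of the simple $A$-module---through the Losev/Premet component-group action and a parabolic-induction argument for the special weight $\lam_o$---is, as you acknowledge, not carried out. The paper bypasses this entirely: it cites \cite{AEkeren19}, where it is proved that for $f\in\mathbb{O}_k$ \emph{every} factor $H_{DS,f}^0(U(\g)/J_{\bar\lam})$ in Corollary~\ref{Co:Zhu-of-reduction} is a simple (finite-dimensional) algebra, hence has a unique simple module. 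That result is the missing ingredient; with it, part~(2) is immediate from the uniqueness of $[\lam_o]$. Your component-group strategy is not wrong in spirit, but making it rigorous essentially reproves a chunk of \cite{AEkeren19}, and the fallback to case-by-case classification you mention is not what the paper does.
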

\begin{proof}
First we treat (1). By Theorem \ref{Th:short-paper},
$\on{Var}(J_{\lam_{\ooh}})=\overline{\mathbb{O}}_k$,
which contains the orbit $G.f$ by the assumption.
Hence,
$H_{DS,f}^{0}(U(\g)/J_{\lam_{\ooh}})$ is nonzero by \eqref{eq:non-vanishing}.
It follows from Corollary  \ref{Co:Zhu-of-reduction}
that 
there exists a simple $\W_k(\g,f)$-module, corresponding to a simple $H_{DS,f}^{0}(U(\g)/J_{\lam_{\ooh}})$-module,
which has the minimal conformal dimension $h_{\lam_{\ooh}}$ among simple positive energy representations
of $\W_k(\g,f)$ 
by Proposition \ref{Pro:minimal-conformal-dim} and \eqref{eq:conformal-dim-of-W-module}. Next we have
\begin{align*}
&\G_{\W_k(\g,f)}=\G_{L_k(\g)}-\dim G.f\quad\text{(Proposition \ref{Pro:asymptotic_data_H_DS})}\\
&=c_{V^k(\g)}-24 (h_{\lam_{\ooh}}+\frac{k+h^{\vee}_\g}{2}|x^0_{\Gamma}|^2-(x^0_{\Gamma}|\rho))
-\dim G.f \quad\text{(Remark \ref{Rem:remar-on-min-con-wt} and \eqref{eq:conformal-dim-of-W-module})},
\end{align*}
where $\Gamma$ is a good even grading.
Hence the assertion follows from  \eqref{eq:W.alg.c.formula} and the fact that $\dim\g_{1/2}=0$, which is true by hypothesis.

Now we turn to (2). By \cite{AEkeren19}, each factor $H_{DS,f}^{0}(U(\g)/J_{\bar \lam})$ in Corollary \ref{Co:Zhu-of-reduction} is a simple algebra, which has a unique simple module. Hence the assertion follows from Remark \ref{Rem:remar-on-min-con-wt}.
\end{proof}

In view of  Proposition \ref{Pro:effective.c.c=growth},
Theorem \ref{Th:min-conf-dim} gives a supporting evidence
for 
 Conjecture  \ref{Conj:rationality}.

\section{Collapsing levels for $W$-algebras}
\label{sec:collapsing}
In general, if $\mf{s}$ is a semisimple Lie algebra, we write $\kappa_{\mf{s}}$ 
for the Killing form of $\mf{s}$. For now, we assume that $\mf{s}$ is simple. 
We denote by $h_{\mf{s}}^\vee$ its dual Coxeter number 
and by $(~|~)_{\mf{s}}$ the normalised inner product $\dfrac{1}{2h_{\mf{s}}^\vee} \kappa_{\mf{s}}$ 
so that $(\theta | \theta)=2$ for $\theta$ the highest 
positive root of $\mf{s}$. 

As in Section~\ref{sec:Asymptotics_of_W_algebras}, we fix an $\sl_2$-triple $(e,h,f)$ of $\g$,  
with related notation. In particular, 
for $j \in \frac{1}{2}\Z$, recall that 
$$\g^{j} = \{x \in \g \colon [h,x] = 2 j x\},$$ and $\g^0$ is the centraliser of $h$ in $\g$. 
The centraliser $\g^\natural$ of the $\sl_2$-triple $(e,h,f)$ in $\g$ 
is given by 
\begin{align}
\g^\natural = \g^0 \cap \g^{f} = \g^{e}\cap \g^{f}. 
\end{align} 
The Lie algebra $\g^\natural$ is a reductive subalgebra of $\g$. 
The centraliser $G^\natural$ of the $\sl_2$-triple $(e,h,f)$ 
in $G$ is a maximal reductive subgroup of the centraliser of $f$ in $G$ 
whose Lie algebra is $\g^\natural$.
We can write 
$\g^\natural= \g_0^\natural \oplus [\g^\natural,\g^\natural]$, 
with $\g_0^\natural$ the centre of the reductive Lie algebra $\g^\natural$.    
Denoting by $\g^\natural_1,\ldots,\g^\natural_s$ the simple factors of 
$[\g^\natural,\g^\natural]$, we get 
$$\g^\natural=\bigoplus_{i=0}^s \g^\natural_i.$$

More generally, for $\g_\Gamma= \bigoplus_{j \in \frac{1}{2}\Z} 
\g_\Gamma^j$ a good grading for $f$, we set 
\begin{align}
\g_\Gamma^\natural:=\g_\Gamma^{0} \cap \g^f. 
\end{align}
We note that $\g_\Gamma^\natural$ is not a reductive Lie algebra 
in general.
In fact $\g^\natural$ is a maximal reductive subalgebra of $\g_\Gamma^\natural$. Indeed, by \cite[Lemma 1.2]{ElashviliKac}, $\g^\natural$ is a maximal reductive subalgebra 
of $\g^f$ while, on the other hand, since $\g^\natural$ is reductive and  
contained in $\bigoplus_{j \ge 0} \g_\Gamma^j$, we have $\g^\natural \subset  \g_\Gamma^0$.

Let $i \in \{0,\ldots,s\}$. 
For any element $x \in \g_i^\natural \subset \g_\Gamma^\natural$ 
the adjoint action $\ad x$ restricts to an endomorphism of $\g_\Gamma^{j}$ which we denote $\rho_x^{j}$, for any $j$.
Setting $\kappa_{\g_\Gamma^{j}}(x,x)={\rm tr}(\rho_x^{j} \circ \rho_x^{j} )$, 
one defines an invariant bilinear form on $\g_{i}^\natural$ 
by (see \cite[Theorem 2.1]{KacWak03} for the case where $\Gamma$ is the Dynkin grading):
\begin{align*}
\phi_{\Gamma,i}^\natural(x,x) := k (x|x)_\g  + \dfrac{1}{2} (\kappa_\g(x,x)
- \kappa_{\g_\Gamma^0}(x,x)-\kappa_{\g_\Gamma^{1/2}}(x,x)).
\end{align*}
Thus, for $i \not=0$, there exists a scalar $k_{\Gamma,i}^\natural$ 
such that 
\begin{align}
\phi_{\Gamma,i}^\natural = k_{\Gamma,i}^\natural  (~|~)_{i} 
\quad \text{where}\quad (~|~)_i  :=(~|~)_{\g_i^\natural}. 
 \label{eq:k_natural}
\end{align}

Note that $V^{\phi_{\Gamma,0}^\natural} (\g_0^\natural) \cong M(1)^{\otimes {\rm rank}\,
\phi_{\Gamma,0}^\natural}$,  
where $M(1)$ is the Heisenberg vertex algebra of central charge 1.

Following \cite{Brundan-Goodwin}, we say that two good gradings $\Gamma,\Gamma'$ 
are {\em adjacent} if 
$$\g= \bigoplus_{i^-\le j \le i^+} \g_{\Gamma}^{i} \cap \g_{\Gamma'}^{j},$$
where $i^-$ denotes the largest half-integer strictly smaller than $i$ and 
$i^+$ denotes the smallest half-integer
strictly greater than $i$.

\begin{Lem} 
\label{Lem:maximal reductive subalgebra}
Let $\Gamma,\Gamma'$ be two good gradings for $f$. 
If $\Gamma,\Gamma'$ are adjacent, then 
$\phi_{\Gamma,i}^\natural =\phi_{\Gamma',i}^\natural$ 
for $i=0,\ldots,s$. 
\end{Lem}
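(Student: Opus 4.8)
The plan is to reduce the identity $\phi_{\Gamma,i}^\natural = \phi_{\Gamma',i}^\natural$ to a comparison of the four bilinear forms appearing in the definition
\[
\phi_{\Gamma,i}^\natural(x,x) = k(x|x)_\g + \tfrac12\bigl(\kappa_\g(x,x) - \kappa_{\g_\Gamma^0}(x,x) - \kappa_{\g_\Gamma^{1/2}}(x,x)\bigr).
\]
The first two terms, $k(x|x)_\g$ and $\tfrac12\kappa_\g(x,x)$, do not involve the grading at all, so it suffices to show that for all $x$ in $\g^\natural$ (equivalently, in each $\g_i^\natural$, $i = 0,\dots,s$) one has
\[
\kappa_{\g_\Gamma^0}(x,x) + \kappa_{\g_\Gamma^{1/2}}(x,x) = \kappa_{\g_{\Gamma'}^0}(x,x) + \kappa_{\g_{\Gamma'}^{1/2}}(x,x).
\]
Here I would use that $x \in \g^\natural \subset \g_\Gamma^0 \cap \g_{\Gamma'}^0$ (as recorded just above the lemma), so $\ad x$ preserves every piece of each grading and the traces $\kappa_{V}(x,x) = \operatorname{tr}_V(\ad x \circ \ad x)$ make sense for any $\ad x$-stable subspace $V$.

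The key step is to exploit adjacency. By definition of adjacent gradings, $\g = \bigoplus_{i^- \le j \le i^+} \g_\Gamma^i \cap \g_{\Gamma'}^j$, and in particular
\[
\g_\Gamma^0 = \g_\Gamma^0 \cap \g_{\Gamma'}^{-1/2} \ \oplus\ \g_\Gamma^0 \cap \g_{\Gamma'}^0 \ \oplus\ \g_\Gamma^0 \cap \g_{\Gamma'}^{1/2},
\qquad
\g_\Gamma^{1/2} = \g_\Gamma^{1/2}\cap\g_{\Gamma'}^0 \ \oplus\ \g_\Gamma^{1/2}\cap\g_{\Gamma'}^{1/2},
\]
with the analogous decompositions for $\g_{\Gamma'}^0$ and $\g_{\Gamma'}^{1/2}$ obtained by exchanging the roles of $\Gamma$ and $\Gamma'$. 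Since each summand $\g_\Gamma^i \cap \g_{\Gamma'}^j$ is $\ad x$-stable, $\kappa$ is additive over these decompositions, so both sides of the desired identity become sums of the common quantities $\kappa_{\g_\Gamma^i \cap \g_{\Gamma'}^j}(x,x)$. Writing $a_{ij} := \kappa_{\g_\Gamma^i \cap \g_{\Gamma'}^j}(x,x)$, the left-hand side is $(a_{0,-1/2} + a_{0,0} + a_{0,1/2}) + (a_{1/2,0} + a_{1/2,1/2})$ and the right-hand side is $(a_{-1/2,0} + a_{0,0} + a_{1/2,0}) + (a_{0,1/2} + a_{1/2,1/2})$; after cancelling the common terms what remains is the claim that $a_{0,-1/2} = a_{-1/2,0}$, i.e. $\kappa_{\g_\Gamma^0\cap\g_{\Gamma'}^{-1/2}}(x,x) = \kappa_{\g_{\Gamma'}^0\cap\g_\Gamma^{-1/2}}(x,x)$.

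This last equality is the main point, and it follows from the symmetry of $\ad f$ (or rather, of $\ad f$ together with the invariant form $(\ |\ )_\g$) relating the pieces of a good grading. Concretely, I would use that a good grading comes with a semisimple element $x_\Gamma^0$ and that $\ad f : \g_\Gamma^j \to \g_\Gamma^{j-1}$ is an isomorphism for $j \le 1/2$; combined with adjacency this gives that $\ad f$ restricts to an $\ad x$-equivariant isomorphism $\g_\Gamma^0 \cap \g_{\Gamma'}^{1/2} \xrightarrow{\sim} \g_{\Gamma'}^{-1/2}\cap\g_\Gamma^{-1}$ and, more to the point, that the pairing $(\ |\ )_\g$ identifies $\g_\Gamma^0\cap\g_{\Gamma'}^{-1/2}$ with the dual of $\g_\Gamma^0\cap\g_{\Gamma'}^{1/2}$ as $\ad x$-modules, while $\ad f$ (using the good grading property of $\Gamma'$) identifies $\g_{\Gamma'}^0\cap\g_\Gamma^{-1/2}$ with $\g_{\Gamma'}^{-1}\cap\g_\Gamma^{-3/2}$, dual to $\g_{\Gamma'}^{1}\cap\g_\Gamma^{?}$ — I would instead argue directly that both $\g_\Gamma^0\cap\g_{\Gamma'}^{-1/2}$ and $\g_{\Gamma'}^0\cap\g_\Gamma^{-1/2}$ are, as $\g_i^\natural$-modules, dual to $\g_\Gamma^0\cap\g_{\Gamma'}^{1/2} = \g_{\Gamma'}^0\cap\g_\Gamma^{1/2}$ via the invariant form and the action of $\ad f$, so their quadratic traces $\kappa(x,x)$ agree because for a module and its dual the traces of $\ad x\circ\ad x$ coincide. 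The technical heart — and the step I expect to cost the most care — is setting up these $\ad f$-isomorphisms and duality pairings cleanly, since one must track exactly which graded pieces $\ad f$ sends where under the adjacency constraint and verify they are $\g^\natural$-equivariant; once that bookkeeping is in place, the trace identity, and hence $\phi_{\Gamma,i}^\natural = \phi_{\Gamma',i}^\natural$, is immediate.
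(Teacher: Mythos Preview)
Your decomposition of $\g_\Gamma^{1/2}$ is incomplete: adjacency gives three pieces,
\[
\g_\Gamma^{1/2} = (\g_\Gamma^{1/2} \cap \g_{\Gamma'}^0) \oplus (\g_\Gamma^{1/2} \cap \g_{\Gamma'}^{1/2}) \oplus (\g_\Gamma^{1/2} \cap \g_{\Gamma'}^1),
\]
not two. After correcting this (and the analogous expression for $\g_{\Gamma'}^{1/2}$) the residual identity to prove is $a_{0,-1/2} + a_{1/2,1} = a_{-1/2,0} + a_{1,1/2}$, not merely $a_{0,-1/2} = a_{-1/2,0}$. This matters, because the latter equality is false in general. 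Already for $\g = \mf{sl}_3$, $f$ of Jordan type $(2,1)$, with $\Gamma$ the Dynkin grading and $\Gamma'$ the adjacent even grading coming from the left-adjusted pyramid, one has $\g_{\Gamma'}^{\pm 1/2} = 0$, hence $a_{0,-1/2} = 0$, while $a_{-1/2,0} = 9$ for $x = \operatorname{diag}(1,1,-2) \in \g^\natural$. Your parenthetical claim $\g_\Gamma^0 \cap \g_{\Gamma'}^{1/2} = \g_{\Gamma'}^0 \cap \g_\Gamma^{1/2}$ is likewise false in this example.

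Your approach is salvageable once the bookkeeping is fixed. Since $x \in \g^f$, the map $\ad f$ commutes with $\ad x$; and since $\ad f \colon \g_{\Gamma'}^{1/2} \to \g_{\Gamma'}^{-1/2}$ is bijective and shifts the $\Gamma$-degree by $-1$, it restricts to $\ad x$-equivariant isomorphisms $\g_\Gamma^i \cap \g_{\Gamma'}^{1/2} \xrightarrow{\sim} \g_\Gamma^{i-1} \cap \g_{\Gamma'}^{-1/2}$ for each $i \in \{0, \tfrac12, 1\}$ (each restriction is injective, and the total dimensions agree). Taking $i = 1$ gives $a_{1,1/2} = a_{0,-1/2}$; by symmetry, using $\ad f$ on $\g_\Gamma^{1/2}$, one gets $a_{1/2,1} = a_{-1/2,0}$, and the needed identity follows. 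The paper takes a different route: it invokes a result of Brundan--Goodwin that for adjacent good gradings one can choose Lagrangians $\mf{l}_\Gamma \subset \g_\Gamma^{1/2}$ and $\mf{l}_{\Gamma'} \subset \g_{\Gamma'}^{1/2}$ with $\mf{l}_\Gamma \oplus \g_\Gamma^{>1/2} = \mf{l}_{\Gamma'} \oplus \g_{\Gamma'}^{>1/2}$, and then shows that $\kappa_\g - \kappa_{\g_\Gamma^0} - \kappa_{\g_\Gamma^{1/2}}$ equals $2\kappa$ of this common subspace.
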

\begin{proof}
According to \cite[Theorem 2 and Lemma 26]{Brundan-Goodwin}, 
there are Lagrangian subspaces $\mf{l}_{\Gamma}$ and 
$\mf{l}_{\Gamma'}$ 
of $\g_{\Gamma}^{1/2}$ and $\g_{\Gamma'}^{1/2}$, respectively, 
with respect to the non-degenerate bilinear form $ (f | [ \cdot ,\cdot ])$ 
such that 
\begin{align*}
\mf{l}_\Gamma \oplus \g_\Gamma^ {>1/2} = \mf{l}_{\Gamma'} \oplus \g_{\Gamma'}^ {>1/2}
\quad \text{ and }\quad 
\mf{l}_\Gamma^- \oplus \g_\Gamma^ {<-1/2} = \mf{l}_{\Gamma'}^- \oplus \g_{\Gamma'}^ {<-1/2},
\end{align*}
where $\mf{a}^-$ denotes 
the dual space to the subspace $\mf{a} \subset \g$ with respect to the Killing form 
of $\g$.
Then for any $x \in \g^\natural$, 
\begin{align*}
\kappa_\g(x,x)
- \kappa_{\g_{\Gamma}^0}(x,x)-\kappa_{\g_{\Gamma}^{1/2}}(x,x) 
& =2 \kappa_{\mf{l}_\Gamma \oplus \g_\Gamma^ {>1/2}}(x,x)  & \\
&  = \kappa_\g(x,x)
- \kappa_{\g_{\Gamma'}^0}(x,x)-\kappa_{\g_{\Gamma'}^{1/2}}(x,x). &
\end{align*}
Indeed, from the decompositions 
\begin{align*}
\g & = (\mf{l}_\Gamma^- \oplus \g_\Gamma^ {<-1/2}) \oplus (\mf{k}_{\Gamma}^- \oplus \g_{\Gamma}^0 \oplus 
\mf{k}_{\Gamma}) \oplus (\mf{l}_\Gamma \oplus \g_\Gamma^ {>1/2}),& \\
\g & = (\mf{l}_{\Gamma'}^- \oplus \g_{\Gamma'}^ {<-1/2}) \oplus (\mf{k}_{\Gamma'}^- \oplus \g_{\Gamma'}^0 \oplus 
\mf{k}_{\Gamma'}) \oplus (\mf{l}_{\Gamma'} \oplus \g_{\Gamma'}^ {>1/2}),&
\end{align*}
where $\mf{k}_\Gamma$ (resp.~$\mf{k}_{\Gamma'}$) 
is a Lagrangian complement in $\g_\Gamma^{1/2}$ (resp.~$\g_{\Gamma'}^{1/2}$)
to $\l_\Gamma$ (resp.~$\mf{l}_{\Gamma'}$), 
we deduce that 
\begin{align*}
\kappa_{\g_{\Gamma}^{1/2}}(x,x) = 
\kappa_{\mf{k}_\Gamma \oplus \mf{k}_\Gamma^{-}}(x,x) 
\quad \text{ and } \quad 
\kappa_{\g_{\Gamma'}^{1/2}}(x,x) = 
\kappa_{\mf{k}_{\Gamma'} \oplus \mf{k}_{\Gamma'}^{-}}(x,x).
\end{align*}
This completes the proof.
\end{proof}

Denoting by $\Gamma_D$  the Dynkin grading, 
we set 
$$\phi_i^\natural:= \phi_{\Gamma_D,i}^\natural, 
\quad \text{ and }\quad 
k_i^\natural:= k_{\Gamma_D,i}^\natural \quad \text{ for } \quad i \not=0.
$$  

\begin{Rem}
\label{Rem:maximal reductive subalgebra}
In type $A$, according to \cite[Lemma 26 and Section~6]{Brundan-Goodwin}, there is a chain $\Gamma_0,\ldots,\Gamma_t$ 
of good gradings 
 for $f$ such that $\Gamma_0=\Gamma_D$, $\Gamma_t=\Gamma$, and 
 for any $i \in \{1,\ldots,t\}$, $\Gamma_{i-1}$ and $\Gamma_i$ are adjacent, 
and one of these good gradings is even. 
Hence, in type $A$, we are free to use an even good grading $\Gamma$ 
to compute $k_i^\natural$, 
which is convenient (see the proof of Lemma \ref{Lem:k_natural-type-A}). 
\end{Rem}

Set 
\begin{align}
V^{k^\natural}(\g^\natural) := V^{\phi_0^\natural} (\g_0^\natural) \otimes 
\bigotimes_{i=1}^s V^{k_i^\natural} (\g_i^\natural).
\end{align}
By \cite[Theorem 2.1]{KacWak03}, there exists an embedding 
\begin{align}
\iota \colon V^{k^\natural}(\g^\natural)
\longhookrightarrow \W^k(\g,f)
\end{align}
of vertex algebras.  
We have
\begin{align}
\iota(\omega_{V^{k^\natural}(\g^\natural)})\in \W^k(\g,f)_2,\quad  (\omega_{\W^k(\g,f)})_{(2)}\iota(\omega_{V^{k^\natural}(\g^\natural)})=0.
\end{align}

We denote by $\mc{V}^k(\g^\natural)$ the image in $\W^k(\g,f)$ of the embedding $\iota$, 
and by $\mc{V}_k(\g^\natural)$ the image of $\mc{V}^k(\g^\natural)$ 
by the canonical projection $\pi \colon \W^k(\g,f) \twoheadrightarrow \W_k(\g,f)$. 
\begin{Def}[\cite{AdaKacMos18}]  
\label{Def:collapsing}
If $ \W_k(\g,f) \cong \mc{V}_k(\g^\natural)$, we say that the level 
$k$ is {\em collapsing}. 
\end{Def}
\begin{Lem}
The level $k$ is collapsing if and only if 
$$ \W_k(\g,f) \cong L_{k^\natural}(\g^\natural),$$
where $L_{k^\natural}(\g^\natural)$ stands for 
$\displaystyle{\bigotimes_{i=0}^s L_{k_i^\natural} (\g_i^\natural)}$.  
Equivalently, $k$ is collapsing for $\W^k(\g,f)$
if and only
if there exists a surjective vertex algebra homomorphism
\begin{align*}
\W^k(\g,f)\twoheadrightarrow  L_{k^\natural}(\g^\natural).
\end{align*}
\end{Lem}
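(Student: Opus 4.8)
The plan is to establish the cycle of implications: $k$ is collapsing $\Longrightarrow$ $\W_k(\g,f)\cong L_{k^\natural}(\g^\natural)$ $\Longrightarrow$ there is a surjective vertex algebra homomorphism $\W^k(\g,f)\twoheadrightarrow L_{k^\natural}(\g^\natural)$ $\Longrightarrow$ $k$ is collapsing. All homomorphisms involved are understood to preserve conformal weight: the embedding $\iota$ does, being a vertex algebra map that carries the weight-one generating subspace $\g^\natural\subset V^{k^\natural}(\g^\natural)$ onto the weight-one currents of $\W^k(\g,f)$ (this is how $\iota$ is constructed in \cite{KacRoaWak03}), and the projection $\pi\colon\W^k(\g,f)\twoheadrightarrow\W_k(\g,f)$ does because its kernel is a graded ideal. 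Consequently $\mc{V}_k(\g^\natural)=(\pi\circ\iota)(V^{k^\natural}(\g^\natural))$ is a \emph{graded} quotient vertex algebra of $V^{k^\natural}(\g^\natural)$. Now $V^{k^\natural}(\g^\natural)=V^{\phi_0^\natural}(\g_0^\natural)\otimes\bigotimes_{i=1}^s V^{k_i^\natural}(\g_i^\natural)$ is conical, hence has a unique maximal proper graded ideal and a unique simple graded quotient; using that each $V^{k_i^\natural}(\g_i^\natural)$ has unique simple quotient $L_{k_i^\natural}(\g_i^\natural)$, that $V^{\phi_0^\natural}(\g_0^\natural)$ is a (simple) Heisenberg vertex algebra, and that a tensor product of simple affine and Heisenberg vertex algebras is simple, that simple graded quotient is exactly $L_{k^\natural}(\g^\natural)=\bigotimes_{i=0}^s L_{k_i^\natural}(\g_i^\natural)$.

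For the first implication, if $k$ is collapsing then $\mc{V}_k(\g^\natural)=\W_k(\g,f)$ is simple; being a graded quotient of $V^{k^\natural}(\g^\natural)$, it must then be \emph{the} simple graded quotient, so $\W_k(\g,f)\cong L_{k^\natural}(\g^\natural)$. The second implication is immediate: compose $\pi$ with the isomorphism $\W_k(\g,f)\cong L_{k^\natural}(\g^\natural)$.

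For the third implication, let $\psi\colon\W^k(\g,f)\twoheadrightarrow L_{k^\natural}(\g^\natural)$ be a surjection. As $\W^k(\g,f)$ is conical, its unique maximal proper graded ideal is $\ker\pi$, and since $L_{k^\natural}(\g^\natural)$ is simple, $\ker\psi$ is maximal proper and graded; hence $\ker\psi=\ker\pi$ and $\psi=\varphi\circ\pi$ for an isomorphism $\varphi\colon\W_k(\g,f)\isomap L_{k^\natural}(\g^\natural)$. It remains to prove $\mc{V}_k(\g^\natural)=\W_k(\g,f)$. Here I would invoke the Kac--Roan--Wakimoto structure theorem, taking the Dynkin grading on $\g$ (legitimate, since the vertex algebra structure of $\W^k(\g,f)$ is grading-independent): $\W^k(\g,f)$ is freely generated by fields of conformal weight $\ge 1$, whence its weight-one subspace is exactly $\iota(\g^\natural)$. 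Therefore $\psi(\iota(\g^\natural))=\psi(\W^k(\g,f)_1)=L_{k^\natural}(\g^\natural)_1$, and $L_{k^\natural}(\g^\natural)$ is strongly generated by its weight-one subspace; hence the subalgebra $\psi(\iota(V^{k^\natural}(\g^\natural)))=\varphi(\mc{V}_k(\g^\natural))$ is all of $L_{k^\natural}(\g^\natural)$, and applying $\varphi^{-1}$ yields $\mc{V}_k(\g^\natural)=\W_k(\g,f)$, i.e.\ $k$ is collapsing.

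The main obstacle is the structural input $\W^k(\g,f)_1=\iota(\g^\natural)$ together with the simplicity of the relevant tensor products of affine and Heisenberg vertex algebras; both facts are standard (the former from \cite{KacRoaWak03}), but they are precisely what makes the three conditions equivalent rather than merely one-way implications. A minor point is that some factor $L_{k_i^\natural}(\g_i^\natural)$ may collapse to $\C$ (when $k_i^\natural=0$): this causes no harm, since one then simply runs the generation step with $L_{k^\natural}(\g^\natural)_1=\bigoplus_{i:\,L_{k_i^\natural}(\g_i^\natural)\ne\C}\g_i^\natural$ in place of $\g^\natural$.
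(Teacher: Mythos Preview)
Your proof is correct and follows the same essential strategy as the paper: identify $\mc{V}_k(\g^\natural)$ as a graded quotient of $V^{k^\natural}(\g^\natural)$, then use simplicity of $\W_k(\g,f)$ together with uniqueness of the simple graded quotient $L_{k^\natural}(\g^\natural)$. The one notable difference is your explicit invocation of the Kac--Roan--Wakimoto structure theorem to pin down $\W^k(\g,f)_1=\iota(\g^\natural)$, which you use to show that any surjection $\psi\colon\W^k(\g,f)\twoheadrightarrow L_{k^\natural}(\g^\natural)$ restricts to a surjection on $\iota(V^{k^\natural}(\g^\natural))$. The paper's converse argument is terser: it asserts that $\pi\circ\iota$ factorises through $L_{k^\natural}(\g^\natural)$ and that $\W_k(\g,f)$ is then the image of the induced map, without isolating the structural input that makes this work. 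Your cycle of implications, and in particular the use of the weight-one description, makes that step transparent; this is an elaboration rather than a different method.
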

For example, if $ \W_k(\g,f) \cong \C$, then $k$ is collapsing. 
\begin{proof}
If $\W_k(\g,f) \cong \mc{V}_k(\g^\natural)$, 
then $\W_k(\g,f)$ is isomorphic to the quotient of $\bigotimes_{i=0}^s V^{k_i^\natural} (\g_i)$ by 
the kernel of the composition 
$\pi \circ \iota$. Since $\W_k(\g,f)$ is simple we deduce 
that this quotient is isomorphic to ${\bigotimes_{i=0}^s L_{k_i^\natural} (\g_i)}=L_{k^\natural}(\g^\natural)$. 
Conversely, if $ \W_k(\g,f) \cong \bigotimes_{i=0}^s L_{k_i^\natural} (\g_i)$, 
then $\pi \circ \iota$ factorises through $\bigotimes_{i=0}^s L_{k_i^\natural} (\g_i)$, 
and so $ \W_k(\g,f)$ is isomorphic to the image 
of this induced map, so $ \W_k(\g,f)\cong \mc{V}_k(\g^\natural)$. 
\end{proof}

We close this section with a lemma which is used above in the proof of Proposition \ref{Pro:Dynkin-self-dual}.
\begin{Lem}
\label{lem:centraliser_is_reductive_subalgebra}
The centre $\mf{z}(\g^\natural)$ of the reductive Lie algebra $\g^\natural$  consists of semisimple elements 
of $\g$. 
Moreover, for any $x \in \mf{z}(\g^\natural)$, we have 
${\rm tr}_{\g^{f}}(\ad x) =0$ and ${\rm tr}_{\g_{>0}}(\ad x) =0$, 
where $\ad_x$ stands for the endomorphism of $\g^{f}$ 
(resp.~$\g_{>0}$) induced from the adjoint action of $x$ 
acting on $\g^{f}$ 
(resp.~$\g_{>0}$).
\end{Lem}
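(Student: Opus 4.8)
The plan is to treat the two statements separately. For the first, I would use that $\g^\natural$ is the Lie algebra of the reductive group $G^\natural = Z_G(e,h,f)$, the centraliser in $G$ of the $\sl_2$-triple (equivalently, of the homomorphism $\on{SL}_2\to G$ integrating it). The connected centre of a reductive group is a torus, and $\mf{z}(\g^\natural)$ is precisely its Lie algebra; since the Lie algebra of a torus in $G$ consists of semisimple elements, so does $\mf{z}(\g^\natural)$. (More elementarily: ``$\g^\natural$ reductive in $\g$'' means $\g$ is a completely reducible $\g^\natural$-module, so by Schur's lemma any $x\in\mf{z}(\g^\natural)$ acts by a scalar on each irreducible summand, whence $\ad x$ is semisimple on $\g$ and $x$ is a semisimple element.)

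For the trace statements, fix $x\in\mf{z}(\g^\natural)$; in fact the argument below works for every $x\in\g^\natural$. Since $x$ commutes with $f$ and with $h$, the operator $\ad x$ stabilises $\g^f$ and stabilises $\g_{>0}=\bigoplus_{j>0}\g^j$, so both traces are defined. The key point I would establish is that $\g^f$ and $\g_{>0}$ are \emph{self-dual} as $\g^\natural$-modules. First, a representative $\tilde n\in G$ of the non-trivial Weyl element of $\on{SL}_2$ centralises $G^\natural$ and satisfies $\Ad(\tilde n)h=-h$ and $\Ad(\tilde n)e=-f$, so $\Ad(\tilde n)$ conjugates $\ad e$ to $-\ad f$ and $\ad h$ to $-\ad h$; it therefore induces isomorphisms of $\g^\natural$-modules $\g^e\cong\g^f$ and $\g_{<0}\cong\g_{>0}$. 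Second, the Killing form of $\g$ is $\g^\natural$-invariant and non-degenerate, with $(\g^e)^\perp=[\g,e]$ and $\g=\g^f\oplus[\g,e]$ (the lowest-weight line of each irreducible $\sl_2$-constituent of $\g$ is not in the image of $\ad e$), so it identifies $\g^f\cong(\g^e)^{*}$; similarly the perfect pairings $\g^j\times\g^{-j}\to\C$ identify $\g_{>0}\cong(\g_{<0})^{*}$. Combining, $\g^f\cong(\g^f)^{*}$ and $\g_{>0}\cong(\g_{>0})^{*}$ as $\g^\natural$-modules. Finally, if $M\cong M^{*}$ as modules over a Lie algebra $\mf{a}$, then for each $a\in\mf{a}$ the operator representing $a$ on $M$ is conjugate to minus its transpose, hence has trace zero; applying this to $M=\g^f$ and $M=\g_{>0}$ gives ${\rm tr}_{\g^f}(\ad x)=0$ and ${\rm tr}_{\g_{>0}}(\ad x)=0$.

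The main obstacle is really the central case. For $x\in[\g^\natural,\g^\natural]$ both traces vanish trivially, because $\ad x$ is then a sum of commutators of operators preserving $\g^f$ (resp.\ $\g_{>0}$); but for $x\in\mf{z}(\g^\natural)$ this shortcut fails, and one genuinely needs the self-duality above — equivalently, that the characters of $G^\natural$ on the lines $\det\g^f$ and $\det\g_{>0}$ are trivial. The rest is routine bookkeeping: checking that the Weyl-element and Killing-form identifications are honestly $\g^\natural$-equivariant, and that the decomposition $\g=\g^f\oplus[\g,e]$ holds, so that Killing duality transports $(\g^e)^{*}$ onto $\g^f$.
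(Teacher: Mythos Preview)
Your proof is correct, and it takes a genuinely different route from the paper's own argument.

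For the semisimplicity claim the paper fixes a maximal torus $\mf{t}\subset\g^0$, sets $\mf{t}^\natural=\mf{t}\cap\g^f$, and shows by a direct weight-space calculation that any $x\in\mf{z}(\g^\natural)$ has vanishing components outside $\mf{t}^\natural$. Your two arguments (the connected centre of the reductive group $G^\natural$ is a torus; or complete reducibility of $\g$ over $\g^\natural$ plus Schur's lemma) are shorter and more structural. For the trace assertions, the paper decomposes $\g$ and $\g^f$ into $\mf{t}^\natural\oplus\mf{s}$-isotypic pieces $L(\alpha,n)$ and then invokes the symmetry $\alpha\leftrightarrow-\alpha$ of the restricted root system $\Phi_f$ (with equal multiplicities, citing Brundan--Goodwin) to pair off contributions. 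Your approach instead proves the global statement that $\g^f$ and $\g_{>0}$ are self-dual as $\g^\natural$-modules, via the $\on{SL}_2$ Weyl element and the Killing form; the trace vanishing then falls out for every $x\in\g^\natural$ at once, not only for $x\in\mf{z}(\g^\natural)$. The two arguments are essentially dual manifestations of the same symmetry (the paper's $\alpha\leftrightarrow-\alpha$ is precisely what self-duality of $\g$ over $\mf{t}^\natural\oplus\mf{s}$ gives on weights), but yours packages it more invariantly and avoids the external reference.
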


\begin{proof}
As previously mentioned, $\g^\natural$ is a maximal reductive subalgebra 
of $\g^{f}$ (\cite[Lemma 1.2]{ElashviliKac}). 
Let $\mf{t}$ be a maximal torus of $\g^{0}=\g^{h}$ 
containing a maximal torus of $\g^\natural$ and 
set 
$$\mf{t}^{\natural}:=\mf{t} \cap \g^\natural= 
 \mf{t}\cap \g^{f}.$$ 
 We intend to show that $\mf{z}(\g^\natural)$ 
is contained in $\mf{t}^{\natural}$. 
 
 We use the decomposition in $\mf{t}^{\natural}$-weight spaces of $\g^{\natural}$
following \cite[Section 2]{Brundan-Goodwin}. 
For $\alpha \in (\mf{t}^{\natural})^*$ and $n \ge 0$, let $L(\alpha,n)$ 
denote the irreducible $\mf{t}^{\natural}\oplus \mf{s}$-modules 
of dimension $(n+1)$ on which $\mf{t}^{\natural}$ acts by the weight $\alpha$, 
where $\mf{s}\cong\sl_2$ is the Lie algebra generated by $e,h,f$. 
Since each $L(\alpha,n)$ contains a nonzero vector annihilated by 
$f$, the set of weights of $\mf{t}^{\natural}$ on $\g$ 
is also the set of weights of $\mf{t}^\natural$ on $\g^{f}$.
Let $\Phi_f \subset (\mf{t}^{\natural})^*$ 
be the set of all nonzero weights of $\mf{t}^{\natural}$ on 
$\g^f$. 
We have the following decomposition 
\begin{align}
\label{eq:decomposition_gf}
\g^{f} = \g^{\mf{t}^{\natural}}\cap \g^f \oplus 
\bigoplus\limits_{\alpha \in \Phi_f \atop i\le 0} 
\g^f (\alpha,i),
\end{align}
where 
$\g^{\mf{t}^{\natural}}$ is the centraliser of 
$\mf{t}^{\natural}$ in $\g$ (it is a Levi subalgebra of $\g$) 
and 
$$\g^f (\alpha,i)= \{x \in \g^f \colon [h,x]=i x 
\text{ and } [t,x] = \alpha(t)x \text{ for all } t \in  \mf{t}^{\natural}\}.$$
This decomposition is
compatible with the decomposition 
$\g^f = \g^\natural \oplus \g^{f}_{<0}$, 
and we have 
$$\g^\natural  = \mf{t}^{\natural} \oplus 
\bigoplus\limits_{\alpha \in \Phi_f^\circ} 
\g^f (\alpha,0),$$
where $\Phi_f^\circ$ denotes the set of all  
nonzero element of $\Phi_f$. 

Let now $x \in \mf{z}(\g^\natural)$ that we write 
$x=x_{0}+\sum_{\alpha \in \Phi_f^\circ} x_\alpha$ 
relatively to the above decomposition, with 
$x_0 \in \mf{t}^{\natural} $ and $x_\alpha \in \g^f (\alpha,0)$ 
for $\alpha \in \Phi_f^\circ$. 
Let $\alpha' \in \Phi_f^\circ$ and pick $t' \in \mf{t}^{\natural}$ 
such that $\alpha'(t')\not=0$. 
From the equalities 
\begin{align*}
0 &= [y,x] = [y,x_0]+\sum_{\alpha \in \Phi_f^\circ} [y,x_\alpha] 
= 0 + \sum_{\alpha \in \Phi_f^\circ \atop 
\alpha \not=\alpha'}  \alpha(t) x_\alpha 
+ \alpha'(t') x_{\alpha'},
\end{align*}
we deduce that $x_{\alpha'}=0$. 
Since this is true for each $\alpha \in \Phi_f^\circ$, 
we get that $x=x_0 \in  \mf{t}^{\natural}$. 
In particular, $x$ is a semisimple elements of $\g$. 

To show the second assertion, first note that we have the following 
decomposition 
\begin{align}
\label{eq:decomposition_g}
\g= \bigoplus_{\alpha \in \Phi_f} 
\bigoplus_{n \ge 0} m(\alpha,n) L(\alpha,n),
\end{align} 
where $m(\alpha,n) \not=0$ for all $\alpha \in \Phi_f$. 
Since $\Phi_f$ is a restricted root system in the sense 
of  \cite[Section 2]{Brundan-Goodwin}, 
for $\alpha \in (\mf{t}^\natural)^*$, $\alpha \in \Phi_f$ 
 implies that $-\alpha \in \Phi^f$ 
 and the corresponding root vector spaces 
 have the same dimension. 
So the second assertion 
follows from the decompositions 
\eqref{eq:decomposition_gf} and  \eqref{eq:decomposition_g}. 
\end{proof}

\section{The main strategy}
\label{sec:strategy} 
We describe in this section our general strategy to find new (admissible) collapsing levels  
using Theorem \ref{Th:main}. 
The main difficulty is to find potential candidates for $k$ and $f$. 
We explain below our strategy to achieve this. 

\subsection{Associated variety}
\label{sub:associated_variety}
If $k$ is collapsing for $\W^k(\g,f)$ then certainly
\begin{align}
X_{\W_k(\g,f)} \cong X_{L_{k^\natural}(\g^\natural)}
\end{align}
as Poisson varieties. As we saw in Theorem \ref{Th:Associated_variety-DS} the vertex algebra $\W_k(\g,f)$ is quasi-lisse whenever $f \in \overline{\mathbb{O}}_k$. 
\begin{Lem}
\label{Lem:k_0-condition}
Suppose that $\W_k(\g,f)$ is quasi-lisse. If $k$ is collapsing then
$\phi_0^\natural$ is identically zero on $\g_0^\natural$. 
In particular, if $k$ is admissible and $f\in \overline{\mathbb{O}}_k$, then $k$ can only be collapsing if $\phi_0^\natural=0$. 
\end{Lem}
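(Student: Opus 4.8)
The idea is to compare associated varieties. If $k$ is collapsing, then $\W_k(\g,f)\cong L_{k^\natural}(\g^\natural)=V^{\phi_0^\natural}(\g_0^\natural)\otimes\bigotimes_{i=1}^s L_{k_i^\natural}(\g_i^\natural)$, so by the K\"unneth-type decomposition of $X_V$ for a tensor product of vertex algebras we get $X_{\W_k(\g,f)}\cong (\g_0^\natural)^*\times\prod_{i=1}^s X_{L_{k_i^\natural}(\g_i^\natural)}$, where the first factor is really the associated variety of the Heisenberg vertex algebra $V^{\phi_0^\natural}(\g_0^\natural)\cong M(1)^{\otimes \operatorname{rank}\phi_0^\natural}$, namely an affine space of dimension $\operatorname{rank}\phi_0^\natural$. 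The key observation is that this affine-space factor consists of non-nilpotent directions: if $\phi_0^\natural\neq 0$, then $\operatorname{rank}\phi_0^\natural\geq 1$ and $X_{L_{k^\natural}(\g^\natural)}$ contains a copy of $\mathbb{A}^{\operatorname{rank}\phi_0^\natural}$ as a factor, so it is not contained in the nilpotent cone $\mathcal{N}_{\g^\natural}$.

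\textbf{Key steps.} First I would invoke the quasi-lisse hypothesis together with Theorem \ref{Th:Associated_variety-DS}(3) and the discussion following Theorem \ref{Th:simplicity}: $X_{\W_k(\g,f)}=\Slo_f\cap X_{L_k(\g)}$, and more to the point, since $\W_k(\g,f)$ is quasi-lisse its associated variety has finitely many symplectic leaves and (being a $\C^*$-invariant subvariety of $\Slo_f$ that is conic for the Slodowy $\C^*$-action contracting to $f$) is contained in $\Slo_f\cap\mathcal{N}_\g$; in particular every point of $X_{\W_k(\g,f)}$ is nilpotent in $\g$. Second, I would transport this along the collapsing isomorphism to conclude $X_{L_{k^\natural}(\g^\natural)}\subset\mathcal{N}_{\g^\natural}$, using that for the simple affine vertex algebra $L_\kappa(\mf a)$ its associated variety is a conic Poisson subvariety of $\mf a^*$ and that nilpotency is detected by the $G^\natural$-invariant structure. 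Third, I would use the tensor decomposition above to isolate the factor $(\g_0^\natural)^*_{\phi_0^\natural}$ coming from $V^{\phi_0^\natural}(\g_0^\natural)$: its associated variety is the linear subspace of $(\g_0^\natural)^*$ of dimension $\operatorname{rank}\phi_0^\natural$, which meets $\mathcal{N}_{\g^\natural}$ only in $0$ (as $\g_0^\natural$ is central in $\g^\natural$, all its elements are semisimple, hence non-nilpotent unless zero). Forcing this subspace to lie in $\mathcal{N}_{\g^\natural}$ gives $\operatorname{rank}\phi_0^\natural=0$, i.e. $\phi_0^\natural\equiv 0$. For the final sentence, when $k$ is admissible and $f\in\overline{\mathbb{O}}_k$, Theorem \ref{Th:Associated_variety-DS} already gives that $\W_k(\g,f)$ is quasi-lisse, so the hypothesis of the first part is automatic and $\phi_0^\natural=0$ follows.

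\textbf{Main obstacle.} The routine part is the K\"unneth decomposition of $X_V$ and the identification of $X_{V^{\phi_0^\natural}(\g_0^\natural)}$ with an affine space of dimension $\operatorname{rank}\phi_0^\natural$; the slightly more delicate point is to argue cleanly that ``quasi-lisse $\W_k(\g,f)$ $\Rightarrow$ $X_{\W_k(\g,f)}\subset\mathcal{N}_\g$'' — this uses that $X_{\W_k(\g,f)}=\Slo_f\cap X_{L_k(\g)}$ and that for $\W_k(\g,f)$ quasi-lisse we are in a situation where $X_{L_k(\g)}$ is conic and the Slodowy-slice $\C^*$-action has $f$ as its unique fixed point, forcing the intersection into the nilpotent cone. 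Once nilpotency is established on the $\W$-algebra side, the transfer to $\g^\natural$ and the rank argument are short. An alternative, perhaps cleaner, route avoids the geometry entirely: if $\phi_0^\natural\neq 0$ then $\mc V_k(\g^\natural)$ contains a nonzero Heisenberg field, whose modes act locally finitely but not nilpotently, contradicting quasi-lisseness of $\W_k(\g,f)$ via the structure of $R_{\W_k(\g,f)}$; I would mention this as the backup argument but develop the associated-variety version as the main one.
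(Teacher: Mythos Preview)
Your overall strategy (compare associated varieties and isolate the Heisenberg factor) is the same as the paper's, but your execution takes an unnecessary detour that introduces a genuine gap.

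The problem is your Step~2. The collapsing isomorphism $\W_k(\g,f)\cong L_{k^\natural}(\g^\natural)$ gives an isomorphism $X_{\W_k(\g,f)}\cong X_{L_{k^\natural}(\g^\natural)}$ of \emph{abstract} Poisson varieties. The property ``contained in the nilpotent cone'' is not intrinsic to a Poisson variety --- it depends on the ambient embedding into $\g^*$ (respectively $(\g^\natural)^*$) --- so you cannot transport the conclusion $X_{\W_k(\g,f)}\subset\mathcal{N}_\g$ of your Step~1 across the isomorphism to get $X_{L_{k^\natural}(\g^\natural)}\subset\mathcal{N}_{\g^\natural}$. What \emph{is} intrinsic, and hence does transfer, is ``has finitely many symplectic leaves.'' Once you realise this, Step~1 becomes entirely superfluous.

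The paper's argument is the clean version of what remains: since $\W_k(\g,f)$ is quasi-lisse and collapsing, $X_{L_{k^\natural}(\g^\natural)}$ has finitely many symplectic leaves. Now work directly in $(\g_0^\natural)^*\times(\g_1^\natural)^*\times\cdots\times(\g_s^\natural)^*$. The symplectic leaves here are products of coadjoint orbits; since $\g_0^\natural$ is abelian, every point of $(\g_0^\natural)^*$ is its own leaf. The simple quotient of the Heisenberg vertex algebra $V^{\phi_0^\natural}(\g_0^\natural)$ has associated variety $\C^{\operatorname{rank}\phi_0^\natural}$, so if $\phi_0^\natural\neq 0$ the factor in the $(\g_0^\natural)^*$-direction is positive-dimensional and contributes infinitely many leaves. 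Hence $\phi_0^\natural=0$. (Equivalently, finitely many leaves forces $X_{L_{k^\natural}(\g^\natural)}\subset\{0\}\times\mathcal{N}_{\g_1^\natural}\times\cdots\times\mathcal{N}_{\g_s^\natural}$.) The ``in particular'' clause then follows from Theorem~\ref{Th:Associated_variety-DS}, exactly as you say. So drop your Steps~1--2 and argue directly on the $\g^\natural$ side via the symplectic-leaf definition of quasi-lisse; that is precisely the paper's proof.
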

Our convention is that $\phi_0^\natural=0$ when $\g_0^\natural=\{0\}$.
\begin{proof}
The associated variety $X_{L_{k^\natural}(\g^\natural)}
$ is a subvariety of  $(\g_0^\natural)^*\times(\g_1^\natural)^*\times \dots \times (\g_s^\natural)^*$
and the symplectic leaves of $(\g_0^\natural)^*\times(\g_1^\natural)^*\times \dots \times (\g_s^\natural)^*$
are the coadjoint orbits of $G_0^\natural\times \cdots \times G_s^\natural$,
where $G_i^\natural$ is the adjoint group of $\g_i^\natural$. 
Recall that $V^{\phi_0^{\natural}}(\g_0^\natural)$ is a Heisenberg vertex algebra 
of rank $\dim \g_0^\natural$. The associated variety 
of its simple quotient is 
$\C^{ {\rm rank}\,\phi_0^\natural}$, provided that $\phi_0^{\natural}\not=0$. 
Hence, $X_{L_{k^\natural}(\g^\natural)}$ has finitely many symplectic leaves if and only if 
it is contained in
$\mc{N}_{\g_1}\times \dots \times \mc{N}_{\g_s}$,
where  $\mc{N}_{\g_i}$ is the nilpotent cone of $\g_i^\natural$.
In particular, we must have 
$X_{L_{\phi_0^{\natural}}(\g_0^\natural)}=\{0\}$. 
This happens if and only if $\phi_0^{\natural}=0$.
\end{proof}
We are thus led to consider levels $k$ for which $\phi_0^\natural = 0$ (further conditions on $k$ will appear later). Now, for each $i>0$, the bilinear form $\phi^\natural_i$ equals the standard form on the simple component $\g_i^\natural$, times a well-defined level $k_i^\natural$, as per \eqref{eq:k_natural}. These levels $k_i^\natural$ can then be expressed as functions of the level $k$, as is done below in Tables~\ref{Tab:A_Slodowy}, \ref{Tab:C_Slodowy} and \ref{Tab:BD_Slodowy} for classical types, and Tables~\ref{Tab:Data-G2}--\ref{Tab:Data-E8} for exceptional types. In the tables we also list $k_0^\natural$ for those cases in which $\g_0^\natural \neq 0$, and since there is no standard normalised form on $\g_0^\natural$ we now explain what we mean by this. If the centre $\g_0^\natural$ is one dimensional, then we choose nonzero $x\in \g_0^\natural$ and set $k_{0}^\natural = \phi_{0}^\natural(x,x)$. Then $k_0^\natural$ is well-defined, as a function of $k$, up to multiplication by a nonzero scalar. This is sufficient for our purposes since, by Lemma \ref{Lem:k_0-condition}, we require $k_0^\natural=0$. For the vast majority of pairs $(\g, f)$ relevant for us, we have $\dim(\g_0^\natural) \leq 1$ in fact. Only two exceptions occur (one in type $E_6$ and one in type $E_7$), and in both these cases $\g^\natural=\g_0^\natural\cong \C^2$. These cases are easily handled separately, and in the tables we abuse notation by also writing $k_{0}^\natural$ for the value of $\phi_{0}^\natural(x,x)$ for some nonzero $x\in \g_0^\natural$. 

We now call $k^\natural$ {\em admissible} if $k_0^\natural=0$ and $k_i^\natural$ is admissible for $\g_i^\natural$, for each $i=1,\ldots,s$.  
\begin{Rem}
It may happen that $k$ is admissible while $k^\natural$ 
is not, even if $\g_0^\natural=0$. 
For example, for $\g=G_2$, $f$ in the nilpotent orbit labelled $\tilde{A}_1$ (of dimension 8) 
and $k = -4+4/7$ which is a principal admissible 
level for $G_2$, 
then $k^{\natural} = k +3/2=-2+1/14$ is not admissible for $\g^\natural \cong \sl_2$ 
(see Table~\ref{Tab:Data-G2}). 
However, under additional conditions on $f$ and $k$, we expect that $k^\natural$ is admissible if $k$ is admissible. See Conjecture~\ref{Conj:k_natural_adm_conjecture} below. 
\end{Rem}
Recall 
\begin{align*}
\Slo_{\O_k,f}=\Slo_f\cap \overline{\O}_k,
\end{align*}
the nilpotent Slodowy slice. Now we assume $k$ and $k^\natural$ are both admissible, where $f\in \overline{\mathbb{O}}_k$, and we assume that $\W_k(\g,f)=H_{DS,f}^0(L_k(\g))$ (verified if $f$ has a good even grading, and in general if Conjecture \ref{Conj:isom} holds). Then according to Theorem \ref{Th:admissible-orbits}
and Theorem~\ref{Th:Associated_variety-DS},
\begin{align}\label{eq:slo.equals.nilp}
\Slo_{\O_k,f}= \overline{\O}_{k_1^\natural}
\times \cdots \times  \overline{\O}_{k_s^\natural},
\end{align}
where $\O_{k_1^\natural}, \ldots,\O_{k_s^\natural}$ are nilpotent orbits 
in $\g_1^\natural,\ldots,\g_s^\natural$, respectively. 
This motivates the following definition:

\begin{Def}
\label{def:collapsing_Slodowy_slices}
We say that a nilpotent Slodowy slice $\Slo_{\O,f}$ is {\em collapsing} 
if $\Slo_{\O,f}$ is isomorphic 
to a product of nilpotent orbit closures in $\g^\natural$. 
\end{Def}

Based on the above analysis, we consider 
pairs $(\O,\O')$ such that 
\begin{enumerate}
\item $\overline{\O}=\overline{\O}_k$ is the associated 
variety of some simple admissible affine vertex algebra $L_k(\g)$, 
\item $\O' \subset \overline{\O}$, 
\item for $f \in \O'$, $\Slo_{\O,f}$ 
is a collapsing nilpotent Slodowy slice 
which is the associated 
variety $X$ of the simple admissible affine vertex algebra $L_{k^\natural}(\g^\natural)$.  (In particular, $X$ is a product of nilpotent orbit closures in $\g^{\natural}$.)
\end{enumerate}

The nilpotent orbit $\O_k$, for admissible $k= -h_\g^\vee+p/q$, 
is given by \cite[Tables 2--10]{Arakawa15a} 
and depends only on the denominator $q$: 
it is described in term of the corresponding partition 
of $n$ for $\g$ simple of classical type, that is, $\g=\sl_n$, $\sp_n$ or $\so_n$, and in the Bala-Carter classification 
for $\g$ simple of exceptional type. 

Motivated by the determination of generic singularities of the nilpotent closures $\overline{\O}$ in simple 
Lie algebras $\g$ of classical types, Kraft and Procesi \cite{KraftProcesi79,KraftProcesi82} described the smooth equivalences 
of singularities between Slodowy slices $\Slo_{\O_1,f_1}$ and $\Slo_{\O_2,f_2}$, where $\Slo_{\O_2,f_2}$ is obtained 
from $\Slo_{\O_1,f_1}$ by the {\em row/column removal rule} 
(see Lemmas~\ref{lem:erasing_row_type-A} and~\ref{lem:erasing_row_type-BCD}). 
It turns out that these  smooth equivalences actually yield isomorphisms of varieties,  
see \cite[Proposition 7.3.2]{Li17}\footnote{This is also mentioned 
without detail in \cite[\S1.8.1]{FuJutLev17}.}. Thus the row/column removal rule, combined with \cite{Arakawa15a}, yields many pairs $(\O,\O')$ satisfying the conditions above in classical types. We consider all such pairs.

For the simple Lie algebras of exceptional types, the authors of 
\cite{FuJutLev17} determine the isomorphism type of most of the Slodowy slices $\Slo_{\O,f}$ for which $G.f$ is a minimal degeneration of $\O$ (i.e., those for which $G.f$ is a maximal orbit in $\overline{\O} \setminus \O$), and also for some of the Slodowy slices for which it is not a minimal degeneration. For the remaining cases, they obtain some weaker information. From their work, together with \cite{Arakawa15a}, one possibility would be to proceed by finding pairs $(\O,\O')$ satisfying the above conditions, as in the classical cases. 

In fact our work is more exhaustive in a sense, since in exceptional types we are able to directly exploit the description of the $k_i^\natural$ in Tables \ref{Tab:Data-G2}--\ref{Tab:Data-E8} and the central charge (see \S\ref{sub:central_charge} below) to detect all possible collapsing levels. In this way, we find many nontrivial isomorphisms between a Slodowy slice $\Slo_{\O,f}$ and a product of nilpotent 
orbit closures in $\g^\natural$. Many of these were observed in \cite{FuJutLev17} already, though others seem to be new. On the other hand, of course, many of  isomorphisms between Slodowy slices obtained in \cite{FuJutLev17} do not correspond to collapsing levels, and so do not appear in our work.

We remark that all the isomorphisms between nilpotent orbits and nilpotent Slodowy slices that we obtain from collapsing levels, are automatically isomorphisms of Poisson varieties. 

Roughly speaking, a collapsing Slodowy slice will yield a collapsing level if the asymptotic data of the vertex algebras corresponding to the two sides of \eqref{eq:slo.equals.nilp} can be shown to coincide. In fact asymptotic data is rather difficult to compute, so the next step in our strategy is to rule out spurious cases by comparing central charges. This is described in \S\ref{sub:central_charge} below.

We close this subsection with a discussion of admissibility of the levels $k_i^\natural$. 
\begin{Conj}
\label{Conj:k_natural_adm_conjecture}
If $k$ is admissible and if $f \in \overline{\O}_k$ 
is such that $\Slo_{\O_k,f}$ is collapsing, 
then $k^\natural$ is admissible, provided that $k_0^\natural=0$.
\end{Conj}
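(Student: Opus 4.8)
The plan is to turn the statement into an explicit verification organised by the geometry of the pair $(\mathbb{O}_k, G.f)$, and to isolate the single step where genuinely new input seems to be required. First I would fix a convenient good grading: by Lemma \ref{Lem:maximal reductive subalgebra} and Remark \ref{Rem:maximal reductive subalgebra} the forms $\phi_i^\natural$, hence the scalars $k_i^\natural$, may be computed from any good grading in a chain of adjacent gradings containing the Dynkin one, which in type $A$ allows the use of an even good grading. With such a choice the defining formula for $\phi_{\Gamma,i}^\natural$ shows that $k_i^\natural$ depends \emph{affine-linearly} on $k$, the slope being the Dynkin index of the embedding $\g_i^\natural \hookrightarrow \g$ and the constant term depending only on the decomposition of $\g$ as a $\g_i^\natural \oplus \sl_2$-module; this is precisely the data recorded for the exceptional types in Tables \ref{Tab:Data-G2}--\ref{Tab:Data-E8-a}, and it is a routine calculation in the classical cases. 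Writing $k + h^\vee_\g = p/q$ in lowest terms one thus obtains an explicit expression for $k_i^\natural + h^\vee_{\g_i^\natural}$, and admissibility of $k_i^\natural$ reduces to two assertions: that this number, written $p_i/q_i$ in lowest terms, has denominator $q_i$ coprime to (respectively divisible by) the lacing number of $\g_i^\natural$ in the way required by \eqref{eq:admissible-n}, and that the numerator $p_i$ satisfies the corresponding bound $p_i \ge h^\vee_{\g_i^\natural}$ or $p_i \ge h_{\g_i^\natural}$.

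To control the denominator I would use the collapsing hypothesis geometrically. Assuming Conjecture \ref{Conj:isom} where necessary (it holds automatically when $f$ admits an even good grading, by Theorem \ref{Th:simplicity}), Theorems \ref{Th:admissible-orbits} and \ref{Th:Associated_variety-DS} give $X_{\W_k(\g,f)} = \Slo_{\mathbb{O}_k,f}$, which by hypothesis is isomorphic to a product $\overline{\mathbb{O}^{(1)}} \times \cdots \times \overline{\mathbb{O}^{(s)}}$ of nilpotent orbit closures in the $\g_i^\natural$. Transporting the inclusion $V^{k^\natural}(\g^\natural) \hookrightarrow \W^k(\g,f)$ through the $\C^*$-equivariant isomorphism $X_{\W^k(\g,f)} \cong \Slo_f$, the associated variety $X_{\mc{V}_k(\g^\natural)}$ is the closure of the image of $\Slo_{\mathbb{O}_k,f}$ under the natural projection $\Slo_f \to (\g^\natural)^*$; since $k_0^\natural = 0$ by hypothesis the $\g_0^\natural$-component of this image is a point, and one checks (using $G^\natural$-equivariance of the collapsing isomorphism) that the image is precisely $\overline{\mathbb{O}^{(1)}} \times \cdots \times \overline{\mathbb{O}^{(s)}}$, so that each $\overline{\mathbb{O}^{(i)}}$ is the associated variety of the graded quotient $\mc{V}_k(\g_i^\natural)$ of $V^{k_i^\natural}(\g_i^\natural)$. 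For classical $\g$ the partition of $\mathbb{O}_k$ is given by \cite{Arakawa15a} and depends only on $q$, and the row/column removal rules of Kraft--Procesi (Lemmas \ref{lem:erasing_row_type-A} and \ref{lem:erasing_row_type-BCD}) compute $\Slo_{\mathbb{O}_k,f}$; the resulting orbit $\mathbb{O}^{(i)}$ is of the special form classified in \cite{Arakawa15a}, hence equal to $\mathbb{O}_{k_i}$ for an admissible level $k_i$ of $\g_i^\natural$ whose denominator $q_i$ is read off the partition. For exceptional $\g$ this identification is made directly from Tables \ref{Tab:Data-G2}--\ref{Tab:Data-E8-a}. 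Matching the denominator so obtained against the affine-linear formula of the first step forces the denominator constraint, and combined with the admissibility of $k$ (i.e.\ $p \ge h^\vee_\g$ or $p \ge h_\g$) yields the numerator bound; this reduces the conjecture to a finite, if lengthy, check.

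The point at which this strategy is not yet unconditional is the passage from ``$\mc{V}_k(\g_i^\natural)$ is a graded quotient of $V^{k_i^\natural}(\g_i^\natural)$ with associated variety a nilpotent orbit closure'' to ``$k_i^\natural$ is admissible'': there do exist non-admissible levels at which the simple affine vertex algebra has nilpotent associated variety (e.g.\ \cite{AM15}), so the associated variety alone does not pin down the level, and one must still invoke, case by case, the explicit formula of the first step to exclude them. I expect the main obstacle to a \emph{uniform} proof to be the absence of a direct argument that the geometric collapsing of the Slodowy slice forces $k$ itself to be a collapsing level: if one had that, then $\mc{V}_k(\g_i^\natural) = L_{k_i^\natural}(\g_i^\natural)$ and $\W_k(\g,f) \cong \bigotimes_i L_{k_i^\natural}(\g_i^\natural)$, and comparing the asymptotic growth $\G_{\W_k(\g,f)} = \G_{L_k(\g)} - \dim\overline{G.f}$ of Proposition \ref{Pro:asymptotic_data_H_DS} with the additive quantity $\sum_i \G_{L_{k_i^\natural}(\g_i^\natural)}$ and the formula of Corollary \ref{Co:asymptotic_data_L} would recover each product $p_i q_i$, hence admissibility, uniformly. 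Making that additivity of $\G$ work along the non-conformal embedding $\mc{V}_k(\g^\natural) \hookrightarrow \W_k(\g,f)$, or else establishing the rationality inputs of Conjecture \ref{Conj:rationality} so that Theorem \ref{Th:min-conf-dim} identifies $\G$ with the effective central charge on both sides, is the crux.
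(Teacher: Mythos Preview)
Note first that this is stated as a \emph{conjecture} in the paper; the paper does not prove it in full, but verifies it for exceptional~$\g$ (Lemma~\ref{Lem:k_natural_adm_conjecture}) and for the classical cases arising from the Kraft--Procesi row/column removal rule (the ``last assertion'' of Lemmas~\ref{Lem:k_natural-type-A} and~\ref{Lem:k_natural-type-BCD}). Your proposal correctly lands on a case-by-case reduction, but the route is more elaborate than what the paper actually does, and the extra machinery you introduce is both unnecessary and the source of the gap you yourself flag.

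For classical $\g$ the paper bypasses any associated-variety argument: it writes down $k_i^\natural$ explicitly from the pyramid (Tables~\ref{Tab:A_Slodowy}--\ref{Tab:BD_Slodowy}) and checks the admissibility inequality by hand. For instance in type~$A$ with $\bs\mu=(q^m,1^s)$ the hypothesis $k_0^\natural=0$ forces $p=n$, whence $k_2^\natural=-s+s/q$, which is visibly admissible for $\sl_s$. For exceptional~$\g$ the paper exploits the collapsing hypothesis only through the crude dimension bound $\dim\Slo_{\mathbb{O}_k,f}\le\dim\g^\natural-\operatorname{rk}\g^\natural$: since $\dim\Slo_{\mathbb{O}_k,f}=\dim\mathbb{O}_k-\dim G.f$ this constrains $\dim\mathbb{O}_k$ and hence, via the tables in~\cite{Arakawa15a}, the denominator~$q$ to finitely many values; for each of these the affine-linear formula for $k_i^\natural$ is checked directly. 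In a few borderline cases where the dimension bound admits a~$q$ for which $k^\natural$ would fail to be admissible, the paper rules these out by comparing the Hasse diagram of $G^\natural$-orbits in $\Slo_{\mathbb{O}_k,f}$ (read off from the interval $[G.f,\mathbb{O}_k]$ in the Hasse diagram of~$\g$) against that of the candidate nilpotent orbit closure in~$\g^\natural$.

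Your second paragraph tries instead to read off the denominator $q_i$ from the associated variety of $\mc{V}_k(\g_i^\natural)$, arguing that the projection $\Slo_f\to(\g^\natural)^*$ carries $\Slo_{\mathbb{O}_k,f}$ onto the product $\prod_i\overline{\mathbb{O}^{(i)}}$. This step is not clearly justified (the projection of the Slodowy slice need not agree with the abstract collapsing isomorphism, and $\mc{V}_k(\g_i^\natural)$ is only known to be some quotient of $V^{k_i^\natural}(\g_i^\natural)$, not necessarily the simple one), and even if granted it runs into the problem you correctly isolate in your third paragraph: a nilpotent associated variety does not force admissibility of the level. The paper avoids this entirely by never invoking $X_{\mc{V}_k(\g_i^\natural)}$; the dimension bound plus the explicit formulas suffice, with the Hasse-diagram comparison as a tiebreaker. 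Your proposed ``uniform'' route via matching asymptotic growths would require the level to be collapsing in the vertex-algebra sense, which is strictly stronger than the geometric hypothesis here and is in fact what the rest of the paper is devoted to \emph{proving}.
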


We will verify the conjecture in the classical cases in the cases where 
$\Slo_{\O_k,f}$ is collapsing and the isomorphism between 
$\Slo_{\O_k,f}$ and a product of nilpotent orbit closures in $\g^\natural$ 
is obtained from the row/column removal rule of Kraft-Procesi 
(see Lemma \ref{Lem:k_natural-type-A} and Lemma \ref{Lem:k_natural-type-BCD}). 
We feel that these cases exhaust all possible cases of 
collapsing nilpotent Slodowy slices 
so that, together with Lemma \ref{Lem:k_natural_adm_conjecture} below,  
it would complete the proof of the conjecture. 

\begin{Lem}
\label{Lem:k_natural_adm_conjecture}
Conjecture \ref{Conj:k_natural_adm_conjecture} is true 
if $\g$ is simple of exceptional type. 
\end{Lem}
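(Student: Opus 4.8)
The plan is to prove the lemma by a finite, type‑by‑type verification over $\g \in \{G_2, F_4, E_6, E_7, E_8\}$. First I would assemble the complete (finite) list of pairs $(\O_k, \O')$ to which the hypotheses of Conjecture \ref{Conj:k_natural_adm_conjecture} apply: $\O_k$ ranges over the nilpotent orbits attached by Theorem \ref{Th:admissible-orbits} and \cite[Tables 2--10]{Arakawa15a} to an admissible denominator $q$ (of which, as $q$ varies, there are only finitely many), and $\O' \subset \overline{\O}_k$ ranges over the nilpotent orbits such that, for $f \in \O'$, the nilpotent Slodowy slice $\Slo_{\O_k, f} = \Slo_f \cap \overline{\O}_k$ is collapsing in the sense of Definition \ref{def:collapsing_Slodowy_slices}, i.e. isomorphic to a product of nilpotent orbit closures in $\g^\natural$. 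In the exceptional types $\g^\natural$ is a small reductive algebra, so this is a stringent condition; combining the generic‑singularity computations of \cite{FuJutLev17} with the orbit and centraliser data of Tables \ref{Tab:Data-G2}--\ref{Tab:Data-E8-a} one finds only finitely many such pairs, and these are precisely the ones entering Theorems \ref{Th:main_E6}--\ref{Th:main_G2}.

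For each such pair I would then compute $k^\natural = (k_0^\natural, k_1^\natural, \ldots, k_s^\natural)$. The component $k_0^\natural$ requires no check: since $k$ is admissible and $f \in \overline{\O}_k$, Lemma \ref{Lem:k_0-condition} gives $\phi_0^\natural = 0$, hence $k_0^\natural = 0$, so the proviso of the conjecture is automatic. For a simple factor $\g_i^\natural$ with $i \ge 1$, the scalar $k_i^\natural = k_{\Gamma_D, i}^\natural$ of \eqref{eq:k_natural} is an affine function of $k$ whose leading coefficient is the Dynkin index of the embedding $\g_i^\natural \hookrightarrow \g$, in particular positive; using Lemma \ref{Lem:maximal reductive subalgebra} and Remark \ref{Rem:maximal reductive subalgebra} one may compute it from an even good grading when that is more convenient. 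These affine functions are tabulated in Tables \ref{Tab:Data-G2}--\ref{Tab:Data-E8-a}. Writing $k + h_\g^\vee = p/q$ with $(p,q) = 1$, the number $k_i^\natural + h_{\g_i^\natural}^\vee$ is then an explicit rational function of $p$ and $q$; because it is increasing in $p$ for fixed $q$, it is enough to test the boundary admissible levels $p = h_\g^\vee$ (in the principal case) and $p = h_\g$ (in the coprincipal case), where the numerator is smallest.

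Finally, for each factor I would put $k_i^\natural + h_{\g_i^\natural}^\vee$ in lowest terms $p_i/q_i$ and verify the admissibility criterion \eqref{eq:admissible-n} for $\g_i^\natural$, namely $(p_i, q_i) = 1$ together with $p_i \ge h_{\g_i^\natural}^\vee$ if $(r_i^\vee, q_i) = 1$ and $p_i \ge h_{\g_i^\natural}$ if $(r_i^\vee, q_i) = r_i^\vee$, where $r_i^\vee$ is the lacing number of $\g_i^\natural$. The whole computation is bookkeeping, but the step that genuinely demands care is exactly this numerator bound --- and in particular deciding whether $k_i^\natural$ is principal or coprincipal admissible, which depends on $(r_i^\vee, q_i)$. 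As the $G_2$, $\tilde{A}_1$ example in the Remark above shows, an admissible $k$ can yield a $k^\natural$ that narrowly fails admissibility through having too small a numerator; the point of the lemma is that among the finitely many collapsing nilpotent Slodowy slices in the exceptional types this never happens. Since the first paragraph reduces the task to a finite list and the second reduces each instance to the boundary levels, each remaining check is a one‑line arithmetic comparison, and the lemma follows.
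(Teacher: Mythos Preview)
Your plan has the right overall shape---a finite case check using the affine formulas $k_i^\natural = a_i k + b_i$ from Tables~\ref{Tab:Data-G2}--\ref{Tab:Data-E8} and the monotonicity in $p$---but there is a genuine gap in the first step. You propose to begin by listing \emph{all} pairs $(\O_k, \O')$ for which $\Slo_{\O_k,f}$ is collapsing, citing \cite{FuJutLev17} and asserting that the resulting list is ``precisely the ones entering Theorems~\ref{Th:main_E6}--\ref{Th:main_G2}''. Neither claim is justified. The results of \cite{FuJutLev17} describe singularities at \emph{minimal} degenerations and a handful of others, not a classification of all collapsing Slodowy slices; and Theorems~\ref{Th:main_E6}--\ref{Th:main_G2} record collapsing \emph{levels}, which is a different (and a priori stronger) condition than the geometric one in Definition~\ref{def:collapsing_Slodowy_slices}. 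So you do not actually have a way to produce the list your argument rests on. (A smaller point: your appeal to Lemma~\ref{Lem:k_0-condition} to make $k_0^\natural=0$ automatic confuses the two notions of ``collapsing''---that lemma assumes the \emph{level} is collapsing, not the slice. This is harmless since $k_0^\natural=0$ is a hypothesis of the conjecture, but it reflects the same conflation.)

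The paper avoids this enumeration entirely. Fixing $f$, it uses the necessary condition that a collapsing slice sits inside $\mc{N}_{\g^\natural}$, hence
\[
\dim\O_k-\dim G.f=\dim\Slo_{\O_k,f}\le\dim\g^\natural-\on{rk}\g^\natural,
\]
which together with \cite[Tables 2--10]{Arakawa15a} restricts $q$ to a short explicit list. For each surviving $q$ one checks whether $k^\natural$ is admissible by your monotonicity argument; when it is not (as in the cases $(F_4,A_1)$ with $q=4$ and $(E_7,A_1)$ with $q=3$), a comparison of the Hasse diagram of the $G^\natural$-orbits in $\Slo_{\O_k,f}$ with that of $\mc{N}_{\g^\natural}$ shows the slice could not have been collapsing in the first place. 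This second ingredient---ruling out the borderline $q$ by a Hasse-diagram obstruction rather than by knowing the slice---is the key idea your proposal is missing.
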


\begin{proof}
Assume that $k$ is admissible.
It is known that $\Slo_{\O_k,f}$ is equidimensional 
of dimension $\dim \O_k -\dim G.f$ (\cite[Corollary 1.3.8]{Gin08}). 
Moreover, by the main result of \cite{AraMor16b}, 
$\Slo_{\O_k,f}$ is irreducible since $k$ is admissible. 

If $\Slo_{\O_k,f}$ is collapsing then in particular it is contained 
in the nilpotent cone $\mathcal{N}_{\g^\natural}$ of $\g^\natural$, and so $\dim \Slo_{\O,f} \le \dim \mathcal{N}_{\g^\natural} = \dim \g^\natural - {\rm rk}\,\g^\natural$.

The semisimple type of $\g^\natural$ and the values of the $k_i^\natural$'s 
are computed in Tables \ref{Tab:Data-G2}--\ref{Tab:Data-E8}. Hence, fixing $f$, we first consider nilpotent orbits $\O$ containing $\overline{G.f}$ such that 
$\dim \O - \dim G.f \le \dim \g^\natural - {\rm rk}\,\g^\natural$ 
and such that $\O=\O_k$ for some admissible level $k$ for $\g$. This heavily restricts the possibilities for the denominator $q$ of $k$. Then we can ask whether for such $q$, the corresponding level $k^\natural$ is admissible for $\g^\natural$. Let us illustrate with an example. 

Assume that $\g=F_4$ and that $f$ belongs to the nilpotent orbit 
labelled $B_3$ in the Bala-Carter classification (dimension 42). 
According to Table  \ref{Tab:Data-F4}, $\g^\natural \cong A_1$. Hence $\dim \Slo_{\O,f} \le \dim A_1 - {\rm rk}\,A_1 = 2$, so $42 \leq \dim \O_k \leq 44$ from which we see that $\O_k$ must be $B_3$, $C_3$ or $F_4(a_2)$. 

Recall that $k=-9+p/q$ with $(p,q)=1$ and $p\ge 9$ if $q$ is odd, 
and $p\ge 12$ if $q$ is even. By \cite{Arakawa15a}, if $\O_k = B_3$ then $q=8$, if $\O_k = F_4(a_2)$ then $q=7$ or $q=10$, and finally the orbit $C_3$ cannot occur as $\O_k$. Now we examine admissibility of 
$$k^\natural = 8k+60 =-2+\frac{2 (4 p - 5 q)}{q}$$ 
in each case.
\begin{itemize}
\item[$\ast$] If $q=7$, then $2 (4 p - 5 q) \ge 2 (4 \times 9 - 5\times 7) =2$, 
so $k^\natural$ is admissible.

\item[$\ast$] If $q=8$, then $2 (4 p - 5 q)/q = (4 p - 5 \times 8)/4$ 
and $4 p - 5 q \ge 4 \times 13 - 5\times 8 =12$ 
so $k^\natural$ is admissible.

\item[$\ast$] If $q=10$, then $2 (4 p - 5 q)/q = (4 p - 5 \times 10)/5$ 
and $4 p - 5 q \ge 4 \times 13 - 5\times 10 =2$ 
so $k^\natural$ is admissible.
\end{itemize}

In most cases considerations of orbit dimension, like those above, suffice to reach the desired conclusion. In some cases, however, a more detailed analysis of the Hasse diagrams of $\g$ and $\g^\natural$ is necessary (we use the diagrams of \cite{FuJutLev17}). 
This is the case, for example, for the minimal nilpotent orbits of $F_4$ and $E_7$. 
We explain the verification for these two cases. 

Let us first consider the minimal nilpotent orbit of $F_4$ (dimension $16$). 
In this case $\g^\natural \cong C_3$. Since the nilpotent cone of $C_3$ has dimension $18$ we have $16 \le \dim\O_k \le 34$. These inequalities are satisfied by $6$ nilpotent orbits, but of these only the orbits $A_1$ (the minimal orbit itself)  
and $A_2 + \widetilde{A}_1$ can occur as $\O_k$. 

If $\O_k = A_2+\tilde{A}_1$ then $q=4$ (here $k = -9 + p/q$ as above), and if $\O_k = A_1$ then $q=2$. Now we examine admissibility of the level
$$k^\natural = k+5/2 =-4+\frac{2 p - 5 q}{2 q},$$ 
in each case.
\begin{itemize}
\item[$\ast$] If $q=2$, then $(2p-5q)/(2q) = (p - 5)/2$  
and $p - 5 \ge 13 - 5 = 8 \ge 6=h_{C_3}$ 
so $k^\natural$ is admissible.

\item[$\ast$] If $q=4$, then $(2p - 5 q)/(2q) = (p - 10)/4$  
but $p \ge 13$ gives us only $p - 10 \ge 3$, not $p - 10 \ge 6$ as required for admissibility.
\end{itemize}
To exclude the second case, we observe instead that $\dim \O_{A_2+\tilde{A}_1}-\dim \O_{A_1}=18$. So if $\Slo_{\O_{A_2+\tilde{A}_1},A_1}$ were collapsing, 
then necessarily $\Slo_{\O_{A_2+\tilde{A}_1},A_1}$ would be isomorphic 
to the nilpotent cone $\mathcal{N}_{C_3}$ of $C_3$ since it is 
irreducible. 
In particular, it should contain a nilpotent $G^\natural$-orbit of dimension 16 
(corresponding to the subregular nilpotent orbit of $C_3\cong \sp_6$). 
Here $G^\natural$ denotes the centraliser in $F_4$ of the $\sl_2$-triple associated 
with $f$ in the minimal nilpotent orbit $A_1$ of $F_4$ whose Lie algebra is $\g^\natural$. 
But from the Hasse diagram of the $F_4$ we see that this is not possible. 
Indeed, 
the Hasse diagram of the $G^\natural$-action on $\Slo_{\O_{A_2+\tilde{A}_1},A_1}$ 
is just the \emph{interval} between the orbits labelled ${A_2+\tilde{A}_1}$ and $A_1$ 
in the Hasse diagram of $F_4$, and the dimension of the corresponding  
$G^\natural$-orbits would be $18, 14, 12, 6, 0$.  
Since it does not coincide with the Hasse diagram of $\mathcal{N}_{C_3}$, this case is ruled out.

Consider now the minimal nilpotent orbit $A_1$ of $E_7$ (dimension 34). 
In this case $\g^\natural \cong D_6$. 
Since the nilpotent cone of $D_6$ has dimension $60$ we have $34 \le \dim\O_k \le 94$. 
Only the orbits $4 A_1$ 
and $2 A_2 + A_1$ can satisfy this conditions and occur as $\O_k$. 

If $\O_k = 4 A_1$ then $q=2$ (here $k = -18 + p/q$), 
and if $\O_k = 2 A_2+A_1$ then $q=3$. Now we examine admissibility of the level
$$k^\natural = k+4 =-10+(p-4q)/q,$$ 
in each case.
\begin{itemize}
\item[$\ast$] If $q=2$, then $(p-4q)/q = (p - 8)/2$  
and $p - 8 \ge 19 - 8 = 11 \ge 19=h_{D_6}^\vee$  
so $k^\natural$ is admissible.

\item[$\ast$] If $q=3$, then $(p-4q)/q= (p - 12)/3$  
but $p \ge 19$ gives us only $p - 12 \ge 7$, not $p - 12 \ge 10$ as required for admissibility.
\end{itemize}
To exclude the second case, we observe instead that 
$\dim \O_{2 A_2+A_1}-\dim \O_{A_1}=56$. 
In $\mc{N}_{D_6}$ there are exactly two nilpotent orbits of dimension 56: 
there are associated with the partition $(7,5)$ and $(9,1^3)$. 
So if $\Slo_{\O_{A_2+\tilde{A}_1},A_1}$ were collapsing, 
then necessarily $\Slo_{\O_{A_2+\tilde{A}_1},A_1}$ would be isomorphic 
to one of the nilpotent orbit closures of $D_6$ of dimension 56 since it is irreducible. 
Both of them contain the nilpotent $G^\natural$-orbit of dimension 54, associated 
with the partition $(7,3,1^2)$ while, looking at the Hasse diagram 
of the $G^\natural$-action on $\Slo_{\O_{2 A_2+A_1},A_1}$ in $E_7$ 
we see that this is not possible.  

The rest of the verifications are left to the reader. 
\end{proof}

\subsection{Central charge} 
\label{sub:central_charge}
Let $k$ be an admissible level for $\g$. We recall (1) that the nilpotent orbit $X_{L_k(\g)} \subset \mathcal{N}_\g$ completely determines the denominator $q$ of the level $k=-h_\g^\vee+p/q$ (cf.~Theorem \ref{Th:admissible-orbits}), and (2) that the $k_i^\natural$, defined by \eqref{eq:k_natural}, are all polynomials of degree one in $k$ or, equivalently, in $p$. 

Assume that both $k$ and $k^\natural$ are admissible. Then both $L_{k^\natural}(\g^\natural)$ and $\W_k(\g,f)$ are conformal vertex algebra since admissible levels are never critical. 
Denoting by $c_V$ the central charge of a conformal vertex algebra $V$, 
we recall that 
$$c_{L_{k}(\g)}=  \dfrac{k  \dim \g }{k + h_\g^\vee}.$$ 

If $\g_0^\natural = 0$ then obviously
\begin{align}\label{eq:cLk.formula}
c_{L_{k^\natural}(\g^\natural)} =  \sum\limits_{i=1}^s  c_{L_{k_i^\natural} (\g_i^\natural)}.
\end{align}
In this case the possible values of the numerator $p$ of $k+h_\g^\vee$ for admissible $k$ 
are determined as solutions of
\begin{align}
\label{eq:central_charge}
c_{H_{DS,f}^0(L_k(\g))} = c_{L_{k^\natural}(\g^\natural)},
\end{align}
considered as an equation in an unknown $p$. Recall that $c_{H_{DS,f}^0(L_k(\g))}$ is given in equation (\ref{eq:W.alg.c.formula}). If there are no solutions in $p$ to the equation \eqref{eq:central_charge}, then $k$ is not collapsing. If there are solutions (with $p \ge h_\g$ or $p \ge h_\g^\vee$ so as to ensure that $k=-h_\g^{\vee}+p/q$ is admissible) then we proceed to the next step (\S\ref{sub:Asymptotic growth}).

If $\g_0^\natural \neq 0$ then, by Lemma \ref{Lem:k_0-condition}, the level $k$ can only be collapsing if $\phi_0^\natural = 0$. In particular $c_{L_{k^\natural}(\g^\natural)}$ continues to be given by (\ref{eq:cLk.formula}). Now the condition $\phi_0^\natural = 0$ entirely determines $k$, and the equation \eqref{eq:central_charge}, 
rather than determining $p$, is simply either true or false. If it is false then $k$ is not collapsing. If it is true then we proceed to the next step (\S\ref{sub:Asymptotic growth}) as before.

The data needed to compute the levels $k_i^\natural$ in term of $k$, 
for $i=0,\ldots,s$, are collected in 
Tables~\ref{Tab:A_Slodowy}--\ref{Tab:BD_Slodowy} 
for the classical types,  
and in 
Tables \ref{Tab:Data-G2}--\ref{Tab:Data-E8} 
for the exceptional types. 
The data for the exceptional types have been 
obtained using the software \texttt{GAP4}.

\subsection{Asymptotic growth and asymptotic dimension}
\label{sub:Asymptotic growth}
The first and second steps (\S\ref{sub:associated_variety} and 
\S\ref{sub:central_charge}) allow us to detect possible  
values for admissible $k$ and $k^\natural$. 
As this point, we apply 
Theorem \ref{Th:main}, or rather, the following powerful consequence of it.

\begin{Pro}
\label{Pro:asymptotics-and-collapsing}
Let $k$ and $k^{\natural}$ be admissible,
$f\in \overline{\mathbb{O}}_k$.
Suppose that 
\begin{align*}
&\G_{H^0_{DS,f}(L_k(\g))}=\G_{L_{k^\natural}(\g^\natural)}=\sum_{i=1}^s\G_{L_{k_i}(\g_i)},\\
& \A_{H^0_{DS,f}(L_k(\g))}=\A_{L_{k^\natural}(\g^\natural)}=\prod_{i=1}^s\A_{L_{k_i}(\g_i)}.
\end{align*}
Then $k$ is collapsing. 
\end{Pro}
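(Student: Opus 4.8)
The statement to prove is Proposition \ref{Pro:asymptotics-and-collapsing}: given $k, k^\natural$ admissible, $f \in \overline{\mathbb{O}}_k$, and matching asymptotic growths and dimensions between $H^0_{DS,f}(L_k(\g))$ and $L_{k^\natural}(\g^\natural)$, then $k$ is collapsing. The natural strategy is to invoke Theorem \ref{Th:main} with the roles played by $\g^\natural$ (in place of $\g$), $V^{k^\natural}(\g^\natural)$ (in place of $V^k(\g)$), and $\W^k(\g,f)$ or rather its simple quotient $\W_k(\g,f)$ (in place of $V$), via the embedding $\iota \colon V^{k^\natural}(\g^\natural) \hookrightarrow \W^k(\g,f)$ of \cite{KacWak03}.

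\textbf{Step 1: set up the hypotheses of Theorem \ref{Th:main}.} First I would replace $H^0_{DS,f}(L_k(\g))$ by $\W_k(\g,f)$: since $f \in \overline{\mathbb{O}}_k$ we are in the quasi-lisse situation of Theorem \ref{Th:Associated_variety-DS}, and by Proposition \ref{pro:quasi_liss_asymptotic} both $\W_k(\g,f)$ and $H^0_{DS,f}(L_k(\g))$ admit asymptotic data; by Proposition \ref{Pro:asymptotic_data_H_DS} the latter has $\Weight = 0$, and one checks (using that $\W_k(\g,f)$ is a quotient of $H^0_{DS,f}(L_k(\g))$, having the same central charge and same minimal conformal weight by Theorem \ref{Th:min-conf-dim}) that $\G$ and $\A$ agree for the two, so the hypothesis $\G_{H^0_{DS,f}(L_k(\g))} = \G_{L_{k^\natural}(\g^\natural)}$, $\A_{H^0_{DS,f}(L_k(\g))} = \A_{L_{k^\natural}(\g^\natural)}$ transfers to $\tilde V := \W_k(\g,f)$ as a quotient of the universal $W$-algebra, which plays the role of $\tilde V$ in Theorem \ref{Th:main}. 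Actually it is cleaner to take $V = \mathcal{V}_k(\g^\natural)$ (the image of $\iota$ in $\W_k(\g,f)$), which is conical and self-dual (it is a quotient of the self-dual $V^{k^\natural}(\g^\natural)$ and sits inside the conical $\W_k(\g,f)$, being a conformal vertex subalgebra after Lemma \ref{Lem:k_0-condition} forces $\phi_0^\natural=0$ so there is no Heisenberg factor obstructing conicity). The homomorphism $\varphi$ of Theorem \ref{Th:main} is the composite $V^{k^\natural}(\g^\natural) \xrightarrow{\iota} \W^k(\g,f) \xrightarrow{\pi} \W_k(\g,f)$, landing in $\mathcal{V}_k(\g^\natural)$; the conditions $\varphi(\omega) \in V_2$ and $(\omega_V)_{(2)}\varphi(\omega) = 0$ hold by \eqref{eq:k_natural} (the displayed relations right after the embedding $\iota$).

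\textbf{Step 2: identify the covering vertex algebra $\tilde V$.} To apply Theorem \ref{Th:main} I need a vertex algebra $\tilde V$ surjecting onto $V = \mathcal{V}_k(\g^\natural)$ with $\chi_{\tilde V} \sim \chi_{L_{k^\natural}(\g^\natural)}$. Here I take $\tilde V = H^0_{DS,f}(L_k(\g))$ itself (or $\W_k(\g,f)$): it admits an asymptotic datum with $\Weight = 0$, and by the given hypotheses together with Step 1 its asymptotic data coincide with those of $L_{k^\natural}(\g^\natural)$, i.e. $\A_{\tilde V} = \A_{L_{k^\natural}(\g^\natural)}$, $\G_{\tilde V} = \G_{L_{k^\natural}(\g^\natural)}$. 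Wait — Theorem \ref{Th:main} is stated for a homomorphism into $\g$, not $\g^\natural$, but it applies verbatim with $\g$ replaced by the reductive $\g^\natural = \bigoplus_{i=0}^s \g_i^\natural$ (since $\phi_0^\natural = 0$, the center contributes nothing and $L_{k^\natural}(\g^\natural) = \bigotimes_{i=1}^s L_{k_i^\natural}(\g_i^\natural)$ is a tensor product of simple admissible affine vertex algebras, for which Corollary \ref{Co:asymptotic_data_L} gives the asymptotic data, and the proof of Theorem \ref{Th:main} — via Theorem \ref{Th:finite_extensions} applied factor by factor — goes through unchanged). Thus Theorem \ref{Th:main} yields that $\varphi$ factors through an isomorphism $L_{k^\natural}(\g^\natural) \cong \mathcal{V}_k(\g^\natural)$.

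\textbf{Step 3: conclude collapsing.} Once $\mathcal{V}_k(\g^\natural) \cong L_{k^\natural}(\g^\natural)$, we must upgrade this to $\W_k(\g,f) \cong L_{k^\natural}(\g^\natural)$, i.e. that $\mathcal{V}_k(\g^\natural)$ is \emph{all} of $\W_k(\g,f)$. This is exactly the content of the Lemma following Definition \ref{Def:asymp.datum}'s analogue in Section \ref{sec:collapsing} (the lemma stating $k$ is collapsing iff $\W_k(\g,f) \cong L_{k^\natural}(\g^\natural)$): the point is a dimension/character count — $\W_k(\g,f)$ is, as a module over its subalgebra $\mathcal{V}_k(\g^\natural) \cong L_{k^\natural}(\g^\natural)$, a module in category $\mathcal{O}$ (more precisely a direct sum of admissible $\widehat{\g^\natural}$-modules by Theorem \ref{Th:finite_extensions}), and comparing asymptotic dimensions $\A_{\W_k(\g,f)} = \A_{H^0_{DS,f}(L_k(\g))} = \A_{L_{k^\natural}(\g^\natural)}$ with positivity of $\A_{L(\lam)}$ for each admissible summand forces the multiplicity-one conclusion, hence $\W_k(\g,f) = \mathcal{V}_k(\g^\natural)$. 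Then $k$ is collapsing by definition.

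\textbf{The main obstacle.} The genuinely delicate point is Step 1 — justifying that the asymptotic data (especially $\A$) of $\W_k(\g,f)$ coincide with those of $H^0_{DS,f}(L_k(\g))$, since a priori $\W_k(\g,f)$ is a proper quotient. This requires that the surjection $H^0_{DS,f}(L_k(\g)) \twoheadrightarrow \W_k(\g,f)$ does not change the leading asymptotics, which follows from Theorem \ref{Th:min-conf-dim}(1) (same central charge, same minimal conformal dimension $h_{\lam_o}$, hence same $\G$) together with the fact that the minimal-weight space is one-dimensional and survives in the simple quotient (so same $\A$). Everything else is a matter of correctly transcribing Theorem \ref{Th:main} and Theorem \ref{Th:finite_extensions} from the simple Lie algebra $\g$ to the reductive $\g^\natural$, and invoking $\phi_0^\natural = 0$ (Lemma \ref{Lem:k_0-condition}) to dispose of the central Heisenberg factor.
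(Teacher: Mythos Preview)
Your overall idea—apply Theorem \ref{Th:main} with $\g^\natural$, $k^\natural$ playing the roles of $\g$, $k$—is exactly the paper's, and the paper's proof is a single line: take $V=\W_k(\g,f)$ and $\tilde V=H^0_{DS,f}(L_k(\g))$. Theorem \ref{Th:main} then directly yields the isomorphism $L_{k^\natural}(\g^\natural)\cong \W_k(\g,f)$, which is precisely the definition of collapsing. No Step~3 is needed.

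Where your write-up goes astray is in the choice of $V$ and in the ``main obstacle'' you flag. You settle on $V=\mathcal V_k(\g^\natural)$, but $\mathcal V_k(\g^\natural)$ is a \emph{subalgebra} of $\W_k(\g,f)$, not a quotient; hence it is not a quotient of $\tilde V=H^0_{DS,f}(L_k(\g))$, and the quotient hypothesis of Theorem \ref{Th:main} fails for that choice. This is what forces you into the extra Step~3 argument. Taking $V=\W_k(\g,f)$ instead, the quotient hypothesis is automatic (by Theorem \ref{Th:Associated_variety-DS}, $H^0_{DS,f}(L_k(\g))$ is a nonzero quotient of $\W^k(\g,f)$, so $\W_k(\g,f)$ is its simple quotient), $V$ is conical and self-dual (the latter by the Proposition in Section~\ref{sec:Asymptotics_of_W_algebras} for the Dynkin grading), and the conditions on $\varphi(\omega)$ hold by the displayed relations following the definition of $\iota$. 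Moreover, your ``main obstacle''—matching the asymptotic data of $\W_k(\g,f)$ with those of $H^0_{DS,f}(L_k(\g))$—is a non-issue: Theorem \ref{Th:main} only requires asymptotic data for $\tilde V$, never for $V$. So Step~1 (and your appeal to Theorem \ref{Th:min-conf-dim}) can be dropped entirely.
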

\begin{proof}
The assertion 
follows immediately by applying Theorem \ref{Th:main}, taking for $\tilde{V} \rightarrow V$ the map $H^0_{DS,f}(L_k(\g)) \rightarrow \W_k(\g,f)$. Thus $L_{k^\natural}(\g^\natural) \cong \W_k(\g,f)$ and we conclude that $k$ is collapsing as required. 
\end{proof}

This proposition ensures that it is enough to compare the asymptotic growths  
and the asymptotic dimension of the vertex algebras 
$H_{DS,f}^{0}(L_k(\g))$ and $L_{k^\natural}(\g^\natural)$ 
that we compute using Corollary~\ref{Co:asymptotic_data_L}. 
This is the goal of next sections. 
In the classical cases, we sometimes 
directly use  the asymptotic growths 
(when $f$ admits an even good grading) 
to detect 
possible values of $k$ instead of the central charge argument, 
because the equation given by the central charge 
is often difficult to solve. 

\begin{Rem} 
\label{Rem:lisse_conjecture_true}
In Proposition \ref{Pro:asymptotics-and-collapsing}
suppose further that $f\in \mathbb{O}_k$ so that $\W_k(\g,f)$ is lisse. 
Then we get
that $$\W_k(\g,f)\cong H_{DS,f}^0(L_k(\g))$$
without the assumption that $f$ admits an even good grading.
Indeed,
$f\in \mathbb{O}_k$ implies that
 $H_{DS,f}^0(L_k(\g))$ is lisse
by Theorem \ref{Th:Associated_variety-DS}.
Hence  
$L_{k^\natural}(\g^{\natural})$ must be integrable and
the homomorphism
$\varphi$ of Theorem \ref{Th:main} must factors through the embedding
$L_{k^\natural}(\g^{\natural})\hookrightarrow H_{DS,f}^0(L_k(\g))$
(\cite{DonMas06}).
In particular,
$H_{DS,f}^0(L_k(\g))$ is a direct sum of integrable representations of the affine Kac-Moody algebra
associated with $\g^{\natural}$.
It follows that
the proof of Theorem \ref{Th:main}
goes through to obtain that 
$H_{DS,f}^0(L_k(\g))\cong L_{k^\natural}(\g^{\natural})$.
\end{Rem}

If the  isomorphism $\W_k(\g,f)\cong H_{DS,f}^0(L_k(\g))$ holds,
which is the case when 
 $f$ admits an even good grading, 
Proposition \ref{Pro:asymptotics-and-collapsing} 
gives a necessary and sufficient condition 
for admissible $k$ to be collapsing.
Unfortunately, in general, it gives only a sufficient condition. 

The following proposition will be useful to obtain explicit decompositions 
of finite extensions 
of admissible simple affine vertex algebras.

\begin{Pro}
\label{Pro:fin_ext-assoc} 
Let $k$ and $k^{\natural}$ be admissible,
$f\in \overline{\mathbb{O}}_k$.
Suppose that the associated varieties 
of $H^0_{DS,f}(L_k(\g))$ and $L_{k^\natural}(\g^\natural)$ 
have the same dimension and are not isomorphic. 
Then $k$ is not collapsing. 
\end{Pro}
\begin{proof}
The assumption ensures that $H^0_{DS,f}(L_k(\g))$ is nonzero. 
Hence, 
$\W_k(\g,f)$ is a quotient of $H^0_{DS,f}(L_k(\g))$ 
and its associated variety 
is a Zariski closed subvariety of that of $H^0_{DS,f}(L_k(\g))$. 
On the other hand, since $k$ is admissible, the associated variety 
of $H^0_{DS,f}(L_k(\g))$ is irreducible, $\overline{\O_k}$ being 
unibranch (see \cite{AraMor16b}). 
If $k$ were collapsing, then the associated variety of $\W_k(\g,f)$ would be isomorphic to 
the (irreducible) variety $X_{L_{k^\natural}(\g^\natural)}$,  
and so, would have the same dimension as $X_{H^0_{DS,f}(L_k(\g))}$ by the hypothesis. 
Then it would be isomorphic to the variety $X_{H^0_{DS,f}(L_k(\g))}$, 
since $X_{H^0_{DS,f}(L_k(\g))}$ is irreducible. 
This contradicts the non-isomorphism hypothesis. 
\end{proof}

\section{Some useful product formulas}
\label{sec:identities}

We recall the well-known identity  
\begin{align}
\label{eq:sin_formula}
\prod\limits_{j=1}^{n-1}  2 \sin  \dfrac{j \pi}{n} = n
\end{align}
and its immediate consequence
\begin{align}\label{eq:sin_formula.odd}
\prod_{j=1}^{n} 2\sin{\frac{(j-1/2) \pi}{n}} = 2.
\end{align}
In this section we record some further identities similar to these which will be very helpful when we come to apply Corollary~\ref{Co:asymptotic_data_L}.

Firstly from \eqref{eq:sin_formula} we deduce
\begin{align}
\label{eq:sin_formula2}
\prod_{j=1}^{n-1} \left( 2 \sin \dfrac{j\pi}{n} \right)^{n-j} 
& = \left(  \prod\limits_{j=1}^{\lfloor \frac{n-1}{2} \rfloor} 2\sin \dfrac{j \pi}{n} \right)^{n} 
 =   \left(\prod\limits_{j=1}^{n-1}2 \sin \dfrac{j \pi}{n} \right)^{\frac{n}{2}}  = n^{\frac{n}{2}}.
\end{align}

Next, we have the following identities. 

\begin{Lem} 
[{\cite{KacPeterson84,Kac90}}]
\label{Lem:main_identities}
{\ }
\begin{enumerate}
\item We have 
$ {\displaystyle \prod\limits_{\alpha \in \Delta_{+} } 
2 \sin \dfrac{\pi (\rho  | \alpha) }{h_\g^\vee} = 
{|P/Q^\vee|}^{\frac{1}{2}} \left( h_\g^\vee\right)^{\frac{\ell}{2}}.}$
\item 
For $\g$ simple, not of type $C_\ell$, $G_2$, $F_4$,  
we have
${\displaystyle \prod\limits_{\alpha \in \Delta_{+} } 
2 \sin \dfrac{\pi (\rho  | \alpha) }{h_\g^\vee+1} =
 \left( h_\g^\vee +1\right)^{\frac{\ell}{2}}.}$
\item We have 
${\displaystyle  \prod\limits_{\alpha \in \Delta_{+} } 
2 \sin \dfrac{\pi (\rho  | \alpha^\vee) }{h_\g } = \prod\limits_{\alpha \in \Delta_{+} } 
2 \sin \dfrac{\pi (\rho^\vee  | \alpha) }{h_\g } =|P^\vee/Q^\vee|^{\frac{1}{2}}  (h_\g)^{\frac{\ell}{2}}}$. 
\item We have
$ {\displaystyle \prod\limits_{\alpha \in \Delta_{+} } 
2 \sin \dfrac{\pi (\rho  | \alpha^\vee) }{h_\g +1}= \prod\limits_{\alpha \in \Delta_{+} } 
2 \sin \dfrac{\pi (\rho^\vee  | \alpha) }{h_\g +1} =
 \left( h_\g  +1 \right)^{\frac{\ell}{2}}.}$
\end{enumerate}
\end{Lem}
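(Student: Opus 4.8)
The plan is to realise each product $\prod_{\alpha\in\Delta_+}2\sin\pi(\alpha\mid v)$ as a specialisation of the Weyl denominator identity $\prod_{\alpha\in\Delta_+}(e^{\alpha/2}-e^{-\alpha/2})=\sum_{w\in W}\varepsilon(w)e^{w\rho}$ at the point $v=\rho/h_\g^\vee$ in (1), $v=\rho/(h_\g^\vee+1)$ in (2), $v=\rho^\vee/h_\g$ in (3), and $v=\rho^\vee/(h_\g+1)$ in (4). Under the algebra homomorphism $e^\mu\mapsto e^{2\pi\imag(\mu\mid v)}$ the left-hand side becomes $(2\imag)^{|\Delta_+|}\prod_{\alpha\in\Delta_+}\sin\pi(\alpha\mid v)$, so in each case the assertion is equivalent to evaluating the alternating exponential (Gauss-type) sum $\sum_{w\in W}\varepsilon(w)e^{2\pi\imag(w\rho\mid v)}$. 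All four are classical and are carried out, in the equivalent language of modular $S$-matrices and theta-function transformation laws, in \cite{KacPeterson84,Kac90}; what remains is to organise the normalisation bookkeeping between $(\rho\mid\alpha)$, $(\rho\mid\alpha^\vee)$ and $(\rho^\vee\mid\alpha)$, and to record the Langlands-dual symmetry $h_\g^\vee=h_{{}^L\g}$ which relates (1) to (3) and (2) to (4).

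For (3) I would evaluate the sum through the Coxeter element $c\in W$: its eigenvalues are $e^{2\pi\imag m_i/h_\g}$ with $m_1,\dots,m_\ell$ the exponents, the partition of $|\Delta_+|$ by root heights $(\rho^\vee\mid\alpha)=\on{ht}(\alpha)$ is conjugate (Kostant--Macdonald) to the partition by exponents, and $\prod_{i=1}^\ell 2\sin\tfrac{\pi m_i}{h_\g}=|\det(1-c)|=|P^\vee/Q^\vee|$. Conjugacy of partitions gives $\prod_{\alpha\in\Delta_+}2\sin\tfrac{\pi(\rho^\vee\mid\alpha)}{h_\g}=\prod_{i=1}^\ell\prod_{j=1}^{m_i}2\sin\tfrac{\pi j}{h_\g}$, which by \eqref{eq:sin_formula} together with the symmetry $m_i+m_{\ell+1-i}=h_\g$ collapses to $h_\g^{\ell/2}\bigl(\prod_i 2\sin\tfrac{\pi m_i}{h_\g}\bigr)^{1/2}=h_\g^{\ell/2}|P^\vee/Q^\vee|^{1/2}$; assertion (1) then follows by applying (3) to ${}^L\g$ and matching lattice indices via Table~\ref{Tab:data_simple}. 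For the classical types one can bypass the Coxeter-element input: writing out $\Delta_+$ explicitly, each product regroups as $\prod_d(2\sin\tfrac{\pi d}{n})^{m_d}$ for suitable multiplicities $m_d$, and is then evaluated directly by \eqref{eq:sin_formula} and \eqref{eq:sin_formula2} (in type $A_\ell$, with $n=h_\g^\vee=\ell+1$, the product in (1) is literally $\prod_{d=1}^{\ell}(2\sin\tfrac{\pi d}{\ell+1})^{\ell+1-d}=(\ell+1)^{(\ell+1)/2}$, matching $|P/Q^\vee|^{1/2}(h_\g^\vee)^{\ell/2}$).

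For (2) and (4) the key point is that $v=\rho/(h_\g^\vee+1)$, respectively $v=\rho^\vee/(h_\g+1)$, should be a regular point, i.e.\ $(\alpha\mid v)\notin\Z$ for all $\alpha\in\Delta_+$: for (4) this holds always, since $0<(\rho^\vee\mid\alpha)=\on{ht}(\alpha)<h_\g<h_\g+1$, while for (2) it fails exactly for $C_\ell$, $G_2$, $F_4$, where a (co)root of height $h_\g^\vee+1$ occurs and the product degenerates to $0$ — which is why those types are excluded. Under regularity the Gauss sum carries no residual lattice contribution and evaluates to $(h_\g^\vee+1)^{\ell/2}$, respectively $(h_\g+1)^{\ell/2}$; this can again be done uniformly as in \cite{Kac90}, and for the types occurring here it reduces to \eqref{eq:sin_formula}--\eqref{eq:sin_formula2} in the classical cases and to a short explicit computation with the exponents in types $E_6,E_7,E_8$. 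The main obstacle throughout is the exact evaluation of the Gauss sum — in particular the appearance of $|\det(1-c)|$ in (1) and (3) versus its absence in the regular cases (2) and (4) — together with keeping the three normalisations of $\rho$ consistent; modulo this the argument is routine.
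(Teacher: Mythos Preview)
Your argument for (3) and (4) via the Coxeter element and Kostant's duality between the height partition $\{\on{ht}(\alpha)\}$ and the exponent partition $(m_1,\dots,m_\ell)$ is correct and more conceptual than the paper's approach, which proceeds entirely case-by-case using \eqref{eq:sin_formula} and \eqref{eq:sin_formula2}. Your pairing $m_i\leftrightarrow h-m_i$ cleanly produces both the factor $h^{\ell/2}$ in (3) (together with the residual $\bigl(\prod_i 2\sin\tfrac{\pi m_i}{h}\bigr)^{1/2}=|P^\vee/Q^\vee|^{1/2}$) and the factor $(h+1)^{\ell/2}$ in (4) (with no residual, since each pair now contributes the full product $\prod_{j=1}^{h}2\sin\tfrac{\pi j}{h+1}=h+1$). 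This is a genuine improvement over the paper's treatment of (3) and (4).

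Two claims, however, are wrong. First, the asserted ``Langlands-dual symmetry $h_\g^\vee=h_{{}^L\g}$'' is false: the Coxeter number is determined by the exponents, which are shared by $\g$ and ${}^L\g$, so $h_{{}^L\g}=h_\g$; your claim would thus force $h_\g^\vee=h_\g$, which fails for every non-simply-laced type (e.g.\ $h_{B_\ell}^\vee=2\ell-1$ but $h_{B_\ell}=h_{C_\ell}=2\ell$). Consequently your reduction of (1) to (3) works only in the simply-laced cases; for $B_\ell,C_\ell,G_2,F_4$ assertion (1) needs an independent argument. The paper simply invokes the Kac--Peterson modular $S$-matrix computation \cite[(13.8.1)]{Kac90}, which is naturally adapted to the affine setup and yields the $h^\vee$-formula directly; there is no shortcut via finite Langlands duality.

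Second, your explanation for the exclusion in (2) is incorrect: for $C_\ell$, $G_2$, $F_4$ the product $\prod_{\alpha}2\sin\tfrac{\pi(\rho|\alpha)}{h^\vee+1}$ does \emph{not} vanish. Since $\theta$ is long, $(\rho|\alpha)\le(\rho|\theta)=(\rho|\theta^\vee)=h^\vee-1<h^\vee+1$ for every $\alpha\in\Delta_+$, so no factor is zero (for $G_2$ the six values of $(\rho|\alpha)$ are $\tfrac13,1,\tfrac43,\tfrac53,2,3$, all strictly between $0$ and $5$). The actual obstruction is that for short roots $(\rho|\alpha)=(\rho|\alpha^\vee)/r^\vee$ is not an integer, so the height/exponent combinatorics underlying your argument breaks down and the product simply fails to equal $(h^\vee+1)^{\ell/2}$ --- cf.\ Remark~\ref{Rem:KacPeterson84}. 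For the one non-simply-laced type $B_\ell$ remaining in (2), the paper obtains the formula by comparing with $D_{\ell+1}$ via part (1) and evaluating the ratio explicitly; your sketch supplies no argument here beyond a citation.
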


The lemma is probably known. 
We provide a proof for the convenience of the reader, and to clear up an ambiguity 
from \cite{KacPeterson84} (see Remark \ref{Rem:KacPeterson84}). 
\begin{proof}
As a rule, in this proof, 
we write $\rho_{\g}$ (resp.~ $\rho_{\g}^\vee$) 
for  the half-sum of positive roots (resp.~coroots) of $\g$. 
For $p \in \Z_{>0}$, 
set 
$$\varPi_\g(p):={\displaystyle \prod\limits_{\alpha \in \Delta_{+} } 
2 \sin \dfrac{\pi (\rho_\g  | \alpha) }{p}}, 
\qquad \varPi^\vee_\g(p):={\displaystyle \prod\limits_{\alpha \in \Delta_{+} } 
2 \sin \dfrac{\pi (\rho_\g  | \alpha^\vee) }{p}}.$$
Since $\{   {\rm ht}(\alpha) \colon \alpha \in \Delta_+\}= 
\{   {\rm ht}(\alpha^\vee) \colon \alpha \in \Delta_+\}$, 
note that for any $p \ge h_\g$, 
$$ \varPi^\vee_\g(p)=
\prod\limits_{\alpha \in \Delta_{+} } 
2 \sin \dfrac{\pi (\rho_\g^\vee  | \alpha) }{p}.$$
We use the data of Table \ref{Tab:data_simple}.

(1) The identity is established in \cite[Chapter 13, (13.8.1)]{Kac90} 
using modular invariance properties. 
It can also be checked using a case-by-case argument 
exploiting identities \eqref{eq:sin_formula} and 
\eqref{eq:sin_formula2}. 

(2) 
We check the identity using a case-by-case argument. 

$\ast$ Type $A_\ell$. 
We have $h_{A_\ell}^\vee +1=\ell+2$. 
Since $(\rho_{A_\ell} | \alpha) = \on{ht}(\alpha)$, we easily that 
\begin{align*} 
\varPi_{A_\ell}(\ell+2) & = 
\varPi_{A_{\ell+1}}(\ell+2) 
\left( \prod\limits_{j=1}^{\ell+1} 2 \sin \dfrac{j \pi}{\ell+2}
\right)^{-1}.
\end{align*} 
So by (1) applied to $A_{\ell+1}$, we obtain the expected statement, 
$
\varPi_{A_\ell}(\ell+2)   
 =(\ell+2)^{\ell/2} ,$ 
using the identity \eqref{eq:sin_formula}. 

$\ast$ Type $B_\ell$ and $D_\ell$. 
We have $h_{B_\ell}^\vee=2\ell-1$ and  $h_{D_\ell}^\vee+1=2\ell-1$. 
We first show the statement for $D_\ell$. We have to show that 
\begin{align*} 
\varPi_{D_\ell}(2\ell -1)
= (2\ell-1)^{\ell/2}. 
\end{align*} 
By (1) applied to $B_\ell$, we have 
\begin{align*}
\varPi_{B_\ell}(2\ell -1) 
=2 (2\ell-1)^{\ell/2}. 
\end{align*}  
So it suffices to show that the ratio $\dfrac{\varPi_{B_\ell}(2\ell -1)}{\varPi_{D_\ell}(2\ell -1)}$ 
equals $2$. 
Observing that $(\rho_{D_\ell} |\alpha)={\rm ht}(\alpha)$, 
we get that 
\begin{align*} 
\varPi_{D_\ell}(2\ell -1)= 
\prod\limits_{j=1}^{\ell-1} \left(2 \sin \dfrac{j \pi }{2\ell-1}\right)^{\ell-j} 
\times 
\prod\limits_{j=1}^{\ell-1} 2 \sin \dfrac{j \pi }{2\ell-1} 
\times 
\prod\limits_{i=1}^{\ell-2} \prod\limits_{j=2i+1} ^{\ell+(i-1)} 2\sin \dfrac{j \pi}{2\ell-1}.  
\end{align*} 
On the other hand, observing that $(\rho_{B_\ell} |\alpha)={\rm ht}(\alpha)$ if $\alpha$ 
is long and $(\rho_{B_\ell} |\alpha)=\frac{1}{2}{\rm ht}(\alpha)$ if $\alpha$ is short, 
we get that 
\begin{align*}
\varPi_{B_\ell}(2\ell -1) & 
=  \prod\limits_{j=1}^\ell 2 \sin \dfrac{(2j-1)\pi}{2(2\ell-1)} 
\times \prod\limits_{j=1}^{\ell-1} \left(2 \sin \dfrac{j \pi }{2\ell-1}\right)^{\ell-j} 
\times 
\prod\limits_{i=1}^{\ell-1} \prod\limits_{j=2i} ^{\ell+(i-1)} 2\sin \dfrac{j \pi}{2\ell-1}.
\end{align*} 
Using the identity \eqref{eq:sin_formula}, 
we show that 
\begin{align*}
& \prod\limits_{j=1}^\ell 2 \sin \dfrac{(2j-1)\pi}{2(2\ell-1)}   = 2,  
\quad  \prod\limits_{j=1}^{\ell-1} 2 \sin \dfrac{j \pi }{2\ell-1} =
 \prod\limits_{j=1}^{\ell-1} 2\sin \dfrac{2j \pi}{ 2\ell-1} = (2\ell-1)^{1/2}. & 
\end{align*}
From this, we obtain that $\dfrac{\varPi_{B_\ell}(2\ell -1)}{\varPi_{D_\ell}(2\ell -1)}=2$ 
as desired. 

We now turn to the statement for $B_\ell$. We have 
to show that 
\begin{align*} 
\varPi_{B_\ell}(2\ell) 
= (2\ell)^{\ell/2}. 
\end{align*} 
By (1) applied to $D_{\ell +1}$, we have 
\begin{align*}
\varPi_{D_{\ell+1}}(2\ell)  
=2 (2\ell)^{(\ell+1)/2}. 
\end{align*}   
So it suffices to show that the ratio $\dfrac{\varPi_{D_{\ell+1}}(2\ell)}{\varPi_{B_\ell}(2\ell) }$ 
equals $2 (2\ell)^{1/2}$. As before, computing the heights of roots, we obtain  that 
\begin{align*}
\dfrac{\varPi_{D_{\ell+1}}(2\ell)}{\varPi_{B_\ell}(2\ell) } &  =  
\prod\limits_{j=1}^\ell 2 \sin \dfrac{ j \pi}{2\ell} 
\left( \prod\limits_{j=1}^{\ell} 2 \sin \dfrac{(2j-1) \pi }{4\ell} \right) ^{-1} 
\prod\limits_{j=1}^{\ell} 2\sin \dfrac{(2j-1) \pi}{ 2\ell}. 
\end{align*} 
Using \eqref{eq:sin_formula} 
we  show that 
\begin{align*}
\prod\limits_{j=1}^\ell 2 \sin \dfrac{ j \pi}{2\ell} = 2 \ell^{1/2},  
\quad \prod\limits_{j=1}^{\ell} 2 \sin \dfrac{(2j-1) \pi }{4\ell}  =2^{1/2}, 
\quad 
\prod\limits_{j=1}^{\ell} 2\sin \dfrac{(2j-1) \pi}{ 2\ell} = 2, & 
\end{align*}
whence $\dfrac{\varPi_{D_{\ell+1}}(2\ell)}{\varPi_{B_\ell}(2\ell) }=2 (2\ell)^{1/2},$ 
as desired. 

\smallskip

$\ast$ Types $E_6, E_7, E_8$.
By direct calculations, 
we easily obtain that 
\begin{align*}
\varPi_{E_6}(13) 
 & =   \prod\limits_{k=1}^6 \left( 2\sin \dfrac{k\pi}{13}\right)^{6}  = \left( \prod\limits_{k=1}^{12}2  \sin \dfrac{k\pi} {19}\right)^{6/2}  =13^3. &
\end{align*}
Similarly, we get
\begin{align*}
\varPi_{E_7}(19)
& =19^{7/2}, \qquad 
\varPi_{E_8}(31)
 =31^{4}.&
\end{align*} 

\noindent

(3) and (4). 
We prove both identities together. 

By (2), it suffices to check the statement 
for the non simply-laced cases. 
We easily check that 
\begin{align*}
& \varPi^\vee_{G_2}(6) = 
6, \quad  
\varPi^\vee_{G_2}(7)=7, \quad 
\varPi^\vee_{F_4}(12)
=12^{2}, \quad 
\varPi^\vee_{F_4}(13)=13^{2}.& 
\end{align*}
It remains to consider the cases where $\g$ has type $B_\ell$ or $C_\ell$.

$\ast$ Type $B_\ell$.  
Let us first prove the identity (4). 
By (1) applied to $B_{\ell+1}$, we 
have 
\begin{align*}
\varPi_{B_{\ell+1}}(2\ell+1)
=2 (2\ell +1)^{(\ell+1)/2}. 
\end{align*}
Hence it suffices to show that 
$$\dfrac{\varPi_{B_{\ell+1}}(2\ell+1)}{\varPi^\vee_{B_{\ell}}(2\ell+1)}=2(2\ell+1)^{1/2}.$$ 
Using the computations of (2) and the identity \eqref{eq:sin_formula}, we easily obtain 
the expected equality.  

Let us now prove the identity (3) for $B_\ell$. 
By (2) applied to $B_\ell$, we have 
\begin{align*}
\varPi_{B_{\ell}}(2\ell)
= (2\ell)^{1/2}. 
\end{align*}
Hence it suffices to show that 
$$\dfrac{\varPi^\vee_{B_{\ell}}(2\ell)}{\varPi_{B_{\ell}}(2\ell)}=2^{1/2}$$
since $|P^\vee/Q^\vee|=2$ for the type $B_\ell$. 
Using \eqref{eq:sin_formula}  and the computations of (2), we easily 
obtain the expected equality.

$\ast$ Type  $C_\ell$. 
Notice that $h_{C_\ell}= h_{B_\ell}= 2 \ell$. 
Hence it suffices to show that 
$$\dfrac{\varPi_{B_{\ell}}^\vee(2\ell)}{\varPi_{C_{\ell}}^\vee(2\ell)}=1
\text{ and } \quad 
\dfrac{\varPi_{B_{\ell}}^\vee(2\ell+1)}{\varPi_{C_{\ell}}^\vee(2\ell+1)}=1.
$$
since $|P^\vee/Q^\vee|=2$ for the types $B_\ell$ and $C_\ell$.
Again using  \eqref{eq:sin_formula}, we easily obtain 
the expected equalities. 
This concludes the proof of the lemma. 
\end{proof}

\begin{Rem}
\label{Rem:KacPeterson84}
The identities of the lemma are also stated in \cite[Proposition 4.30]{KacPeterson84}.  
However, contrary to what should follow from (4.30.2) of \cite{KacPeterson84}, 
identity (2) does not hold for the types $C_\ell$, $\ell \ge 3$, $G_2$ and $F_4$. 
For these types, it seems that there is no pleasant formula for 
${\displaystyle \prod\limits_{\alpha \in \Delta_{+}(C_\ell)} 2 \sin \dfrac{\pi (\rho  | \alpha) }{h_\g^\vee+1}}$.
\end{Rem}

In the next two sections  
we study collapsing levels in the classical cases, implementing the strategy described in Section~\ref{sec:strategy}. 

\section{Collapsing levels for type $\sl_n$} 
\label{sec:sl_n}

Let $n\in \Z_{>0}$. 
In this section, it is assumed that $\g$ is the simple Lie algebra $\mf{sl}_n$, 
the Lie algebra 
of traceless $n \times n$ matrices with coefficients in $\C$. 
The Killing form of $\g=\sl_n$ 
is given by $\kappa_\g(x,y)= 2n {\rm tr}(xy)$ 
and $(x|y)_\g={\rm tr}(xy)$. 

Denote by $\P(n)$ the set 
of partitions of $n$. 
As a rule, unless otherwise specified, we write 
an element $\bs{\lambda}$ 
of $\P(n)$ 
as a decreasing sequence $\bs{\lambda}=(\lambda_1,\ldots,\lambda_r)$ 
omitting zeroes. 
Thus, 
$$
\lambda_1 \ge \cdots \ge \lambda_r \ge 1\quad 
\text{ and }\quad  
\lambda_1 + \cdots + \lambda_r  = n.
$$ 

Let us denote by $\ge$ the partial order on $\P(n)$ relative 
to dominance. More precisely, given
$\bs{\lambda} = ( \lambda_{1},\cdots ,\lambda_{r}),
\bs{\mu} = (\mu_{1},\dots ,\mu_{s}) \in \P (n)$,  
we have $\bs{\lambda} \geqslant \bs{\mu}$ if 
$
\sum_{i=1}^{k} \lambda_{i} \geqslant \sum_{i=1}^{k} \mu_{i}
$
for $1\leqslant k\leqslant \min (r,s)$.

By \cite[Theorem~5.1.1]{CMa}, nilpotent orbits of $\sl_{n}$ are 
parametrised by $\P(n)$. For $\bs{\lambda}\in\P(n)$, 
we shall denote by 
$\O_{\bs{\lambda}}$ the corresponding nilpotent orbit of 
$\sl_n$. 
If $\bs{\lambda}, \bs{\mu} \in \P (n)$, then 
$\O_{\bs{\mu}} \subset \overline{\O}_{\bs{\lambda}}$ if and only if
$\bs{\mu} \leqslant \bs{\lambda}$.

\begin{Def} \label{Def:degeneration}
Let $\bs{\lam} \in \P(n)$.  A {\em degeneration} of $\bs{\lam}$ is 
an element $\bs{\mu} \in \P(n)$ such that 
$\O_{\bs{\mu}} \subsetneq \overline{\O}_{\bs{\lam}}$, 
that is, $\bs{\mu} < \bs{\lam}$. 
A  degeneration $\bs{\mu}$  of $\bs{\lam}$ is said to be {\em minimal} 
if $\O_{\bs{\mu}}$ is open in 
$ \overline{\O}_{\bs{\lam}} \setminus \O_{\bs{\lam}}$. 
\end{Def}

Fix $\bs\lam \in  \P (n)$. 
As proved in \cite{ElashviliKac}  
the set of good gradings for $f \in\O_{\bs\lam}$ are in bijection with 
the set of {\em pyramids of shape $\bs{\lam}$}. 
We refer to \cite{Brundan-Goodwin} for the precise construction of 
pyramids associated with good gradings. 

For our purpose, let us just recall that a pyramid is a diagram consisting of $n$ boxes each of size 
2 units by 2 units drawn in the upper half of the $xy$-place, with midpoints having integer coordinates. 
By the coordinates of the 
box $i$, we mean the coordinates of its midpoint.
We will also speak of the row number of a
box, by which we mean its $y$-coordinate, and the column number of a box, meaning its $x$-coordinate.
A pyramid of shape $\bs\lam$ consists of $r$ rows, with the $i^{\text{th}}$ row consisting of $\lam_i$ horizontally consecutive boxes.
The rows are positioned so that the boxes of the 
first row are centred on the $y$-axis and 
have $y$-coordinate $1$, the boxes of the second row have $y$-coordinate $3$, etc. 
In addition, no box of row $i$ is permitted to have $x$-coordinate smaller than the minimal $x$-coordinate of the boxes in row $i-1$, nor may its $x$-coordinate be greater than the maximal 
$x$-coordinates of the boxes in row $i-1$. 
We obtain the Dynkin pyramid (corresponding to the Dynkin grading) when the boxes 
of any row are centred around the $y$-axis. 
Note that a good grading is even if and only if the $x$-coordinates of all 
boxes have the same parity.  This is always the case for the left-adjusted and 
the right-adjusted 
pyramids of shape $\bs\lam$. 
For the Dynkin pyramid, this happens if and only if all parts $\lam_i$ have the same 
parity. 

We number the boxes of the pyramid from top right to bottom left 
and so that col$(1) \ge {\rm col}(2) \ge \ldots \ge {\rm col}(n)$. 
Then one can choose a representative $f$ in $\O_\lam$ as follows. 
Set $f=\sum_{i,j} e_{i,j}$, where the sum is over all $i,j \in \{1,\ldots,n\}$ such 
that ${\rm row}(i)  = {\rm row}(j)$ and 
${\rm col}(j)= {\rm col}(i)+2$. 
Set 
$$x_\Gamma^0 = \eps I_n + \tilde{x}_\Gamma^0,$$
where 
$\tilde{x}_\Gamma^0=\sum_{i=1}^n \frac{1}{2}{\rm col}(i) e_{i,i}$ 
and $\eps$ is chosen so that ${\rm tr}(x_\Gamma^0)=0$. 
Here $e_{i,j}$ stands for the $(i,j)$-matrix unit. 
Then $f \in \O_{\bs\lam}$ and 
$$\g^{i}_\Gamma :=  \{x \in \g \colon [x_\Gamma^0,x] = i x \}$$
yields a good grading 
$\displaystyle{\g=\bigoplus\limits_{j \in\Z} \g^{j}_\Gamma}$ 
for $f$ (see \cite{ElashviliKac} or \cite[\S6]{Brundan-Goodwin}). 

As an example, we represent in Figure \ref{Fig:exemple_pyr} the left-adjusted, 
the Dynkin and the right-adjusted pyramids for the partition $\bs\lam=(3^2,2)$. 

\begin{figure}[ht]
\setlength\unitlength{0.0175cm}
\begin{picture}(600,60)
%\begin{tikzpicture}(700,160)
% First partition 

% horizontal lines
\put(100,0){\line(1,0){60}}
\put(100,20){\line(1,0){60}}
\put(100,40){\line(1,0){60}}
\put(100,60){\line(1,0){40}}

% vertical lines
\put(100,0){\line(0,1){60}}
\put(120,0){\line(0,1){60}}
\put(140,0){\line(0,1){60}}
\put(160,0){\line(0,1){40}}

\put(150,30){\makebox(0,0){\Tiny{{$1$}}}}
\put(150,10){\makebox(0,0){\Tiny{{$2$}}}}
\put(130,50){\makebox(0,0){\Tiny{{$3$}}}}
\put(130,30){\makebox(0,0){\Tiny{{$4$}}}}
\put(130,10){\makebox(0,0){\Tiny{{$5$}}}}
\put(110,50){\makebox(0,0){\Tiny{{$6$}}}}
\put(110,30){\makebox(0,0){\Tiny{{$7$}}}}
\put(110,10){\makebox(0,0){\Tiny{{$8$}}}}

\put(130,0){\makebox(0,0){\Tiny{{$\bullet$}}}}

% horizontal lines
\put(300,0){\line(1,0){60}}
\put(300,20){\line(1,0){60}}
\put(300,40){\line(1,0){60}}
\put(310,60){\line(1,0){40}}

% vertical lines
\put(300,0){\line(0,1){40}}
\put(320,0){\line(0,1){40}}
\put(340,0){\line(0,1){40}}
\put(360,0){\line(0,1){40}}
\put(310,40){\line(0,1){20}}
\put(330,40){\line(0,1){20}}
\put(350,40){\line(0,1){20}}

\put(350,30){\makebox(0,0){\Tiny{{$1$}}}}
\put(350,10){\makebox(0,0){\Tiny{{$2$}}}}
\put(340,50){\makebox(0,0){\Tiny{{$3$}}}}
\put(330,30){\makebox(0,0){\Tiny{{$4$}}}}
\put(330,10){\makebox(0,0){\Tiny{{$5$}}}}
\put(320,50){\makebox(0,0){\Tiny{{$6$}}}}
\put(310,30){\makebox(0,0){\Tiny{{$7$}}}}
\put(310,10){\makebox(0,0){\Tiny{{$8$}}}}

\put(330,0){\makebox(0,0){\Tiny{{$\bullet$}}}}

% horizontal lines
\put(500,0){\line(1,0){60}}
\put(500,20){\line(1,0){60}}
\put(500,40){\line(1,0){60}}
\put(520,60){\line(1,0){40}}

% vertical lines
\put(500,0){\line(0,1){40}}
\put(520,0){\line(0,1){60}}
\put(540,0){\line(0,1){60}}
\put(560,0){\line(0,1){60}}

\put(550,30){\makebox(0,0){\Tiny{{$2$}}}}
\put(550,10){\makebox(0,0){\Tiny{{$3$}}}}
\put(530,50){\makebox(0,0){\Tiny{{$4$}}}}
\put(530,30){\makebox(0,0){\Tiny{{$5$}}}}
\put(530,10){\makebox(0,0){\Tiny{{$6$}}}}
\put(550,50){\makebox(0,0){\Tiny{{$1$}}}}
\put(510,30){\makebox(0,0){\Tiny{{$7$}}}}
\put(510,10){\makebox(0,0){\Tiny{{$8$}}}}

\put(530,0){\makebox(0,0){\Tiny{{$\bullet$}}}}
\end{picture}
\caption{Left-adjusted, Dynkin and right-adjusted pyramids of shape $\bs\lam=(3^2,2)$}
\label{Fig:exemple_pyr}
\end{figure}
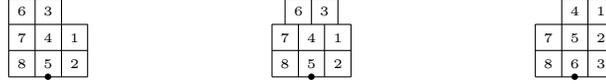

The following lemma is a refinement of \cite[Propositions 4.4 and 5.4]{KraftProcesi81}. 
We refer to \cite[Propositions 7.3.1 and 7.3.2]{Li17} for a proof. 

\begin{Lem}[row/column removal rule in $\sl_n$]
\label{lem:erasing_row_type-A}
Let ${\bs{\lam}} \in \P(n)$ and ${\bs{\mu}}$ a degeneration 
of $ {\bs{\lam}}$. 
Assume that the first $l$ rows and 
the first $m$ columns of $ {\bs{\lam}}$ and $ {\bs{\mu}}$ 
coincide.  
Denote by $\bs{\lam}'$ and $\bs{\mu}'$ the partitions obtained by erasing 
these $l$ common rows and $m$ common columns. Then 
\begin{align*}
\Slo_{\O_{\bs{\lam}},f} \cong \Slo_{\O_{{\bs{\lam}'}},f'},
\end{align*}
as algebraic varieties, 
with $f \in \O_{{\bs{\mu}}}$ and $f' \in \O_{{\bs{\mu}'}}$. 
In particular, if $f'=0$, then 
$\Slo_{\O_{\bs{\lam}},f} \cong \overline{\O}_{{\bs{\lam}'}}$.  
\end{Lem}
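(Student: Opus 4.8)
The plan is to reduce the statement to the combinatorics of the Kraft--Procesi row/column removal for $\mathfrak{sl}_n$ and to known results identifying Slodowy slices of $\mathrm{GL}_m$-type with those of smaller rank. First I would recall the classical picture: for $\mathbf{\lambda} \in \P(n)$ and $\mathbf{\mu}$ a degeneration, Kraft and Procesi established in \cite[Propositions 4.4, 5.4]{KraftProcesi81} a \emph{smooth equivalence} of singularities between $\Slo_{\O_{\bs\lam},f}$ (with $f \in \O_{\bs\mu}$) and $\Slo_{\O_{\bs\lam'},f'}$ (with $f' \in \O_{\bs\mu'}$) when $\bs\lam,\bs\mu$ share their first $l$ rows and first $m$ columns. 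The content of the lemma, beyond Kraft--Procesi, is the upgrade from smooth equivalence to an honest isomorphism of algebraic varieties, which is exactly \cite[Propositions 7.3.1, 7.3.2]{Li17}. So the proof essentially amounts to citing \cite{Li17} after setting up the correct matrix model.

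Concretely, I would first treat the row-removal case and the column-removal case separately, since \cite{KraftProcesi81} does, and then compose them. For row removal: choose a good grading for $f$ via a pyramid of shape $\bs\lam$ that is compatible with the decomposition into the first $l$ rows and the rest; realize $f$ and $x_\Gamma^0$ in block form on $\C^n = \C^{n_1} \oplus \C^{n_2}$ where $n_1$ is the number of boxes in the first $l$ rows. The key algebraic fact, due to Kraft--Procesi and made into an isomorphism by Li, is that the transverse slice $\Slo_f \cap \overline{\O_{\bs\lam}}$ in $\mathfrak{gl}_n$ is isomorphic to $\Slo_{f'} \cap \overline{\O_{\bs\lam'}}$ in $\mathfrak{gl}_{n-ln_{\text{row}}}$, because the extra rows contribute only a contractible ``thickening'' that splits off as an affine space complementary to the slice; and the column-removal case is dual to this under transpose (equivalently, under the duality $\O \mapsto$ its transpose, which interchanges rows and columns of partitions and sends nilpotent orbit closures to the corresponding ones). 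Then I would observe that passing from $\mathfrak{gl}_n$ to $\mathfrak{sl}_n$ changes nothing here: the slices live inside the nilpotent cone, the central scalar direction $\C I_n$ is transverse to everything nilpotent, and all the relevant varieties are the same whether one works in $\mathfrak{gl}_n$ or $\mathfrak{sl}_n$ (this is why we may use the representative $f = \sum e_{i,j}$ and $x_\Gamma^0 = \eps I_n + \tilde x_\Gamma^0$ as described above).

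The final clause, that $f' = 0$ forces $\Slo_{\O_{\bs\lam},f} \cong \overline{\O_{\bs\lam'}}$, is immediate from the convention $\Slo_0 = \g'$ (for $\g' = \mathfrak{sl}_{|\bs\lam'|}$), so that $\Slo_{\O_{\bs\lam'},0} = \g' \cap \overline{\O_{\bs\lam'}} = \overline{\O_{\bs\lam'}}$; one just notes that $f' = 0$ happens precisely when the partition $\bs\mu'$ obtained by removing the common rows and columns is empty, i.e. $\bs\mu$ and $\bs\lam$ agree up to the removed pieces. I would also remark that the statement is really a refinement of \cite[Propositions 4.4, 5.4]{KraftProcesi81} only in the sense of recording that the equivalence is an isomorphism rather than merely a smooth equivalence, and that this refinement is not new with us but is \cite[Propositions 7.3.1, 7.3.2]{Li17} (as also noted in \cite[\S1.8.1]{FuJutLev17}).

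The main obstacle, and the reason a full self-contained proof would be long, is the isomorphism upgrade itself: showing that the Kraft--Procesi smooth equivalence can be chosen to be a genuine isomorphism of affine varieties requires a careful choice of transverse slice (not the naive Slodowy slice but one adapted to the block decomposition) and an explicit $\mathrm{GL}$-equivariant identification of the two slices; this is precisely the technical heart of \cite[\S7.3]{Li17}. Since that result is available in the literature, the ``proof'' here should simply assemble the row-removal and column-removal statements, handle the $\mathfrak{gl}$ versus $\mathfrak{sl}$ bookkeeping, and invoke \cite{Li17}; I would not reprove Li's result.
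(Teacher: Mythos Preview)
Your proposal is correct and matches the paper's approach exactly: the paper does not give an independent argument but simply states that the lemma is a refinement of \cite[Propositions~4.4 and 5.4]{KraftProcesi81} and refers to \cite[Propositions~7.3.1 and 7.3.2]{Li17} for the proof, which is precisely what you do (with more surrounding explanation than the paper itself provides).

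One small slip worth flagging: in your discussion of the ``in particular'' clause you write that $f'=0$ happens precisely when $\bs\mu'$ is \emph{empty}. That is not right: $f'\in\O_{\bs\mu'}$ lies in the zero orbit of $\mf{sl}_{|\bs\mu'|}$ when $\bs\mu'=(1^{|\bs\mu'|})$, not when $\bs\mu'$ is the empty partition (indeed the paper's Lemma immediately following this one classifies exactly those $\bs\mu$ for which $\bs\mu'=(1^{|\bs\mu'|})$). Your derivation of the conclusion from the convention $\Slo_0=\g'$ is correct; only the parenthetical characterization of when $f'=0$ needs to be fixed.
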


By \cite{Arakawa15a}
if $k$ is an admissible level 
for $\sl_n$, we have 
$\O_k=\O_{\bs{\lam}}$, 
where  
\begin{align}
\label{eq:Ok_sl_n}
\bs{\lam}=(q^{\widetilde m},\widetilde s), \quad
\quad 0 \le \widetilde s \le \widetilde q-1.
\end{align}

\begin{Lem}
\label{Lem:choice_of_partitions}
Let $\bs{\mu}$ be a partition of $n$ such that $\O_{\bs{\mu}} \subset  \overline{\O}_{\bs{\lam}}$.   
Let $\bs{\lam}'$ and $\bs{\mu}'$ be the partitions obtained from $\bs{\lam}$ and $\bs{\mu}$ 
by erasing all common rows and columns of $\bs{\lam}$ and $\bs{\mu}$. 
Then, $\bs{\mu}'$ corresponds to the zero nilpotent orbit of $\sl_{|\bs{\mu}'|}$, that is, $\bs{\mu}'=(1^{|\bs{\mu}'|})$ 
if and only if $\bs\mu$ is of one of the following types:  
\begin{itemize}
\item[(a)] $\bs{\mu}=(q^{\widetilde m},\widetilde s)=\bs{\lam}$, 
\item[(b)] $\bs{\mu}=(q^m,1^s)$ with $0 \le m \le \widetilde m$ and $s \ge 0$, 
\item[(c)] $\bs{\mu}=(q^{\widetilde m -1},(q-1)^2)$ and $\widetilde s = q-2$. 
\end{itemize}
Here $|\bs{\mu}'|$ stands for the sum of the parts of $\bs{\mu}'$. 
\end{Lem}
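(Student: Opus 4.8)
The plan is to make the row/column removal completely explicit, reduce the statement to an elementary inequality between a few parts of $\bs\mu$, and then carry out a short case analysis. First I would set up the combinatorics: let $l$ denote the length of the longest common initial segment of the (weakly decreasing) sequences of parts of $\bs\lam$ and $\bs\mu$, and let $m$ denote the length of the longest common initial segment of the parts of the transposed partitions $\bs\lam^t$ and $\bs\mu^t$; these are the numbers of rows and of columns that get erased. Since every non-removed row of $\bs\lam$ has length $<q$ while every removed row has length $q\geq m$, the row removal and the column removal are compatible, and the Young diagram of $\bs\mu'$ is obtained from that of $\bs\mu$ by deleting the first $l$ rows and the first $m$ columns, i.e.\ $\bs\mu'_i=\max(\mu_{l+i}-m,0)$ for all $i$ (and likewise for $\bs\lam'$). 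Hence $\bs\mu'=(1^{|\bs\mu'|})$ if and only if $\mu'_1\leq 1$, that is
\[
\mu_{l+1}\leq m+1 ,
\]
or, equivalently, the box in row $l+1$ and column $m+2$ is not a box of $\bs\mu$. So the whole task becomes: find all $\bs\mu\leq\bs\lam$ satisfying this inequality.

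Next I would compute $l$ and $m$. Writing $a$ for the number of parts of $\bs\mu$ equal to $q$, dominance together with the shape $\bs\lam=(q^{\widetilde m},\widetilde s)$, $0\leq\widetilde s\leq q-1$, forces $a\leq\widetilde m$ and then $l=a$ in every case; the degenerate sub-case $\bs\mu=\bs\lam$ is exactly (a), so from now on assume $\bs\mu\neq\bs\lam$. The decisive invariant is the number of parts $r(\bs\mu)$: dominance gives $r(\bs\mu)\geq r(\bs\lam)$, and if this inequality is strict then the first columns of $\bs\lam$ and $\bs\mu$ already differ, so $m=0$ and the displayed inequality becomes $\mu_{a+1}\leq 1$, i.e.\ $\bs\mu=(q^{a},1^{s})$; conversely every partition of $n$ of this form with $0\leq a\leq\widetilde m$ lies in $\overline{\O_{\bs\lam}}$. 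This accounts for exactly family (b).

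The remaining, and only delicate, case is $r(\bs\mu)=r(\bs\lam)$. Here I would use $\bs\lam^t=((\widetilde m+1)^{\widetilde s},\widetilde m^{\,q-\widetilde s})$ to pin down $m$ by locating the first column in which $\bs\mu^t$ disagrees with $\bs\lam^t$, which amounts to comparing $\widetilde s$ with the last one or two parts of $\bs\mu$. In each of the resulting regimes the displayed inequality squeezes the parts $\mu_{a+1},\mu_{a+2},\dots$ into two consecutive values, so $\bs\mu$ is forced to have the shape $(q^{a},(t+1)^{b},t^{c})$, possibly with one extra part equal to $\widetilde s$; substituting this shape into $|\bs\mu|=n$ produces a linear Diophantine relation among $a,b,c,t,\widetilde s,q$. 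A short computation should show that, apart from $\bs\mu=\bs\lam$, this relation has no solution except in the regime $\mu_{\widetilde m+1}=\widetilde s+1$, where it forces $\widetilde s=q-2$ (using $\gcd(q-\widetilde s-1,q-\widetilde s)=1$) and then $c=2$, $b=0$, $a=\widetilde m-1$, i.e.\ $\bs\mu=(q^{\widetilde m-1},(q-1)^{2})$: family (c). Finally one checks the converse directly, namely that each of (a), (b), (c) satisfies the displayed inequality (for (c): $l=\widetilde m-1$, $m=q-2$ and $\mu_{\widetilde m}-m=1$), noting that families (b) and (c) overlap when $q=2$.

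The genuine work, and the main obstacle, lies entirely in the last case: one must correctly identify $m$ in the regimes $\mu_{\widetilde m+1}<\widetilde s$, $\mu_{\widetilde m+1}=\widetilde s$ and $\mu_{\widetilde m+1}>\widetilde s$ (together with the analogue when $\widetilde s=0$), and then observe that in the first two regimes the Diophantine relation coming from $|\bs\mu|=n$ would force $q<\widetilde s$ and is therefore vacuous, whereas in the third it cuts out precisely family (c). Everything else is routine bookkeeping with Young diagrams.
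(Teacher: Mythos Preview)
Your proposal is correct and follows essentially the same route as the paper: remove the common rows (the $q$'s), split on whether $r(\bs\mu)>r(\bs\lam)$ (no common columns, giving family (b)) or $r(\bs\mu)=r(\bs\lam)$, and in the latter case use $|\bs\mu|=n$ to obtain the Diophantine relation $(\widetilde m-a)(q-\widetilde s-1)=1$ that singles out family (c). One simplification worth noting: once $r(\bs\mu)=r(\bs\lam)$ and $\bs\mu\neq\bs\lam$, dominance on the transposed partitions already forces $\mu_{\widetilde m+1}>\widetilde s$ (indeed $\mu_{\widetilde m+1}\geq\widetilde s$ always, and equality gives $\sum_{i\leq\widetilde m}\mu_i=q\widetilde m$ with each $\mu_i\leq q$, hence $\bs\mu=\bs\lam$), so only your third regime is nonempty and the sub-case analysis collapses immediately to the paper's direct computation with exactly $\widetilde s$ common columns.
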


\begin{proof} 
Since $\O_{\bs{\mu}} \subset  \overline{\O}_{\bs{\lam}}=\overline{\O}_k$, one can 
write $\bs{\mu}=(q^m,\bs{\nu})$, with $0 \le m \le \widetilde m$ and $\bs{\nu} =({\nu}_1,\ldots,{\nu}_t)$ 
with $\nu_1 < q$. 
Assume that $\bs{\mu} \not = \bs{\lam}$, the case $\bs{\mu} = \bs{\lam}$ being obvious. 

Let $\bs{\lam}''$ and $\bs{\mu}''$ be the partitions obtained from $(q^{\widetilde m},\widetilde s)$ and $\bs{\mu}$ 
by erasing all common rows of $\bs{\lam}=(q^{\widetilde m},\widetilde s)$ and $\bs{\mu}$. 
Then $\bs{\lam}''=(q^{\widetilde m - m},\widetilde s)$ and $\bs{\mu}''=\bs{\nu}$. 
The partition $\bs{\mu}''$ corresponds to the zero nilpotent orbit of $\sl_{|\bs{\mu}''|}$ 
if and only if $\bs{\nu} = (1^{|\bs{\nu} |})$. 
This leads to the partitions of type (b). 
We illustrate in Figures \ref{Fig:removal-1}  
the row removal rule in the case where 
$\bs{\lam}=(3^3,1)$ and $\bs{\mu}=(3^2,1^4)$ is of type (b).

{\tiny
\begin{center}
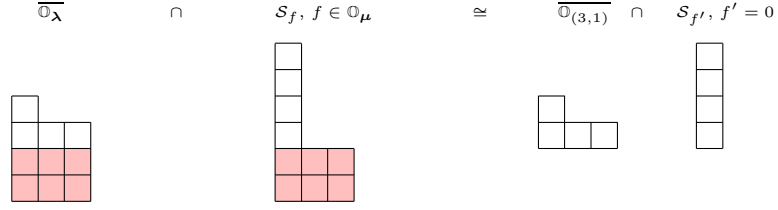
\begin{figure}[ht]
\setlength\unitlength{0.0175cm}
\begin{picture}(800,160)
% First partition 

{\color{pink}\polygon*(100,0)(160,0)(160,40)(100,40)}

% horizontal lines
\put(100,0){\line(1,0){60}}
\put(100,20){\line(1,0){60}}
\put(100,40){\line(1,0){60}}
\put(100,60){\line(1,0){60}}
\put(100,80){\line(1,0){20}}

% vertical lines
\put(100,0){\line(0,1){80}}
\put(120,0){\line(0,1){80}}
\put(140,0){\line(0,1){60}}
\put(160,0){\line(0,1){60}}

\put(120,140){$\overline{\O}_{\bs{\lam}}$}
\put(220,140){$\cap$}
\put(300,140){$\Slo_f$, $f \in \O_{\bs{\mu}}$}

% Second partition 

{\color{pink}\polygon*(300,0)(360,0)(360,40)(300,40)}

% horizontal lines
\put(300,0){\line(1,0){60}}
\put(300,20){\line(1,0){60}}
\put(300,40){\line(1,0){60}}
\put(300,60){\line(1,0){20}}
\put(300,80){\line(1,0){20}}
\put(300,100){\line(1,0){20}}
\put(300,120){\line(1,0){20}}

% vertical lines
\put(300,0){\line(0,1){120}}
\put(320,0){\line(0,1){120}}
\put(340,0){\line(0,1){40}}
\put(360,0){\line(0,1){40}}

\put(450,140){$\cong$}

\put(500,40){\line(1,0){60}}
\put(500,60){\line(1,0){60}}
\put(500,80){\line(1,0){20}}

\put(500,40){\line(0,1){40}}
\put(520,40){\line(0,1){40}}
\put(540,40){\line(0,1){20}}
\put(560,40){\line(0,1){20}}

\put(620,40){\line(1,0){20}}
\put(620,60){\line(1,0){20}}
\put(620,80){\line(1,0){20}}
\put(620,100){\line(1,0){20}}
\put(620,120){\line(1,0){20}}

\put(620,40){\line(0,1){80}}
\put(640,40){\line(0,1){80}}

\put(515,140){$\overline{\O}_{(3,1)}$}
\put(570,140){$\cap$}
\put(605,140){$\Slo_{f'}$, $f' =0$}

\end{picture}
\caption{\footnotesize{Row removal rule for $\bs{\lam}=(3^3,1)$ and $\bs{\mu}=(3^2,1^4)$}} 
\label{Fig:removal-1}
\end{figure}
\end{center}}

Consider now the common columns of $\bs{\lam}$ and $\bs{\mu}$.
Observe that $\bs{\lam}=(q^{\widetilde m},\widetilde s)$ and $\bs{\mu}=(q^m,{\nu}_1,\ldots,{\nu}_t)$ have at least one common column if only if $\widetilde m +1=m+t$, that is, 
$$t = \widetilde m - m +1.$$ 
Assume that this condition holds. 

We illustrate in Figures \ref{Fig:removal-2} and \ref{Fig:removal-3} 
the row/column removal rule in the case where 
$\bs{\lam}=(5^3,2)$, $\bs{\mu}=(5^2,4,3)$, and 
in the case where $\bs{\lam}=(5^3,3)$, $\bs{\mu}=(5^2,4^2)$. 

{\tiny
\begin{center}
\begin{figure}[ht]
\setlength\unitlength{0.0175cm}
\begin{picture}(800,160)
%\begin{tikzpicture}(700,160)
% First partition 

{\color{pink}\polygon*(100,0)(200,0)(200,40)(140,40)(140,80)(100,80)}

% horizontal lines
\put(100,0){\line(1,0){100}}
\put(100,20){\line(1,0){100}}
\put(100,40){\line(1,0){100}}
\put(100,60){\line(1,0){100}}
\put(100,80){\line(1,0){40}}

% vertical lines
\put(100,0){\line(0,1){80}}
\put(120,0){\line(0,1){80}}
\put(140,0){\line(0,1){80}}
\put(160,0){\line(0,1){60}}
\put(180,0){\line(0,1){60}}
\put(200,0){\line(0,1){60}}

\put(140,120){$\overline{\O}_{\bs{\lam}}$}
\put(250,120){$\cap$}
\put(310,120){$\Slo_f$, $f \in \O_{\bs{\mu}}$}

% Second partition 

{\color{pink}\polygon*(300,0)(400,0)(400,40)(340,40)(340,80)(300,80)}

% horizontal lines
\put(300,0){\line(1,0){100}}
\put(300,20){\line(1,0){100}}
\put(300,40){\line(1,0){100}}
\put(300,60){\line(1,0){80}}
\put(300,80){\line(1,0){60}}

% vertical lines
\put(300,0){\line(0,1){80}}
\put(320,0){\line(0,1){80}}
\put(340,0){\line(0,1){80}}
\put(360,0){\line(0,1){80}}
\put(380,0){\line(0,1){60}}
\put(400,0){\line(0,1){40}}

\put(450,120){$\cong$}

\put(500,40){\line(1,0){60}}
\put(500,60){\line(1,0){60}}

\put(500,40){\line(0,1){20}}
\put(520,40){\line(0,1){20}}
\put(540,40){\line(0,1){20}}
\put(560,40){\line(0,1){20}}

\put(620,40){\line(1,0){40}}
\put(620,60){\line(1,0){40}}
\put(620,80){\line(1,0){20}}

\put(620,40){\line(0,1){40}}
\put(640,40){\line(0,1){40}}
\put(660,40){\line(0,1){20}}

\put(515,120){$\overline{\O}_{(3)}$}
\put(570,120){$\cap$}
\put(605,120){$\Slo_{f'}$, $f' \in \O_{(2,1)}$}

%\end{tikzpicture}
\end{picture}
\caption{\footnotesize{Row/column removal rule for $\bs{\lam}=(5^3,2)$ and $\bs{\mu}=(5^2,4,3)$}} 
\label{Fig:removal-2}
\end{figure}
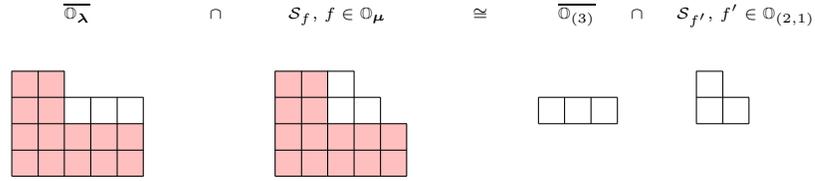
\end{center}}

{\tiny
\begin{center}
\begin{figure}[ht]
\setlength\unitlength{0.0175cm}
\begin{picture}(800,160)
% First partition 

{\color{pink}\polygon*(100,0)(200,0)(200,40)(160,40)(160,80)(100,80)}

% horizontal lines
\put(100,0){\line(1,0){100}}
\put(100,20){\line(1,0){100}}
\put(100,40){\line(1,0){100}}
\put(100,60){\line(1,0){100}}
\put(100,80){\line(1,0){60}}

% vertical lines
\put(100,0){\line(0,1){80}}
\put(120,0){\line(0,1){80}}
\put(140,0){\line(0,1){80}}
\put(160,0){\line(0,1){80}}
\put(180,0){\line(0,1){60}}
\put(200,0){\line(0,1){60}}

\put(140,120){$\overline{\O}_{\bs{\lam}}$}
\put(250,120){$\cap$}
\put(310,120){$\Slo_f$, $f \in \O_{\bs{\mu}}$}

% Second partition 

{\color{pink}\polygon*(300,0)(400,0)(400,40)(360,40)(360,80)(300,80)}

% horizontal lines
\put(300,0){\line(1,0){100}}
\put(300,20){\line(1,0){100}}
\put(300,40){\line(1,0){100}}
\put(300,60){\line(1,0){80}}
\put(300,80){\line(1,0){80}}

% vertical lines
\put(300,0){\line(0,1){80}}
\put(320,0){\line(0,1){80}}
\put(340,0){\line(0,1){80}}
\put(360,0){\line(0,1){80}}
\put(380,0){\line(0,1){80}}
\put(400,0){\line(0,1){40}}

\put(450,120){$\cong$}

\put(520,40){\line(1,0){40}}
\put(520,60){\line(1,0){40}}

\put(520,40){\line(0,1){20}}
\put(540,40){\line(0,1){20}}
\put(560,40){\line(0,1){20}}

\put(620,40){\line(1,0){20}}
\put(620,60){\line(1,0){20}}
\put(620,80){\line(1,0){20}}

\put(620,40){\line(0,1){40}}
\put(640,40){\line(0,1){40}}

\put(515,120){$\overline{\O}_{(2)}$}
\put(570,120){$\cap$}
\put(605,120){$\Slo_{f'}$, $f' = 0$}

\end{picture}
\caption{\footnotesize{Row/column removal rule for $\bs{\lam}=(5^3,3)$ and $\bs{\mu}=(5^2,4^2)$}} 
\label{Fig:removal-3}
\end{figure}
\end{center}}

We obtain that 
$\bs{\lam}' = ((q-\widetilde s)^{\widetilde m -m})$ and $\bs{\mu}'=({\nu}_1-\widetilde s ,\ldots,{\nu}_t-\widetilde s)$. 
Furthermore, $\bs{\mu}'$ corresponds to the zero nilpotent orbit of $\sl_{|\bs{\mu}'|}$ if and only if ${\nu}_1-\widetilde s=\cdots={\nu}_t -\widetilde s=1$. 
If so, then necessarily $(\widetilde m -m) (q-\widetilde s)=t$ 
since $\bs{\lam}'$ and $\bs{\mu}'$ are partitions of the same integer, whence 
\begin{align}
\label{eq:partition_condition}
(\widetilde m -m) (q-\widetilde s -1)=1.
\end{align}
using $t = \widetilde m - m +1$. 
But condition \eqref{eq:partition_condition} holds if and only if 
$q-\widetilde s -1=1$ and $\widetilde m -m=1$. This leads to the partitions of type (c). 

Conversely, it is easy to verify that if $\bs{\mu}$ is of type (a), (b) or (c) 
then $\bs{\mu}'$ corresponds to the zero 
nilpotent orbit of $\sl_{|\bs{\mu}'|}$. 
\end{proof}

In view of Lemma \ref{Lem:choice_of_partitions}, 
we describe the centraliser $\g^\natural$ 
and the values of the $k_i^\natural$'s for particular   
$\sl_2$-triples $(e,h,f)$ of $\sl_n$.

\begin{Lem}
\label{Lem:k_natural-type-A} 
Let $f$ be a nilpotent element of $\sl_n$ associated with $\bs\mu \in \P(n)$, 
with $\bs\mu$ 
as in the first column of Table~\ref{Tab:A_Slodowy}. 
Then the centraliser $\g^\natural$ of an $\sl_2$-triple 
$(e,h,f)$   
and the values of the $k_i^\natural$'s are given 
by Table~\ref{Tab:A_Slodowy}. 
Moreover, if $k$ is admissible, then so is $k^\natural$, 
provided that $k_0^\natural =0$. 
\end{Lem}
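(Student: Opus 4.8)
The plan is to prove Lemma \ref{Lem:k_natural-type-A} by a direct but carefully organized computation, treating in turn the three families of partitions $\bs\mu$ of types (a), (b), (c) arising from Lemma \ref{Lem:choice_of_partitions}, which are exactly the cases where the relevant nilpotent Slodowy slice is collapsing. For each such $\bs\mu$ I would first identify the reductive Lie algebra $\g^\natural = \g^e \cap \g^f$. This is purely a matter of linear algebra: for $f$ associated with a partition $\bs\mu = (\mu_1^{a_1}, \mu_2^{a_2}, \dots)$ of $n$, the centraliser of the $\sl_2$-triple in $\sl_n$ is $\mf{s}(\bigoplus_i \mf{gl}_{a_i})$, so $\g^\natural_0$ is the appropriate Cartan piece and the simple factors $\g^\natural_i$ are of type $A_{a_i - 1}$. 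For partitions of type (b), $\bs\mu=(q^m,1^s)$, this gives $\g^\natural$ of type roughly $A_{m-1} \oplus A_{s-1}$ (with a one-dimensional centre when both $m,s>0$); for type (c) one gets a single $A_1$ or $A$-factor plus centre; for type (a) $\g^\natural$ is just a torus. I would record these explicitly and match them against the first columns of Table \ref{Tab:A_Slodowy}.

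Next I would compute the levels $k_i^\natural$ using formula \eqref{eq:k_natural}, i.e. by evaluating $\phi_{\Gamma,i}^\natural(x,x) = k(x|x)_\g + \tfrac12(\kappa_\g(x,x) - \kappa_{\g_\Gamma^0}(x,x) - \kappa_{\g_\Gamma^{1/2}}(x,x))$ on a suitable element $x$ of $\g_i^\natural$. Here I would invoke Remark \ref{Rem:maximal reductive subalgebra}: in type $A$ there is a chain of adjacent good gradings from the Dynkin grading to an even good grading, and by Lemma \ref{Lem:maximal reductive subalgebra} the form $\phi_{\Gamma,i}^\natural$ is unchanged along adjacent gradings, so I am free to compute with the left-adjusted (even) pyramid. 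With an even grading $\g_\Gamma^{1/2}=0$, so the $\kappa_{\g_\Gamma^{1/2}}$ term drops out and the computation reduces to evaluating traces of $(\ad x)^2$ on $\g = \sl_n$ and on $\g_\Gamma^0$. Taking $x$ to be a standard diagonal traceless element supported on one block (a difference of two matrix units $e_{ii} - e_{jj}$ within a fixed $\mu$-block), these traces are elementary: $\kappa_{\sl_n}(x,x) = 2n\,\mathrm{tr}(x^2)$, and $\kappa_{\g_\Gamma^0}(x,x)$ is $2(\dim$-of-the-block$)$ times $\mathrm{tr}(x^2)$ since $\g_\Gamma^0 = \bigoplus \sl_{\mu_i \cdot a_i}$-type pieces (one $\mf{gl}$ block of size $\mu_i$ repeated, roughly). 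Carrying this through yields $k_i^\natural$ as an explicit affine-linear function of $k$, which I would then check matches Table \ref{Tab:A_Slodowy} case by case. I would also separately verify $\phi_0^\natural = 0$ when the problem requires it, which by Lemma \ref{Lem:k_0-condition} is automatic when $f \in \overline{\mathbb{O}}_k$ and $k$ admissible, but can also be seen directly: the centre contributes a term proportional to $k(x|x)_\g + \tfrac12(\kappa_\g - \kappa_{\g^0})(x,x)$ evaluated on the central direction, and for the collapsing cases this vanishes.

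Finally, for the admissibility claim, I would take $k = -h_\g^\vee + p/q = -n + p/q$ admissible for $\sl_n$ (so $(p,q)=1$, $p \ge n$; type $A$ is simply-laced so $r^\vee=1$ and there is no coprincipal subtlety), recall from \eqref{eq:Ok_sl_n} that the partition $\bs\lam = \O_k$ forces $q = \widetilde q$ with $\widetilde m = \lfloor n/q\rfloor$, and then substitute into each formula $k_i^\natural = k_i^\natural(k)$ from Table \ref{Tab:A_Slodowy}. Each $\g_i^\natural$ is of type $A_{d_i - 1}$ for some $d_i$, so I must check that $k_i^\natural + h_{\g_i^\natural}^\vee = k_i^\natural + d_i$ has the form $p_i/q_i$ with $(p_i,q_i)=1$ and $p_i \ge d_i$. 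Since $k_i^\natural$ is affine-linear in $k$ with denominator dividing $q$, this reduces to an explicit numerical inequality in $p$ and $q$, using $p \ge n$; I would verify it for each of the finitely many shapes of $\bs\mu$ in the table. The main obstacle, and where I expect to spend the most care, is the trace bookkeeping in step two — keeping track of exactly which matrix units lie in $\g_\Gamma^0$ versus the positive part, and handling the normalization $(x|y)_\g = \mathrm{tr}(xy)$ versus the Killing form $\kappa_\g = 2n\,\mathrm{tr}(xy)$ consistently — together with ensuring the resulting $k_i^\natural$ formulas agree verbatim with Table \ref{Tab:A_Slodowy} in every case, including the exceptional type (c) partitions where two parts of size $q-1$ appear and the block structure of $\g^\natural$ is slightly less uniform.
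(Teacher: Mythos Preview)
Your approach is essentially the paper's: both pass to the left-adjusted even pyramid via Lemma \ref{Lem:maximal reductive subalgebra} and Remark \ref{Rem:maximal reductive subalgebra}, read off $\g_\Gamma^0$ and $\g^\natural$ from the column structure, evaluate $\phi_i^\natural$ on explicit diagonal elements, and then verify admissibility by substituting $k=-n+p/q$ (using that $k_0^\natural=0$ forces $p=n$ in the cases where the centre is present).

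Two slips to fix before you write it up. First, your own formula $\g^\natural \cong \mf{s}(\bigoplus_i \mf{gl}_{a_i})$ is correct, but you misapply it: for type (a), $\bs\mu=(q^{\widetilde m},\widetilde s)$ with $1\le \widetilde s\le q-1$, one gets $\g^\natural\cong \C\times\sl_{\widetilde m}$, not just a torus; for type (c), $\bs\mu=(q^m,(q-1)^2)$, one gets $\g^\natural\cong \C\times\sl_m\times\sl_2$, i.e.\ two simple factors plus centre. Second, $\g_\Gamma^0$ for the left-adjusted pyramid is one $\mf{gl}$-block per \emph{column}, e.g.\ for $(q^m,1^s)$ it is $\C^{q-1}\times(\sl_m)^{q-1}\times\sl_{m+s}$, not blocks of size $\mu_i a_i$; this matters for computing $\kappa_{\g_\Gamma^0}$ and hence the constants in $k_i^\natural$.
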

In Table \ref{Tab:A_Slodowy}, the numbering of the levels $k_i^\natural$'s 
follows the order in which the simple factors of $\g^\natural$ 
appears.

\begin{proof}
By Lemma \ref{Lem:maximal reductive subalgebra} 
and Remark \ref{Rem:maximal reductive subalgebra}, in order 
to describe $\g^\natural$ and compute the $k_i^\natural$'s, 
one can use the left-adjusted pyramid of shape $\bs{\mu}$. 
This pyramid always corresponds to an even good grading for $f$. 

\smallskip

(1) Consider first the case $\bs{\mu}= (q^m,1^s)$, 
with $s$ possibly zero. 
The pyramid consists of a $q \times m$ rectangle 
surmounted by a vertical strip of dimension $1 \times s$ on the left as in Figure~\ref{Fig:sl_n}. 
For example, for $\bs{\mu}=(5^3,1^4)$ we get the pyramid as in Figure~\ref{Fig:sl-ex}.
{\tiny
\begin{center}
\begin{figure}
\setlength\unitlength{0.0125cm}
\begin{minipage}[l]{.46\linewidth} 
\begin{picture}(120,210)
\put(100,0){\line(1,0){150}}
\put(100,30){\line(1,0){150}}
\put(100,60){\line(1,0){150}}
\put(100,90){\line(1,0){150}}
\put(100,120){\line(1,0){150}}
\put(100,150){\line(1,0){30}}
\put(100,180){\line(1,0){30}}
\put(100,210){\line(1,0){30}}
\put(100,0){\line(0,1){210}}
\put(130,0){\line(0,1){210}}
\put(250,0){\line(0,1){120}}
\put(220,0){\line(0,1){120}}
\put(190,0){\line(0,1){120}}
\put(160,0){\line(0,1){120}}
\put(175,0){\makebox(0,0){\Tiny{{$\bullet$}}}}
\put(115,48){\makebox(0,0){\Tiny{{$\vdots$}}}}
\put(235,15){\makebox(0,0){\Tiny{{$m$}}}}
\put(235,75){\makebox(0,0){\Tiny{{$2$}}}}
\put(235,48){\makebox(0,0){\Tiny{{$\vdots$}}}}
\put(235,105){\makebox(0,0){\Tiny{{$1$}}}}
\put(115,15){\makebox(0,0){\Tiny{{$n$}}}}
\end{picture}
\caption{\footnotesize{Pyramid for $(q^m,1^s)$}} 
\label{Fig:sl_n}
\end{minipage} \hfill \begin{minipage}[l]{.46\linewidth}
\setlength\unitlength{0.0125cm} 
{\tiny 
\begin{picture}(120,210)
\put(100,0){\line(1,0){150}}
\put(100,30){\line(1,0){150}}
\put(100,60){\line(1,0){150}}
\put(100,90){\line(1,0){150}}
\put(100,120){\line(1,0){30}}
\put(100,150){\line(1,0){30}}
\put(100,180){\line(1,0){30}}
\put(100,210){\line(1,0){30}}
\put(100,0){\line(0,1){210}}
\put(130,0){\line(0,1){210}}
\put(250,0){\line(0,1){90}}
\put(220,0){\line(0,1){90}}
\put(190,0){\line(0,1){90}}
\put(160,0){\line(0,1){90}}
\put(175,0){\makebox(0,0){\Tiny{{$\bullet$}}}}
\put(235,75){\makebox(0,0){\Tiny{{$1$}}}}
\put(235,45){\makebox(0,0){\Tiny{{$2$}}}}
\put(235,15){\makebox(0,0){\Tiny{{$3$}}}}
\put(205,75){\makebox(0,0){\Tiny{{$4$}}}}
\put(205,45){\makebox(0,0){\Tiny{{$5$}}}}
\put(205,15){\makebox(0,0){\Tiny{{$6$}}}}
\put(175,75){\makebox(0,0){\Tiny{{$7$}}}}
\put(175,45){\makebox(0,0){\Tiny{{$8$}}}}
\put(175,15){\makebox(0,0){\Tiny{{$9$}}}}
\put(145,75){\makebox(0,0){\Tiny{{$10$}}}}
\put(145,45){\makebox(0,0){\Tiny{{$11$}}}}
\put(145,15){\makebox(0,0){\Tiny{{$12$}}}}
\put(115,195){\makebox(0,0){\Tiny{{$13$}}}}
\put(115,165){\makebox(0,0){\Tiny{{$14$}}}}
\put(115,135){\makebox(0,0){\Tiny{{$15$}}}}
\put(115,105){\makebox(0,0){\Tiny{{$16$}}}}
\put(115,75){\makebox(0,0){\Tiny{{$17$}}}}
\put(115,45){\makebox(0,0){\Tiny{{$18$}}}}
\put(115,15){\makebox(0,0){\Tiny{{$19$}}}}
\end{picture}}
\caption{\footnotesize{Pyramid for $(5^3,1^4)$}} 
\label{Fig:sl-ex}
\end{minipage}
\end{figure}
\end{center}
} 
From the pyramid, we easily see that 
\begin{align*}
\g^{0}_\Gamma& =\left\lbrace  
\on{diag}(x_1,\ldots,x_{q-1},y) \in \sl_n \colon 
x_i \in  \mf{gl}_m,\, y \in \mf{gl}_{m+s} \right\rbrace & \\
& \cong \C^{q-1} \times (\mf{sl}_m)^{q-1} \times \mf{sl}_{m+s},
\end{align*}
and 
\begin{align*} 
\g^\natural  & =  \left\{ 
\on{diag}(x,\ldots,x,y)  \in \sl_n \colon 
x \in \mf{gl}_m, \, 
y = \on{diag}( x,x'),\, x' \in \mf{gl}_{s} \right\} \subset \g^{0}_\Gamma &\\
& \cong \begin{cases} 
\C \times \mf{sl}_m \times \mf{sl}_s & \text{ if } s\not=0,\\
 \mf{sl}_m & \text{ if } s=0.  
 \end{cases} & 
\end{align*} 
First, 
pick $t =\on{diag}(x,\ldots,x,y) 
\in \g^\natural_1 \cong \sl_m$,  
with $x=\on{diag}(1,-1,0 ,\ldots,0) \in \mf{gl}_m$ and 
$y = \on{diag}(x,x')\in \mf{gl}_{m+s}$ 
with 
$x'=0$. 
We have 
\begin{align*}
k (t|t)_\g  + (\kappa_\g(t,t)- \kappa_{\g^0_\Gamma}(t,t))/2  
= 2 q k  + 2( q n - q m - s) ,& 
\end{align*}
and $ (t|t)_1^\natural  = 2 $, 
whence $k^\natural_1 = q k +  q n - n.$
This terminates the case $s=0$. 

Assume now $s\not=0$. 
Pick $t= \on{diag}(x,\ldots,x,y) 
 \in \g^\natural_0$, 
with $x=\on{diag}(-s,\ldots,-s) \in \mf{gl}_m$  
and $y =\on{diag}(x,x') \in \mf{gl}_{m+s}$ 
with $x'=\on{diag}\left(qm,\ldots,q m\right)$.  
The projection of $t$ onto the semisimple part of $\g_\Gamma^0$ 
is ${\rm diag}(0,\ldots,0,z)$ with 
$$z = {\rm diag}\left(-\frac{sn}{m+s},\ldots,-\frac{sn}{m+s}, 
\frac{mn}{m+s},\ldots,\frac{mn}{m+s}\right) \in \sl_{m+s}.$$
From this, we get 
\begin{align*}
& k (t|t)_\g  + (\kappa_\g(t,t)
- \kappa_{\g^0_\Gamma}(t,t))/2
= smn (qk + qn -n). &
\end{align*}
Choose $(~|~)_0$ so that $(t|t)_0= smn $, 
whence $k^\natural_0 = q k + q n - n$.  

Pick finally $t =\on{diag}(x,\ldots,x,y) 
 \in \g^\natural_2 \cong \sl_s$,  
where $x=0 \in \mf{gl}_m$ and $y = \on{diag}(x,x')\in \mf{gl}_{m+s}$ 
with $x'=\on{diag}(1,-1,0,\ldots,0)$. 
We have 
\begin{align*}
k (t|t)_\g  + (\kappa_\g(t,t)
- \kappa_{\g^0_\Gamma}(t,t))/2 
=  2 k + (4 n- 4(m+s))/2
\end{align*}
and $(t|t)_2^\natural=2$, 
whence $k^\natural_2 = k+ q m  - m.$

\smallskip

(2) Assume now that $\bs\mu=(q^m,(q-1)^2)$. 
Here, we easily 
see that the embedding $\g^\natural   \hookrightarrow 
\g^0_\Gamma$ is as follows. 
We have 
\begin{align*}
\g_\Gamma^0 & = \left\{{\rm diag}(x,y_1,\ldots,y_{q-1})  \in \sl_n \colon x \in \mf{gl}_{m}, \, y_i  
\in \mf{gl}_{m+2} \right\} & \\
& \cong  \C^{q-1} \cong (\sl_{m}) \times (\sl_{m+2})^{q-1}, & 
\end{align*}
\begin{align*} \g^\natural   &= \left\{ 
{\rm diag}(x,y,\ldots, y) \in \sl_n \colon x \in \mf{gl}_m, \, y = \on{diag}(z,x) 
 \in \mf{gl}_{m+2}, \, z \in \mf{gl}_2 \in  \right\} & \\
& \cong \C \times \sl_m \times \sl_2. &
\end{align*}
We compute $k_0^\natural,k_1^\natural,k_2^\natural$ 
similarly as the previous case. So we omit the details. 

\smallskip

(3) Finally, assume that $\bs\mu=(q^m,s)$, with $m \ge 0$ and $0 < s \le q-1$. 
Here, we easily 
see that the embedding $\g^\natural   \hookrightarrow 
\g^0_\Gamma$ is as follows. 
We have 
\begin{align*}
\g^0_\Gamma & = \left\{{\rm diag}(x_1,\ldots,x_s, 
y_1,\ldots,y_{q-s}) \in \sl_n \colon x_i \in \mf{gl}_{m+1}, \, y_j  
\in \mf{gl}_{m} \right\} & \\
& \cong (\mf{gl}_{m+1})^s \times (\mf{gl}_m)^{q-s} & 
\end{align*}
and 
\begin{align*} \g^\natural  &= \left\{ 
{\rm diag}(x,\ldots,x, y,\ldots,y)  \in \sl_n \colon y \in \mf{gl}_m, \, x= 
\on{diag}(\lam , y) \in \mf{gl}_{m+1}\right\} & \\
& \cong \C \times \sl_m. &
\end{align*}
We compute $k_0^\natural,k_1^\natural$ 
as in the first case. So we omit the details. 

The last assertion of the lemma is then easy to verify. 
Indeed, the conditions $k_0^\natural=0$ (when $k_0^\natural$ appears) implies $p=n$. 
Then $k_1^\natural=0$ and $k_2^\natural= -s+s/q$ (when  $k_2^\natural$ appears), 
which is admissible for $\sl_s$. 
If $\g_0^\natural =0$, then $\bs\mu=(q^m)$ and 
$k_1^\natural = p-n = -m + p-m(q-1)$ which is admissible for $\sl_m$ 
since $p-m(q-1) \ge m$ is equivalent to the condition $p \ge n$. 
\end{proof}

{\tiny 
\begin{table}
\begin{tabular}{llll}
\hline
&&& \\[-0.5em]
 $\bs\mu$  
& $\g^\natural=\bigoplus_i \g^\natural_i$  
& $k_i^\natural$ 
& conditions \\
&&& \\[-0.5em]
\hline 
&&& \\[-0.5em]
$(q^{m},{s})$ & 
 $ \C \times\sl_{m}$ &  $k^\natural_0 = q k + q n  - n$ &   $1 \le {s} \le q-1$ \\ 
&& $k^\natural_1 = q k + q n  - n$ &    \\[0.5em]
  $(q^{m})$ & $\mf{sl}_{m}$ &  $k^\natural_1 = q k + q n  - n$  & \\[0.5em]
$(q^m,1^s)$  
& $\C \times \mf{sl}_m \times \mf{sl}_s$ 
&  $k^\natural_0 = q k + q n  - n$  
& $s >0$  \\
&& $k^\natural_1 = q k + q n  - n$  &  \\
&& $k^\natural_2 = k +  q m - m$ &   \\[0.2em]
$(q^m,(q-1)^2)$  
& $\C \times \mf{sl}_m \times \mf{sl}_2$
&  $k^\natural_0 = q k + q n  - n$  
&   \\
&& $k^\natural_1 = q k + q n  - n$  &  \\
&& $k^\natural_2 = (q-1) (k + n -m-2) $ &   \\[0.2em]
\hline
\end{tabular} \\[1em] 
\caption{\footnotesize{Centralisers of some $\sl_2$-triples $(e,h,f)$ in $\sl_{n}$, 
with $f \in \O_{\bs\mu}$}} 
\label{Tab:A_Slodowy}
\end{table}
}

\begin{Rem}
Besides $\g$, $f$ and $k$, the construction of $H^0_{DS, f}(L_k(\g))$ takes an auxiliary choice of good grading on $\g$ compatible with $f$. For all types other than $\sl_n$ we use the Dynkin grading, but in this section we sometimes work with good gradings different than the Dynkin grading. While it is known that $H^0_{DS, f}(L_k(\g))$, as a vertex algebra, is independent of the choice of good grading, its conformal structure, and therefore in principle its asymptotic datum, can depend on this choice. Since Theorem \ref{Th:main} requires self-duality, and by Proposition \ref{Pro:Dynkin-self-dual} self-duality holds for the Dynkin grading, it is important to verify that the asymptotic datum is independent of the choice of good grading in the cases relevant for us. For $\G$ this is evident from the formula (see Proposition \ref{Pro:asymptotic_data_H_DS}). As for $\A$, for the pairs $(k, f)$ that arise in our search for collapsing levels for $\sl_n$ 
the relationship between the orbits $\O_k$ and $G. f$ is such that the independence 
of $\A$ on the choice of good grading can be seen via combinatorial arguments. 
Namely, we have the following lemma. 
\end{Rem}

\begin{Lem}
\label{Lem:asymptotic_dimension_good_gradings}
Assume that $\bs{\mu}$ is a partition of type (a), (b), (c) as in 
Lemma~\ref{Lem:choice_of_partitions}, 
and pick $f \in \O_{\bs{\mu}} \subset \overline{\O}_k$.  
Then the expression given for $\A_{H^0_{DS, f}(L_k(\g))}$ in Proposition~\ref{Pro:asymptotic_data_H_DS} is independent of the choice of good grading $\Gamma$.
\end{Lem}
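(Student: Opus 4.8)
The plan is to analyze the formula
\[
\A_{H^0_{DS,f}(L_k(\g))}= \frac{1}{2^{|\Delta_\Gamma^{1/2}|/2}\,q^{|\Delta^0_{\Gamma,+}|}\,|P/(pq)Q^\vee|^{1/2}}\;\prod_{\alpha\in\Delta_+}2\sin\frac{\pi(\lam+\rho|\alpha)}{p}\;\prod_{\alpha\in\Delta_+\setminus\Delta^0_{\Gamma,+}}2\sin\frac{\pi(x^0_\Gamma|\alpha)}{q}
\]
from Proposition~\ref{Pro:asymptotic_data_H_DS} and identify exactly which ingredients depend on the good grading $\Gamma$. The factors $2^{-|\Delta_\Gamma^{1/2}|/2}$, $|P/(pq)Q^\vee|^{-1/2}$ and $\prod_{\alpha\in\Delta_+}2\sin\frac{\pi(\lam+\rho|\alpha)}{p}$ do not: $|P/(pq)Q^\vee|$ is intrinsic to $\g$, the middle product involves only $\lam$ and the root system, and $|\Delta_\Gamma^{1/2}|=\dim\g_\Gamma^{1/2}$ is determined by $\dim G.f$ together with $\dim\g_\Gamma^0$ — but for the partitions of type (a), (b), (c) in Lemma~\ref{Lem:choice_of_partitions} the relevant $f$ admits an even good grading, so the left-adjusted pyramid gives $\dim\g_\Gamma^{1/2}=0$, and in any good grading $\Gamma$ one has $\dim\g_\Gamma^{1/2}$ expressible purely through orbit dimensions (in type $A$, all good gradings with $f$ fixed have the same $\dim\g_\Gamma^{1/2}$ precisely when the Dynkin grading is even; more carefully, $\dim\g^f=\dim\g^0_\Gamma+\dim\g^{1/2}_\Gamma$ so $\dim\g^{1/2}_\Gamma$ varies only as $\dim\g^0_\Gamma$ does). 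Thus the whole content reduces to showing that the product
\[
P_\Gamma:=\frac{1}{q^{|\Delta^0_{\Gamma,+}|}}\prod_{\alpha\in\Delta_+\setminus\Delta^0_{\Gamma,+}}2\sin\frac{\pi(x^0_\Gamma|\alpha)}{q}
\]
together with the $2^{-|\Delta_\Gamma^{1/2}|/2}$ correction is independent of $\Gamma$.

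First I would set up the combinatorics explicitly for type $A$. For a partition $\bs\mu$ and a pyramid representing a good grading $\Gamma$, the element $x^0_\Gamma$ is diagonal with entries given by half the column coordinates of the boxes, and $(x^0_\Gamma|\alpha_{ij})=\frac12(\mathrm{col}(i)-\mathrm{col}(j))$ for the root $e_{ij}$. The set $\Delta^0_{\Gamma,+}$ consists of roots $e_{ij}$ with $\mathrm{col}(i)=\mathrm{col}(j)$, and $\Delta_+\setminus\Delta^0_{\Gamma,+}$ of those with $\mathrm{col}(i)\neq\mathrm{col}(j)$; so $P_\Gamma$ is a product over ordered pairs of boxes in different columns of $2\sin\frac{\pi(\mathrm{col}(i)-\mathrm{col}(j))}{2q}$, divided by $q$ raised to the number of same-column pairs. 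Now the key observation: as one slides the pyramid (moving from the Dynkin to the left-adjusted pyramid, say) one changes which column each box sits in, but by Lemma~\ref{lem:erasing_row_type-A} the Slodowy-slice geometry — and hence, at a collapsing level, the conformal vertex algebra $H^0_{DS,f}(L_k(\g))\cong L_{k^\natural}(\g^\natural)$ — is unchanged. The cleanest argument is: the expression in Proposition~\ref{Pro:asymptotic_data_H_DS} computes $\A$ for the conformal structure attached to $\Gamma$; but for $\bs\mu$ of type (a), (b) or (c) the associated variety $X_{H^0_{DS,f}(L_k(\g))}=\Slo_{\O_k,f}$ is a product of nilpotent orbit closures $\overline{\O}_{\bs\lam'}$ (or a single one), which by Theorem~\ref{Th:admissible-orbits} means $H^0_{DS,f}(L_k(\g))$ is lisse, hence (by Remark~\ref{Rem:lisse_conjecture_true} and Theorem~\ref{Th:main}) already collapsing, whence its asymptotic dimension equals $\A_{L_{k^\natural}(\g^\natural)}$, which is manifestly grading-independent.

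The honest route, which I would actually carry out, avoids circularity by a direct computation: I would show $P_\Gamma\cdot 2^{-|\Delta^{1/2}_\Gamma|/2}$ equals the analogous quantity for the left-adjusted (even) pyramid by a telescoping/reindexing argument. Concretely, adjacent good gradings (Lemma~\ref{Lem:maximal reductive subalgebra}, Remark~\ref{Rem:maximal reductive subalgebra}) differ by moving a single row of the pyramid one unit; I would track how the multiset $\{\mathrm{col}(i)-\mathrm{col}(j)\}$ and the counts $|\Delta^0_{\Gamma,+}|$, $|\Delta^{1/2}_\Gamma|$ change under such an elementary move, and verify using the identity $\prod_{j=1}^{n-1}2\sin\frac{j\pi}{n}=n$ (equation~\eqref{eq:sin_formula}) and its variant~\eqref{eq:sin_formula.odd} that the factors rearrange into the same product. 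Since any two good gradings for $f$ in type $A$ are connected by a chain of adjacent ones passing through an even grading, this suffices. The main obstacle I anticipate is the bookkeeping in the elementary-move step: showing that the new boundary $\sin$-factors produced when a row crosses a column coordinate exactly cancel against the change in the normalising power of $q$ and the power of $2$ — this is where the arithmetic of $2\sin$-products must be handled carefully, but it is a finite, explicitly controlled computation rather than a conceptual difficulty, and the partitions of types (a), (b), (c) are sufficiently rigid (rectangles, rectangles-plus-a-column, rectangles-with-a-near-square tail) that the column multisets are easy to enumerate.
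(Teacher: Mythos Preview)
Your proposal eventually lands on a correct strategy (show the $\Gamma$-dependent piece
\[
2^{-|\Delta_\Gamma^{1/2}|/2}\,q^{-|\Delta^0_{\Gamma,+}|}\prod_{\alpha\in\Delta_+\setminus\Delta^0_{\Gamma,+}}2\sin\frac{\pi(x^0_\Gamma|\alpha)}{q}
\]
is invariant, using the identities \eqref{eq:sin_formula} and \eqref{eq:sin_formula.odd}), but the route you outline differs from the paper's and your opening paragraph contains an error. You first assert that $2^{-|\Delta_\Gamma^{1/2}|/2}$ is $\Gamma$-independent; it is not, as you yourself notice a few lines later via $\dim\g^f=\dim\g_\Gamma^0+\dim\g_\Gamma^{1/2}$. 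The ``cleanest argument'' you propose is circular, as you correctly flag, so the substantive content is the adjacent-grading calculation.

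The paper does \emph{not} argue via a chain of adjacent gradings. Instead it rewrites the $\Gamma$-dependent factor as a product over unordered pairs of boxes $(i,j)$, with a factor $1/q$ when $x_i=x_j$, a factor $1/\sqrt{2}$ when $|x_i-x_j|=1$, and $2\sin\frac{\pi|x_i-x_j|}{q}$ otherwise. For $\bs\mu=(q^m,s)$ the pyramid is a $q\times m$ rectangle surmounted by an $s\times 1$ strip, and moving between good gradings only slides the strip horizontally. Pairs with both boxes in the rectangle, or both in the strip, contribute the same in every $\Gamma$. For a box $i$ in the strip, the paper computes the product over $j$ running along a single row of the rectangle: when $i$ is aligned with the rectangle one gets $\frac{1}{q}\prod_{j=1}^{q-1}2\sin\frac{j\pi}{q}=1$ by \eqref{eq:sin_formula}, and when $i$ is offset by a half-integer one gets $\frac{1}{2}\prod_{j=1}^{q}2\sin\frac{(j-1/2)\pi}{q}=1$ by \eqref{eq:sin_formula.odd}. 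Thus every cross-contribution is identically $1$, and invariance follows at once. The same decomposition handles $(q^m,1^s)$ and $(q^m,(q-1)^2)$.

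Your adjacent-grading approach should work in principle, but it requires tracking how several boundary $\sin$-factors, the count $|\Delta^0_{\Gamma,+}|$, and $|\Delta^{1/2}_\Gamma|$ all change under each elementary move and verifying a cancellation each time; the paper's box-pair decomposition avoids this bookkeeping entirely by showing the cross-terms are literally $1$ rather than merely invariant.
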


\begin{proof}
The expression in question is the product of a term manifestly independent of $\Gamma$ with
\begin{align}\label{eq:Gamma.dep.part}
\A' = \frac{1}{2^{\frac{|\Delta_\Gamma^{1/2}|}{2}} q^{|\Delta_{\Gamma, +}^0|}} \prod_{\alpha \in \Delta_+ \backslash \Delta_{\Gamma, +}^0} 2 \sin{\frac{\pi (x_\Gamma^0 | \alpha)}{q}}.
\end{align}

Now we consider $k = -n + p/q$ an admissible level for $\g = \mathfrak{sl}_n$ and the Hamiltonian reduction $H^0_{DS, f}(L_k(\g))$ for some $f \in \overline{\O}_k$. 

We first consider the partitions $(q^m, s)$ 
of $n$. 

The pyramids associated with $\bs\lambda = (q^m, s)$ all consist of a $q \times m$ rectangle surmounted by a horizontal strip of dimension $s \times 1$. In Figures \ref{Fig:asdim-pyr-1} and \ref{Fig:asdim-pyr-2} we illustrate two pyramids associated with $n=16$, $q=5$. In general there are $2(q-s)+1$ distinct pyramids associated with $\bs\lambda$.
{\tiny
\begin{center}
\begin{figure}[ht]
\setlength\unitlength{0.0175cm}
\begin{minipage}[l]{.46\linewidth} 
\begin{picture}(150,160)
% horizontal lines of rectangle
\put(100,0){\line(1,0){100}}
\put(100,20){\line(1,0){100}}
\put(100,40){\line(1,0){100}}
\put(100,60){\line(1,0){100}}

% verticle lines of rectangle
\put(100,0){\line(0,1){60}}
\put(120,0){\line(0,1){60}}
\put(140,0){\line(0,1){60}}
\put(160,0){\line(0,1){60}}
\put(180,0){\line(0,1){60}}
\put(200,0){\line(0,1){60}}

% sides of odd box
\put(100,60){\line(0,1){20}}
\put(120,60){\line(0,1){20}}
% top of odd box
\put(100,80){\line(1,0){20}}

\put(110,70){\makebox(0,0){{\Tiny{$i$}}}}
%\put(110,50){\makebox(0,0){{\Tiny{$j_0$}}}}

\end{picture}
\caption{\footnotesize{An even good grading}} 
\label{Fig:asdim-pyr-1}
\end{minipage}
\hfill
\begin{minipage}[l]{.46\linewidth} 
\begin{picture}(150,160)
% horizontal lines of rectangle
\put(100,0){\line(1,0){100}}
\put(100,20){\line(1,0){100}}
\put(100,40){\line(1,0){100}}
\put(100,60){\line(1,0){100}}

% verticle lines of rectangle
\put(100,0){\line(0,1){60}}
\put(120,0){\line(0,1){60}}
\put(140,0){\line(0,1){60}}
\put(160,0){\line(0,1){60}}
\put(180,0){\line(0,1){60}}
\put(200,0){\line(0,1){60}}

% sides of odd box
\put(110,60){\line(0,1){20}}
\put(130,60){\line(0,1){20}}
% top of odd box
\put(110,80){\line(1,0){20}}

\put(120,70){\makebox(0,0){{\Tiny{$i$}}}}
\end{picture}
\caption{\footnotesize{An odd good grading}} 
\label{Fig:asdim-pyr-2}
\end{minipage}
\end{figure}
\end{center}}

We rewrite \eqref{eq:Gamma.dep.part} as
\begin{align*}
\A' =  \prod_{\stackrel{i < j}{x_i=x_j}} \frac{1}{q} \cdot \prod_{\stackrel{i < j}{|x_i-x_j|=1}} \frac{1}{2^{1/2}} \cdot \prod_{\stackrel{i < j}{|x_i-x_j| \ge 1}} 2\sin \frac{\pi |x_i - x_j|}{q}.
\end{align*}
To show that $\A'$ is independent of $\Gamma$ we first note that if boxes $i$ and $j$ lie both in the $q \times m$ rectangle or else both in the $s \times 1$ horizontal strip, then the contribution of the pair $(i, j)$ to the product above is the same for all choices of $\Gamma$. So we may ignore the contributions of such pairs.

Now we let $i$ be a box in the $s \times 1$ strip and we consider the contribution to the product as $j$ runs along one of the rows of the $q \times m$ rectangle. In the configurations depicted by 
Figures~\ref{Fig:asdim-pyr-1} and \ref{Fig:asdim-pyr-2} this contribution is, respectively,
\begin{align*}
\frac{1}{q} \prod_{j=1}^{n-1} 2\sin{\frac{j \pi}{q}} \qquad \text{or} \qquad \frac{1}{2} \prod_{j=1}^{n} 2\sin{\frac{(j-1/2) \pi}{n}}.
\end{align*}
Indeed for any of the $q$ possible positions of box $i$ aligned with the $q \times m$ rectangle we obtain the first of these products simply because $\sin{((q-j)\pi/q)} = \sin{(j \pi/q)}$. Similarly all remaining positions of box $i$ yield the second product. But finally both products are equal to $1$ due to identities \eqref{eq:sin_formula} and \eqref{eq:sin_formula.odd}, and so all products are equal. Therefore the value of $\A'$ does not depend on $\Gamma$. 

Similar arguments establish independence of $\A$ on $\Gamma$ for nilpotent elements 
associated with partitions of type $(q^m, 1^s)$ or $(q^m,(q-1)^2)$. 
\end{proof}

We are now in a position to state our results on collapsing levels for $\sl_n$. 
We have
\begin{align}
\W_k(\g,f)\cong H_{DS,f}^0(L_k(\g))
\label{eq:simple}
\end{align}
for an admissible level $k$ and $f\in \overline{\mathbb{O}}_k$,
since any nilpotent element of $\sl_n$ admits an even good grading.
Moreover, 
$\W_k(\g,f)$ is rational if $f\in \O_k$ (\cite{AEkeren19}).

First, we consider the case when $\W_k(\g,f)$ is rational. 
\begin{Th}  
\label{Th:main_sl_n-1}
Assume that $k = - h^{\vee}_{\g} + p/q = -n +p/q$ 
is admissible for $\g=\sl_n$. 
Pick a nilpotent element 
$f \in\O_k$, so that $\W_k(\g,f)$ is rational. 
Then
$k$ is collapsing if and only if
$n\equiv 0,\pm 1\pmod{q}$
and 
$p=\begin{cases} h^{\vee}_{\g}&\text{if }n\equiv \pm 1\pmod{q}\\
 h^{\vee}_{\g}+1&\text{if }n\equiv 0\pmod{q}.
\end{cases}$ \newline
Furthermore
\begin{enumerate}
\item If $n\equiv \pm 1\pmod{q}$,
then
$$\W_{-n +n/q}(\mf{sl}_n,f)  \cong \C.$$ 
\item  If $n\equiv 0\pmod{q}$,
$$\W_{-n + (n+1)/q}(\mf{sl}_n,f)  \cong L_{1}(\mf{sl}_{\widetilde m}),$$
where $\widetilde m$ 
is defined by \eqref{eq:Ok_sl_n}. 
\end{enumerate}
\end{Th}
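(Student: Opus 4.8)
The plan is to apply the machinery of Section~\ref{sec:strategy}, in particular Proposition~\ref{Pro:asymptotics-and-collapsing} (equivalently Theorem~\ref{Th:main}), combined with the explicit asymptotic data recorded in Corollary~\ref{Co:asymptotic_data_L} and Proposition~\ref{Pro:asymptotic_data_H_DS}, together with the product identities of Section~\ref{sec:identities}. The first observation is that since $f\in\O_k$, the reduction $H_{DS,f}^0(L_k(\g))$ is lisse by Theorem~\ref{Th:Associated_variety-DS}(c), hence rational under the isomorphism \eqref{eq:simple} (using \cite{AEkeren19}); moreover by Remark~\ref{Rem:lisse_conjecture_true} it suffices to check equality of asymptotic growth and asymptotic dimension of $H_{DS,f}^0(L_k(\g))$ and the candidate target $L_{k^\natural}(\g^\natural)$, without worrying about even good gradings (though in type $A$ they exist anyway). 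Note also that Lemma~\ref{Lem:asymptotic_dimension_good_gradings} guarantees the expression for $\A_{H^0_{DS,f}(L_k(\g))}$ is grading-independent for the relevant partitions, so we are free to compute with the left-adjusted (even) pyramid.

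The forward direction (a level of the stated form is collapsing) is the computational heart. For $f\in\O_k$ with $\bs\lam = (q^{\widetilde m},\widetilde s)$ as in \eqref{eq:Ok_sl_n}, the centraliser $\g^\natural$ is $\mf{sl}_{\widetilde m}$ when $\widetilde s \in \{0,1\}$ (cases $\bs\mu=(q^m)$, resp.\ $\bs\mu=(q^m,1^s)$ with $s$ small, in Table~\ref{Tab:A_Slodowy}), and in the borderline case $\widetilde s = 0$ one even gets $\g^\natural$ with no simple factors, i.e.\ $\W_k(\g,f)\cong\C$. Concretely: when $n \equiv \pm 1 \pmod q$ and $p = h^\vee_\g = n$, Table~\ref{Tab:A_Slodowy} gives $k_1^\natural = qk + qn - n = q(-n+n/q) + qn - n = 0$; if further $n\equiv \pm1\pmod q$ the partition forces $\widetilde m = 0$ or the orbit is principal in $\mf{sl}_{\widetilde m=1}$, giving the collapse to $\C$. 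When $n\equiv 0\pmod q$, writing $n = q\widetilde m$ and $p = n+1$, one computes $k_1^\natural = q(-n+(n+1)/q) + qn - n = 1$, which is the (integrable, boundary) level for $\mf{sl}_{\widetilde m}$ whose simple affine vertex algebra is $L_1(\mf{sl}_{\widetilde m})$. Then I would verify $\G_{H_{DS,f}^0(L_k(\g))} = \G_{L_{k^\natural}(\g^\natural)}$ using the growth formula of Proposition~\ref{Pro:asymptotic_data_H_DS}(1) (principal case, since $q$ is coprime to $r^\vee=1$) and the central charge formula \eqref{eq:W.alg.c.formula}; and $\A_{H_{DS,f}^0(L_k(\g))} = \A_{L_{k^\natural}(\g^\natural)}$ using the $\A$-formula of Proposition~\ref{Pro:asymptotic_data_H_DS}(1), where the $\prod 2\sin(\pi(\lam+\rho|\alpha)/p)$ and $\prod 2\sin(\pi(x_\Gamma^0|\alpha)/q)$ factors collapse via Lemma~\ref{Lem:main_identities}(1)--(2) (which is exactly why $p = h^\vee$ or $h^\vee+1 = h_\g+1$ appear — these are the arguments for which those clean product identities hold) and identities \eqref{eq:sin_formula}, \eqref{eq:sin_formula2}. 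Once both asymptotic invariants match, Proposition~\ref{Pro:asymptotics-and-collapsing} yields that $k$ is collapsing and produces the stated isomorphism.

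For the converse direction (no other admissible $k$ is collapsing), I would argue by elimination. If $k = -n+p/q$ is admissible and $f\in\O_k$ with $\g^\natural = \mf{sl}_{\widetilde m}$, then a necessary condition for collapse is the associated-variety identity, which forces $\Slo_{\O_k,f}$ to be isomorphic to a nilpotent orbit closure in $\mf{sl}_{\widetilde m}$; since here $f\in\O_k$ we actually have $\Slo_{\O_k,f} = \{f\}$ a point, so $X_{L_{k^\natural}(\mf{sl}_{\widetilde m})}$ must be a point, i.e.\ $L_{k^\natural}(\mf{sl}_{\widetilde m})$ is lisse — forcing $k_1^\natural \in \Z_{\ge 0}$ (the only lisse admissible levels for $\mf{sl}_{\widetilde m}$ being the nonnegative integers). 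From $k_1^\natural = qk+qn-n = p - n$ one gets $p \ge n$, automatic, but combined with Lemma~\ref{Lem:k_0-condition} (when $\g_0^\natural\ne 0$, i.e.\ $\widetilde s\ge 1$, one needs $\phi_0^\natural = 0$, forcing $p = n$ exactly, whence $k_1^\natural = 0$) and a case analysis on $\widetilde s$: if $\widetilde s = 0$ then $k_1^\natural = p-n$ must be $0$, i.e.\ $p=n$, and $n\equiv 0\pmod q$ — but then actually one should double-check whether $p=n+1$ also occurs here, and I expect the partition combinatorics to show $\widetilde s=0$ together with $f$ in the \emph{specific} orbit $\O_k$ gives the $L_1$ collapse at $p=n+1$ rather than at $p=n$; conversely $\widetilde s = 1$ gives $\C$ at $p = n$. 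The bookkeeping of exactly which residue class of $n$ mod $q$ pairs with which value of $p$ — disentangling $\widetilde s\in\{0,1\}$, whether $\g^\natural$ is trivial or $\mf{sl}_{\widetilde m}$ with $\widetilde m\ge 2$, and ruling out larger $p$ by the central-charge equation \eqref{eq:central_charge} having no admissible solutions — is where I expect the main obstacle to lie; everything else is a matter of feeding Table~\ref{Tab:A_Slodowy} and Lemma~\ref{Lem:main_identities} into Proposition~\ref{Pro:asymptotics-and-collapsing}.
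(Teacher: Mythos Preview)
Your overall framework is right and matches the paper: use Table~\ref{Tab:A_Slodowy} to read off $\g^\natural$ and the $k_i^\natural$, then apply Proposition~\ref{Pro:asymptotics-and-collapsing} with the asymptotic data of Corollary~\ref{Co:asymptotic_data_L} and Proposition~\ref{Pro:asymptotic_data_H_DS}, simplifying the sine products via Lemma~\ref{Lem:main_identities} and \eqref{eq:sin_formula}, \eqref{eq:sin_formula2}. However, the case analysis is tangled, and this is not merely bookkeeping --- it hides the actual mechanism.

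First, the structure of $\g^\natural$ is reversed. For $\bs\lam=(q^{\widetilde m},\widetilde s)$ with $\widetilde s=0$ one has $\g^\natural\cong\sl_{\widetilde m}$ (one simple factor, \emph{no} centre), and this is the case that collapses to $L_1(\sl_{\widetilde m})$ at $p=n+1$. For $1\le\widetilde s\le q-1$ one has $\g^\natural\cong\C\times\sl_{\widetilde m}$, and Lemma~\ref{Lem:k_0-condition} forces $k_0^\natural=p-n=0$, hence $p=n$ and simultaneously $k_1^\natural=0$; then $L_{k^\natural}(\g^\natural)=L_0(\C)\otimes L_0(\sl_{\widetilde m})=\C$. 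The collapse to $\C$ has nothing to do with $\widetilde m$ being $0$ or $1$ --- $\widetilde m$ is arbitrary --- it comes from both levels being zero.

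Second, the necessity argument in each case is incomplete. In the $\widetilde s\neq 0$ case, after pinning $p=n$ you must still show that only $\widetilde s\in\{1,q-1\}$ give $\W_k(\g,f)\cong\C$. The paper does this by computing
\[
\G_{H^0_{DS,f}(L_k(\sl_n))}\;=\;\dim\g^f-\frac{n(n^2-1)}{nq}\;=\;(\widetilde s-1)\,\frac{q-\widetilde s-1}{q},
\]
which vanishes precisely for $\widetilde s=1$ and $\widetilde s=q-1$. Your lisse observation ($\Slo_{\O_k,f}=\{f\}$ forces $X_{L_{k^\natural}(\g^\natural)}$ to be a point) is correct but does not replace this step: it only says the target must be lisse, not that the reduction is trivial. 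In the $\widetilde s=0$ case there is no centre, so Lemma~\ref{Lem:k_0-condition} gives nothing, and your lisse constraint $k_1^\natural=p-n\in\Z_{\ge 0}$ leaves infinitely many $p$. The paper instead solves the asymptotic-growth equation $\G_{H^0_{DS,f}(L_k(\sl_n))}=\G_{L_{p-n}(\sl_{\widetilde m})}$ in $p$, obtaining $p\in\{n-1,n+1\}$, of which only $n+1$ is admissible.

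Third, for the forward direction you must carry out three distinct pyramid computations --- for $(q^{\widetilde m})$, $(q^{\widetilde m},1)$, and $(q^{\widetilde m},q-1)$ --- each producing a different $\prod 2\sin(\pi(x_\Gamma^0|\alpha)/q)$; you only gesture at the first two and omit the $\widetilde s=q-1$ case entirely. These are the computations \eqref{eq:sin_formula2} is designed for, and they are where the actual verification happens.
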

\begin{proof} Fix a nilpotent element 
$f \in \O_k=\O_{(q^{m},s)}$, with $m:=\widetilde m$ 
and $s:=\widetilde s$ in the notation of \eqref{eq:Ok_sl_n}.

(a) Case $ s \not=0$. 

According to Table \ref{Tab:A_Slodowy}, 
we have $\g^\natural \cong \C \times \sl_{ m}$, 
and $k_0^\natural =  k_1^\natural = qk + q n-n = p-n$. 
If $k$ is collapsing, then necessarily $k_0^\natural =0$, 
whence $p=n$. 
Assume from now that $p=0$, whence 
$k_0^\natural =  k_1^\natural=0$. 
By \eqref{eq:simple},  
if $k$ is collapsing, 
then $\W_k(\sl_n,f)\cong \C$ and 
the asymptotic growth $\G_{\W_k(\sl_n,f)}=\G_{H^0_{DS,f}(L_k(\sl_n)}$ 
must be $0$. 
By Corollary~\ref{Co:asymptotic_data_L}, 
we have $\G_{L_{k^\natural}(\g^\natural))} =0$ while by Proposition \ref{Pro:asymptotic_data_H_DS}, 
\begin{align*}
\G_{H^0_{DS,f}(L_k(\sl_n))} & = \dim \g^{f} - \dfrac{h_\g^\vee  \dim \sl_n}{p q} 
= ( s-1) \left(\dfrac{q-( s+1)}{q}\right)
\end{align*}
since 
$\dim \g^{f} = { m}^2(q-{ s}) +({ m}+1)^2 { s}-1$. 
Therefore, $\G_{H^0_{DS,f}(L_k(\sl_n))} =0$ if and only if ${ s}=1$ or ${ s}=q-1$. 
The case ${ s}=1$ will be dealt with in Theorem \ref{Th:main_sl_n-2} with $s=1$. 
We consider here only the case ${ s}=q-1$. 

Our aim is to show that  $\W_{-n + n/q}(\mf{sl}_n,f)  \cong \C$, 
for $f$ corresponding 
to the partition $(q^{ m},q-1)$. 
By Proposition \ref{Pro:asymptotics-and-collapsing} and the above computation it suffices to show 
that the asymptotic dimension of $H^0_{DS,f}(L_k(\sl_n))$ is~$1$. 

Let $\g=\bigoplus_i \g_\Gamma^{i}$ be the even good grading for $f$ 
corresponding to the left-adjusted pyramid of shape $(q^{{ m}}, s)$ 
as in the proof of Lemma \ref{Lem:k_natural-type-A}. 
By Proposition \ref{Pro:asymptotic_data_H_DS} and Lemma \ref{Lem:main_identities} (1), 
we get 
\begin{align}
\label{eq:Ampl-1} 
\A_{H^0_{DS,f}(L_k(\sl_n))} & = 
\dfrac{1}{q^{|\Delta_{\Gamma,0}^+|} 
q^{\frac{n-1}{2}} } 
\prod_{\alpha \in \Delta_+ \setminus \Delta_{\Gamma,+}^{0}} 
2 \sin \dfrac{\pi (x_\Gamma^0 |\alpha)}{q} , & 
\end{align}
with $n=q  m + q-1$ and 
$|\Delta_{\Gamma,0}^+|= \dfrac{{ m}(n-1)}{2}$. 
Indeed, by Lemma \ref{Lem:asymptotic_dimension_good_gradings}, the asymptotic dimension does not depend 
on the good grading for such an $f$.  
For $i \in \{1,\ldots,n-1\}$, let $x_i$ be the $x$-coordinate of the box labelled $i$ 
in the pyramid. 
Note that for $j \in\Z$, we have 
\begin{align}
\label{eq:cardinality_pyramid-sln}
\# \{\alpha \in \Delta_+ \colon (x_\Gamma^0|\alpha) = j \}= 
\# \{ (i,l) \in \{1,\ldots,n-1\} \colon i \le l,\, |x_i - x_{l} |/2 = j \}.
\end{align}
In this way, using \eqref{eq:cardinality_pyramid-sln} 
and the identity \eqref{eq:sin_formula2}, 
we obtain that
\begin{align*}
\prod_{\alpha \in \Delta_+ \setminus \Delta_{\Gamma,+}^{0}} 
2 \sin  \dfrac{\pi (x_\Gamma^0 |\alpha)}{q} & 
= \prod_{j=1}^{q-1} \left( 2 \sin \dfrac{j \pi}{q}\right)^{({ m}+1)(n-j({ m}+1))}  
 =q^{\frac{q({ m}+1)^2}{2}-({ m}+1)} .&
\end{align*}
Combining this with \eqref{eq:Ampl-1}, we verify that $\A_{H^0_{DS,f}(L_k(\sl_n)}=1$, 
as desired. This concludes this case. 

(b) Case ${ s} =0$. 

According to Table \ref{Tab:A_Slodowy}, we have 
$\g^\natural  \cong \sl_{ m}$ 
and $k^\natural=qk +qn -n$. 
If $k$ is collapsing, then by \eqref{eq:simple},  
we must have $\G_{L_{k^\natural}(\sl_{{ m}})}=\G_{H^0_{DS,f}(L_k(\sl_n))}$.  
We have 
\begin{align*}
\G_{L_{k^\natural}(\sl_{{ m}})} & = ({ m}^2-1) 
\left(1-\dfrac{{ m}}{p-{ m}(q-1)}\right), \\
\G_{H^0_{DS,f}(L_k(\sl_n))} & = q{ m}^2-1-\dfrac{n(n^2-1)}{p q}
\end{align*}
since 
$\dim \g^{f} = { m}^2 q -1$. 
Solving the equation 
$$ ( m^2-1) \left(1-\dfrac{{ m}}{p-{ m}(q-1)}\right)= q{ m}^2-1-\dfrac{n(n^2-1)}{p q}$$
with unknown $p$  
we obtain that $p$ must be either equal to $n+1$ or $n-1$. 
Only the case $p=n+1$ is greater than $h_\g^\vee=n$.

From now, it is assumed that $p=n+1$, whence $k_1^\natural=1 = -{ m} + ({ m}+1)/1$.  
We apply Proposition \ref{Pro:asymptotics-and-collapsing} to prove that $k$ is collapsing. 
It is enough to show that 
$L_{1}(\mf{sl}_{ m})$ and $H^0_{DS,f}(L_k(\sl_n))$ share 
the same asymptotic dimension. 
By Corollary~\ref{Co:asymptotic_data_L} and Lemma \ref{Lem:main_identities} (2), \begin{align}
\label{eq:Amp:L1}
\A_{L_{1}(\mf{sl}_{ m})} & = 
\frac{1}{\sqrt{m}} 
\end{align} 
On the other hand, by Proposition \ref{Pro:asymptotic_data_H_DS} and Lemma \ref{Lem:main_identities} (2), 
\begin{align}
\label{eq:Amp:W-1a}
& \A_{H_{DS,f}^0(L_{-n + (n+1)/q}(\mf{sl}_n))}  = \dfrac{1}{q^{|\Delta_{\Gamma,+}^0|} 
q^{\frac{n-1}{2}} n^{\frac{1}{2}}} 
\prod\limits_{\alpha\in \Delta_+ \setminus\Delta_{\Gamma,+}^0} 
2 \sin \dfrac{\pi (x_\Gamma^0|\alpha) }{q} ,&
\end{align} 
with $n=q  m$ and  
$|\Delta_{\Gamma,+}^0|= q{ m}({ m}-1)/2$. 
Moreover, computing the cardinality of the sets 
$\{\alpha \in \Delta_+ \colon (x_\Gamma^0|\alpha) = j \}$ 
as in the previous case, 
we obtain by \eqref{eq:sin_formula2} that 
\begin{align*}
\prod\limits_{\alpha\in \Delta_+ \setminus\Delta_{\Gamma,+}^0} 
2 \sin\dfrac{\pi (x_\Gamma^0|\alpha) }{q} = 
q^{qm^2/2} . 
\end{align*} 
Combining this with \eqref{eq:Amp:W-1a}, we get that
$\A_{H_{DS,f}^0(L_{-n + (n+1)/q}(\mf{sl}_n))} =\frac{1}{\sqrt{m}} $ 
as expected. This completes the proof. 
\end{proof}

We now consider the partitions $\bs\mu \in \P(n)$ 
as in Lemma \ref{Lem:choice_of_partitions-C} 
of type (b) and (c). 
This leads us to the following results.

\begin{Th} 
\label{Th:main_sl_n-2}
Assume that $k = - n + p/q$ 
is admissible  for $\g=\sl_n$. 
\begin{enumerate}
\item 
Pick a nilpotent element 
$f \in \overline{\O}_k$ corresponding to the partition $(q^m,1^s)$, 
with $m\ge 0$ and $s >0$. 
Then $k$ is collapsing if and only if $p=n=h^{\vee}_{\sl_n}$. 
Moreover, 
$$\W_{-n + n/q}(\mf{sl}_n,f)  \cong L_{-s+s/q}(\mf{sl}_s).$$
\item Assume that $\widetilde s =q-2$ and pick a nilpotent element 
$f\in \overline{\O}_k $ corresponding to the partition 
$(q^{m},(q-1)^2)$, with $m=\widetilde m-1$ 
in the notation of \eqref{eq:Ok_sl_n}. 
Then $k$ is collapsing if and only if $p=n=h^{\vee}_{\g}$. 
Moreover, 
$$\W_{-n + n/q}(\mf{sl}_n,f)  \cong L_{-2+2/q}(\mf{sl}_2).$$
\end{enumerate}
\end{Th}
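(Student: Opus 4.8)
The plan is to apply the general collapsing criterion of Proposition~\ref{Pro:asymptotics-and-collapsing} to the two families of pairs $(k,f)$ in the statement. Recall that since any nilpotent element of $\sl_n$ admits an even good grading we have $\W_k(\g,f)\cong H_{DS,f}^0(L_k(\g))$ by \eqref{eq:simple}, and that both $k$ and $k^\natural$ will be verified to be admissible via Lemma~\ref{Lem:k_natural-type-A}. Thus it suffices to (i) determine, from the central-charge or asymptotic-growth equation, that collapsing forces $p=n$; and (ii) at $p=n$, check that $\G_{H_{DS,f}^0(L_k(\g))}=\G_{L_{k^\natural}(\g^\natural)}$ and $\A_{H_{DS,f}^0(L_k(\g))}=\A_{L_{k^\natural}(\g^\natural)}$ using Corollary~\ref{Co:asymptotic_data_L} and Proposition~\ref{Pro:asymptotic_data_H_DS}.

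For part (1), with $\bs\mu=(q^m,1^s)$, $s>0$, Table~\ref{Tab:A_Slodowy} gives $\g^\natural\cong\C\times\sl_m\times\sl_s$ with $k_0^\natural=k_1^\natural=qk+qn-n=p-n$ and $k_2^\natural=k+qm-m$. By Lemma~\ref{Lem:k_0-condition} (applied to the quasi-lisse $\W_k(\g,f)$, since $k$ is admissible and $f\in\overline{\O}_k$), $\phi_0^\natural=0$, hence $k_0^\natural=0$, i.e.\ $p=n$. Then $k_1^\natural=0$, so $L_{k_1^\natural}(\sl_m)\cong\C$, and $k_2^\natural=-s+s/q$, which is admissible for $\sl_s$, with associated variety contained in $\mc N_{\sl_s}$. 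It remains to compare asymptotic data at $p=n$. The growths match because $\dim\g^f=m^2(q-s)+(m+1)^2 s-1$ and a direct substitution into the formulas of Proposition~\ref{Pro:asymptotic_data_H_DS} and Corollary~\ref{Co:asymptotic_data_L} yields $\G_{H_{DS,f}^0(L_k(\g))}=\G_{L_{-s+s/q}(\sl_s)}$. For the dimensions, I would use the left-adjusted pyramid of shape $(q^m,1^s)$ (an even good grading), identity \eqref{eq:cardinality_pyramid-sln} to count the pairs of boxes contributing to the $x^0_\Gamma$-product, and then identities \eqref{eq:sin_formula}, \eqref{eq:sin_formula2}, together with Lemma~\ref{Lem:main_identities}(1), to evaluate the product $\prod_{\alpha\in\Delta_+\setminus\Delta_{\Gamma,+}^0}2\sin\frac{\pi(x^0_\Gamma|\alpha)}{q}$ in closed form; the cancellation should leave exactly $\A_{H^0_{DS,f}(L_k(\sl_n))}=\A_{L_{-s+s/q}(\sl_s)}$, and then Proposition~\ref{Pro:asymptotics-and-collapsing} gives the isomorphism. (The case $s=1$ overlaps the $\bs\mu=(q^m,s)$ analysis already touched on in the proof of Theorem~\ref{Th:main_sl_n-1}, and is consistent with it.)

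For part (2), with $\bs\mu=(q^m,(q-1)^2)$, $m=\widetilde m-1$, Table~\ref{Tab:A_Slodowy} gives $\g^\natural\cong\C\times\sl_m\times\sl_2$ with $k_0^\natural=k_1^\natural=p-n$ and $k_2^\natural=(q-1)(k+n-m-2)=-2+2/(q-1)\cdot\text{(something)}$; more precisely at $p=n$ one computes $k_2^\natural=-(q-1)+ (q-1)/q\cdot$—so I would first redo this substitution carefully: $k+n-m-2 = -n+p/q+n-m-2$, and at $p=n$ this equals $n/q-m-2$; since $n=qm+2(q-1)=q(m+2)-2$, we get $n/q=(m+2)-2/q$, hence $k_2^\natural=(q-1)(-2/q)= -2+2/q$, which is admissible for $\sl_2$. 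Again Lemma~\ref{Lem:k_0-condition} forces $k_0^\natural=0$, i.e.\ $p=n$, so $L_{k_1^\natural}(\sl_m)\cong\C$ and the candidate is $\W_{-n+n/q}(\sl_n,f)\cong L_{-2+2/q}(\sl_2)$. The rest is the asymptotic-data comparison: use $\dim\g^f=m^2 q+2(m+2)^2-\text{(trace correction)}$ (computed from the partition $(q^m,(q-1)^2)$), check the growth equation, and then, as in part (1), evaluate the $\A$-product for the left-adjusted pyramid of shape $(q^m,(q-1)^2)$ using \eqref{eq:cardinality_pyramid-sln}, \eqref{eq:sin_formula2} and Lemma~\ref{Lem:main_identities}. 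By Lemma~\ref{Lem:asymptotic_dimension_good_gradings} the computed value is independent of the good grading, so it is legitimate to use this convenient one.

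The main obstacle I anticipate is the dimension bookkeeping in step (ii): correctly computing $\dim\g^f$ for each partition, correctly enumerating via \eqref{eq:cardinality_pyramid-sln} the multiset of values $(x^0_\Gamma|\alpha)$ as boxes $i<j$ range over the pyramid, and then organizing the resulting product of sines so that the powers of $q$ (and factors of $2$ from $\Delta^{1/2}$) cancel against the denominator $q^{|\Delta_{\Gamma,+}^0|}2^{|\Delta_\Gamma^{1/2}|/2}|P/(pq)Q^\vee|^{1/2}$ to leave precisely $1/\sqrt{s}$ (resp.\ $1/\sqrt{2}$, i.e.\ $\A_{L_{-2+2/q}(\sl_2)}$). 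The ``only if'' direction is comparatively painless: it is just the observation that $\A_{L(\lambda)}>0$ for all $\lambda$ together with the growth equation forcing $p=n$, exactly as in the proof of Theorem~\ref{Th:main_sl_n-1}. I would also remark that these pairs exhaust the type (b)/(c) cases of Lemma~\ref{Lem:choice_of_partitions} for which the nilpotent Slodowy slice is collapsing, so that together with Theorem~\ref{Th:main_sl_n-1} this completes the classification of admissible collapsing levels of $\W^k(\sl_n,f)$ for these $f$.
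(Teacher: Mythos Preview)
Your overall strategy is exactly the paper's: use Lemma~\ref{Lem:k_0-condition} to force $k_0^\natural=0$ (hence $p=n$), read off $k_1^\natural=0$ and $k_2^\natural$ from Table~\ref{Tab:A_Slodowy}, then match asymptotic growth and asymptotic dimension via Proposition~\ref{Pro:asymptotic_data_H_DS} and Corollary~\ref{Co:asymptotic_data_L}, computing the sine product from the left-adjusted pyramid. The use of Lemma~\ref{Lem:asymptotic_dimension_good_gradings} and \eqref{eq:simple} is also exactly as in the paper.

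That said, several of the concrete values you write down are wrong, and they are not just typos---they are the targets you would be trying to hit, so the cancellation you describe would not occur as stated.
\begin{itemize}
\item For $\bs\mu=(q^m,1^s)$ the dual partition is $(m+s,m^{q-1})$, so $\dim\g^f=(m+s)^2+(q-1)m^2-1$, not $m^2(q-s)+(m+1)^2 s-1$ (the latter is the formula for $(q^m,s)$, the partition treated in Theorem~\ref{Th:main_sl_n-1}). With the correct value one gets $\G_{H^0_{DS,f}(L_k(\sl_n))}=(1-1/q)(s^2-1)$, which indeed equals $\G_{L_{-s+s/q}(\sl_s)}$.
\item The asymptotic dimension of $L_{-s+s/q}(\sl_s)$ is $q^{-(s^2-1)/2}$ (by Corollary~\ref{Co:asymptotic_data_L} and Lemma~\ref{Lem:main_identities}(1) with $p^\natural=s$, $q^\natural=q$), not $1/\sqrt{s}$; the value $1/\sqrt{s}$ is $\A_{L_1(\sl_s)}$, which arose in the $p=n+1$ case of Theorem~\ref{Th:main_sl_n-1}. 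Likewise $\A_{L_{-2+2/q}(\sl_2)}=q^{-3/2}$, not $1/\sqrt{2}$.
\item For $\bs\mu=(q^m,(q-1)^2)$ the dual partition is $((m+2)^{q-1},m)$, so $\dim\g^f=m^2+(m+2)^2(q-1)-1$; your expression $m^2 q+2(m+2)^2$ does not match this.
\end{itemize}
Once these targets are corrected, the pyramid computation (counting $\#\{\alpha:(x^0_\Gamma|\alpha)=j\}$ and applying \eqref{eq:sin_formula2}) does yield exactly the required powers of $q$, and Proposition~\ref{Pro:asymptotics-and-collapsing} concludes. Your remark about the ``only if'' direction is also slightly off: here it is purely the condition $k_0^\natural=0$ (Lemma~\ref{Lem:k_0-condition}) that pins down $p=n$; no growth equation is needed, unlike the $\g_0^\natural=0$ case in Theorem~\ref{Th:main_sl_n-1}.
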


\begin{Rem}
\label{Rem:compatible_sl_n}
For $s=1$ in (1) the formula has to be understood as 
$$\W_{-n + n/q}(\mf{sl}_n,f)  \cong \C.$$
Since for $s=1$,  $\G_{L_{-s+s/q}(\mf{sl}_s)} = \frac{(q-1)(s^2-1)}{q}= 0$ and 
$\A_{L_{-s+s/q}(\mf{sl}_s)}=q^{-(s^2-1)/2}=1$ (see the below proof), 
the formulas make sense 
and are compatible with Theorem~\ref{Th:main_sl_n-1}. 
\end{Rem}

\begin{proof}
As in the proof of Theorem \ref{Th:main_sl_n-1}, we let $\g=\bigoplus_i \g_\Gamma^{i}$ be the 
even good grading for $f$ corresponding to the left-adjusted pyramid associated with the partition 
of $f$.  

(1) Fix a nilpotent element 
$f \in\overline{\O}_k$ corresponding to the partition $(q^m,1^s)$. 
According to Table \ref{Tab:A_Slodowy}, we have 
$\g^\natural  \cong \C\times \sl_{m} \times \sl_{s}$,  
$k_0^\natural=k_1^\natural = p-n$ and 
$k_2^\natural  = k+qm - m$. 
If $k$ is collapsing, then necessarily $k_0^\natural=0$, that is, $p=n$. 
We assume from now on that $p=n$.  
Hence $k_2^\natural  = k + qm  - m  = - s +s/q$, 
which is an admissible level for $\sl_s$.  
By Proposition~\ref{Pro:asymptotic_data_H_DS}, we get 
\begin{align*}
\G_{H_{DS,f}^0(L_{-n + n/q}(\mf{sl}_n))} & 
 =   \left(1 -\dfrac{1}{q}\right) (s^2-1)  =\G_{L_{-s+s/q}(\mf{sl}_s)} & 
\end{align*} 
since $\dim \g^f = (m+s)^2 + (q-1)m^2-1$.  
By Corollary~\ref{Co:asymptotic_data_L} and Lemma \ref{Lem:main_identities} (1), 
we have 
\begin{align*}
\A_{L_{-s+s/q}(\mf{sl}_s)} & = 
\dfrac{1}{q^{s(s-1)/2} q^{(s-1)/2}}
= \dfrac{1}{q^{(s^2-1)/2}} .& 
\end{align*}
On the other hand, by Proposition \ref{Pro:asymptotic_data_H_DS}, we have 
\begin{align}
 \label{eq:Amp-sln-2}
& \A_{H_{DS,f}^0(L_{-n+n/q}(\mf{sl}_n))}  = 
 \dfrac{1}{q^{| \Delta_{\Gamma,+}^0 |} 
q^{(n-1)/2} } 
\prod\limits_{\alpha\in \Delta_+ \setminus \Delta_{\Gamma,+}^0 } 
2 \sin \left(\dfrac{\pi (x_\Gamma^0|\alpha) }{q}\right) ,& 
\end{align} 
with $|\Delta_{\Gamma,+}^0 | =\frac{(m+s)(m+s-1)}{2}+ \frac{m(q-1)(m-1)}{2}$. 
Indeed, by Lemma \ref{Lem:asymptotic_dimension_good_gradings}, the asymptotic dimension does not depend 
on the good grading for such an $f$. 
Using the left-adjusted pyramid of shape $(q^m,1^s)$, we easily see that 
\begin{align}
\label{eq:sin_Delta_+-2}
\prod\limits_{\alpha\in \Delta_+\setminus \Delta_{\Gamma,+}^0 } 
2 \sin \left(\dfrac{\pi (x_\Gamma^0|\alpha) }{q}\right) 
= & 
\prod\limits_{j=1}^{q-1} 
\left( 2 \sin
\dfrac{j \pi}{q} \right)^{m^2(q-j)+sm} =  q^{qm^2/2+sm} & 
\end{align}
using \eqref{eq:sin_formula2}.  
Combining \eqref{eq:Amp-sln-2} and \eqref{eq:sin_Delta_+-2}, we 
conclude that  
$$\A_{H_{DS,f}^0(L_{-n+n/q}(\mf{sl}_n))}
= \dfrac{1}{q^{(s^2-1)/2}} =\A_{L_{-s+s/q}(\mf{sl}_s)},$$
as desired. By Proposition \ref{Pro:asymptotics-and-collapsing} it follows that $k$ 
is collapsing. 

(2) Fix a nilpotent element 
$f \in\overline{\O}_k$ corresponding to the partition $(q^m,(q-1)^2)$. 
According to Table \ref{Tab:A_Slodowy}, we have 
$\g^\natural  \cong \C \times \sl_{m} \times \sl_{2}$,  
$k_0^\natural=k_1^\natural =  p-n$ and 
$k_2^\natural  =(q-1)( k+n-m-2)$. 
If $k$ is collapsing, then necessarily $k_0^\natural=0$, that is, $p=n$. 
We assume from now on that $p=n$.  
Hence $k_2^\natural  = (q-1)(n/q-m-2)=-2+2/q$, 
which is an admissible level for $\sl_2$. 
Note that $q$ is odd since $(q,n)=1$ 
and $n=qm +2(q-1)$.   
By Proposition~\ref{Pro:asymptotic_data_H_DS}, we get  
\begin{align*}
\G_{H_{DS,f}^0(L_{-n + n/q}(\mf{sl}_n))} & 
 =  3\left(1-\frac{1}{q}\right)  =\G_{L_{-2+2/q}(\mf{sl}_2)} & 
\end{align*} 
since $\dim \g^f = m^2+(m+2)^2(q-1)-1$.  
Moreover, from (1) we know that 
\begin{align*}
\A_{L_{-2+2/q}(\mf{sl}_2)} = \dfrac{1}{q^{3/2}} . 
\end{align*}
On the other hand, by Proposition \ref{Pro:asymptotic_data_H_DS} 
and Lemma \ref{Lem:asymptotic_dimension_good_gradings}, we have 
\begin{align*}
& \A_{H_{DS,f}^0(L_{-n+n/q}(\mf{sl}_n))}  = 
 \dfrac{1}{q^{| \Delta_{\Gamma,+}^0 |} 
q^{(n-1)/2} } 
\prod\limits_{\alpha\in \Delta_+ \setminus \Delta_{+}^0 } 
2 \sin \left(\dfrac{\pi (x_\Gamma^0|\alpha) }{q}\right) ,& 
\end{align*} 
with $|\Delta_{\Gamma,+}^0 | =\frac{m(m-1)}{2}+\frac{(m+1)(m+2)(q-1)}{2}$.  
Using the left-adjusted pyramid of shape $(q^m,(q-1)^2)$, we easily see that 
\begin{align*}
\prod\limits_{\alpha\in \Delta_+\setminus \Delta_{+}^0 } 
2 \sin \left(\dfrac{\pi (x^0|\alpha) }{q}\right) 
= & 
\prod\limits_{j=1}^{q-1} 
\left( 2 \sin
\dfrac{j \pi}{q} \right)^{m(m+2) + (m+2)^2 (q-j-1)}  . & 
\end{align*}
As in the previous cases, we 
conclude using \eqref{eq:sin_formula} and \eqref{eq:sin_formula2} that 
$$\A_{H_{DS,f}^0(L_{-n+n/q}(\mf{sl}_n))}=\A_{L_{-2+2/q}(\mf{sl}_2)},$$
as desired. By Proposition \ref{Pro:asymptotics-and-collapsing} it follows that $k$ 
is collapsing. 
\end{proof}

\begin{Rem}As has been observed 
in the above proof, if $k$ is collapsing for $f \in \O_{(q^{ m})}$, 
with $n = q  m$, then necessarily 
$k=-n+(n+1)/q$ or $k=-n+(n-1)/q$. 
Only the first case leads to an admissible level. 
However, one may ask whether the following holds\footnote{This statement 
has been recently established in \cite{Adamovic-et-al_New-collapsing}.}:
$$\W_{-n + (n-1)/q}(\mf{sl}_n,f)  \cong L_{-1}(\mf{sl}_{ m}).$$
(The two above vertex algebras have the same central charge.) 
\end{Rem}

We propose the following conjectural extension of Theorem \ref{Th:main_sl_n-2}. 

\begin{Conj}
\label{Conj:collapsing_sln}
Let  $f \in \sl_n$ be a nilpotent element associated with  
a partition $(q^m, \bs\nu)$,  where 
$1<m\le \widetilde m$ in the notation of \eqref{eq:Ok_sl_n},  
and $\bs\nu=(\nu_1,\ldots,\nu_t)$ is a partition 
of $s:= n - q m$ such that $\nu_1 < q$.  
Then 
$$\W_{-n+n/q}(\sl_n,f) \cong \W_{-s+s/q}(\sl_s,f'),$$ 
where $f'$ is a nilpotent element in $\sl_s$ associated 
with the partition $\bs\nu$. 
 \end{Conj}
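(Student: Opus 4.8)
The plan is to prove Conjecture \ref{Conj:collapsing_sln} by bootstrapping from the row removal rule in type $A$ together with the transitivity of Drinfeld--Sokolov reduction, exactly in the spirit of the proofs of Theorems \ref{Th:main_sl_n-1} and \ref{Th:main_sl_n-2}. First I would identify the key geometric input: by Lemma \ref{lem:erasing_row_type-A}, erasing the $m$ common rows $q^m$ of $\bs\lambda = (q^{\widetilde m}, \widetilde s)$ and $\bs\mu = (q^m, \bs\nu)$ gives $\Slo_{\O_{\bs\lambda}, f} \cong \Slo_{\O_{\bs\lambda'}, f'}$ where $\bs\lambda' = (q^{\widetilde m - m}, \widetilde s)$ is exactly the partition $\O_{k'}$ attached to the admissible level $k' = -s + s/q$ for $\sl_s$ (with $s = n - qm$), and $f' \in \O_{\bs\nu} \subset \overline{\O}_{k'}$. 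This shows the associated varieties match: $X_{\W_{-n+n/q}(\sl_n,f)} = \Slo_{\O_k,f} \cong \Slo_{\O_{k'},f'} = X_{\W_{-s+s/q}(\sl_s,f')}$ as Poisson varieties, using Theorems \ref{Th:admissible-orbits}, \ref{Th:Associated_variety-DS} and \ref{Th:simplicity}, and noting that every nilpotent element of $\sl_n$ admits an even good grading so that \eqref{eq:simple} applies on both sides.

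The heart of the argument is then to promote this variety isomorphism to a vertex algebra isomorphism, and here I would follow the asymptotic-data strategy of Section \ref{sec:strategy}. The plan is to compute $\G$ and $\A$ for both $H_{DS,f}^0(L_{-n+n/q}(\sl_n))$ and $H_{DS,f'}^0(L_{-s+s/q}(\sl_s))$ using Proposition \ref{Pro:asymptotic_data_H_DS} (principal admissible case, part (1)). For the asymptotic growth, a short calculation with $\dim \g^f$ and the formula $\G_{H_{DS,f}^0(L_k(\g))} = \dim\g^f - h^\vee_\g \dim\g/(pq)$, carried out with $p = n$ on the left and $p = s$ on the right, should give equality once one uses $\dim\sl_n^f - \dim\sl_s^{f'} = $ (the number of boxes in the $q^m$ block of pairwise-commuting adjacencies) $= qm^2$-type combinatorial identities coming from the pyramid. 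For the asymptotic dimension, I would compare the product formulas from Corollary \ref{Co:asymptotic_data_L} / Proposition \ref{Pro:asymptotic_data_H_DS}; after applying Lemma \ref{Lem:asymptotic_dimension_good_gradings} to replace the grading by the left-adjusted (even) pyramid on both sides, the extra factor in the $\sl_n$ computation relative to the $\sl_s$ one consists of the sines attached to roots meeting the $q\times m$ rectangle, and identity \eqref{eq:sin_formula2} collapses this extra factor to a power of $q$ that exactly cancels the difference in the $q^{|\Delta^0_{\Gamma,+}|} q^{(n-1)/2}$ prefactors. Once $\G$ and $\A$ agree on both sides, I would invoke Theorem \ref{Th:main} (or directly Proposition \ref{Pro:asymptotics-and-collapsing}'s underlying mechanism, generalized): the embedding $\W_{-s+s/q}(\sl_s,f') \hookrightarrow \W_{-n+n/q}(\sl_n,f)$ — which must be constructed — is conformal by Proposition \ref{Pro:conformal} because the growths match, and matching of $\A$ forces it to be an isomorphism.

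The main obstacle I anticipate is producing the vertex algebra homomorphism $\W_{-s+s/q}(\sl_s, f') \to \W_{-n+n/q}(\sl_n, f)$ in the first place; unlike in Theorems \ref{Th:main_sl_n-1} and \ref{Th:main_sl_n-2}, the target of the ``collapse'' is now itself a $W$-algebra rather than an affine vertex algebra, so Theorem \ref{Th:main} as stated does not directly apply. One natural route is \emph{partial} (two-step) Drinfeld--Sokolov reduction: choose a good grading on $\sl_n$ refining the filtration so that reducing $L_{-n+n/q}(\sl_n)$ with respect to $f$ factors as first reducing to something whose associated graded affine part is $L_{-s+s/q}(\sl_s)$ (the row-removal picture suggests $\g^\natural$ for the ``block'' nilpotent of $\sl_n$ given by the $q^m$ rectangle contains $\sl_s$ — more precisely, reducing $\sl_n$ at $f_{\text{block}} = $ regular-in-each-$q$-string nilpotent yields, at a collapsing level by Theorem \ref{Th:main_sl_n-1}, an affine $\sl_s$-type vertex algebra), and then the residual reduction by $f'$ intertwines. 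Making this ``reduction by stages'' precise — i.e. a Frenkel--Kac--Wakimoto / van~Ekeren--Heluani style statement $H_{DS,f}^0 \cong H_{DS,f'}^0 \circ H_{DS, f_{\text{block}}}^0$ at the relevant levels — is the technical crux; if one has it, combined with the $\W_{-n+n/q}(\sl_n, f_{\text{block}}) \cong L_{-s+s/q}(\sl_s)$ collapse of Theorem \ref{Th:main_sl_n-1} (applied with the roles suitably arranged), the homomorphism and then the isomorphism follow, and the asymptotic bookkeeping above serves to confirm there is no kernel or cokernel. I would expect that for the purposes of the paper this conjecture is left open precisely because this reduction-by-stages compatibility, while very plausible and consistent with all the numerics, is not available in the literature in the precise form needed.
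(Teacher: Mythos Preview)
Your diagnosis is correct: this statement is a \emph{Conjecture} in the paper, and the paper does not prove it. What the paper does offer as evidence is precisely the first two ingredients you identify. Immediately after the conjecture it notes that the associated varieties match via Lemma~\ref{lem:erasing_row_type-A} (row removal), and then proves a Proposition showing the asymptotic growths agree, exactly the computation you sketch. The paper stops there; it does not carry out the asymptotic dimension comparison you propose, nor does it construct a vertex algebra homomorphism between the two $W$-algebras.

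You have correctly located the genuine obstruction. Theorem~\ref{Th:main} and Proposition~\ref{Pro:asymptotics-and-collapsing} require as input a homomorphism from a \emph{universal affine} vertex algebra $V^{k^\natural}(\g^\natural)$ into the target, and then exploit the rigidity of admissible affine vertex algebras (Theorem~\ref{Th:finite_extensions}) to force equality. When the candidate target is itself a $W$-algebra $\W_{-s+s/q}(\sl_s,f')$ rather than an affine vertex algebra, there is no analogous embedding available a priori, and no substitute for Theorem~\ref{Th:finite_extensions}. Your proposed remedy---reduction by stages, factoring $H_{DS,f}^0$ through $H_{DS,f_{\text{block}}}^0$ so as to use the collapse $\W_{-n+n/q}(\sl_n,f_{\text{block}})\cong L_{-s+s/q}(\sl_s)$ from Theorem~\ref{Th:main_sl_n-2} as an intermediate step---is exactly the natural strategy, and your closing remark is on point: the paper leaves the conjecture open because such a reduction-by-stages statement, though expected, was not available in the form needed. (Indeed, results of this kind for type $A$ have since appeared in subsequent literature and do settle the conjecture along the lines you outline.)
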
 
 Note that Conjecture~\ref{Conj:collapsing_sln} has been proven in the special 
case where $n=7$, $q=3$ and $s=4$ 
by Francesco Allegra \cite{Allegra19}. 
This case is in fact a particular case of Theorem~\ref{Th:main_sl_n-2} 
used with $f$ corresponding to the partitions $(3^2,1)$, $(3,2^2)$ and $(3,1^4)$. 
It seems that this conjecture has been stated  in \cite{XieYan2}.

The associated variety of 
$\W_{-n+n/q}(\sl_n,f)$ is 
$\Slo_{\O_{(q^{\widetilde m},\widetilde s)},f}$ while the associated variety of 
$\W_{-s+s/q}(\sl_s,f')$ is 
$\Slo_{\O_{(q^{\widetilde m -m},\widetilde s)},f'}$. 
These two nilpotent Slodowy slices are isomorphic 
by Lemma \ref{lem:erasing_row_type-A}. 
The following proposition gives further 
evidence for Conjecture \ref{Conj:collapsing_sln}. 
\begin{Pro}
In the above notations, 
the vertex algebras 
$\W_{-n+n/q}(\sl_n,f)$ and 
$\W_{-s+s/q}(\sl_s,f')$ have the same 
asymptotic growth.  
\end{Pro}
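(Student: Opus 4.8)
The plan is to compute both asymptotic growths explicitly using Proposition~\ref{Pro:asymptotic_data_H_DS} and show they coincide. Recall that for an admissible level $k=-h_\g^\vee+p/q$ and $f\in\overline{\mathbb{O}}_k$, Proposition~\ref{Pro:asymptotic_data_H_DS} gives
\[
\G_{H^0_{DS,f}(L_k(\g))}=\dim\g^f-\frac{h_\g^\vee\dim\g}{pq}.
\]
First I would apply this to $\g=\sl_n$ with the collapsing level $k=-n+n/q$, so $p=n$, obtaining
\[
\G_{\W_{-n+n/q}(\sl_n,f)}=\dim(\sl_n)^f-\frac{n^2-1}{q}.
\]
Then I would apply it to $\g=\sl_s$ with the level $k'=-s+s/q$, again with $p=s$, so that $f'\in\mathbb{O}_{k'}$ since $\mathbb{O}_{k'}$ is the orbit with partition $(q^{\widetilde m-m},\widetilde s)$ and $f'$ corresponds to $\bs\nu=(q^0,\widetilde s,\dots)$; more precisely one checks $f'\in\overline{\mathbb{O}}_{k'}$, which holds because $\bs\nu$ has all parts $<q$. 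This gives
\[
\G_{\W_{-s+s/q}(\sl_s,f')}=\dim(\sl_s)^{f'}-\frac{s^2-1}{q}.
\]

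The core of the argument is then the identity
\[
\dim(\sl_n)^f-\dim(\sl_s)^{f'}=\frac{n^2-s^2}{q},
\]
which I would prove combinatorially. The dimension of the centraliser of a nilpotent element in $\mathfrak{gl}_N$ with partition $\bs\lambda=(\lambda_1,\lambda_2,\dots)$ (conjugate partition $\bs\lambda^{t}=(\lambda_1^t,\lambda_2^t,\dots)$) is $\sum_i (\lambda_i^t)^2$, so $\dim(\sl_N)^{f_{\bs\lambda}}=\sum_i(\lambda_i^t)^2-1$. Writing the partition of $f$ as $\bs\lambda=(q^m,\bs\nu)$ and that of $f'$ as $\bs\nu$, I would compare the conjugate partitions: $\bs\lambda^t$ is obtained from $\bs\nu^t$ by adding $m$ to each of the first $q$ columns. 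If $\bs\nu^t=(\nu_1^t,\dots,\nu_q^t,0,\dots)$ (padding with zeros up to length $q$, which is legitimate since $\nu_1<q$), then $\bs\lambda^t=(\nu_1^t+m,\dots,\nu_q^t+m)$. Hence
\[
\dim(\sl_n)^f-\dim(\sl_s)^{f'}=\sum_{j=1}^q\big((\nu_j^t+m)^2-(\nu_j^t)^2\big)=2m\sum_{j=1}^q\nu_j^t+qm^2=2ms+qm^2,
\]
using $\sum_j\nu_j^t=|\bs\nu|=s$. On the other hand $n=qm+s$ gives $n^2-s^2=(n-s)(n+s)=qm(qm+2s)=q^2m^2+2qms$, so $(n^2-s^2)/q=qm^2+2ms$, matching exactly.

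Combining the two displayed formulas for the growths with this identity yields $\G_{\W_{-n+n/q}(\sl_n,f)}=\G_{\W_{-s+s/q}(\sl_s,f')}$, as claimed. The only genuine subtlety — the step I expect to require the most care — is verifying that both $f\in\overline{\mathbb{O}}_k$ and $f'\in\overline{\mathbb{O}}_{k'}$ so that Proposition~\ref{Pro:asymptotic_data_H_DS} legitimately applies on both sides; for $f$ this is the hypothesis $m\le\widetilde m$ together with $\nu_1<q$ (dominance order), and for $f'$ one notes that $\mathbb{O}_{k'}$ has partition $(q^{\widetilde m-m},\widetilde s)$ by \eqref{eq:Ok_sl_n} and $\bs\nu\le(q^{\widetilde m-m},\widetilde s)$ again by $\nu_1<q$ and $|\bs\nu|=s=q(\widetilde m-m)+\widetilde s$. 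Everything else is the elementary partition bookkeeping above, and no delicate product-of-sines identity (of the kind needed for the asymptotic \emph{dimension}) is required here.
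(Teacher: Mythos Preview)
Your proof is correct and follows essentially the same approach as the paper: both compute the two growths via Proposition~\ref{Pro:asymptotic_data_H_DS}, express the centraliser dimensions in terms of the dual partition, observe that the dual of $(q^m,\bs\nu)$ is obtained from that of $\bs\nu$ by adding $m$ to each of the first $q$ parts, and then verify the resulting numerical identity. Your write-up is in fact slightly more explicit than the paper's (which simply says the identity is ``readily verified''), and your remark on checking $f\in\overline{\O}_k$, $f'\in\overline{\O}_{k'}$ is a welcome clarification.
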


\begin{proof} 
Write $\bs{\mu}=(\mu_1,\ldots,\mu_r)$ the partition $(q^m,\bs\nu)$ corresponding to $f$. 
Notice that 
$\G_{\W_{-n+n/q}(\sl_n,f)}=\dim \g^f -\frac{n}{n q}  \dim \sl_n  = 
 \sum_{i=1}^{q}( \mu_i^*)^2- 1 -\frac{(n^2-1)}{q}$ 
 and 
$\G_{\W_{-s+s/q}(\sl_s,f')}=\dim \g^{f} - \frac{s}{s q} \dim \sl_s = 
 \sum_{i=1}^{q}( \mu_i^*-m)^2 - 1 -\frac{(s^2-1)}{q},$
 where $(\mu_1^*,\ldots,\mu_{q}^*)$ is the dual partition to $\bs\mu$. 
But we readily verify from the values of the $\mu_i$'s that the following identity holds: 
$ \sum_{i=1}^{q}( \mu_i^*)^2 - \frac{(n^2-1)}{q} 
 =  \sum_{i=1}^{q}( \mu_i^*-m)^2 - \frac{(s^2-1)}{q},$ 
 whence the lemma.
 \end{proof}

\section{Collapsing levels for types $\sp_n$ and $\so_n$} 
\label{sec:classical}
Let $n\in \Z_{> 0}$. 
We study in this section collapsing levels for $\sp_n$ and $\so_n$. 

\subsection*{Notations for $\sp_{n}$} 
We realise $\g=\mf{sp}_{n}$ as the set of $n$-size square matrices 
$x$ such that $ x^T J_n + J_n x =0$ where 
$J_n$ is the anti-diagonal 
matrix given by 
$$J_n := \begin{pmatrix} 
0 & U_{n/2} & \\
- U_{n/2} & 0
\end{pmatrix},$$
where for $m \in \Z_{\ge 0}$,  $U_m$ stands for 
the $m$-size square matrix 
with unit on the anti-diagonal.
For an $m$-size square matrix $x$, we write $\widehat{x}$ for the matrix 
$U_m x^T U_m$, where $x^T$ is the transpose 
matrix of $x$.  
Thus, 
\begin{align*}
\sp_n = \left\{ \begin{pmatrix} a &  b \\
c & - \hat{a} \\
\end{pmatrix} \colon a,b,c \in \mf{gl}_{n/2}, \, b =  \hat{b}, \, c=  \hat{c}\right\}. 
\end{align*}
Writing $e_{i,j}$ for the $i,j$-matrix unit as in the $\sl_n$ case, 
the following
matrices give a Chevalley basis for $\g$: 
\begin{align*}
\{e_{i,j}- e_{-j,-i}\}_{1 \le i,j \le n/2}\cup \{e_{i,-j} & + e_{j,-i},e_{-i,j}+ e_{-j,i} \}_{1 \le i<j \le n/2} \\
&\cup \{e_{k,-k},e_{-k,k}\}_{1 \le k \le n/2}.
\end{align*}
Let $\sigma_{i,j} \in\{\pm 1\}$ denote the $e_{i,j}$-coefficient of the unique element 
in the above basis that involves $e_{i,j}$.
The Killing form of $\g=\sp_n$ 
is given by $\kappa_\g(x,y)
= (n+2) {\rm tr}(xy)$ and $(x|y)_\g ={\rm tr}(xy)$. 

Set 
$$\P_{-1}(n) :=\{\bs{\lambda} \in \P(n)\colon \text{number of parts of size $\lambda$ is even, for each odd number $\lambda$}\},$$ 
By \cite[Theorem~5.1.3]{CMa}, nilpotent orbits of $\sp_{n}$ 
are parametrised by $\P_{-1}(n)$. 
For $\bs{\lambda}=(\lambda_{1},\dots ,\lambda_{r})\in \P_{-1}(n)$, 
we shall denote by 
$\O_{-1;\bs{\lambda}}$, 
or simply by $\O_{\bs{\lambda}}$ when there is no possible confusion,  
the corresponding nilpotent orbit of $\sp_{n}$. 
As in the case of $\sl_{n}$, if $\bs{\lambda}, \bs{\mu} \in \P_{-1} (n)$, then 
$\O_{-1;\bs{\mu}} \subset \overline{\O}_{-1;\bs{\lambda}}$ if and only if
$\bs{\mu} \leqslant \bs{\lambda}$. 

Given $\bs{\lambda}\in\P(n)$, there exists a unique 
$\bs{\lambda}^{-} \in \P_{-1}(n)$ 
such that $\bs{\lambda} ^{-} \le \bs{\lambda} $, 
and if $\bs{\mu}\in\P_{-1}(n)$ verifies 
$\bs{\mu} \le \bs{\lambda} $, then 
$\bs{\mu}\le \bs{\lambda} ^{-}$. 

\subsection*{Notations for $\so_n$}
We realise $\so_n$ as the set of $n$-size square matrices 
$x$ such that $ x^T K_n + K_n x =0$, where 
$K_n$ is the anti-diagonal 
matrix given by 
$$K_n := \begin{pmatrix} 
0 & 0 & U_{n/2}  \\
0 & 2 & 0\\
U_{n/2}  & 0 & 0
\end{pmatrix} 
\text{ if }n\text{ is odd}, \qquad K_n := \begin{pmatrix} 
0 & U_{n/2} \\
U_{n/2}  & 0 
\end{pmatrix} \text{ if }n \text{ is even}.$$ 
Thus we get that 
\begin{align*}
\so_n = \left\{ \begin{pmatrix} a & u & b \\
v & 0 & - \hat{u}\\
c & -\hat{v} & -\hat{a} \\
\end{pmatrix} \colon a,b,c \in \mf{gl}_{n/2}, \, 
b = - \hat{b}, \, c= - \hat{c}\right\} \text{ if }n\text{ is odd},
\end{align*}
\begin{align*}
\so_n  = \left\{ \begin{pmatrix} a &  b \\
c & -\hat{a} \\
\end{pmatrix} \colon a,b,c \in \mf{gl}_{n/2}, \, b = - \hat{b}, \, c= - \hat{c}\right\} \text{ if }n\text{ is even}. 
\end{align*}.
Writing $e_{i,j}$ for the $i,j$-matrix unit as in the $\sl_n$ case, 
we see that the following
matrices give a Chevalley basis for $\g$ 
(omitting the last family if $n$ is even): 
\begin{align*}
\{e_{i,j}- e_{-j,-i}\}_{1 \le i,j \le n/2}\cup \{e_{i,-j}- e_{j,-i},e_{-j,i}- e_{-i,j} \}_{1 \le i <j \le n/2}  \\
\cup \{2e_{k,0}- e_{0,-k},e_{0,k}-2 e_{-k,0}\}_{1 \le k \le n/2}.
\end{align*}
As in the $\mf{sp}_{n}$ case, let $\sigma_{i,j} \in\{\pm 1\}$ denote the 
$e_{i,j}$-coefficient of the unique element 
in the above basis that involves $e_{i,j}$.

The Killing form of $\g=\so_n$ 
is given by $\kappa_\g(x,y)= (n-2) {\rm tr}(xy)$ 
and $(x|y)_\g ={\rm tr}(xy)/2$. 
Set 
$$\P_{1}(n) :=\{\bs{\lambda} \in \P(n)\colon \text{number 
of parts of size $\lambda$ is even, for each even $\lambda$}\}.$$
By \cite[Theorems 5.1.2 and 5.1.4]{CMa}, 
nilpotent orbits of $\so_{n}$ 
are parametrised by $\P_1(n)$, with the exception that each 
{\em very even} 
partition $\bs{\lambda} \in\P_{1}(n)$ (i.e., $\bs{\lambda}$ has only even parts) 
corresponds to two nilpotent orbits.
For $\bs{\lambda}\in \P_1(n)$, not very even, we shall denote by 
$\O_{1;\bs\lambda}$, 
or simply by $\O_{\bs{\lambda}}$ when there is no possible confusion, 
the corresponding nilpotent orbit of $\so_n$. 
For very even $\bs{\lambda}\in \P_1(n)$, we shall denote by 
$\O_{1;\bs{\lambda}}^{I}$ 
and $\O_{1;\bs{\lambda}}^{I\!I}$ the two corresponding nilpotent orbits 
of $\so_n$.  
In fact, their union forms a single $O(n)$-orbit. 
Thus nilpotent orbits of $\mf{o}_{n}$ 
are parametrised by $\P_1(n)$. 

If $\bs{\lambda},\bs{\mu}\in\P_{1} (n)$, then 
$\overline{\O}_{1;\bs{\mu}}^{\bullet} \subsetneq
\overline{\O}_{1;\bs{\lambda}}^{\bullet}$ if and only if
$\bs{\mu} < \bs{\lambda}$, 
where $\O_{1;\bs{\lambda}}^{\bullet}$ is either 
$\O_{1;\bs{\lambda}}$, $\O_{1;\bs{\lambda}}^{I}$ or 
$\O_{1;\bs{\lambda}}^{II}$ according 
to whether $\bs{\lambda}$ is very even or not.

Given $\bs{\lambda} \in\P(n)$, there exists a unique 
$\bs{\lambda}^{+}
\in \P_{1}(n)$ such that $\bs{\lambda}^{+} \le \bs{\lambda} $, and if 
$\bs{\mu}\in\P_{1}(n)$ verifies $\bs{\mu} \le \bs{\lambda} $, then 
$\bs{\mu}\le \bs{\lambda} ^{+}$.

\begin{Def} \label{Def:eps_degeneration}
Assume that $\bs{\lam} \in \P_{\eps}(n)$, for $\eps \in \{\pm 1\}$. 
An {\em $\eps$-degeneration} of $\bs{\lam}$ is 
an element $\bs{\mu} \in \P_{\eps}(n)$ such that 
$\O_{\eps; \bs{\mu}}\subsetneq \overline{\O}_{\eps; \bs{\lam}}$, 
that is, $\bs{\mu} < \bs{\lam}$. 
A $\eps$-degeneration $\bs{\mu}$ of $\bs{\lam}$ is said to be {\em minimal} 
if $\O_{\eps;\bs{\mu}}$ is open in 
$ \overline{\O}_{\eps;\bs{\lam}} \setminus \O_{\eps;\bs{\lam}}$. 
\end{Def}

\subsection{Symplectic and orthogonal pyramids}
\label{sub:sympl_pyramids}
As in the $\sl_n$ case, there is a bijection between the set of good gradings 
of $\sp_n$ or $\so_n$ compatible with a given nilpotent element 
and the set of {\em some} pyramids of shape the corresponding partition 
of $\P_\eps(n)$, with $\eps \in \{\pm 1\}$. 
Such pyramids are called {\em symplectic pyramids} for $\sp_n$ 
($\eps=-1$) and {\em orthogonal pyramids} for $\so_n$ ($\eps=1$). 
For $\sp_n$ and $\so_n$, 
we will be only using symplectic or orthogonal Dynkin pyramids. 
This is a diagram consisting of $n$ boxes each of size $2$ units by
$2$ units drawn in the $xy$-plane. As in the $\sl_n$ case, the coordinates of a box are the coordinates of its
midpoint, and the row and column numbers of a box mean its $y$- and $x$-coordinate, respectively, but there are a few differences. 

Let us first explain what are the  
{symplectic} Dynkin pyramids  
of shape $\bs\lam \in \P_{-1}(n)$. 
The parts of $\bs\lam$ indicate the number of
boxes in each row, and the rows are added to the diagram 
so that 
we get a symmetric pyramid with respect to the point $(0,0)$. 
The only complication is that if some (necessarily even) part $\lam_i$ 
of $\bs\lam$ 
has odd multiplicity, then the first time a row of this length is added to the diagram it is split
into two halves, the right half is added to the next free row in the upper half plane in columns
$1, 3,\ldots,\lam_i - 1$ 
and the left half is added to the lower half plane in a centrally symmetric way.
We refer the exceptional rows arising in this way to as {\em skew rows}. 
The missing boxes in skew
rows are drawn as a box with a cross through it.
We number the boxes 
of the symplectic Dynkin pyramid 
with labels $1,\ldots,n/2,-n/2,\ldots,-1$ in such a way that $i$ and 
$-i$ appear in centrally symmetric boxes, for $i=1,\ldots,n/2$. 
As a rule, we will number the first $n/2$ boxes of symplectic Dynkin pyramids  
from top right to bottom left. 

For example, we represent in Figure \ref{Fig:sp_n-pyr1} 
and Figure \ref{Fig:sp_n-pyr2} the numbered 
symplectic Dynkin pyramids of shape $\bs\lam=(5^2,1^2)$ 
and $\bs\lam=(5^2,4,2)$, respectively.

{\tiny
\begin{center}
\begin{figure}[h]
\setlength\unitlength{0.0175cm}
\begin{minipage}[l]{.46\linewidth} 
\hspace{2cm}\begin{picture}(0,100)

\put(40,0){\line(1,0){20}}
\put(0,20){\line(1,0){100}}
\put(0,40){\line(1,0){100}}
\put(0,60){\line(1,0){100}}
\put(40,80){\line(1,0){20}}

\put(40,0){\line(0,1){80}}
\put(60,0){\line(0,1){80}}
\put(0,20){\line(0,1){40}}
\put(20,20){\line(0,1){40}}
\put(80,20){\line(0,1){40}}
\put(100,20){\line(0,1){40}}

\put(90,50){\makebox(0,0){{\Tiny{$1$}}}}
\put(90,30){\makebox(0,0){{\Tiny{$2$}}}}
\put(70,50){\makebox(0,0){{\Tiny{$3$}}}}
\put(70,30){\makebox(0,0){{\Tiny{$4$}}}}
\put(50,70){\makebox(0,0){{\Tiny{$5$}}}}
\put(50,50){\makebox(0,0){{\Tiny{$6$}}}}
\put(50,30){\makebox(0,0){{\Tiny{-$6$}}}}
\put(50,10){\makebox(0,0){{\Tiny{-$5$}}}}
\put(30,50){\makebox(0,0){{\Tiny{-$4$}}}}
\put(30,30){\makebox(0,0){{\Tiny{-$3$}}}}
\put(10,50){\makebox(0,0){{\Tiny{-$2$}}}}
\put(10,30){\makebox(0,0){{\Tiny{-$1$}}}}

\put(50,40){\makebox(0,0){{\Tiny{$\bullet$}}}}
\end{picture}
\caption{\footnotesize{Symplectic Dynkin pyramid of shape $(5^2,1^2)$}} 
\label{Fig:sp_n-pyr1}
\end{minipage} \hfill \begin{minipage}[l]{.46\linewidth}
\setlength\unitlength{0.0175cm}
\hspace{2cm}\begin{picture}(0,100)
\put(10,0){\line(1,0){80}}
\put(10,80){\line(1,0){80}}
\put(0,20){\line(1,0){100}}
\put(0,40){\line(1,0){100}}
\put(0,60){\line(1,0){100}}

\put(0,20){\line(0,1){40}}
\put(20,20){\line(0,1){40}}
\put(40,20){\line(0,1){40}}
\put(60,20){\line(0,1){40}}
\put(80,20){\line(0,1){40}}
\put(100,20){\line(0,1){40}}

\put(10,0){\line(0,1){20}}
\put(30,0){\line(0,1){20}}
\put(50,0){\line(0,1){20}}
\put(70,0){\line(0,1){20}}
\put(90,0){\line(0,1){20}}

\put(10,60){\line(0,1){20}}
\put(30,60){\line(0,1){20}}
\put(50,60){\line(0,1){20}}
\put(70,60){\line(0,1){20}}
\put(90,60){\line(0,1){20}}

\put(10,60){\line(1,1){20}}
\put(10,80){\line(1,-1){20}}

\put(70,0){\line(1,1){20}}
\put(70,20){\line(1,-1){20}}

\put(90,50){\makebox(0,0){{\Tiny{$1$}}}}
\put(90,30){\makebox(0,0){{\Tiny{$2$}}}}
\put(70,50){\makebox(0,0){{\Tiny{$4$}}}}
\put(70,30){\makebox(0,0){{\Tiny{$5$}}}}
\put(50,50){\makebox(0,0){{\Tiny{$8$}}}}
\put(50,30){\makebox(0,0){{\Tiny{-$8$}}}}

\put(30,50){\makebox(0,0){{\Tiny{-$5$}}}}
\put(30,30){\makebox(0,0){{\Tiny{-$4$}}}}
\put(10,50){\makebox(0,0){{\Tiny{-$2$}}}}
\put(10,30){\makebox(0,0){{\Tiny{-$1$}}}}

\put(80,70){\makebox(0,0){{\Tiny{$3$}}}}
\put(60,70){\makebox(0,0){{\Tiny{$6$}}}}
\put(40,70){\makebox(0,0){{\Tiny{-$7$}}}}

\put(20,10){\makebox(0,0){{\Tiny{-$3$}}}}
\put(40,10){\makebox(0,0){{\Tiny{-$6$}}}}
\put(60,10){\makebox(0,0){{\Tiny{$7$}}}}

\put(50,40){\makebox(0,0){{\Tiny{$\bullet$}}}}
\end{picture}
\caption{\footnotesize{Symplectic Dynkin pyramid of shape $(5^2,4,2)$}} 
\label{Fig:sp_n-pyr2}
\end{minipage}
\end{figure}
\end{center}}

Let us now explain what are the  
{orthogonal} Dynkin pyramids  
of shape $\bs\lam \in \P_{1}(n)$. 
Assume to start with
that $n$ is even. 
Then the orthogonal 
Dynkin pyramid is constructed as in the symplectic case, adding
rows of lengths determined by the parts of $\bs\lam$ 
working outwards from the $x$-axis starting with
the largest part, in a centrally symmetric way. 
The only difficulty is if some (necessarily odd)
part of $\bs\lam$  appears with odd multiplicity. 
As $n$ is even, the number of distinct parts having odd
multiplicity is even. 
Choose $i_1 < j_1 < \cdots < i_r < j_r$ such that $\lam_{i_1} > \lam_{j_1} >\cdots > \lam_{i_r }> \lam_{j_r}$ are
representatives for all the distinct odd parts of $\bs\lam$ having odd multiplicity. 
Then the first time
the part $\lam_{i_s}$ needs to be added to the diagram, the part $\lam_{j_s}$ 
is also added at the same time, so
that the parts $\lam_{i_s}$ and $\lam_{j_s}$ of $\bs\lam$ 
contribute two centrally symmetric rows to the diagram, one
row in the upper half plane with boxes in columns $1-\lam_{j_s}, 3-\lam_{j_s},\ldots,
\lam_{i_s}-1$ and the other 
row in the lower half plane with boxes in columns 
$1 -\lam_{i_s} , 3-\lam_{i_s},\ldots,\lam_{j_s}-1$. 
We will refer
to the exceptional rows arising in this way as {\em skew rows}. 
We number the boxes exactly as in the symplectic case. 

For example, we represent in Figure \ref{Fig:so_n-pyr1} 
and Figure \ref{Fig:so_n-pyr2} the numbered 
orthogonal Dynkin pyramids of shape $\bs\lam=(4^2,1^2)$ 
and $\bs\lam=(3,1^3)$, respectively. 

{\tiny
\begin{center}
\begin{figure}[h]
\setlength\unitlength{0.0175cm}
\begin{minipage}[l]{.46\linewidth} 
\hspace{2cm}\begin{picture}(0,100)

\put(40,0){\line(1,0){20}}
\put(10,20){\line(1,0){80}}
\put(10,40){\line(1,0){80}}
\put(10,60){\line(1,0){80}}
\put(40,80){\line(1,0){20}}

\put(10,20){\line(0,1){40}}
\put(30,20){\line(0,1){40}}
\put(50,20){\line(0,1){40}}
\put(70,20){\line(0,1){40}}
\put(90,20){\line(0,1){40}}
\put(40,0){\line(0,1){20}}
\put(60,0){\line(0,1){20}}
\put(40,60){\line(0,1){20}}
\put(60,60){\line(0,1){20}}

\put(80,50){\makebox(0,0){{\Tiny{$1$}}}}
\put(80,30){\makebox(0,0){{\Tiny{$2$}}}}
\put(60,50){\makebox(0,0){{\Tiny{$3$}}}}
\put(60,30){\makebox(0,0){{\Tiny{$4$}}}}
\put(50,70){\makebox(0,0){{\Tiny{$5$}}}}
\put(50,10){\makebox(0,0){{\Tiny{-$5$}}}}
\put(40,50){\makebox(0,0){{\Tiny{-$4$}}}}
\put(40,30){\makebox(0,0){{\Tiny{-$3$}}}}
\put(20,50){\makebox(0,0){{\Tiny{-$2$}}}}
\put(20,30){\makebox(0,0){{\Tiny{-$1$}}}}

\put(50,40){\makebox(0,0){{\Tiny{$\bullet$}}}}
\end{picture}
\caption{\footnotesize{Orthogonal Dynkin pyramid of shape $(4^2,1^2)$}} 
\label{Fig:so_n-pyr1}
\end{minipage} \hfill \begin{minipage}[l]{.46\linewidth}
\setlength\unitlength{0.0175cm}
\hspace{2cm}\begin{picture}(0,100)
\put(40,0){\line(1,0){20}}
\put(20,20){\line(1,0){60}}
\put(20,40){\line(1,0){60}}
\put(20,60){\line(1,0){60}}
\put(40,80){\line(1,0){20}}

\put(40,0){\line(0,1){80}}
\put(60,0){\line(0,1){80}}
\put(20,20){\line(0,1){40}}
\put(80,20){\line(0,1){40}}

\put(20,20){\line(1,1){20}}
\put(20,40){\line(1,-1){20}}
\put(60,40){\line(1,1){20}}
\put(60,60){\line(1,-1){20}}

\put(70,30){\makebox(0,0){{\Tiny{$1$}}}}
\put(50,70){\makebox(0,0){{\Tiny{$2$}}}}
\put(50,50){\makebox(0,0){{\Tiny{$3$}}}}
\put(50,30){\makebox(0,0){{\Tiny{-$3$}}}}
\put(50,10){\makebox(0,0){{\Tiny{-$2$}}}}
\put(30,50){\makebox(0,0){{\Tiny{-$1$}}}}

\put(50,40){\makebox(0,0){{\Tiny{$\bullet$}}}}
\end{picture}
\caption{\footnotesize{Orthogonal Dynkin pyramid of shape $(3,1^3)$}} 
\label{Fig:so_n-pyr2}
\end{minipage}
\end{figure}
\end{center}}

If $n$ is odd, there is one additional consideration. 
There must be some odd part appearing
with odd multiplicity. Let $\lam_i$ 
 be the largest such part, and put $\lam_i$ boxes into the zeroth row in
columns $1-\lam_i, 3- \lam_i,\ldots,\lam_i -1$; 
 we also treat this zeroth row as a skew row. 
Now remove the
part $\lam_i$ from $\bs\lam$, to obtain a partition of an even number. The remaining parts are then added
to the diagram exactly as in the case $n$ even. 
We number the boxes 
exactly as in the symplectic Dynkin pyramid 
with labels $1,\ldots,n/2,0,-n/2,\ldots,-1$ in such a way that $i$ and 
$-i$ appear in centrally symmetric boxes, for $i=1,\ldots,n/2$, 
except that there is here a box numbered $0$. 

We represent in Figure \ref{Fig:so_n-pyr3} 
and Figure \ref{Fig:so_n-pyr4} two more examples: the numbered 
orthogonal Dynkin pyramids of shape $\bs\lam=(5^3,1^2)$ 
and $\bs\lam=(4^2,3)$, respectively. 

{\tiny
\begin{center}
\begin{figure}[h]
\setlength\unitlength{0.0175cm}
\begin{minipage}[l]{.46\linewidth} 
\hspace{2cm}\begin{picture}(0,100)

\put(40,0){\line(1,0){20}}
\put(0,20){\line(1,0){100}}
\put(0,40){\line(1,0){100}}
\put(0,60){\line(1,0){100}}
\put(0,80){\line(1,0){100}}
\put(40,100){\line(1,0){20}}

\put(40,0){\line(0,1){100}}
\put(60,0){\line(0,1){100}}
\put(0,20){\line(0,1){60}}
\put(20,20){\line(0,1){60}}
\put(80,20){\line(0,1){60}}
\put(100,20){\line(0,1){60}}

\put(90,70){\makebox(0,0){{\Tiny{$1$}}}}
\put(90,50){\makebox(0,0){{\Tiny{$2$}}}}
\put(90,30){\makebox(0,0){{\Tiny{$3$}}}}
\put(70,70){\makebox(0,0){{\Tiny{$4$}}}}
\put(70,50){\makebox(0,0){{\Tiny{$5$}}}}
\put(70,30){\makebox(0,0){{\Tiny{$6$}}}}
\put(50,90){\makebox(0,0){{\Tiny{$7$}}}}
\put(50,70){\makebox(0,0){{\Tiny{$8$}}}}
\put(50,50){\makebox(0,0){{\Tiny{$0$}}}}
\put(50,30){\makebox(0,0){{\Tiny{-$8$}}}}
\put(50,10){\makebox(0,0){{\Tiny{-$7$}}}}
\put(30,70){\makebox(0,0){{\Tiny{-$6$}}}}
\put(30,50){\makebox(0,0){{\Tiny{-$5$}}}}
\put(30,30){\makebox(0,0){{\Tiny{-$4$}}}}
\put(10,70){\makebox(0,0){{\Tiny{-$3$}}}}
\put(10,50){\makebox(0,0){{\Tiny{-$2$}}}}
\put(10,30){\makebox(0,0){{\Tiny{-$1$}}}}

\end{picture}
\caption{\footnotesize{Orthogonal Dynkin pyramid of shape $(5^3,1^2)$}} 
\label{Fig:so_n-pyr3}
\end{minipage} \hfill \begin{minipage}[l]{.46\linewidth}
\setlength\unitlength{0.0175cm}
\hspace{2cm}\begin{picture}(0,100)

\put(0,10){\line(1,0){80}}
\put(0,30){\line(1,0){80}}
\put(0,50){\line(1,0){80}}
\put(0,70){\line(1,0){80}}

\put(0,10){\line(0,1){20}}
\put(0,50){\line(0,1){20}}
\put(20,10){\line(0,1){20}}
\put(20,50){\line(0,1){20}}
\put(40,10){\line(0,1){20}}
\put(40,50){\line(0,1){20}}
\put(60,10){\line(0,1){20}}
\put(60,50){\line(0,1){20}}
\put(80,10){\line(0,1){20}}
\put(80,50){\line(0,1){20}}

\put(10,30){\line(0,1){20}}
\put(30,30){\line(0,1){20}}
\put(50,30){\line(0,1){20}}
\put(70,30){\line(0,1){20}}

\put(70,60){\makebox(0,0){{\Tiny{$1$}}}}
\put(50,60){\makebox(0,0){{\Tiny{$4$}}}}
\put(30,60){\makebox(0,0){{\Tiny{-$5$}}}}
\put(10,60){\makebox(0,0){{\Tiny{-$2$}}}}

\put(60,40){\makebox(0,0){{\Tiny{$3$}}}}
\put(40,40){\makebox(0,0){{\Tiny{$0$}}}}
\put(20,40){\makebox(0,0){{\Tiny{-$3$}}}}

\put(70,20){\makebox(0,0){{\Tiny{$2$}}}}
\put(50,20){\makebox(0,0){{\Tiny{$5$}}}}
\put(30,20){\makebox(0,0){{\Tiny{-$4$}}}}
\put(10,20){\makebox(0,0){{\Tiny{-$1$}}}}

\end{picture}
\caption{\footnotesize{Orthogonal Dynkin pyramid of shape $(4^2,3)$}} 
\label{Fig:so_n-pyr4}
\end{minipage}
\end{figure}
\end{center}}

We can now fix a 
choice of an $\sl_2$-triple $(e,h,f)$ 
such that $f \in \O_{-1;\bs{\lam}}$ by 
setting $f =\sum_{i,j} \sigma_{i,j} e_{i,j}$ for all $1 \le i,j \le n$, 
where the sum of over all pairs $i,j$ of boxes such 
in the Dynkin
pyramid such that 
\begin{align*} 
& \text{either }   {\rm col}(j)= {\rm col}(i)+2 \text{ and } 
{\rm row}(i)={\rm row}(j), & \\
& \text{or }  {\rm col}(j)=1, \; {\rm col}(i) = -1 
\text{ and } {\rm row}(i)= - {\rm row}(j) & \\ 
& \hspace{0.5cm} \text{ is a skew-row in the upper half plane}, & 
\end{align*}
and $h = \sum_{i=1}^{n} {\rm col}(i) e_{i,i}$  
(so $x^0=  \sum_{i=1}^n 
\frac{1}{2}{\rm col}(i) e_{i,i}$).

The following lemma is a refinement of \cite[Theorem 12.3]{KraftProcesi82}. 
We refer to \cite[Propositions 8.5.1 and 8.5.2]{Li17} for a proof. 

\begin{Lem}[row/column removal rule for $\sp_n$ and $\so_n$]
\label{lem:erasing_row_type-BCD}
Let ${\bs{\lam}} \in \P_\eps(n)$, 
for $\eps\in\{\pm 1\}$, and $ {\bs{\mu}}$ a degeneration 
of ${\bs{\lam}}$. 
Assume that the first $l$ rows and the first $m$ columns 
of ${\bs{\lam}}$ and ${\bs{\mu}}$ 
coincide and denote by $\bs{\lam}'$ and $\bs{\mu}'$ the partitions obtained by erasing 
these $l$ common  rows and common  $m$ columns. 
Then 
\begin{align*} 
\Slo_{\O_{\eps; {\bs{\lam}}},f} \cong \Slo_{\O_{\eps;{\bs{\lam}'}},f'},
\end{align*}
as algebraic varieties, 
with $f \in \O_{\eps; {\bs{\mu}}}$ and $f' \in \O_{\eps;{\bs{\mu}'}}$. 
In particular, if $f'=0$, then 
$\Slo_{\O_{\eps; {\bs{\lam}}},f} \cong \overline{\O}_{\eps;{\bs{\lam}'}}$.  
\end{Lem}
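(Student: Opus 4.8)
The plan is to follow the approach of Li \cite{Li17}, which refines the row and column removal arguments of Kraft and Procesi \cite[Theorem 12.3]{KraftProcesi82}. First one reduces to the two elementary cases: (i) $\bs{\lam}$ and $\bs{\mu}$ share only their first row, which is then erased to produce $\bs{\lam}'$ and $\bs{\mu}'$; and (ii) the analogous statement for a single common first column. The general statement follows by iterating these two reductions, noting that at each step the partitions obtained by erasing a common row or column again lie in $\P_\eps$, so that the symplectic (resp.\ orthogonal) structure is preserved throughout --- possibly after interchanging the roles of rows and columns by passing to the transpose partition.

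For the elementary cases one uses the linear-algebraic model of Kraft--Procesi. Fixing the bilinear form $\langle\cdot\,,\cdot\rangle$ of parity $\eps$ on $\C^n$, the slice $\Slo_{\O_{\eps;\bs{\lam}},f}$ is identified with a variety of tuples of linear maps between the layers of the $\Z$-graded vector space determined by the Dynkin pyramid of $f$, subject to the rank conditions cutting out $\overline{\O_{\bs{\lam}}}$ inside the nilpotent cone of $\mathfrak{gl}_n$ and to compatibility with $\langle\cdot\,,\cdot\rangle$. In this model, erasing the common first row (resp.\ first column) of $\bs{\lam}$ and $\bs{\mu}$ corresponds to deleting a vertex of the underlying linear quiver and replacing the ambient space by the orthogonal complement of the corresponding summand with respect to $\langle\cdot\,,\cdot\rangle$. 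This produces explicit morphisms $\Slo_{\O_{\eps;\bs{\lam}},f}\to\Slo_{\O_{\eps;\bs{\lam}'},f'}$ and $\Slo_{\O_{\eps;\bs{\lam}'},f'}\to\Slo_{\O_{\eps;\bs{\lam}},f}$ which one checks are mutually inverse, thereby upgrading the smooth equivalence of singularities established in \cite{KraftProcesi82} to an isomorphism of affine varieties.

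The column-removal step is the more delicate one, since transposition of partitions exchanges $\P_{-1}$ with, up to the $\bs{\lam}^{\pm}$-corrections, $\P_{1}$; thus column removal is most naturally phrased by passing between the symplectic and orthogonal pictures, and one must verify that after the reduction the restricted form has the correct parity. A further point requiring care is the case of a very even partition $\bs{\lam}$ in type $D$, where $\O_{1;\bs{\lam}}$ splits as $\O_{1;\bs{\lam}}^{I}\sqcup\O_{1;\bs{\lam}}^{I\!I}$: here one checks that the removal isomorphism carries the closure of the chosen component to the closure of the corresponding component attached to $\bs{\lam}'$, using that the two pieces form a single $O(n)$-orbit. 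The main obstacle is precisely this tracking of the bilinear form through the column-removal reduction together with the very-even bookkeeping; the row-removal case and the combinatorial induction are routine once the quiver model is in place, and all remaining details are supplied in \cite[Propositions 8.5.1 and 8.5.2]{Li17}.
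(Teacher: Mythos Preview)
Your proposal is essentially aligned with the paper: the paper does not give its own proof of this lemma at all, but simply states that it is a refinement of \cite[Theorem~12.3]{KraftProcesi82} and refers to \cite[Propositions~8.5.1 and 8.5.2]{Li17} for the details. Your write-up is an expository sketch of precisely that argument, ending with the same citation, so there is no divergence in approach---you have merely unpacked what the paper leaves as a reference.
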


By \cite[Tables 2 and 3]{Arakawa15a}, the nilpotent orbit $\O_k$, for $k$ admissible for $\g=\sp_n$ 
or $\g=\so_n$, is described as follows: 
\begin{itemize}
\item If $k$ is a principal admissible level for $\sp_n$ 
(that is, $q$ is odd), then $\O_k=\O_{-1;\bs{\widetilde{\lam}}^-}$, 
where $\bs{\widetilde{\lam}}=(q^{\widetilde m},\widetilde s) \in \P(n)$, with $0 \le  \widetilde s\le q-1$.

\item If $k$ is a coprincipal admissible level for $\sp_n$ 
(that is, $q$ is even), then 
$\O_k=\O_{-1;\bs{\widetilde{\lam}}^-}$ where 
$\bs{\widetilde{\lam}}=(\frac{q}{2}+1,(\frac{q}{2})^{\widetilde m},\widetilde s) \in \P(n)$, 
with  $0 \le  \widetilde s\le \frac{q}{2}-1$. 

\item If $k$ is a principal admissible level for $\so_n$ 
(that is, either $n$ is even, or both $n$ and $q$ are odd), 
then $\O_k=\O_{1;\bs{\widetilde{\lam}}^+}$, 
where $\bs{\widetilde{\lam}}=(q+1,q^{\widetilde m},\widetilde s) \in \P(n)$, with $0 \le  \widetilde s\le q-1$. 

\item If $k$ is a coprincipal admissible level for $\so_n$ 
(that is, $n$ is odd and $q$ is even), 
then $\O_k=\O_{1;\bs{\widetilde{\lam}}^+}$, 
where $\bs{\widetilde{\lam}}=(q^{\widetilde m},\widetilde s) \in \P(n)$, with $0 \le  \widetilde s\le q-1$. 
\end{itemize}

The following lemma is the analog of Lemma \ref{Lem:choice_of_partitions} 
for $\sp_n$. 
\begin{Lem}[Case $\sp_n$]
\label{Lem:choice_of_partitions-C}
Let $\bs\lam \in \P_{-1}(n)$ be such that $\O_k=\O_{-1,\bs\lam}$.  
Fix a partition $\bs{\mu} \in \P_{-1}(n)$ of $n$ such that $\O_{\bs{\mu}} \subset  \overline{\O}_{\bs{\lam}}$.   
Let $\bs{\lam}'$ and $\bs{\mu}'$ be the partitions obtained from $\bs{\lam}$ and $\bs{\mu}$ 
by erasing all common rows and columns of $\bs{\lam}$ and $\bs{\mu}$. 
Then, $\bs{\mu}'$ corresponds to the zero nilpotent orbit of $\sp_{|\bs{\mu}'|}$, that is, $\bs{\mu}'=(1^{|\bs{\mu}'|})$ 
if and only if $\bs\mu$ is of one of the following types:  
\begin{itemize}
\item[(a)] $\bs{\mu}=\bs{\lam}$, 
\item[(b)] $\bs{\mu}=(q^m,1^s)$, with $q$ odd, $m$ even, $s\ge 0$, 
\item[(c)] $\bs{\mu}=(q^{\widetilde m-1},q-1,1^{\widetilde s+1})$, with $q$ odd, 
\item[(d)] $\bs{\mu}=(q^{\widetilde m-1},(q-2)^{2})$ and $\widetilde s = q-4$, with $q$ odd, 
\item[(e)] $\bs{\mu}=((\frac{q}{2})^{m},1^s)$, with $q$ even, $\frac{q}{2},s$ even, $m$ odd or even.
\item[(f)] $\bs{\mu}=((\frac{q}{2})^{\widetilde m},(\frac{q}{2}-1)^2)$ 
and $\widetilde s=\frac{q}{2}-4$, with $q$ even, 
$\frac{q}{2}$ even,
\item[(g)] $\bs{\mu}=(\frac{q}{2}+1,(\frac{q}{2})^{m},1^s)$, with $q$ even, $\frac{q}{2}$ odd, $m,s$ even, 
\item[(h)] $\bs{\mu}=((\frac{q}{2})^{\widetilde m +2}, \widetilde s)$, with $q$ even, $\frac{q}{2}$ odd. 
\item[(i)] $\bs{\mu}=(\frac{q}{2}+1,(\frac{q}{2})^{m},\frac{q}{2}-1,1^s)$, with $q$ even, $\frac{q}{2}$ odd, $m,s$ even, 
\item[(j)] $\bs{\mu}=(\frac{q}{2}+1,(\frac{q}{2})^{\widetilde m-1},(\frac{q}{2}-2)^2)$, 
and $\widetilde s=\frac{q}{2}-4$, with $q$ even, 
$\frac{q}{2}$ odd, 
\item[(k)] $\bs{\mu}=((\frac{q}{2}-1)^2)$, $\widetilde m=0$  
and $\widetilde s=\frac{q}{2}-3$, with $q$ even, 
$\frac{q}{2}$ odd. 
\end{itemize}
Here $|\bs{\mu}'|$ stands for the sum of the parts of $\bs{\mu}'$. 
\end{Lem}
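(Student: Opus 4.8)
The statement is a purely combinatorial classification, so the proof is a careful bookkeeping argument about partitions. The setup is: we have a fixed admissible level $k$ for $\sp_n$ with denominator $q$, the corresponding nilpotent orbit $\O_k = \O_{-1,\bs\lam}$ where $\bs\lam$ is the $(-1)$-collapse of an explicit partition $\bs{\widetilde\lam}$ (of shape $(q^{\widetilde m},\widetilde s)^-$ if $q$ is odd, or $(\frac q2+1,(\frac q2)^{\widetilde m},\widetilde s)^-$ if $q$ is even), and we want to classify all $\bs\mu\in\P_{-1}(n)$ with $\bs\mu\le\bs\lam$ such that after deleting all common rows \emph{and} all common columns of $\bs\lam$ and $\bs\mu$, the residual partition $\bs\mu'$ is $(1^{|\bs\mu'|})$. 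First I would record the trivial case $\bs\mu=\bs\lam$ (type (a)), where $\bs\mu'$ is empty, and then assume $\bs\mu\subsetneq\bs\lam$.

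\textbf{Key steps.} Following the $\sl_n$ proof (Lemma~\ref{Lem:choice_of_partitions}), I would split the row/column removal into two stages. Stage one: erase the common \emph{rows} only. Since $\bs\mu\le\bs\lam$ and $\bs\lam$ has a very regular shape (essentially a rectangle $(q^{\widetilde m},\widetilde s)$ in the odd case, with a possible top part $\frac q2+1$ and a tail in the even case), I can write $\bs\mu=(q^m,\bs\nu)$ (odd case) with $0\le m\le\widetilde m$ and $\nu_1<q$ — modulo the $(-1)$-collapse constraint that forces even multiplicities of odd parts — and after deleting the $m$ common top rows, the residual shape on the $\bs\lam$ side is $(q^{\widetilde m-m},\widetilde s)$ and on the $\bs\mu$ side is $\bs\nu$. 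Stage two: then erase common columns; a column is common iff the two partitions agree on that column, which (as in the $\sl_n$ case) happens precisely when the number of parts match up, i.e.\ $t:=\#\bs\nu = \widetilde m-m+1$. In that case the common column block has width $\widetilde s$, deleting it turns $(q^{\widetilde m-m},\widetilde s)$ into $((q-\widetilde s)^{\widetilde m-m})$ and $\bs\nu$ into $(\nu_1-\widetilde s,\dots,\nu_t-\widetilde s)$. The condition ``$\bs\mu'=(1^{|\bs\mu'|})$'' then forces $\nu_i-\widetilde s=1$ for all $i$ (or $\bs\nu$ already all $1$'s if there were no common columns), together with the numerical equality of the two residual partitions' sizes, which pins down $q-\widetilde s-1=1$ and $\widetilde m-m=1$ exactly as in \eqref{eq:partition_condition}. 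Running through the possibilities for $m,\bs\nu$ under these constraints, \emph{and} keeping track of the $(-1)$-collapse parity conditions (even multiplicity of each odd part), produces the listed types (b), (c), (d) in the $q$ odd case. For the $q$ even case the same analysis applies but now $\bs\lam$ has the extra top part $\frac q2+1$ and one must separately track whether $\bs\mu$ inherits that part; this branching, plus the collapse conditions, yields the families (e)--(k). I would organise the $q$ even analysis as: first whether the part $\frac q2+1$ is present in $\bs\mu$ (types (g),(i),(j) vs (e),(f),(h),(k)), then whether common columns appear, then solving the resulting size equation.

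\textbf{Converse and verification.} For the converse direction — that each listed $\bs\mu$ does yield $\bs\mu'=(1^{\bullet})$ — I would simply exhibit, family by family, the common rows and columns and compute $\bs\mu'$ directly; this is routine and I would present it as ``it is straightforward to check'' with perhaps one illustrative picture (analogous to Figures~\ref{Fig:removal-2},~\ref{Fig:removal-3}).

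\textbf{Main obstacle.} The genuine difficulty, compared to the $\sl_n$ case, is the interaction between the row/column-removal bookkeeping and the $\P_{-1}(n)$ constraint (odd parts occur with even multiplicity). Deleting common rows or columns does not in general preserve this parity condition in an obvious way, so one must be careful that the $\bs\lam$ used is the true collapse $\bs{\widetilde\lam}^-$ rather than $\bs{\widetilde\lam}$ itself, and that the candidate $\bs\mu$ genuinely lies in $\P_{-1}(n)$ rather than merely being $\le\bs\lam$ in $\P(n)$. Concretely, the parity forces the multiplicities $m$, $s$ (and the shape of $\bs\nu$) into the narrow arithmetic windows appearing in the statement (``$m$ even'', ``$s$ even'', ``$\frac q2$ odd'', etc.), and disentangling exactly which combinations survive — especially the borderline families (d), (f), (j), (k) where $\widetilde s$ is forced to a specific value like $q-4$ or $\frac q2-4$ or $\frac q2-3$ — is where the care is needed. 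I expect the cleanest write-up to handle $q$ odd fully first (three families), then treat $q$ even by the same template with the $\frac q2+1$ part bolted on, pointing to the odd case for the repeated arguments.
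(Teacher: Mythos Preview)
Your proposal is correct and matches the paper's approach: both reduce to the $\sl_n$ argument (Lemma~\ref{Lem:choice_of_partitions}) plus the $\P_{-1}$ parity bookkeeping you flag as the main obstacle. The only difference is organisational --- rather than splitting by $q$ odd/even and then by features of $\bs\mu$, the paper first enumerates the five distinct shapes of $\bs\lam=\widetilde{\bs\lam}^-$ (two when $q$ is odd, three when $q$ is even, according to how the $(-1)$-collapse acts) and runs the row/column-removal argument separately in each, which is where types (c), (d) and (h)--(k) fall out most cleanly; your ``$\bs\lam$ essentially a rectangle'' slightly understates this initial branching.
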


\begin{proof}
We argue as in the proof of Lemma \ref{Lem:choice_of_partitions}. 
One can assume that $\bs\mu\not=\bs\lam$, the case where $\bs\mu=\bs\lam$ being obvious. 
According to the above description of $\O_k$, 
five types of partitions for $\bs\lam$ 
can be distinguished. 
We consider the different types.

(1) $\bs\lam=(q^{\widetilde m},\widetilde s)$, with $q$ odd, $\widetilde m$ even and $0 \le \widetilde s \le q-1$. 
This case is very similar to that of $\sl_n$. 
This leads to the partitions of type (b), 
the case that $\bs\mu$ is of type $(q^{\widetilde m-1},(q-1)^2)$ being 
excluded since $q$ and $\widetilde m-2$ are odd.

(2) $\bs\lam=(q^{\widetilde m-1},q-1,\widetilde s +1)$, with $q$ odd, $\widetilde m-1$ even, $2 \le \widetilde s+1 \le q-1$. 
Assume first that $\bs\mu=(q^{\widetilde m-1},q-1,\bs\nu)$, with $\bs\nu=(\nu_t,\ldots,\nu_t)$ and $\nu_1 \le q-1$. 
Then $\bs\lam'=(\widetilde s +1)$ and $\bs\mu'= \bs\nu$. 
So the only possibility for that $\bs\mu'$ corresponds to the zero 
orbit is that $\nu_i=1$ for any $i$, that is, $\bs\mu$ is of the form (c). 

Assume now that $\bs\mu=(q^{m},\bs\nu)$, with $0 \le m \le \widetilde m-1$ even, 
$\bs\nu=(\nu_t,\ldots,\nu_t)$ and $\nu_1 \le q-1$. 
Let $\bs{\lam}''$ and $\bs{\mu}''$ be the partitions obtained 
by erasing all common rows of $\bs{\lam}$ and $\bs{\mu}$. 
Then $\bs\lam''=(q^{\widetilde m-1-m},q-1,\widetilde s +1)$ and $\bs\mu''= \bs\nu$. 
If $\nu_i=1$ for all $i$, then we get a partition of type  (b). 
Otherwise, observe that $\bs\lam''$ and $\bs\mu'$ have at least one common column 
if and only if $t=\widetilde m-m+1$. 
If so, then $\nu_t > \widetilde s+1$ and 
$\bs\lam'=((q-\widetilde s-1)^{\widetilde m-1-m},q-2-\widetilde s)$, $\bs\mu'=(\nu_1-\widetilde s-1, 
\ldots,\nu_t-\widetilde s-1)$. 
The partition $\bs\mu'$ corresponds to the zero orbit if and only if 
$\nu_i-\widetilde s-1=1$ for all $i$. 
Then 
$$\widetilde m-m+1=(\widetilde m-1-m) (q-\widetilde s-1) + q-2-\widetilde s,$$
that is, 
$$2=(\widetilde m-m) (q-\widetilde s-2).$$
Since $\widetilde m-m$ is odd, the only possibility is that $\widetilde m-m=1$, that is, 
$m=\widetilde m-1$ 
and 
$q-\widetilde s-2=2$, that is, $\widetilde s =q-4$. 
Hence $t=2$ and $\nu_1=\nu_2=q-2$. So $\bs\mu$ of the form (d). 

(3) $\bs\lam=((\frac{q}{2})^{\widetilde m+1},\widetilde s +1)$, with $q$ even, 
$\frac{q}{2}$ even, $\widetilde m$ odd or even 
and $0 \le \widetilde s+1 \le \frac{q}{2}-1$. 
This case is very similar to the $\sl_n$ one. So we conclude similarly. 
This leads to the partitions of type (e) or (f). 

(4) $\bs\lam=(\frac{q}{2}+1,(\frac{q}{2})^{\widetilde m},\widetilde s)$, with $q$ even, 
$\frac{q}{2}$ odd, $\widetilde m$ even 
and $1 \le \widetilde s \le \frac{q}{2}-1$. 
Assume first that $\bs\mu=(\frac{q}{2}+1,(\frac{q}{2})^{m},\bs\nu)$, with $0 \le m \le \widetilde m$, 
$\bs\nu=(\nu_t,\ldots,\nu_t)$ and $\nu_1 \le \frac{q}{2}-1$. 
Let $\bs{\lam}''$ and $\bs{\mu}''$ be the partitions obtained 
by erasing all common rows of $\bs{\lam}$ and $\bs{\mu}$. 
Then $\bs\lam''=(q^{\widetilde m-m},\widetilde s )$ and $\bs\mu''= \bs\nu$. 
So we are led to the case (3), but the partition 
 $\bs\mu=(\frac{q}{2}+1,(\frac{q}{2})^{\widetilde m-1},(\frac{q}{2}-1)^2)$ is excluded 
since both $\frac{q}{2}$ and $\widetilde m-1$ are odd. So we get only the type (g). 
 
Assume now that $\bs\mu=((\frac{q}{2})^{m},\bs\nu)$, with $0 \le m \le \widetilde m+2$, 
$\bs\nu=(\nu_t,\ldots,\nu_t)$ and $\nu_1 \le \frac{q}{2}-1$. 
Then $\bs\lam$ and $\bs\mu$ have no common rows. Moreover, $\bs\lam$ and $\bs\mu$ have at least one 
common column if and only if $m+t=\widetilde m+2$. 
If so, then $\nu_t > \widetilde s$ and we have 
$\bs\lam'=(\frac{q}{2}+1-\widetilde s,(\frac{q}{2}-\widetilde s)^{\widetilde m})$,  
$\bs\mu'=((\frac{q}{2}-\widetilde s)^{m},\nu_1-\widetilde s, 
\ldots,\nu_t-\widetilde s)$. 
The later corresponds to the zero orbit if and only if either $\frac{q}{2}-\widetilde s=1$ 
and $\nu_i-\widetilde s=1$ for all $i$, whence  
$m+t =\widetilde m+2$ and $\widetilde s=\frac{q}{2}-1$ and, 
necessarily, $\bs\mu$ has type (h) with $\widetilde s=0$. 
Or $m=0$ and $\nu_i-\widetilde s=1$ for all $i$, whence 
$$\widetilde m +2 = \widetilde m\left(\frac{q}{2}-\widetilde s\right)+\frac{q}{2}-1-\widetilde s,$$
that is,  
$$2= ( \widetilde m+1)\left(\frac{q}{2}-1-\widetilde s\right).$$
Hence for parity reasons we get that $\widetilde m=0$, 
$\bs\lam=(\frac{q}{2}+1,\frac{q}{2}-3)$ and $\bs\mu=((\frac{q}{2}-1)^2)$. 
This yields to the partition of type (k).

(5) $\bs\lam=(\frac{q}{2}+1,(\frac{q}{2})^{\widetilde m-1},\frac{q}{2}-1,\widetilde s+1)$, with $q$ even, 
$\frac{q}{2}$ odd, $\widetilde m-1$ even 
and $2 \le \widetilde s+1 \le \frac{q}{2}-1$. 

Assume first that the first row of $\bs\mu$ if $\frac{q}{2}+1$. 
By erasing the first (common) row of $\bs\lam$ and $\bs\mu$ 
we go to the situation (3) with $\frac{q}{2}$ is place of $q$. 
So we get partitions of types (g), (i) or (j). 

Assume now that $\bs\mu=((\frac{q}{2})^{m},\bs\nu)$, with $0 \le m \le \widetilde m+1$, 
$\bs\nu=(\nu_t,\ldots,\nu_t)$ and $\nu_1 \le \frac{q}{2}-1$. 
Then $\bs\lam$ and $\bs\mu$ have no common rows. Moreover, $\bs\lam$ and $\bs\mu$ have at least one 
common column only if $m+t=\widetilde m+2$. 
If so, either $\nu_t= \widetilde s+1$ and then necessarily $\bs\mu$ is of the type (h), 
or $\nu_t > \widetilde s+1$ and we have 
$\bs\lam'=(\frac{q}{2}-\widetilde s,(\frac{q}{2}-\widetilde s-1)^{\widetilde m-1},\frac{q}{2}-2-\widetilde s)$ and 
$\bs\mu'=((\frac{q}{2}-\widetilde s-1)^{m},\nu_1-\widetilde s-1, 
\ldots,\nu_t-\widetilde s-1)$. 
The later corresponds to the zero nilpotent orbit if and only if either $\frac{q}{2}-\widetilde s-1=1$ 
and $\nu_i-\widetilde s-1=1$ for all $i$, whence 
$m+t =\widetilde m+1$ and $\widetilde s+1=\frac{q}{2}-1$, 
and $\bs\mu$ has type (h) with $\widetilde s=\frac{q}{2}-2$, 
or $m=0$ and $\nu_i-\widetilde s-1=1$ for all $i$, 
whence 
$$\widetilde m+1= (\widetilde m+1)\frac{q}{2} +\widetilde s+1,$$
that is, 
$$0=(\widetilde m+1)\left(\frac{q}{2}-1\right) +\widetilde s+1,$$
whence, $\frac{q}{2}=1$ and $\widetilde s+1=0$ which is impossible. 

Conversely, we easily that all the partitions of type (a)--(k) verify the desired conditions. 
\end{proof}

We now state the analog of Lemma \ref{Lem:choice_of_partitions} 
for $\so_n$. 

\begin{Lem}[Case $\so_n$]
\label{Lem:choice_of_partitions-BD}
Let $\bs\lam \in \P_{1}(n)$ be such that  
$\O_k=\O_{1;\bs\lam}$.  
Fix a partition $\bs{\mu}$ of $n$ such that $\O_{\bs{\mu}} \subset  \overline{\O}_{\bs{\lam}}$.   
Let $\bs{\lam}'$ and $\bs{\mu}'$ be the partitions obtained from $\bs{\lam}$ and $\bs{\mu}$ 
by erasing all common rows and columns of $\bs{\lam}$ and $\bs{\mu}$. 
Then, $\bs{\mu}'$ corresponds to the zero nilpotent orbit of $\sp_{|\bs{\mu}'|}$, that is, $\bs{\mu}'=(1^{|\bs{\mu}'|})$ 
if and only if $\bs\mu \in \P_1(n)$ is of one of the following types:  
\begin{itemize}
\item[(a)] $\bs{\mu}=\bs{\lam}$, 
\item[(b)] $\bs{\mu}=(q^m,1^s)$, with $q$ odd or even, $m$ odd or even, $s\ge 0$, 
\item[(c)] $\bs{\mu}=(q^{\widetilde m},(q-1)^{2})$, $\widetilde s = q-3$ 
and $\widetilde m$ odd, with $q$ odd, 
\item[(d)] $\bs{\mu}=(3^{\widetilde m-1},2^{4})$, $q=3$, $\widetilde s=1$ 
and $\widetilde m$ even,
\item[(e)] $\bs{\mu}=(q^{\widetilde m+1},2^{2})$, $\widetilde s=3$ 
and $\widetilde m+1$ odd, with $q$ odd, 
\item[(f)] $\bs{\mu}=(q^{\widetilde m},(q-1)^2,1)$, $\widetilde s = q-2$ 
and $\widetilde m$ even, with $q$ odd, 
\item[(g)] $\bs{\mu}=(q^{\widetilde m -1},q-1,1^s)$, with $q$ even, $s\ge 0$,  
\item[(h)] $\bs{\mu}=(q^{\widetilde m -1},q-1,2^2)$ and $\widetilde s=3$, with $q$ even,  
\item[(i)] $\bs{\mu}=(q^{\widetilde m -1},(q-2)^2,1)$ and $\widetilde s = q-3$, with $q$ even, 
\item[(j)] $\bs{\mu}=(q+1,q^{m},1^s)$, with $q,m$ even and $s\ge 0$, 
\item[(k)] $\bs{\mu}=(q+1,q^{m},q-1,1^s)$, with $q,m$ even and $s\ge 0$, 
\item[(l)] $\bs{\mu}=(q+1,q^{\widetilde m-1},q-1,2^2)$ and $\widetilde s=3$, with $q$ even,  
\item[(m)] $\bs{\mu}=(q+1,q^{\widetilde m-1},(q-2)^2,1)$ and $\widetilde s = q-3$, with $q$ even, 
\item[(n)] $\bs{\mu}=(q^{\widetilde m+1},q-1,1)$ and $\widetilde s=q-1$, with $q$ even.  
\end{itemize}
Here $|\bs{\mu}'|$ stands for the sum of the parts of $\bs{\mu}'$. 
\end{Lem}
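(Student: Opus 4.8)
The proof will follow the same combinatorial pattern as the proof of Lemma~\ref{Lem:choice_of_partitions-C} for $\sp_n$, but now organised according to the four possible shapes of $\bs\lam$ listed just above (the two principal cases and the two coprincipal cases). I would begin by discarding the trivial case $\bs\mu = \bs\lam$, which accounts for type (a), and then fix a partition $\bs\mu \in \P_1(n)$ with $\O_{\bs\mu}\subset\overline{\O_{\bs\lam}}$, writing $\bs\mu = (q^m, \bs\nu)$ or $(q+1, q^m, \bs\nu)$ etc. according as $\bs\lam$ has a row of length $q+1$ or not, with $\bs\nu = (\nu_1,\dots,\nu_t)$, $\nu_1 \le q-1$ (or $\nu_1\le\frac{q}{2}-1$ in the coprincipal $\so_n$ cases, where $\bs\lam$ itself has parts of size $q/2$, $q/2\pm1$). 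The key observation, exactly as in Lemma~\ref{Lem:choice_of_partitions}, is the following: after erasing common rows we pass to $\bs\lam''$, $\bs\mu'' = \bs\nu$, and $\bs\mu'$ is trivial precisely when \emph{either} $\bs\nu = (1^{|\bs\nu|})$ already (giving the types with a tail of $1$'s), \emph{or} $\bs\lam''$ and $\bs\mu'' = \bs\nu$ share at least one column, which forces $t$ to take a specific value ($t = \text{(number of rows of }\bs\lam'') $), and then after erasing the common columns one is left with a rectangular $\bs\lam'$ and a $\bs\mu'$ whose triviality is equivalent to a single Diophantine equation of the shape $(\text{number of removed rows})(\text{column length}-1) = 1$ or $= 2$.

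Concretely I would split into the four cases. \textbf{Case 1}, $\bs\lam = (q+1, q^{\widetilde m}, \widetilde s)$ with $q$ odd (or $n$ even): subcases according to whether $\bs\mu$ begins with $q+1$ or with $q$; the first subcase reduces by row removal to the pure-rectangle analysis and yields types (j), (k) (after imposing that $\P_1$-membership be preserved — the parity constraints on $m$ are exactly what makes $\bs\mu$ lie in $\P_1(n)$), while the second subcase produces types (b), (c), (e), (f), the equation $(\widetilde m - m)(q - \widetilde s - 2) = 2$ (or a variant) together with the parity of $\widetilde m - m$ pinning down the sporadic solutions. \textbf{Case 2}, $\bs\lam = (q+1, q^{\widetilde m-1}, q-1, \widetilde s+1)$ with $q$ even: here one first peels off the $q+1$ row, lands in the $\so$-type analysis with $q$ in place of $q+1$, and obtains types (g)--(i) and (k)--(n); the presence of the extra $q-1$ row and the $\P_1$-parity of even parts is what distinguishes (k) from (l) and (m). \textbf{Case 3}, $\bs\lam = (q^{\widetilde m}, \widetilde s)$ with $n,q$ odd (coprincipal $\so_n$): essentially identical to the $\sl_n$ argument, giving (b) and the degeneration (c) with its parity condition $\widetilde m$ odd, plus the exceptional $q=3$ family (d) where $(q-2)^2$ becomes $2^2$ and falls inside $\P_1$. \textbf{Case 4}, $\bs\lam = (q^{\widetilde m}, \widetilde s)$ in the remaining principal case: similar, producing (b), (e), and possibly (n). In each subcase I would verify the parity conditions that guarantee $\bs\mu \in \P_1(n)$ (number of parts of each even size is even) — this is the one genuinely new ingredient compared to the $\sp_n$ lemma, and it is what makes some solutions of the Diophantine equations admissible and others not.

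Finally, for the converse direction I would simply check, type by type (a)--(n), that erasing common rows and columns of $\bs\lam$ and $\bs\mu$ does leave a $\bs\mu'$ all of whose parts equal $1$; this is a direct computation using the explicit shapes, entirely parallel to the end of the proof of Lemma~\ref{Lem:choice_of_partitions-C}, and I would leave the routine verifications to the reader as is done there.

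The main obstacle I anticipate is purely bookkeeping: keeping the four shapes of $\bs\lam$, the two or three subcases of where $\bs\mu$'s first row sits, and — crucially — the $\P_1(n)$ parity restrictions all straight simultaneously, since it is precisely the interaction between the Diophantine constraint coming from "equal partitions of the same integer after column removal" and the parity constraint coming from membership in $\P_1(n)$ that selects exactly the fourteen families (b)--(n). I would structure the write-up so that each sporadic family (d), (e), (h), (i), (l), (m), (n) is pinned to one explicit small equation, to make the case analysis auditable. No deep input is needed beyond the row/column removal rule (Lemma~\ref{lem:erasing_row_type-BCD}) and the combinatorics of the dominance order on $\P_1(n)$ recalled at the start of this section.
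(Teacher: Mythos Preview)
Your approach is exactly what the paper does: the paper's proof reads in its entirety ``The proof is very similar to that of Lemma~\ref{Lem:choice_of_partitions-C}. The details are left to the reader.'' Your outline of a case analysis by shape of $\bs\lam$, row/column removal, and the resulting small Diophantine equations together with the $\P_1(n)$ parity constraints is precisely the intended argument.

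One bookkeeping correction, however: your description of the possible shapes of $\bs\lam$ is partly garbled. For $\so_n$ the coprincipal case is $n$ odd and $q$ \emph{even}, with $\bs{\widetilde\lam}=(q^{\widetilde m},\widetilde s)$; there are no parts of size $q/2$ or $q/2\pm 1$ here (that is the $\sp_n$ coprincipal picture). The principal $\so_n$ case has $\bs{\widetilde\lam}=(q+1,q^{\widetilde m},\widetilde s)$, and after applying $(\,\cdot\,)^+$ to land in $\P_1(n)$ one gets the various shapes that feed into types (b)--(n). So your ``Case 3: $n,q$ odd (coprincipal)'' is mislabelled, and the parenthetical ``$\nu_1\le q/2-1$'' should be dropped. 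Once you align the case split with the actual list of $\bs\lam$-shapes given just before Lemma~\ref{Lem:choice_of_partitions-C}, the rest of your plan goes through unchanged.
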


\begin{proof}
The proof is very similar to that of Lemma \ref{Lem:choice_of_partitions-C}. 
The details are left to the reader. 
\end{proof}

In particular, by Lemma \ref{lem:erasing_row_type-BCD}, if $\bs{\mu}$ is a partition of $n$ 
as in Lemma \ref{Lem:choice_of_partitions-C} (resp. Lemma~\ref{Lem:choice_of_partitions-BD}), 
the nilpotent Slodowy slice $\Slo_{\O_k,f}$ is isomorphic 
to a nilpotent orbit closure in $\sp_{s}$ (resp.~$\so_s$) 
for $f \in \O_{\varepsilon;\bs{\mu}}$ with $\varepsilon=-1$ (resp.~$\eps=1$) 
and $s:=|\bs{\mu}'|$. 
If $\bs{\mu}$ is 
of type (a), note that $\Slo_{\O_k,f} \cong \{f\}$. 

In view of Lemmas \ref{Lem:choice_of_partitions-C} and 
\ref{Lem:choice_of_partitions-BD}, 
we describe the centraliser $\g^\natural$ 
and the values of the $k_i^\natural$'s for particular   
$\sl_2$-triples $(e,h,f)$ of $\sp_n$ and $\so_n$.

\begin{Lem}
\label{Lem:k_natural-type-BCD}
Let $\eps=-1$ (resp.~$\eps=1$), 
and $f \in \O_{\eps;\bs\mu}$ a nilpotent element of $\sp_n$ 
(resp.~$\so_n$)  with $\bs\mu$ 
as in the first column of Table~\ref{Tab:C_Slodowy} 
(resp.~Table~\ref{Tab:BD_Slodowy}). 
Then the centraliser $\g^\natural$ of an $\sl_2$-triple 
$(e,h,f)$   
and the  $k_i^\natural$'s are given 
by Table~\ref{Tab:C_Slodowy} 
(resp.~Table~\ref{Tab:BD_Slodowy}).
Moreover, if $k$ is admissible, then so is $k^\natural$, 
provided that $k_0^\natural=0$. 
\end{Lem}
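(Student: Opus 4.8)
\textbf{Plan of proof for Lemma \ref{Lem:k_natural-type-BCD}.}

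The plan is to mimic the strategy used for type $A$ in Lemma \ref{Lem:k_natural-type-A}: realise $\g^\natural$ explicitly inside $\g_\Gamma^0$ for an appropriate good grading $\Gamma$, then compute the restriction of the bilinear form $\phi_{\Gamma,i}^\natural(x,x) = k(x|x)_\g + \tfrac{1}{2}(\kappa_\g(x,x) - \kappa_{\g_\Gamma^0}(x,x) - \kappa_{\g_\Gamma^{1/2}}(x,x))$ on a convenient vector $x$ in each simple factor $\g_i^\natural$, and read off $k_i^\natural$ from \eqref{eq:k_natural}. By Lemma \ref{Lem:maximal reductive subalgebra}, the form $\phi_{\Gamma,i}^\natural$ is unchanged under passing between adjacent good gradings, so the computation may be carried out in whichever good grading is most convenient; for the $\sp_n$ and $\so_n$ partitions appearing in Tables \ref{Tab:C_Slodowy} and \ref{Tab:BD_Slodowy} I would work with the symplectic (resp.\ orthogonal) Dynkin pyramids described in Section \ref{sub:sympl_pyramids}, using the explicit Chevalley bases and the scalars $\sigma_{i,j}$ recorded there, together with the identities $\kappa_{\sp_n}(x,y) = (n+2)\operatorname{tr}(xy)$, $(x|y)_{\sp_n} = \operatorname{tr}(xy)$ and $\kappa_{\so_n}(x,y) = (n-2)\operatorname{tr}(xy)$, $(x|y)_{\so_n} = \tfrac{1}{2}\operatorname{tr}(xy)$.

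First I would determine $\g_\Gamma^0$ and $\g^\natural = \g_\Gamma^0 \cap \g^f$ for each $\bs\mu$ in the tables. For a partition $\bs\mu$ of $\sp_n$ the centraliser $\g^\natural$ is a product of symplectic, orthogonal and general linear factors governed by the multiplicities of the parts of $\bs\mu$ (and the parity of each part), and similarly for $\so_n$; this is standard from \cite{CMa} or can be read off the columns of the pyramids. The embedding $\g^\natural \hookrightarrow \g_\Gamma^0$ is diagonal in the same sense as in the $\sl_n$ case: a factor of $\g^\natural$ attached to a part of multiplicity $r$ sits inside the corresponding $\mf{gl}$-blocks of $\g_\Gamma^0$ repeated along the rows of equal length. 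Second, for each simple factor $\g_i^\natural$ I would pick a root vector or a coroot-type element $x$ (e.g.\ a diagonal $\operatorname{diag}(1,-1,0,\dots)$ inside a $\mf{gl}$-block, or $e_{k,-k}+e_{-k,k}$-style elements for the rank-one pieces of $\sp$ or $\so$), compute $\operatorname{tr}(xx)$ inside $\g$, inside $\g_\Gamma^0$, and inside $\g_\Gamma^{1/2}$ by counting the blocks and half-integer-height root spaces visible in the pyramid, and assemble $\phi_{\Gamma,i}^\natural(x,x)$ as a linear polynomial in $k$; dividing by $(x|x)_i$ gives $k_i^\natural$. The centre $\g_0^\natural$, when present, is handled as in Lemma \ref{Lem:k_natural-type-A}: project the chosen $x$ to the semisimple part of $\g_\Gamma^0$ and normalise $(~|~)_0$ so that the stated value of $k_0^\natural$ emerges; by Lemma \ref{Lem:k_0-condition} only the vanishing of $k_0^\natural$ matters, so the normalisation ambiguity is harmless.

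For the final admissibility assertion, I would substitute into the formulas for the $k_i^\natural$ the constraint $k_0^\natural = 0$ (which, as in the $A$ case, typically forces the numerator $p$ of $k + h_\g^\vee = p/q$ to take a specific value, e.g.\ $p = n$ or $p = n+1$ depending on the family), and then check case by case that the resulting $k_i^\natural + h_{\g_i^\natural}^\vee$ has the form $p_i/q_i$ with $(p_i,q_i)=1$ and $p_i$ meeting the bound in \eqref{eq:admissible-n} for $\g_i^\natural$; the denominators $q_i$ that appear are $q$ (or $q/2$) and the bounds follow from the admissibility of $k$ for $\g$. When $\g_0^\natural = 0$ one argues directly that the admissibility bound $p \ge h_\g$ or $p \ge h_\g^\vee$ translates into the corresponding bound for each $\g_i^\natural$. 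The main obstacle I anticipate is purely bookkeeping: the sheer number of partition families in Tables \ref{Tab:C_Slodowy} and \ref{Tab:BD_Slodowy} (types (a)--(k) for $\sp_n$, (a)--(n) for $\so_n$), each with its own pyramid geometry, parity conditions, and skew-row subtleties in the $\sp$/$\so$ pyramids, means one must be careful that the trace counts of $\g_\Gamma^0$ and $\g_\Gamma^{1/2}$ are correct in the presence of skew rows and of the central box numbered $0$ for odd $\so_n$; the conceptual content of each individual computation, however, is identical to the type-$A$ case and poses no real difficulty. I would therefore present one or two representative cases in full and indicate that the remaining cases are entirely analogous.
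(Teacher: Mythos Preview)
Your proposal is correct and follows essentially the same approach as the paper: explicit description of $\g^0$ and $\g^\natural$ via the symplectic/orthogonal Dynkin pyramids, computation of $\phi_i^\natural$ on convenient diagonal elements using the trace formulas, and a direct case check for the admissibility of $k^\natural$. The paper likewise details only $\bs\mu=(q^m,1^s)$ for $\sp_n$ and $\so_n$ (including the $m=4$ subtlety via $\so_4\cong\sl_2\times\sl_2$) and leaves the remaining partition families to the reader; one minor difference is that the paper works directly with the Dynkin grading throughout rather than invoking Lemma~\ref{Lem:maximal reductive subalgebra} to change gradings.
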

In Tables~\ref{Tab:C_Slodowy} 
and Table~\ref{Tab:BD_Slodowy}, the numbering of the levels $k_i^\natural$'s 
follows the order in which the simple factors of $\g^\natural$ 
appear.

\begin{Rem}
\label{Rem:small_values}
In Tables~\ref{Tab:C_Slodowy} 
and Table~\ref{Tab:BD_Slodowy}, the conclusions 
for small $s$ or $m$ remain valid, up to possible changes of numbering for the factors $\g_i^\natural$. 
More specifically, if the factor $\g_i^\natural$ for some $i$ is isomorphic to $\mf{so}_m$ with $m=2$,  
then $\g_i^\natural \cong \C$, so $i$ must be replaced by $0$ and the value of $k_i^\natural$ remains valid.

If the factor $\g_i^\natural$ for some $i$, let's say $i=1$, is isomorphic to $\mf{so}_m$ 
with $m=4$, then $\g_1^\natural$ is not simple and has to replaced by $\g_1^\natural \times \g_2^\natural$, 
with $\g_1^\natural \cong \sl_2$,  $\g_2^\natural \cong \sl_2$. 
Moreover, we have to read $k_2^\natural = k_1^\natural=\cdots$ 
instead of $k_1^\natural=\cdots$, and the value of $k_1^\natural$ remains valid. 

If the factor $\g_i^\natural$ for some $i$ is isomorphic to $\mf{so}_m$ or $\sl_m$ with $m=1$, then this 
factor does not appear (we use the convention that $\sl_1=\mf{so}_1 = 0$), 
and we just forget about $k_i^\natural$. 

Finally, if the factor $\g_i^\natural$ for some $i$ is isomorphic to $\mf{so}_{3} \cong \sl_2$, 
$\mf{so}_{6} \cong \sl_3$ or $\mf{sp}_{2} \cong \sl_2$ then the conclusions remain valid without any change. 
\end{Rem}

\begin{proof}
We proceed as in Lemma \ref{Lem:k_natural-type-A}. 
In order to describe $\g^\natural$ we use the symplectic (resp.~orthogonal) 
Dynkin pyramid of shape $\bs{\mu}$. 
Indeed, as explained in \S\ref{sub:sympl_pyramids} such a pyramid allows 
to construct an $\sl_2$-triple $(e,h,f)$ with $f \in \O_{\bs\mu}$. 
One easily can compute $\g^0=\g^{h}$ from the pyramid  
and, hence, $\g^\natural = \g^0 \cap \g^f$.  

\smallskip

\noindent 
{\bf Case $\sp_n$.}
We detail the proof for 
$\bs\mu= (q^m,1^s) \in \P_{-1}(n)$. 
The other cases are dealt with similarly, and the verifications 
are left to the reader. There are two cases. 

(1) $\bs\mu= (q^m,1^s) \in \P_{-1}(n)$, with $q$ odd, $m$ even, 
$s$ even.  
In this case, the orbit $\O_{-1;\bs\mu}$ is even, and 
the symplectic Dynkin pyramid associated with the partition $\bs\mu$ is 
as in Figure~\ref{Fig:sp_n-pyr1} if $q=5$, $m=s=2$. 
From the pyramid, we obtain that  
\begin{align*}
\g^0 & = \left\{{\rm diag}(x_1, \ldots,x_{(q-1)/2}, 
y, - \widehat{x_{(q-1)/2}}, \ldots,\widehat{x_1} \colon x_i \in \mf{gl}_m, 
 \, y \in \mf{sp}_{m+s} \right\} & \\
& \cong  (\mf{gl}_m)^{(q-1)/2}  \times \mf{sp}_{m+s}, &\\
\g^\natural & = \left\{{\rm diag}(x, \ldots,x,y, 
- \widehat{x}, \ldots, - \widehat{x}) \right. \colon & \\
& \qquad  \left. x = \begin{pmatrix} 
a & b \\ c & -\hat{a}\end{pmatrix}, \, 
y =   \begin{pmatrix} 
a & 0 & b \\ 
0 & z & 0 \\
c & 0 & -\hat{a}
\end{pmatrix}, \,  b= \hat{b}, \, c= \hat{c},\, 
z \in \mf{sp}_s \right\}  \subset \g^0 & \\
& \cong \mf{sp}_m \times \mf{sp}_s. & 
\end{align*} 
To compute $k_1^\natural$, pick $t \in \g^\natural$ with 
$a= {\rm diag}(1,\ldots,0)$, $b=c=0$ and $z=0$ 
in the above description of $\g^\natural$. 
We obtain that $(t | t)_1^\natural = 2$, $(t|t)_\g = 2 q$, 
$\kappa_\g(t,t) = 4 q \left(\frac{n}{2}+1\right)$, 
 $\kappa_{\g^0}(t,t) = 2 m (q-1) +4\left(\frac{m+s}{2}+1\right)$. 
Therefore, $k_1^\natural  = 
  q k + q \left(\frac{n}{2}+1 \right) -  \left(\frac{n}{2}+1\right)$. 
To compute $k_2^\natural$, pick $t \in \g^\natural$ with $z={\rm diag}(1,0,\ldots,0,-1)$ 
and $x=0$ 
in the above description of $\g^\natural$. 
We obtain that $(t | t)_1^\natural = 2$, $(t|t)_\g = 2$, 
$\kappa_\g(t,t) = 4  \left(\frac{n}{2}+1\right)$, 
 $\kappa_{\g^0}(t,t) = 4\left(\frac{m+s}{2}+1\right)$. 
Therefore, $k_2^\natural  =  k +  \left(\frac{n}{2}+1 \right) -  \left(\frac{m+s}{2}+1\right)$.

(2) $\bs{\mu}=(q^m,1^s)$, with $q$ even, $m$ odd or even, 
$s$ even. 
The orbit $\O_{-1;\bs{\mu}}$ is odd, and 
we have
\begin{align*}
\g^0   & = \left\{{\rm diag}(x_1, \ldots,x_{q/2},y, - \widehat{x_{q/2}}, \ldots,\widehat{x_1} 
\colon x_i \in \mf{gl}_m,  \, y \in \mf{sp}_s \right\}&  \\
 &  \cong  (\mf{gl}_m)^{q/2}  \times \mf{sp}_{s}, & \\
\g^\natural  & = \left\{{\rm diag}(x, \ldots,x,y, - \widehat{x}, \ldots,\widehat{x} 
\colon x \in \mf{so}_m,  \, y \in \mf{sp}_s \right\} \subset \g^0 & \\
&  \cong \mf{so}_m \times \mf{sp}_s. & 
\end{align*}
If $m=2$ or $m>4$, 
we compute $k_1^\natural$ by picking $t \in \g^\natural$ with $x={\rm diag}(1,0,\ldots,0,-1)$ 
and $y=0$ in the above description of $\g^\natural$. 
We obtain that $(t | t)_1^\natural = 1$, $(t|t)_\g = 2 q$, 
$\kappa_\g(t,t) = 4 q \left(\frac{n}{2}+1\right)$, 
 $\kappa_{\g^0}(t,t) = 2 m q$, 
  $\kappa_{1/2}(t,t) = 2 s$. 
Therefore, 
$k_1^\natural  = 
  2 q k + 2 q \left(\frac{n}{2}+1 \right) - n,$
whence the expected result. 
To compute $k_2^\natural$, pick $t \in \g^\natural$ with $x=0$  
and $y={\rm diag}(1,0,\ldots,0,-1)$ 
in the above description of $\g^\natural$. 
We obtain that $(t | t)_2^\natural = 2$, $(t|t)_\g = 2$, 
$\kappa_\g(t,t) = 4  \left(\frac{n}{2}+1\right)$, 
 $\kappa_{\g^0}(t,t) = 4\left(\dfrac{s}{2}+1\right)$,  
  $\kappa_{1/2}(t,t) = 2 m$. 
Therefore, 
$ k_2^\natural  =  k +  \left(\frac{n}{2}+1 \right) -  \left(\dfrac{m}{2}+\dfrac{m}{2}+1\right),$
whence the expected result. 

Assume $m=4$. 
The isomorphism 
\begin{align}
\label{eq:iso_so4}
\mf{so}_4
\cong \sl_2 \times \sl_2
\end{align}
is obtained through the assignments 
$$t_1= {\rm diag}(1,1,-1,-1)  \mapsto h_1, \quad t_2= {\rm diag}(1,-1,1,-1)  \mapsto h_2,$$ 
$$e_{1,2} - e_{-2,-1}  \mapsto e_1, \quad e_{1,-2} - e_{2,-1}  \mapsto e_2,\quad 
e_{2,1} - e_{-1,-2}  \mapsto f_1, \quad e_{-2,1} -e_{-1,2}  \mapsto f_2,$$ 
with ${\rm span}(h_i,e_i,f_i) \cong \sl_2$. 
To compute $k_1^\natural$, choose 
$t \in \g^\natural$ such that  
$x= {\rm diag}(1,1,-1,-1)$ and $y=0$ 
in the above description of $\g^\natural$. 
We obtain that $k_1^\natural  = 
  2 q k + 2 q \left(\frac{n}{2}+1 \right) - n$. 
To compute $k_2^\natural$, choose 
$t \in \g^\natural$ such that  
$x= {\rm diag}(1,-1,1,-1)$ and $y=0$ 
in the above description of $\g^\natural$. 
In the same way we obtain that 
$k_2^\natural  =2 q k + 2 q \left(\frac{n}{2}+1 \right) - n$. 
The computation of $k_3^\natural$ in this case 
works as in the case $m>4$ for $k_2^\natural$. 
This terminates this case following Remark \ref{Rem:small_values}.

\smallskip

\noindent 
{\bf Case $\so_n$.}
As for the $\sp_n$, we detail the proof only for 
$\bs\mu= (q^m,1^s) \in \P_{1}(n)$. 
The other cases are dealt with similarly, and the verifications 
are left to the reader. There are two cases. 

(1) $\bs{\mu} = (q^m,1^s)$ with $q$ odd. 

(a) Assume first that $m$ is even. 
Then the orbit $\O_{1;\bs\mu}$ is even, and 
from the pyramid, we easily obtain that 
\begin{align*} 
\g^0 &=\left\{ {\rm diag}(x_1,\ldots,x_{(q-1)/2},y,-\widehat{x_{(q-1)/2}},\ldots,-\widehat{x_1}) 
\colon x_i \in \mf{gl}_m, y \in \mf{so}_{m+s}\right\} &\\ 
&\cong (\mf{gl}_m)^{(q-1)/2} \times \mf{so}_{m+s}, & 
\end{align*}
\begin{align*}  
\g^\natural &= \left\{{\rm diag}(x,\ldots,x,
 y,-\widehat{x},\ldots,-\widehat{x}) 
\colon x= \begin{pmatrix} a & b \\ c & -\widehat{a}
\end{pmatrix},  \, b = - \widehat{b}, \, c = - \widehat{c}, \right. & \\
& \hspace{6cm} \left. y = \begin{pmatrix} 
a & 0 & b \\
0 & z & 0 \\
c & 0  & - \widehat{a}
\end{pmatrix}  \, z \in \mf{so}_s \right\}  \subset \g^0 & \\
& \cong \mf{so}_{m} \times \mf{so}_s.&
\end{align*}

Assume  $m=2$ or $m>4$. 
In order to compute $k_1^\natural$, pick
$t \in \g^\natural$ with $a= {\rm diag}(1,0,\ldots,0)$ and $b=c=0$, $z=0$  
in the above description of $\g^\natural$. 
Then $(t| t)_{1}^\natural = 1$, $(t | t )_\g = q$, 
$\kappa_{\g} (t,t) = 2 q(n-2)$ and $\kappa_{\g^0} (t,t) = 
2m (q-1) + 2(m+s-2)$. Therefore 
$k_1^\natural = qk + q(n-2) -(n-2).$
In order to compute $k_2^\natural$, choose 
$t \in \g^\natural$ such that 
$z= {\rm diag}(1,\ldots,-1)$ and $a=b=c=0$  
in the above description of $\g^\natural$. 
Then 
$(t| t)_{1}^\natural = 1$, $(t | t )_\g = 1$, 
$\kappa_{\g} (t,t) = 2(n-2)$ and $\kappa_{\g^0} (t,t) = 
2(m+s-2)$. Therefore 
$k_2^\natural = k + n-m-s.$
If $m=4$, using the isomorphism \eqref{eq:iso_so4} as in the $\mf{sp}_n$ case, 
we easily verify that $k_1^\natural = k_2^\natural = qk + q(n-2) -(n-2)$ 
and $k_3^\natural =k + n-m-s$. 
So the same conclusions hold following Remark \ref{Rem:small_values}. 

(b) Assume now that $m$ is odd. 
Two cases have to be distinguished.  

$\ast$ If $s$ is even, then the orbit $\O_{1;\bs\mu}$ is even 
and the Dynkin pyramid is  
as in 
Figure~\ref{Fig:so-ex-2} if $q=7$, $m=3$ and $s=2$. 
{\tiny
\begin{center}
\begin{figure}
\setlength\unitlength{0.0175cm} 
\hspace{-2cm}\begin{picture}(140,140)
\put(160,0){\line(1,0){20}}
\put(160,20){\line(1,0){20}}
\put(100,40){\line(1,0){140}}
\put(100,60){\line(1,0){140}}
\put(100,80){\line(1,0){140}}
\put(100,100){\line(1,0){140}}
\put(160,120){\line(1,0){20}}
\put(160,140){\line(1,0){20}}
\put(100,40){\line(0,1){60}}
\put(120,40){\line(0,1){60}}
\put(140,40){\line(0,1){60}}
\put(160,0){\line(0,1){140}}
\put(180,0){\line(0,1){140}}
\put(200,40){\line(0,1){60}}
\put(220,40){\line(0,1){60}}
\put(240,40){\line(0,1){60}}
\put(230,50){\makebox(0,0){\Tiny{$3$}}}
\put(230,70){\makebox(0,0){\Tiny{$2$}}}
\put(230,90){\makebox(0,0){\Tiny{$1$}}}
\put(210,50){\makebox(0,0){\Tiny{$6$}}}
\put(210,70){\makebox(0,0){\Tiny{$5$}}}
\put(210,90){\makebox(0,0){\Tiny{$4$}}}
\put(190,50){\makebox(0,0){\Tiny{$9$}}}
\put(190,70){\makebox(0,0){\Tiny{$8$}}}
\put(190,90){\makebox(0,0){\Tiny{$7$}}}
\put(170,50){\makebox(0,0){\Tiny{-$12$}}}
\put(170,70){\makebox(0,0){\Tiny{$0$}}}
\put(170,90){\makebox(0,0){\Tiny{$12$}}}
\put(150,50){\makebox(0,0){\Tiny{-$7$}}}
\put(150,70){\makebox(0,0){\Tiny{-$8$}}}
\put(150,90){\makebox(0,0){\Tiny{-$9$}}}
\put(130,50){\makebox(0,0){\Tiny{-$4$}}}
\put(130,70){\makebox(0,0){\Tiny{-$5$}}}
\put(130,90){\makebox(0,0){\Tiny{-$6$}}}
\put(110,50){\makebox(0,0){\Tiny{-$1$}}}
\put(110,70){\makebox(0,0){\Tiny{-$2$}}}
\put(110,90){\makebox(0,0){\Tiny{-$3$}}}
\put(170,110){\makebox(0,0){\Tiny{$11$}}}
\put(170,130){\makebox(0,0){\Tiny{$10$}}}
\put(170,10){\makebox(0,0){\Tiny{-$10$}}}
\put(170,30){\makebox(0,0){\Tiny{-$11$}}}
\end{picture}
\caption{\footnotesize{orthogonal Dynkin Pyramid for $(7^3,1^4)$}} \label{Fig:so-ex-2}
\end{figure}
\end{center}
}

From the pyramid, we obtain that 
\begin{align*} 
\g^0 & = \left\{ {\rm diag}(x_1,\ldots,x_{(q-1)/2},y,-\widehat{x_{(q-1)/2}},\ldots,-\widehat{x_1}) 
\colon x_i \in \mf{gl}_m, y \in \mf{so}_{m+s}\right\} &\\ 
& \cong  (\mf{gl}_m)^{(q-1)/2} \times \mf{so}_{m+s},& \\ 
\g^\natural &= \left\{{\rm diag}(x,\ldots,x,
 y,-\widehat{x},\ldots,-\widehat{x}) 
\colon  x= \begin{pmatrix} a & u & b \\ v & 0 & - \widehat{u} & \\ 
c & -\widehat{v} & -\widehat{a}
\end{pmatrix} \in \mf{so}_m,  \, a,b,c \in \mf{gl}_{(m-1)/2}, \right. &\\
&\qquad\quad \left. b = - \widehat{b}, \, c = - \widehat{c}, \, y = \begin{pmatrix} 
a & 0 & u & 0 & b \\
0 & a' & 0 & b' & 0 \\ 
v & 0 & 0 &  0 &  -\widehat{u}\\ 
0 & c' & 0  & -\widehat{a'} & 0 \\
c & 0  & -\widehat{v} & 0 & - \widehat{a}
\end{pmatrix} \in \mf{so}_{m+s}, \, b' = - \widehat{b'}, \, c' = - \widehat{c'}  \right\} & \\
& \cong \mf{so}_{m} \times \mf{so}_s. &
\end{align*}
We compute the $k_i^\natural$'s as in the case where $m$ is even. 

$\ast$ If $s$ is odd, 
then the orbit $\O_{1;\bs\mu}$ is even and the Dynkin pyramid is  
as 
Figure~\ref{Fig:so-ex-3}
if $q=7$, $m=3$ and $s=3$. 
{\tiny
\begin{center}
\begin{figure}
\setlength\unitlength{0.0175cm}
\hspace{-2cm}\begin{picture}(140,140)
\put(160,0){\line(1,0){20}}
\put(160,20){\line(1,0){20}}
\put(100,20){\line(1,0){140}}
\put(100,40){\line(1,0){140}}
\put(100,60){\line(1,0){140}}
\put(100,80){\line(1,0){140}}
\put(100,100){\line(1,0){140}}
\put(160,120){\line(1,0){20}}

\put(100,20){\line(0,1){80}}
\put(120,20){\line(0,1){80}}
\put(140,20){\line(0,1){80}}
\put(160,0){\line(0,1){120}}
\put(180,0){\line(0,1){120}}
\put(200,20){\line(0,1){80}}
\put(220,20){\line(0,1){80}}
\put(240,20){\line(0,1){80}}

\put(100,60){\line(1,1){20.1}}
\put(120,60){\line(1,1){20.1}}
\put(140,60){\line(1,1){20.1}}
\put(180,40){\line(1,1){20.1}}
\put(200,40){\line(1,1){20.1}}
\put(220,40){\line(1,1){20.1}}
\put(120,60){\line(-1,1){20.1}}
\put(140,60){\line(-1,1){20.1}}
\put(160,60){\line(-1,1){20.1}}
\put(200,40){\line(-1,1){20.1}}
\put(220,40){\line(-1,1){20.1}}
\put(240,40){\line(-1,1){20.1}}

\put(230,50){\makebox(0,0){{}}}
\put(230,30){\makebox(0,0){{$3$}}}
\put(210,30){\makebox(0,0){{$6$}}}
\put(190,30){\makebox(0,0){{$9$}}}
\put(230,70){\makebox(0,0){{$2$}}}
\put(230,90){\makebox(0,0){{$1$}}}
\put(210,50){\makebox(0,0){{}}}
\put(210,70){\makebox(0,0){{$5$}}}
\put(210,90){\makebox(0,0){{$4$}}}
\put(190,50){\makebox(0,0){{}}}
\put(190,70){\makebox(0,0){{$8$}}}
\put(190,90){\makebox(0,0){{$7$}}}
\put(170,50){\makebox(0,0){{-$11$}}}
\put(170,70){\makebox(0,0){{$11$}}}
\put(170,90){\makebox(0,0){{$10$}}}
\put(170,110){\makebox(0,0){{$12$}}}
\put(150,50){\makebox(0,0){{-$8$}}}
\put(150,30){\makebox(0,0){{-$7$}}}
\put(150,70){\makebox(0,0){{}}}
\put(150,90){\makebox(0,0){{-$9$}}}
\put(130,50){\makebox(0,0){{-$5$}}}
\put(110,50){\makebox(0,0){{-$2$}}}
\put(130,30){\makebox(0,0){{-$4$}}}
\put(130,90){\makebox(0,0){{-$6$}}}
\put(110,30){\makebox(0,0){{-$1$}}}
\put(110,90){\makebox(0,0){{-$3$}}}
\put(170,10){\makebox(0,0){{-$12$}}}
\put(170,30){\makebox(0,0){{-$10$}}}
\end{picture}
\caption{\footnotesize{orthogonal Dynkin Pyramid for $(7^3,1^3)$}} 
\label{Fig:so-ex-3} 
\end{figure}
\end{center}
}
From the pyramid we obtain that 
\begin{align*} 
\g^0 & = \left\{ {\rm diag}(x_1,\ldots,x_{(q-1)/2},y,-\widehat{x_{(q-1)/2}},\ldots,-\widehat{x_1}) 
\colon x_i \in \mf{gl}_m, y \in \mf{so}_{m+s}\right\} &\\ 
& \cong  (\mf{gl}_m)^{(q-1)/2} \times \mf{so}_{m+s},& 
\end{align*}
and 
\begin{align*}  
\g^\natural &= \left\{{\rm diag}(x,\ldots,x,
 y,-\widehat{x},\ldots,-\widehat{x}) 
\colon  x= \begin{pmatrix} a & u & b \\ 2v & 0 & - 2\widehat{u} & \\ 
c & -\widehat{v} & -\widehat{a}
\end{pmatrix}, \, a,b,c \in\mf{gl}_{(m-1)/2}, \, b = - \widehat{b}, \, c = - \widehat{c}, \, \right. &\\
&\hspace{6cm} \left. y = \begin{pmatrix} 
w & 0 & z & -z & 0 & 0 \\
0 & a & u & u & 0 & 0 \\
t & - v & 0 & 0 & - \widehat{u}  & \widehat{z}\\
- t & - v & 0 & 0 & - \widehat{u}  & - \widehat{z}\\
0 & 0 &  \widehat{v} & \widehat{v} & 0 & 0 \\
0 & 0 & \widehat{t} & - \widehat{t} & 0 & \widehat{w}
\end{pmatrix},\, w\in \so_{s-1}  \right\} & \\
& \cong \mf{so}_{m} \times \mf{so}_s. &
\end{align*}
The first statements then easily follows. 
We compute the $k_i^\natural$'s as in the case where $m$ is even. 
 
(2) $\bs\mu=(q^m,1^s)$ with $q$ even, $m$ even and $s>0$ arbitrary. 
Here, the orbit $\O_{1;\bs\mu}$ is odd, 
and we have 
\begin{align*}
\g^{0} = & \left\{{\rm diag}(x_1,\ldots,x_{q/2},y,-\widehat{x_{q/2}},
\ldots,-\widehat{x_1}) \right.\colon& \\
 & \qquad \qquad \left.  x_i \in \mf{gl}_{m}, \, y \in \mf{so}_s \right\} \cong 
(\mf{gl}_m)^{q/2} \times \mf{so}_s, &\\
\g^\natural = & \left\{{\rm diag}(x,0,\ldots,0,x,y,-\widehat{x}, 
0,\ldots,0,-\widehat{x}) \right. \colon & \\
 & \qquad \qquad \left.  x = \begin{pmatrix} a & b \\ 
 c & - \hat{a}
 \end{pmatrix}, b = \hat{b}, \, c=\hat{c}, \, y \in \mf{so}_s \right\} \cong 
\sp_{m} \times \mf{so}_s. & 
\end{align*}
Let $\g_1^\natural$ be the simple component of $\g^\natural$ isomorphic to $\sp_{m}$. 
To compute $k_1^\natural$, pick $t \in \g^\natural$ with $a= {\rm diag}(1,\ldots,0)$, 
$b=c=0$, $y=0$ in the above description of $\g^\natural$. 
Then $(t|t)_{\g^\natural}=2$, $(t|t)_\g = q$, 
$\kappa_\g(t,t)= 2q (n-2)$, 
 $\kappa_{\g^{0}}(x,x)= 2 m q$, 
 $\kappa_{\g^{1/2}}(t,t)= 2 s$. 
Therefore 
 $ 2 k_1^\natural 
= q k + q (n-2) -  n,$ 
 whence the expected result. 
 
Assume $s=3$ or $s>4$. 
Let $\g_2^\natural$ be the simple component of $\g^\natural$ isomorphic to $\so_{s}$. 
To compute $k_2^\natural$, pick $y={\rm diag}(1,\ldots,-1)$ and $x=0$ 
in the above description of $\g^\natural$. 
Then $(t|t)_{\g^\natural}=1$, $(t|t)_\g = 1$, 
$\kappa_\g(t,t)= 2 (n-2)$, 
 $\kappa_{\g^{0}}(x,x)= 2 (s-2)$,  
 $\kappa_{\g^{1/2}}(t,t)= 2 m$. 
Therefore 
 $  k_2^\natural = k + n-m-s,$ 
 whence the expected result. 
 
 We compute similarly $k_0^\natural$ for $s=2$ in which 
 case the $\mf{so}_s$ component is the centre $\g_0^\natural$ of $\g^\natural$. 
 Also, using the isomorphism $\mf{so}_4 \cong \sl_2 \times \sl_2$ 
 as in (1), $m=4$ case, we easily obtain that 
 $  k_2^\natural =k_3^\natural = k + n-m-s.$

It remains to check the last assertion. 
We do it for $\g=\sp_m$ and $\bs\mu=(q^m,1^s)$, the other cases can be checked easily as well. 
In this case, $\g^\natural \cong 
\mf{sp}_m \times \mf{sp}_s$, $k_1^\natural=qk + q(\frac{n}{2}+1) - (\frac{n}{2}+1)$ 
and $k_2^\natural= k + \frac{n}{2}- \frac{m}{2} - \frac{s}{2}$. 
Therefore $k_1^\natural = p - (\frac{n}{2}+1)$ which is a nonnegative integer. 
In particular it is admissible for $\mf{sp}_m$. 
On the other hand, $k_2^\natural= - (\frac{s}{2}+1) + \frac{p-q \frac{m}{2}}{q}$, 
with $(p-q \frac{m}{2},q)=1$ and 
$p-q \frac{m}{2} \ge \frac{s}{2}+1$.  
So $k_2^\natural$ is admissible for $\mf{sp}_s$. 
\end{proof}

{\tiny 
\begin{table}
\begin{tabular}{llll}
\hline
&&& \\[-0.5em]
$\bs\mu$ & $\g^\natural=\bigoplus_i \g^\natural_i$ 
& $k_i^\natural$ 
& 
conditions \\
&&& \\[-0.5em]
\hline 
&&& \\[-0.5em]
$(q^{m},{s})$ 
& $\mf{sp}_{ m}$ & $k_1^\natural=qk + q(\frac{n}{2}+1) - (\frac{n}{2}+1)$ & $q$ odd, $m$ even, 
$0 \le {s} \le q-1$ even  \\[0.5em]  
$(q^m,1^s)$ 
& $\mf{sp}_m \times \mf{sp}_s$ & $k_1^\natural=qk + q(\frac{n}{2}+1) - (\frac{n}{2}+1)$ & 
$q$ odd, $m$ even, $s\ge 0$ even  \\
&& $k_2^\natural= k + \frac{n}{2}- \frac{m}{2} - \frac{s}{2}$ & \\[0.5em] 
$(q^{m},q-1,{s})$ 
& $\mf{sp}_{m}$ & $k_1^\natural=qk + q(\frac{n}{2}+1)- (\frac{n}{2}+1)$ &  $q$ odd, $m$ even, 
$0 \le  {s} \le q-1$ even \\[0.5em]
 $(q^{m},q-1,1^{s})$ & 
 $\mf{sp}_{m} \times \mf{sp}_{s}$ & $k_1^\natural=qk + q(\frac{n}{2}+1) - (\frac{n}{2}+1)$ & 
 $q$ odd, $m$ even, $s \ge 0$ even  \\
&& $k_2^\natural= k + \frac{n}{2}- \frac{m}{2} - \frac{s}{2} -\frac{1}{2}$  & \\[0.5em] 
$(q^{m},(q-2)^2)$ & 
 $\mf{sp}_{m} \times \mf{sl}_{2}$ & $k_1^\natural=qk + q(\frac{n}{2}+1) - (\frac{n}{2}+1)$ & 
 $q$ odd, $m$ even  \\
&& $k_2^\natural= (q-2)(k + \frac{n}{2} - \frac{m}{2}) -1$  & \\[0.5em] 
$(\frac{q}{2}+1,(\frac{q}{2})^{m},{s})$ 
& $\mf{sp}_{m}$ & $k_1^\natural=\frac{q}{2}k + \frac{q}{2} (\frac{n}{2}+1) - \frac{n+1}{2}$ & 
$q$ even, $\frac{q}{2}$ odd, $m $ even, $0 \le  {s} \le q-1$ even  \\[0.5em] 
$(\frac{q}{2}+1,(\frac{q}{2})^m,1^s)$ 
& $\mf{sp}_m \times \mf{sp}_s$ & $k_1^\natural=\frac{q}{2} k + \frac{q}{2} (\frac{n}{2}+1) - \frac{n+1}{2}$ & 
$q$ even, $\frac{q}{2}$ odd, $m$ even, $s \ge 0$ even \\
&& $k_2^\natural= k + \frac{n}{2}- \frac{m}{2} - \frac{s}{2} -\frac{1}{2} $ &  \\[0.5em] 
$(\frac{q}{2}+1,(\frac{q}{2})^{m},\frac{q}{2}-1,{s})$ 
& $\sp_{m}$ & $k_1^\natural= \frac{q}{2} k + \frac{q}{2}(\frac{n}{2}+1) - \frac{n+1}{2}$ 
& $q$ even, $\frac{q}{2}$ odd, 
$m $ even, $2 \le {s} \le q-1$ even   \\[0.5em] 
$(\frac{q}{2}+1,(\frac{q}{2})^{m},\frac{q}{2}-1,1^{ s})$ 
& $\mf{sp}_{m} \times \mf{sp}_{s}$ & $k_1^\natural=\frac{q}{2} k 
+  \frac{q}{2}(\frac{n}{2}+1) - \frac{n+1}{2}$ & 
$q$ even, $\frac{q}{2}$ odd, $m$ even, $s \ge 0$ even \\
&& $k_2^\natural= k + \frac{n}{2}- \frac{m}{2} - \frac{s}{2} - 1 $ &  \\[0.5em] 
$(\frac{q}{2}+1,(\frac{q}{2})^{m},(\frac{q}{2}-2)^2)$ 
& $\mf{sp}_{m} \times \mf{sl}_{2}$  & $k_1^\natural= \frac{q}{2} k + \frac{q}{2}(\frac{n}{2}+1) - \frac{n+1}{2}$ & 
$q$ even, $\frac{q}{2}$ odd, $m$ even \\
&& $k_2^\natural=(\frac{q}{2}-2)(k+\frac{n}{2}-\frac{m}{2}-\frac{1}{2})-1$ &  \\[0.5em] 
$((\frac{q}{2})^{m},{s})$ 
& $\mf{so}_{m}$ & $k_1^\natural={q} k + {q} (\frac{n}{2}+1) -n$   & $q$ even, $\frac{q}{2}$ even, 
$m$ odd or even, $0 \le  {s} \le q-1$ even  \\[0.5em] 
 $((\frac{q}{2})^m,1^s)$  
& $\mf{so}_m \times \mf{sp}_s$ & $k_1^\natural={q} k + {q} (\frac{n}{2}+1) -n$ &  $q$ even, $\frac{q}{2}$ even, 

$s \ge 0$ even \\
&& $k_2^\natural= k + \frac{n}{2}- \frac{m}{2} - \frac{s}{2} $ &   \\[0.5em] 
$((\frac{q}{2})^m,(\frac{q}{2}-1)^2)$ & 
 $\mf{so}_{m} \times \mf{sl}_{2}$ & 
 $k_1^\natural=qk + q(\frac{n}{2}+1) - n$ & 
$q$ even, $\frac{q}{2}$ even, 
$m$ odd or even  \\
&& $k_2^\natural= (\frac{q}{2}-1)(k + \frac{n}{2} - \frac{m}{2}) -1$  & \\[0.5em] 
\hline
\end{tabular}
\caption{\footnotesize{Centralisers of some $\sl_2$-triples $(e,h,f)$ in $\sp_{n}$, 
with $f \in \O_{-1;\bs\mu}$}}
\label{Tab:C_Slodowy}
\end{table}
}

\bigskip

{\tiny 
\begin{table}
\begin{tabular}{llll}
\hline
&&& \\[-0.5em]
$\bs\mu$   
& $\g^\natural=\bigoplus_i \g^\natural_i$ 
& $k_i^\natural$ 
& 
conditions \\
&&& \\[-0.5em]
\hline 
&&& \\[-0.5em] 
$(q^{m},{s})$ 
& $\mf{so}_{m}$ 
& $k_1^\natural = qk + q(n-2) - (n-2)$ & $q$ odd, $m$ odd or even, 
$1 \le {s} \le q-1$ odd  \\[0.5em]
$(q^m,1^s)$ & $\mf{so}_{m} \times \mf{so}_s$  
& $k_1^\natural = qk + q(n-2) - (n-2)$ & $q$ odd, $m$ odd or even, $s \ge 1$ \\ 
&& $k_2^\natural = k + n - m - s$ &  \\[0.5em]
$(q^m,(q-1)^2)$ & $\mf{so}_{m} \times \mf{sl}_2$  
& $k_1^\natural = qk + q(n-2) - (n-2)$ & $q$ odd, $m$ odd or even\\ 
&& $k_2^\natural = \frac{q-1}{2}(k + n - m - 4)$ &  \\[0.5em]
$(q^m,(q-1)^2,1)$ &  $\mf{so}_{m} \times \mf{sl}_2$  
& $k_1^\natural = qk + q(n-2) - (n-2)$ & $q$ odd, $m$ even\\ 
&& $k_2^\natural = \frac{q-1}{2}(k + n - m - 4)$ &  \\[0.5em]

$(q^{m},{s},1)$ 
& $\mf{so}_{m}$ 
& $k_1^\natural = qk + q(n-2) - (n-2)$ & $q$ odd, $m$ odd or even, $3 \le {s} \le q-1$ odd \\[0.5em]
$(q^m,2^2)$ & $\mf{so}_{m} \times \mf{sl}_2$  
& $k_1^\natural = qk + q(n-2) - (n-2)$ & $q$ odd, $m$ odd or even \\ 
&& $k_2^\natural = k + n - m - 4$ &  \\[0.5em]
$(3^m,2^4)$ &   $\mf{so}_{m} \times \mf{sp}_4$ 
& $k_1^\natural = 3 k + 3 (n-2) - (n-2)$  & $m$ odd \\ 
& & $k_2^\natural = k + 2m +2$ &  \\[0.5em]
%%%
$(q+1,q^{m},{s})$  
& $\mf{sp}_{m}$ & 
$k_1^\natural =\frac{qk}{2} + \frac{q(n-2)}{2} - \frac{(n-1)}{2}$ & $q$ even, $m$ even, 
$1 \le  {s} \le q-1$ odd \\[0.5em]  
$(q+1,q^m,1^s)$ 
& $\mf{sp}_m \times \mf{so}_s$ &
$k_1^\natural =\frac{qk}{2} + \frac{q(n-2)}{2} - \frac{(n-1)}{2}$ & $q$ even, $m$ even, 
$s\ge 1$  odd \\ 
&& $k_2^\natural = k+n - m -s-1$ & \\[0.5em] 
$(q+1,q^{m},q-1,{s},1)$ &  
$\mf{sp}_{m}$ & 
$k_1^\natural =\frac{qk}{2} + \frac{q(n-2)}{2} - \frac{(n-1)}{2}$  &  $q$ even, $m$ even, 
$3 \le {s} \le q-1$ odd  \\[0.5em]
$(q+1,q^{m},q-1,1^s)$ 
& $\mf{sp}_{m} \times \mf{so}_s$ & 
$k_1^\natural =\frac{qk}{2} + \frac{q(n-2)}{2} - \frac{(n-1)}{2}$ & $q$ even, $m$ even, $s\ge 2$ 
even \\
&& $k_2^\natural = k+n - m -s-2$& \\[0.5em] 

$(q+1,q^m,q-1,2^2)$ & $\mf{sp}_{m} \times \mf{sl}_2$  
& $k_1^\natural =\frac{qk}{2} + \frac{q(n-2)}{2} - \frac{(n-1)}{2}$ 
& $q$ even, $m$ even \\ 
&& $k_2^\natural = k+n-m-6$ 
&  \\[0.5em]

$(q+1,q^m,(q-2)^2,1)$ & $\mf{sp}_{m} \times \mf{sl}_2$  
& $k_1^\natural =\frac{qk}{2} + \frac{q(n-2)}{2} - \frac{(n-1)}{2}$
& $q$ even, $m$ even \\ 
&& $k_2^\natural = \frac{q-2}{2}(k + n - m - 5)-\frac{1}{2}$ 
&  \\[0.5em]
$(q^{m},{s})$  
& $\mf{sp}_{ m}$ & 
$k_1^\natural =\frac{qk}{2} + \frac{q(n-2)}{2} - \frac{n}{2}$ & $q$ even, $m$ even, 
$1 \le {s} \le q - 1$ odd  \\[0.5em]
$(q^m,1^s)$  
& $\mf{sp}_{m} \times \mf{so}_s$ 
& $k_1^\natural =\frac{qk}{2} + \frac{q(n-2)}{2} - \frac{n}{2}$ &  $q$ even, $m$ even, $s\ge 0$ \\
&& $k_2^\natural = k + n - m -s$  &  \\[0.5em] 
$(q^{m},q-1,{s},1)$ 
& $\mf{sp}_m$ 
& $k_1^\natural = \frac{qk}{2} + \frac{q(n-2)}{2} - \frac{n}{2}$  &  $q$ even, $m$ even, 
$3 \le {s} \le q-1$ odd   \\[0.5em] 
$(q^{m},q-1,1^s)$ 
& $\mf{sp}_m \times \mf{so}_s$ 
& $k_1^\natural = \frac{qk}{2} + \frac{q(n-2)}{2} - \frac{n}{2}$ &  $q$ even, $m$ even, $s \ge 0$ \\
&& $k_2^\natural = k + n - m -s-1$ & \\[0.5em] 
$(q^m,q-1,2^2)$ & $\mf{sp}_{m} \times \mf{sl}_2$  
& $k_1^\natural = \frac{qk}{2} + \frac{q(n-2)}{2} - \frac{n}{2}$ & $q$ even, $m$ even \\ 
&& $k_2^\natural = k+n-m-1$ & \\[0.5em] 
$(q^m,(q-1)^2,1)$ & $\mf{sp}_{m} \times \mf{sl}_2$  
& $k_1^\natural = \frac{qk}{2} + \frac{q(n-2)}{2} - \frac{n}{2}$ & $q$ even, $m$ even \\ 
&& $k_2^\natural = \frac{q-1}{2}(k + n - m - 4)$ &  \\[0.5em]
$(q^m,(q-2)^2,1)$ &  $\mf{sp}_{m} \times \mf{sl}_2$
& $k_1^\natural = \frac{qk}{2} + \frac{q(n-2)}{2} - \frac{n}{2}$ & $q$ even, $m$ even \\ 
& & $k_2^\natural = \frac{q-2}{2}(k + n - m - 4)-\frac{1}{2}$   &  \\[0.5em]
\hline
\end{tabular}
\caption{\footnotesize{Centralisers of some $\sl_2$-triples $(e,h,f)$ in $\so_{n}$, 
with $f \in \O_{1;\bs\mu}$}} 
\label{Tab:BD_Slodowy}
\end{table}
}

We are now in a position to state our results on collapsing levels for $\sp_n$. 
First, we consider the case where $\W_k(\g,f)$ is lisse, that is, $f \in \O_k$.

\begin{Th} 
\label{Th:main_sp_n-1}    
Assume that $k = -h_\g^{\vee} + p/q=- \left(\frac{n}{2}+1\right) + p/q$ 
is an admissible level for $\g=\mf{sp}_n$. 
Pick a nilpotent element $f \in \O_k$ so that $\W_k(\g,f)$ is lisse. 
\begin{enumerate} 
\item Assume that $k$ is principal. 
If $p=h_{\sp_n}^{\vee}=\frac{n}{2}+1$, then for generic\footnote{Here, by {\em generic} $q$, we mean all $q$ but a finite number.} $q$, 
$k$ is collapsing if and only if $n \equiv 0,-1 \mod q$. 
If $n \equiv 0,-1 \mod q$, then for generic $q$, 
$k$ is collapsing if and only if $p=h_{\sp_n}^{\vee}$. 
Moreover, if $n \equiv 0,-1 \mod q$, then
$$\W_{-h_{\sp_n}^{\vee} + h_{\sp_n}^{\vee}/q}(\mf{sp}_n,f)  \cong H_{DS,f}^0(L_{-h_{\sp_n}^{\vee} + h_{\sp_n}^{\vee}/q}(\mf{sp}_n))\cong \C.$$
\item Assume that $k$ is a coprincipal admissible level for $\g=\mf{sp}_n$. 
If $p=h_{\sp_n}+1=n+1$, then for generic $q$, 
$k$ is collapsing if and only if $n \equiv 0,1  \mod \frac{q}{2}$. 
If  $n \equiv 0,1  \mod \frac{q}{2}$, then for generic $q$, 
$k$ is collapsing if and only if $p=h_{\sp_n}+1$
Moreover, if $n \equiv 0,1  \mod \frac{q}{2}$ with $\frac{q}{2}$ odd, then 
$$\W_{-h_{\sp_n}^{\vee} + (h_{\sp_n}+1)/q}(\mf{sp}_n,f)  \cong H_{DS,f}^0(L_{-h_{\sp_n}^{\vee} + (h_{\sp_n}+1)/q}(\mf{sp}_n)) \cong \C,$$
and if $n \equiv 0,1  \mod \frac{q}{2}$ with $\frac{q}{2}$ even, then 
$$\W_{-h_{\sp_n}^{\vee} + (h_{\sp_n}+1)/q}(\mf{sp}_n,f) \cong H_{DS,f}^0(L_{-h_{\sp_n}^{\vee} + (h_{\sp_n}+1)/q}(\mf{sp}_n)) \cong L_{1}(\mf{so}_{m}).$$ 
\end{enumerate}
\end{Th}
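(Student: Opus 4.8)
The strategy is a direct application of Proposition~\ref{Pro:asymptotics-and-collapsing}, exactly as in the proof of Theorem~\ref{Th:main_sl_n-1}. Fix $k$ admissible for $\sp_n$ and $f\in\O_k$. Since $f\in\O_k$ we have $X_{L_k(\g)}=\overline{\O}_k=\overline{G.f}$, so by Theorem~\ref{Th:Associated_variety-DS} the vertex algebra $H^0_{DS,f}(L_k(\g))$ is lisse; hence, by Remark~\ref{Rem:lisse_conjecture_true}, once we verify the asymptotic growth and asymptotic dimension hypotheses of Proposition~\ref{Pro:asymptotics-and-collapsing} we obtain $\W_k(\g,f)\cong H^0_{DS,f}(L_k(\g))\cong L_{k^\natural}(\g^\natural)$ \emph{without} needing the even-good-grading hypothesis (though in the cases at hand one is available anyway). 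So everything reduces to computing $\G$ and $\A$ on both sides via Corollary~\ref{Co:asymptotic_data_L} and Proposition~\ref{Pro:asymptotic_data_H_DS}, and matching them.

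First I would identify the partition of $\O_k$ from the list preceding Lemma~\ref{Lem:choice_of_partitions-C}: in the principal case $\O_k=\O_{-1;\bs{\widetilde\lam}^-}$ with $\bs{\widetilde\lam}=(q^{\widetilde m},\widetilde s)$, and in the coprincipal case $\bs{\widetilde\lam}=(\tfrac q2+1,(\tfrac q2)^{\widetilde m},\widetilde s)$. The condition $p=h^\vee_{\sp_n}$ (resp.\ $p=h_{\sp_n}+1$) is what forces $k^\natural_0=0$ and makes $k^\natural_1$ an admissible (in fact nonnegative-integer or unit-denominator) level for the semisimple part of $\g^\natural$ according to Lemma~\ref{Lem:k_natural-type-BCD} and Table~\ref{Tab:C_Slodowy}. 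I would then compute $\G_{H^0_{DS,f}(L_k(\g))}=\dim\g^f-\dim\overline{G.f}$ using $\dim\g^f$ read off from the partition, and compare with $\G_{L_{k^\natural}(\g^\natural)}$; solving $\G_{H^0_{DS,f}(L_k(\g))}=\G_{L_{k^\natural}(\g^\natural)}$ as an equation in $p$ (for generic $q$) pins down $p$ to the two stated residue possibilities $n\equiv 0,-1\pmod q$ (resp.\ $n\equiv 0,1\pmod{q/2}$), exactly as $p\in\{n\pm1\}$ arose in Theorem~\ref{Th:main_sl_n-1}; only the admissible branch survives. This gives the "only if" and one of the two "if" directions; combined with the dimension count it yields the stated equivalences.

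For the explicit isomorphisms I would then match asymptotic dimensions. When the semisimple part of $\g^\natural$ is trivial (the cases giving $\W_k\cong\C$, e.g.\ $m=0$ or $m=1$, $\mf{so}_1=0$), I must show $\A_{H^0_{DS,f}(L_k(\g))}=1$: using Proposition~\ref{Pro:asymptotic_data_H_DS} with an even good grading (the symplectic Dynkin pyramid, or a left-adjusted variant when $f$ admits one), the product over $\Delta_+\setminus\Delta^0_{\Gamma,+}$ is evaluated by counting, for each $j$, the number of positive roots with $(x^0_\Gamma|\alpha)=j$ from the pyramid geometry, and then applying the identities \eqref{eq:sin_formula} and \eqref{eq:sin_formula2}, together with Lemma~\ref{Lem:main_identities}(1) for the $\prod 2\sin\pi(\lam+\rho|\alpha)/p$ factor at $p=h^\vee_\g$. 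When $\g^\natural\cong\mf{so}_m$ is nontrivial (the coprincipal $\tfrac q2$-even case $\mu=((\tfrac q2)^m,1^s)$ with $s=0$, giving $L_1(\mf{so}_m)$), I would compute $\A_{L_1(\mf{so}_m)}$ via Corollary~\ref{Co:asymptotic_data_L} and Lemma~\ref{Lem:main_identities}(2) or (4) (type $B$/$D$), and again evaluate the $W$-algebra side by the pyramid-counting plus \eqref{eq:sin_formula2}; the two should agree. The independence of $\A_{H^0_{DS,f}(L_k(\g))}$ on the choice of good grading in these partition types is the analogue of Lemma~\ref{Lem:asymptotic_dimension_good_gradings}, which I would either invoke or re-prove by the same box-pair argument using \eqref{eq:sin_formula} and \eqref{eq:sin_formula.odd}.

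\textbf{Main obstacle.} The genuinely delicate point is the combinatorial evaluation of $\prod_{\alpha\in\Delta_+\setminus\Delta^0_{\Gamma,+}}2\sin\tfrac{\pi(x^0_\Gamma|\alpha)}{q}$ for the symplectic/orthogonal Dynkin pyramids: unlike type $A$, the pyramid has a central-symmetry structure and possible skew rows, so the multiplicities $\#\{\alpha\in\Delta_+:(x^0_\Gamma|\alpha)=j\}$ must be read off carefully (distinguishing long vs.\ short roots, and the $\Delta^{1/2}_{\Gamma,+}$ contributions), and the telescoping into powers of $q$ via \eqref{eq:sin_formula2} is less automatic. Managing the half-integer shifts coming from $x^0_\Gamma$ and the $2^{-|\Delta^{1/2}_\Gamma|/2}$ factor, and confirming that everything collapses to exactly $1$ or to $1/\sqrt{m}$-type constants matching $L_1(\mf{so}_m)$, is where the real work lies; the "generic $q$" proviso in the statement reflects that for a few small denominators the orbit-dimension equation can have spurious extra solutions or coincidences, which would need to be checked or excluded by hand.
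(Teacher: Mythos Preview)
Your overall strategy is the paper's: read off $\g^\natural$ and $k^\natural$ from Table~\ref{Tab:C_Slodowy}, pin down $p$ and the residue $\widetilde s$ by equating invariants, then match asymptotic dimensions via the symplectic Dynkin pyramid and the identities \eqref{eq:sin_formula}, \eqref{eq:sin_formula2}. The ``main obstacle'' you identify is indeed where the work lies, and the paper carries it out by explicit case-by-case counts of $\#\{\alpha\in\Delta_+:(x^0\,|\,\alpha)=j\}$ (and, in the coprincipal case, separately for long and short roots).

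There is, however, a conceptual slip in your plan that would derail the casework. You write that $\W_k\cong\C$ corresponds to ``the semisimple part of $\g^\natural$ trivial (e.g.\ $m=0$ or $m=1$)'' and that $p=h^\vee_{\sp_n}$ ``forces $k_0^\natural=0$''. In fact, for $f\in\O_k$ in the $\sp_n$ setting there is \emph{no} center factor $\g_0^\natural$ in Table~\ref{Tab:C_Slodowy}: one has $\g^\natural\cong\sp_m$ (principal, or coprincipal with $q/2$ odd) or $\g^\natural\cong\so_m$ (coprincipal with $q/2$ even), and $m$ is typically large. The target $\C$ arises not because $\g^\natural$ vanishes but because $k_1^\natural=0$: Table~\ref{Tab:C_Slodowy} gives $k_1^\natural=p-(\tfrac n2+1)$ in the principal case and $k_1^\natural=\tfrac p2-\tfrac{n+1}{2}$ in the $q/2$-odd coprincipal case, so $p=h^\vee$ (resp.\ $p=h+1$) forces $k_1^\natural=0$ and hence $L_{k^\natural}(\g^\natural)=L_0(\sp_m)=\C$. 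In the $q/2$-even coprincipal case $k_1^\natural=p-n=1$, giving $L_1(\so_m)$; here $\A_{L_1(\so_m)}=\tfrac12$ (not a $1/\sqrt m$--type constant as you guess). Also, your expression $\G_{H^0_{DS,f}}=\dim\g^f-\dim\overline{G.f}$ is a slip: the correct formula from Proposition~\ref{Pro:asymptotic_data_H_DS} is $\dim\g^f-\tfrac{h^\vee_\g\dim\g}{pq}$ (principal) or the coprincipal analogue, and it is this that one solves (the paper actually alternates between the central-charge and the growth equation depending on which is cleaner: growth when $f$ is even, central charge otherwise). Once you correct these points, the rest of your outline goes through exactly as in the paper, splitting into the sub-cases $(q^{\widetilde m},\widetilde s)$, $(q^{\widetilde m-1},q-1,\widetilde s+1)$ for (1) and the three partition shapes for (2).
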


Note that, in the above statements, the isomorphisms $\W_k(\g,f) \cong H_{DS,f}^0(L_k(\g))$ 
holds due to Remark~\ref{Rem:lisse_conjecture_true}. 

\begin{proof}
Let $\bs\lam$ be the partition corresponding to $f \in \O_k$. 
In the below proof, we always exploit the {symplectic Dynkin pyramid} 
of shape $\bs\lam$ as described in \S\ref{sub:sympl_pyramids}. 
Letting $I:=\{1,\ldots,\frac{n}{2},-\frac{n}{2},\ldots,-1\}$ the set of labels, 
we notice that for $j \in \frac{1}{2}\Z_{> 0}$, 
\begin{align}
\label{eq:cardinality_pyramid}
 \# \{\alpha \in \Delta_+ \colon (x^0|\alpha) = j \} = 
\# \{ (i,l) \in I \colon 0 < i \le |l| , \, |\on{col}(i) - \on{col}(l) |/2 = j \}. 
\end{align} 
In this way, the pyramid allows us to compute the central charge 
and the asymptotic dimension of $H_{DS,f}^0(L_k(\g))$. 
Indeed, the central charge of $H_{DS,f}^0(L_k(\g))$ is given by
$$c_{H_{DS,f}^0(L_k(\g))} = \dim \g^0 -\frac{1}{2} \dim \g^{1/2} - 12\left(\frac{q}{p} |\rho|^2 -2 (\rho|x^0)
+\frac{p}{q} |x^0|^2\right),$$
with 
\begin{align*}
|x^0|^2 & = \frac{1}{h_\g^\vee} \sum_{\alpha \in\Delta_+} (x^0 | \alpha)^2, \qquad 
 (\rho|x^0)  = \frac{1}{2} \sum_{\alpha \in\Delta_+} (x^0 | \alpha), &
\end{align*}
while $|\rho|^2$ is computed using the strange formula:
$$|\rho|^2=\dfrac{h_\g^\vee \dim \g}{12} = \dfrac{n(n+1)(n+2)}{48}.$$
On the other hand, \eqref{eq:cardinality_pyramid} enables to compute 
the term $\prod\limits_{\alpha\in \Delta_+ \setminus \Delta_{+}^0 } 
2 \sin \left(\dfrac{\pi (x^0|\alpha) }{q}\right)$ in 
the asymptotic dimension of $H_{DS,f}^0(L_k(\g))$ in the principal case. 

For the coprincipal case, note that 
 \begin{align*}
\prod\limits_{\alpha\in \Delta_+ \setminus \Delta_{+}^0 } 
2 \sin \left(\dfrac{\pi (x^0|\alpha^\vee) }{q}\right) 
= \prod\limits_{\alpha\in \Delta_+^{\rm long} \setminus \Delta_{+}^0 } 
2 \sin \left(\dfrac{\pi (x^0|\alpha) }{q}\right) 
\prod\limits_{\alpha\in \Delta_+^{\rm short} \setminus \Delta_{+}^0 } 
2 \sin \left(\dfrac{2 \pi (x^0|\alpha) }{q}\right), 
\end{align*}
and that for $j \in \frac{1}{2}\Z_{>0}$, 
\begin{align}
\label{eq:cardinality_pyramid-co}
& \# \{\alpha \in \Delta_+^{\rm short} \colon (x^0|\alpha) = j \}= 
\# \{ (i,l) \in I \colon  0 < i < |l| ,  \, |\on{col}(i) - \on{col}(l) |/2 = j \},& \\\nonumber
& \# \{\alpha \in \Delta_+^{\rm long} \colon (x^0|\alpha) = j \}= 
\# \{ (i,l) \in I \colon i >0, \, l = - i , \, |\on{col}(i) - \on{col}(l) |/2 = j \},&
\end{align}

We now follow the strategy of Section~\ref{sec:strategy}.

\smallskip

\noindent 
(1) In this part, $q$ is assumed to be odd. 
From the description of $\O_k$, 
either $\O_k=\O_{(q^{\widetilde m}, \widetilde s)}$ or $\O_k=\O_{(q^{\widetilde m-1},q-1,{\widetilde s}+1)}$. 
We consider the two cases separately. 

(a) Assume  that $\O_k=\O_{(q^{\widetilde m}, \widetilde s)}$, 
with $\widetilde m,\widetilde s$ even, and set $m:=\widetilde m$, 
$s:= \widetilde s$ for simplicity.  
According to Table \ref{Tab:C_Slodowy}, we have 
$\g^\natural  \cong \mf{sp}_{m}$, and 
$k^\natural = p-(\frac{n}{2}+1)$. 
Using the symplectic pyramid  of shape $(q^{m},s)$, 
we establish for $j=1,\ldots,(q-1)/2$ that 
\begin{align*}
&\# \{\alpha \in \Delta_+ \colon (x^0|\alpha)=2j-1\}  = \left(\dfrac{q-2j+1}{2}\right)\times m^2 
+\begin{cases} 
s/2-j+1 &  \text{ if } j \le s/2\\
0 & \text{ else.}
\end{cases},  & \\
&\# \{\alpha \in \Delta_+ \colon (x^0|\alpha)=2j\}  = \left(\dfrac{q-2j-1}{2}\right)\times m^2 +\dfrac{m(m+1)}{2} 
+\begin{cases} 
s/2-j & \text{ if } j \le s/2 \\
0 & \text{ else.}
\end{cases}.& 
\end{align*}

The cardinality of roots $\alpha \in \Delta_+$ such that $(x^0|\alpha)$ is a half-integer 
can be computed as follows. 

$\ast$ If $s-1 \le \frac{q-1}{2}-\frac{s}{2}+1$, then 
\begin{align*}
& \# \{\alpha \in \Delta_+ \colon (x^0|\alpha)=(2j-1)/2\}  & \\
& \hspace{2.5cm}= 
\begin{cases} sm & \text{ if } j=1,\ldots,s-1, \\ 
\left(\frac{s}{2}+\left\lfloor \frac{j}{2}\right\rfloor\right)m & \text{ if } j=s,\ldots,\frac{q-1}{2}-\frac{s}{2}+1,\\
\left(\frac{s}{2}-i+\left\lfloor \frac{j}{2}\right\rfloor\right)m & \text{ if } 
j=\frac{q-1}{2}-\frac{s}{2}+1+i \text{ with }i=1,\ldots,s-1.
\end{cases}&
\end{align*}

$\ast$ If $s-1 > \frac{q-1}{2}-\frac{s}{2}+1$, then 
\begin{align*}
& \# \{\alpha \in \Delta_+ \colon (x^0|\alpha)=(2j-1)/2\}  & \\
& \hspace{2.5cm}= 
\begin{cases} sm & \text{ if } j=1,\ldots,\frac{q-1}{2}-\frac{s}{2}+1,\\
 (s-i)m & \text{ if } j=\frac{q-1}{2}-\frac{s}{2}+1+i\text{ with }i=1,\ldots,\frac{3 s}{2}-2-\frac{q-1}{2},\\
 \left(\frac{s}{2}-i+\left\lfloor \frac{j}{2}\right\rfloor\right)m
 & \text{ if }j=\frac{q-1}{2}-\frac{s}{2}+1+i\text{ with }i=\frac{3 s}{2}-1-\frac{q-1}{2},\ldots,s-1.
 \end{cases}&
\end{align*}
From this, we get an expression of 
$c_{H_{DS,f}^0(L_k(\g))}$ depending on $q,m,s$. 
On the other hand 
$$c_{L_{k^\natural}(\g^\natural)}  
=\frac{m ( m+1) (2 - 2 p + m q + s)}{2 (-2 p + m ( q-1) + s)}.$$

Fix $p=h_{\g}^\vee = \frac{n}{2}+1$. 
For generic $q$, the only solutions of the equation 
$c_{H_{DS,f}^0(L_k(\g))} =c_{L_{k^\natural}(\g^\natural)}$ 
with unknown $s$  
are $s=0$ and  $s=q-1$. 
Now if we fix $s=0$, for generic $q$, 
the only solutions of the equation $c_{H_{DS,f}^0(L_k(\g))} =c_{L_{k^\natural}(\g^\natural)}$ 
with unknown $p$ are, 
$$p = \frac{n}{2}+1\quad \text{ or }\quad p = \dfrac{n+1}{2}.$$ 
Only $p = \frac{n}{2}+1$ is greater than $h_\g^\vee$. 
If we fix $s=q-1$, for generic $q$, 
the only solution of the equation $c_{H_{DS,f}^0(L_k(\g))} =c_{L_{k^\natural}(\g^\natural)}$ 
with unknown $p$ is $p = \frac{n}{2}+1$.

$\ast$ Assume $p = \frac{n}{2} +  1$ and $s=0$. 
In this case, $k_1^\natural=0$ and $\G_{H_{DS,f}^0(L_{k}(\mf{sp}_n))}=0$. 
We aim to apply Proposition \ref{Pro:asymptotics-and-collapsing}. 
By Proposition~\ref{Pro:asymptotic_data_H_DS} and Lemma \ref{Lem:main_identities} (1), 
we have 
 \begin{align}
 \label{eq:ampl-sp_n-1}
& \A_{H_{DS,f}^0(L_{ -(\frac{n}{2}+1) + (\frac{n}{2}+1)/q}(\mf{sp}_n))} 
=  \dfrac{1}{q^{|\Delta_+^0|} 
q^{\frac{n}{4}}} 
\prod\limits_{\alpha\in \Delta_+ \setminus \Delta_{+}^0 } 
2 \sin \left(\dfrac{\pi (x^0|\alpha) }{q}\right) & 
\end{align} 
with $n=qm$ 
and $|\Delta_+^0| =  \frac{(q-1)m(m-1)}{4}+\frac{m^2}{4}$, 
since the orbit $\O_{-1;\bs\lam}$ is even. 
By \eqref{eq:cardinality_pyramid} and the previous computations,  
we show that 
\begin{align}
\label{eq:sin_Delta+-sp_n-1}
& \prod\limits_{\alpha\in \Delta_+ \setminus \Delta_{+}^0 } 
2 \sin \left(\dfrac{\pi (x^0|\alpha) }{q}\right)   
=  q^{\frac{qm^2}{4}+\frac{m}{4}},& 
\end{align}
using the identities \eqref{eq:sin_formula} and \eqref{eq:sin_formula2}. 
Combining \eqref{eq:ampl-sp_n-1} and \eqref{eq:sin_Delta+-sp_n-1} we conclude that 
$\A_{H_{DS,f}^0(L_{ -(\frac{n}{2}+1) + (\frac{n}{2}+1)/q}(\mf{sp}_n))}
=1$ as desired. 

$\ast$ Assume $p = \frac{n}{2} +  1$ and $s=q-1$.  
Then $k_1^\natural=0$ and $\G_{H_{DS,f}^0(L_{k}(\mf{sp}_n))}=0$. 
As in the previous case, we aim to apply Proposition \ref{Pro:asymptotics-and-collapsing}. 
By Proposition~\ref{Pro:asymptotic_data_H_DS} and Lemma~\ref{Lem:main_identities} (1), 
we have 
 \begin{align}
 \label{eq:ampl-sp_n-odd}  
 \A_{H_{DS,f}^0(L_{ -(\frac{n}{2}+1) + (\frac{n}{2}+1)/q}(\mf{sp}_n))}  
 = \dfrac{1}{2^{\frac{|\Delta^{1/2}|}{2}} q^{|\Delta_+^0|} 
q^{\frac{n}{4}}} 
\prod\limits_{\alpha\in \Delta_+ \setminus \Delta_{+}^0 } 
 \sin \left(\dfrac{\pi (x^0|\alpha) }{q}\right) & 
\end{align} 
with $n=qm+q-1$, $|\Delta_+^0| =\frac{(q-1)m(m-1)}{4} +\frac{m^2}{4}$ 
and $|\Delta^{1/2}| = (q-1)m$. 
Using the identities \eqref{eq:sin_formula} and \eqref{eq:sin_formula2} 
and the previous computations, we show that 
\begin{align}
\prod\limits_{\alpha\in \Delta_+ \setminus \Delta_{+}^0 } 
 \sin \left(\dfrac{\pi (x^0|\alpha) }{q}\right) 
& 
  = \left(\dfrac{1}{2}\right)^{\frac{(q-1)^2 m}{2}} \left(\dfrac{q}{2^{q-1}}\right)^{\frac{q(m^2+1)+(m-1)}{4}} . &
\label{eq:sin_Delta+-sp_n-odd}
\end{align}
Combining \eqref{eq:ampl-sp_n-odd} and \eqref{eq:sin_Delta+-sp_n-odd}, we 
conclude that $\A_{H_{DS,f}^0(L_{ -(\frac{n}{2}+1) + (\frac{n}{2}+1)/q}(\mf{sp}_n))}=1$
as desired. 

(b) Assume now that $\O_k=\O_{(q^m,q-1,s)}$, with $m := \widetilde m-1$, 
$s:=\widetilde s+1$.  
We argue as in (a). According to Table~\ref{Tab:C_Slodowy}, we have 
$\g^\natural  \cong \mf{sp}_m$, and 
$k_1^\natural = p-(\frac{n}{2}+1)$. 
We compute the central charges of 
$H_{DS,f}^0(L_k(\g))$ and $L_{k^\natural}(\g^\natural)$ similarly as in (a). 
Fix $p=h_{\g}^\vee = \frac{n}{2}+1$. 
Solving the equation $c_{H_{DS,f}^0(L_k(\g))} =c_{L_{k^\natural}(\g^\natural)}$ 
with unknown $s$, we get that for generic $q$, the only solutions  
are $0$ or $q+1$, the second one being excluded.  
Fix $s=0$.
Solving the equation $c_{H_{DS,f}^0(L_k(\g))} =c_{L_{k^\natural}(\g^\natural)}$ 
with unknown $p$, we get that for generic $q$, the only solutions  are
$$p = \frac{n}{2}\quad \text{ or }\quad p = \dfrac{n}{2}+1.$$ 
Only $p = \frac{n}{2}+1$ is greater than $h_\g^\vee$. 
But the case $p = \frac{n}{2}+1$ and $s=0$ has been 
already dealt with in part (a). 

\smallskip

\noindent 
(2) In this part, $q$ is assumed to be even. 
It follows from the description of $\O_k$. 
that either $\O_k=\O_{((\frac{q}{2})^{\widetilde m+1}, \widetilde s+1 )}$ 
with even $\frac{q}{2}$, or 
$\O_k=\O_{(\frac{q}{2}+1,(\frac{q}{2})^{\widetilde m}, \widetilde  s )}$ 
with odd $\frac{q}{2}$, 
or  $\O_k=\O_{(\frac{q}{2}+1,(\frac{q}{2})^{\widetilde m -1 },\frac{q}{2}-1, \widetilde  s +1)}$ 
with odd $\frac{q}{2}$. 
 
(a) Assume that $\O_k=\O_{((\frac{q}{2})^{\widetilde m+1}, \widetilde s+1 )}$, 
with $\frac{q}{2}$ even, and set $m:=\widetilde m+1$, $s:=\widetilde s+1$ for simplicity. 
According to Table \ref{Tab:C_Slodowy}, we have 
$\g^\natural  \cong \mf{so}_{m}$, and 
$k^\natural  = p- n$. 
The orbit $\O_{k}$ is even. 
Hence one can directly use the asymptotic growth.  
We have 
\begin{align*}
\G_{H_{DS,f}^0(L_{k}(\mf{sp}_n))} & = 
\frac{1}{2} (m^2 \left(\frac{q}{2}- s \right) +(m+1)^2  s ) 
- \dfrac{(n-1)n(n+2)}{p q} ,& 
\end{align*} 
with $n=\frac{q m}{2}+ s $, 
while 
\begin{align*}
\G_{L_{k^\natural}(\g^\natural)} & = \dfrac{m(m-1)}{2} 
\left( 1 - \dfrac{m -2}{p - \frac{qm}{2}- s 
+ m -2}\right) . &
 \end{align*}
Fixing $ s   =0$, we find that, for generic $q$, 
the only solution of the equation  $\G_{H_{DS,f}^0(L_{k}(\mf{sp}_n))} =  \G_{L_{k^\natural}(\g^\natural)}$ with unknown $p$ 
is $p =n+1$. 
Fixing $p = n +  1$, 
the only solutions of the equation  $\G_{H_{DS,f}^0(L_{k}(\mf{sp}_n))} =  \G_{L_{k^\natural}(\g^\natural)}$ with unknown $ s  $ 
are $ s =0$ or $ s =\frac{q}{2}+1$, the second case being excluded. 

From now on it is assumed that ${ s  }=0$ and $p =n+  1$. 
Thus $k^\natural=1$ and $\G_{H_{DS,f}^0(L_{k}(\mf{sp}_n))}=\frac m{2}$. 
By Corollary~\ref{Co:asymptotic_data_L}, 
 $\A_{L_1(\mf{sp}_m)} = \dfrac{1}{2}$, 
and by Proposition~\ref{Pro:asymptotic_data_H_DS},
 \begin{align*}
& \A_{H_{DS,f}^0(L_{ -(\frac{n}{2}+1) + (n+1)/q}(\mf{sp}_n))} 
=  \dfrac{2^{|\Delta_+^{\rm short} \cap \Delta_+^0| }} 
{q^{|\Delta_+^0| } 
{\left( \frac{q}{2} \right)}^{\frac{n}{4}} \sqrt{4}} 
\prod\limits_{\alpha\in \Delta_+ \setminus \Delta_{+}^0 } 
2 \sin \left(\dfrac{\pi (x^0|\alpha^\vee) }{q}\right) & 
\end{align*} 
with $|\Delta_+^0| = |\Delta_+^{\rm short} \cap \Delta_+^0| 
= \frac{q \widetilde m(\widetilde m -1)}{8}$ since $\O_{k}$ is even. 
Using the symplectic pyramid of shape $((\frac{q}{2})^{m})$, 
we establish for $j=1,\ldots, q/4$ that 
\begin{align*}
& \# \{\alpha \in \Delta_+^{\rm short} 
\colon (x^0|\alpha)=2j-1\}  = m^2 \left(\frac{q}{4}-j\right) +\frac{{m} ({m}+1)}{2},  & \\
& \# \{\alpha \in \Delta_+^{\rm long} 
\colon (x^0|\alpha)=2j-1\} = \widetilde{m},  & \\
& \# \{\alpha \in \Delta_+^{\rm short} 
\colon (x^0|\alpha)=2j\}   =m^2 \left(\frac{q}{4}-j \right).& 
\end{align*}
From this, we deduce as in previous computations that 
$$\A_{H_{DS,f}^0(L_{ -(\frac{n}{2}+1) + (n+1)/q}(\mf{sp}_n))}
=\frac{1}{2}$$ 
whence the statement by Proposition \ref{Pro:asymptotics-and-collapsing}.

(b) Assume that $\O_k=\O_{(\frac{q}{2}+1,(\frac{q}{2})^{\widetilde m},  \widetilde s )}$ 
with $\frac{q}{2}$ odd, $m:=\widetilde m$ even, $s:= \widetilde s$ even. 
According to Table \ref{Tab:C_Slodowy}, we have 
$\g^\natural  \cong \mf{sp}_{m}$, and 
$k^\natural = \frac{p}{2}- \frac{n+1}{2}$. 
Using the symplectic pyramid of shape 
$(\frac{q}{2}+1,(\frac{q}{2})^{m},s)$, 
we compute $\# \{\alpha \in \Delta_+^{\rm short} 
\colon (x^0|\alpha)=j\}$ and 
$\# \{\alpha \in \Delta_+^{\rm long} 
\colon (x^0|\alpha)=j\}$ for each $j$. 
For $s=0$, we establish that for $j=1,\ldots,q/2$  

$\# \{\alpha \in \Delta_+^{\rm short} 
\colon (x^0|\alpha)=(2j-1)/2\} = m \left(\frac{q}{2}-(j-1) \right)$,

\smallskip

$\# \{\alpha \in \Delta_+^{\rm short} \colon (x^0|\alpha)=2j-1\} = \frac{q+2}{4}-j + 
\left(\frac{q+2}{4}-j \right) m^2$,   

\smallskip

$\# \{\alpha \in \Delta_+^{\rm long} \colon (x^0|\alpha)=2j-1\} = 1$, 

\smallskip

$\# \{\alpha \in \Delta_+^{\rm short} \colon (x^0|\alpha)=2j\} = \frac{q+2}{4}-j + \left(\frac{q+2}{4}-j\right) m^2 
+ \frac{ m( m-1)}{2}$,

\smallskip

\noindent 
and that for  $j=1,\ldots, (q+2)/4-1$, 

$\# \{\alpha \in \Delta_+^{\rm long} \colon (x^0|\alpha)=2j\} =m$. 

\smallskip

For $s\not=0$, we establish that 
\begin{align*}
&\# \{\alpha \in \Delta_+^{\rm short} 
\colon (x^0|\alpha)=(2j-1)/2\} =m \left(\frac{q}{2}-(j-1) \right)& \\
& \qquad\qquad  +\begin{cases} ms & \text{if } j \le \frac{q+2-2s}{4}, \\
 m(s-\frac{q+2-2s}{4}+j) & \text{if } \frac{q+2-2s}{4} +1 \le j \le  \frac{q+2-2s}{4} +  s-1,\\
0 & \text{if } \frac{q+2-2s}{4} + s \le j \le \frac{q}{2}, 
\end{cases}&\\
&\# \{\alpha \in \Delta_+^{\rm short} 
\colon (x^0|\alpha)= 2j-1 \} = \left(\frac{q+2}{4}-j \right) m^2& \\
& \qquad \qquad 
+\begin{cases} \frac{q+2-2s}{4} -(2j-1) +3 + 4\left(\frac{s}{2}-j\right)& \text{if } j \le \frac{q+2-2s}{4}, \\
 2 & \text{if } \frac{q+2-2s}{4} +1 \le j \le  \frac{q+2-2s}{4} +  s-1,\\
1 & \text{if } \frac{q+2-2s}{4} + s \le j \le \frac{q}{2}, 
\end{cases}&\\
&\# \{\alpha \in \Delta_+^{\rm long} \colon (x^0|\alpha)=2j-1\} 
 = \begin{cases} 2 & \text{if } 2j-1 \le s-1, \\
1 & \text{otherwise,}
\end{cases}&\\
&\# \{\alpha \in \Delta_+^{\rm short} 
\colon (x^0|\alpha)= 2j \} = \left(\frac{q+2}{4}-j\right) m^2 
+ \frac{ m( m-1)}{2} & \\
& \qquad \qquad 
+\begin{cases} \frac{q+2-2s}{4} -2j +2 + 4\left(\frac{s}{2}-j\right)& \text{if } j \le \frac{q+2-2s}{4}, \\
 2 & \text{if } \frac{q+2-2s}{4} +1 \le j \le  \frac{q+2-2s}{4} +  s-1,\\
1 & \text{if } \frac{q+2-2s}{4} + s \le j \le \frac{q}{2},
\end{cases}&\\
&\# \{\alpha \in \Delta_+^{\rm long} \colon (x^0|\alpha)=2j\} 
 = m. &
\end{align*}

Fixing $p = n +  1$, 
the only solutions of the equation  $c_{H_{DS,f}^0(L_{k}(\mf{sp}_n))} =c_{L_{k^\natural}(\g^\natural)}$ with unknown $ s  $ 
are $ s =0$ and $ s =\frac{q}{2}-1$. 
Fixing $ s =0$ or $s=\frac{q}{2}-1$, we find that for generic $q$ 
the only admissible 
solution of the equation  $c_{H_{DS,f}^0(L_{k}(\mf{sp}_n))} =c_{L_{k^\natural}(\g^\natural)}$ with unknown $p$ 
is $p =n+1$. 

$\ast$ Assume that $p=n+1$ and $s =0$. 
We have $k^\natural=0$, $\G_{H_{DS,f}^0(L_{k}(\mf{sp}_n))}=0$ 
and  $\A_{L_1(\mf{sp}_m)} =1$.  
Here we have 
$|\Delta_+^0| =\frac{(q-2) m( m -1)}{8}+\frac{(q-2) }{4}
\left(\frac m{2}\right)^{2}$, 
$|\Delta_+^{\rm short} \cap \Delta_+^0| 
= \frac{q  m( m -1)}{8}+\frac{(q-2) }{4}+\frac{ m( m -2)}{4}$, 
$|\Delta^{1/2}_+| = m+  m (q-2)$, 
and using the above computations,  
we obtain  that 
$\A_{H_{DS,f}^0(L_{ -(\frac{n}{2}+1) + (n+1)/q}(\mf{sp}_n))}
=1$, whence the statement by  Proposition \ref{Pro:asymptotics-and-collapsing}. 

$\ast$ Assume that $p=n+1$ and $ s =\frac{q}{2}-1$. 
We have $k^\natural=0$, $\G_{H_{DS,f}^0(L_{k}(\mf{sp}_n))}=0$ 
and  $\A_{L_1(\mf{sp}_m)} =1$.  
Here we have $|\Delta_+^0| =\frac{(q-2) m( m -1)}{8}+\left(\frac m{2}\right)^{2}$, 
$|\Delta_+^{\rm short} \cap \Delta_+^0| 
= \frac{q  m( m -1)}{8}+\frac{ m( m -2)}{4}$, 
$|\Delta^{1/2}_+| = \frac{q m}{2}$  
and using the above computations we deduce that 
$\A_{H_{DS,f}^0(L_{ -(\frac{n}{2}+1) + (n+1)/q}(\mf{sp}_n))}
=1$, whence the statement by Proposition \ref{Pro:asymptotics-and-collapsing}.

(c) Assume that $\O_k=\O_{(\frac{q}{2}+1,(\frac{q}{2})^{m},\frac{q}{2}-1,  s )}$ 
with $\frac{q}{2}$ odd, $m:=\widetilde m -1$ even, 
$s:=\widetilde s +1$ even. 
According to Table \ref{Tab:C_Slodowy}, we have 
$\g^\natural  \cong \mf{sp}_{m}$, and 
$k^\natural = \frac{p}{2}- \frac{n+1}{2}$. 
As in the previous cases, we compute the central charge 
$c_{H_{DS,f}^0(L_{k}(\mf{sp}_n))}$ using the symplectic pyramid 
of shape $(\frac{q}{2}+1,(\frac{q}{2})^{m},\frac{q}{2}-1,  s )$. 
We omit here the details. 
Fixing $p = n +  1$, 
the only solutions of the equation  $c_{H_{DS,f}^0(L_{k}(\mf{sp}_n))}=c_{L_{k^\natural}(\g^\natural)}$ with unknown $ s  $ 
are $ s =0$ or $ s =\frac{q}{2}+1$, the second case being excluded. 
Fixing $ s   =0$, we find that for generic $q$ 
the only admissible 
solution of the equation  $c_{H_{DS,f}^0(L_{k}(\mf{sp}_n))}=c_{L_{k^\natural}(\g^\natural)}$ with unknown $p$ 
is $p =n+1$. 
But the case where $p = n +  1$ and $ s =0$ has been already dealt 
with in part (2) (b). 

This terminates the proof of part (2). 
\end{proof}

\begin{Rem}
Since $\C$ and $L_1(\mf{so}_m)$ are rational,
Theorem \ref{Th:main_sp_n-1}   
confirms Conjecture~\ref{Conj:rationality}
for $\g=\mf{sp}_n$,
$k+h^{\vee}_\g=h^{\vee}_\g/q$ with
$n\equiv 0,-1\pmod{q}$, $q$ odd,
and $k+h^{\vee}_\g=(h^{\vee}_\g+1)/q$ with
$n\equiv 0,1\pmod{q/2}$, $q$ even.
\end{Rem}

\begin{Rem}\label{rem:min.mod.cases.sp}
Although our main goal is to identify collapsing levels, i.e., to prove isomorphisms between certain $W$-algebras and affine vertex algebras, it is worth remarking that our methods can be used to prove other isomorphisms too, as we now illustrate.

We consider the coprincipal admissible level $k = -h_\g^\vee + p/q$, for $\g = \mf{sp}_n$,  where $q$ is twice an odd integer and $p=h+1$. Then $\O_k = \O_{\bs\lam}$ where $\bs\lam =(\frac{q}{2}+1, (\frac{q}{2})^{m}, s)$ for some $m, s$ even. Let $f \in \O_k$. In Theorem \ref{Th:main_sp_n-1} above we showed that if $s=0$ then $H_{DS, f}^0(L_{k}(\g))$ is isomorphic to the trivial vertex algebra $\C$.

If instead $s=2$ then the central charge of $H_{DS, f}^0(L_{k}(\g))$, which in general is given by the formula
\begin{align*}
c = -\frac{s (q-2s-2) \left(qs + q -2s^2 -2s + 4 \right)}{4 q}
\end{align*}
becomes
\begin{align*}
c = 13 - \frac{24}{q} - \frac{3q}{2},
\end{align*}
which is the central charge of the Virasoro minimal model $\on{Vir}_{2, q/2}$. The values of $(x_0 | \alpha)$ for $\alpha \in \Delta_+$ can be read off from the symplectic pyramid of $\bs\lam$ as was done in the proof of Theorem \ref{Th:main_sp_n-1}, to obtain a formula for the asymptotic dimension $\A_{H_{DS, f}^0(L_{k}(\g))}$. On the other hand the asymptotic dimension of $\on{Vir}_{2, q/2}$ is given by formula (\ref{eq:minmod.as.dim}) and the two expressions can be shown to coincide by an elementary, though very tedious, calculation. It follows that 
\begin{align*}
\W_k(\g,f)\cong H_{DS, f}^0(L_{k}(\g)) \cong \on{Vir}_{2, q/2}.
\end{align*}
This gives a yet another evidence for  Conjecture \ref{Conj:isom}
and Conjecture \ref{Conj:rationality}.
\end{Rem}

We now turn to collapsing levels for $\mf{sp}_n$ in the \emph{non} lisse cases, that is, we consider the partitions $\bs\mu \in \P_{-1}(n)$ identified in Lemma \ref{Lem:choice_of_partitions-C} cases (b)--(k). Our study leads us to the following results. 

\begin{Th} 
\label{Th:main_sp_n-2}    
Assume that $k = -h_\g^{\vee} +p/q=- \left(\frac{n}{2}+1\right) + p/q$  
is admissible for $\g=\mf{sp}_n$. 
\begin{enumerate}
\item Assume that $k$ is principal, that is, $q$ is odd. 
\begin{enumerate}
\item Pick a nilpotent element $f \in \overline{\O}_k$ 
corresponding to the partition $(q^m,1^s)$, with $m,s$ even. 
For generic $q$, $k$ is collapsing if and only if $p=h_{\sp_n}^{\vee}$, and 
$$\W_{-h_{\mf{sp}_n}^{\vee} + h_{\mf{sp}_n}^{\vee}/q}(\mf{sp}_n,f)  \cong L_{-h_{\sp_s}^{\vee} + h_{\sp_s}^{\vee} /q}(\mf{sp}_s).$$
\item Pick a nilpotent element $f \in \overline{\O}_k$ 
corresponding to the partition $(q^{m},q-1,1^{s})$, with $s$ even. 
For generic $q$, $k$ is collapsing only if $p=h_{\mf{sp}_n}^{\vee}$ 
and 
the following inclusion is a 
finite extension: 
\begin{align*} 
L_{-h_{\mf{sp}_s}^{\vee}+(h_{\mf{sp}_s}+1)/(2q)}(\mf{sp}_s)  \longhookrightarrow 
\W_{-h_{\mf{sp}_n}^{\vee} + h_{\mf{sp}_n}^{\vee}/q}(\mf{sp}_n,f).
\end{align*}
\end{enumerate}
\item Assume that $k$ is coprincipal, that is, $q$ is even. 
\begin{enumerate}
\item 
Pick a nilpotent element $f \in \overline{\O}_k$ 
corresponding to the partition  
$(\frac{q}{2}+1,(\frac{q}{2})^{m},1^s)$, with $\frac{q}{2}$ odd, $m$ even, $s$ even. 
If $p=h_{\sp_n}+1$, 
then $k$ is collapsing if and only if $s=0$ or $s=2$. 
If $s=2$, then for generic $q$, $k$ is collapsing if and only if $p=h_{\sp_n}+1$. 
Moreover, if $s=2$, then  
$$\qquad \W_{-h_{\mf{sp}_n}^{\vee} + (h_{\sp_n}+1)/q}(\mf{sp}_n,f)  \cong 
L_{-h_{\mf{sp}_s}^{\vee}+h_{\mf{sp}_s}^{\vee}/(q/2)}(\sp_s).$$

\item Pick a nilpotent element $f \in \overline{\O}_k$ 
corresponding to the partition $(\frac{q}{2}+1,(\frac{q}{2})^{m}, \frac{q}{2}-1,1^s)$, with $\frac{q}{2}$ odd 
and $s$ even. 
For generic $q$, $k$ is collapsing only if $p=h_{\sp_n}+1$ and 
the following inclusion is a 
finite extension: 
\begin{align*} 
L_{-h_{\sp_s}^\vee+(h_{\sp_s}+1)/q}(\sp_s) \longhookrightarrow 
\W_{-h_{\sp_n}^\vee + (h_{\sp_n}+1)/q}(\mf{sp}_n,f).
\end{align*}

\item Pick a nilpotent element $f \in \overline{\O}_k$ 
corresponding to the partition $((\frac{q}{2})^m,1^s)$, with $\frac{q}{2}$ even, 
$m$ odd or even, and $s$  even. 
For generic $q$, $k$ is collapsing only if $p=h_{\sp_n}+1$ and 
the following inclusion is a 
finite extension: 
\begin{align*} 
L_1(\so_m) \otimes (L_{-h_{\mf{sp}_s}^{\vee}+(h_{\sp_s}+1)/q}(\mf{sp}_{s})) 
\longhookrightarrow 
\W_{- h_{\mf{sp}_n}^{\vee}+ (h_{\sp_n}+1)/q}(\mf{sp}_n,f) 
\end{align*}
\end{enumerate}
\end{enumerate}
\end{Th}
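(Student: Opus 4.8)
The plan is to implement the strategy of Section~\ref{sec:strategy} for each of the four families of partitions appearing in the statement, using Proposition~\ref{Pro:asymptotics-and-collapsing} as the main engine. For each pair $(k,f)$ with $f\in\overline{\O}_k$, I would first extract from Table~\ref{Tab:C_Slodowy} (via Lemma~\ref{Lem:k_natural-type-BCD}) the centraliser $\g^\natural$ and the levels $k_i^\natural$ as affine-linear functions of $p$. The condition $k_0^\natural=0$ (Lemma~\ref{Lem:k_0-condition}) forces $p$ in the presence of a centre, and in the centreless cases one instead solves the central charge equation \eqref{eq:central_charge}, using formula \eqref{eq:W.alg.c.formula} for $c_{H^0_{DS,f}(L_k(\g))}$, where the values $(x^0|\alpha)$ and $|\Delta_\Gamma^{1/2}|$, $|\Delta^0_{\Gamma,+}|$ are read off from the symplectic Dynkin pyramid of the partition exactly as in the proof of Theorem~\ref{Th:main_sp_n-1}. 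Since for non-lisse $f$ the conjectural isomorphism $\W_k(\g,f)\cong H^0_{DS,f}(L_k(\g))$ holds because every nilpotent element of $\sp_n$ admits an even good grading (invoke Theorem~\ref{Th:simplicity}), these central-charge and asymptotic comparisons translate directly into statements about $\W_k(\g,f)$.

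For the two \emph{collapsing} cases (1)(a) and (2)(a) with $s=2$, once $p$ is pinned down and $k^\natural$ is admissible by the last assertion of Lemma~\ref{Lem:k_natural-type-BCD}, it remains to verify the matching of asymptotic growths and dimensions: $\G_{H^0_{DS,f}(L_k(\g))}=\G_{L_{k^\natural}(\g^\natural)}$ and $\A_{H^0_{DS,f}(L_k(\g))}=\A_{L_{k^\natural}(\g^\natural)}$. The growth equality is a short computation with $\dim\g^f$ and the formula in Proposition~\ref{Pro:asymptotic_data_H_DS}. The dimension equality reduces, after cancelling the manifestly equal factors, to a trigonometric product identity of the type $\prod_{j=1}^{q-1}(2\sin(j\pi/q))^{a_j}=q^{b}$ which is handled by the elementary identities \eqref{eq:sin_formula}, \eqref{eq:sin_formula.odd}, \eqref{eq:sin_formula2} together with Lemma~\ref{Lem:main_identities}; in the coprincipal cases one must additionally track the long/short root split via \eqref{eq:cardinality_pyramid-co}. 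Then Proposition~\ref{Pro:asymptotics-and-collapsing} gives the stated isomorphism.

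For the three \emph{finite extension} cases (1)(b), (2)(b), (2)(c), the analogous argument shows that $H^0_{DS,f}(L_k(\g))$ contains $L_{k^\natural}(\g^\natural)$ as a conformal subalgebra (apply Proposition~\ref{Pro:conformal} and Theorem~\ref{Th:finite_extensions} to decompose $H^0_{DS,f}(L_k(\g))$ into admissible $\g^\natural$-modules), but now the asymptotic dimension $\A_{H^0_{DS,f}(L_k(\g))}$ equals $2\A_{L_{k^\natural}(\g^\natural)}$ (or the product $\A_{L_1(\so_m)}\A_{L_{k^\natural}(\sp_s)}$ times $2$ in (2)(c)), so two simple modules appear. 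I would identify the extra module by matching conformal dimensions via \eqref{eq:conformal-dim-of-W-module} and $h_\lam$ from \eqref{eq:conformal-dim-of-W-module}: the candidate is $L_{k^\natural}(\g^\natural;\varpi_1)$, the module with highest weight the first fundamental weight, whose conformal weight and asymptotic dimension are computed from Corollary~\ref{Co:asymptotic_data_L}. Consistency of the total central charge, total asymptotic growth, total asymptotic dimension, and the Zhu-algebra constraint (Corollary~\ref{Co:Zhu-of-reduction}, which lists exactly which simple $\g^\natural$-modules can occur as lowest pieces) then forces the decomposition to be precisely the claimed sum of two simple modules.

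The main obstacle I expect is the bookkeeping of the pyramid statistics in the coprincipal cases with a leading part $\tfrac{q}{2}+1$ and a trailing strip $1^s$: the sets $\{\alpha\in\Delta_+^{\mathrm{short}}:(x^0|\alpha)=j\}$ and their long counterparts have piecewise formulas depending on the relative sizes of $s$ and $q/2$ (as already visible in the case analysis in the proof of Theorem~\ref{Th:main_sp_n-1}(2)(b)), so the trigonometric product simplification splits into several subcases, each of which must be checked to collapse to the same power of $q$ times a power of $2$. A secondary obstacle, purely in the finite-extension cases, is rigorously ruling out a larger decomposition — here I would lean on the fact that the admissible $\g^\natural$-modules that can appear are constrained by Theorem~\ref{Th:rationality-in-category-O} and by the Zhu-algebra description, so that matching $\A$, $\G$, $c$ and the minimal conformal dimension $h_{\lam_o}$ (Theorem~\ref{Th:min-conf-dim}) leaves no room for a third summand.
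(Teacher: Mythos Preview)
Your overall strategy matches the paper's almost exactly: extract $\g^\natural$ and the $k_i^\natural$ from Table~\ref{Tab:C_Slodowy}, pin down $p$ by the central-charge (or asymptotic-growth) equation, and then compare asymptotic data via Proposition~\ref{Pro:asymptotics-and-collapsing}. The pyramid bookkeeping you anticipate is precisely what the paper carries out.

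There is, however, one genuine error. You assert that ``every nilpotent element of $\sp_n$ admits an even good grading'' and invoke Theorem~\ref{Th:simplicity} to obtain $\W_k(\g,f)\cong H^0_{DS,f}(L_k(\g))$ in all cases. This is false: unlike $\sl_n$, the symplectic Lie algebras do \emph{not} have even good gradings for every nilpotent. In case (1)(a) the partition $(q^m,1^s)$ with $q$ odd has all parts odd, so the Dynkin grading itself is even and the paper does invoke this (``Since $f$ is even, $\W_k(\g,f)\cong H^0_{DS,f}(L_k(\sp_n))$''). But in (1)(b), (2)(a), (2)(b), (2)(c) the partitions have parts of mixed parity, and one cannot simply appeal to Theorem~\ref{Th:simplicity}. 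For the finite-extension cases the paper instead works directly with $H^0_{DS,f}(L_k(\g))$ and, having identified it as a direct sum of two simple admissible $\g^\natural$-modules, observes that this forces $H^0_{DS,f}(L_k(\g))$ to be simple (cf.\ the argument in the proof of Theorem~\ref{Th:main_E6}), hence equal to $\W_k(\g,f)$ as the unique simple quotient of $\W^k(\g,f)$.

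There is also a difference of method in the finite-extension analysis. You propose to identify the extra summand and rule out further ones via the Zhu algebra (Corollary~\ref{Co:Zhu-of-reduction}) and the minimal conformal dimension (Theorem~\ref{Th:min-conf-dim}). The paper's route is more elementary and self-contained: since $p^\natural-h^\vee_{\sp_s}=1$ in every case, the ordinary admissible weights of level $k^\natural$ are exactly $0$ and the fundamental weights $\varpi_i$; among these one checks directly which have $\Delta_\lambda=\frac{(\lambda|\lambda+2\rho)}{2(k^\natural+h^\vee)}\in\tfrac{1}{2}\Z$, and only $\lambda=0,\varpi_1$ survive. Then $\A_{L_{k^\natural}(\sp_s;\varpi_1)}=\A_{L_{k^\natural}(\sp_s)}$ together with $m_0\leq 1$ forces $m_0=m_{\varpi_1}=1$. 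This avoids the Zhu-algebra machinery entirely and is what makes the argument go through uniformly.
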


\begin{Rem}
Similarly to Remark \ref{Rem:compatible_sl_n}, notice that (3) and (4) 
 are compatible with (1) for $s= 0$, and (5), (6), (7) is compatible with (2) with $s= 0$. 
\end{Rem}

\begin{proof}
We argue as in the proof of Theorem \ref{Th:main_sp_n-1}. 
So we omit some details when the computations are very similar to those 
considered in Theorem \ref{Th:main_sp_n-1}. 

\smallskip

\noindent 
(1) In this part, $q$ is odd. 

(a) Fix a nilpotent element 
$f \in\overline{\O}_k$ corresponding to the partition $(q^m,1^s)$. 
According to Table \ref{Tab:C_Slodowy}, we have 
$\g^\natural  \cong \mf{sp}_m \times \mf{sp}_{s} $,  
$k_1^\natural = p-(\frac{n}{2}+1)$ and 
$k_2^\natural  
= - (\frac{s}{2}+1) + (p-\frac{qm}{2})/q$. 
Since $f$ is even, $\W_k(\g,f) \cong H_{DS,f}^0(L_{k}(\mf{sp}_n))$, 
and Proposition \ref{Pro:asymptotics-and-collapsing} gives a necessary and 
sufficient condition for that $k$ is collapsing. 

By Proposition \ref{Pro:asymptotic_data_H_DS}, we have 
\begin{align*}
\G_{H_{DS,f}^0(L_{k}(\mf{sp}_n))} 
& =  \frac{1}{2} ((m+s)^2 +m^2(q-1) +m+s) - \dfrac{n(n+1)(n+2)}{4pq},& 
\end{align*} 
with $n=qm+s$, 
while by  Corollary~\ref{Co:asymptotic_data_L}, 
\begin{align*}
\G_{L_{k^\natural}(\g^\natural)} & = 
\G_{L_{-(\frac{m}{2}+1)+(\frac{m-n}{2}+p)/1}(\mf{sp}_m)}+\G_{L_{- (\frac{s}{2}+1) + (p-\frac{qm}{2})/q}(\mf{sp}_s)}
& \\
&= \dfrac{m(m+1)}{2} \left( 1 - \dfrac{m+2}{m-n+2p}\right) 
+  \dfrac{s(s+1)}{2} \left( 1 - \dfrac{s+2}{(2p- qm)q}\right). &
 \end{align*}
By solving the equation  $\G_{H_{DS,f}^0(L_{k}(\mf{sp}_n))}  =\G_{L_{k^\natural}(\g^\natural)}$ with unknown $p$, 
we obtain that, for generic $q$, the only solutions which are nonnegative integers are 
$$p = \dfrac{n+1}{2}  \quad \text{ and }\quad p = \dfrac{n}{2} +1  .$$
Only the solution $p = \frac{n}{2} +  1$ is greater than $h_\g^\vee$. 
From now on it is assumed that $p = \frac{n}{2} +  1$. 
Then $k_1^\natural=0$ and $k_2^\natural = -(\frac{s}{2}+1) + (\frac{s}{2}+1)/q$. 
We now apply Proposition~\ref{Pro:asymptotics-and-collapsing} to prove that $k$ is collapsing. 
By Corollary~\ref{Co:asymptotic_data_L} and Lemma \ref{Lem:main_identities} (1), 
we have 
\begin{align*}
\A_{L_{ -(\frac{s}{2}+1) + (\frac{s}{2}+1)/q}(\mf{sp}_s)} & = 
\dfrac{1}{q^{s^2/4} q^{s/4}} .& 
\end{align*}
On the other hand, by Proposition \ref{Pro:asymptotic_data_H_DS} and 
Lemma \ref{Lem:main_identities} (1), we have 
 \begin{align}
\label{eq:ampl-sp_n-2}
 \A_{H_{DS,f}^0(L_{ -(\frac{n}{2}+1) + (\frac{n}{2}+1)/q}(\mf{sp}_n))}  = 
 \dfrac{1}{q^{|\Delta_+^0|} 
q^{\frac{n}{4}}} 
\prod\limits_{\alpha\in \Delta_+ \setminus \Delta_{+}^0 } 
2 \sin \left(\dfrac{\pi (x^0|\alpha) }{q}\right) & 
\end{align} 
with $n=qm+s$  and $|\Delta_+^0| =  \frac{(q-1)m(m-1)}{4}+\frac{(m+s)^2}{4}$, 
since the orbit $G.f$ is even. 
Using the pyramid associated with the partition $(q^m,1^s)$ we establish that for $j=1,\ldots,(q-1)/2$, 

\smallskip

$\# \{\alpha \in \Delta_+ \colon (x^0|\alpha)=2j-1\} = \left(\frac{q-2j-1}{2}\right)\times m^2 + m(m+s) ,$

\smallskip

$\# \{\alpha \in \Delta_+ \colon (x^0|\alpha)=2j\}  = \left(\frac{q-2j-1}{2}\right)\times m^2 + m(m+s) +\frac{m(m+1)}{2},$

\smallskip

$\# \{\alpha \in \Delta_+ \colon (x^0|\alpha)=q-2j\} = j m^2 ,$
\smallskip

$\# \{\alpha \in \Delta_+ \colon (x^0|\alpha)=q- 2j+1\} = (j-1) m^2+ \frac{m(m+1)}{2} ,$

\smallskip

We deduce that 
\begin{align}
\label{eq:sin_Delta+-sp_n-2}
\prod\limits_{\alpha\in \Delta_+ \setminus \Delta_{+}^0 } 
2 \sin \left(\dfrac{\pi (x^0|\alpha) }{q}\right) 
= &   q^{\frac{qm^2}{4}+\frac{ms}{2}+\frac{m}{4}}  & 
\end{align}
by \eqref{eq:sin_formula} and \eqref{eq:sin_formula2}. 
Combining \eqref{eq:ampl-sp_n-2} and \eqref{eq:sin_Delta+-sp_n-2}, we 
conclude that 
$$\A_{H_{DS,f}^0(L_{ -(\frac{n}{2}+1) + (\frac{n}{2}+1)/q}(\mf{sp}_n))}
=\A_{L_{ -(\frac{s}{2}+1) + (\frac{s}{2}+1)/q}(\mf{sp}_s)},$$
as desired. 

(b) Fix a nilpotent element 
$f \in \overline{\O}_k$ corresponding to the partition $(q^m,q-1,1^s)$. 
According to Table \ref{Tab:C_Slodowy}, we have 
$\g^\natural  \cong \mf{sp}_m \times \mf{sp}_{s} $,  
$k_1^\natural  = p-(\frac{n}{2}+1)$ and 
$k_2^\natural  
= - (\frac{s}{2}+1) + \frac{2p-mq-q}{2q}$, 
with $ (2p-mq-q,2q)=1$. 
Using the pyramid of shape $(q^m,q-1,1^s)$, 
we establish that for $j=1,\ldots,(q-1)/2$, 

\smallskip

$\# \{\alpha \in \Delta_+ \colon (x^0|\alpha)=\frac{2j-1}{2}\}   = \left( \frac{q-1}{2}\right) m + \left( \frac{q-(2j-1)}{2}\right) m +s$, 

\smallskip

$\# \{\alpha \in \Delta_+ \colon (x^0|\alpha)=\frac{q+2j-2}{2} \}  = \left( \frac{q-(2j-1)}{2}\right) m$, 

\smallskip

$\# \{\alpha \in \Delta_+ \colon (x^0|\alpha)= 2j-1 \} =  \left( \frac{q-(2j-1)}{2}\right) m^2 + \left( \frac{q-(2j-1)}{2}\right)  + 
\begin{cases}
ms & \text{  if } 2j-1 \le \frac{q-1}{2} \\
0 & \text{ otherwise},
\end{cases}$, 

$\# \{\alpha \in \Delta_+ \colon (x^0|\alpha)= 2j \} =  \left( \frac{q- 2j-1}{2}\right) m^2 + \left( \frac{q-2j-1}{2}\right)  +\frac{m(m+1)}{2}
+ \begin{cases}
ms & \text{  if } 2j-1 \le \frac{q-1}{2} \\
0 & \text{ otherwise.}
\end{cases}$

\smallskip

As explained in the proof of Theorem \ref{Th:main_sp_n-1}, 
the above cardinalities allow us to compute the central charge of $H_{DS,f}^0(L_{k}(\mf{sp}_n))$. 
By solving the equation $c_{H_{DS,f}^0(L_{k}(\mf{sp}_n))} =c_{L_{k^\natural}(\g^\natural)} $ with unknown $p$, 
we obtain that 
for generic $q$ the only admissible 
solution is $p = \dfrac{n}{2}+1.$ 
Assume that $p = \frac{n}{2} +  1$. 
Then $k_1^\natural=0$ and $k_2^\natural = -(\frac{s}{2}+1) + \frac{s+1}{2q}$  
which is coprincipal admissible for $\mf{sp}_s$. 
We easily verify that 
$\G_{H_{DS,f}^0(L_{ -(\frac{n}{2}+1) + (\frac{n}{2}+1)/q}(\mf{sp}_n))}
= \G_{L_{-(\frac{s}{2}+1) + \frac{s+1}{2q}}(\mf{sp}_s)}.$
By Corollary~\ref{Co:asymptotic_data_L} and 
Lemma~\ref{Lem:main_identities} (4),  
we have 
\begin{align*}
\A_{L_{ -(\frac{s}{2}+1) + (s+1)/(2q)}(\mf{sp}_s)} & = 
\dfrac{1}{2^{\frac{s}{2}+1} q^{\frac{s}{4}(s+1) }} .& 
\end{align*}
On the other hand, by Proposition \ref{Pro:asymptotic_data_H_DS} and 
Lemma \ref{Lem:main_identities} (1),  
\begin{align*}
& \A_{H_{DS,f}^0(L_{ -(\frac{n}{2}+1) + (\frac{n}{2}+1)/q}(\mf{sp}_n))}  =& \\ 
&\qquad\qquad  \dfrac{1}{2^{|\Delta^{1/2}|/2}q^{|\Delta_+^0|} 
q^{\frac{n}{4}}} 
\prod\limits_{\alpha\in \Delta_+ \setminus \Delta_{+}^0 } 
2 \sin \left(\dfrac{\pi (x^0|\alpha) }{q}\right) & 
\end{align*} 
with $n=qm+q-1+s$, $|\Delta_+^0| =  \frac{(q-1)m(m-1)}{4}+\frac{(m+s)^2}{4}$ 
and $|\Delta^{1/2}|=(q-1)m+s$. 
From the above computations, 
we get that 
$$\A_{H_{DS,f}^0(L_{ -(\frac{n}{2}+1) + (\frac{n}{2}+1)/q}(\mf{sp}_n))}
= 2 \A_{L_{-(\frac{s}{2}+1) + \frac{s+1}{2q}}(\mf{sp}_s)}.$$
Since $\G_{H_{DS,f}^0(L_{ -(\frac{n}{2}+1) + (\frac{n}{2}+1)/q}(\mf{sp}_n))}
= \G_{L_{-(\frac{s}{2}+1) + \frac{s+1}{2q}}(\mf{sp}_s)},$
it follows from Theorem~\ref{Th:main}  
$\W_{ -(\frac{n}{2}+1) + (\frac{n}{2}+1)/q}(\mf{sp}_n,f)$ 
a direct sum of admissible representations of $L_{-(\frac{s}{2}+1) + \frac{s+1}{2q}}(\mf{sp}_s)$.

\smallskip

\noindent 
(2) In this part, $q$ is even.

(a) Pick a nilpotent element $f \in \overline{\O}_k$ 
corresponding to the partition $(\frac{q}{2}+1,(\frac{q}{2})^{m}, 1^s)$, with odd $\frac{q}{2}$. 
According to Table \ref{Tab:C_Slodowy}, we have 
$\g^\natural  \cong \mf{sp}_m \times \mf{sp}_{s} $,  
$k_1^\natural = 
\frac{p}{2} - \frac{n+1}{2}$ and 
$k_2^\natural  
= - (\frac{s}{2}+1) + \frac{2p-mq -q}{2q}$. 

Using the pyramid of shape $(\frac{q}{2}+1,(\frac{q}{2})^{m}, 1^s)$, 
we establish that for $j=1,\ldots,q/2$, 
\begin{align*}
\# \{\alpha \in \Delta_+^{\rm short} \colon (x^0|\alpha)=\frac{2j-1}{2}\}   
= \left( \frac{q+2-2j}{2}\right) m +  
\begin{cases}
s & \text{  if } 2j-1 \le \frac{q}{2} \\
0 & \text{ otherwise},
\end{cases}
\end{align*}
that for $j=1,\ldots,(q+2)/4$, 
\begin{align*}
& \# \{\alpha \in \Delta_+^{\rm short} \colon (x^0|\alpha)=2j-1 \}  
 = \left( \frac{q+2-4j}{4}\right) (m^2+1)+
\begin{cases}
ms & \text{  if } 2j  \le \frac{q-2}{4} \\
0 & \text{ otherwise,}
\end{cases}&\\ 
& \# \{\alpha \in \Delta_+^{\rm long} \colon (x^0|\alpha)=2j-1 \}  
= 1,& 
\end{align*}
and that for $j=1,\ldots,(q-2)/4$, 
\begin{align*}
& \# \{\alpha \in \Delta_+^{\rm short} \colon (x^0|\alpha)= 2j \} 
=  \left( \frac{q+2-4j}{4}\right)(m^2 +1)
+  \frac{m(m-1)}{2} + 
\begin{cases}
ms & \text{  if } 2 j \le \frac{q-2}{4} \\
0 & \text{ otherwise,}
\end{cases}&\\ 
&\# \{\alpha \in \Delta_+^{\rm long} \colon (x^0|\alpha)= 2j \}=m.&
\end{align*}
The above cardinalities allows us to compute the central charge of $H_{DS,f}^0(L_{k}(\mf{sp}_n))$. 

By solving the equation  $c_{H_{DS,f}^0(L_{k}(\mf{sp}_n))} =c_{L_{k^\natural}(\g^\natural)} $ 
with unknown $p$, 
we obtain that, for generic $q$, the only admissible solution is  
$p = n+1$. 
Fixing $p=n+1$, we obtain that the only nonnegative integer 
solutions the equation  $c_{H_{DS,f}^0(L_{k}(\mf{sp}_n))} =c_{L_{k^\natural}(\g^\natural)} $ with unknown $s$ 
are $s=0$ and $s=2$. 
The case where $p=n+1$ and $s=0$ has been already dealt 
with in the proof of Theorem \ref{Th:main_sp_n-1}. 
We now assume that $p =n+  1$ and $s=2$. 
Then $k_1^\natural$, $k_2^\natural=(\frac{s}{2}+1)/(q/2)$ and  
we easily verify that $\G_{H_{DS,f}^0(L_{k}(\mf{sp}_n))} =\G_{L_{k^\natural}(\g^\natural)}$. 
Let us compare the asymptotic dimensions. 
We obtain here that 
\begin{align*}
\A_{L_{ -(\frac{s}{2}+1) + (s/2+1)/(q/2)}(\mf{sp}_s)} & = 
\dfrac{1}{(q/2)^{\frac{s^2}{4}+\frac{s}{3}}},& 
\end{align*}
and that  
\begin{align*}
& \A_{H_{DS,f}^0(L_{ -(\frac{n}{2}+1) + (n+1)/q}(\mf{sp}_n))}  =& \\ 
&\qquad\qquad  \dfrac{2^{|\Delta^{\rm short} \cap \Delta_+^0|}}{2^{|\Delta^{1/2}|/2}q^{|\Delta_+^0|} 
(q/2)^{\frac{n}{4}}\times \sqrt{4}} 
\prod\limits_{\alpha\in \Delta_+ \setminus \Delta_{+}^0 } 
2 \sin \left(\dfrac{\pi (x^0|\alpha) }{q}\right) & 
\end{align*} 
with $n=qm/2+q/2+3$, $|\Delta_+^0| =  \frac{(q-2)m(m-1)}{4}+\frac{(m+2)^2}{4}$, 
$|\Delta^{\rm short} \cap \Delta_+^0|=|\Delta_+^0|-\frac{m+2}{2}$  
and $|\Delta^{1/2}|=qm/2+2$. 
From the above computations, 
we can compute the last factor in $\A_{H_{DS,f}^0(L_{ -(\frac{n}{2}+1) + (n+1)/q}(\mf{sp}_n))}$ 
and we get that $\A_{H_{DS,f}^0(L_{ -(\frac{n}{2}+1) + (n+1)/q}(\mf{sp}_n))}$ equals 
$\A_{L_{ -(\frac{s}{2}+1) + (s/2+1)/(q/2)}(\mf{sp}_s)}$, whence the expected result. 

(b) Pick a nilpotent element $f \in \overline{\O}_k$ 
corresponding to the partition $(\frac{q}{2}+1,(\frac{q}{2})^m,\frac{q}{2}-1,1^s)$, with odd $\frac{q}{2}$. 
According to Table \ref{Tab:C_Slodowy}, we have 
$\g^\natural  \cong \mf{sp}_m \times \mf{sp}_{s} $,  
$k_1^\natural  = \frac{p}{2}-  \frac{n+1}{2}$ and 
$k_2^\natural  
= - (\frac{s}{2}+1) + 
\frac{p-\frac{qm}{2}-q}{q}$. 
Using the pyramid of shape 
$(\frac{q}{2}+1,(\frac{q}{2})^m,\frac{q}{2}-1,1^s)$ we establish that 
for $j=1,\ldots,q/2$, 

\smallskip

$\# \{\alpha \in \Delta_+^{\rm short} \colon (x^0|\alpha)=(2j-1)/2\} = 
m+m(q-2j) + \begin{cases} 2s & \text{ if } j \le (q-2)/4, \\
s & \text{ if } j = (q+2)/4, \\
0 & \text{otherwise,}
\end{cases}$\\
and that for $j=1,\ldots,(q-2)/4$, 

\smallskip

$\# \{\alpha \in \Delta_+^{\rm short} \colon (x^0|\alpha)=2j-1\} = 
2+ \left( \frac{q+2-4j}{4}\right)(m^2+4)+1$ 

\hfill $ + \begin{cases} m s & \text{ if } 2j-1 \le (q-2)/4, \\
0 & \text{otherwise,}
\end{cases}$ 

\smallskip

$\# \{\alpha \in \Delta_+^{\rm long} \colon (x^0|\alpha)=2j-1\} = 2,$ 

\smallskip

$\# \{\alpha \in \Delta_+^{\rm long} \colon (x^0|\alpha)=q/2\} = 1,$

\smallskip

$\# \{\alpha \in \Delta_+^{\rm short} \colon (x^0|\alpha)=2j\} = 
2+ \left( \frac{q-2-4j}{4}\right)(m^2+4)+\frac{m(m-1)}{2}$

\hfill $+ \begin{cases} m s & \text{ if } 2j \le (q-2)/4 \\
0 & \text{otherwise,}
\end{cases},$

\smallskip

$\# \{\alpha \in \Delta_+^{\rm long} \colon (x^0|\alpha)=2j\} = m,$

\smallskip

By solving the equation  $c_{H_{DS,f}^0(L_{k}(\mf{sp}_n))} 
=c_{L_{k^\natural}(\g^\natural)} $ with unknown $p$, 
we obtain that, for generic $q$, the only solution is $p =  n+1$. 
From now on we assume that $p=n+1$. 
Then $k_1^\natural=0$ and $k_2^\natural = -(\frac{s}{2}+1) + \frac{s+1}{q}$ 
and we have 
$\G_{\W_{-(\frac{n}{2}+1) + \frac{n+1}{q}}(\mf{sp}_n,f) }= 
\G_{L_{-(\frac{s}{2}+1) + \frac{s+1}{q}}(\mf{sp}_s) }$. . 
Moreover,  
\begin{align*}
\A_{L_{ -(\frac{s}{2}+1) + \frac{s+1}{q}}(\mf{sp}_s)} & = 
\frac{2^{\frac{s}{2}(\frac{s}{2}-1)}}{q^{\frac{s^2}{4}}(q/2)^{\frac{s}{4}}\sqrt{4}},& 
\end{align*}
and, as in the previous case, we compute $\A_{H_{DS,f}^0(L_{ -(\frac{n}{2}+1) + \frac{n+1}{q}}(\mf{sp}_n))}$. 
Here we get that 
$$\A_{H_{DS,f}^0(L_{ -(\frac{n}{2}+1) + \frac{n+1}{q}}(\mf{sp}_n))}
=2 \A_{L_{ -(\frac{s}{2}+1) +\frac{s+1}{q}}(\mf{sp}_s)},$$
and we conclude as in (1) (b). 

(c) Fix a nilpotent element $f \in \overline{\O}_k$ 
corresponding to the partition $((\frac{q}{2})^m,1^s)$, with even $\frac{q}{2}$. 
According to Table \ref{Tab:C_Slodowy}, we have 
$\g^\natural  \cong \mf{so}_m \times \mf{sp}_{s} $,  
$k_1^\natural = p-n $ and 
$k_2^\natural  
= - (\frac{s}{2}+1) + (p-\frac{qm}{2})/q$. 
Using the pyramid associated with the partition $((\frac{q}{2})^m,1^s)$ we establish that 
for $j=1,\ldots,q/4$, 
\smallskip

$\# \{\alpha \in \Delta_+^{\rm short} \colon (x^0|\alpha)=(2j-1)/2\} = 
ms,$

\smallskip

$\# \{\alpha \in \Delta_+^{\rm short} \colon (x^0|\alpha)=2j-1\} = 
m^2 \left( \frac{q}{4}-1\right)+\frac{m(m-1)}{2},$

\smallskip

$\# \{\alpha \in \Delta_+^{\rm long} \colon (x^0|\alpha)=2j-1\} = m,$\\
and that 
for $j=1,\ldots,q/4-1$,

$\# \{\alpha \in \Delta_+^{\rm short} \colon (x^0|\alpha)=2j\} = 
m^2 \left( \frac{q}{4}-1\right).$

\smallskip

By solving the equation  $c_{H_{DS,f}^0(L_{k}(\mf{sp}_n))} 
=c_{L_{k^\natural}(\g^\natural)} $ with unknown $p$, 
we obtain that, for generic $q$, the only solutions are 
$$p = n  \quad \text{ and }\quad p = n+1  .$$
Only the solution $p = n +  1$ leads to an admissible level. 
From now on it is assumed that $p = n+  1$. 
Then $k_1^\natural=1$ and $k_2^\natural = -(\frac{s}{2}+1) + \frac{s+1}{q}$ 
and we have 
$\G_{\W_{-(\frac{n}{2}+1) + \frac{n+1}{q}}(\mf{sp}_n,f) }=
\G_{L_{1}(\mf{so}_m) }+ 
\G_{L_{-(\frac{s}{2}+1) + \frac{s+1}{q}}(\mf{sp}_s) } =\frac{m}{2}+\frac{s (2 + q - 2 s + q s)}{2 q}$. 

By Corollary~\ref{Co:asymptotic_data_L} and Lemma~\ref{Lem:main_identities} (1), 
\begin{align*}
\A_{L_{1}(\mf{so}_m) \otimes L_{ -(\frac{s}{2}+1) + \frac{s+1}{2}}(\mf{sp}_s)} & = \frac{1}{2}\times 
\frac{2^{\frac{s}{4}(s-1)-1}}{q^{\frac{s}{4}(s+1)}}.& 
\end{align*}
On the other hand, by Proposition \ref{Pro:asymptotic_data_H_DS} and 
Lemma \ref{Lem:main_identities} (1), 
 \begin{align*}
 \A_{H_{DS,f}^0(L_{ -(\frac{n}{2}+1) + \frac{n+1}{q}}(\mf{sp}_n))}  = 
 \dfrac{2^{|\Delta_+^{\rm short} \cap \Delta_+^0|}}{2^{|\Delta_+^{1/2}|/2}q^{|\Delta_+^0|} 
q^{\frac{n}{4}}} 
\prod\limits_{\alpha\in \Delta_+ \setminus \Delta_{+}^0 } 
2 \sin \left(\dfrac{\pi (x^0|\alpha) }{q}\right) & 
\end{align*} 
with $n=qm+s$, $|\Delta_+^{\rm short} \cap \Delta_+^0| =\frac{m(m-1)q}{8}+\frac{s(s-2)}{4} $, 
$|\Delta_+^0| =\frac{m(m-1)q}{8}+\left(\frac{s}{2}\right)^2 $ 
and $|\Delta_+^{1/2}|=ms$. 
Using the above computations, we get  that 
$$\A_{H_{DS,f}^0(L_{ -(\frac{n}{2}+1) + \frac{n+1}{2}}(\mf{sp}_n))}
=2 \A_{L_1(\so_m) \otimes L_{ -(\frac{s}{2}+1) +\frac{s+1}{2}}(\mf{sp}_s)},$$
and we conclude as in (1) (b). 
\end{proof}

One can specify the decomposition of the finite extension (1) (b) 
in Theorem~\ref{Th:main_sp_n-2}  
assuming that Conjecture \ref{Conj:isom} holds, i.e:
$$\W_{-h_{\mf{sp}_n}^{\vee} + h_{\mf{sp}_n}^{\vee}/q}(\mf{sp}_n,f)\cong 
H_{DS,f}^0(L_{-h_{\mf{sp}_n}^{\vee} + h_{\mf{sp}_n}^{\vee}/q}(\mf{sp}_n)).$$ 
(The same arguments work for the the finite extensions in (2) (b) and (c).)
Keep the notations of the proof. 
Denoting by $L_{k_2^\natural}(\sp_s;\lam)$ the highest irreducible 
representation of $L_{k_2^\natural}(\sp_s)$ of admissible weight $\lam$, 
we get that 
$$H_{DS,f}^0(L_{ -(\frac{n}{2}+1) + (\frac{n}{2}+1)/q}(\mf{sp}_n)) = \bigoplus_{\Delta_\lam 
\in \frac{1}{2} \Z} L_{-(\frac{s}{2}+1) + \frac{s+1}{2q}}(\sp_s;\lam)^{\otimes m_\lam},$$
where $\Delta_\lam$ is the lowest $L_0$-eigenvalue  
of $ L_{-(\frac{s}{2}+1) + \frac{s+1}{2q}}(\sp_s;\lam)$. 
Moreover, the sum is finite.

We have $p_2- h_{\sp_s} = (s+1)-s=1$, where $k_2^\natural+h_{\sp_s}^\vee=p_2/q_2$. 
Recall that 
$$\Delta_\lam(\sp_s)= 
\dfrac{( \lam | \lam +2\rho_{\sp_s} )}{2(k_2^\natural+h_{\sp_s}^\vee)}.$$
For generic $q$, we observe that $\lam =\varpi_1$ is the only fundamental weight 
for which $\Delta_\lam \in \frac{1}{2}\Z$. 
One the other hand, we easily verify using Proposition \ref{Pro:asymptotic_data_H_DS} that 
$$\on{qdim}(L_{ -(\frac{s}{2}+1) + (s+1)/(2q)}(\mf{sp}_s ;\varpi_1))=1,$$ that is, 
$$\A_{L_{ -(\frac{s}{2}+1) + (s+1)/(2q)}(\mf{sp}_s ;\varpi_1)}=\A_{L_{ -(\frac{s}{2}+1) + (s+1)/(2q)}(\mf{sp}_s)}.$$
As a result, 
$$m_0  \A_{L_{ -(\frac{s}{2}+1) + (\frac{s}{2}+1)/q}(\sp_s;0)}+ 
m_{\varpi_1} \A_{L_{ -(\frac{s}{2}+1) + (\frac{s}{2}+1)/q}(\sp_s;\varpi_1)} = 
2 \A_{L_{ -(\frac{s}{2}+1) + (\frac{s}{2}+1)/q}(\sp_s;0)}.$$ 
But $m_0$ must be at most $1$. 
So, if  
$$\W_{-h_{\mf{sp}_n}^{\vee} + h_{\mf{sp}_n}^{\vee}/q}(\mf{sp}_n,f)\cong 
H_{DS,f}^0(L_{-h_{\mf{sp}_n}^{\vee} + h_{\mf{sp}_n}^{\vee}/q}(\mf{sp}_n)),$$
we get  
$$\W_{-h_{\mf{sp}_n}^{\vee} + h_{\mf{sp}_n}^{\vee}/q}(\mf{sp}_n,f) 
\cong L_{-h_{\mf{sp}_s}^{\vee}+(h_{\mf{sp}_s}+1)/(2q)}(\mf{sp}_s) 
\oplus L_{-h_{\mf{sp}_s}^{\vee}+(h_{\mf{sp}_s}+1)/(2q)}(\mf{sp}_s; \varpi_1).$$ 
Arguing as above, we conjecture explicit  
decompositions for the other finite extensions:

\begin{Conj}
\label{Conj:dec_finite_extensions-sp_n}
For generic $q$, we have the following finite extensions:
\begin{align*} 
&\hspace{1cm} \W_{-h_{\mf{sp}_n}^{\vee} + h_{\mf{sp}_n}^{\vee}/q}(\mf{sp}_n,f) 
\cong L_{-h_{\mf{sp}_s}^{\vee}+(h_{\mf{sp}_s}+1)/(2q)}(\mf{sp}_s) 
\oplus L_{-h_{\mf{sp}_s}^{\vee}+(h_{\mf{sp}_s}+1)/(2q)}(\mf{sp}_s; \varpi_1).&\\
&\hspace{1cm} \W_{-h_{\sp_n}^\vee + (h_{\sp_n}+1)/q}(\mf{sp}_n,f)   
\cong 
L_{-h_{\sp_s}^\vee+(h_{\sp_s}+1)/q}(\sp_s) \oplus L_{-h_{\sp_s}^\vee+(h_{\sp_s}+1)/q}(\sp_s; \varpi_1).&\\
& \hspace{1cm}  \W_{- h_{\mf{sp}_n}^{\vee}+ (h_{\sp_n}+1)/q}(\mf{sp}_n,f)  
& \\
& \hspace{1cm} \quad \cong 
L_1(\so_m) \otimes (L_{-h_{\mf{sp}_s}^{\vee}+(h_{\sp_s}+1)/q}(\mf{sp}_{s}))  
\oplus (L_1(\so_m;\varpi_1) \otimes (L_{-h_{\mf{sp}_s}^{\vee}+(h_{\sp_s}+1)/q}(\mf{sp}_{s};\varpi_1)) ).&
\end{align*}
\end{Conj}

\begin{Rem}As it has been observed 
in the proof of Theorem \ref{Th:main_sp_n-1}  
(1), if $k$ is collapsing for $f \in \O_{(q^{m},1^s)}$, 
then necessarily 
$p=\frac{n}{2}+1$ or $p=\frac{n+1}{2}$. 
Only the first case verifies that $p \ge h_\g^\vee$. 
However, one may wonder whether the following holds:
$$
H_{DS,f}^0(L_{-(\frac{n}{2}+1)+\frac{n+1}{2q}}(\mf{sp}_n))
\cong 
L_{-1/2} (\mf{sp}_m) \otimes L_{-(\frac{s}{2}+1)+\frac{s+1}{2q}}(\mf{sp}_{s}).$$ 
(The two above vertex algebras have the same central charge.) 
\end{Rem}

We now state our main results on collapsing levels 
for $\so_n$. Here also we start with the lisse case, that is, $f \in \O_k$. 

\begin{Th}
\label{Th:main_so_n-1}
Assume that $k = -h_\g^\vee+p/q=-(n-2)+p/q$ 
is admissible for $\g=\mf{so}_n$. 
Pick a nilpotent element $f \in \O_k$ so that $\W_k(\g,f)$ is lisse. 
\begin{enumerate} 
\item Assume that $q$ is odd. 
If $p=h_{\so_n}^\vee=n-2$, then for generic 
$q$, $k$ is collapsing if and only if $n \equiv 0,1 \mod q$. 
If $n \equiv 0,1 \mod q$,  then for generic 
$q$, $k$ is collapsing if and only if $p=h_{\so_n}^\vee$. 
Moreover, if $n \equiv 0,1 \mod q$, then 
$$\W_{-h_{\so_n}^\vee+h_{\so_n}^\vee/q}(\so_n,f) 
\cong H_{DS,f}^0(L_k(\g)) \cong \C.$$
\item Assume that $n$ and $q$ are even. 
If $p=h_{\so_n}+1=n-1$, 
then for generic $q$, $k$ is collapsing if and only if $n \equiv 0,2 \mod q$. 
If $n \equiv 0,2 \mod q$, then for generic $q$, $k$ is collapsing if and only if  
$p=h_{\so_n}+1$. 
Moreover, if $n \equiv 0,2 \mod q$, then 
$$\W_{-h_{\so_n}^\vee+(h_{\so_n}+1)/q}(\so_n,f)  \cong H_{DS,f}^0(L_k(\g))\cong \C.$$
\item Assume that $n$ is odd and that $q$ is even. 
If $p=h_{\so_n}+1=n$, then for generic $q$, $k$ is collapsing if and only if $n \equiv -1,1 \mod q$. 
If $n \equiv -1,1 \mod q$, then for generic $q$, $k$ is collapsing if and only if $p=h_{\so_n}+1$. 
Moreover, if $n \equiv -1,1 \mod q$, then 
$$\W_{-h_{\so_n}^\vee+(h_{\so_n}+1)/q}(\so_n,f) 
\cong H_{DS,f}^0(L_k(\g)) \cong \C.$$
\end{enumerate} 
\end{Th}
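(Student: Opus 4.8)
\textbf{Proof strategy for Theorem \ref{Th:main_so_n-1}.}
The plan is to follow exactly the three-step strategy of Section \ref{sec:strategy}, exploiting the fact that any nilpotent element of $\so_n$ lying in $\O_k$ gives a lisse $W$-algebra (Theorem \ref{Th:Associated_variety-DS}(3c)), so that by Remark \ref{Rem:lisse_conjecture_true} we automatically get $\W_k(\g,f)\cong H_{DS,f}^0(L_k(\g))$ without needing an even good grading. First I would read off from \cite[Tables 2--3]{Arakawa15a} the partition $\bs\lam$ of the orbit $\O_k$ in each of the three cases: for $q$ odd, $\bs\lam=(q+1,q^{\widetilde m},\widetilde s)^+$; for $n,q$ even, $\bs\lam=(q+1,q^{\widetilde m},\widetilde s)^+$ as well; and for $n$ odd, $q$ even, $\bs\lam=(q^{\widetilde m},\widetilde s)^+$. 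In each situation the centraliser $\g^\natural$ and the $k_i^\natural$ are obtained from Lemma \ref{Lem:k_natural-type-BCD} (that is, the relevant rows of Table \ref{Tab:BD_Slodowy}), with the small-rank caveats of Remark \ref{Rem:small_values}. As always the collapsing condition forces $k_0^\natural=0$ when $\g_0^\natural\neq 0$; otherwise one solves the central-charge equation \eqref{eq:central_charge}, using \eqref{eq:W.alg.c.formula} for $c_{H_{DS,f}^0(L_k(\g))}$ and the strange formula $|\rho|^2=h_\g^\vee\dim\g/12$.

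The central-charge input $c_{H_{DS,f}^0(L_k(\g))}$ requires knowing the multiset $\{(x^0|\alpha):\alpha\in\Delta_+\}$, which I would extract combinatorially from the orthogonal Dynkin pyramid of shape $\bs\lam$ described in \S\ref{sub:sympl_pyramids}, using the analogue of \eqref{eq:cardinality_pyramid} and \eqref{eq:cardinality_pyramid-co} for $\so_n$ (distinguishing short and long roots in the coprincipal cases). This gives $c_{H_{DS,f}^0(L_k(\g))}$ as an explicit rational function of $p,q$ and the partition data; setting it equal to $c_{L_{k^\natural}(\g^\natural)}$ and solving for $p$ (treating $q$ as generic) should yield, in each of the three cases, only $p=h_\g^\vee$ (resp.\ $p=h_\g+1$) together with the spurious solution $p=h_\g^\vee-1$ (resp.\ $p=h_\g$) that fails the admissibility bound \eqref{eq:admissible-n}. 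The constraint $n\equiv 0,1$ (resp.\ $0,2$; resp.\ $-1,1$) mod $q$ (resp.\ $\tfrac q2$) is exactly the condition under which such an $\O_k$ contains a nilpotent $f$ with $\bs\mu=\bs\lam$ of type (a) in Lemma \ref{Lem:choice_of_partitions-BD}, i.e.\ under which $\Slo_{\O_k,f}$ is a point and the only possible target is $\C$; I would verify this by checking when the relevant partitions $\bs\lam$ collapse to $\widetilde s\in\{0,1\}$ (resp.\ $\widetilde m=0$ etc.).

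Once $p$ is pinned down, the final step is Proposition \ref{Pro:asymptotics-and-collapsing}: it suffices to check that the asymptotic growth and asymptotic dimension of $H_{DS,f}^0(L_k(\g))$ match those of $L_{k^\natural}(\g^\natural)=\C$, i.e.\ $\G_{H_{DS,f}^0(L_k(\g))}=0$ and $\A_{H_{DS,f}^0(L_k(\g))}=1$. The growth vanishes since $\G_{H_{DS,f}^0(L_k(\g))}=\G_{L_k(\g)}-\dim G.f=\dim\g^f-\dim\mathbb O_f=0$ precisely when $f\in\O_k$, which is our hypothesis (this is immediate from Proposition \ref{Pro:asymptotic_data_H_DS}). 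For $\A$, I would plug the pyramid data into the formulas of Proposition \ref{Pro:asymptotic_data_H_DS}(1) or (2), and reduce the resulting product $\prod_{\alpha\in\Delta_+\setminus\Delta^0_{\Gamma,+}}2\sin(\pi(x^0_\Gamma|\alpha)/q)$ (or its short/long variant) to a power of $q$ via the elementary identities \eqref{eq:sin_formula}, \eqref{eq:sin_formula.odd}, \eqref{eq:sin_formula2} and Lemma \ref{Lem:main_identities}, cancelling against the $q^{|\Delta^0_{\Gamma,+}|}q^{\ell/2}$ and $2^{|\Delta_\Gamma^{1/2}|/2}$ denominators to leave $1$. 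This last sine-product bookkeeping is the main obstacle: it is the same kind of tedious-but-routine pyramid computation carried out in the proofs of Theorems \ref{Th:main_sl_n-1}, \ref{Th:main_sl_n-2} and \ref{Th:main_sp_n-1}, and it must be done separately for the even-$n$ and odd-$n$ pyramids, with careful attention to skew rows (which affect the column coordinates and hence the multiset of $(x^0|\alpha)$) and to the short/long distinction in the two coprincipal subcases (2) and (3). Having established both asymptotic equalities, Proposition \ref{Pro:asymptotics-and-collapsing} gives $\W_k(\g,f)\cong H_{DS,f}^0(L_k(\g))\cong\C$, completing the proof; the converse directions (that $k$ collapsing forces $p$ and the congruence on $n$) follow from the central-charge analysis above together with the fact that $\C$ is the only candidate when the Slodowy slice is a point.
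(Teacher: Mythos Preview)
Your overall strategy is exactly the paper's: mimic the proof of Theorem~\ref{Th:main_sp_n-1}, replacing symplectic Dynkin pyramids by orthogonal ones. However, two concrete errors would derail the argument as written.

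First, your claim that $\G_{H_{DS,f}^0(L_k(\g))}=\dim\g^f-\dim\mathbb O_f=0$ automatically when $f\in\O_k$ is false. Proposition~\ref{Pro:asymptotic_data_H_DS} gives $\G_{H_{DS,f}^0(L_k(\g))}=\dim\g^f-h_\g^\vee\dim\g/(pq)$ (or its coprincipal analogue), and this vanishes only for the specific values of $p$ \emph{and} the specific congruences on $n$ singled out in the statement. Indeed, in the $\sp_n$ proof (Theorem~\ref{Th:main_sp_n-1}(1)(a)) the analogous growth is $(s-1)(q-s-1)/q$, which is zero only for $s=1$ or $s=q-1$; this is precisely where the congruence conditions come from. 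You must therefore solve the central-charge (or growth) equation in the variable $\widetilde s$ with $p$ fixed to obtain the congruences, and conversely solve in $p$ with $\widetilde s$ fixed to pin down $p$---the growth does not vanish for free.

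Second, case~(2) ($n$ and $q$ both even) is \emph{principal}, not coprincipal: $n$ even means $\g$ is of type $D$, which is simply laced, so $r^\vee=1$ and there is no short/long distinction. Only case~(3) ($n$ odd, $q$ even, hence type $B$ with $r^\vee=2$) is coprincipal. Using the coprincipal formula of Proposition~\ref{Pro:asymptotic_data_H_DS}(2) in case~(2) would give the wrong asymptotic dimension. With these two corrections your plan matches the paper's, which simply records the orthogonal analogue of \eqref{eq:cardinality_pyramid}--\eqref{eq:cardinality_pyramid-co} and declares the rest ``very similar to Theorem~\ref{Th:main_sp_n-1}''.
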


\begin{proof}
We argue as in the proof of Theorem \ref{Th:main_sp_n-1}. 
We exploit 
here the orthogonal Dynkin pyramid of shape $\bs\lam$ corresponding to $f \in \O_k$. 
Here, we set $I=\{1,\ldots,\frac{n}{2},-\frac{n}{2},\ldots,-1\}$ if $n$ is even, 
and $I=\{1,\ldots,\frac{n}{2},0,-\frac{n}{2},\ldots,-1\}$ if $n$ is odd. 
Moreover, for $j \in \frac{1}{2}\Z_{> 0}$, 
\begin{align*}
& \# \{\alpha \in \Delta_+^{\rm long} \colon (x^0|\alpha) = j \}= 
\# \{ (i,l) \in I \colon  0 < i \le |l| ,  \, |\on{col}(i) - \on{col}(l) |/2 = j \},&\\ 
& \# \{\alpha \in \Delta_+^{\rm short} \colon (x^0|\alpha) = j \}= 
\# \{ (i,l) \in I \colon i >0, \, l = 0 , \, |\on{col}(i) - \on{col}(l) |/2 = j \},&
\end{align*}
and 
$$\{\alpha \in \Delta_+ \colon (x^0|\alpha) = j \}=
\{\alpha \in \Delta_+^{\rm long} \colon (x^0|\alpha) = j \} \cup 
\{\alpha \in \Delta_+^{\rm short} \colon (x^0|\alpha) = j \},$$ 
with $\{\alpha \in \Delta_+^{\rm short} \colon (x^0|\alpha) = j \}=\varnothing$ 
if $n$ is even.  
From this, we compute the central charge and the asymptotic dimension 
of $H_{DS,f}^0(L_k(\g))$ 
using the pyramid exactly as in the case where $\g=\sp_n$. 
Since the proof is very similar to that of  Theorem~\ref{Th:main_sp_n-1}, 
we omit the details. 
\end{proof}

\begin{Rem}\label{rem:min.mod.cases.so}
As in Remark \ref{rem:min.mod.cases.sp} we now comment on a few isomorphisms between $W$-algebras obtained with similar methods as in the proof of Theorem \ref{Th:main_so_n-1} above.

Let $q>3$ be even and not divisible by $3$, let $n$ be odd, and $k = -h_\g^\vee + p/q$ a coprincipal admissible level for $\g = \mf{so}_n$, where $p=h_\g+1$. Then $\O_k = \O_{\bs\lam}$ where $\bs\lam = (q^m, s)$. If we choose $n$ so that $s = 3$ and take $f \in \O_k$ then we obtain an isomorphism
\[
\W_k(\g,f) \cong H_{DS, f}^0(L_{k}(\g)) \cong \on{Vir}_{3, q/2},
\]
proved by comparison of asymptotic growth and asymptotic dimension.

If $q$ and $m$ are odd so that $n = mq + 3$ is even, and $k = -h_\g^\vee + p/q$ is the principal admissible level for $\g = \mf{so}_n$ where $p=h_\g^\vee$, then $\O_k = \O_{\bs\lam}$ where $\bs\lam = (q^m, 3)$. If we choose $f \in \O_k$ then we obtain an isomorphism
\[
\W_k(\g,f) \cong H_{DS, f}^0(L_{k}(\g)) \cong \on{Vir}_{2, q},
\]
again proved by comparison of asymptotic growth and asymptotic dimension.
\end{Rem}

\begin{Th}
\label{Th:main_so_n-2}
Assume that $k = -h_\g^\vee+p/q=-(n-2)+p/q$ 
is admissible for $\g=\mf{so}_n$. 
\begin{enumerate} 
\item Assume that $q$ is odd so that $k$ is principal.  
\begin{enumerate} 
\item Pick a nilpotent element 
$f \in\overline{\O}_k$ corresponding to the partition $(q^m,1^s)$ 
with $s\ge 3$. 
For generic $q$, $k$ is collapsing only if $p=h_{\so_n}^\vee$ or $p=h_{\so_n}^\vee+1$. 
Moreover, 
$$\W_{-h_{\so_n}^\vee +h_{\so_n}^\vee/q}(\mf{so}_{n},f) \cong L_{-h_{\so_s}^\vee +h_{\so_s}^\vee/q}(\mf{so}_s),
$$
and (for $m \ge 3$) 
we have the following inclusion is a finite extension:
$$L_{1}(\mf{so}_m) \otimes L_{-h_{\so_s}^\vee +(h_{\so_s}^\vee+1)/q}(\mf{so}_s) 
\longhookrightarrow \W_{-h_{\so_n}^\vee+(h_{\so_n}^\vee+1)/q }(\mf{so}_{n},f).$$
\item Pick a nilpotent element 
$f \in\overline{\O}_k$ corresponding to the partition $(q^m,(q-1)^2)$. 
For generic $q$, $k$ is collapsing if and only if $p=h_{\so_n}^\vee$ and,  
$$\W_{-h_{\so_n}^\vee +h_{\so_n}^\vee/q}(\mf{so}_{n},f) \cong L_{-2 +2/q}(\sl_2),
$$
\end{enumerate}

\item Assume that $q$ and $n$ are even so that $k$ is principal.  
\begin{enumerate}  
\item Pick a nilpotent element 
$f \in\overline{\O}_k$ corresponding to the partition $(q+1,q^m,1^s)$, 
with even $m$, odd $s$. 
Then $k$ is collapsing if and only if $p=h_{\so_n}+1$ and, 
$$\W_{-h_{\so_n}^\vee +(h_{\so_n}+1)/q}(\mf{so}_{n},f) \cong L_{-h_{\so_s}^\vee +(h_{\so_s}+1)/q}(\mf{so}_s).$$

\item Pick a nilpotent element 
$f \in\overline{\O}_k$ corresponding to the partition $(q+1,q^m,q-1,1^s)$, 
with even $m,s$, $s >2$. 
Then for generic $q$, $k$ is collapsing only if $p=h_{\so_n}+1$. 
Moreover, for generic $q$, 
we have the following inclusion is a finite extension:
$$L_{-h_{\so_s}^\vee +(h_{\so_s}+1)/q}(\mf{so}_s)  \longhookrightarrow 
\W_{-h_{\so_n}^\vee +(h_{\so_n}+1)/q}(\mf{so}_{n},f).$$
\end{enumerate}

\item Assume that $n$ is odd and $q$ is even 
so that $k$ is coprincipal. 
\begin{enumerate}  
\item Pick a nilpotent element 
$f \in\overline{\O}_k$ corresponding to the partition $(q^m,1^s)$, 
with even $m$ and odd $s$. 
Then for generic $q$, $k$ is collapsing only if $p=h_{\so_n}+1$. 
Moreover,
$$\W_{-h_{\so_n}^\vee+(h_{\so_n}+1)/q}(\mf{so}_{n},f) \cong 
L_{-h_{\so_s}^\vee +(h_{\so_s}+1)/q}(\mf{so}_s).$$
\item Pick a nilpotent element 
$f \in\overline{\O}_k$ corresponding to the partition $(q^m,q-1,1^s)$, 
with even $m,s$, $s>2$. 
Then for generic $q$, $k$ is collapsing only if $p=n$. 
Moreover, for generic $q$, 
we have the following inclusion is a finite extension:
$$L_{-h_{\so_s}^\vee +(h_{\so_s}+1)/q}(\mf{so}_s) 
\longhookrightarrow \W_{-h_{\so_n}^\vee +(h_{\so_n}+1)/q}(\mf{so}_{n},f).$$
\end{enumerate}
\end{enumerate} 
\end{Th}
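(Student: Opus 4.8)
\textbf{Proof strategy for Theorem \ref{Th:main_so_n-2}.} The plan is to run, case by case, the general machine assembled in Section \ref{sec:strategy}: for each partition $\bs\mu$ appearing in the statement we have already computed $\g^\natural$ and the levels $k_i^\natural$ in Table \ref{Tab:BD_Slodowy} (Lemma \ref{Lem:k_natural-type-BCD}), so the task reduces to (i) pinning down the possible numerators $p$ of an admissible collapsing level, and (ii) verifying the hypotheses of Proposition \ref{Pro:asymptotics-and-collapsing} (or, in the finite-extension cases, of Theorem \ref{Th:finite_extensions}). Since every nilpotent element of $\mf{so}_n$ admits a good grading and the relevant $f$ lie in $\overline{\O}_k$, whenever $f\in\O_k$ we additionally get $\W_k(\g,f)\cong H^0_{DS,f}(L_k(\g))$ by Remark \ref{Rem:lisse_conjecture_true}; in the non-lisse cases the stated isomorphisms are for $H^0_{DS,f}(L_k(\g))$ (equal to $\W_k(\g,f)$ under Conjecture \ref{Conj:isom}, which holds for even good gradings by Theorem \ref{Th:simplicity}).

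First I would fix the orthogonal Dynkin pyramid of shape $\bs\lam=\O_k$ (as listed before Lemma \ref{Lem:choice_of_partitions-BD}) and, exactly as in the proof of Theorem \ref{Th:main_so_n-1}, extract from it the multiset of values $(x^0_\Gamma\mid\alpha)$, $\alpha\in\Delta_+$, distinguishing long and short roots via the formulas recorded there. This yields closed expressions for $c_{H^0_{DS,f}(L_k(\g))}$ (via \eqref{eq:W.alg.c.formula} and the strange formula) and for the product $\prod_{\alpha\in\Delta_+\setminus\Delta^0_{\Gamma,+}}2\sin\pi(x^0_\Gamma\mid\alpha)/q$ entering $\A_{H^0_{DS,f}(L_k(\g))}$ in Proposition \ref{Pro:asymptotic_data_H_DS}. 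Then I would impose $k^\natural$ admissible (forcing $k_0^\natural=0$ when $\g_0^\natural\ne0$, by Lemma \ref{Lem:k_0-condition}), equate central charges $c_{H^0_{DS,f}(L_k(\g))}=c_{L_{k^\natural}(\g^\natural)}$ (using $c_{L_{k^\natural}(\g^\natural)}=\sum_i c_{L_{k_i^\natural}(\g_i^\natural)}$), and solve the resulting polynomial in $p$: in each case, for generic $q$, the only root with $p\ge h_\g^\vee$ (resp.\ $p\ge h_\g$) is the asserted one, $p=h_{\so_n}^\vee$ or $p=h_{\so_n}+1$. For the lisse subcases (where $G.f$ is even) I would instead compare asymptotic growths directly, which is computationally lighter; the remark after Theorem \ref{Th:main_sp_n-1} and Remarks \ref{rem:min.mod.cases.sp}, \ref{rem:min.mod.cases.so} provide the template.

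With $p$ fixed, the remaining point is to match asymptotic dimensions. For the genuinely collapsing cases (1)(a) first assertion, (1)(b), (2)(a), (3)(a)) I would compute $\A_{H^0_{DS,f}(L_k(\g))}$ from Proposition \ref{Pro:asymptotic_data_H_DS} and $\A_{L_{k^\natural}(\g^\natural)}$ from Corollary \ref{Co:asymptotic_data_L}, reducing both via the product identities \eqref{eq:sin_formula}, \eqref{eq:sin_formula.odd}, \eqref{eq:sin_formula2} and Lemma \ref{Lem:main_identities} (parts (1)--(4) being exactly tailored to the denominators $q$, $q+1$, $h_\g$, $h_\g^\vee$ that occur); Proposition \ref{Pro:asymptotics-and-collapsing} then concludes. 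For the finite-extension cases ((1)(a) second assertion, (2)(b), (3)(b)) the computation gives $\A_{H^0_{DS,f}(L_k(\g))}=2\,\A_{L_{k^\natural}(\g^\natural)}$ (more precisely $2^{c}$ for the product $L_1(\so_m)\otimes L_{k_2^\natural}(\so_s)$), with equal asymptotic growths; by Theorem \ref{Th:finite_extensions}, $H^0_{DS,f}(L_k(\g))$ is then a finite direct sum of admissible $L_{k^\natural}(\g^\natural)$-modules, and I would identify the summands by checking that for generic $q$ the only fundamental weight $\lam$ with half-integral conformal dimension $\Delta_\lam$ is $\varpi_1$, that $\on{qdim} L_{k^\natural}(\g^\natural;\varpi_1)=1$ (again via Proposition \ref{Pro:asymptotic_data_H_DS}), and that the vacuum multiplicity is at most one—forcing the extension $L_{k^\natural}(\g^\natural)\oplus L_{k^\natural}(\g^\natural;\varpi_1)$. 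Independence of these expressions on the choice of good grading is clear at a collapsing level (and can be seen combinatorially as in Lemma \ref{Lem:asymptotic_dimension_good_gradings}).

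\textbf{Main obstacle.} The conceptual framework is fixed; the difficulty is purely computational bookkeeping. The hardest part will be the case-by-case pyramid combinatorics for $\mf{so}_n$—the orthogonal Dynkin pyramids have skew rows (and, for odd $n$, a zeroth row), so the counting of $\{\alpha\in\Delta_+^{\rm long/short}:(x^0_\Gamma\mid\alpha)=j\}$ splits into many subcases according to the parities of $m$, $s$ and $q/2$, exactly as in the proof of Theorem \ref{Th:main_sp_n-2}. Carrying the resulting long products through the sine identities of Section \ref{sec:identities} without error, and in particular correctly tracking the power of $2$ that distinguishes a genuine collapse from a finite extension, is where essentially all the work lies; I would organise it by first disposing of the $\bs\mu=(q^m,1^s)$ families (which reduce, after row/column removal, to the already-treated lisse cases via Lemma \ref{lem:erasing_row_type-BCD}) and then handling the $(q-1)^2$ and $(q-2)^2$ tails, which produce the $\sl_2$-factors, separately.
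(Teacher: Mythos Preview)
Your proposal is correct and takes essentially the same approach as the paper: the paper's own proof simply says ``This proof is very similar to that of Theorem \ref{Th:main_sp_n-2}, except that, obviously, we use here orthogonal Dynkin pyramids. We omit the details.'' You have spelled out precisely that strategy---pyramid combinatorics to compute central charges and asymptotic data, solving for $p$, then Proposition \ref{Pro:asymptotics-and-collapsing} for the collapsing cases and Theorem \ref{Th:finite_extensions} plus the $\varpi_1$/$\on{qdim}=1$ argument for the finite-extension cases---exactly parallel to how Theorem \ref{Th:main_sp_n-2} was handled.
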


\begin{proof}
This proof is very similar to that of Theorem \ref{Th:main_sp_n-2}, 
except that, obviously, we use here orthogonal Dynkin pyramids. 
We omit the details. 
\end{proof}

The following conjecture is similar to Conjecture \ref{Conj:dec_finite_extensions-sp_n}.
\begin{Conj}
\label{Conj:dec_finite_extensions-so_n}
For generic $q$, we have the following finite extensions:
\begin{align*}
&\hspace{1cm} \W_{-h_{\so_n}^\vee+(h_{\so_n}^\vee+1)/q }(\mf{so}_{n},f) 
\cong & \\  
& \hspace{1cm} \qquad 
L_{1}(\mf{so}_m) \otimes L_{-h_{\so_s}^\vee +(h_{\so_s}^\vee+1)/q}(\mf{so}_s) \oplus L_{1}(\mf{so}_m;\varpi_1) \otimes L_{-h_{\so_s}^\vee +(h_{\so_s}^\vee+1)/q}(\mf{so}_s;\varpi_1).&\\
&\hspace{1cm} \W_{-h_{\so_n}^\vee +(h_{\so_n}+1)/q}(\mf{so}_{n},f)  
\cong  L_{-h_{\so_s}^\vee +(h_{\so_s}+1)/q}(\mf{so}_s) 
\oplus L_{-h_{\so_s}^\vee +(h_{\so_s}+1)/q}(\mf{so}_s;\varpi_1).& \\
&\hspace{1cm} \W_{-h_{\so_n}^\vee +(h_{\so_n}+1)/q}(\mf{so}_{n},f) 
\cong  L_{-h_{\so_s}^\vee +(h_{\so_s}+1)/q}(\mf{so}_s) \oplus 
 L_{-h_{\so_s}^\vee +(h_{\so_s}+1)/q}(\mf{so}_s;\varpi_1).& 
\end{align*}
\end{Conj}

\begin{Rem}
Similarly to Remark \ref{Rem:compatible_sl_n}, notice that (4)  
with $s=0$ or $s=1$ is compatible with (1), (5)  
with $s=1$ is compatible with (2), (7)  
with $s=1$ is compatible with (3), and  (8)  
with $s=0$ is compatible with (3).
\end{Rem}

\begin{Rem} \label{Rem:non_adm}
It might be also interesting to consider 
the case where $(n-2,q) \not=1$. 
For example, consider the Lie algebra $\mf{so}_{27}$ 
(type $B_{13})$ and $f \in \O_{(5^4,1^7)}$. 
We find that $k= -25+\dfrac{25}{5}=-20$ is not admissible, 
and $k_1^\natural= - 5+\dfrac{5}{5}=-4$ is not admissible either. 
One can wonder  whether 
$$\W_{-20}(\mf{so}_{27},f) \cong L_{-4}(\mf{so}_{7}).$$
Other examples are
$$\W_{-18}(\mf{so}_{23},f) \cong L_{-6}(\mf{so}_{9}).$$
$$\W_{-6}(\mf{so}_{11},f) \cong L_{-2}(\mf{so}_{5}).$$
The last example is interesting because one knows 
\cite[Theorem 7.1]{AraMor16b} that, for $\ell \ge 3$, 
$$X_{ L_{-2}(\mf{so}_{2\ell+1})}=\overline{\O}_{short}.$$
\end{Rem}

\begin{Conj} 
\label{Conj:exhaustive}
\begin{enumerate}
\item The cases covered by Theorems \ref{Th:main_sl_n-1} 
and \ref{Th:main_sl_n-2} give the exhaustive list of pairs $(f,k)$ 
where $f$ is a nilpotent element of $\sl_n$ 
and $k$ is an admissible 
collapsing levels for $\sl_n$. 
\item 
The cases covered by Theorems \ref{Th:main_sp_n-1} 
and \ref{Th:main_sp_n-2} give the exhaustive list of pairs $(f,k)$ 
where $f$ is a nilpotent element of $\sp_n$ 
and $k$ is an admissible 
collapsing levels for $\mf{sp}_n$. 
\item The cases covered by Theorems \ref{Th:main_so_n-1} 
and \ref{Th:main_so_n-2} give the exhaustive list of pairs $(f,k)$ 
where $f$ is a nilpotent element of $\so_n$ 
and $k$ is an admissible 
collapsing levels for $\mf{so}_n$. 
\end{enumerate}
\end{Conj}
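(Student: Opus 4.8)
The plan is to prove the reverse inclusion to the four theorems: that every admissible collapsing pair $(f,k)$ for $\g=\sp_n$ or $\g=\so_n$ already appears among the enumerated families. The starting point is the chain of necessary conditions assembled in Section~\ref{sec:strategy}. If $k$ is admissible and collapsing then $\W_k(\g,f)\neq 0$, so $f\in X_{L_k(\g)}=\overline{\O}_k$ by Theorem~\ref{Th:Associated_variety-DS}; since $k$ is admissible and $f\in\overline{\O}_k$, the algebra $\W_k(\g,f)$ is quasi-lisse, whence $\phi_0^\natural=0$ by Lemma~\ref{Lem:k_0-condition}. The collapsing hypothesis then supplies a Poisson isomorphism $X_{\W_k(\g,f)}\cong X_{L_{k^\natural}(\g^\natural)}$. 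By \cite{AraMor16b} the left-hand side, which equals the nilpotent Slodowy slice $\Slo_{\O_k,f}$, is irreducible, while by Lemma~\ref{Lem:k_0-condition} the right-hand side is contained in $\mc{N}_{\g^\natural}$ and is therefore a product of nilpotent orbit closures. Thus the first, and principal, task reduces to a purely geometric one: to classify all pairs $(\O_k,f)$, with $\O_k$ the associated variety of an admissible $L_k(\g)$ and $f\in\overline{\O}_k$, for which $\Slo_{\O_k,f}$ is a collapsing nilpotent Slodowy slice in the sense of Definition~\ref{def:collapsing_Slodowy_slices}.

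Second, I would carry out this classification combinatorially. The admissible orbits $\O_k$ are the explicit partitions recalled before Lemma~\ref{Lem:choice_of_partitions-C}, and the row/column removal rule (Lemma~\ref{lem:erasing_row_type-BCD}) produces an isomorphism $\Slo_{\O_k,f}\cong\Slo_{\O_{\bs\lam'},f'}$ for any degeneration. The content of Lemmas~\ref{Lem:choice_of_partitions-C} and~\ref{Lem:choice_of_partitions-BD} is precisely the enumeration of those $\bs\mu$ for which $f'=0$, i.e.\ for which the slice becomes a single orbit closure; together with the centraliser data of Lemma~\ref{Lem:k_natural-type-BCD} these account for exactly the partitions appearing in Theorems~\ref{Th:main_sp_n-1}, \ref{Th:main_sp_n-2}, \ref{Th:main_so_n-1} and~\ref{Th:main_so_n-2}. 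The crux of the conjecture is therefore to prove that these are the \emph{only} collapsing slices: that no $\Slo_{\O_k,f}$ is isomorphic to a product of nilpotent orbit closures in $\g^\natural$ except via row/column removal. I would attempt this by combining (i) the dimension constraint $\dim\Slo_{\O_k,f}=\dim\O_k-\dim G.f\le\dim\g^\natural-{\rm rk}\,\g^\natural$, exactly as used in Lemma~\ref{Lem:k_natural_adm_conjecture}, to bound the admissible $\O_k$ for fixed $f$; and (ii) the Kraft--Procesi determination of the generic singularities of $\overline{\O}$ in classical types together with the Hasse diagrams of $\g$ and $\g^\natural$, to match singularity types and orbit-dimension sequences in the manner of the $F_4$ and $E_7$ arguments of Lemma~\ref{Lem:k_natural_adm_conjecture}.

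Third, once $\bs\mu$ has been forced into one of the listed types, the level itself is pinned down by the central-charge equation $c_{H^0_{DS,f}(L_k(\g))}=c_{L_{k^\natural}(\g^\natural)}$, equivalently by comparison of asymptotic growths, which the proofs of Theorems~\ref{Th:main_sp_n-1}--\ref{Th:main_so_n-2} already show has, for generic $q$, only the solutions $p=h^\vee_\g$ and $p=h_\g+1$ (or $p=h^\vee_\g+1$) compatible with admissibility. Combined with Conjecture~\ref{Conj:k_natural_adm_conjecture}, verified in the classical collapsing cases by Lemmas~\ref{Lem:k_natural-type-A} and~\ref{Lem:k_natural-type-BCD}, which guarantees that $k^\natural$ is admissible, this should yield exactly the pairs $(f,k)$ of the four theorems, closing the loop since those theorems prove the converse sufficiency.

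I expect the decisive obstacle to lie in the geometric exhaustiveness of the second step. Two difficulties compound here. First, isomorphism of nilpotent Slodowy slices as symplectic singularities is a subtle invariant, and the row/column removal rule is not \emph{a priori} known to generate all such isomorphisms, so excluding sporadic products of orbit closures will likely demand a case-by-case analysis of Hasse diagrams in the small-$q$ regime where the dimension bound is slack. Second, the reduction in the first step rests on the identification $X_{\W_k(\g,f)}=\Slo_{\O_k,f}$, which in turn requires $\W_k(\g,f)\cong H^0_{DS,f}(L_k(\g))$ (Conjecture~\ref{Conj:isom}); this is available for $f\in\O_k$ by Remark~\ref{Rem:lisse_conjecture_true} and for $f$ admitting an even good grading by Theorem~\ref{Th:simplicity}, but a complete proof for all $f\in\overline{\O}_k$ in types $C$, $B$ and $D$ must either establish Conjecture~\ref{Conj:isom} in the remaining cases or find a way to bypass it.
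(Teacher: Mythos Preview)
The statement you are addressing is a \emph{conjecture} in the paper, not a theorem: the authors do not provide a proof, and indeed the sentence immediately preceding it in Section~\ref{sec:classical} offers no argument beyond the theorems themselves. Your proposal is therefore not to be compared with a proof in the paper, because there is none; rather, it is an outline of a possible attack on an open problem.

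That said, your strategy is coherent and tracks the heuristics the authors themselves use elsewhere (notably in the proof of Lemma~\ref{Lem:k_natural_adm_conjecture} for exceptional types). You have also correctly identified the two genuine gaps. The first is the geometric exhaustiveness claim: Lemmas~\ref{Lem:choice_of_partitions-C} and~\ref{Lem:choice_of_partitions-BD} classify only those $\bs\mu$ for which row/column removal reduces to the zero orbit, and the paper explicitly states (just after Conjecture~\ref{Conj:k_natural_adm_conjecture}) that the authors only \emph{feel} these exhaust all collapsing nilpotent Slodowy slices---they do not prove it. Your proposed use of Kraft--Procesi singularity types and Hasse-diagram comparisons is plausible but would require a systematic treatment that goes well beyond what is in the literature for classical types. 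The second gap is the reliance on Conjecture~\ref{Conj:isom} for $f$ not admitting an even good grading; this is genuinely open, and without it the identification $X_{\W_k(\g,f)}=\Slo_{\O_k,f}$ is not available in all cases. A further subtlety you should note: even among the partitions listed in the two lemmas, several lead in Theorems~\ref{Th:main_sp_n-2} and~\ref{Th:main_so_n-2} not to collapsing levels but to finite extensions, so the exhaustiveness statement must also rule out collapsing at those $(f,k)$, which the paper does only ``for generic $q$''. In short, your plan is a reasonable roadmap, but the obstacles you flag are precisely why this remains a conjecture.
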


\section{Collapsing levels in the exceptional types}
\label{sec:exceptional}

In this section we state our main results and conjectures 
concerning collapsing levels in the exceptional types. As in the preceding sections, our proofs  
follow the strategy described in Section~\ref{sec:strategy};  
considering pairs $(\O_k,G.f)$ such that $\Slo_{\O_k,f}$ is collapsing. 
Data on nilpotent orbits, $\mathfrak{sl}_2$-triples and centralisers, which we will use throughout this section, is recorded in Tables~\ref{Tab:Data-G2}, \ref{Tab:Data-F4}, \ref{Tab:Data-E6},
\ref{Tab:Data-E7-a}-\ref{Tab:Data-E7}, \ref{Tab:Data-E8-a}-\ref{Tab:Data-E8}. 

The results of this section are organised by type, the isomorphisms in type $E_6$, $E_7$, $E_8$, $G_2$ and $F_4$ presented in Theorems \ref{Th:main_E6}, \ref{Th:main_E7}, \ref{Th:main_E8}, \ref{Th:main_G2} and \ref{Th:main_F4} respectively. In place of $f$ we write the label of $G.f$ in the Bala-Carter classification, and $\g$ and $\g^\natural$ are denoted by their types. The results are summarized in Tables \ref{Tab:main_results-E6}, \ref{Tab:main_results-E7}, \ref{Tab:main_results-E8}, \ref{Tab:main_results-G2}, and \ref{Tab:main_results-F4}. We present a complete proof only for Theorem \ref{Th:main_E6}, the others being very similar.

In the tables, we indicate for each triple 
$(\O_k,G.f,\overline{\O^{\natural}}\cong \Slo_{\O_k,f})$, the values of $p/q=k+h_\g^\vee$ , 
$k_i^\natural +h_{\g_i^\natural}^\vee$, for $i=1,\ldots,s$    
(if $i>1$, we write in the first column 
$k_1^\natural +h_{\g_1^\natural}^\vee$, and in second column $k_2^\natural +h_{\g_2^\natural}^\vee$, etc.), 
the central charge $c_V$,  
the asymptotic growth $\G_V$ and the asymptotic dimension $\A_V$ 
with $V$ being either $H_{DS,f}^0(L_k(\g))$ or $L_{k^\natural}(\g^\natural)$. 
Then we write in the last column the symbol~$\checkmark$ if all invariants match, 
we write ^^ ^^ fin.~ext.'' if we expect that 
$\W_k(\g,f)$ 
is a finite 
extension of  $L_{k^\natural}(\g^\natural)$ (usually, 
this happens when all invariants for $H_{DS,f}^0(L_k(\g))$ 
match except the asymptotic dimension, 
and more details are furnished in the corresponding theorem). 
As a rule, when one of the invariants does not coincide, we write in first position the invariant 
corresponding to $H_{DS,f}^0(L_k(\g))$. 

Nilpotent orbits are given by their the Bala-Carter classification 
in the exceptional types. 
Nilpotent orbits in classical types  
are given in term of partitions. 
Note that the associated variety of $L_k(\g)$ determines the possible denominators  
$q$ of $k+h_\g^\vee$ since $k$ is admissible (see Theorem \ref{Th:admissible-orbits}). 
So the values of $q$ are always among these possible denominators. 

We also indicate in the table the isomorphism type of $\Slo_{\O_k,f}$ 
(in the third column) 
when it is known (if so, it is a product of nilpotent orbits in $\g^\natural$). 
When the isomorphism $\overline{\O^{\natural}}\cong \Slo_{\O_k,f}$ 
comes from a minimal degeneration, 
then we always obtain a minimal nilpotent orbit closure \cite{FuJutLev17}. 
Following Kraft and Procesi \cite{KraftProcesi79,KraftProcesi82}, 
we refer to the minimal nilpotent 
orbit $\O_{\text{min}}$ of a simple Lie algebra 
by the lower case letters for the ambient simple Lie algebra: 
$a_k$, $b_k$, $c_k$, $d_k$ $(k\ge 4)$, $g_2$, $f_4$, $e_6$, $e_7$, $e_8$. 
Similarly, we refer to the minimal 
special nilpotent orbit\footnote{There is 
 an order-reversing map $d$ on the set of nilpotent orbits in $\g$ that becomes
an involution when restricted to its image \cite{Spaltenstein82}. 
Orbits in the image of $d$ are called
{\em special}. There is a unique minimal special 
nilpotent orbit, which is of dimension $2 h_\g -2$. 
Note that the minimal nilpotent orbit of $\g$ has dimension 
$2 h_\g^\vee-2$.} for the ambient simple Lie algebra 
as $a_k^{sp}$, $b_k^{sp}$, $c_k^{sp}$, $d_k^{sp}$ $(k\ge 4)$, 
$g_2^{sp}$, $f_4^{sp}$, $e_6^{sp}$, $e_7^{sp}$, $e_8^{sp}$. 
The nilpotent cone of a Lie algebra of type $X$ 
will denoted by $\mc{N}_X$.  
In Table \ref{Tab:main_results-F4}, the letter $m$ refers to a non-normal type  
of singularity which is neither a simple surface singularity 
nor a minimal singularity  (see \cite[\S1.8.4]{FuJutLev17}).
As for the singularity  $a_2^+$ appearing also in Table \ref{Tab:main_results-F4} 
it refers to the singularity $a_2$ together 
with the action of a
subgroup $K \subset {\rm Aut}(\sl_3)$ which lifts a 
subgroup of the Dynkin diagram of $\sl_3=A_2$ (see \cite[\S1.8.2]{FuJutLev17}).

When the isomorphism type of $\Slo_{\O_k,f}$ is not known (to the best of our knowledge) 
we write ^^ ^^ unknown''. 

\begin{Th}
\label{Th:main_E6}
The following isomorphisms hold, providing 
collapsing levels for $\g=E_6$. 
{\footnotesize
\begin{align*}
& \W_{-12+12/13}(E_6,E_6) \cong \C, && 
\W_{-12+13/12}(E_6,E_6) \cong \C, & \\
& \W_{-12+13/9}(E_6,E_6(a_1)) \cong \C,  &&
\W_{-12+13/6}(E_6,E_6(a_3)) \cong \C, &\\ 
& \W_{-12+13/6}(E_6,A_5) \cong L_{-2+2/3}(A_1), && 
 \W_{-12+12/7}(E_6,D_4) \cong L_{-3+3/7}(A_2), &\\ 
& \W_{-12+13/6}(E_6,D_4) \cong L_{-2+4/3}(A_2), &&
\W_{-12+12/5}(E_6,A_4) \cong L_{-2+2/5}(A_1), &\\ 
& \W_{-12+13/3}(E_6,2A_2+A_1) \cong \C, && 
\W_{-12+13/3}(E_6,2A_2)\cong L_{-4+7/3}(G_2), &\\
& \W_{-12+13/2}(E_6,3 A_1) \cong L_1(A_2). &&  &
\end{align*}}
Moreover, the following inclusions are finite extensions:
 {\footnotesize
\begin{align*}
& L_{-2+3/14}(A_1) \longhookrightarrow \W_{-12+12/7}(E_6,A_5),&& 
&L_{-3+4/3}(A_2)\otimes 
L_{-3+4/3}(A_2)  
\longhookrightarrow \W_{-12+13/3}(E_6,A_2), & \\ 
& L_{-6+7/2}(A_5) \longhookrightarrow \W_{-12+13/2}(E_6,A_1).&&
\end{align*}
}
\end{Th}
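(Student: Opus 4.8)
The plan is to follow the general strategy laid out in Section~\ref{sec:strategy} and invoke Proposition~\ref{Pro:asymptotics-and-collapsing} (equivalently Theorem~\ref{Th:main}) for each of the twelve asserted collapsing isomorphisms, and Theorem~\ref{Th:finite_extensions} together with an asymptotic-dimension count for each of the three finite extensions. For a given pair $(\O_k,G.f)$ in type $E_6$, the data in Table~\ref{Tab:Data-E6} fixes $\g^\natural$ and the functions $k_i^\natural(k)$; combined with the requirement $k_0^\natural=0$ (Lemma~\ref{Lem:k_0-condition}) whenever $\g_0^\natural\neq 0$, and with the central-charge equation \eqref{eq:central_charge}, one pins down the candidate admissible levels $k$ and checks that $k^\natural$ is admissible (Lemma~\ref{Lem:k_natural_adm_conjecture}). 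Then the decisive step is to verify
\[
\G_{H_{DS,f}^0(L_k(\g))}=\G_{L_{k^\natural}(\g^\natural)},\qquad
\A_{H_{DS,f}^0(L_k(\g))}=\A_{L_{k^\natural}(\g^\natural)},
\]
using the explicit formulas of Proposition~\ref{Pro:asymptotic_data_H_DS} for the left-hand sides and Corollary~\ref{Co:asymptotic_data_L} for the right-hand sides. Since every $f$ occurring here is known to admit an even good grading, Theorem~\ref{Th:simplicity} gives $\W_k(\g,f)\cong H_{DS,f}^0(L_k(\g))$, so Proposition~\ref{Pro:asymptotics-and-collapsing} then yields the stated isomorphism; in the cases where $\g^\natural=0$ (i.e.\ $\W_k(\g,f)\cong\C$) one only needs $\G=0$ and $\A=1$.

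First I would tabulate, for each of the fifteen rows, the orbit dimensions $\dim G.f$ and $\dim\O_k$ (hence $\dim\Slo_{\O_k,f}=\dim\O_k-\dim G.f$), the Bala--Carter label and the semisimple type of $\g^\natural$, reading these from Table~\ref{Tab:Data-E6}. Next, for each row I would solve \eqref{eq:central_charge} for $p$ (using $c_{H_{DS,f}^0(L_k(\g))}$ from \eqref{eq:W.alg.c.formula} with the relevant even good grading, and $c_{L_{k^\natural}(\g^\natural)}$ from \eqref{eq:cLk.formula}), confirming that the solution matches the $p/q$ listed, e.g.\ $p/q=12/13$, $13/12$, $13/9$, $13/6$, $12/7$, $12/5$, $13/3$, $13/2$; the admissible denominators $q$ are constrained by Theorem~\ref{Th:admissible-orbits} via the requirement $\O_k$ be as specified. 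Then I would compute the asymptotic growths: $\G_{H_{DS,f}^0(L_k(\g))}=\dim\g^f-\tfrac{h_\g^\vee\dim\g}{pq}$ (Proposition~\ref{Pro:asymptotic_data_H_DS}, with $\dim E_6=78$, $h^\vee=12$) and $\G_{L_{k^\natural}(\g^\natural)}=\sum_i(1-h_{\g_i^\natural}^\vee/(p_iq_i))\dim\g_i^\natural$ (Corollary~\ref{Co:asymptotic_data_L}), and check equality. Finally I would compute the asymptotic dimensions: the right-hand side is a product of the closed forms in Corollary~\ref{Co:asymptotic_data_L}, which for the small factors $A_1$, $A_2$, $G_2$, $A_5$ arising here can be evaluated using the product identities \eqref{eq:sin_formula}--\eqref{eq:sin_formula2} and Lemma~\ref{Lem:main_identities}; the left-hand side is the product formula of Proposition~\ref{Pro:asymptotic_data_H_DS}, where the factors $\prod_{\alpha\in\Delta_+}2\sin\frac{\pi(\lam_o+\rho|\alpha)}{p}$ and $\prod_{\alpha\in\Delta_+\setminus\Delta^0_{\Gamma,+}}2\sin\frac{\pi(x^0_\Gamma|\alpha)}{q}$ need the root-system data of $E_6$ evaluated at the weights $\lam_o$ \eqref{eq:def-lam0} and at $x^0_\Gamma$, with $|\Delta^{1/2}_\Gamma|$ and $|\Delta^0_{\Gamma,+}|$ read from the grading.

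For the three finite extensions ($E_6\to A_5$ at $12/7$, $E_6\to A_2\oplus A_2$ at $13/3$, $E_6\to A_5$ at $13/2$) the plan is: establish $\G_{H_{DS,f}^0(L_k(\g))}=\G_{L_{k^\natural}(\g^\natural)}$ and the conformality hypotheses of Proposition~\ref{Pro:conformal} (these hold by \eqref{eq:k_natural} and the structure of the Kac--Wakimoto embedding), so that by Theorem~\ref{Th:finite_extensions} $H_{DS,f}^0(L_k(\g))$ decomposes as a finite direct sum of admissible $L_{k^\natural}(\g^\natural)$-modules $\bigoplus_\lam L_{k^\natural}(\g^\natural;\lam)^{\oplus m_\lam}$ with $m_0=1$; then compute $\A_{H_{DS,f}^0(L_k(\g))}$ and compare with $\sum_\lam m_\lam\A_{L_{k^\natural}(\g^\natural;\lam)}$, using $\on{qdim}$ computations (as in the $\sp_n$ proof of Theorem~\ref{Th:main_sp_n-2}(1)(b)) to identify that the only weight besides $0$ with half-integral conformal dimension and the requisite quantum dimension is a fundamental weight $\varpi_i$, forcing the stated two-summand decomposition. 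The main obstacle will be the last step in the finite-extension cases: controlling \emph{which} admissible modules of $L_{k^\natural}(\g^\natural)$ can appear and with what multiplicity, which requires a careful analysis of conformal dimensions $h_\lam$ via \eqref{eq:conformal-dim-of-W-module} over the admissible weights in $\on{Adm}^{k^\natural}$ and a matching of quantum dimensions; the bare asymptotic-datum comparison only constrains the \emph{sum} $\sum_\lam m_\lam\A_{L(\lam)}$, so some extra input (the $\tfrac12\Z$-grading of the $W$-algebra, the bound $m_0\le 1$, and explicit $\on{qdim}$ values) is needed to rigidify the answer. The purely numerical verifications (central charge, growth, dimension) for the twelve plain collapses are routine but voluminous; I would present one representative case, say $\W_{-12+12/7}(E_6,D_4)\cong L_{-3+3/7}(A_2)$, in full and relegate the remainder to the same pattern, exactly as the paper does for the classical families.
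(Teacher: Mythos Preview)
Your plan is essentially the paper's own approach: detect candidate levels via Table~\ref{Tab:Data-E6} and the central-charge equation, verify the asymptotic growth and dimension identities from Proposition~\ref{Pro:asymptotic_data_H_DS} and Corollary~\ref{Co:asymptotic_data_L}, and invoke Proposition~\ref{Pro:asymptotics-and-collapsing} for the collapses and Theorem~\ref{Th:finite_extensions} plus a conformal-dimension\,/\,$\on{qdim}$ analysis for the finite extensions. The paper likewise works out two or three representative cases in detail (the $A_5$ case at $q=6,7$ and the $A_1$ case at $q=2$) and declares the rest analogous, exactly as you propose.

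One correction: your assertion that ``every $f$ occurring here is known to admit an even good grading'' is not true---per Table~\ref{Tab:Data-E6} the orbits $A_5$, $2A_2+A_1$, $2A_2$, $3A_1$, $A_1$ have odd Dynkin grading, and outside type~$A$ there is no guarantee of an alternative even good grading. Fortunately this assertion is also unnecessary. For the collapsing isomorphisms, Proposition~\ref{Pro:asymptotics-and-collapsing} already delivers $\W_k(\g,f)\cong L_{k^\natural}(\g^\natural)$ directly (it applies Theorem~\ref{Th:main} with $V=\W_k(\g,f)$ and $\tilde V=H_{DS,f}^0(L_k(\g))$, needing only that the latter surjects onto the former). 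For the finite-extension cases, the paper does not invoke Theorem~\ref{Th:simplicity} either; instead, having decomposed $H_{DS,f}^0(L_k(\g))$ as $L_{k^\natural}(\g^\natural)\oplus L_{k^\natural}(\g^\natural;\varpi_i)$, it observes that this forces $H_{DS,f}^0(L_k(\g))$ to be simple, hence isomorphic to $\W_k(\g,f)$. You should replace the even-good-grading step by this simplicity observation.
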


\begin{proof}
We detail below only a few cases. 
The chief tool to prove the isomorphisms is Theorem~\ref{Th:main}, using data summarised in Table \ref{Tab:main_results-E6}. 
 
$\bullet$ Assume $q \ge 12$. Let $p\ge 12$ and $(p,q)=1$. Computing the central charge, we observe that $c_{\W_{-12+p/q}(E_6,E_6)}=0$ 
if and only if $(p,q)=(12,13)$ or $(p,q)=(13,12)$. 
Moreover, in both these cases $\G_{\W_{-12+p/q}(E_6,E_6)}=0$ and $\A_{\W_{-12+p/q}(E_6,E_6)}=1$. By Theorem \ref{Th:main} the first two isomorphisms follow. 

$\bullet$ Assume that $q=6$ or $7$, and pick $f \in A_5$. 
According to Table \ref{Tab:Data-E6}, we have $\g^\natural \cong \sl_2$ 
and $k^\natural = k+17/2$. Computing the central charge, we easily verify that if $k$ 
is collapsing then necessarily $(p, q) = (13, 6)$ or $(p, q) = (12, 7)$. 

$\ast$ Assume first that $(p, q) = (13, 6)$. Then $k^\natural = -2+2/3$. 
Since $k = -12+13/6$ and $k^\natural = -2+2/3$ are admissible 
for $E_6$ and $A_1$, respectively, it suffices to apply Theorem 
\ref{Th:main} to prove that $\W_{-12+13/6}(E_6,A_5)$ 
and $L_{-2+2/3}(A_1)$ are isomorphic. 

We easily check using Corollary~\ref{Co:asymptotic_data_L}  
and Proposition \ref{Pro:asymptotic_data_H_DS} 
that $L_{-2+2/3}(A_1)$ and $H^0_{DS,A_5}(L_{-12+13/6}(E_6))$ 
share the same asymptotic growth of $2$. 
Let us compare their asymptotic dimensions. 
By  Corollary~\ref{Co:asymptotic_data_L}  and Lemma \ref{Lem:main_identities} (1), 
we have 
$$\A_{L_{-2+2/3}(A_1)} = \frac{1}{3 \sqrt{3}},$$ 
while by Proposition \ref{Pro:asymptotic_data_H_DS} and Lemma \ref{Lem:main_identities} (2), 
\begin{align*} 
\A_{H^0_{DS,A_5}(L_{-12+13/6}(E_6)) }&= 
\dfrac{1} 
{2^{|\Delta^{1/2}|/2} 6^{|\Delta_+^0|} 6^{3} \sqrt{3}} 
\prod\limits_{\alpha \in \Delta_+ \setminus \Delta_+^0} 
2 \sin \dfrac{\pi (x^0  | \alpha) }{6}, &\\
\end{align*} 
with $|\Delta^{1/2}|=6$ and $|\Delta_+^0|=1$. 
Computing the cardinality of $\{\alpha \in \Delta_+ \colon (x^0  | \alpha)=j \}$ 
for $j>0$, we verify that 
$$\prod\limits_{\alpha \in \Delta_+ \setminus \Delta_+^0} 
2 \sin \dfrac{\pi (x^0  | \alpha) }{6}= 2^7 3^3,$$ 
whence $\A_{H^0_{DS,A_5}(L_{-12+13/6}(E_6)) }= \frac{1}{3 \sqrt{3}}$, 
as required. 

$\ast$ Assume now that $(p, q) = (12, 7)$. 
Here, we obtain that 
$$\G_{H^0_{DS,A_5}(L_{-12+12/7}(E_6))}= \G_{L_{-2+3/14}(A_1)}$$ 
and that 
$$\A_{H^0_{DS,A_5}(L_{-12+12/7}(E_6))}= 2\A_{L_{-2+3/14}(A_1)}.$$ 
Moreover, we can verify that the central charge of 
$H^0_{DS,A_5}(L_{-12+12/7}(E_6))$ and $L_{-2+3/14}(A_1)$ 
are both equal to $-25$. 
By Theorem~\ref{Th:main}, $\W_{-12+12/7}(E_6,A_5)$ is a finite extension of $L_{-2+3/14}(A_1)$. 

We argue similarly in all the other cases. 
\end{proof}

{\tiny 
\begin{table}
\begin{tabular}{lllllllll}
\hline
&& &&& &&& \\[-0.5em]
$\O_k$ & $G.f$ & $\Slo_{\O_k,f}$ & $\frac{p}{q}=k+h_{\g}^\vee$ & $k^\natural+h_{\g^\natural}^\vee$ & $c_V$ & $\G_V$ & $\A_V$ & comments \\
&& &&& &&& \\[-0.5em]
\hline
&& &&& &&& \\[-0.5em]
$E_6$ & $E_6$ & $\{0\}$ & $12/13, 13/12$ &  & $0$ &  $0$ & $1$& $\checkmark$ \\[1em]
$E_6(a_1)$ & $E_6(a_1)$ & $\{0\}$ & $13/9$ &  & $0$ &  $0$ & $1$& $\checkmark$\\[2em]
 $E_6(a_3)$ & $E_6(a_3)$ & $\{0\}$ & $13/6$ &  & $0$ &  $0$ & $1$& 
 $\checkmark$\\[1em]
&  $A_5$ & $a_1$ & $13/6$ & $2/3$ & $-6$ &  2 & $\dfrac{1}{3\sqrt{3}} $& 
$\checkmark$\\
&& & $12/7$ & $3/14$ & $-25$ & $\dfrac{20}{7}$ & $\dfrac{2}{28\sqrt{7}} 
\not=   \dfrac{1}{28\sqrt{7}}$ & fin.~ext.  \\[2em]
 & $D_4$ & $\mc{N}_{A_2}$ & $12/7$ & $3/7$ & $-48$ & $\dfrac{48}{7}$ & $\dfrac{1}{7^4}$ 
 & $\checkmark$  \\
&&  & $13/ 6$ & $4/3$ & $-10$ & 6 &  $\dfrac{1}{81\sqrt{3}}$& $\checkmark$  \\[1em]
 $A_4+A_1$ & $A_4$ & $a_1$ & $12/5$ & $2/5$ & $-12$ &  $\dfrac{12}{5}$ & 
$\dfrac{1}{5\sqrt{5}}$ & $\checkmark$\\[1em]
$D_4(a_1)$ & $2 A_2$ & $g_2^{sp}$ & $12/4$ &    & $-42$ &   & 
 & $k=-9$, $k^\natural=-3$\\
&& &&& &&& not admissible \\[1em]
$2 A_2+A_1$ & $2A_2+A_1$ & $\{0\} \subset A_1$ & $13/3$ & $2/1$ & $0$ &  $0$ & 
$1$ & $\checkmark$ \\[1em]
 & $2A_2$ & $g_2$ & $13/3$ & $7/3$ & $-10$ &  $6$ & 
$\dfrac{1}{27\sqrt{3}}$ & $\checkmark$ 
\\[1em]
 & $A_2$ & 
unknown in $A_2\times A_2$ & 
$13/3$ & 
$4/3$ & 
$-20$ &  
$12$ & 
$3 \times \left(\dfrac{1}{81\sqrt{3}}\right)^2 \not=\left(\dfrac{1}{81\sqrt{3}}\right)^2$ 
&  fin.~ext.
\\[1em]
$3 A_1$ & $3 A_1$ & $\{0\} \subset A_1\times A_2$ & $13/2$ & $4/1$ & $2$ & $2$  & $\dfrac{1}{\sqrt{3}}$ 
 & $\checkmark$ \\[1em]
 & $2A_1$ & $b_3$ & $12/2$ &    & $- 14$ &   & 
 & $k=-6$, $k^\natural=-2$\\
 && &&& &&& not admissible \\[1em]
 & $A_1$ & {unknown in $A_5$} & $13/2$ & $7/2$ & $- 25$ & $20$  & $\dfrac{2}{2^{17}\sqrt{3}} 
 \not=\dfrac{1}{2^{17}\sqrt{3}}$
 & fin.~ext. \\[1em]
\hline
\end{tabular}\\[1em]
\caption{\footnotesize{Main asymptotic data in type $E_6$}}
\label{Tab:main_results-E6}
\end{table}
}
\begin{Rem}
\label{Rem:birational_equivalent_E6} 
\begin{enumerate}
\item The associated 
variety of 
$H_{DS,A_1}^0(L_{-12+13/2}(E_6))$ 
and that 
of $L_{-6+7/2}(A_5)$ have the same dimension, but they are not isomorphic. 
Indeed the former is the nilpotent Slodowy 
slice $\Slo_{3 A_1,A_1}$ in $E_6$   
while the latter is the closure of the nilpotent orbit of $A_5=\sl_6$ attached to the partition 
$(2^3)$. 
These two varieties are not isomorphic 
since the number of nilpotent $G^\natural$-orbits in $\overline{\O}_{(2^3)}$ is $4$, while 
if $\Slo_{3 A_1,A_1}$ had a dense $G^\natural$-orbit then the number of $G^\natural$-orbits in $\Slo_{3 A_1,A_1}$ would be $3$, as we can see from the Hasse diagram of $E_6$. 
\item Similarly, the nilpotent Slodowy slice 
$\Slo_{2A_2+A_1,A_2}$ (associated variety of  
$H_{DS,A_2}^0(L_{-12+13/3}(E_6))$ 
is not isomorphic to the product $\mc{N}_{A_2}\times \mc{N}_{A_2}$  
(associated variety of $L_{-3+4/3}(A_2)\otimes 
L_{-3+4/3}(A_2)$)
and these two varieties have the same dimension. 
\end{enumerate}
\end{Rem}

\begin{Pro}
\label{Pro:fin_dec-E6}
The following decompositions hold:
 {\footnotesize
\begin{align*}  
& \W_{-12+13/2}(E_6,A_1) 
\cong L_{-6+7/2}(A_5) \oplus L_{-6+7/2}(A_5;\varpi_3), &\\
& \W_{-12+13/3}(E_6,A_2) \cong \left( L_{-3+4/3}(A_2)\otimes 
L_{-3+4/3}(A_2)\right) & \\
&\hspace{1cm}\oplus \left(L_{-3+4/3}(A_3;\varpi_1) 
\otimes L_{-3+4/3}(A_3;\varpi_1) \right)
\oplus \left(L_{-3+4/3}(A_3;\varpi_2) 
\otimes L_{-3+4/3}(A_3;\varpi_2) \right). &\\
\end{align*}
}
\end{Pro}

\begin{proof}
$\bullet$ Assume first that $(p, q)=(13, 2)$, and let $f \in A_1$. 
In this case we have 
\begin{align*}
& \G_{H^0_{DS,A_1}(L_{-12+13/2}(E_6))} = \G_{L_{-6+7/2}(A_5)}, 
\quad \A_{H^0_{DS,A_1}(L_{-12+13/2}(E_6))} = 2\A_{L_{-6+7/2}(A_5)}.&
\end{align*}
Moreover, we verify that the central charges of 
$H^0_{DS,A_1}(L_{-12+13/2}(E_6))$ and $L_{-6+7/2}(A_5)$ 
are both equal to $-25$. 
Arguing %as in the previous case, we obtain
as in the proof of Theorem \ref{Th:main_sp_n-2} (1)~(b), 
we obtain that 
$$%H_{DS,A_1}^0(L_{-12+13/2}(E_6)) 
\W_{-12+13/2}(E_6,A_1) = \bigoplus_{\Delta_\lam 
\in \frac{1}{2} \Z} L_{-6+7/2}(A_5;\lam),$$
where $\Delta_\lam$ is the lowest $L_0$-eigenvalue  
of $ L_{-6+7/2}(A_1;\lam)$. 
We require
$$\Delta_\lam= \dfrac{ ( \lam| \lam+2 \rho )}{2(k^\natural + 6)}= \dfrac{( \lam | \lam+2 \rho)}{7} 
\in \frac{1}{2} \Z$$
where $\lam = \sum\limits_{i=1}^5 \lam_i \varpi_i$, {assuming that all $\lam _i \in \Z_{\ge 0}$}, with $1 = p - h_{A_5}^\vee \ge \sum\limits_{i=1}^5 \lam_i \ge 0$. 
The only possibilities are $\lam =0$ or $\lam =\varpi_3$. 

On the other hand, by Proposition \ref{Pro:asymptotic_data_H_DS}, 
we easily see that 
$$\A_{L_{-6+7/2}(A_5; \varpi_3)} = \A_{L_{-6+7/2}(A_5)}.$$
As a result, 
$$m_0  \A_{L_{-6+7/2}(A_5; 0)}+ m_1 \A_{L_{-6+7/2}(A_5; \varpi_3)} = 
2 \A_{L_{-6+7/2}(A_5; 0)}.$$ 
But $m_0$ must be at most $1$. 
We conclude that either 
$\W_{-12+13/2}(E_6,A_1)  \cong 
L_{-6+7/2}(A_5) \oplus L_{-6+7/2}(A_5; \varpi_3)$, 
or that $-12+13/2$ is collapsing. 
But this is impossible by Proposition \ref{Pro:fin_ext-assoc} 
and Remark \ref{Rem:birational_equivalent_E6}. 

$\bullet$ Assume now $(p,q)=(13,3)$ and $f \in A_2$. 
We have $\W_{-12+13/3}(E_6,A_2) \cong H_{DS,f}^{0}(L_{-12+13/3}(E_6))$ since $f$ is even. 
Arguing as in the previous case, we obtain 
here that the only possible weights are $\lam =0$, $\lam =\varpi_1$ or $\lam =\varpi_2$. 
Since $H_{DS,f}^{0}(L_{-12+13/3}(E_6)) \cong \W_{-12+13/3}(E_6,A_2)$ 
is a finite direct sum of admissible $L_{-3+4/3}(A_2) 
\otimes L_{-3+4/3}(A_2)$-modules, 
we obtain here from the equalities 
\begin{align*}
 m_0  \A_{L_{-3+4/3}(A_2;0) \otimes 
L_{-3+4/3}(A_2;0)}+ m_1 \A_{L_{-3+4/3}(A_2;\varpi_1) 
\otimes L_{-3+4/3}(A_2;\varpi_1)} && \\ 
+  m_2 \A_{L_{-3+4/3}(A_2;\varpi_2) 
\otimes L_{-3+4/3}(A_3;\varpi_2)}
= 
3 \A_{L_{-3+4/3}(A_2) 
\otimes L_{-3+4/3}(A_2)},&
\end{align*}
and $m_0=1$ the expected decomposition. 
Indeed, by Proposition \ref{Pro:asymptotic_data_H_DS}, 
$$\A_{L_{-3+4/3}(A_3;\varpi_1)}=\A_{L_{-3+4/3}(A_2;\varpi_2)} =\A_{L_{-3+4/3}(A_2)}.$$  
\end{proof} 
For $(p,q)=(12,7)$ and $f \in A_5$, similar 
arguments as before Conjecture \ref{Conj:dec_finite_extensions-sp_n} 
lead to the following conjecture.  
\begin{Conj}
\label{Conj:dec_finite_extensions-E6}
$$\W_{-12+12/7}(E_6,A_5)
\cong L_{-2+3/14}(A_1) \oplus L_{-2+3/14}(A_1;\varpi_1).$$
\end{Conj}

We conjecture the following isomorphisms at non admissible level.
\begin{Conj}
\label{Conj:main_E6}
\begin{align*}
& \W_{-9}(E_6,2A_2)\cong L_{-3}(G_2), \quad 
 \W_{-6}(E_6,2 A_1) \cong L_{-2}(B_3).  & 
\end{align*}
\end{Conj}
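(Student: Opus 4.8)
The plan is to follow the strategy of Section~\ref{sec:strategy} as far as it will go, but since the levels $k=-9$ and $k=-6$ for $\g=E_6$ are non-admissible, the central tool Theorem~\ref{Th:main} is unavailable: neither the asymptotic formulas of Corollary~\ref{Co:asymptotic_data_L} and Proposition~\ref{Pro:asymptotic_data_H_DS} nor the finite-extension result Theorem~\ref{Th:finite_extensions} apply, so the argument must be supplemented by input specific to these two vertex algebras. First I would pin down the associated varieties on the $E_6$ side. Since Theorem~\ref{Th:admissible-orbits} does not apply, I would establish $X_{L_{-9}(E_6)} = \overline{D_4(a_1)}$ and $X_{L_{-6}(E_6)} = \overline{3A_1}$ independently, for instance by locating a singular vector in $V^k(E_6)$ whose image generates the maximal submodule and computing the resulting ideal in $R_{V^k(E_6)} = \C[\g^*]$, or by invoking known results on these specific non-admissible affine vertex algebras. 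Granting this, Theorem~\ref{Th:Associated_variety-DS}(3) gives $X_{H_{DS,f}^0(L_k(\g))} = \Slo_{\O_k,f}$, and because $2A_2 \subset \overline{D_4(a_1)}$ and $2A_1 \subset \overline{3A_1}$ the reductions are nonzero.

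Next I would match these with the affine side. For the second isomorphism, $X_{L_{-2}(\so_7)} = \overline{\O_{short}}$ by \cite[Theorem~7.1]{AraMor16b} (recalled in Remark~\ref{Rem:non_adm}), and this short-root orbit is the minimal orbit $b_3$ of $B_3$; on the other hand $\Slo_{3A_1,2A_1} \cong b_3$ by the row/column removal rule together with \cite{FuJutLev17}, so the two associated varieties coincide. For the first isomorphism I would identify $X_{L_{-3}(G_2)}$ with the minimal special orbit closure $g_2^{sp} = \overline{G_2(a_1)}$ and show $\Slo_{D_4(a_1),2A_2} \cong g_2^{sp}$, again via \cite{FuJutLev17}. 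In both cases the central charges already agree: $c_{L_{-3}(G_2)} = -3\cdot 14/1 = -42$ and $c_{L_{-2}(\so_7)} = -2\cdot 21/3 = -14$, matching the values of $c_{H_{DS,f}^0(L_k(\g))}$ recorded in Table~\ref{Tab:main_results-E6}.

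The isomorphism itself would be produced from the canonical embedding $\iota\colon V^{k^\natural}(\g^\natural) \hookrightarrow \W^k(\g,f)$ of \cite{KacWak03}, composed with the projection $\pi\colon \W^k(\g,f) \twoheadrightarrow \W_k(\g,f)$; the content is to prove that the image $\mc{V}_k(\g^\natural)$ equals all of $\W_k(\g,f)$ and factors through $L_{k^\natural}(\g^\natural)$. In place of the asymptotic comparison of Proposition~\ref{Pro:asymptotics-and-collapsing} I would pass to Zhu algebras: by Theorem~\ref{Th:Zhu-of-W} one has $\on{Zhu}(H_{DS,f}^0(L_k(\g))) \cong H_{DS,f}^0(U(\g)/I_k)$ with $\on{Var}(I_k) = X_{L_k(\g)}$, and the aim is to show that $I_k$ is the primitive ideal whose finite Drinfeld--Sokolov reduction is precisely the maximal ideal of $U(\g^\natural)$ cutting out $X_{L_{k^\natural}(\g^\natural)}$. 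Together with the simplicity of $\W_k(\g,f)$ and the matching central charge, this would force $\W_k(\g,f) \cong L_{k^\natural}(\g^\natural)$.

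The main obstacle will be this last step: upgrading the coincidence of associated varieties and central charges to a genuine vertex-algebra isomorphism without any modularity input. The difficulty splits into two parts: (i) determining the exact two-sided ideal $I_k \subset U(\g)$ at the non-admissible integer levels $k=-9,-6$, for which no analogue of Theorem~\ref{The:Zhu-of-admissible} is available and even the irreducibility of $X_{L_k(\g)}$ must be argued directly; and (ii) proving surjectivity of $\mc{V}_k(\g^\natural) \to \W_k(\g,f)$, i.e.\ that the strong generators of $\W_k(\g,f)$ lying outside $\g^\natural$ vanish in the simple quotient. A fallback, feasible in principle though computationally heavy, would be a direct analysis of singular vectors and operator product expansions in $\W^{-9}(E_6,2A_2)$ and $\W^{-6}(E_6,2A_1)$ exhibiting the collapse explicitly.
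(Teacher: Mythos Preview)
The statement you are attempting to prove is Conjecture~\ref{Conj:main_E6}, not a theorem: the paper offers no proof. It is presented precisely because the levels $k=-9$ and $k=-6$ are non-admissible, so the paper's entire machinery (Theorems~\ref{Th:admissible-orbits}, \ref{Th:finite_extensions}, \ref{Th:main}, Proposition~\ref{Pro:asymptotics-and-collapsing}) is inapplicable. The only evidence the paper records is the coincidence of central charges ($-42$ and $-14$) and the identification of the relevant nilpotent Slodowy slices $\Slo_{D_4(a_1),2A_2}\cong g_2^{sp}$ and $\Slo_{3A_1,2A_1}\cong b_3$ in Table~\ref{Tab:main_results-E6}, flagged ``not admissible''.

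Your proposal is an honest research outline rather than a proof, and you yourself flag this. The genuine gaps are exactly the ones you name: (i) the associated varieties $X_{L_{-9}(E_6)}$ and $X_{L_{-6}(E_6)}$ are not known --- your suggestion to locate a singular vector and compute the resulting ideal is in principle sound but is an open computation, not an argument; (ii) even granting (i), there is no mechanism in the paper (or in your outline) to promote equality of associated varieties and central charges to a vertex-algebra isomorphism at non-admissible level. Your Zhu-algebra route requires knowing the two-sided ideal $I_k$ exactly, which is the same unresolved input as (i), and the surjectivity of $\mc V_k(\g^\natural)\to\W_k(\g,f)$ is a separate open question. In short, your plan matches the paper's own view of the situation: these isomorphisms are supported by the available invariants but remain conjectural.
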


\begin{Th}
\label{Th:main_E7}
The following isomorphisms hold, 
providing collapsing levels for $\g=E_7$. 
{\footnotesize \begin{align*}
& \W_{-18+18/19}(E_7,E_7) \cong \C, && \W_{-18+19/18}(E_7,E_7) \cong \C, & \\
& \W_{-18+19/14}(E_7,E_7(a_1)) \cong \C, && \W_{-18+18/13}(E_7,E_6) \cong L_{-2+2/13}(A_1), & \\
& \W_{-18+19/12}(E_7,E_6) \cong L_{-2+3/4}(A_1), && \W_{-18+19/10}(E_7,D_6) \cong L_{-2+2/5}(A_1),&\\ 
& \W_{-18+18/7}(E_7,A_6) \cong \C, &&  \W_{-18+19/7}(E_7,A_6) \cong L_{1}(A_1), & \\
& \W_{-18+18/7}(E_7,(A_5)'') \cong L_{-4+4/7}(G_2), && 
 \W_{-18+18/7}(E_7,D_4) \cong L_{-4+4/7}(C_3),  \\
&  \W_{-18+19/6}(E_7,E_7(a_5)) \cong \C, && 
  \W_{-18+19/6}(E_7,E_6(a_3)) \cong L_{-2+3/2}(A_1),& \\
& \W_{-18+19/6}(E_7,D_6(a_2)) \cong L_{-2+2/3}(A_1), &&  
 \W_{-18+19/6}(E_7,(A_5)') \cong L_{-2+2/3}(A_1)\otimes L_{-2+3/2}(A_1), &\\
&\W_{-18+19/6}(E_7,(A_5)'') \cong L_{-4+7/6}(G_2), && 
 \W_{-18+19/6}(E_7,D_4) \cong L_{-4+7/6}(C_3), & \\
 & \W_{-18+19/5}(E_7,A_4+A_2) \cong L_{3}(A_1),  &&
 \W_{-18+18/5}(E_7,A_4) \cong L_{-3+3/5}(A_2), & \\
 & \W_{-18+19/4}(E_7,A_3+A_2+A_1) \cong L_{2}(A_1), && 
  \W_{-18+19/4}(E_7,D_4(a_1)) \cong L_{-2+3/4}(A_1)^{\otimes 3}, &\\
 &  \W_{-18+19/3}(E_7,2A_2+A_1) \cong L_{1}(A_1), && 
  \W_{-18+19/3}(E_7,A_2+3A_1) \cong L_{- 4+8/3}(G_2), & \\
& \W_{-18+19/3}(E_7,2 A_2) \cong L_{1}(A_1)\otimes L_{- 4+7/3}(G_2), &&
 \W_{-18+19/2}(E_7,4 A_1) \cong \C, &\\ 
 & \W_{-18+19/2}(E_7,(3 A_1)') \cong L_{-4+7/2}(C_3), && 
 \W_{-18+19/2}(E_7,(3 A_1)'') \cong L_{-9+13/2}(F_4). &  
\end{align*}}
Moreover, the following inclusions are finite extensions: 
{\footnotesize \begin{align*}
& L_{-2+3/22}(A_1) \longhookrightarrow \W_{-18+18/11}(E_7,D_6), & 
& L_{-2+3/8}(A_1)\otimes L_{-2+3/4}(A_1) \longhookrightarrow \W_{-18+19/8}(E_7,D_5),& \\
& L_{-2+3/4}(A_1) \otimes L_{-5+7/4}(B_3)\longhookrightarrow \W_{-18+19/4}(E_7,A_3) ,&
& L_{-6+7/3}(A_5) \longhookrightarrow \W_{-18+19/3}(E_7,A_2), & \\
& L_{-2+3/2}(A_1) \otimes L_{-7+9/2}(B_4) \longhookrightarrow \W_{-18+19/2}(E_7,2 A_1), &
& L_{-10+11/2}(D_6) \longhookrightarrow \W_{-18+19/2}(E_7,A_1). &
\end{align*}}
\end{Th}

\begin{Rem}
\label{Rem:birational_equivalent_E7}
In the comments below, we argue as in Remark \ref{Rem:birational_equivalent_E6} 
to conclude that the varieties are not isomorphic with the same dimension. 
\begin{enumerate}
\item The associated 
variety of $H_{DS,A_3}^0(L_{-18+19/4}(E_7))$ is not isomorphic to the associated variety 
of $L_{-2+3/4}(A_1) \otimes L_{-5+7/4}(B_3) $. 
The former is the nilpotent Slodowy 
slice $\Slo_{A_3+A_2+A_1,A_3}$ in $E_7$   
while the later is the closure of the nilpotent orbit of $a_1 \times \O_{1,(3^2,1)} \subset A_1 \times B_3$. 
\item The associated 
variety of $H_{DS,A_2}^0(L_{-18+19/3}(E_7))$ is not isomorphic to the associated variety 
of $L_{-6+7/3}(A_5)$. 
The former is the nilpotent Slodowy 
slice $\Slo_{2 A_2+A_1,A_2}$ in $E_7$   
while the later is the closure of the nilpotent orbit of $A_5=\sl_6$ attached to the partition 
$(3^2)$.  
\item The associated 
variety of $H_{DS,2 A_1}^0(L_{-18+19/2}(E_7))$ is not isomorphic to the associated variety 
of $L_{-2+3/2}(A_1) \otimes L_{-7+9/2}(B_4)$. 
The former is the nilpotent Slodowy 
slice $\Slo_{4 A_1,2 A_1}$ in $E_7$   
while the later is the closure of the nilpotent orbit of $a_1 \times \O_{1,(2^4,1)} \subset A_1 \times B_4$. 
Here to conclude that varieties are not isomorphic one needs to use the singularities 
of $\Slo_{4 A_1,2 A_1}$ as described in \cite{FuJutLev17}. 
\item The associated 
variety of $H_{DS,A_1}^0(L_{-18+19/2}(E_7))$ is not isomorphic to the associated variety 
of $L_{-10+11/2}(D_6)$. 
The former is the nilpotent Slodowy 
slice $\Slo_{4 A_1,A_1}$ in $E_7$   
while the later is the closure of the nilpotent orbit of $D_6=\so_{12}$ attached to the partition 
$(3,2^4,1)$. 
\end{enumerate}
\end{Rem}

{\tiny 
\begin{table}
\begin{tabular}{lllllllll}
\hline
&& &&& &&& \\[-0.5em]
$\O_k$ & $G.f$ & $\Slo_{\O_k,f}$ & $\frac{p}{q}=k+h_{\g}^\vee$ & $k^\natural+h_{\g^\natural}^\vee$ & $c_V$ & $\G_V$ & $\A_V$ & comments \\
&& &&& &&& \\[-0.5em]
\hline
&& &&& &&& \\[-0.5em]
$E_7$ & $E_7$ & $\{0\}$ & $18/19, 19/18$ &  & $0$ & $0$ & $1$  
& $\checkmark$  \\[1em]
$E_7(a_1)$ & $E_7(a_1)$ & $\{0\}$ & $19/14$ &  & $0$ & $0$ & $1$  
& $\checkmark$ \\[1em] 
$E_7(a_2)$ & $E_6$ & $a_1$ & $18/13$ & $2/13$ & $-36$ & $\dfrac{36}{13}$ & $\dfrac{1}{13\sqrt{13}}$  
& $\checkmark$ \\
&& & $19/12$ & $3/4$ & $-5$ & $\dfrac{5}{2}$ & $\dfrac{1}{8\sqrt{2}}$  
& $\checkmark$ \\[1em]
$E_7(a_3)$ & $D_6$ & $a_1$ & $18/11$ & $3/22$ & $-41$ & $\dfrac{32}{11}$ & 
$\dfrac{2}{44\sqrt{11}}\not=
\dfrac{1}{44\sqrt{11}}$    
& fin.~ext.  \\
&& & $19/10$ & $12/5$ & $-12$ & $\dfrac{12}{5}$ & 
$\dfrac{1}{5\sqrt{5}}$    
& $\checkmark$ \\[1em]
$E_7(a_4)$ & $D_5$ & $a_1\times a_1$ & $19/8$ & $3/8, 3/4$ & $-18$ & $\dfrac{21}{4}$ & $
\dfrac{2}{256\sqrt{2}}\not=
\dfrac{1}{256\sqrt{2}}$    
& fin.~ext.   \\[1em]
$A_6$ & $A_6$ & $\{0\} \subset A_1$ & $18/7$ & $0$ & $0$ & $0$ & $1$  
& $\checkmark$ \\
&& & $19/7$ & $3/1$ & $1$ & $1$ & $\dfrac{1}{\sqrt{2}}$  
& $\checkmark$ \\[1em] 
$A_6$ & $(A_5)''$ & $\mc{N}_{G_2}$ & $18/7$ & $4/7$ & $-84$  & $12$  & $\dfrac{1}{7^7}$   
& $\checkmark$ \\[1em]
 & $D_4$ & $\mc{N}_{C_3}$ & $18/7$ & $4/7$  & $-126$ & $18$ & $\dfrac{1}{7^{10}\sqrt{7}}$    
& $\checkmark$ \\[1em]
$E_7(a_5)$ & $E_7(a_5)$ & $\{0\}$ & $19/6$ &  & $0$ & $0$ & $1$   
& $\checkmark$ \\[1em]
 & $E_6(a_3)$ & $a_1$ & $19/6$ & $3/2$ & $-1$ & $2$ & $\dfrac{1}{4}$   
& $\checkmark$ \\[1em]
 & $D_6(a_2)$ & $a_1$ & $19/6$ & $2/3$ & $-6$ & $2$ & $\dfrac{1}{3\sqrt{3}}$   
& $\checkmark$ \\[1em]
 & $(A_5)'$ & {$a_1\times a_1$} & $19/6$ & $2/3, 3/2$ & $-7$ & $4$ & $\dfrac{1}{2^23\sqrt{3}}$   
& $\checkmark$  \\[1em]
 & $(A_5)''$ & $g_2^{sp}$ & $19/6$ & $7/6$  & $-34$ & $10$ & $\dfrac{1}{2^7 3^3 \sqrt{3}}$    
& $\checkmark$ \\[1em]
& $D_4$ & $\O_{(4,2)}^{C_3}$ & $19/6$ & $7/6$  & $-51$ & $16$ & $\dfrac{1}{6^4  \sqrt{3}}$    
& $\checkmark$ \\[1em]
$A_4+A_2$ & $A_4+A_2$ & $\{0\} \subset A_1$ & $19/5$ &$5/1$ & $\dfrac{9}{5}$ & $\dfrac{9}{5}$ &  
$\sqrt{\dfrac{2}{5}} \sin \dfrac{\pi}{5}$ & $\checkmark$ \\[1em]
 & $A_4$ & $a_2$ & $18/5$ &$3/5$ & $-32$ &$\dfrac{32}{5}$ &  $\dfrac{1}{5^4}$ 
& $\checkmark$ \\[1em]
$A_3+A_2+A_1$ & $A_3+A_2+A_1$ & $\{0\} \subset A_1$ &  $19/4$ & $4/1$ & $\dfrac{3}{2}$  &  $\dfrac{3}{2}$ &   
$\dfrac{1}{2}$ &  $\checkmark$ \\[1em]
 & $D_4(a_1)$ & $\mc{N}_{A_1} \times\mc{N}_{A_1}\times \mc{N}_{A_1} $ &  $19/4$ & $3/4$ & $-15$  &  $\dfrac{15}{2}$ &   
$\dfrac{1}{2^{10}\sqrt{2}}$ & $\checkmark$ \\[1em]
 & $A_3$ & {unknown in $A_1\times B_3$} &  $19/4$ & $3/4,7/4$ & $-44$  &  $\dfrac{35}{2}$ &   
$\dfrac{2}{2^{21}\sqrt{2}}\not=\dfrac{1}{2^{22}\sqrt{2}}$ &  fin.~ext.  \\[1em]
$2 A_2+A_1$ & $2 A_2+A_1$ & $\{0\} \subset A_1\times A_1$ & $19/3$ & $3/1, 2/1$  & $1$ & $1$ &   $\dfrac{1}{\sqrt{2}}$ 
& $\checkmark$   \\[1em]
 & $A_2+3 A_1$ & $g_2$ &$19/3$ & $8/3$  & $-7$ & $7$ &   $\dfrac{1}{3^3 \sqrt{6}}$ 
& $\checkmark$ \\
&& & $18/3$ & $6/3$ & $-14$ & $\dfrac{14}{3}$ &    $\dfrac{1}{3^3\sqrt{3}}$ 
&  not adm. \\[1em]
 & $2 A_2$ & $\{0\}\times g_2\subset A_1\times G_2$ &$19/3$ & $3/1,7/3$  & $-9$ & $7$ &   $\dfrac{1}{3^3 \sqrt{6}}$ 
& $\checkmark$  \\[1em]
 & $A_2$ & {unknown in $A_5$} &$19/3$ & $7/3$  & $-55$ & $25$ & 
 $\dfrac{3}{3^{18} \sqrt{2}}\not=\dfrac{1}{3^{18} \sqrt{2}}$ 
& fin.~ext.  \\[1em]
$4 A_1$ & $4 A_1$ & $\{0\} \subset C_3$ & $19/2$ &  & $0$ & $0$ &  $1$ 
& $\checkmark$ \\[1em]
& $ (3 A_1)'$ & $c_3$ &$19/2$ & $7/2$ & $-3$ & $6$ &  $\dfrac{1}{2^4}$ 
& $\checkmark$ \\[1em]
 & $ (3 A_1)''$ & $f_4$ &$19/2$ & $13/2$ & $-20$ & $16$ &  $\dfrac{1}{2^{13}}$ 
& $\checkmark$ \\[1em]
 & $2 A_1$ & unknown in $A_1\times B_4$ & 
 $19/2$ & $3/2,9/2$ & $-21$ & $18$ &  $\dfrac{2}{2^{16}} \not= \dfrac{1}{2^{16}}$ 
& fin.~ext. \\[1em]
 & $A_1$ & {unknown in $D_6$} & $19/2$ & $11/2$ & $-54$ & $36$ &  
 $\dfrac{2}{2^{34}} 
\not= \dfrac{1}{2^{34}}$ 
& fin.~ext.   \\
&& 
&   &  &   &   &    &   \\[1em] 
\hline
\end{tabular}\\[1em]
\caption{\footnotesize{Main asymptotic data in type $E_7$}}
\label{Tab:main_results-E7}
\end{table}
}

\begin{Pro}
\label{Pro:fin_dec-E7}
The following decompositions hold:  
{\footnotesize \begin{align*}
& \W_{-18+19/8}(E_7,D_5) 
\cong (L_{-2+3/8}(A_1)\otimes L_{-2+3/4}(A_1)) 
{ \oplus (L_{-2+3/8}(A_1;\varpi_1)\otimes L_{-2+3/4}(A_1;\varpi_1))}, & \\  
& \W_{-18+19/4}(E_7,A_3) 
\cong (L_{-2+3/4}(A_1) \otimes L_{-5+7/4}(B_3)) 
\oplus (L_{-2+3/4}(A_1;\varpi_1) \otimes L_{-5+7/4}(B_3;\varpi_3)) , & \\
& \W_{-18+19/3}(E_7,A_2) 
\cong L_{-6+7/3}(A_5) 
{\oplus L_{-6+7/3}(A_5;\varpi_2) \oplus L_{-6+7/3}(A_5;\varpi_4)}, & \\
& \W_{-18+19/2}(E_7,2 A_1)
\cong (L_{-2+3/2}(A_1) \otimes L_{-7+9/2}(B_4)) 
\oplus (L_{-2+3/2}(A_1;\varpi_1) \otimes L_{-7+9/2}(B_4;\varpi_1)) , & \\
& \W_{-18+19/2}(E_7,A_1) 
\cong 
L_{-10+11/2}(D_6) 
{\oplus L_{-10+11/2}(D_6;\varpi_1) }. & 
\end{align*}}
\end{Pro}

\begin{proof}
We argue as in the proof of Proposition \ref{Pro:fin_dec-E6}. 
More precisely, for $f$ in $D_5$ or $A_2$ we can use that fact that $f$ is even. 
For the other cases, we use Proposition \ref{Pro:fin_ext-assoc} 
and Remark~\ref{Rem:birational_equivalent_E7}. 
\end{proof}

For $(p,q)=(18,11)$ and $f \in D_6$, similar 
arguments as before Conjecture \ref{Conj:dec_finite_extensions-sp_n} 
lead to the following conjecture.  
\begin{Conj}
$$\W_{-18+18/11}(E_7,D_6)
\cong L_{-2+3/22}(A_1) \oplus  {L_{-2+3/22}(A_1;\varpi_1)}.$$
\end{Conj}

Based on the coincidence of central charges and asymptotic growths and dimensions at the non admissible levels $k$ and $k^\natural$ corresponding to $k+18=p/q=18/3$ and $k^\natural+4=6/3$, we conjecture:
\begin{Conj} 
\label{Conj:main_E7}
$$\W_{-12}(E_7,A_2+3A_1) \cong L_{-2}(G_2).$$
\end{Conj}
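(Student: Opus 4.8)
The plan is to run the proof strategy of the admissible cases of Theorem~\ref{Th:main_E7} as far as it can go at the non-admissible level $k=-12$, isolating precisely where admissibility enters and replacing those steps by arguments that do not require it. First I would record the purely algebraic input, which is level-independent. By \cite{KacWak03} the Kac--Wakimoto embedding $\iota\colon V^{k^\natural}(\g^\natural)\hookrightarrow \W^k(\g,f)$ exists for every $k$; specialising to $\g=E_7$, $f\in A_2+3A_1$ and using the computation of $k_i^\natural$ underlying Table~\ref{Tab:Data-E7} one checks $\g^\natural\cong G_2$ and $k^\natural+h_{G_2}^\vee=6/3$, i.e.\ $k^\natural=-2$. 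Composing $\iota$ with the projection $\pi\colon \W^{-12}(E_7,f)\twoheadrightarrow \W_{-12}(E_7,f)$ gives the subalgebra $\mc{V}_{-12}(G_2)\subseteq \W_{-12}(E_7,f)$, and by the characterisation of collapsing levels in Section~\ref{sec:collapsing} it suffices to prove $\W_{-12}(E_7,f)\cong L_{-2}(G_2)$, equivalently that $\pi\circ\iota$ is surjective with simple image.

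Next I would pin down the associated varieties, which needs no modularity. The identity $X_{H_{DS,f}^{0}(V)}=X_V\cap\Slo_f$ of Theorem~\ref{Th:Associated_variety-DS}(3) holds for any quotient $V$ of $V^k(\g)$, so $X_{\W_{-12}(E_7,f)}=X_{L_{-12}(E_7)}\cap\Slo_f$ once the isomorphism $\W_{-12}(E_7,f)\cong H_{DS,f}^{0}(L_{-12}(E_7))$ of Conjecture~\ref{Conj:isom} is secured at this level (itself delicate, but accessible via Remark~\ref{Rem:lisse_conjecture_true} provided the reduction is first shown to be quasi-lisse). The crux of this step is to determine $X_{L_{-12}(E_7)}$; I would aim to prove it equals $\overline{\O_{2A_2+A_1}}$, the orbit closure attached to the neighbouring admissible level of denominator $3$, and likewise to identify $X_{L_{-2}(G_2)}$, which I expect to be the minimal nilpotent orbit closure $\overline{g_2}$. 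Granting these two facts, the minimal-degeneration isomorphism $\Slo_{2A_2+A_1,\,A_2+3A_1}\cong\overline{g_2}$ recorded in Table~\ref{Tab:main_results-E7} and \cite{FuJutLev17} yields the required match $X_{\W_{-12}(E_7,f)}\cong X_{L_{-2}(G_2)}$.

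The hard part will be upgrading the coincidence of central charge, associated variety and numerical invariants $(c,\G,\A)=(-14,14/3,1/(3^3\sqrt3))$ to an honest isomorphism. In the admissible cases this is supplied by Theorem~\ref{Th:main}, whose proof rests on Theorem~\ref{Th:finite_extensions} --- the decomposition of any conical self-dual extension of $V^{k}(\g^\natural)$ into admissible $L_{k^\natural}(\g^\natural)$-modules --- and this is exactly where admissibility is indispensable, entering through the classification of Theorem~\ref{Th:rationality-in-category-O} and Lemma~\ref{Lem:Arakaw-Fiebig}. I therefore expect the genuinely new ingredient to be an analogue of Theorem~\ref{Th:finite_extensions} for $V^{-2}(G_2)$: that $\W_{-12}(E_7,f)$, which is conical and self-dual for the Dynkin grading by the Proposition of Section~\ref{sec:Asymptotics_of_W_algebras}, decomposes as a finite direct sum of $L_{-2}(G_2)$-modules with the vacuum occurring exactly once.

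Since Proposition~\ref{pro:quasi_liss_asymptotic} furnishes asymptotic data for any quasi-lisse vertex operator algebra of CFT type without invoking admissibility, the conditional route I would pursue is: establish quasi-lisseness of $\W_{-12}(E_7,f)$ with $X_{\W_{-12}(E_7,f)}\cong\overline{g_2}$, deduce conformality of $\pi\circ\iota$ via Proposition~\ref{Pro:conformal}, and then conclude from the hoped-for decomposition result together with the positivity $\A_{L(\lam)}>0$ that no module besides the vacuum can appear. Producing the requisite module-category control for $L_{-2}(G_2)$ at this non-admissible level --- most plausibly through an explicit free-field or coset realisation that makes its category $\mc{O}$ and characters computable --- is the principal obstacle, and is precisely the input that the admissibility-based machinery of the paper cannot provide.
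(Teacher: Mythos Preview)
The statement you are attempting to prove is labelled \emph{Conjecture}~\ref{Conj:main_E7} in the paper, and the paper offers no proof of it. The sentence immediately preceding the conjecture says only that it is motivated by ``the coincidence of central charges and asymptotic growths and dimensions at the non admissible levels $k$ and $k^\natural$ corresponding to $k+18=p/q=18/3$ and $k^\natural+4=6/3$'', and the corresponding row of Table~\ref{Tab:main_results-E7} is flagged ``not adm.'' There is therefore no paper proof against which to compare your proposal.

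That said, your analysis is sound as a diagnosis of \emph{why} the paper cannot prove this. You have correctly located the two places where the paper's machinery breaks down: (i) the associated variety $X_{L_{-12}(E_7)}$ is not determined by Theorem~\ref{Th:admissible-orbits}, since $-12=-18+18/3$ is not admissible (the numerator $18$ fails $p\ge h^\vee_{E_7}=18$ with $(p,q)=1$), and likewise $X_{L_{-2}(G_2)}$ is unknown; and (ii) Theorem~\ref{Th:finite_extensions}, which is the engine of Theorem~\ref{Th:main} and Proposition~\ref{Pro:asymptotics-and-collapsing}, requires admissibility of $k^\natural$ to invoke the complete reducibility of Theorem~\ref{Th:rationality-in-category-O}. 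Your proposal to replace (ii) by direct module-theoretic control of $L_{-2}(G_2)$ via a free-field or coset realisation is a reasonable plan of attack, but it is a substantial open problem in its own right; you should be aware that you are not filling a small gap in an existing argument but rather proposing a programme beyond what the paper achieves. Note also that your appeal to Remark~\ref{Rem:lisse_conjecture_true} to secure $\W_{-12}(E_7,f)\cong H^0_{DS,f}(L_{-12}(E_7))$ is circular as stated: that remark assumes the hypotheses of Proposition~\ref{Pro:asymptotics-and-collapsing}, which already presuppose admissibility of both $k$ and $k^\natural$.
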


\begin{Th}
\label{Th:main_E8}
The following isomorphisms hold, 
providing collapsing levels for $\g=E_8$. 
{\footnotesize 
\begin{align*}
& \W_{-30+30/31}(E_8,E_8) \cong \C, 
&&  \W_{-30+31/30}(E_8,E_8) \cong \C, &\\ 
& \W_{-30+30/24}(E_8,E_8(a_1)) \cong \C, 
&& \W_{-30+31/20}(E_8,E_8(a_2)) \cong \C, & \\ 
& \W_{-30+31/18}(E_8,E_7) \cong L_{-2+2/9}(A_1), 
&& \W_{-30+31/15}(E_8,E_8(a_4)) \cong \C, &\\
& \W_{-30+31/12}(E_8,D_7) \cong L_{-2+2/3}(A_1), 
&& \W_{-30+31/12}(E_8,E_8(a_5)) \cong \C,  &\\
& \W_{-30+30/13}(E_8,E_6) \cong L_{-4+4/13}(G_2), 
&& \W_{-30+31/12}(E_8,E_6) \cong L_{-4+7/12}(G_2), &\\  
& \W_{-30+31/10}(E_8,E_8(a_6)) \cong \C, 
&& \W_{-30+31/10}(E_8,D_6) \cong L_{-3+3/5}(B_2), &\\
& \W_{-30+31/9}(E_8,E_6(a_1)) \cong L_{-3+4/9}(A_2), 
& & \W_{-30+31/8}(A_7,A_7) \cong \C,&\\
& \W_{-30+30/7}(E_8,A_6) \cong L_{-2+2/7}(A_1),  & 
& \W_{-30+31/6}(E_8,E_8(a_7)) \cong \C, &\\
& \W_{-30+31/6}(E_8,E_8(a_7)) \cong \C, & 
 & \W_{-30+31/6}(E_8,D_6(a_2)) \cong L_{-2+2/3}(A_1)\otimes L_{-2+2/3}(A_1), & \\
& \W_{-30+31/6}(E_8,E_6(a_3)) \cong L_{-4+7/6}(G_2),  & 
 & \W_{-30+31/6}(E_8,D_4) \cong L_{-9+13/6}(F_4),  &\\
& \W_{-30+30/7}(E_8,D_4) \cong L_{-9+9/7}(F_4), &   
& \W_{-30+31/5}(E_8,A_4+A_3) \cong \C,  & \\
& {\W_{-30+32/5}(E_8,A_4+A_3) \cong L_2(A_1)}, &
& \W_{-30+31/4}(E_8,2 A_3) \cong \C, & \\
& \W_{-30+31/4}(E_8,D_4(a_1)+A_2) \cong L_{-3+3/2}(A_2), &
& \W_{-30+31/3}(E_8,2A_2+2A_1) \cong \C, & \\
&  \W_{-30+32/3}(E_8,2A_2+2A_1) \cong L_1(B_2), &
&  \W_{-30+31/3}(E_8,2A_2) \cong L_{-4+7/3}(G_2) \otimes L_{-4+7/3}(G_2), &  \\
& \W_{-30+31/2}(E_8, 4 A_1) \cong \C, &
& \W_{-30+31/2}(E_8,3 A_1) \cong L_{-9+13/2}(F_4). &
\end{align*}}
Moreover, the following inclusions are finite extensions. 
{\footnotesize 
\begin{align*}
& L_{-2+3/38}(A_1) \longhookrightarrow \W_{-30+30/19}(E_8,E_7),&
& L_{-2+3/14}(A_1) \longhookrightarrow \W_{-30+31/14}(E_8,E_7(a_1)), & \\
& L_{-2+3/26}(A_1) \longhookrightarrow \W_{-30+30/13}(E_8,D_7),  &
& L_{-3+5/22}(B_2) \longhookrightarrow \W_{-30+30/11}(E_8,D_6), &\\
& L_{-3+7/8}(B_3) \longhookrightarrow \W_{-30+31/8}(E_8,D_5), &
& L_{-2+3/7}(A_1)\otimes L_{1}(A_1) \longhookrightarrow \W_{-30+31/7}(E_8,A_6), & \\
& L_{-6+7/4}(D_4) \longhookrightarrow \W_{-30+31/4}(E_8,D_4(a_1)), &
& L_{-9+11/4}(B_5) \longhookrightarrow \W_{-30+31/4}(E_8,A_3), & \\
& L_{-12+13/3}(E_6) \longhookrightarrow \W_{-30+31/3}(E_8,A_2), &
& L_{-11+13/2}(B_6) \longhookrightarrow \W_{-30+31/2}(E_8,2 A_1), &\\
& L_{-18+19/2}(E_7) \longhookrightarrow \W_{-30+31/2}(E_8,A_1).&&
\end{align*}}
\end{Th}

{\tiny 
\begin{table}
\begin{tabular}{lllllllll}
\hline
&& &&& &&& \\[-0.5em]
$\O_k$ & $G.f$ & $\Slo_{\O_k,f}$ & $\frac{p}{q}=k+h_{\g}^\vee$ & $k^\natural+h_{\g^\natural}^\vee$ & $c_V$ & $\G_V$ & $\A_V$ & comments \\
&&&&&& \\[-0.5em]
\hline
 && &&& &&& \\[-0.5em]
$E_8$ & $E_8$ & $\{0\}$ &30/31, 31/30&  & $0$ & $0$ & $1$ & $\checkmark$  \\
 $E_8(a_1)$ & $E_8(a_1)$ & $\{0\}$ &30/24&  & $0$ & $0$ & $1$ & $\checkmark$  \\[1em]
 $E_8(a_2)$ & $E_8(a_2)$ & $\{0\}$ &31/20&  & $0$ & $0$ & $1$ & $\checkmark$  \\[1em]
$E_8(a_3)$ & $E_7$ & $a_1$ &31/18 & $2/9$ & $-24$ & $\dfrac{8}{3}$ & $\dfrac{1}{27}$ & $\checkmark$\\
&&  & $30/19$ & $3/38$ & 
$-73$ & $\dfrac{56}{19}$ & $\dfrac{2}{76 \sqrt{19}} \not=\dfrac{1}{76 \sqrt{19}}$ &  fin.~ext. \\[1em]
$E_8(a_4)$ & $E_8(a_4)$ & $\{0\}$ &30/15&  & $0$ & $0$ & $1$ & $\checkmark$  \\[1em]
$E_8(b_4)$ & $E_7(a_1)$ & $a_1$ & $31/14$ & $3/14$ & $-25$ & 
$\dfrac{20}{7}$ & $\dfrac{2}{28\sqrt{7}}\not=\dfrac{1}{28\sqrt{7}}$ &  fin.~ext.  \\[1em]
$E_8(a_5)$ & $E_8(a_5)$ & $\{0\}$ &31/12&  & $0$ & $0$ & $1$ & $\checkmark$  \\[1em]
 & $D_7$ & $A_1$ & $30/13$ &  $3/26$ & $-49$ & $\dfrac{38}{13}$ & $\frac{2}{52\sqrt{13}}\not=
\frac{1}{52\sqrt{13}}$  & fin.~ext.   \\[1em]
  & &  & $31/12$  & $2/3$ & $-6$ & $2$ & $\frac{1}{3\sqrt{13}}$ & $\checkmark$  \\[1em]
 & $E_6$ & $\mc{N}_{G_2}$ & 
 $30/13$ & $4/13$ &$-168$ & $\dfrac{168}{13}$ &  
$\dfrac{1}{13^7}$ & $\checkmark$ \\[1em]
&& & $31/12$ & $7/12$ & $-82$ & $12$ & $\dfrac{1}{3^3 2^{14} \sqrt{3}}$ & 
$\checkmark$\\
 $E_8(a_6)$ & $E_8(a_6)$ & $\{0\}$ & $31/10$ &  & $0$ & $0$ & $1$ & $\checkmark$  \\[1em]
 & $D_6$ & $\mc{N}_{B_2}$  & $30/11$ & $5/22$ & $-122$ & $\dfrac{104}{11}$ 
& $\dfrac{2}{2^3 11^5} \not= 
 \dfrac{1}{2^3 11^5}$ &{fin.~ext.} \\[1em]
&& & $31/10$ & $3/5$ & $-40$ & $8$ & $\dfrac{1}{5^5}$ & 
$\checkmark$ \\[1em]
 $E_8(b_6)$ & $E_6(a_1)$ & $A_2$ & $31/9$ & $4/9$  & $-46$ & $\dfrac{22}{3}$ & $\dfrac{1}{9^4\sqrt{3}}$ & $\checkmark$  \\[1em]
$A_7$ & $A_7$ & $\{0\} \subset A_1$ & $31/8$ &  & $0$ & $0$ & $1$ & $\checkmark$  \\[1em]
 & $D_5$ & {unknown in $B_3$} & $31/8$ & $7/8$ & $-99$ & $18$ & 
 $\dfrac{2}{2^{28} \sqrt{2}}\not=\dfrac{1}{2^{28} \sqrt{32}} $ & fin.~ext.\\[1em]
$A_6+A_1$ & $A_6$ & $a_1$ & $30/7$ & $2/7$ & $-18$ & $\dfrac{18}{7}$ & 
$\dfrac{1}{7\sqrt{7}}$ & $\checkmark$ \\[1em]
 &  &  & $31/7$ & $3/7, 3/1$ & $-10$ & $\dfrac{26}{7}$ & 
$\dfrac{2}{14\sqrt{7}}\not=\dfrac{1}{14\sqrt{7}}$ &  fin.~ext. \\[1em]
$E_8(a_7)$ & $E_8(a_7)$ & $\{0\}$ & $31/6$ &  & $0$ & $0$ & $1$ & $\checkmark$  \\[1em]
 & $E_7(a_5)$ & $a_1$ & $31/6$ & $2/3$ & $-6$ & $2$  & $\dfrac{1}{3\sqrt{3}}$ & $\checkmark$\\[1em]
 & $D_6(a_2)$ & {$a_1\times a_1$} & $31/6$ & $2/3, 2/3$ & $-12$ & $4$  & $\dfrac{1}{3^3}$ & 
 $\checkmark$ \\[1em]
 & $E_6(a_3)$ & $g_2^{sp}$ & $31/6$ & $7/6$ & $-34$ & $10$ &$\dfrac{1}{2^7 3^3 \sqrt{3}}$ & 
$\checkmark$ \\[1em]
 & $A_5$ & $a_1\times g_2^{sp}$ & $31/6$ & $2/3,7/6$ & $-40$ & $12$ &$\dfrac{1}{2^7 3^5}$ & 
$\checkmark$ \\[1em]
 & $D_4$ & {$F_4(a_3)$} & $31/6$ & $13/6$ & $-164$ & $40$ & 
$\dfrac{1}{3^{26} 2^{13}}$ & $\checkmark$ \\[1em]
$A_6+A_1$ & $D_4$ & {$F_4(a_2)$} & $30/7$ & $9/7$ & $-312$ & $\dfrac{312}{7}$ & $\dfrac{1}{7^{26}}$ & 
$\checkmark$ \\[1em]
$A_4+A_3$ & $A_4+A_3$ & $\{0\}  \subset A_1$ & $31/5$ &  & $0$ & $0$ & $1$ & $\checkmark$  \\
&& & $32/5$ & $4/1$  & $\dfrac{3}{2}$ & $\dfrac{3}{2}$  & $\dfrac{1}{2}$ & $\checkmark$  \\[1em]
 $2 A_3$ & $2 A_3$ & $\{0\}  \subset A_1$ & $31/4$ &  & $0$ & $0$ & $1$ & $\checkmark$  \\[1em]
 & $D_4(a_1)+A_2$ & $a_2$ & $31/4$  & $3/2$ & $-8$ &4 &$\dfrac{1}{2^4}$ & $\checkmark$  \\[1em]
 & $D_4(a_1)$ & {unknown in $D_4$} & $31/4$  & $7/4$ & $-68$ & $22$ &$\dfrac{4}{2^{29}}\not=\dfrac{1}{2^{29}}$ & 
fin.~ext.  \\[1em]
 & $A_3$ & {unknown in $B_5$} & $31/4$ & $11/4$ & $-125$ & $40$ & $\dfrac{2}{2^{50}}
\not= \dfrac{1}{2^{50}}$ & fin.~ext.  \\[1em]
\hline
\end{tabular}\\[1em]
\caption{\footnotesize{Main asymptotic data in type $E_8$}}
\label{Tab:main_results-E8-a}
\end{table}
}

{\tiny 
\begin{table}
\begin{tabular}{lllllllll}
\hline
&& &&& &&& \\[-0.5em]
$\O_k$ & $G.f$ & $\Slo_{\O_k,f}$ & $\frac{p}{q}=k+h_{\g}^\vee$ & $k^\natural+h_{\g^\natural}^\vee$ & $c_V$ & $\G_V$ & $\A_V$ & comments \\
&&&&&& \\[-0.5em]
\hline
$2 A_2+2 A_1$ & $2 A_2+2 A_1$ & $\{0\}  \subset B_2$ & $31/3$ &  & $0$ & $0$ & $1$ & $\checkmark$  \\
 & & & $32/3$ & $4/1$  & $\dfrac{5}{2}$ & $\dfrac{5}{2}$  & $\dfrac{1}{2}$ & $\checkmark$  \\[1em]
 & $2 A_2$ & {$g_2 \times g_2$} & $31/3$ & $7/3, 7/3$ & $-20$ & $12$ &$\dfrac{1}{3^7}$ 
 & $\checkmark$ \\[1em]
   & $A_2$ & {unknown in $E_6$} & $31/3$ & $13/3$ & $-138$ & $54$ & $\dfrac{3}{3^{39} \sqrt{3}}
\not= \dfrac{1}{3^{39} \sqrt{3}}$  & fin.~ext.  \\[1em]
$4 A_1$ & $4 A_1$ & $\{0\}  \subset C_4$ & $31/2$ &  & $0$ & $0$ & $1$ & $\checkmark$  \\[1em]
 & $3 A_1$ & $f_4$ & $31/2$ & $13/2$ & $-20$ & $16$ & $\dfrac{1}{2^{13}}$ & 
$\checkmark$ \\[1em]
 & $2 A_1$ & {unknown in $B_6$} & $31/2$ & $13/2$ & $-54$ & $36$ & 
 $\dfrac{2}{2^{33}} \not= \dfrac{1}{2^{33}}$ &  fin.~ext.  \\[1em]
 & $A_1$ & {unknown in $E_7$} & $31/2$ & $19/2$ & $-119$ & $70$ & $\dfrac{2}{2^{67}} \not=\dfrac{1}{2^{67}}$ & 
fin.~ext.  \\[1em]
\hline
\end{tabular}\\[1em]
\caption{\footnotesize{Main asymptotic data in type $E_8$ (continued)}}
\label{Tab:main_results-E8}
\end{table}
}

\begin{Rem}
\label{Rem:birational_equivalent_E8}
In the below comments, we argue as in Remark \ref{Rem:birational_equivalent_E6} 
to conclude that the varieties are not isomorphic with the same dimension. 
\begin{enumerate}
\item The associated 
variety of $H_{DS,D_5}^0(L_{-30+31/8}(E_8))$ 
is not isomorphic to the associated variety 
of $L_{-5+7/8}(B_3)$. 
The former is the nilpotent Slodowy 
slice $\Slo_{A_7,D_5}$ in $E_8$   
while the later is the closure of the the nilpotent cone $\mc{N}_{B_3}$ of $B_3$. 
\item The associated 
variety of $H_{DS,D_4(a_1)}^0(L_{-30+31/4}(E_8))$ is not isomorphic to the associated variety 
of $L_{-4+7/4}(D_4)$. 
The former is the nilpotent Slodowy 
slice $\Slo_{2 A_3,D_4(a_1)}$ in $E_8$   
while the later is the closure of the orbit $\O_{1;(5,3)}$ in $D_4$. 
\item The associated 
variety of $H_{DS,A_3}^0(L_{-30+31/4}(E_8))$ is not isomorphic to the associated variety 
of $L_{-9+11/4}(B_5)$. 
The former is the nilpotent Slodowy 
slice $\Slo_{2 A_3,A_3}$ in $E_8$   
while the later is the closure of the orbit $\O_{1;(4^2,3)}$ in $B_5$. 
\item The associated 
variety of $H_{DS,A_2}^0(L_{-30+31/3}(E_8))$ is not isomorphic to the associated variety 
of $L_{-12+13/3}(E_6)$. 
The former is the nilpotent Slodowy 
slice $\Slo_{2 A_2+2 A_1,A_2}$ in $E_8$   
while the later is the closure of the orbit $2 A_2+A_1$ in $E_6$.  
\item The associated 
variety of $H_{DS,2A_1}^0(L_{-30+31/2}(E_8))$ is isomorphic to the associated variety 
of $L_{-11+13/2}(B_6)$. The former is the nilpotent Slodowy 
slice $\Slo_{4 A_1, 2A_1}$ in $E_8$   
while the later is the closure of the orbit $\O_{1;(2^6,1)}$ in $B_6$. 

\item The associated 
variety of $H_{DS,A_1}^0(L_{-30+31/2}(E_8))$ is not isomorphic to the associated variety 
of $L_{-18+19/2}(E_7)$. Indeed the former is the nilpotent Slodowy 
slice $\Slo_{4 A_1,A_1}$ in $E_8$   
while the later is the closure of the orbit $4 A_1$ in $E_7$. 
\end{enumerate}
\end{Rem}

\begin{Pro}
\label{Pro:fin_dec-E8}
The following decompositions hold:
{\footnotesize 
\begin{align*}
& \W_{-30+31/8}(E_8,D_5) 
\cong L_{-3+7/8}(B_3) {\oplus L_{-3+7/8}(B_3;\varpi_3)}, & \\
& \W_{-30+31/4}(E_8,D_4(a_1)) 
 \cong L_{-6+7/4}(D_4) \oplus 
\bigoplus_{i=1,3,4} L_{-6+7/4}(D_4;\varpi_i), & \\ 
& \W_{-30+31/4}(E_8,A_3) \cong L_{-9+11/4}(B_5) \oplus  L_{-9+11/4}(B_5;\varpi_1),& \\
& \W_{-30+31/3}(E_8,A_2) \cong L_{-12+13/3}(E_6) {\oplus  L_{-12+13/3}(E_6;\varpi_1)
\oplus L_{-12+13/3}(E_6;\varpi_6)}, & \\
& \W_{-30+31/2}(E_8,2 A_1) 
\cong L_{-11+13/2}(B_6) \oplus  L_{-11+13/2}(B_6;\varpi_6) , & \\
& \W_{-30+31/2}(E_8,A_1) 
\cong L_{-18+19/2}(E_7) {\oplus L_{-18+19/2}(E_7;\varpi_7)}. &    
\end{align*}}
\end{Pro}

\begin{proof}
We argue as in the proof of Proposition \ref{Pro:fin_dec-E6}. 
More precisely, for $f$ in $D_5$, $D_4(a_1)$ or $A_2$ we can use that fact that $f$ is even. 
For the other cases, we use Proposition \ref{Pro:fin_ext-assoc} 
and Remark~\ref{Rem:birational_equivalent_E8}. 
\end{proof}

For other cases, similar 
arguments as before Conjecture \ref{Conj:dec_finite_extensions-sp_n} 
lead to the following conjecture.  

\begin{Conj}
{\footnotesize 
\begin{align*}
& 
\W_{-30+30/19}(E_8,E_7) 
\cong L_{-2+3/38}(A_1) {\oplus L_{-2+3/38}(A_1;\varpi_1)}, & \\
& \W_{-30+31/14}(E_8,E_7(a_1)) 
\cong L_{-2+3/14}(A_1) \oplus L_{-2+3/14}(A_1;\varpi_1) &\\
& \W_{-30+30/13}(E_8,D_7) 
\cong L_{-2+3/26}(A_1) \oplus L_{-2+3/26}(A_1;\varpi_1) &\\
& \W_{-30+30/11}(E_8,D_6) 
\cong L_{-3+5/22}(B_2) {\oplus L_{-3+5/22}(B_2;\varpi_2)}, & \\
& \W_{-30+31/7}(E_8,A_6) \cong  \left(L_{-2+3/7}(A_1)\otimes L_{1}(A_1)\right) 
\oplus \left( L_{-2+3/7}(A_1;\varpi_1)\otimes L_{1}(A_1;\varpi_1)\right) .&
\end{align*}}
\end{Conj}

In type $E_8$ we make the following conjecture concerning non admissible levels.
\begin{Conj}
\label{Conj:main_E8}
The following isomorphisms hold. 
{\footnotesize \begin{align*}
& \W_{-30+32/12}(E_8,E_6) \cong L_{-4+2/3}(G_2) , & 
& \W_{-24}(E_8,E_6(a_3)) \cong L_{-2}(G_2) , & \\
& \W_{-45/2}(E_8,A_4+2A_1) \cong \C , & 
& \W_{-70/3}(E_8,A_4+A_2+A_1) \cong \C. & 
\end{align*}}
\end{Conj}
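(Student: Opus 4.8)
The four isomorphisms of Conjecture \ref{Conj:main_E8} all occur at \emph{non-admissible} levels: writing $k+h_\g^\vee=p/q$ one finds $p/q=8/3,\,6/1,\,15/2,\,20/3$ respectively, each with $p<h_\g^\vee=30$, and the two $G_2$ targets are likewise non-admissible, with $k^\natural+h_{G_2}^\vee=2/3$ and $2/1$, both below $h_{G_2}=6$. Consequently neither Theorem \ref{Th:main} nor the classification of admissible highest-weight modules (Theorems \ref{Th:rationality-in-category-O} and \ref{Th:finite_extensions}) is available, and the strategy of Section \ref{sec:strategy} cannot be applied verbatim. All that is currently recorded is the coincidence of central charge, asymptotic growth $\G$ and asymptotic dimension $\A$ between $H^0_{DS,f}(L_k(\g))$ and the proposed target; the plan is to upgrade these numerical coincidences to genuine isomorphisms by first controlling the relevant associated varieties and then supplying substitutes for the admissible-level tools.

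The first and decisive step is to determine $X_{L_k(\g)}$ at these levels. For the two collapses to $\C$ (orbits $A_4+2A_1$ and $A_4+A_2+A_1$) the target forces the claim $X_{L_k(\g)}=\overline{G.f}$, so that $H^0_{DS,f}(L_k(\g))$ is lisse by the lisseness criterion of Theorem \ref{Th:Associated_variety-DS}; for the two $G_2$-collapses one must instead show $X_{L_k(\g)}=\overline{\O}$ for the orbit $\O$ whose nilpotent Slodowy slice through $f$ equals the (known) associated variety of the $G_2$ target, so that $H^0_{DS,f}(L_k(\g))$ is quasi-lisse with $X=\Slo_{\O,f}$ by the quasi-lisseness criterion of the same theorem. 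This is exactly the type of non-admissible associated-variety statement illustrated in Remark \ref{Rem:non_adm} (e.g. $X_{L_{-2}(\so_{2\ell+1})}=\overline{\O_{\mathrm{short}}}$), and I expect it to be the main obstacle: there is no analogue of Theorem \ref{Th:admissible-orbits} away from admissible levels, so each case would demand a separate computation of $R_{L_k(\g)}$, or of the Zhu ideal $I_k$ in $U(\g)$, presumably via the singular-vector and associated-variety techniques of \cite{AraMor16b}.

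Granting the associated-variety input, the asymptotic data can be recovered outside the admissible framework. Once $L_k(\g)$, and hence $H^0_{DS,f}(L_k(\g))$, is shown quasi-lisse, Proposition \ref{pro:quasi_liss_asymptotic} guarantees that its normalised character admits an asymptotic datum and solves a modular linear differential equation, and the tabulated values of $\G$ and $\A$ should be confirmable from the explicit non-admissible character together with the reduction formula of Proposition \ref{Pro:reduction-of-assymptotics}. Crucially, Proposition \ref{Pro:conformal} does \emph{not} use admissibility: applied to the embedding $\iota\colon V^{k^\natural}(\g^\natural)\hookrightarrow \W^k(\g,f)$ with $\tilde W=H^0_{DS,f}(L_k(\g))$, the coincidence $\G_{L_{k^\natural}(\g^\natural)}=\G_{H^0_{DS,f}(L_k(\g))}$ shows that $\iota$ is conformal, so that $\mc{V}_k(\g^\natural)$ is a conformal vertex subalgebra of $\W_k(\g,f)$; its hypotheses do, however, require the quasi-lisseness of the $G_2$ target, which must therefore also be established.

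The final step, promoting the conformal inclusion $\mc{V}_k(\g^\natural)\subseteq\W_k(\g,f)$ to an equality, is where a replacement for Theorem \ref{Th:finite_extensions} is needed. For the two $\C$-cases the claim reduces, via the lisseness from Step 1, to a triviality criterion: a simple self-dual lisse vertex operator algebra of CFT type whose asymptotic growth $\G$ (which equals the effective central charge, by Remark \ref{Rem:remar-on-min-con-wt}) vanishes should force $V_{>0}=0$; isolating and proving this criterion is the one analytic gap remaining in those cases. For the two $G_2$-cases the argument is more delicate, since one must bound the multiplicities in the decomposition of $\W_k(\g,f)$ as an $L_{k^\natural}(G_2)$-module and then match $\A$ to force all non-vacuum multiplicities to vanish, exactly as in the admissible finite-extension arguments of Theorems \ref{Th:main_E6}--\ref{Th:main_E8}; absent a module classification for these specific $G_2$ levels, I would instead pass to the Zhu algebra, using Theorem \ref{Th:Zhu-of-W} to identify $\on{Zhu}(\W^k(\g,f))$ with the finite $W$-algebra $U(\g,f)$ and $\on{Zhu}(H^0_{DS,f}(L_k(\g)))$ with $H^0_{DS,f}(\on{Zhu}(L_k(\g)))$, so that computing $H^0_{DS,f}(U(\g)/I_k)$ and identifying it with $\on{Zhu}(L_{k^\natural}(G_2))$ (respectively with $\C$) would pin down the isomorphism on lowest graded pieces and, together with conformality, on the whole vertex algebras.
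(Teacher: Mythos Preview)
The statement you are addressing is labelled a \emph{Conjecture} in the paper, and the paper provides no proof. The only justification recorded there is a single sentence after the statement: for $f$ of type $A_4+2A_1$ the central charge vanishes and $k_0^\natural=k_1^\natural=0$, and similarly for $A_4+A_2+A_1$ one finds $k_1^\natural=0$. That is the entirety of the paper's ``proof''.

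Your proposal is not a proof either, and you are transparent about this: it is a careful dissection of why the paper's machinery fails here and a programme for what would need to be supplied. Your identification of the levels as non-admissible is correct (with $p=8,6,15,20$ all below $h_{E_8}^\vee=30$, and the $G_2$ targets below $h_{G_2}^\vee=4$), and so is your diagnosis that Theorems~\ref{Th:main}, \ref{Th:rationality-in-category-O}, and \ref{Th:finite_extensions} are unavailable. The three gaps you isolate---determining $X_{L_k(\g)}$ at these specific non-admissible levels, establishing quasi-lisseness of the proposed $G_2$ targets, and replacing the admissible module-classification step by a Zhu-algebra or lisseness argument---are genuine obstructions, and each would require new work not present in the paper. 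In particular, the first step (a non-admissible analogue of Theorem~\ref{Th:admissible-orbits} for these four levels) is, as you note, the decisive and currently open one.

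In short: there is no proof in the paper to compare against, and your submission is a well-reasoned outline of a possible attack, not a proof. Your analysis goes considerably further than the paper's own one-line motivation, but the conjecture remains open.
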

For example in the case of $f$ of Bala-Carter type $A_4+2A_1$ the conjecture is motivated by the fact that the central charge is $0$ and $k_0^\natural=k_1^\natural=0$. Similarly for $f$ of type $A_4+A_2+A_1$ we find $k_1^\natural=0$.

\begin{Th}
\label{Th:main_G2}
The following isomorphisms hold, providing collapsing levels 
for $G_2$. 
{\footnotesize \begin{align*}
& \W_{-4+7/12}(G_2,G_2) \cong \C, &&  \W_{-4+4/7}(G_2,G_2) \cong \C & \\
& \W_{-4+7/6}(G_2,G_2(a_1)) \cong \C , &&  
\W_{-4+7/6}(G_2,\tilde A_1) \cong L_{-2+2/3}(A_1), & \\
& \W_{-4+7/3}(G_2,A_1) \cong \C, &&  \W_{-4+8/3}(G_2,A_1) \cong L_{1}(A_1).& 
\end{align*}}
Moreover, the following isomorphism holds. 
{\footnotesize 
\begin{align*}
\W_{-4+5/2}(G_2,  A_1) 
\cong L_{-2+5/2}(A_1) \oplus L_{-2+5/2}(A_1;\varpi_1). 
\end{align*}}
\end{Th}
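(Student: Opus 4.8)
The proof strategy mirrors the finite-extension cases treated in Theorems \ref{Th:main_E6} and \ref{Th:main_E7}, specialised here to $\g = G_2$ and $f$ the nilpotent element in the orbit $A_1$ (the long root vector, in Bala-Carter terms). First I would assemble the relevant numerical data from Table \ref{Tab:Data-G2}: for the $A_1$ orbit the centraliser $\g^\natural$ is $\mathfrak{sl}_2$, and the level $k^\natural$ is the appropriate linear function of $k$, so that $k = -4 + 5/2$ gives $k^\natural = -2 + 5/2$. Both $k = -4 + 5/2 = -3/2$ and $k^\natural = -2 + 5/2 = 1/2$ must be checked to be admissible for $G_2$ and $\mathfrak{sl}_2$ respectively (here $q = 2$, and $q$ even means coprincipal for $G_2$; note $r^\vee = 3$ so $(r^\vee, q) = 1$, so in fact this is a principal admissible level, and one checks $p = 5 \ge h^\vee_{G_2} = 4$; for $\mathfrak{sl}_2$, $k^\natural = 1/2$ is a standard admissible level). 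Since $f$ admits an even good grading (all nilpotent orbits in $G_2$ do, one checks directly), we have $\W_k(G_2,f) \cong H^0_{DS,f}(L_k(G_2))$ by Theorem \ref{Th:simplicity}, so it suffices to work with the Drinfeld-Sokolov reduction.

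Next I would compute the three asymptotic invariants on both sides. For $H^0_{DS,f}(L_{-3/2}(G_2))$ I would use Proposition \ref{Pro:asymptotic_data_H_DS} (principal case): the asymptotic growth $\G$ is $\dim \g^f - h^\vee_{G_2} \dim G_2 / (pq) = \dim \g^f - 4 \cdot 14/10$, and the asymptotic dimension $\A$ is the explicit product over roots, evaluated using the good grading data $x^0_\Gamma$ for the $A_1$ orbit together with the sine-product identities of Section \ref{sec:identities} (especially \eqref{eq:sin_formula} and Lemma \ref{Lem:main_identities}). For $L_{1/2}(\mathfrak{sl}_2)$ and its simple module $L_{1/2}(\mathfrak{sl}_2;\varpi_1)$ I would use Corollary \ref{Co:asymptotic_data_L}. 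I expect to find that the growths match, $\G_{H^0_{DS,f}(L_k(G_2))} = \G_{L_{k^\natural}(\mathfrak{sl}_2)}$, and that the central charges match (computed via \eqref{eq:W.alg.c.formula} and the Sugawara formula), but that the asymptotic dimensions satisfy $\A_{H^0_{DS,f}(L_k(G_2))} = 2\,\A_{L_{k^\natural}(\mathfrak{sl}_2)}$ — a factor of $2$ discrepancy, exactly as in the finite-extension cases of Theorem \ref{Th:main_E6}. This is what signals a finite extension rather than an isomorphism.

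Given the matching growth and central charge but doubled asymptotic dimension, I would then invoke Theorem \ref{Th:finite_extensions}: since $H^0_{DS,f}(L_k(G_2))$ is conical and self-dual (by the Proposition on self-duality of $\W_k(\g,f)$ for Dynkin gradings) and receives a conformal map from $V^{k^\natural}(\mathfrak{sl}_2)$ — conformality following from Proposition \ref{Pro:conformal} using the coincidence of growths — the theorem gives a full embedding $L_{k^\natural}(\mathfrak{sl}_2) \hookrightarrow H^0_{DS,f}(L_k(G_2))$ and decomposes the latter as a direct sum of admissible $\widehat{\mathfrak{sl}_2}$-modules $L_{1/2}(\mathfrak{sl}_2;\lambda)$. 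The sum is finite by \cite{Arakawa15a} (finitely many simple objects in $\mathcal{O}_k$). Then I would enumerate the admissible weights $\lambda$ with conformal dimension $\Delta_\lambda = \lambda(\lambda+2)/(2(k^\natural+2)) \in \tfrac12\Z$: since $p - h^\vee_{\mathfrak{sl}_2} = 5 - 2 = 3$... actually one must be careful here, $k^\natural + h^\vee = 5/2$ so $p_2 = 5$, $q_2 = 2$, and the relevant condition restricts $\lambda$ to finitely many fundamental-type weights; I expect only $\lambda = 0$ and $\lambda = \varpi_1$ survive, and one checks via Proposition \ref{Pro:asymptotic_data_H_DS} that $\on{qdim}(L_{1/2}(\mathfrak{sl}_2;\varpi_1)) = 1$, i.e. $\A_{L_{1/2}(\mathfrak{sl}_2;\varpi_1)} = \A_{L_{1/2}(\mathfrak{sl}_2)}$. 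The multiplicity equation $m_0 \A + m_{\varpi_1}\A = 2\A$ together with $m_0 \le 1$ (vacuum appears once) forces $m_0 = m_{\varpi_1} = 1$, yielding $H^0_{DS,f}(L_{-3/2}(G_2)) \cong L_{1/2}(\mathfrak{sl}_2) \oplus L_{1/2}(\mathfrak{sl}_2;\varpi_1)$ as claimed.

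The main obstacle I anticipate is the explicit asymptotic-dimension computation for the $G_2$ side: one must correctly extract the values $(x^0_\Gamma \mid \alpha)$ for all $\alpha \in \Delta_+(G_2)$ from the good grading attached to the $A_1$ orbit, distinguishing long and short roots, and then carefully apply the coprincipal/principal sine-product machinery of Proposition \ref{Pro:asymptotic_data_H_DS} and Section \ref{sec:identities} to confirm the precise value $\A_{H^0_{DS,f}(L_k(G_2))} = 2 \A_{L_{1/2}(\mathfrak{sl}_2)}$ — getting the power of $2$ and the power of $q$ exactly right. A secondary subtlety is the enumeration of admissible weights of $L_{1/2}(\mathfrak{sl}_2)$ with half-integral conformal dimension and the verification that their quantum dimensions are all $1$; this is routine given Corollary \ref{Co:asymptotic_data_L} but must be done with the correct normalisation of the bilinear form on $\mathfrak{sl}_2$.
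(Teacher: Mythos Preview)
Your approach to the finite-extension case is the same as the paper's (which defers to the $E_6$ proof), but there are two genuine issues.

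First, the proposal addresses only the last isomorphism. The six collapsing-level isomorphisms in the first display also require proof; these follow from Proposition~\ref{Pro:asymptotics-and-collapsing} (and Remark~\ref{Rem:lisse_conjecture_true} when $f\in\O_k$) using the data of Table~\ref{Tab:main_results-G2}, exactly as in the $E_6$ argument, but you should at least say so.

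Second, your claim that ``all nilpotent orbits in $G_2$ admit even good gradings'' is false: Table~\ref{Tab:Data-G2} marks both $A_1$ and $\tilde A_1$ as non-even, and by the Elashvili--Kac classification these orbits admit no even good grading at all. Hence Theorem~\ref{Th:simplicity} is unavailable and you cannot identify $\W_k(G_2,f)$ with $H^0_{DS,f}(L_k(G_2))$ at the outset. The paper's route (see the $A_5$ finite-extension case in the $E_6$ proof) is to run the whole argument for $H^0_{DS,f}(L_k(G_2))$ instead---self-dual and conical as a quotient of $\W^k$---obtain the two-term decomposition, and then observe that a direct sum of two non-isomorphic simple $L_{k^\natural}(A_1)$-modules each with multiplicity one is necessarily simple as a vertex algebra, hence equals $\W_k$. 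Your argument is easily repaired along these lines.

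A final caution on the weight enumeration you flag as a subtlety: with $k^\natural+2=5/2$ one has $p_2=5$, $q_2=2$, so there are four ordinary simple modules $\lambda=n\varpi_1$, $n=0,1,2,3$, and $h_\lambda=n(n+2)/10$ lies in $\tfrac12\Z$ only for $n=0$ and $n=3$. The module with quantum dimension $1$ is $3\varpi_1$ (since $\sin(4\pi/5)=\sin(\pi/5)$), not $\varpi_1$; carry this computation through rather than copying the stated weight.
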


\begin{proof}
To prove the second assertion we argue as in the proof of Proposition \ref{Pro:fin_dec-E6} 
using Remark~\ref{Rem:birational_equivalent_G2} below. 
\end{proof}

{\tiny 
\begin{table}
\begin{tabular}{lllllllll}
\hline
&& &&& &&& \\[-0.5em] 
$\O_k$ & $G.f$ & $\Slo_{\O_k,f}$ & $\frac{p}{q}=k+h_{\g}^\vee$ & $k^\natural+h_{\g^\natural}^\vee$ & $c_V$ & $\G_V$ & $\A_V$ & comments \\
&&&&&& \\[-0.5em]
\hline
 && &&& &&& \\[-0.5em]
$G_2$ & $G_2$ & $\{0\}$ & $7/12$ &  & $0$ & 
$0$ & $1$ & $\checkmark$ \\
 & & &  $4/7$ &  & $0$ & $0$ & $1$ & $\checkmark$ \\[1em]
$G_2(a_1)$ & $G_2(a_1)$  & $\{0\}$ & $7/6$ &  & $0$ & 
$0$ & $1$ & $\checkmark$\\[1em]
 & $\tilde A_1$ & $a_1$ & $7/6$ & $2/3$ & $-6$ & 
$2$ & $\dfrac{1}{3\sqrt{3}}$ 
& $\checkmark$ \\[1em]
$\tilde A_1$ & $A_1$  & {$m$} & $5/2$ & $5/2$  & $3/5$ & 
$12/5$ & $\dfrac{2}{2\sqrt{5}} \sin \dfrac{\pi}{5} 
\not=\dfrac{1}{2\sqrt{5}} \sin \dfrac{\pi}{5}$ & fin.~ext.\\[1em]
$A_1$ & $A_1$  & $\{0\} \subset A_1$ & $7/3$ & $2/1$ & $0$ & 
$0$ & $1$ & $\checkmark$ \\
 &   &  & $8/3$ & $3/1$ & $1$ & 
$1$ & $\dfrac{1}{\sqrt{2}}$ & $\checkmark$ \\[1em]
\hline
\end{tabular}\\[1em]
\caption{\footnotesize{Main asymptotic data in type $G_2$}}
\label{Tab:main_results-G2}
\end{table}
}

\begin{Rem}
\label{Rem:birational_equivalent_G2}
The associated 
variety of $\W_{-4+5/2}(G_2,A_1)$ is not isomorphic to the associated variety 
of $L_{-2+5/2}(A_1)$. 
The former is the nilpotent Slodowy 
slice $\Slo_{\tilde A_1,A_1}$ in $G_2$  
while the later is the 
the nilpotent cone in $A_1$, 
and it is known that $\Slo_{\tilde A_1,A_1}$ is 
a 2-dimensional non-normal 
variety. 
\end{Rem}

\begin{Th} 
\label{Th:main_F4}
The following isomorphisms hold, providing collapsing levels for $\g=F_4$. 
{\footnotesize \begin{align*}
& \W_{-9+13/18}(F_4,F_4) \cong \C, &&  \W_{-9+9/13}(F_4,F_4) \cong \C & \\
& \W_{-9+13/12}(F_4,F_4(a_1)) \cong \C, &&  \W_{-9+9/7}(F_4,C_3) \cong L_{-2+2/7}(A_1) & \\
& \W_{-9+9/7}(F_4,B_3) \cong L_{-2+2/7}(A_1) , &&  \W_{-9+13/8}(F_4,B_3)  \cong L_{1}(A_1) & \\
& \W_{-9+13/6}(F_4,F_4(a_3)) \cong \C , &&  \W_{-9+13/6}(F_4,C_3(a_1))  \cong L_{-2+2/3}(A_1) & \\
& \W_{-9+13/6}(F_4,B_3) \cong L_{-2+2/3}(A_1) \otimes L_{-2+2/3}(A_1) , &&  
\W_{-9+13/6}(F_4,\tilde A_2))  \cong L_{-4+7/6}(G_2) & \\
& \W_{-9+13/4}(F_4,A_2 +\tilde A_1) \cong \C , &&  
\W_{-9+13/4}(F_4,A_2))  \cong L_{-3+3/2}(A_2), & \\
& \W_{-9+13/2}(F_4,A_1))  \cong \C. && & 
\end{align*}}
Moreover, the following inclusions are finite extensions. 
{\footnotesize 
\begin{align*}
& L_{-2+5/2}(A_1) \longhookrightarrow \W_{-9+10/3}(F_4,A_2+\tilde A_1), & 
& L_{-3+5/3}(A_2)   \longhookrightarrow \W_{-9+10/3}(F_4,A_2), & \\
& L_{-4+5/4}(A_3) \longhookrightarrow \W_{-9+13/4}(F_4,\tilde A_1). & 
\end{align*}}
\end{Th}

\begin{Rem}
\label{Rem:birational_equivalent_F4}
In the below comments, we argue as in Remark \ref{Rem:birational_equivalent_E6} 
to conclude that the varieties are not isomorphic. 
\begin{enumerate}
\item The associated 
variety of $H_{DS,A_2+\tilde A_1}^0(L_{-9+10/3}(F_4))$ is not isomorphic to the associated variety 
of $L_{-2+5/2}(A_1)$. 
The former is the nilpotent Slodowy 
slice $\Slo_{\tilde A_2+A_1, A_2+\tilde A_1}$ 
whose singularity at $f \in A_2+\tilde A_1$ is not normal 
(see \cite[\S1.8.4]{FuJutLev17}) 
while the later is the nilpotent cone of $A_1=\sl_2$. 
\item The associated 
variety of $H_{DS,A_2}^0(L_{-9+10/3}(F_4,A_2)$ is not isomorphic to the associated variety 
of $L_{-2+5/3}(A_2)$. 
The former is the nilpotent Slodowy 
slice $\Slo_{\tilde \tilde A_2+A_1, A_2}$ 
while the later is the nilpotent cone of $A_2$. 

\item The associated 
variety of $H_{DS,\tilde A_1}^0(L_{-9+13/4}(F_4))$ is not isomorphic to the associated variety 
of $L_{-4+5/4}(A_3)$. Indeed the former is the nilpotent Slodowy 
slice $\Slo_{A_2+\tilde A_1, \tilde A_1}$ in $F_4$   
while the later is the nilpotent cone of $A_3=\sl_4$. 
But the number 
of nilpotent $G^\natural$-orbits in $\mc{N}_{A_3}$ is 5 while the number 
of  $G^\natural$-orbits in $\Slo_{A_2+\tilde A_1, \tilde A_1}$ would be 4. 
\end{enumerate}
\end{Rem}

\begin{Pro}
\label{Pro:fin_dec-F4}
The following decompositions hold:
{\footnotesize 
\begin{align*}
& \W_{-9+10/3}(F_4,A_2+\tilde A_1) 
\cong L_{-2+5/2}(A_1) \oplus L_{-2+5/2}(A_1;\varpi_1), & \\
& \W_{-9+10/3}(F_4,A_2) 
\cong L_{-3+5/3}(A_2) \oplus L_{-3+5/3}(A_2;\varpi_1) \oplus L_{-3+5/3}(A_2;\varpi_2), & \\
& \W_{-9+13/4}(F_4,\tilde A_1) 
\cong L_{-4+5/4}(A_3) 
\oplus \bigoplus_{i=1,2,3}L_{-4+5/4}(A_3;\varpi_i). & 
\end{align*}}
\end{Pro}

\begin{proof}
We argue as in the proof of Proposition \ref{Pro:fin_dec-E6}. 
More precisely, for $f \in A_2$ we can use that fact that $f$ is even. 
For the other cases, we use Proposition \ref{Pro:fin_ext-assoc} 
and Remark~\ref{Rem:birational_equivalent_F4}. 
\end{proof}

For other cases, similar 
arguments as before Conjecture \ref{Conj:dec_finite_extensions-sp_n} 
lead to the following conjecture.  

\begin{Conj}
{\footnotesize 
\begin{align*}
& \W_{-9+13/10}(F_4,C_3) 
\cong L_{-2+3/10}(A_1) \oplus L_{-2+3/10}(A_1;\varpi_1), & \\
& \W_{-9+9/5}(F_4,B_2)  
\cong (L_{-2+3/10}(A_1) \otimes L_{-2+3/10}(A_1)) 
\oplus( L_{-2+3/10}(A_1;\varpi_1) \otimes L_{-2+3/10}(A_1;\varpi_1)).  & \\
\end{align*}}
\end{Conj}

{As in the classical cases, we have the following conjecture.
\begin{Conj} 
\label{Conj:exhaustive_exceptional}
Assume that $\g$ simple of exceptional type. 
The cases covered by Theorems~\ref{Th:main_E6}, 
\ref{Th:main_E7}, \ref{Th:main_E8}, \ref{Th:main_G2} and \ref{Th:main_F4} 
give the exhaustive list of pairs $(f,k)$  
where $f$ is a nilpotent element of $\g$ 
and $k$ is an admissible 
collapsing levels for $\g$. 
\end{Conj}
}

{\tiny 
\begin{table}
\begin{tabular}{lllllllll}
\hline
&& &&& &&& \\[-0.5em]
$\O_k$ & $G.f$ & $\Slo_{\O_k,f}$ & $\frac{p}{q}=k+h_{\g}^\vee$ & $k^\natural+h_{\g^\natural}^\vee$ & $c_V$ & $\G_V$ & $\A_V$ & comments \\
&& &&& &&& \\[-0.5em]
\hline
&& &&& &&& \\[-0.5em]
$F_4$ & $F_4$ & $\{0\}$ & $13/18, 9/13$ &  &$0$ & $0$ 
& $1$ & $\checkmark$ \\[1em]
$F_4(a_1)$ & $F_4(a_1)$ & $\{0\}$ & $13/12$ &  &$0$ & $0$ 
& $1$ & $\checkmark$\\[1em]
$F_4(a_2)$ & $C_3$ & $a_1$ & $9/7$ & $2/7$ &$-18$ & $\dfrac{18}{7}$ 
& $\dfrac{1}{7\sqrt{7}}$ & $\checkmark$ \\
&  & & $13/10$ &$3/10$ & $-17$ & $\dfrac{14}{5}$ 
& $\dfrac{2}{2^2 5\sqrt{5}}\not=\dfrac{1}{2^2 5 \sqrt{5}}$  &  fin.~ext. \\[1em]
 & $B_3$ & $a_1$ & $9/7$ & $2/7$ &$-18$ & $\dfrac{18}{7}$ 
& $\dfrac{1}{7\sqrt{7}}$ & $\checkmark$ \\[1em]
$B_3$ & $B_3$ & $\{0\} \subset A_1$ & $13/8$ & $3/1$ &$1$ & $1$ 
& $ \dfrac{1}{\sqrt{2}}$ & $\checkmark$ \\[1em]
 & $ \tilde A_2$ & $\mc{N}_{G_2}$ & $12/8$ & $4/8$ & $-98$ & $\dfrac{49}{4}$ & & 
not admissible \\[1em]
$F_4(a_3)$ & $F_4(a_3)$ & $\{0\}$ &$13/6$ &  & $0$ & $0$ & $1$ & $\checkmark$\\[1em] 
 & $C_3(a_1)$ & $a_1$ &$13/6$ & $2/3$ & $-6$ & $2$ & $\dfrac{1}{3\sqrt{3}}$ & 
$\checkmark$ \\[1em]
 & $B_2$ & ${a_1 \times a_1}$ & $13/6$ & $2/3, 2/3$ &$-12$ & $4$ 
& $\dfrac{1}{3 \sqrt{3}}$ & $\checkmark$ \\[1em]
 &  & & $9/5$ & $3/10, 3/10$ &$-34$ & $\dfrac{28}{5}$ 
& $\dfrac{4}{5^{3/2} 2^2} \not=\dfrac{2}{5^{3/2} 2^2}$ & fin.~ext.  \\[1em]
 & $\tilde A_2$ & $g_2^{sp}$ & $13/6$ & $7/6$ & $-34$ & $10$ & 
$\dfrac{1}{2^7 3^3 \sqrt{3}}$  & $\checkmark$ \\[1em] 
 &  &  & $9/6$ & $3/6$ & $-98$ &$\dfrac{14}{3}$  & $\dfrac{1}{3^3\sqrt{3}}$
 & 
 not admissible \\[1em]
$\tilde A_2+A_1$ & $\tilde A_2$ & $g_2$ & $9/3$ & $6/3$ & $-14$ &$\dfrac{14}{3}$  & $\dfrac{1}{3^3\sqrt{3}}$
 & 
 not admissible \\[1em]
& $A_2+\tilde A_1$ &{$m$}  & $10/3$ & $5/2$ & $\dfrac{3}{5}$ & 
$\dfrac{13}{5}$ & 
$\dfrac{2}{2\sqrt{5}} \sin \dfrac{\pi}{5}$ & fin.~ext.\\[1em]
&  &   & &   &   &  & $\not=\dfrac{1}{2\sqrt{5}} \sin \dfrac{\pi}{5}$ & \\[1em]
 & $A_2$ & {unknown in }$A_2$ &$10/3$ & $5/3$ & 
$-\dfrac{32}{5}$ & $\dfrac{32}{5}$ & $\dfrac{3 \times 2^3}{3^{7/2} 5} \left( \sin \dfrac{\pi}{5}\right)^2  
\sin \dfrac{2\pi}{5}$ & fin.~ext. \\
 &  &  & &  &   &  & 
 $\not= \dfrac{2^3}{3^{7/2} 5} \left( \sin \dfrac{\pi}{5}\right)^2  
\sin \dfrac{2\pi}{5} $ &  \\[1em]
 $A_2+\tilde A_1$ &  $A_2+\tilde A_1$ & $\{0\} \subset A_1$ &$13/4$ & $2/1$ & 
$0$ & $0$ & $1$ & 
$\checkmark$ \\[1em]
 & $A_2$ & $a_2^+$ &$13/4$ & $3/2$ & 
$-8$ & $4$ & $\dfrac{1}{2^4}$ & 
$\checkmark$ \\[1em]
 & $ \tilde A_1$ & {unknown in }$A_3$ &$13/4$ & $5/4$ & 
$-33$ & $12$ & $\dfrac{4}{2^{16}} \not=\dfrac{1}{2^{16}}$ & 
fin.~ext. \\[1em]
$A_1$ & $ A_1$ & $\{0\} \subset C_3$ &$13/2$ & $4/1$ & 
$0$ & $0$ & $1$ & 
$\checkmark$ \\[1em]
\hline
\end{tabular}\\[1em]
\caption{\footnotesize{Main asymptotic data in type $F_4$}}
\label{Tab:main_results-F4}
\end{table}
}

\begin{Conj} 
\label{Conj:main_F4} 
The following isomorphisms hold. 
{\footnotesize
\begin{align*}
& \W_{-15/2}(F_4,\tilde A_2) \cong L_{-7/2}(G_2) , && 
\W_{-6}(F_4,\tilde A_2) \cong L_{-2}(G_2). & \\
\end{align*}}
\end{Conj}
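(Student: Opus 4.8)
Both asserted isomorphisms occur at \emph{non-admissible} levels: $-15/2=-h^\vee_{F_4}+3/2$ and $-6=-h^\vee_{F_4}+3$, and neither $(p,q)=(3,2)$ nor $(p,q)=(3,1)$ satisfies the admissibility bound \eqref{eq:admissible-n} for $F_4$; symmetrically $-7/2=-h^\vee_{G_2}+1/2$ and $-2=-h^\vee_{G_2}+2$ are non-admissible for $G_2$. Hence Theorem \ref{Th:main} is not available, and the data of Table \ref{Tab:main_results-F4} (matching central charges $-98$, resp.\ $-14$, and matching asymptotic growths, both $14/3$) only fix invariants. The plan is to substitute geometric input together with a structural study of these two specific $W$-algebras for the modularity argument used at admissible level.

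The first step is to control associated varieties. Granting $X_{L_{-6}(F_4)}=\overline{\mathbb{O}}_{\widetilde A_2+A_1}$ and $X_{L_{-15/2}(F_4)}=\overline{\mathbb{O}}_{F_4(a_3)}$ --- itself a prerequisite, to be established for these non-admissible levels by the sheet and induction techniques of \cite{AraMor16b} --- the inclusion $\widetilde A_2\subset\overline{\mathbb{O}}_{\widetilde A_2+A_1}\cap\overline{\mathbb{O}}_{F_4(a_3)}$ and Theorem \ref{Th:Associated_variety-DS} give that $H^0_{DS,\widetilde A_2}(L_{-6}(F_4))$ and $H^0_{DS,\widetilde A_2}(L_{-15/2}(F_4))$ are nonzero and quasi-lisse, with associated varieties the nilpotent Slodowy slices $\Slo_{\widetilde A_2+A_1,\widetilde A_2}$ and $\Slo_{F_4(a_3),\widetilde A_2}$; by \cite{FuJutLev17} these are $\overline{g_2}$ and $\overline{g_2^{sp}}$, the closures of the minimal and the minimal special nilpotent orbit of $\g^\natural\cong G_2$. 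Matching against the (expected, and known in analogous cases, cf.\ \cite{AraMor16b}) identifications $X_{L_{-2}(G_2)}=\overline{g_2}$ and $X_{L_{-7/2}(G_2)}=\overline{g_2^{sp}}$ shows that the two sides of each conjectural isomorphism have the same associated variety; in particular $L_{-2}(G_2)$ and $L_{-7/2}(G_2)$ are quasi-lisse, so Proposition \ref{pro:quasi_liss_asymptotic} equips them with asymptotic data, which one then checks agree with the $H^0_{DS,\widetilde A_2}$-side in growth, central charge and asymptotic dimension.

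The remaining, and decisive, step is to upgrade this numerical agreement to a vertex algebra isomorphism, and this is where I expect the real obstacle: the proof of Theorem \ref{Th:finite_extensions}, on which Theorem \ref{Th:main} rests, uses Theorem \ref{Th:rationality-in-category-O} and Lemma \ref{Lem:Arakaw-Fiebig}, neither valid for non-admissible $k$. Two routes seem possible. First, one could prove Conjecture \ref{Conj:isom} for the data $(F_4,\widetilde A_2,k)$, so that $\W_k(F_4,\widetilde A_2)\cong H^0_{DS,\widetilde A_2}(L_k(F_4))$, and then establish a non-admissible analogue of Theorem \ref{Th:main}, to the effect that a conical self-dual conformal quotient of $V^k(F_4)$ with the asymptotic character of $L_k(F_4)$ must coincide with $L_k(F_4)$; this would require a classification of the ordinary modules of $L_{-6}(F_4)$ and $L_{-15/2}(F_4)$ together with a rigidity argument, which is the hard and at present open ingredient. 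Second, and more tractable, one could analyse the universal $W$-algebra $\W^k(F_4,\widetilde A_2)$ directly: its strong generators are organised by the decomposition of $\g^f$ (of dimension $22$) as a $\g^\natural\cong G_2$-module into the adjoint module and an $8$-dimensional complement, and one would show that at $k=-6$ and $k=-15/2$ suitable singular vectors of $\W^k(F_4,\widetilde A_2)$ force every generator outside $\mathcal{V}^k(\g^\natural)$ into the maximal graded ideal, so that $\W_k(F_4,\widetilde A_2)=\mathcal{V}_k(\g^\natural)\cong L_{k^\natural}(G_2)$ by the characterisation of collapsing levels of Section \ref{sec:collapsing}. Here the main difficulty is the OPE bookkeeping and the precise location of the relevant singular vectors at these two particular levels; this route is feasible because $\widetilde A_2$ is small enough that the number of extra generators is manageable.
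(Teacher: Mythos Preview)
The statement in question is a \emph{Conjecture} in the paper, not a theorem; the paper does not prove it. What the paper offers is exactly the evidence you recapitulate in your first two paragraphs: at the non-admissible levels $k=-15/2$ and $k=-6$ the central charges match ($-98$ and $-14$ respectively), and the formal expressions for asymptotic growth and asymptotic dimension coincide (see the rows for $\tilde A_2$ with $\frac{p}{q}=12/8$, $9/6$, and $9/3$ in Table~\ref{Tab:main_results-F4}, all flagged ``not admissible''). The paper explicitly labels these isomorphisms as conjectural precisely because the admissibility hypothesis of Theorem~\ref{Th:main} fails, and it offers no substitute argument.

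Your proposal is therefore not to be compared against a proof in the paper, because none exists. What you have written is an honest research outline, and you correctly locate the obstruction: Theorem~\ref{Th:finite_extensions} rests on the category-$\mathcal{O}$ rationality of Theorem~\ref{Th:rationality-in-category-O}, which is unavailable here. Both of your suggested routes have genuine gaps that you yourself flag. The first route presupposes as-yet-unproved identifications of $X_{L_{-6}(F_4)}$ and $X_{L_{-15/2}(F_4)}$ (these levels are not covered by \cite{Arakawa15a} or \cite{AraMor16b}), and then requires a non-admissible analogue of Theorem~\ref{Th:main} which, as you say, would need a classification of ordinary $L_k(F_4)$-modules that is presently out of reach. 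The second route, via explicit singular vectors in $\W^k(F_4,\tilde A_2)$, is in principle executable but you have not carried it out; locating the relevant null states and verifying that they kill all generators outside $\mathcal{V}^k(\g^\natural)$ is exactly the computation that would constitute a proof, and it remains to be done. In short: your diagnosis of why the conjecture is open is accurate and matches the paper's implicit reasoning, but neither route you sketch has been completed, so the statement remains a conjecture.
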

Writing $k=-9+12/8$ or $-9+9/6$ and $k^\natural = -4+/8$ or $-4+3/6$, 
we find that $\W_{-15/2}(F_4,\tilde A_2)$ and $L_{-7/2}(G_2)$ 
have the same central charge, and that the (conjectural) formula for asymptotic 
growth and asymptotic dimension coincide; see Table \ref{Tab:main_results-F4}. 
Similarly, writing $k=-9+9/3$ and $k^\natural = -4+6+3$ 
we find that $\W_{-6}(F_4,\tilde A_2)$ and $L_{-2}(G_2)$ 
have the same central charge, and that the (conjectural) formula for asymptotic 
growth and asymptotic dimension coincide; see Table \ref{Tab:main_results-F4}.

\newpage

\appendix

\section{Centralisers of $\sl_2$-triples  
in simple exceptional Lie algebras}
\label{App:Centralisers_exceptional}

In this appendix we collect the data relative to each $\sl_2$-triples 
in simple exceptional Lie algebras. 
Our results are obtained using the software \texttt{GAP4} and 
are summarised in Tables \ref{Tab:Data-G2}, \ref{Tab:Data-F4}, \ref{Tab:Data-E6}, 
\ref{Tab:Data-E7} and \ref{Tab:Data-E8}.

In the Tables, nilpotent orbits are given by the Bala-Carter classification (first column). 
We indicate in the second column whether the nilpotent orbit is even or not. 
The third column gives the type of $\g^\natural$, and the last column gives 
the values of the $k_i^\natural$'s. Obviously, the order follows the order 
of the simple factors of $\g^{\natural}$ as appearing in the third column. 

\bigskip

\bigskip

\begin{minipage}{.5\textwidth}
\centering
{\tiny 
\begin{tabular}{|llll|}
\hline
&&&\\[-0.5em] 
$G.f$ & even & $\g^\natural= \bigoplus_{i} \g_i^\natural$ &   $k_i^\natural$  \\[-0.5em] 
&&&\\
\hline
&&&\\[-0.5em] 
$G_2$  & yes  & $\{0\}$ &   \\[0.5em] 
$G_2(a_1)$ & yes  & $\{0\}$ &     \\[0.5em] 
$\tilde{A}_1$ & no  & $A_1$ & $k_1^\natural=k+\frac{3}{2}$ \\[0.5em] 
$A_1$  & no  & $A_1$ & $k_1^\natural=3 k+5$   \\[0.5em] 
\hline 
\end{tabular} \\[1em]  
\captionof{table}{\footnotesize{Centralisers of $\sl_2$-triples in type $G_2$}} 
\label{Tab:Data-G2}
\vspace{0.44cm}
}
{\tiny 
\begin{tabular}{|llll|}
\hline
&&&\\[-0.5em] 
$G.f$ & even & $\g^\natural= \bigoplus_{i} \g_i^\natural$ &  
 $k_i^\natural$  \\[-0.5em] 
&&&\\
\hline
&&&\\[-0.5em] 
$F_4$ & yes  &  $\{0\}$ &   \\[0.5em]
$F_4(a_1)$ & yes &  $\{0\}$ &   \\[0.5em]
${F_4(a_2)}$ & yes  & $\{0\}$  &   \\[0.5em]
${C_3}$ & yes  & $A_1$  & $k_1^\natural=k+6$   \\[0.5em]
${B_3}$ & yes  &  $A_1$ & $k_1^\natural=8k +60$   \\[0.5em]
${F_4(a_3)}$ & yes  & $\{0\}$ &   \\[0.5em]
${C_3(a_1)}$ & no  &  $A_1$ & $k_1^\natural=k+\frac{11}{2}$   \\[0.5em] 
${\tilde{A}_2 +A_1}$  & no & $A_1$ & $k_1^\natural=3k+17$  \\[0.5em]
$B_2$ & no &  $A_1 \times A_1$ & $k_1^\natural=k+\frac{11}{2}$   \\
&&& $k_2^\natural=k+\frac{11}{2}$   \\[0.5em] 
$A_2 +\tilde{A}_1$ & no &  $A_1$ & $k_1^\natural=6k+\frac{69}{2}$  \\[0.5em] 
$\tilde{A}_2$ & yes  & $G_2$  & $k_1^\natural=k+4$   \\[0.5em] 
$A_2$ & yes  &  $A_2$ & $k_1^\natural=2k+10$  \\[0.5em]
$A_1 + \tilde{A}_1$ &  no  & $A_1\times A_1$ & $k_1^\natural=k+4$  \\
&&& $k_2^\natural=8k+40$  \\[0.5em]
$\tilde{A}_1$ &  no  & $A_3$ &  $k_1^\natural=k+3$\\[0.5em]
$A_1$ & no &  $C_3$ & $k_1^\natural= k+ \frac{5}{2}$  \\[0.5em]
\hline 
\end{tabular} \\[1em] 
\captionof{table}{\footnotesize{Centralisers of $\sl_2$-triples in type $F_4$}} 
\label{Tab:Data-F4}
}
\end{minipage}\hspace{1cm}
\begin{minipage}{.5\textwidth} 
\centering
{\tiny
\begin{tabular}{|llll|}
\hline
&&&\\[-0.5em] 
$G.f$ & even & $\g^\natural= \bigoplus_{i} \g_i^\natural$ &  
 $k_i^\natural$  \\[-0.5em] 
&&&\\
\hline
&&&\\[-0.5em] 
$E_6$ & yes  &  $\{0\}$ &   \\[0.5em]
$E_6(a_1)$ & yes  &  $\{0\}$ &   \\[0.5em]
$D_5$ & yes & $\C$ &  $k_0^\natural= k+\frac{21}{2}$  \\[0.5em]
${E_6(a_3)}$ & yes & $\{0\}$ &    \\[0.5em]
$D_5(a_1)$ & no  & $\C$ &  $k_0^\natural= 12k+117$  \\[0.5em]
$A_5$ & no  & $A_1$ &  $k_1^\natural= k+\frac{17}{2}$  \\[0.5em]
$A_4+A_1$ & no  & $\C$ & $k_0^\natural= 15 k+144$ \\[0.5em]
$D_4$ & yes & $A_2$ &  $k_1^\natural=2k+18$  \\[0.5em]
$A_4$ & yes  & $\C \times A_1$ & $k_0^\natural=15 k+144$ \\ 
& & & $k_1^\natural=k+8$  \\[0.5em]
$D_4(a_1)$ & yes  & $\C^2$ & 
$\phi^\natural_0 \not= 0$ for admissible $k$\\[0.5em]
$A_3+A_1$ & no  & $\C\times A_1$ & $k_0^\natural=k+9$ \\ 
& & &  $k_1^\natural=k +\frac{15}{2}$ \\[0.5em]
$2 A_2+A_1$  & no &  $A_1$ & $k_1^\natural=3k+23$  \\[0.5em]
$A_3$ & no & $\C\times B_2$ & $k_0^\natural= k+9$ \\
& & & $k_1^\natural=k+7$ \\[0.5em] 
$A_2 +2 A_1$ & no  & $\C\times A_1$ &  $k_0^\natural=6k+45$   \\
& & & $k_1^\natural= 6k+45$  \\[0.5em]
$2 A_2$  & no & $G_2$ & $k_1^\natural=k+6$ \\[0.5em]
$A_2 +A_1$ & no & $\C\times A_2$ &  $k_0^\natural=6k+45$   \\
& & & $k_1^\natural=k+6 $  \\[0.5em]
$A_2$ & yes  & $A_2 \times A_2$   &  $k_1^\natural=k+6$   \\
& & & $k_2^\natural=k+6 $  \\[0.5em]
$3 A_1$ & no & $A_1\times A_2$ & $k_1^\natural=k+\frac{11}{2}$  \\
& & & $k_2^\natural= 2 k+12$  \\[0.5em]
$2 A_1$ & no  &  $\C\times B_3$ & $k_0^\natural=3 k+18$ \\
& & & $k_1^\natural=k+4$  \\[0.5em]
$A_1$ & no & $A_5$ & $k_1^\natural=k+3$\\[0.5em]
\hline
\end{tabular} \\[1em] 
\captionof{table}{\footnotesize{Centralisers of $\sl_2$-triples in type $E_6$}} 
\label{Tab:Data-E6}
}
\end{minipage}

\begin{minipage}{.5\textwidth}
\centering
\vspace{-0.05cm}
{\tiny 
\begin{tabular}{|llll|}
\hline
&&&\\[-0.5em] 
$G.f$ & even & $\g^\natural= \bigoplus_{i} \g_i^\natural$ &  
 $k_i^\natural$  \\[-0.5em] 
&&&\\
\hline
&&&\\[-0.5em] 
$E_7$ & yes &  $\{0\}$ &   \\[0.5em] 
$E_7(a_1)$ & yes &  $\{0\}$ &   \\[0.5em] 
$E_7(a_2)$ & yes &  $\{0\}$ &    \\[0.5em] 
$E_7(a_3)$ & yes&  $\{0\}$ &   \\[0.5em] 
$E_6$ & yes & $A_1$ &  $k_1^\natural=3k+48$ \\[0.5em] 
$E_6(a_1)$ & yes &  $\C$  &  $k_0^\natural=k+16$ \\[0.5em] 
$D_6$ & no & $A_1$ & $k_1^\natural=k+ \frac{29}{2}$  \\[0.5em] 
$E_7(a_4)$ & yes &  $\{0\}$ &    \\[0.5em] 
$D_6(a_1)$  & no &  $A_1$ &   $k_1^\natural=k+14$  \\[0.5em] 
$D_5+A_1$  & no &  $A_1$ &  $k_1^\natural=2k+30$  \\[0.5em] 
$A_6$  & yes &   $A_1$ &  $k_1^\natural=7k+108$  \\[0.5em]
$E_7(a_5)$ & yes &  $\{0\}$ &    \\[0.5em] 
$D_5$  & yes &  $A_1\times A_1$ &  $k_1^\natural=k+14$ \\
&&& {$k_2^\natural=2k+30$} \\[0.5em] 
$E_6(a_3)$ & yes & $A_1$ &  $k_1^\natural=3k+44$ \\[0.5em] 
$D_6(a_2)$ & yes & $A_1$ &  $k_1^\natural=k+\frac{27}{2}$ \\[0.5em]
$D_5(a_1)+A_1$ &  {yes} &  {$A_1$} &  {$k_1^\natural=8 k+116$} \\[0.5em]  
$A_5+A_1$ &  no &  {$A_1$} &  {$k_1^\natural=3 k+41$} \\[0.5em] 
$(A_5)'$ &  {no} &  $A_1 \times A_1$ &  {$k_1^\natural=k+\frac{27}{2}$} \\
&&&  {$k_2^\natural=3k+44$} \\[0.5em] 
$A_4+A_2$ & yes &  $A_1$ &  $k_1^\natural= 15 k+216$  \\[0.5em] 
$D_5(a_1)$ &  no &  $\C \times A_1$ & $k_0^\natural=2k+29$ \\[0.5em]
&&& $k_1^\natural=k+13$  \\[0.5em] 
$A_4+A_1$ &  no & $\C^2$ &  
$\phi^\natural_0 \not= 0$ for admissible $k$\\[0.5em]
$D_4+A_1$ &  no &  {$B_2$} &  {$k_1^\natural=k+\frac{25}{2}$} \\[0.5em]
$(A_5)''$ & yes & $G_2$  &  $k_1^\natural=k+12$  \\[0.5em]
$A_3+A_2+A_1$ & yes  &  $A_1$ &  $k_1^\natural=24 k+320$ \\[0.5em] 
$A_4$ & yes & $\C\times A_2$ &  $k_0^\natural=k+\frac{72}{5}$ \\
& && $k_1^\natural=k+12$  \\[0.5em] 
$A_3+A_2$ & no & $\C\times A_1$ &  $k_0^\natural=k+\frac{40}{3}$ \\
& && $k_1^\natural=k+12$  \\[0.5em] 
\hline
\end{tabular} \\[1em] 
\captionof{table}{\footnotesize{Centralisers of $\sl_2$-triples in type  $E_7$}} 
\label{Tab:Data-E7-a}
}
\end{minipage}\hspace{1cm} 
\begin{minipage}{.5\textwidth} 
\centering
{\tiny 
\begin{tabular}{|llll|}
\hline
&&&\\[-0.5em] 
$G.f$ & even & $\g^\natural= \bigoplus_{i} \g_i^\natural$ &  
 $k_i^\natural$  \\[-0.5em] 
&&&\\
\hline
&&&\\[-0.5em] 
$D_4(a_1)+A_1$ &  no &  $A_1 \times A_1$ &  $k_1^\natural=k+12$ \\
& && $k_2^\natural=k+12$\\[0.5em] 
$D_4$ &  yes &  $C_3$ &  $k_1^\natural=k+12$ \\[0.5em]
$A_3+2 A_1$ &  no &  $A_1 \times A_1$ &  $k_1^\natural=k+\frac{23}{2}$ \\
& && $k_2^\natural=2k+24$\\[0.5em] 
$D_4(a_1)$&  {yes} &  {$A_1 \times A_1 \times A_1$} &  $k_1^\natural=k+12$ \\
& && $k_2^\natural=k+12$  \\
& && $k_3^\natural=k+12$  \\[0.5em] 
$(A_3+A_1)^\prime$ &  no &  $A_1 \times A_1\times A_1$ &  $k_1^\natural=k+\frac{23}{2}$ \\
&&&  $k_2^\natural=k+12$ \\
&&&  $k_3^\natural=2k+24$ \\[0.5em] 
$2 A_2+A_1$ & no  &  $A_1 \times A_1$ & $k_1^\natural=3k+36$  \\ 
&&& {$k_2^\natural=3k+35$} \\[0.5em] 
$(A_3+A_1)^{\prime\prime}$ &  no &  $B_3$ &  $k_1^\natural=k+10$ \\[0.5em] 
$A_2+3 A_1$ & yes & $G_2$ & $k_1^\natural=2 k+22$ \\[0.5em] 
$2 A_2$ & yes & $A_1\times G_2$ & $k_1^\natural=3k+36$ \\
&&&  $k_2^\natural=k+10$ \\[0.5em]  
$A_3$ &  no & $A_1 \times B_3$ &  $k_1^\natural=k+12$ \\[0.5em] 
&&& $k_2^\natural=k+10$\\[0.5em]
$A_2+2A_1$ &  no &  $A_1 \times A_1\times A_1$ &  $k_1^\natural=k+10$ \\
&&&  $k_2^\natural=2k+22$ \\
&&&  $k_3^\natural=6k+66$ \\[0.5em]
$A_2+A_1$ &  no &  $\C\times A_3$ &  $k_0^\natural=k+11$ \\
&&&  $k_1^\natural=k+9$ \\[0.5em] 
$4 A_1$ & no &  $C_3$ &  $k_1^\natural=k+\frac{17}{2}$  \\[0.5em] 
$A_2$ & yes & $A_5$ & $k_1^\natural=k+8$\\[0.5em]
$(3 A_1)'$ & no & $A_1\times C_3$ & $k_1^\natural=k +\frac{17}{2}$ \\
&& & $k_2^\natural=k+8$  \\[0.5em] 
$(3 A_1)''$ & yes & $F_4$ &  $k_1^\natural=k+6$ \\[0.5em] 
$2 A_1$ & no  & $A_1 \times B_4$ &  $k_1^\natural=k+8$\\
&&&  {$k_2^\natural=k+6$} \\[0.5em] 
$A_1$ & no & $D_6$ & $k_1^\natural=k+4$ \\[0.5em] 
\hline
\end{tabular} \\[1em] 
\captionof{table}{\footnotesize{Centralisers of $\sl_2$-triples in type  $E_7$ (continued)}} 
\label{Tab:Data-E7}
}
\end{minipage}

\begin{minipage}{.5\textwidth}
\centering
\vspace{-0.5cm}
{\tiny 
\begin{tabular}{|llll|}
\hline
&&&\\[-0.5em] 
$G.f$ & even & $\g^\natural= \bigoplus_{i} \g_i^\natural$ &  
 $k_i^\natural$  \\[-0.5em] 
&&&\\
\hline
&&&\\[-0.5em] 
$E_8$  & yes &  $\{0\}$ &   \\[0.5em] 
$E_8(a_1)$ & yes &  $\{0\}$ &   \\[0.5em] 
$E_8(a_2)$ & yes  & $\{0\}$ &   \\[0.5em] 
$E_8(a_3)$  & yes  &  $\{0\}$ &  \\[0.5em] 
$E_8(a_4)$ & yes  &  $\{0\}$ &   \\[0.5em] 
$E_7$   & no &  $A_1$ & $k_1^\natural=k+\frac{53}{2}$  \\[0.5em] 
$E_8(b_4)$  & yes  &  $\{0\}$ &  \\[0.5em] 
$E_8(a_5)$  & yes &  $\{0\}$ & \\[0.5em] 
$E_7(a_1)$ & no & $A_1$ & $k_1^\natural=k+26$  \\[0.5em] 
$E_8(b_5)$ & yes & $\{0\}$ &  \\[0.5em] 
$D_7$& no & $A_1$ & $k_1^\natural=2 k+\frac{107}{2}$ \\[0.5em]
$E_8(a_6)$ & yes   &  $\{0\}$ & \\[0.5em] 
$E_7(a_2)$ & no  & $A_1$ & $k_1^\natural=k +\frac{51}{2}$ \\[0.5em] 
$E_6+A_1$ & no  & $A_1$ & $k_1^\natural=3k +77$ \\[0.5em]  
$D_7(a_1)$ & no & $\C$ & $k_0^\natural=4k +106$ \\[0.5em] 
$E_8(b_6)$  & yes  &  $\{0\}$ & \\[0.5em] 
$E_7(a_3)$ & no & $A_1$ & $k_1^\natural=k+25$ \\[0.5em] 
$E_6(a_1)+A_1$ & no  & $\C$ & $k_0^\natural=2k+51$ \\[0.5em] 
$A_7$& no &  $A_1$ & $k_1^\natural=4k+\frac{209}{2}$   \\[0.5em] 
$D_7(a_2)$ & no  & $\C$ & $k_0^\natural=k+26$ \\[0.5em] 
$E_6$& yes & $G_2$ & $k_1^\natural=k+24$ \\[0.5em] 
$D_6$ & no & $B_2$ & $k_1^\natural=k+\frac{49}{2}$  \\[0.5em] 
$D_5+A_2$ & yes   & $\C$ & $k_0^\natural=3k+76$ \\[0.5em] 
$E_6(a_1)$ & yes  & $A_2$ & $k_1^\natural=k+24$   \\[0.5em] 
$E_7(a_4)$ & no & $A_1$ & $k_1^\natural=k+24$ \\[0.5em] 
$A_6+A_1$ & no   &  $A_1$ & $k_1^\natural=7k+180$   \\[0.5em] 
$D_6(a_1)$ &no    & $A_1\times A_1$ & $k_1^\natural=k+24$ \\
&&& $k_2^\natural=k+24$ \\[0.5em] 
$A_6$ & yes   & $A_1 \times A_1$ & ${k_1^\natural=k+24}$\\
&&& $k_2^\natural=7 k+180$ \\[0.5em] 
$E_8(a_7)$ & yes   &  $\{0\}$ &   \\[0.5em] 
$D_5+A_1$ & no   & $A_1\times A_1$ & $k_1^\natural=k+\frac{47}{2}$ \\
&&& $k_2^\natural=2k+48$ \\[0.5em] 
$E_7(a_5)$ & no  & $A_1$ & ${k_1^\natural=k+\frac{47}{2}}$  \\[0.5em] 
$E_6(a_3)+ A_1$ & no  & $A_1$ & $k_1^\natural=3k +71$ \\[0.5em] 
$D_6(a_2)$ & no & $A_1\times A_1$ & $k_1^\natural=k+\frac{47}{2}$ \\
&&& $k_2^\natural=k+\frac{47}{2}$ \\[0.5em] 
$D_5(a_1)+A_2$ & no  & $A_1$ & $k_1^\natural=6k +\frac{285}{2}$ \\[0.5em] 
$A_5+A_1$ & no & $A_1\times A_1$ & $k_1^\natural=k+\frac{47}{2}$  \\
&&& $k_2^\natural=3k+71$ \\[0.5em] 
$A_4+A_3$ & no &  $A_1$ &  $k_1^\natural=10k+238$ \\[0.5em] 
$D_5$ & yes   & $B_3$ & $k_1^\natural=k+22$ \\[0.5em]  
$E_6(a_3)$ & yes   & $G_2$ & $k_1^\natural=k+22$  \\[0.5em] 
$D_4+A_2$ & yes & $A_2$ & $k_1^\natural=2k+46$ \\[0.5em] 
\hline
\end{tabular} \\[1em] 
\captionof{table}{\footnotesize{Centralisers of $\sl_2$-triples in type $E_8$}} 
\label{Tab:Data-E8-a}
}
\end{minipage}\hspace{1cm} 
\begin{minipage}{.5\textwidth} 
\centering
{\tiny 
\begin{tabular}{|llll|}
\hline
&&&\\[-0.5em] 
$G.f$ & even & $\g^\natural= \bigoplus_{i} \g_i^\natural$ &  
 $k_i^\natural$  \\[-0.5em] 
&&&\\
\hline
&&&\\[-0.5em] 
$A_4+ A_2+A_1$ & no & $A_1$ & $k_1^\natural=15k+350$ \\[0.5em]  
$D_5(a_1)+A_1$ & no & $A_1 \times A_1$ & $k_1^\natural=k+22$ \\
&&& $k_2^\natural=8k+184$ \\[0.5em] 
$A_5$ & no  & $A_1\times G_2$ & $k_1^\natural=k+\frac{47}{2}$  \\
&&& $k_2^\natural=k+22$ \\[0.5em] 
$A_4+ A_2$ & yes  & $A_1\times A_1$ & $k_1^\natural=15k+350$ \\
&&& $k_2^\natural=k+22$ \\[0.5em] 
$A_4+2 A_1$ & no  & $\C\times A_1$ & $k_0^\natural=k+\frac{45}{2}$ \\
&&& $k_1^\natural=2k+45$ \\[0.5em] 
$D_5(a_1)$ & no  & $A_3$ & $k_1^\natural=k+21$ \\[0.5em] 
$2 A_3$ & no &  $B_2$ & $k_1^\natural = 2 k + \frac{89}{2}$  \\[0.5em] 
$A_4+A_1$ & no & $\C\times A_2$ & $k_0^\natural=k+\frac{45}{2}$ \\
&&& $k_1^\natural=k+21$ \\[0.5em] 
$D_4(a_1)+A_2$ & yes  & $A_2$ & $k_1^\natural=6k+ 132$  \\[0.5em] 
$D_4+A_1$ & no & $C_3$ & $k_1^\natural=k+\frac{41}{2}$ \\[0.5em] 
$A_3+ A_2+A_1$ & no  & $A_1\times A_1$ & $k_1^\natural=k+\frac{41}{2}$ \\
&&& $k_2^\natural=24k+528$ \\[0.5em] 
$A_4$ & yes  & $A_4$ & $k_1^\natural=k+20$ \\[0.5em] 
$A_3+ A_2$ & no  &  $\C \times B_2$ & $k_0^\natural=k+22$ \\
&&& $k_1^\natural=k+20$ \\[0.5em] 
$D_4(a_1)+ A_1$ & no  & $A_1\times A_1\times A_1$ & $k_1^\natural=k+20$ \\
&&& {$k_2^\natural=k+20$} \\
&&& {$k_3^\natural=k+20$}  \\[0.5em] 
$A_3+2 A_1$ & no  & $A_1\times B_2$ & $k_1^\natural=2k+40$ \\
&&& $k_2^\natural=k+\frac{39}{2}$ \\[0.5em] 
$2 A_2+2 A_1$  & no  &  $B_2$ & $k_1^\natural = 3 k + 59$  \\[0.5em] 
$D_4$ &  yes  & $F_4$ & $k_1^\natural=k+18$\\[0.5em]   
$D_4(a_1)$ & no  & $D_4$ & $k_1^\natural=k+18$ \\[0.5em]
$A_3+A_1$ & no  & $A_1 \times B_3$ & $k_1^\natural=k+\frac{39}{2}$ \\ 
&&& $k_2^\natural=k+18$ \\[0.5em] 
$2 A_2+A_1$ & no  &  $A_1\times G_2$  & $k_1^\natural=3k +59$ \\ 
&&& {$k_2^\natural=k +8$} \\[0.5em] 
$2 A_2$ & yes  & $G_2 \times G_2$ & $k_1^\natural=k+18$ \\
&&& {$k_2^\natural=k+18$} \\[0.5em] 
$A_2+3 A_1$ & no  & $A_1 \times G_2$ & $k_1^\natural=k+\frac{35}{2}$ \\
&&& $k_2^\natural=2k+36$ \\[0.5em] 
$A_3$ & no  & $B_5$ & $k_1^\natural=k +16$ \\[0.5em] 
$A_2+2 A_1$ & no  & $A_1 \times B_3$ & $k_1^\natural=6k+108$ \\
&&& $k_2^\natural=k+16$ \\[0.5em] 
$A_2+A_1$ & no  & $A_5$ & $k_1^\natural=k +15$ \\[0.5em] 
$4 A_1$ & no &  $C_4$ & $k_1^\natural=k +\frac{29}{2}$  \\[0.5em] 
$A_2$ & yes  & $E_6$ & $k_1^\natural=k +12$ \\[0.5em] 
$3 A_1$ & no   & $A_1\times F_4$ & $k_1^\natural=k +\frac{29}{2}$ \\
&&& $k_2^\natural=k+12$ \\[0.5em] 
$2 A_1$ & no  & $B_6$ & $k_1^\natural=k +10$ \\[0.5em] 
$A_1$ & no  & $E_7$ & $k_1^\natural=k +6$ \\[0.5em]
\hline
\end{tabular} \\[1em] 
\captionof{table}{\footnotesize{Centralisers of $\sl_2$-triples in type $E_8$ (continued)}} 
\label{Tab:Data-E8}
}
\end{minipage}

\newcommand{\etalchar}[1]{$^{#1}$}

\end{document}